\theoremstyle{plain}
\newtheorem{ThmIntro}{Theorem}
\newtheorem{CoroIntro}[ThmIntro]{Corollary}
\newtheorem{proposition}{Proposition}[section]
\newtheorem{theorem}[proposition]{Theorem}
\newtheorem{lemma}[proposition]{Lemma}
\newtheorem{corollary}[proposition]{Corollary}
\theoremstyle{definition}
\newtheorem{example}[proposition]{Example}
\newtheorem{definition}[proposition]{Definition}
\newtheorem{remark}[proposition]{Remark}
\newtheorem{conjecture}[proposition]{Conjecture}
\newtheorem{fact}[proposition]{Fact}
\DeclareMathOperator{\Aut}{Aut}
\DeclareMathOperator{\Id}{Id}
\DeclareMathOperator{\SL}{\mathsf{SL}}
\DeclareMathOperator{\Sp}{Sp}
\DeclareMathOperator{\GL}{\mathsf{GL}}
\DeclareMathOperator{\SO}{\mathsf{SO}}
\DeclareMathOperator{\PSL}{\mathsf{PSL}}
\DeclareMathOperator{\PGL}{\mathsf{PGL}}
\DeclareMathOperator{\PU}{\mathsf{PU}}
\DeclareMathOperator{\Hom}{Hom}
\DeclareMathOperator{\Isom}{Isom}
\DeclareMathOperator{\Inn}{Inn}
\DeclareMathOperator{\Span}{Span}
\DeclareMathOperator{\Ad}{Ad}
\DeclareMathOperator{\rsp}{RSp}
\DeclareMathOperator{\ulim}{ulim}
\DeclareMathOperator{\RSp}{RSp}
\DeclareMathOperator{\Pos}{Pos}
\DeclareMathOperator{\Diam}{\lozenge}
\newcommand{\bdext}{{\Vc^{\mathrm{M}}_\rho}}
\DeclareMathOperator{\N}{{\mathbb{N}}}
\DeclareMathOperator{\R}{\mathbb{R}}
\DeclareMathOperator{\Cc}{\mathcal{C}}
\DeclareMathOperator{\Fc}{\mathcal{F}}
\DeclareMathOperator{\Ic}{\mathcal{I}}
\DeclareMathOperator{\Lc}{\mathcal{L}}
\DeclareMathOperator{\Oc}{\mathcal{O}}
\DeclareMathOperator{\Rc}{\mathcal{R}}
\DeclareMathOperator{\Tc}{\mathcal{T}}
\DeclareMathOperator{\Vc}{\mathcal{V}}
\DeclareMathOperator{\Cb}{\mathbb{C}}
\DeclareMathOperator{\Fb}{{\mathbb{F}}}
\DeclareMathOperator{\Hb}{\mathbb{H}}
\DeclareMathOperator{\Kb}{\mathbb{K}}
\DeclareMathOperator{\Lb}{\mathbb{L}}
\DeclareMathOperator{\Nb}{{\mathbb{N}}}
\DeclareMathOperator{\Rb}{{\mathbb{R}}}
\DeclareMathOperator{\Sb}{\mathbb{S}}
\DeclareMathOperator{\Zb}{\mathbb{Z}}
\DeclareMathOperator{\Qb}{\mathbb{Q}}
\DeclareMathOperator{\Rbom}{\mathbb{R}^\lambda_\omega}
\newcommand{\abs}[1]{\left|#1\right|}
\newcommand{\PG}{\textit{PG}}
\newcommand*\from{\colon}
\newcommand{\pr}{\textnormal{pr}}
\newcommand{\opp}{\textnormal{opp}}
\newcommand{\std}{\textnormal{std}}
\newcommand{\cl}{\textnormal{cl}}
\newcommand{\fr}{\textnormal{fr}}
\newcommand{\Tr}{\textnormal{Tr}}
\newcommand{\Stab}{\textnormal{Stab}}
\newcommand{\CL}{\textnormal{Col}}
\newcommand{\Prox}{\textnormal{Prox}}
\newcommand{\red}{\textnormal{red}}
\newcommand{\Qbar}{\overline{\Qb}^r}
\newcommand{\1}{\mathbf{1}}
\begin{document}

\def\subjclassname{\textup{2020} Mathematics Subject Classification}

\subjclass{
22E40, 
30F60, 
14P10. 
}

\keywords{Positive structures of flag varieties, representations of Fuchsian groups, non-Archimedean ordered fields}

\title[Positive representations over real closed fields]{Positive representations over real closed fields}

\author{Xenia Flamm}
\address{Institut des Hautes \'Etudes Scientifiques\newline
\indent Max-Planck Institute for Mathematics in the Sciences}
\email{flamm@ihes.fr, xenia.flamm@mis.mpg.de}

\author{Nicolas Tholozan}
\address{École Normale Supérieure PSL, CNRS}
\email{nicolas.tholozan@ens.fr}

\author{Tianqi Wang}
\address{Yale University}
\email{tq.wang@yale.edu}

\author{Tengren Zhang}
\address{National University of Singapore}
\email{matzt@nus.edu.sg}

\date{\today}

\begin{abstract}
We develop the theory of $\Theta$-positive representations from  general Fuchsian groups to linear groups over real closed fields. Our definition, which does not assume the boundary map to be continuous, encompasses many generalizations of positive or Anosov representations that have been considered in the literature. 
\end{abstract}

\maketitle

\setcounter{tocdepth}{2}

\section{Introduction}
One of the striking recent developments of the field sometimes called \emph{higher Teichm\"uller--Thurston theory} is the theory of $\Theta$-positive representations of surface groups, introduced by Guichard--Wienhard \cite{guichard2025generalizing} and subsequently developed by Beyrer--Pozzetti \cite{beyrer2021positive} and Guichard--Labourie--Wienhard \cite{GLW}, among others.

Here, $\Theta$ refers to a subset of simple roots of a given real semisimple, algebraically connected, linear algebraic group~$G$.
Let $P_\Theta$ denote the associated parabolic subgroup of $G$. Roughly speaking, the group $G$ is said to admit a $\Theta$-positive structure if one can define a natural notion of ``cyclically ordered'' tuple (called \emph{positive tuple}) of ``hyper-transverse'' flags in the flag variety $\Fc_\Theta \coloneqq G/P_\Theta$. Such $\Theta$-positive structures have been classified by Guichard--Wienhard. This notion generalizes Lusztig's positivity for tuples of complete flags in $\Rb^d$ \cite{Lusztig_TotalPositivityReductiveGroups}.

Now, let $\Gamma$ be the fundamental group of a closed hyperbolic surface. Its Gromov boundary $\partial_\infty \Gamma$ identifies with the unit circle $\Sb^1 = \partial_\infty \Hb$, where $\Hb \subset \Cb$ denotes the Poincaré disc. A representation $\rho\colon \Gamma \to G$ is \emph{$\Theta$-positive} when there exists a continuous $\rho$-equivariant \emph{boundary map} $\xi_\rho\colon \partial_\infty \Gamma \to \Fc_\Theta$ that sends any cyclically ordered tuple to a positive tuple.

These $\Theta$-positive representations include the Hitchin representations studied by Labourie \cite{Lab} and Fock--Goncharov \cite{fock2006moduli}, the maximal representations introduced by Burger--Iozzi--Wienhard \cite{BIW03,BIW10}, as well as a new family of representations into $\SO(p,q)$ and representations into two new exceptional Lie groups. These representations are \emph{$\Theta$-Anosov} in the sense of Labourie \cite{Lab}. In particular, they are discrete and faithful. These recent developments culminated in the following theorem:

\begin{theorem}[\cite{GLW, BeyrerGuichardLabouriePozzettiWienhard_PositivityCrossRatiosCollarLemma}]
    Let $\Gamma$ be a closed surface group and $G$ a real semisimple linear algebraic group that admits a $\Theta$-positive structure. 
    Then the set of $\Theta$-positive representations of $\Gamma$ to $G$ forms a union of connected components of $\Hom(\Gamma,G)$.
\end{theorem}

In fact, in light of the independent work of Aparicio-Aroyo--Bradlow--Collier--Garcia-Prada--Gothen--Oliveira \cite{aparicio2019so}, the $\Theta$-positive condition is likely to characterize exactly the connected components of real representation varieties of closed surface groups containing only discrete and faithful representations.

\subsection{%
\texorpdfstring{%
A general notion of $\Theta$-positive representation}%
{A general notion of Theta-positive representation}}
\label{subsection:Intro:GeneralNotionThetaPosRepr}
The purpose of the present work is to extend the notion of $\Theta$-positive representation simultaneously in two directions:
\begin{itemize}
    \item For representations of fundamental groups of hyperbolic surfaces which are not necessarily closed anymore;
    \item For representations into linear semi-algebraic groups over real closed fields (see \Cref{section:SemiAlgGroups}) that are semi-simple  and semi-algebraically connected.
\end{itemize}

Throughout the paper, $\Gamma$ will denote a non-elementary \emph{Fuchsian group}, i.e.\ a discrete group of biholomorphisms of the Poincar\'e disc $\Hb$ which is not virtually abelian. Its limit set $\Lambda(\Gamma)$ is the accumulation set of any $\Gamma$-orbit in $\Sb^1 \simeq \partial_\infty \Hb$. The group $\Gamma$ is called \emph{of the first kind} when $\Lambda(\Gamma) = \Sb^1$ and \emph{of the second kind} otherwise.

Let $G$ be a real, algebraically connected, semisimple, linear algebraic group $G$ (in the main body of the paper, our results are stated more generally for linear semi-algebraic groups over the real algebraic numbers, see \Cref{section:SemiAlgGroups}). 
Suppose that $\Theta$ is a subset of the set of simple restricted roots of $G$ for which $G$ admits a $\Theta$-positive structure, see \Cref{subsection: PositiveStructures}. 
Let $P_\Theta$ be the associated standard parabolic subgroup and $\Fc_\Theta \coloneqq G/P_\Theta$ the associated flag variety.

Consider now any ordered field extension $\Fb$ of $\Rb$ and let $G_{\Fb}$, $P_{\Theta,\Fb}$ and $\Fc_{\Theta,\Fb}= G_{\Fb}/P_{\Theta,\Fb}$ denote the $\Fb$-points of $G$, $P_\Theta$ and $\Fc_\Theta$ respectively.
As pointed out by Guichard--Wienhard \cite{guichard2025generalizing}, the $\Theta$-positivity of tuples of flags is a real semi-algebraic condition, and thus does make sense over $\Fc_{\Theta, \Fb}$.
We will investigate the following definition, which already appears in \cite{GLW}:

\begin{definition} \label{def: positive rep intro}
    A representation $\rho \from \Gamma \to G_{\Fb}$ is \emph{$\Theta$-positive} if there exists a $\Gamma$-invariant non-empty subset $D\subset \Lambda(\Gamma)$ and a $\rho$-equivariant map $\xi \from D \to \Fc_{\Theta,\Fb}$ mapping cyclically ordered tuples in $D$ to positive tuples.
\end{definition}

A key feature of this definition is that we do not assume anything about the domain of definition $D$ of the boundary map $\xi$. Our first concern will be about extending this boundary map to the whole circle. 

In \Cref{s: Fuchsian groups}, we recall that any Fuchsian group $\Gamma$ acting on $\Sb^1$ is semiconjugated to a Fuchsian group $\Gamma'$ of the first kind. We will see (\Cref{propo: equivalence positivity first and second kind Fuchsian groups}) that the $\Theta$-positive representations of $\Gamma$ and $\Gamma'$ are the same, which often allows us to assume that $\Gamma$ is of the first kind.

Since a $\rho$-equivariant map must send a point $x$ to a point fixed by $\rho(\Stab_\Gamma(x))$, let us investigate first the points with non-trivial stabilizer. Recall that these are of two kind: \emph{parabolic fixed points} and \emph{hyperbolic fixed points}. If $\gamma$ is an infinite order element of $\Gamma$, we denote by $\gamma^+ \in \Sb^1$ its attracting fixed point if $\gamma$ is hyperbolic and its unique fixed point if $\gamma$ is parabolic. The sets
\[\Lambda_h \coloneqq \{\gamma^+ \mid \gamma\in \Gamma \textrm{ hyperbolic}\} \textnormal{ and } \Lambda_p \coloneqq \{\gamma^+\mid \gamma\in \Gamma \textrm{ parabolic}\}\]
are $\Gamma$-invariant subsets of $\Sb^1$, and $\Lambda_h$ is always non-empty (as long as $\Gamma$ is non-elementary).\\

In \Cref{section : proximal representations}, we prove that $\Theta$-positivity implies a weak form of proximality over real closed fields, and deduce that positive representations have a boundary map which is canonically defined on $\Lambda_h$.

\begin{ThmIntro} \label{thm-intro: weak proximality}
Let $\Gamma$ be a Fuchsian group of the first kind and $\rho$ a representation of $\Gamma$ into $G_{\Fb}$, where $\Fb$ is a real closed field. Then the following are equivalent:
\begin{enumerate}
    \item
    \label{thm-intro: weak proximality: item positive}
    $\rho$ is $\Theta$-positive;
    \item 
    \label{thm-intro: weak proximality: item positive on fixed points}
    for every hyperbolic element $\gamma \in \Gamma$, $\rho(\gamma)$ is \emph{weakly $\Theta$-proximal}, and the map $\xi^h_\rho \from \Lambda_h \to \Fc_{\Theta,\Fb}$ sending $\gamma^+$ to the weakly attracting flag of $\rho(\gamma)$ sends cyclically ordered tuples in $\Lambda_h$ to positive tuples in $\Fc_{\Theta,\Fb}$.
\end{enumerate}
\end{ThmIntro}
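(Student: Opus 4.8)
The direction $(\ref{thm-intro: weak proximality: item positive on fixed points}) \Rightarrow (\ref{thm-intro: weak proximality: item positive})$ is essentially a definition-chase: if $\xi^h_\rho$ is a positive map defined on $\Lambda_h$, then taking $D = \Lambda_h$ (which is non-empty, $\Gamma$-invariant, and contained in $\Lambda(\Gamma)$ since $\Gamma$ is of the first kind) and $\xi = \xi^h_\rho$ exhibits $\rho$ as $\Theta$-positive. One should check $\rho$-equivariance of $\xi^h_\rho$, which follows from the functoriality of the weakly attracting flag: $\rho(\eta)$ conjugates the weakly attracting flag of $\rho(\gamma)$ to that of $\rho(\eta\gamma\eta^{-1})$, and $(\eta\gamma\eta^{-1})^+ = \eta\cdot\gamma^+$. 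So the content is entirely in the forward direction.

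For $(\ref{thm-intro: weak proximality: item positive}) \Rightarrow (\ref{thm-intro: weak proximality: item positive on fixed points})$, start from the given data $D \subset \Lambda(\Gamma)$ and $\rho$-equivariant positive map $\xi \from D \to \Fc_{\Theta,\Fb}$. First I would upgrade $D$ to something large: since $\Gamma$ is non-elementary and of the first kind, the limit set is all of $\Sb^1$ and hyperbolic fixed points are dense; a non-empty $\Gamma$-invariant $D$ must therefore meet every orbit-closure and in particular one can find, for any hyperbolic $\gamma$, points of $D$ in both components of $\Sb^1 \setminus \{\gamma^+, \gamma^-\}$ (using minimality of the $\Gamma$-action on $\Lambda(\Gamma) = \Sb^1$ and that $\langle\gamma\rangle$-orbits of points of $D$ accumulate on $\{\gamma^+,\gamma^-\}$). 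The key geometric input is then: for a cyclically ordered triple $(x_-, x, x_+)$ in $D$ with $x_\pm$ close to $\gamma^\mp$, the positivity of the triple $(\xi(x_-), \xi(x), \xi(x_+))$ together with positivity of $(\xi(\gamma^n x_-), \xi(\gamma^n x), \xi(\gamma^n x_+))$ forces $\rho(\gamma)^n \xi(x)$ to converge, as $n \to \infty$, into the "positive cone" pinched between two fixed transverse flags — this is where one invokes the earlier results of \Cref{section : proximal representations} on how $\Theta$-positivity of nested tuples produces weak $\Theta$-proximality of $\rho(\gamma)$ over a real closed field, and identifies its weakly attracting flag as $\xi^h_\rho(\gamma^+) \coloneqq \lim_n \rho(\gamma)^n \xi(x)$ (independent of $x \in D \setminus \{\gamma^-\}$, by a squeezing argument between positive tuples).

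Having defined $\xi^h_\rho$ on $\Lambda_h$ this way, I would then verify it is a positive map: given a cyclically ordered tuple $(\gamma_1^+, \dots, \gamma_k^+)$ in $\Lambda_h$, I would approximate each $\gamma_i^+$ by a point $x_i \in D$ on the correct side (again using density of $D$ and that $\xi^h_\rho(\gamma_i^+)$ is the limit of $\rho(\gamma_i)^n \xi(x_i)$), so that $(x_1, \dots, x_k)$ is cyclically ordered in $D$, hence $(\xi(x_1), \dots, \xi(x_k))$ is positive; then pass to the limit using that the set of positive tuples is closed under the relevant limits (or: its closure consists of "non-negative" tuples, and transversality coming from weak proximality upgrades non-negative to positive — this semi-algebraic closure argument over $\Fb$ is the point where working over a real closed field rather than $\Rb$ requires care, since one cannot use ordinary topological compactness and must argue with the semi-algebraic structure, e.g.\ via Tarski's transfer principle or by an explicit definability argument).

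**The main obstacle.** The hard part is the convergence/limit argument over a general real closed field $\Fb$: showing that $\rho(\gamma)^n \xi(x)$ "converges" — which must be interpreted semi-algebraically or via an appropriate ultralimit/valuation-theoretic framework rather than in the classical topology — and that the resulting limit flag is transverse to $\xi^h_\rho(\gamma^-)$, so that $\rho(\gamma)$ is genuinely weakly $\Theta$-proximal with the claimed attracting flag. I expect this to rely essentially on the proximality results established in \Cref{section : proximal representations}, with the remaining work being to feed the positive-tuple hypothesis into those results in the right form and to check that the "sidedness" bookkeeping (which component of $\Sb^1 \setminus \{\gamma^+,\gamma^-\}$ a given point of $D$ lies in, and correspondingly which chamber of positive flags $\xi$ maps it to) is consistent under iteration by $\rho(\gamma)$.
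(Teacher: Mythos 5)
The direction $(2)\Rightarrow(1)$ is handled fine. For $(1)\Rightarrow(2)$ your outline has the right ingredients (choose $x,y\in D$ with $(\gamma^-,x,\gamma^+,y)$ cyclically ordered, exploit the positive $5$-tuple $(\xi(x),\rho(\gamma)\xi(x),\rho(\gamma)^2\xi(x),\rho(\gamma)\xi(y),\xi(y))$, interleave hyperbolic fixed points with points of $D$ to check positivity of $\xi^h_\rho$), but you have correctly identified the main obstacle and then not resolved it.

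The problem is that you propose to \emph{define} $\xi^h_\rho(\gamma^+)$ as $\lim_n \rho(\gamma)^n\xi(x)$ and to verify positivity of $\xi^h_\rho$ by a limiting argument along positive tuples. Over a non-Archimedean real closed field $\Fb$ these limits simply do not exist in general: the paper points out explicitly (after \Cref{propo:PosTransUniqueFP}) that for $g = \left(\begin{smallmatrix}1&1\\0&1\end{smallmatrix}\right)\in\PSL_2(\Fb)$, which is $\Theta$-positively translating, the orbit $g^n\cdot 0 = n$ has no limit in $\mathbf{P}^1(\Fb)$. Invoking "ultralimits" or "valuation-theoretic frameworks" does not repair this; it would change what object $\xi^h_\rho$ is and where it lives. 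The statement requires $\xi^h_\rho(\gamma^+)$ to be the \emph{weakly attracting flag} of $\rho(\gamma)$, which the paper defines directly from the multiplicative Jordan decomposition (\Cref{sec: proximality}) with no limiting process at all.

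The paper's actual mechanism, which your sketch does not reach, is Tarski--Seidenberg transfer. The key technical step (\Cref{lem: CharPosTransWeaklyProx}) is: if $(x,g\cdot x,g^2\cdot x,g\cdot y,y)$ is a positive $5$-tuple of flags, then $g$ is weakly $\Theta$-proximal, $g^\pm$ sit inside the expected nested diamonds, and the bi-infinite tuple $(g^-,\dots,g^{-n}x,\dots,x,\dots,g^n x,g^+,g^n y,\dots,y,\dots,g^{-n}y)$ is positive for each $n$. Because hypothesis and conclusion are countably many first-order sentences of $\mathcal L(\overline{\Qb}^r)$, it suffices to prove them over $\Rb$, where the least-upper-bound property gives convergence of positive sequences (\Cref{proposition: least upper bounded property for positive sequence}). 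Similarly, the identification $\xi(\gamma^+)=\xi^h_\rho(\gamma^+)$ for $\gamma^+\in D$ comes from squeezing the weakly attracting flag between two closed opposite diamonds (\Cref{lem: intersection closures diamonds}), and the positivity of $\xi\cup\xi^h_\rho$ is established combinatorially by repeated application of the inductive positivity criterion \Cref{lem: AddingElemToPosTuple} (inserting $\rho(\gamma_i)^+\in\Diam_{\xi(\gamma_i x_i)}(\xi(x_i),\xi(y_i))$ between $\xi(x_i)$ and $\xi(y_i)$), not by passing to a limit. Your observation that "transversality upgrades semi-positive to positive" is correct (\Cref{prop: transverse + semi positive implies positive}), but the paper does not need it here, because it never produces a merely semi-positive tuple in the first place.
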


The notions of weakly $\Theta$-proximal element and weakly attracting flag will be defined in \Cref{sec: proximality}.
Here let us explain them in the particular case where $G_{\Fb}= \SL_d(\Fb)$ and $\Theta$ is the set of all simple roots. Then $g \in \SL_d(\Fb)$ will be called \emph{weakly $\Theta$-proximal} if it is diagonalizable over $\Fb$ with distinct eigenvalues satisfying $\vert \lambda_1 \vert_{\Fb} > \ldots > \vert \lambda_d\vert_{\Fb} $ (here and elsewhere, $\vert x\vert_{\Fb} = \max\{x,-x\}\in \Fb$). The flag obtained by taking sums of eigenspaces in decreasing order is then called the \emph{weakly attracting flag} of $g$. It is indeed a fixed point of $g$ in $\Fc_{\Theta, \Fb}$. However, it is an actual attracting point only when the ratios between consecutive eigenvalues are \emph{big elements} in $\Fb$, meaning that for any $x\in \Fb$, there exists $n\in \N$ such that
\[x \leq \left(\frac{\lambda_i}{\lambda_{i+1}}\right)^n~.\]
In that case, we call $g$ \emph{strongly $\Theta$-proximal}. The two notions are equivalent if and only if $\Fb$ is \emph{Archimedean} (i.e.\ a subfield of $\Rb$).

One of the perhaps most surprising features of the non-Archimedean case is that positive boundary maps may not extend to parabolic fixed points. In \Cref{subsection: ExNonFramablePosRepr}, we describe positive representations into $\PGL_2(\Fb)$ for which the image of parabolic elements do not fix a point in $\mathbf P^1(\Fb)$. Burger--Iozzi--Parreau--Pozzetti reached the same conclusion in \cite{BIPP_RSCMaximalRepr}. 

In order to extend the boundary map everywhere, we thus have no other choice but to impose that it is a priori defined on $\Lambda_p$. Following Fock--Goncharov's terminology, we call \emph{$\Theta$-framing} a $\rho$-equivariant map from $\Lambda_p$ to $\Fc_{\Theta,\Fb}$ and define:

\begin{definition}
   Let $\Gamma$ be a Fuchsian group of the first kind containing parabolic elements (equivalently, such that $\Lambda_p$ is non-empty). A representation $\rho \from \Gamma \to G_{\Fb}$ is \emph{$\Theta$-positively frameable} if there exists a $\rho$-equivariant map $\xi \from \Lambda_p \to \Fc_{\Theta, \Fb}$ mapping cyclically ordered tuples in $\Lambda_p$ to positive tuples.
\end{definition}

Finally, in order to extend positive boundary maps to a point $x\in \Sb^1$ with trivial stabilizer, the only condition is that the image of $x$ should belong to a countable intersection of ``generalized intervals'' called \emph{diamonds} (see \Cref{s: Diamonds}). The key condition here will be that the field $\Fb$ is \emph{Cantor complete} (see \Cref{dfn: Cantor complete}), which ensures that the intersection of a nested sequence of closed non-empty bounded intervals is non-empty. Note that every ordered field embeds in a Cantor complete real closed field \cite{Shelah_QuiteCompleteRealClosedFields}. We can now state our strongest extension theorem: 
\begin{ThmIntro} \label{thm-intro: extension boundary map}
    Let $\Gamma$ be a Fuchsian group of the first kind, $\Fb$ a Cantor complete real closed field and $\rho$ a representation of $\Gamma$ into $G_{\Fb}$. Assume that either $\Lambda_p = \emptyset$ and $\rho$ is $\Theta$-positive, or $\Lambda_p\neq \emptyset$ and $\rho$ is $\Theta$-positively frameable. Then there exists a $\rho$-equivariant positive map $\xi \colon \Sb^1\to \Fc_{\Theta, \Fb}$ (defined on the whole circle).
\end{ThmIntro}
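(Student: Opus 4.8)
The plan is to build the boundary map $\xi$ in four stages: first assemble a $\rho$-equivariant positive map $\xi_0$ on $\Lambda_0 \coloneqq \Lambda_h \cup \Lambda_p$; then, for each $x \in \Sb^1 \setminus \Lambda_0$, produce a candidate value for $\xi(x)$ as a point of a nested intersection of diamonds, which is non-empty because $\Fb$ is Cantor complete; then make these choices $\Gamma$-equivariantly; and finally verify that the resulting map $\xi \colon \Sb^1 \to \Fc_{\Theta,\Fb}$ sends cyclically ordered tuples to positive tuples. The recurring structural facts are that $\Lambda_0$ is $\Gamma$-invariant, countable --- since $\Gamma$ is a discrete subgroup of $\PSL_2(\R)$, hence countable --- and dense in $\Sb^1$ --- since $\Gamma$ is of the first kind, so that $\overline{\Lambda_h} = \Lambda(\Gamma) = \Sb^1$ --- and that $\Lambda_h \cap \Lambda_p = \emptyset$, as no boundary point of a Fuchsian group is fixed by both a hyperbolic and a parabolic element.

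For the first stage, if $\Lambda_p = \emptyset$ then $\rho$ is $\Theta$-positive and \Cref{thm-intro: weak proximality} furnishes the positive map $\xi^h_\rho$ on $\Lambda_h$; we set $\xi_0 \coloneqq \xi^h_\rho$. If $\Lambda_p \neq \emptyset$ then $\rho$ is again $\Theta$-positive (take $D = \Lambda_p$ in \Cref{def: positive rep intro}), so \Cref{thm-intro: weak proximality} still produces $\xi^h_\rho$ on $\Lambda_h$, and we let $\xi_0$ be the common extension of $\xi^h_\rho$ and the given $\Theta$-framing to $\Lambda_0 = \Lambda_h \sqcup \Lambda_p$. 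One then has to check that $\xi_0$ remains positive, i.e.\ that the framing and the canonical hyperbolic boundary map are mutually positive on tuples containing both parabolic and hyperbolic fixed points. This compatibility is the first delicate point: the plan is to write each parabolic fixed point as a monotone limit of hyperbolic fixed points, note that the associated weakly attracting flags are monotone for the order on diamonds, and deduce positivity of mixed tuples by passing to the limit through nested diamonds.

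For the second and third stages, fix $x \in \Sb^1 \setminus \Lambda_0$, so that $\Stab_\Gamma(x)$ is trivial, and consider the closed diamonds $\overline{\Diam(\xi_0(a), \xi_0(b))}$ as $(a,b)$ ranges over pairs of points of $\Lambda_0$ lying on opposite sides of $x$. This family is downward directed: given two such pairs, the pair formed by the two endpoints nearest to $x$ gives a smaller diamond, by positivity of $\xi_0$ on the corresponding four-tuple together with the order-compatibility of diamond inclusion established in \Cref{s: Diamonds}. Since $\Lambda_0$ is countable, the family contains a cofinal nested sequence, and its intersection $I(x)$ is non-empty because $\Fb$ is Cantor complete and a nested sequence of non-empty closed diamonds over a Cantor complete field has non-empty intersection (\Cref{s: Diamonds}). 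Density of $\Lambda_0$ provides points of $\Lambda_0$ strictly on either side of $x$, which forces $I(x)$ to lie in every \emph{open} diamond $\Diam(\xi_0(a), \xi_0(b))$ with $a, b$ on opposite sides of $x$; in particular every element of $I(x)$ is transverse to every $\xi_0(a)$. Now choose one representative $x$ in each $\Gamma$-orbit of $\Sb^1 \setminus \Lambda_0$, pick $\xi(x) \in I(x)$, and set $\xi(\gamma x) \coloneqq \rho(\gamma)\xi(x)$; this is well defined because stabilizers are trivial and consistent with the diamond conditions because $\rho(\gamma) I(x) = I(\gamma x)$, by $\rho$-equivariance of $\xi_0$.

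For the last stage, let $x_1, \dots, x_n$ be cyclically ordered in $\Sb^1$; we want $(\xi(x_1), \dots, \xi(x_n))$ positive. For each $i$ with $x_i \notin \Lambda_0$ insert, using density of $\Lambda_0$, a sandwich $a_i, x_i, b_i$ with $a_i, b_i \in \Lambda_0$ so that the enlarged collection is still cyclically ordered; keep $x_i$ itself when $x_i \in \Lambda_0$. The points of the enlarged tuple lying in $\Lambda_0$ map under $\xi_0$ to a positive tuple, and each triple $(\xi_0(a_i), \xi(x_i), \xi_0(b_i))$ is positive because $\xi(x_i) \in \Diam(\xi_0(a_i), \xi_0(b_i))$; concatenating these along shared flags --- using the gluing properties of positive tuples from \Cref{s: Diamonds} and the transversality observed above --- shows the enlarged tuple, and hence the subtuple $(\xi(x_1), \dots, \xi(x_n))$, is positive; the cases of small $n$ are handled directly. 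The main obstacle is the mutual-positivity verification in the first stage; the rest amounts to carefully orchestrating the diamond formalism and Cantor completeness once these are available.
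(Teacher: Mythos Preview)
Your overall four-stage plan matches the paper's strategy closely: the paper builds the maximal boundary extension $(\bdext,\zeta^M_\rho)$ whose fibers $\bdext(x)$ are exactly your nested intersections $I(x)$, uses Cantor completeness to show these are non-empty, chooses equivariant sections orbit-by-orbit on points with trivial stabilizer, and verifies positivity by sandwiching with points of the dense domain. So stages~2--4 are essentially correct and coincide with the paper's \Cref{prop: converse}, \Cref{proposition: cantor complete field admit surjective boundary extension}, and the proofs of Corollaries~\ref{cor:ClosedSurfExtensionLimitMap} and~\ref{extension cor}. (One notational wrinkle: your $\Diam(\xi_0(a),\xi_0(b))$ is ambiguous since there are $2^{|\Theta|}$ diamonds with given extremities; you need a reference point $z\in\Lambda_0$ to write $\Diam^{\opp}_{\xi_0(z)}(\xi_0(a),\xi_0(b))$, as the paper does.)

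The genuine gap is in stage~1. Your sketch---write a parabolic fixed point as a monotone limit of hyperbolic ones and ``pass to the limit through nested diamonds''---does not work over a non-Archimedean $\Fb$: positive sequences need not converge there, and the framing value $\xi_p(\gamma^+)$ is \emph{not} in general the limit of $\xi^h_\rho$ along any sequence approaching $\gamma^+$. So you cannot deduce mutual positivity of $\xi^h_\rho$ and the framing by a limit argument. The correct argument, carried out in \Cref{corollary: limit map extends to proximal limit map}, goes the other way: start from the \emph{framing} (which is positive by hypothesis), take points $x,y\in\Lambda_p$ on either side of a hyperbolic fixed point $\gamma^+$, note that $(\xi_p(x),\rho(\gamma)\xi_p(x),\rho(\gamma)^2\xi_p(x),\rho(\gamma)\xi_p(y),\xi_p(y))$ is positive, and apply \Cref{lem: CharPosTransWeaklyProx} to place $\rho(\gamma)^+=\xi^h_\rho(\gamma^+)$ inside the diamond $\Diam_{\rho(\gamma)\xi_p(x)}(\xi_p(x),\xi_p(y))$. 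Iterated use of \Cref{lem: AddingElemToPosTuple} then inserts all hyperbolic values into any positive tuple of framing values. The paper avoids even this by observing that \emph{every} positive boundary map is a partial section of the single object $\zeta^M_\rho$ (\Cref{prop: converse}), so compatibility of $\xi^h_\rho$ with the framing is automatic once both are known to be sections; this is what the boundary-extension formalism buys you over the direct construction.
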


The converse is immediate by restriction of the boundary map. If $\Lambda_p$ is non-empty but $\rho$ is only assumed to be $\Theta$-positive, then the map $\xi$ might only be extended to $\Sb^1 \setminus \Lambda_p$.

\subsection{Examples}
There have already been various efforts to generalize positivity beyond the case of closed surface groups and real algebraic groups. To our knowledge, \Cref{def: positive rep intro} encompasses all these generalizations. Let us review these before investigating further properties of positive representations.

\subsubsection*{Maximal representations}
In \cite{BIW03,BIW10}, Burger--Iozzi--Wienhard introduce \emph{maximal representations} of a finitely generated surface group (possibly with parabolic elements) into a real Lie group of Hermitian type $G$. They prove that such representations admit a positive equivariant boundary map into the Shilov boundary of $G$, which is a priori only measurable (in particular, only defined on a set of full measure).

\subsubsection*{Fock--Goncharov's positive framed representations}
In \cite{fock2006moduli}, Fock--Goncharov define positive representations of punctured surface groups into a real split simple Lie group $G$ as those admitting a \emph{positive framing} $\xi\colon \Lambda_p \to \Fc_\Delta$, where $\Fc_\Delta$ is the full flag variety.
Our definition of \emph{$\Theta$-positively frameable representation} is a straightforward generalization of Fock--Goncharov's positivity to all flag varieties $\Fc_\Theta$ admitting a $\Theta$-positive structure, and to all Fuchsian groups $\Gamma$ of the first kind with parabolic elements. 
This generalization was already investigated (over the reals and when $\Gamma$ is finitely generated) by Guichard--Rogozinnikov--Wienhard \cite{GRW}, where they describe the topology of conjugacy classes of $\Theta$-positively framed representations.

\subsubsection*{Hitchin representations of Fuchsian groups}
In \cite{CZZ}, the fourth author, together with Canary and Zimmer, introduced \emph{cusped Hitchin representations} of a finitely generated Fuchsian group $\Gamma$ into $\SL_d(\Rb)$. Importantly, their definition assumes the existence of a \emph{continuous}, positive, equivariant, boundary map from the limit set $\Lambda(\Gamma)$ to the variety of complete flags in $\Rb^d$. In particular, the precise geometry of the hyperbolic surface $ S=\Gamma \backslash \Hb$ constrains the behavior of the representation on boundary curves:
\begin{itemize}
    \item If $\gamma\in \Gamma$ is represented by a curve in $S$ bounding a funnel, then $\rho(\gamma)$ must be loxodromic;
    \item If $\gamma \in \Gamma$ is represented by a curve in $S$ bounding a cusp, then $\rho(\gamma)$ must be unipotent.
\end{itemize}
We immediately get that a cusped Hitchin representation is positive in the sense of \Cref{def: positive rep intro}.
Conversely, they also proved \cite[Theorem 9.2]{CZZ} that a positive representation of $\Gamma$ is a cusped Hitchin representation if and only if it is \emph{type-preserving}, which is a particular case of \Cref{thm-intro: type preserving} below. 

\subsubsection*{Hitchin representations over real closed fields}
In \cite{Flamm}, the first author studied Hitchin representations of closed surface groups into $\SL_d(\Fb)$. She defined them as those representations arising as ``limits'' of real Hitchin representations (see \Cref{section:RSC}), and characterized them as those admitting a positive boundary map defined on $\Lambda_h$.
These are thus exactly the $\Delta$-positive representations into $\SL_d(\Fb)$ in the sense of \Cref{dfn: positive representations}, where $\Delta$ is the set of all simple roots.

\subsection{Continuity of the boundary map}

An important feature of our general definition of $\Theta$-positivity is that the boundary map $\xi$ is not necessarily continuous. There are two motivations for that:

\begin{itemize}
    \item For fundamental groups of punctured surfaces, imposing the continuity of the boundary map as in \cite{CZZ} constrains the behavior of the representation and does not provide a unified framework to account for continuous transitions between, for instance, Fuchsian representations with hyperbolic and parabolic monodromy at the punctures.

    \item In the case of closed surface groups, one cannot hope to get continuous boundary maps into flag varieties over non-Archimedean ordered fields since the boundary of the closed surface group is a topological circle while the flag varieties are totally discontinuous.
\end{itemize}

In \cite{GLW}, the authors proceed to prove that, for a closed surface group, the positive boundary map extends continuously to the whole circle. We will give another proof here. More precisely, we will prove the following for real positive representations:

\begin{ThmIntro} \label{thm-intro: Continuity boundary map R}
    Let $\rho \colon \Gamma \to G$ be a $\Theta$-positive representation of a Fuchsian group of the first kind. Then there exists a unique left (resp.\ right) continuous positive $\rho$-equivariant boundary map $\xi_\rho^l$ (resp.\ $\xi_\rho^r$) from $\Sb^1$ to $\Fc_\Theta$.
    Moreover, $\xi_\rho^l$ and $\xi_\rho^r$ coincide at conical limit points of $\Gamma$. In particular, if $\Gamma$ is cocompact, the positive boundary map $\xi$ is unique and continuous, and the representation $\rho$ is $\Theta$-Anosov.
\end{ThmIntro}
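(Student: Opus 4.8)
The plan is to construct the left- and right-continuous boundary maps separately using the fact that $\Rb$ is (Cantor complete and) Archimedean, so that Theorem C already gives \emph{some} $\rho$-equivariant positive boundary map $\xi \colon \Sb^1 \to \Fc_{\Theta,\Rb}$ once we know $\rho$ is $\Theta$-positively frameable. Over $\Rb$, weak $\Theta$-proximality is equivalent to strong $\Theta$-proximality, so Theorem A tells us that each hyperbolic $\rho(\gamma)$ is genuinely $\Theta$-proximal and the canonical map $\xi^h_\rho \colon \Lambda_h \to \Fc_{\Theta,\Rb}$ is positive; in particular $\rho$ is automatically $\Theta$-positively frameable (a parabolic fixed point is a limit of hyperbolic fixed points, and the framing at a parabolic point $\gamma^+$ is forced to be the unique $\rho(\gamma)$-fixed flag that is compatible with the positive structure, which exists because over $\Rb$ positivity is a closed condition). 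So Theorem C applies and we start from a positive boundary map $\xi$.

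Next I would define, for each $x \in \Sb^1$, the left limit $\xi^l_\rho(x) \coloneqq \lim_{y \to x^-} \xi(y)$ and the right limit $\xi^r_\rho(x) \coloneqq \lim_{y \to x^+} \xi(y)$, where the limits are taken along points $y$ in the domain of $\xi$ (or along $\Lambda_h$, which is dense). The key point is that these limits \emph{exist}: the positivity of $\xi$ means that as $y$ increases toward $x$, the flags $\xi(y)$ sweep out a monotone family inside a "diamond" (the generalized interval machinery of Section on Diamonds), and over $\Rb$ a bounded monotone family in a diamond converges because the diamond is compact and the monotonicity gives a well-defined supremum/infimum in each coordinate of the totally-positive parametrization. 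Equivariance of $\xi^l_\rho$ and $\xi^r_\rho$ is inherited from that of $\xi$ together with the fact that each $\rho(\gamma)$ acts on $\Fc_{\Theta,\Rb}$ by a homeomorphism that is either orientation-preserving or -reversing on the relevant diamonds; one checks orientation-preservation using that $\Gamma$ acts on $\Sb^1$ preserving orientation and $\xi$ is positive. Positivity of $\xi^l_\rho$ and $\xi^r_\rho$ follows because positivity of tuples is a \emph{closed} condition over $\Rb$ (it is cut out by non-strict inequalities after taking closures), so a limit of positive tuples of distinct flags is positive provided the limiting flags remain pairwise transverse — and transversality of $\xi^l_\rho(x_1), \dots, \xi^l_\rho(x_n)$ for distinct $x_i$ is guaranteed because one can interpolate extra points $y_i$ between the $x_i$ and use that $\xi(y_i)$ separates $\xi^l_\rho(x_i)$ from $\xi^l_\rho(x_{i+1})$ inside a diamond. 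Uniqueness of $\xi^l_\rho$ (resp. $\xi^r_\rho$) among left- (resp. right-) continuous positive equivariant maps is immediate since any such map must agree with $\xi^h_\rho$ on the dense set $\Lambda_h$.

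For the statement that $\xi^l_\rho$ and $\xi^r_\rho$ agree at conical limit points, I would use the dynamical characterization of conical points: $x$ is conical iff there is a sequence $\gamma_n \in \Gamma$ with $\gamma_n \to x$ conically, i.e. $\gamma_n^{-1}$ contracts a neighborhood of $x$ uniformly while $\gamma_n$ stays in a bounded part of $\Sb^1$ away from its own fixed points. Applying $\rho(\gamma_n^{-1})$ to a small interval $(x-\epsilon, x+\epsilon)$ and using that $\rho(\gamma_n)$ is $\Theta$-proximal with attracting/repelling data controlled along the conical sequence, the images $\xi(x-\epsilon)$ and $\xi(x+\epsilon)$ get pushed to flags that converge to the \emph{same} flag (the attracting flag of the limiting dynamics), forcing $\xi^l_\rho(x) = \xi^r_\rho(x)$. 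The main obstacle, and the step requiring the most care, is exactly this convergence-of-monotone-families-in-a-diamond argument: one must set up the diamonds and the total order on flags inside them precisely enough that "bounded monotone sequences converge" is a genuine consequence of completeness of $\Rb$, and one must verify that the one-sided limits land back in the flag variety (not merely in some compactification) and remain transverse to neighboring flags. Finally, when $\Gamma$ is cocompact every point of $\Sb^1$ is conical, so $\xi^l_\rho = \xi^r_\rho$ everywhere, giving a single continuous positive equivariant $\xi$; continuity plus positivity (hence transversality and the contraction dynamics along conical sequences established above) is precisely the definition of $\Theta$-Anosov, so $\rho$ is $\Theta$-Anosov, and uniqueness follows as before.
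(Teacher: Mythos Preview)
Your overall architecture matches the paper's: define $\xi_\rho^l(x)$ and $\xi_\rho^r(x)$ as one-sided limits along a dense positive orbit, use completeness of $\Rb$ to show these limits exist (this is exactly the paper's \Cref{proposition: least upper bounded property for positive sequence}), and then treat conical points separately. However, two steps in your write-up do not go through as written.

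First, invoking Theorem~C is circular. You argue that $\rho$ is $\Theta$-positively frameable because ``the framing at a parabolic point $\gamma^+$ is forced to be the unique $\rho(\gamma)$-fixed flag that is compatible with the positive structure, which exists because over $\Rb$ positivity is a closed condition.'' But the existence of such a compatible fixed flag is precisely what needs to be proven; in the paper, frameability over $\Rb$ is a \emph{consequence} of constructing $\xi_\rho^l$ (see \Cref{coro: positive => frameable over R}), not an input. The fix is simple: bypass Theorem~C entirely and define $\xi_\rho^l(x)$ directly as the limit of $\xi_\rho^h(x_n)$ for $x_n \in \Lambda_h$ approaching $x$ from the left. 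Everything you need (existence of the limit, independence of the sequence, positivity) works the same way. The paper then checks positivity of $\xi_\rho^l$ not via the semi-positive-plus-transverse route you sketch, but by showing $\xi_\rho^l(x)\in\bdext(x)$ so that $\xi_\rho^l$ is a section of the maximal boundary extension; your route via \Cref{prop: transverse + semi positive implies positive} would also work.

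Second, and more seriously, your conical-point argument misses the key geometric input. You write that ``$\gamma_n^{-1}$ contracts a neighborhood of $x$ uniformly'' and then apply $\rho(\gamma_n^{-1})$ to $\xi(x\pm\epsilon)$; but pushing fixed flags by $\rho(\gamma_n^{-1})$ tells you about the \emph{repelling} flag of $(\rho(\gamma_n))$, not about $\xi_\rho^l(x)$ or $\xi_\rho^r(x)$. What actually distinguishes conical points from parabolic ones is the property in \Cref{lem: Monotonous limit points}(2): there is a \emph{single} sequence $(\gamma_n)$ and two disjoint intervals $U,V$ such that $\gamma_n\cdot U \to x$ from the right and $\gamma_n\cdot V \to x$ from the left. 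The paper then argues that $\rho(\gamma_n)$ sends open diamonds in $\xi(U)$ and $\xi(V)$ to flags converging to $\xi_\rho^r(x)$ and $\xi_\rho^l(x)$ respectively; since $(\rho(\gamma_n))$ is a $\Theta$-attracting sequence (not ``$\Theta$-proximal'', which is a property of a single element), both limits coincide with its unique attracting flag. At a parabolic fixed point this fails precisely because the left approach comes from $\gamma^n$ and the right from $\gamma^{-n}$, which have different attracting flags. Without isolating this two-sided approach property, your argument does not explain why parabolic and conical points behave differently.
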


\begin{remark}
      For maximal representations into Hermitian Lie groups, the left and right continuous boundary maps were already constructed in the work of Burger--Iozzi--Wienhard \cite{BIW03,BIW10}. There, they claim that the continuity of the boundary map for closed surface groups \emph{follows} from the Anosov property proven in their other paper with Labourie \cite{BILW}.
\end{remark}

More generally, if $\Gamma$ is a finitely generated Fuchsian group (not necessarily of the first kind), recall that the hyperbolic orbifold $\Gamma\backslash \Hb$ is the union of a compact core with finitely many cusps and funnels. We call a representation $\rho\colon \Gamma \to G_{\Fb}$ \emph{type-preserving} if the image of every generator of the fundamental group of a cusp is unipotent and the image of every generator of the fundamental group of a funnel is $\Theta$-proximal. The following theorem holds for real representations of finitely generated Fuchsian groups.

\begin{ThmIntro}
\label{thm-intro: type preserving}
    Let $\Gamma$ be a finitely generated Fuchsian group and $\Lambda(\Gamma)\subset \Sb^1$ its limit set. A representation $\rho \colon \Gamma \to G$ is $\Theta$-positive and type-preserving if and only if there exists a continuous $\rho$-equivariant positive map $\xi \colon \Lambda(\Gamma) \to \Fc_{\Theta}$.
\end{ThmIntro}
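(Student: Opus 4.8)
The plan is to deduce \Cref{thm-intro: type preserving} from the extension machinery already available, by reducing to the case of a Fuchsian group of the first kind and then matching the two continuous boundary maps $\xi_\rho^l, \xi_\rho^r$ of \Cref{thm-intro: Continuity boundary map R} along the limit set. First I would treat the ``only if'' direction: suppose $\rho$ is $\Theta$-positive and type-preserving. By \Cref{propo: equivalence positivity first and second kind Fuchsian groups} we may pass to a semiconjugate Fuchsian group $\Gamma'$ of the first kind; here one must check that type-preservation is compatible with this semiconjugacy, which it is, since the semiconjugacy matches cusps to cusps and funnels to funnels (a boundary curve of $\Gamma\backslash\Hb$ bounding a funnel corresponds, after filling in the funnel, to a hyperbolic element of $\Gamma'$ whose axis is a boundary geodesic, while cusps are preserved on the nose). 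Since $\rho$ is $\Theta$-positive and type-preserving, the images of parabolic generators are unipotent, hence weakly $\Theta$-proximal with a well-defined fixed flag (its weakly attracting flag), so $\rho$ is automatically $\Theta$-positively frameable: the map $\xi^h_\rho$ of \Cref{thm-intro: weak proximality} extends over $\Lambda_p$ by sending each parabolic fixed point to the unique flag fixed by the corresponding unipotent. \Cref{thm-intro: extension boundary map} (applied over $\Fb = \Rb$, which is Cantor complete) then produces a $\rho$-equivariant positive map $\xi\colon \Sb^1 \to \Fc_{\Theta,\Rb}$, and \Cref{thm-intro: Continuity boundary map R} shows $\xi_\rho^l$ and $\xi_\rho^r$ agree at conical limit points. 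The heart of this direction is to show $\xi_\rho^l = \xi_\rho^r$ on the entire limit set $\Lambda(\Gamma)$, not just at conical points: I would argue that the only non-conical limit points are the parabolic fixed points (since $\Gamma$ is finitely generated, $\Gamma\backslash\Hb$ has finite type, so $\Lambda(\Gamma)$ consists of conical limit points together with the countably many parabolic fixed points, which are bounded parabolic), and at a parabolic fixed point $p = \gamma^+$ the left and right limits of the boundary map are both flags fixed by the unipotent $\rho(\gamma)$, hence equal (a unipotent element of $G_{\Rb}$ has a unique fixed point in $\Fc_{\Theta,\Rb}$ that it attracts to, or more carefully: the weakly attracting flag is unique among flags transverse to a fixed complementary flag, and one-sided limits of a positive map are hyper-transverse to nearby images). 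Restricting the resulting continuous map to $\Lambda(\Gamma)$ yields the desired $\xi$.

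For the converse, suppose there is a continuous $\rho$-equivariant positive map $\xi\colon\Lambda(\Gamma)\to\Fc_{\Theta,\Rb}$. Positivity of $\xi$ on cyclically ordered tuples inside $D = \Lambda(\Gamma)\supseteq \Lambda_h$ immediately gives that $\rho$ is $\Theta$-positive in the sense of \Cref{def: positive rep intro}, so by \Cref{thm-intro: weak proximality} every $\rho(\gamma)$ with $\gamma$ hyperbolic is weakly $\Theta$-proximal. To upgrade ``weakly'' to ``$\Theta$-proximal'' for funnel generators and to get unipotence for cusp generators, I would use the continuity of $\xi$ together with equivariance: for $\gamma$ hyperbolic, $\rho(\gamma)$ fixes $\xi(\gamma^+)$ and $\xi(\gamma^-)$, which are transverse; continuity of $\xi$ at $\gamma^+$ forces $\rho(\gamma)$ to contract a neighbourhood of $\xi(\gamma^+)$ in $\Fc_{\Theta,\Rb}$ onto $\xi(\gamma^+)$ — indeed $\gamma^n x \to \gamma^+$ for $x\ne\gamma^-$ and continuity gives $\rho(\gamma)^n\xi(x)\to\xi(\gamma^+)$ — and over the Archimedean field $\Rb$ this is exactly $\Theta$-proximality (weak $\Theta$-proximality plus the big-ratio condition, which over $\Rb$ is vacuous once the eigenvalue moduli are strictly separated; one must rule out equal moduli, which follows because a non-proximal diagonalizable element does not contract a full neighbourhood). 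For a cusp generator $\gamma$ (parabolic in $\Gamma$), $\gamma^+=\gamma^-=p$ is the unique fixed point, and continuity of $\xi$ at $p$ combined with $\gamma^n x \to p$ for all $x\ne p$ forces $\rho(\gamma)^n$ to converge to the constant map $\xi(p)$ on $\xi(\Lambda(\Gamma))$; since $\Lambda(\Gamma)$ is perfect, $\xi(\Lambda(\Gamma))$ contains a hyper-transverse pair, and an element of $G_{\Rb}$ whose powers converge to a constant on a Zariski-dense-enough set must be unipotent (its semisimple part would have a bounded-modulus eigenvalue preventing such convergence unless all eigenvalue moduli are equal, i.e. the element is, up to a compact factor, unipotent; the compact part is then killed because $\rho(\gamma)^n$ genuinely converges rather than merely sub-converges). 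This gives type-preservation.

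The main obstacle I expect is the converse direction's passage from weak proximality to genuine proximality/unipotence using only continuity of $\xi$ — that is, extracting dynamical contraction on a neighbourhood in the flag variety from the hypothesis that $\xi$ is continuous on $\Lambda(\Gamma)$, when $\Lambda(\Gamma)$ may be a Cantor set rather than all of $\Sb^1$. One has to be careful that $\xi(\Lambda(\Gamma))$ is rich enough (contains positive triples and accumulates onto $\xi(\gamma^\pm)$ from within a hyper-transverse family) to pin down the Jordan type of $\rho(\gamma)$; the right tool is the positivity of $\xi$, which guarantees that images of points near $\gamma^+$ in $\Lambda(\Gamma)$ lie in a shrinking family of ``diamonds'' around $\xi(\gamma^+)$, and then invoke the characterization of $\Theta$-proximal and unipotent elements in terms of their action on diamonds from \Cref{sec: proximality}. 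A secondary technical point, on the forward direction, is verifying that the semiconjugacy $\Gamma\to\Gamma'$ respects the funnel/cusp dichotomy and that ``type-preserving'' descends — I would handle this by noting that funnel curves become boundary geodesics of the convex core of $\Gamma'\backslash\Hb$ and remain hyperbolic, while parabolic cusp subgroups are unaffected, so the notion of type-preservation is literally the same for $\rho$ viewed on $\Gamma$ or on $\Gamma'$.
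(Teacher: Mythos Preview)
Your forward direction contains a genuine error in the semiconjugacy step. You assert that under the semiconjugacy $\iota:\Gamma\to\Gamma'$ to a first-kind group, ``funnel curves become boundary geodesics of the convex core of $\Gamma'\backslash\Hb$ and remain hyperbolic''. This is false: $\Gamma'$ is of the first kind, so $\Gamma'\backslash\Hb$ has \emph{no} funnels; every peripheral hyperbolic element of $\Gamma$ is sent by $\iota$ to a \emph{parabolic} element of $\Gamma'$ (this is exactly how the semiconjugacy collapses the gaps in $\Lambda(\Gamma)$). Consequently, type-preservation does not transfer: if $\gamma\in\Gamma$ generates a funnel, $\rho(\gamma)$ is $\Theta$-proximal, but $\iota(\gamma)\in\Gamma'$ is parabolic, so $\rho\circ\iota^{-1}$ is not type-preserving. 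In particular, at the new parabolic fixed points of $\Gamma'$ coming from funnels, the left and right boundary maps $\xi^l_{\rho\circ\iota^{-1}}$ and $\xi^r_{\rho\circ\iota^{-1}}$ will take the values $\rho(\gamma)^+\neq\rho(\gamma)^-$ and therefore disagree, so your plan to produce a single continuous map on $\Sb^1$ and restrict cannot succeed as written. (There is also a terminological slip: unipotent elements are \emph{not} weakly $\Theta$-proximal, since their Jordan projection is trivial; what you want is that they are $\Theta$-positively translating with $g^+=g^-$.)

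The paper avoids this by working directly with $\Gamma$ (possibly of the second kind): it defines $\xi_\rho^l$ on the set $\Lambda_l\subset\Lambda(\Gamma)$ of left limit points and $\xi_\rho^r$ on $\Lambda_r$, and shows these agree on $\Lambda_l\cap\Lambda_r$. Funnel endpoints lie in $\Lambda_l\setminus\Lambda_r$ or $\Lambda_r\setminus\Lambda_l$, so there is no conflict there. For points of $\Lambda_l\cap\Lambda_r$ that are parabolic, the paper uses that weak unipotence forces $\rho(\gamma)^+=\rho(\gamma)^-$ (citing \cite{ZZ1}); for conical points it passes to $\Gamma'$ \emph{only for that step} and invokes \Cref{thm: continuity at conical limit points}. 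The glued map is then shown to be continuous and (via semi-positivity plus a transversality argument using \Cref{lem: semi-positive + a bit transverse implies transverse}) positive. For the converse, rather than your direct dynamical argument, the paper passes through the relatively $\Theta$-Anosov property and cites \cite{ZZ1} for the implication relatively Anosov $\Rightarrow$ type-preserving; your sketch that ``powers converging to a constant forces unipotence'' is precisely the nontrivial content supplied by that reference, and is not as elementary as you suggest.
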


One of the motivations of this paper was to understand precisely why the continuity of the boundary map would be true for closed surface groups and real representations, but fail for surface groups with punctures and for representations over other ordered fields. In the course of the proof we will see precisely where the hypotheses come in: 
 \begin{itemize}
     \item A Fuchsian group is cocompact if and only if every point $x\in \Sb^1$ is a conical limit point, and the crucial property that we will use is the existence of $a\neq b\in \Sb^1$ and $(\gamma_n)\in \Gamma^{\N}$ such that $\gamma_n \cdot a$ and $\gamma_n \cdot b$ respectively converge to $x$ \emph{from the right} and \emph{from the left}.
     \item Ultimately, the existence of left and right continuous extensions will rely on the fact that a bounded increasing sequence converges, a property which is well-known to characterize $\Rb$ among ordered fields.
 \end{itemize}

\subsection{Positivity and extended geometric finiteness}
Recall that a \emph{relatively hyperbolic pair} is the data of a finitely generated group $\Gamma$ and a collection $\mathcal P$ of finitely many conjugacy classes of subgroups that we will call \emph{cusp groups}, such that there exists a proper isometric action of $\Gamma$ on a Gromov hyperbolic space $X$ for which each $P\in \mathcal P$ fixes a point $x_P \in \partial_\infty X$ and such that $\Gamma$ acts cocompactly on the complement of a $\Gamma$-invariant collection of horoballs centered at the $x_P$. The prototypical example (which is the one we are interested in here) is when $\Gamma$ is a Fuchsian group with finite covolume, $\mathcal P$ is the collection of conjugacy classes of cyclic subgroups generated by unipotent elements, and $X= \Hb$. 

Recently, several definitions have been proposed to generalize the notion of Anosov representation to relatively hyperbolic pairs. While the notion of \emph{relatively Anosov representation}, introduced independently by Kapovich--Leeb \cite{KL} and Zhu \cite{Zhu}, forces the image of the cusp groups to be quasi-unipotent, Weisman introduced in \cite{Weisman} the broader notion of \emph{extended geometrically finite (EGF) representation}, which is less constraining on the image of cusp groups and allows to study transitions between representations such that cusp groups have unipotent image and representations for which this image is diagonalisable.

However, perhaps surprisingly, Weisman did not prove that Burger--Iozzi--Wien-hard's maximal representations or Fock--Goncharov's positive representations of fundamental groups of hyperbolic surfaces with cusps are extended geometrically finite. Here we remedy this.
Weisman's EGF condition asks for the existence of a $\rho$-equivariant, continuous, transverse, surjective \emph{$\Theta$-boundary extension} $\zeta\from \Vc \subset \Fc_\Theta \to \partial_\infty X$, together with a ``dynamics preserving'' condition.
Informally, this condition requires that the dynamical behavior of $\rho(\Gamma)$ on $\Vc$ extends to some open neighborhood in $\Fc_\Theta$ (see \Cref{s: Real case}). Here we prove that every positive representation $\rho$ into $G_{\Fb}$ admits a \emph{positive $\Theta$-boundary extension}; namely, a pair of $(\Vc, \zeta)$ where $\Vc\subset \Fc_{\Theta,\Fb}$ is $\rho$-invariant, and $\zeta\colon \Vc\to \Sb^1$ is a $\rho$-equivariant continuous map under which every preimage of a cyclically ordered tuple is positive (see \Cref{def: Boundary Extension}).

\begin{ThmIntro} \label{thm: Boundary Extension Intro}
Let $\Gamma$ be a Fuchsian group of the first kind and $\rho\colon \Gamma \to G_{\Fb}$ a representation. If $\rho$ admits a positive $\Theta$-boundary extension, then $\rho$ is $\Theta$-positive.

Conversely, if $\rho$ is $\Theta$-positive, then there exists a unique positive $\Theta$-boundary extension $(\bdext, \zeta_\rho^{\mathrm{M}})$ which is \emph{maximal} in the following sense:
\begin{itemize}
    \item For every positive $\Theta$-boundary extension $(\Vc, \zeta)$, we have \[\Vc \subset \bdext \quad\text{and}\quad\zeta= {\zeta_\rho^{\mathrm{M}}}\vert_{ \Vc}~;\]
    \item For every positive boundary map $\xi \from D \subset \Sb^1 \to \Fc_{\Theta,\Fb}$, we have $\xi(D)\subset \bdext $ and $\zeta_\rho^{\mathrm{M}} \circ \xi = \Id\vert_{ D}$.
\end{itemize}
\end{ThmIntro}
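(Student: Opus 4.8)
The plan is to build the maximal positive boundary extension $(\bdext, \zeta_\rho^M)$ by hand from the collection of all positive boundary maps, and then verify maximality directly. First I would handle the easy direction: if $(\mathcal V, \zeta)$ is a positive boundary extension, then any local section of $\zeta$ over a small arc of $\Sb^1$, or more robustly the boundary map obtained on $\Lambda_h$ by sending $\gamma^+$ to the unique $\rho(\gamma)$-fixed point of $\mathcal V$ lying over $\gamma^+$ (using continuity of $\zeta$ and the north-south dynamics of $\gamma$ on $\Sb^1$), gives a $\rho$-equivariant positive map from a $\Gamma$-invariant subset of $\Lambda(\Gamma) = \Sb^1$ into $\Fc_{\Theta,\Fb}$; hence $\rho$ is $\Theta$-positive by definition. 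One should check that the positivity of the section follows from the ``positive on preimages of cyclically ordered tuples'' hypothesis together with transversality — this is where the definition of positive boundary extension in \Cref{def: Boundary Extension} is used.

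For the converse, the key object is $\bdext := \bigcup_\xi \xi(D)$, the union over all positive boundary maps $\xi \from D \subset \Sb^1 \to \Fc_{\Theta,\Fb}$ of their images; by \Cref{thm-intro: weak proximality} at least the map $\xi^h_\rho$ on $\Lambda_h$ exists, so this is non-empty, and it is visibly $\rho$-invariant. The crucial point is to define $\zeta_\rho^M \from \bdext \to \Sb^1$ consistently: given $F \in \xi(D)$, set $\zeta_\rho^M(F) = $ the point $x \in D$ with $\xi(x) = F$. Well-definedness — that two positive boundary maps cannot send distinct points of $\Sb^1$ to the same flag, and that their union is again ``positive'' in the appropriate sense — should follow from the transversality/purity properties of positive tuples established earlier (a positive map is injective, and the union of the graphs of all positive maps is still the graph of a positive partial map). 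Then $\rho$-equivariance of $\zeta_\rho^M$ is immediate, and the two bulleted maximality properties hold essentially by construction: the second is the definition of $\bdext$, and the first requires showing that a positive boundary \emph{extension} $(\mathcal V,\zeta)$ has $\mathcal V \subseteq \bdext$, which we get by producing, for each $F \in \mathcal V$, a positive boundary map through $F$ — again via a local section of $\zeta$ near $\zeta(F)$, extended $\Gamma$-equivariantly, or by observing $\zeta(F)$ must be approximated conically and using the dynamics.

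The main obstacle I expect is \textbf{continuity} of $\zeta_\rho^M$, since the boundary maps $\xi$ themselves need not be continuous. The strategy here is that $\zeta_\rho^M$ is a map \emph{out of} the flag variety (with its Hausdorff, or semi-algebraic, topology) \emph{into} the circle, so the relevant fact is that $\Fc_{\Theta,\Fb}$ carries enough topology to separate the ``diamonds'' or ``generalized intervals'' of \Cref{s: Diamonds}. Concretely: for $F \in \bdext$ with $\zeta_\rho^M(F) = x$ and a sub-arc $(a,b) \ni x$ of $\Sb^1$ with endpoints in $\Lambda_h$, the set $(\zeta_\rho^M)^{-1}((a,b))$ should be exactly the intersection of $\bdext$ with an open diamond determined by $\xi^h_\rho(a)$ and $\xi^h_\rho(b)$ — positivity forces every flag over a point of $(a,b)$ into that diamond, and conversely. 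Since such arcs form a neighborhood basis of $x$ in $\Sb^1$ (here we use that $\Gamma$ is of the first kind, so $\Lambda_h$ is dense in $\Sb^1$), this gives continuity of $\zeta_\rho^M$. One must be slightly careful at points $x$ fixed by a parabolic, where one cannot straddle $x$ by hyperbolic fixed points from both sides arbitrarily closely in the naive way; there I would instead use arcs with one endpoint hyperbolic and argue with half-open diamonds, or invoke density of $\Lambda_h \setminus \{x\}$ on each side. Finally, uniqueness of $(\bdext,\zeta_\rho^M)$ follows formally from the two maximality properties: any maximal positive boundary extension must contain, and be contained in, $\bdext$, and the compatibility of the $\zeta$-maps pins down $\zeta_\rho^M$.
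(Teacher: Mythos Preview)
Your construction of $\bdext$ as the union $\bigcup_\xi \xi(D)$ over all positive boundary maps has two genuine gaps.

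First, it need not satisfy the first maximality property. You propose to show $\Vc \subset \bdext$ for any positive boundary extension $(\Vc,\zeta)$ by producing, for each $F \in \Vc$, a positive boundary map through $F$. But if $x = \zeta(F)$ is a parabolic fixed point with generator $\gamma \in \Gamma$, then any $\rho$-equivariant map defined at $x$ must send $x$ to a $\rho(\gamma)$-fixed flag. The fiber $\zeta^{-1}(x)$ is $\rho(\gamma)$-invariant \emph{as a set}, but individual flags in it need not be fixed by $\rho(\gamma)$; in that case no positive boundary map passes through $F$, and your union misses it. This actually occurs: in the example of \Cref{subsection: ExNonFramablePosRepr} over a Cantor complete field, $\rho(c)$ is an infinitesimal rotation with no fixed flag, yet $\bdext(c^+)$ is non-empty (by \Cref{proposition: cantor complete field admit surjective boundary extension}), so the maximal boundary extension has flags over $c^+$ that are not in the image of any positive boundary map. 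Second, and relatedly, your union has no evident reason to be closed in $\Fc_{\Theta,\Fb}$, which is required by \Cref{def: Boundary Extension}.

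The paper avoids both issues by defining $\bdext(x)$ for each $x \in \Sb^1$ directly as the nested intersection of diamonds $\bigcap_n \Diam^\opp_{\xi(z)}(\xi(x_n),\xi(x_n'))$, where $x_n \to x$ from the right and $x_n' \to x$ from the left along some fixed positive map $\xi$. This set captures every flag ``squeezed between'' the images of $\xi$ on either side of $x$, regardless of whether it is fixed by $\rho(\Stab_\Gamma(x))$, so the first maximality property goes through. Closedness of $\bdext = \bigsqcup_x \bdext(x)$ is then obtained by rewriting it as an intersection, over all cyclically ordered tuples $(y_1,\dots,y_m)$ in $D$, of finite unions of closed diamonds (see the ``Closedness of $\bdext$'' step in the proof of \Cref{prop: converse}); this is the subtle part.

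Your sketch of the forward direction also needs repair: the paper explicitly warns (just before \Cref{prop: Boundary extension implies positive}) that $\zeta$ need not admit a partial section, and the weakly attracting flag $\rho(\gamma)^+$ need not lie in $\Vc$, so your ``unique $\rho(\gamma)$-fixed point of $\Vc$ over $\gamma^+$'' may not exist. The paper instead uses \Cref{lem: CharPosTransWeaklyProx} to show directly from the positivity of $(\Vc,\zeta)$ that each $\rho(\gamma)$ is weakly $\Theta$-proximal with $\rho(\gamma)^+$ lying in the correct diamond, and then verifies that the proximal limit map $\xi^h_\rho$ on $\Lambda_h$ is positive --- without ever assuming $\xi^h_\rho(\Lambda_h) \subset \Vc$.
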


Note that our definition of $\Theta$-boundary extension does not make any assumption on  the image of $\zeta$ in general. However, we will see in \Cref{sss:Surjectivity Cantor Complete} that $\zeta^{\mathrm M}_\rho$ is surjective onto $\Sb^1$ when the field $\Fb$ is Cantor complete.

As a corollary, for real representations, we obtain: 
\begin{ThmIntro} \label{thm-intro: Theta-positive EGF}
    Let $\Gamma$ be a finitely generated Fuchsian group of the first kind and $\rho \from \Gamma \to G$ be a $\Theta$-positive representation. Then $\rho$ is EGF in the sense of Weisman (relatively to parabolic subgroups) and $\Theta$-divergent.
\end{ThmIntro}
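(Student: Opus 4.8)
The plan is to deduce \Cref{thm-intro: Theta-positive EGF} by combining \Cref{thm: Boundary Extension Intro} with the extension and continuity results of \Cref{thm-intro: extension boundary map} and \Cref{thm-intro: Continuity boundary map R}, then checking Weisman's axioms one at a time. Since $\Gamma$ is finitely generated of the first kind with finite covolume, the natural relatively hyperbolic structure is $(\Gamma, \mathcal P)$ where $\mathcal P$ consists of the conjugacy classes of maximal parabolic subgroups and $X = \Hb$, with horoballs centered at $\Lambda_p$; the Bowditch boundary $\partial_\infty X$ is then $\Sb^1$. The starting point is that a $\Theta$-positive representation into $G_{\Rb}$ over the Archimedean field $\Rb$ is automatically $\Theta$-positively frameable: weak and strong $\Theta$-proximality coincide over $\Rb$, and more to the point, \Cref{thm-intro: Continuity boundary map R} already produces a continuous positive $\rho$-equivariant boundary map on $\Sb^1$ (using that $\Rb$ is Cantor complete as a degenerate case of \Cref{thm-intro: extension boundary map}), so in particular $\xi_\rho$ is defined on $\Lambda_p$. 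Feeding this into \Cref{thm: Boundary Extension Intro} gives the maximal positive boundary extension $(\bdext, \zeta_\rho^M)$ with $\zeta_\rho^M \circ \xi_\rho = \Id$; and by the remark about Cantor completeness in \Cref{sss:Surjectivity Cantor Complete}, $\zeta_\rho^M$ is surjective onto $\Sb^1$.

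Next I would verify that $(\bdext, \zeta_\rho^M)$ satisfies Weisman's definition of a boundary extension for an EGF representation. This has three ingredients. First, transversality: I need to know that $\bdext$ is an open subset of $\Fc_{\Theta,\Rb}$ and that $\zeta_\rho^M$ is continuous — continuity is part of the definition of a positive boundary extension, and openness of $\bdext$ over $\Rb$ should follow from the explicit description of diamonds in \Cref{s: Diamonds} together with the fact that over $\Rb$ a countable intersection of open diamonds with nonempty interior carves out an open set; one should also check the transversality/averaging condition Weisman requires, which amounts to the statement that points of $\bdext$ over the same $\zeta_\rho^M$-fiber form antichains and distinct fibers are mutually transverse, a consequence of positivity of the map under which preimages of cyclically ordered tuples are positive. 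Second, the \emph{dynamics-preserving} condition: for each cusp group $P = \Stab_\Gamma(p)$ with $p \in \Lambda_p$, and each sequence $\gamma_n \in \Gamma$ with $\gamma_n \to p$ and $\gamma_n^{-1} \to q$ conically, I must show $\rho(\gamma_n)$ acts on a neighborhood of $\zeta_\rho^{M,-1}(q)$ in $\Fc_{\Theta,\Rb}$ with source-sink dynamics limiting onto $\zeta_\rho^{M,-1}(p)$; here I would invoke \Cref{thm-intro: weak proximality} (weak $\Theta$-proximality of $\rho(\gamma)$ for hyperbolic $\gamma$, hence strong over $\Rb$) to handle non-parabolic $\gamma_n$, and for sequences approaching a parabolic fixed point use that the left and right continuous boundary maps agree at conical limit points (\Cref{thm-intro: Continuity boundary map R}) to control the attracting behavior, together with the maximality of $\bdext$ to ensure the neighborhood is genuinely open. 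Third, I must exhibit the cusp subgroups acting properly discontinuously and cocompactly on the relevant pieces of $\bdext \setminus \zeta_\rho^{M,-1}(p)$, which again follows from positivity (positive tuples stay uniformly transverse) and the structure of $\bdext$ near cusp fibers.

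Finally, $\Theta$-divergence of $\rho$ should be extracted from the existence of the boundary extension together with properness: since $\zeta_\rho^M$ is continuous and $\Gamma$-equivariant from an open transverse subset of the flag variety onto $\Sb^1$, and since the $\Gamma$-action on $\Sb^1$ is a convergence action with conical and bounded-parabolic limit points, the contraction dynamics of $\rho(\gamma_n)$ on $\Fc_{\Theta,\Rb}$ forces the singular values (or the relevant $\Theta$-distance to the walls) of $\rho(\gamma_n)$ to diverge; concretely one argues that if $\rho$ failed to be $\Theta$-divergent along some sequence, one could extract a limit contradicting the source-sink behavior guaranteed by the dynamics-preserving condition, or equivalently contradicting the injectivity of $\zeta_\rho^M$ on $\xi_\rho(\Sb^1)$. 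I expect the main obstacle to be the dynamics-preserving condition at parabolic fixed points: unlike the hyperbolic case, where strong $\Theta$-proximality over $\Rb$ hands us source-sink dynamics directly, at a cusp one must show that the ``diamond'' neighborhoods shrinking around $\xi_\rho(p)$ are genuinely open in $\Fc_{\Theta,\Rb}$ and swept out under the $\rho(P)$-action, which requires carefully combining the maximality of the boundary extension with the continuity of $\xi_\rho^l$ and $\xi_\rho^r$ and their coincidence at conical points — precisely the place where the Archimedean hypothesis is essential and where the non-Archimedean theory developed earlier in the paper fails to give an open neighborhood.
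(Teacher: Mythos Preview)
Your overall strategy has two genuine structural problems.

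First, you assert that $\bdext$ is open in $\Fc_{\Theta,\Rb}$ and justify this by saying ``a countable intersection of open diamonds with nonempty interior carves out an open set.'' This is false in general and false here: by construction (see \Cref{def: Boundary Extension} and the proof of \Cref{prop: converse}) the set $\bdext$ is \emph{closed}, not open, and each fiber $\bdext(x)$ is a nested intersection of diamonds which is typically a single point (at conical limit points, \Cref{thm: continuity at conical limit points}) or the closure of a diamond (at parabolic points). Weisman's definition (\Cref{def: EGF}) does not require $\Vc$ to be open; the openness appears in the auxiliary family $(C_x)_{x\in\Sb^1}$, which you never construct.

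Second, and more fundamentally, your ordering is backwards. You propose to verify the dynamics-preserving condition directly and then extract $\Theta$-divergence from it, but your sketch for the dynamics-preserving part is vague precisely where the work lies, and your divergence argument is essentially ``if divergence failed we would contradict the dynamics we haven't yet proven.'' The paper proceeds in the opposite order: it proves $\Theta$-divergence \emph{first} and directly (\Cref{thm: positive rep are divergent}), using only \Cref{lem: Monotonous limit points} to find an interval $[a,b]$ with $\gamma_n\cdot[a,b]\to x$ monotonically, so that the diamond $\Diam_{\xi_\rho^r(c)}(\xi_\rho^r(a),\xi_\rho^r(b))$ is an open set on which $\rho(\gamma_n)$ converges to a point, whence \Cref{lemma: divergence in flag manifold} gives $\Theta$-attraction. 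Once divergence is established, the paper invokes \Cref{lemma: equivalences EGF and divergent}, which reduces EGF for divergent representations to the single condition $\Lc_\rho\subset\Vc$; this inclusion is then checked by showing $\Lc_\rho=\xi_\rho^l(\Sb^1)\cup\xi_\rho^r(\Sb^1)$ (\Cref{theorem: extended positive representations are extended geometrically finite}). This bypasses entirely the direct verification of the dynamics-preserving condition at parabolic points that you flagged as the main obstacle.
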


The $\Theta$-divergent condition is discussed in \Cref{subsection: divergence for sequences}.
Informally, it requires that every diverging sequence $(\gamma_n)_{n\in \Nb} \subset \Gamma$, has a subsequence $(\gamma_{k_n})$ such that $\rho(\gamma_{k_n})$ converges to a constant map on some non-empty open subset in $\Fc_{\Theta,\Rb}$.
The combination of the EGF and divergent conditions has been studied by the third author in \cite{W23b}, where he proves among other things that it admits nice dynamical characterizations in terms of dominated splittings.

\subsection{%
\texorpdfstring{%
$\Theta$-positivity is open and closed}%
{Theta-positivity is open and closed}}
When $\Gamma$ is a cocompact Fuchsian group, one of the most important results of the theory of positive representations is that the set of $\Theta$-positive representations $\Pos_\Theta(\Gamma,G)$ is both open and closed in $\Hom(\Gamma,G)$. This was proven in a series of papers by Guichard, Wienhard, Beyrer, Pozzetti and Labourie \cite{GLW, beyrer2021positive,BeyrerGuichardLabouriePozzettiWienhard_PositivityCrossRatiosCollarLemma}.

Here, we extend this result to a finite covolume Fuchsian group with parabolic elements.
Note that positivity imposes a condition on images of parabolic elements, which we call \emph{$\Theta$-positively translating} (see \Cref{dfn:PosRotPosTransl}).
This condition is neither open nor closed.
However, we prove that positive representations form connected components of \emph{relative representation varieties}: let $\rho_0 \colon \Gamma \to G_{\Fb}$ be a representation, and define
\[\Hom_{\rho_0}(\Gamma, G_{\Fb}) = \{\rho \mid \rho(\gamma) \textrm{ conjugate to $\rho_0(\gamma)$ for all $\gamma \in \Gamma$ parabolic}\}\]
(see \Cref{subs: Semialg Framed Repr} for precisions). Let us also denote by $\Pos_\Theta^{\fr}(\Gamma, G_{\Fb})$ the set of $\Theta$-positively frameable representations.

\begin{ThmIntro} \label{thm-intro: Positivity closed}
    The set $\Pos_\Theta^{\fr}(\Gamma,G_{\Fb})\cap \Hom_{\rho_0}(\Gamma, G_{\Fb})$ is open and closed in\break $\Hom_{\rho_0}(\Gamma,G_{\Fb})$.
\end{ThmIntro}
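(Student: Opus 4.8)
The plan is to prove openness and closedness separately, using the extension machinery of \Cref{thm-intro: extension boundary map} together with the real-case results (the theorem of Guichard--Labourie--Wienhard--Beyrer--Pozzetti that $\Theta$-positivity is open and closed for \emph{closed} surface groups) and a doubling or amalgamation argument to reduce the relative (cusped) setting to the closed one. First I would set up the relative character variety carefully: after \Cref{propo: equivalence positivity first and second kind Fuchsian groups} we may assume $\Gamma$ is a finite-covolume Fuchsian group of the first kind, so $\Gamma\backslash\Hb$ is a surface of finite type and $\Gamma$ is a free product of surface and cyclic groups with finitely many conjugacy classes of parabolic (cusp) subgroups $\langle c_1\rangle,\dots,\langle c_k\rangle$. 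The condition defining $\Hom_{\rho_0}$ pins the conjugacy class of each $\rho(c_j)$; by \Cref{dfn:PosRotPosTransl} (the $\Theta$-positively translating condition) and the definition of $\Theta$-positively frameable representation, a representation in $\Hom_{\rho_0}$ is $\Theta$-positively frameable exactly when one can choose equivariant flags $\xi(c_j^+)$ at the parabolic fixed points making all cyclically ordered tuples positive; the key point is that for a single cusp this is a \emph{semi-algebraic} condition on $\rho$ once the conjugacy class of $\rho(c_j)$ is fixed, because it only involves the relative position of finitely many flags in the (semi-algebraic) flag variety and the notion of positivity is semi-algebraic (as recalled after \Cref{def: positive rep intro}, following Guichard--Wienhard).

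For closedness I would argue as follows: take a sequence (or, since $\Fb$ may be non-Archimedean, a definable path, or better, work with the constructible topology / Zariski-type closure appropriate to $\Fb$) $\rho_n \to \rho$ in $\Hom_{\rho_0}(\Gamma,G_\Fb)$ with each $\rho_n$ $\Theta$-positively frameable. Using \Cref{thm-intro: extension boundary map} — passing to a Cantor complete extension of $\Fb$ if necessary, which is harmless for the closedness statement over $\Fb$ by transfer — each $\rho_n$ has a full positive equivariant boundary map $\xi_n\colon\Sb^1\to\Fc_{\Theta,\Fb}$, hence in particular a positive map on $\Lambda_h$, which by \Cref{thm-intro: weak proximality} means each $\rho_n(\gamma)$ is weakly $\Theta$-proximal for hyperbolic $\gamma$ and the weakly-attracting-flag map is positive. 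The flags $\xi^h_{\rho_n}(\gamma^+)$ are then algebraic functions of $\rho_n(\gamma)$ (eigenflags), so they converge to $\xi^h_\rho(\gamma^+)$, which therefore lands in the closure of the positive semigroup; combined with the fixed conjugacy classes at cusps one checks the limiting flags at $\Lambda_p$ (determined by $\rho_0$ up to the finite-dimensional choice, which is compact/definably-compact in the relevant sense) can be chosen so that the whole tuple stays positive, using that the positivity condition is closed \emph{on the relative variety} even though it is not closed in $\Hom(\Gamma,G_\Fb)$. The cleanest way to make this rigorous is the doubling trick: glue two copies of the surface along the cusps (or cap off each cusp appropriately) to obtain a closed surface group $\hat\Gamma$ containing $\Gamma$, extend $\rho_n$ and $\rho$ to $\hat\rho_n,\hat\rho$ using the fixed conjugacy data, invoke the \emph{closed-surface} openness-and-closedness theorem for $\hat\Gamma$, and pull the conclusion back to $\Gamma$.

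For openness the same doubling/amalgamation reduction applies: given $\rho$ in the set, extend to $\hat\rho$ on a closed surface group $\hat\Gamma$ (this is where one uses that the fixed conjugacy class of each $\rho(c_j)$ lets one extend the representation across the doubling locus in a way that varies algebraically with $\rho\in\Hom_{\rho_0}$), apply openness of $\Theta$-positivity for $\hat\Gamma$ to get an open neighborhood of $\hat\rho$ consisting of positive representations, and intersect with the image of $\Hom_{\rho_0}(\Gamma,G_\Fb)$ under the (continuous, semi-algebraic) extension map to get an open neighborhood of $\rho$ in $\Hom_{\rho_0}$ of positively frameable representations; here one must check that a positive representation of $\hat\Gamma$ restricts to a positively frameable representation of $\Gamma$, which follows by restricting the boundary map of $\hat\Gamma$ to $\Lambda_p(\Gamma)\subset\Lambda(\hat\Gamma)$ and noting cyclic order is inherited.

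The main obstacle, I expect, is the doubling construction itself over a general real closed field and making it vary nicely in families: one needs, for each fixed conjugacy class $[\rho_0(c_j)]$ of a parabolic, a canonical (or at least semi-algebraically parametrized) way to realize $\Gamma\subset\hat\Gamma$ as a subgroup of a closed (or higher-genus) surface group together with a compatible extension of representations — classically one uses that a parabolic in $\PSL_2(\Rb)$ is a limit of hyperbolics and glues, but here $\rho_0(c_j)$ is $\Theta$-positively translating and need not be unipotent, so the gluing must respect the $\Theta$-positive structure at that cusp. Equivalently, one must know that the "positive collar'' / "positivity near a cusp'' behaves well in families, which is exactly the content one imports from \cite{GLW, beyrer2021positive, BeyrerGuichardLabouriePozzettiWienhard_PositivityCrossRatiosCollarLemma} (the collar lemma) transported to the present generality; establishing this uniform positive-collar statement over arbitrary real closed fields, or finding a self-contained argument that sidesteps doubling by working directly with the semi-algebraic description of frameable positivity on the relative variety, is the technical heart of the proof.
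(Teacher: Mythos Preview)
Your doubling strategy has a genuine, fatal obstruction, and the paper proceeds along an entirely different route.

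\textbf{Why doubling fails.} In the doubled closed surface $\hat\Sigma$, each former cusp curve $c_j$ becomes an essential simple closed curve, hence a \emph{hyperbolic} element of the closed surface group $\hat\Gamma$. By \Cref{thm-intro: weak proximality}, every hyperbolic element of $\hat\Gamma$ is sent to a \emph{weakly $\Theta$-proximal} element by any $\Theta$-positive representation $\hat\rho$. But on $\Hom_{\rho_0}(\Gamma,G_{\Fb})$ the conjugacy class of $\rho(c_j)$ is fixed equal to that of $\rho_0(c_j)$, and $\rho_0(c_j)$ is only required to be $\Theta$-positively translating; it need not be weakly $\Theta$-proximal (for instance it may be unipotent, the type-preserving case). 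In that case no extension $\hat\rho$ of $\rho$ to $\hat\Gamma$ can be $\Theta$-positive, so there is nothing to restrict, and both your openness and closedness arguments collapse simultaneously. The ``positive collar'' input you hope to import cannot repair this: it is a statement about eigenvalue gaps, and there is no eigenvalue gap to speak of when $\rho_0(c_j)$ is unipotent. Your direct (non-doubling) sketch for closedness has the same hole one level down: the claim that $\xi^h_{\rho_n}(\gamma^+)\to\xi^h_\rho(\gamma^+)$ presupposes that $\rho(\gamma)$ is weakly $\Theta$-proximal, which is exactly what must be proven.

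\textbf{What the paper does instead.} For closedness, the paper proves the stronger unconditional statement that $\Pos_\Theta(\Gamma,G_{\Fb})$ is closed in $\Hom(\Gamma,G_{\Fb})$ (\Cref{thm: closedness}). The collar lemma of \cite{BeyrerGuichardLabouriePozzettiWienhard_PositivityCrossRatiosCollarLemma}, stated semi-algebraically over $\Qbar$, yields the inequality $(\alpha_{\Fb}\circ J_{\Fb}(\rho(\gamma))-1)(p^\alpha_{\Fb}(\rho(\gamma'))-1)\geq 1$ for linked hyperbolic pairs; this forces weak $\Theta$-proximality to persist at the limit and makes the limit map $\xi_\rho^h$ well-defined and \emph{semi}-positive. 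Positivity is then recovered via a transversality-propagation lemma for semi-positive $6$-tuples (\Cref{lem: semi-positive + a bit transverse implies transverse}). For openness in $\Hom_{\rho_0}$, the paper observes that the forward fixed point $g\mapsto g^+$ of a $\Theta$-positively translating element is continuous \emph{on each conjugacy class} (even though not globally), so the canonical framing $\rho\mapsto(\rho,\xi_\rho^l)$ is a continuous section on $\Hom_{\rho_0}$; it then shows (\Cref{propo: ThetaPosSemiAlg}) that positivity of a framed representation is detected by the positivity of finitely many quadruples indexed by the edges of a fixed ideal triangulation, an open condition. No passage to a closed surface group is involved.
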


The proof of the closedness follows closely the arguments of Beyrer--Guichard--Labourie--Pozzetti--Wienhard. We rely in particular on their collar lemma (see \Cref{lem: collar lemma} and \cite[Theorem C, Corollary D]{BeyrerGuichardLabouriePozzettiWienhard_PositivityCrossRatiosCollarLemma}).

\subsection{A conditional improvement}

\Cref{thm-intro: Positivity closed} only deals with deformations of representations that preserve the conjugacy class of the image of the cusps.
More generally, we will show that the set of $\Theta$-positive representations of $\Gamma$ is closed in $\Hom(\Gamma, G_{\Fb})$ (for any non-elementary Fuchsian group $\Gamma$, see \Cref{thm: closedness}).
The openness on the other hand, is less clear. We will show that the set of \emph{$\Theta$-positively framed representations} (i.e.\ pairs of a representation and a positive framing) is open in the set of framed representations (for a suitable topology).
It is tempting to believe that the same is true after projecting to the space of frameable representations.
We prove this under some condition on the flag variety.

\begin{definition}
    Let $\Fc_{\Theta,\Fb}$ be a flag variety with a positive structure. We say that $\Fc_{\Theta,\Fb}$  satisfies the \emph{$\PG$-condition} if there exists $N>0$ such that, for every positive tuple $(x_1,\ldots ,x_N)\in \Fc_{\Theta,\Fb}$, every $y\in \Fc_{\Theta,\Fb}$ is transverse to at least one of the $x_i$.
\end{definition}

Here ``PG'' stands for ``positive is generic''.
We will see that this condition is satisfied in many cases, for the complete flag varieties of $\SL(n,\Rb)$, $\textnormal{Sp}(2k,\Rb)$, $\SO(k,k+1)$ and $G_2$ (following from work of Saldanha--Shapiro--Shapiro \cite{SaldanhaShapiroShapiro_FinitenessGrassmannConvexityRevisited, SaldanhaShapiroShapiro_FinitenessGrassmannConvexity}) and for the Shilov boundaries of of Hermitian groups of tube type (see \Cref{subs:PGConditionHermFlagVarities}).
The general case remains open and we conjecture the following.

\begin{conjecture}\label{conj:PG-condition}
    Every flag variety with a $\Theta$-positive structure satisfies the $\PG$-condition.
\end{conjecture}

Under the $\PG$-condition, we can remove the restriction on the images of parabolic elements in \Cref{thm-intro: Positivity closed}, and prove the following.

\begin{ThmIntro}
\label{thm-intro:PG-condition}
    Assume that $\Fc_{\Theta,\Fb}$ satisfies the $\PG$-condition, and let $\Gamma$ be a non-cocompact lattice. Then:
    \begin{enumerate}
        \item
        \label{thm-intro:PG-condition: posfram}
        If $\rho \colon \Gamma\to G_{\Fb}$ is $\Theta$-positive and $\Theta$-frameable, then every $\Theta$-framing of $\rho$ is positive. In particular, $\rho$ is $\Theta$-positively frameable.
        \item 
        \label{thm-intro:PG-condition: open in frameable}
        The set $\Pos_\Theta^{\fr}(\Gamma,G_{\Fb})$ is open and closed in the closed set $\Hom_\Theta^{\fr}(\Gamma,G_{\Fb})$ of $\Theta$-frameable representations.
    \end{enumerate}
\end{ThmIntro}

\subsection{%
\texorpdfstring{%
$\Theta$-positivity and the real spectrum compactification}%
{Theta-positivity and the real spectrum compactification}}
Here we assume that $\Gamma$ is a lattice, i.e.\ a finitely generated Fuchsian group of the first kind. The space $\Hom(\Gamma,G)$ thus has the structure of a real affine algebraic variety and, as such, admits a \emph{real spectrum compactification} $\RSp \Hom(\Gamma,G)$.
It is proven in \cite{BurgerIozziParreauPozzetti_RSCCharacterVarieties2} that points in $\RSp \Hom(\Gamma,G)$ correspond to representations of $\Gamma$ into $G_{\Fb}$ for $\Fb$ an ordered field containing $\R$, modulo an equivalence relation which will be made precise in \Cref{section:RSC}.

Let $\Fb$ be a real closed field containing $\Rb$.

\begin{ThmIntro} \label{thm-intro: Positivity semi-algebraic}
    The set $\Pos_\Theta(\Gamma,G)$ is semi-algebraic. Denoting by $\left(\Pos_\Theta(\Gamma, G)\right)_{\Fb}$ its $\Fb$-extension, we have:
    \[\left(\Pos_\Theta(\Gamma, G)\right)_{\Fb} = \Pos_\Theta(\Gamma, G_{\Fb})\]
    if $\Gamma$ is cocompact, and
    \[\left(\Pos_\Theta(\Gamma, G)\right)_{\Fb} = \Pos_\Theta^{\fr}(\Gamma, G_{\Fb})\]
    otherwise.
\end{ThmIntro}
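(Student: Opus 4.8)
The plan is to deduce \Cref{thm-intro: Positivity semi-algebraic} from the structural results already in place, chiefly \Cref{thm-intro: weak proximality}, \Cref{thm-intro: Positivity closed}, and the fact (discussed in \Cref{section:RSC}) that the $\Fb$-extension operation commutes with finite Boolean combinations of polynomial (in)equalities. The first step is to establish that $\Pos_\Theta(\Gamma, G_{\Rb})$ is semi-algebraic. For this I would use \Cref{thm-intro: weak proximality}: over $\Rb$ a representation is $\Theta$-positive iff every hyperbolic $\gamma \in \Gamma$ has $\rho(\gamma)$ weakly $\Theta$-proximal and the induced map $\xi^h_\rho$ on $\Lambda_h$ is positive. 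Since $\Gamma$ is finitely generated and of finite covolume, the positivity of $\xi^h_\rho$ can be checked on finitely many tuples of fixed points $\gamma_1^+, \dots, \gamma_k^+$ — this is exactly the kind of finiteness encoded in the collar-lemma arguments and in the proof of \Cref{thm-intro: Positivity closed}; being weakly $\Theta$-proximal is a semi-algebraic condition on $\rho(\gamma)$ (vanishing/nonvanishing of discriminants of characteristic polynomials, sign conditions on eigenvalue moduli), and positivity of a tuple of flags is semi-algebraic by the very definition of a $\Theta$-positive structure (Guichard--Wienhard). Hence $\Pos_\Theta(\Gamma, G_{\Rb})$ is cut out inside the algebraic variety $\Hom(\Gamma, G_{\Rb})$ by finitely many such conditions, so it is semi-algebraic.

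Next I would prove the cocompact case. Here $\Lambda_p = \emptyset$, so there is nothing to frame, and the claim is $\left(\Pos_\Theta(\Gamma, G_{\Rb})\right)_{\Fb} = \Pos_\Theta(\Gamma, G_{\Fb})$. The inclusion $\supseteq$ follows because $\Pos_\Theta(\Gamma, G_{\Rb})$, being semi-algebraic and (by the classical theorem recalled in the introduction, reproven here via \Cref{thm-intro: Continuity boundary map R}) a union of connected components of $\Hom(\Gamma, G_{\Rb})$, is open and closed; its $\Fb$-extension is then open and closed in $\Hom(\Gamma, G_{\Fb})$, and by the Tarski--Seidenberg transfer principle the defining formula for $\Theta$-positivity (via \Cref{thm-intro: weak proximality}, now phrased over an arbitrary real closed field, which is exactly how that theorem is stated) holds at a point of $\Hom(\Gamma, G_{\Fb})$ iff it holds in the $\Fb$-extension. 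Concretely: a representation $\rho \colon \Gamma \to G_{\Fb}$ lies in $\left(\Pos_\Theta(\Gamma, G_{\Rb})\right)_{\Fb}$ iff the finite system of semi-algebraic conditions (weak $\Theta$-proximality of the $\rho(\gamma_i)$ and positivity of the resulting finite flag tuples) is satisfied; by \Cref{thm-intro: weak proximality} applied over $\Fb$, this system is equivalent to $\rho \in \Pos_\Theta(\Gamma, G_{\Fb})$. This gives both inclusions at once.

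For the non-cocompact (finite covolume, parabolics present) case the same transfer argument applies, but one must be careful about which real-valued condition extends. Over $\Rb$, $\Theta$-positive and $\Theta$-positively frameable coincide for finite-covolume $\Gamma$: a $\Theta$-positive real representation has a continuous boundary map on $\Lambda(\Gamma) = \Sb^1$ by \Cref{thm-intro: Continuity boundary map R} (or \Cref{thm-intro: type preserving}), whose restriction to $\Lambda_p$ is the required framing; conversely a frameable representation is positive by restriction. So $\Pos_\Theta(\Gamma, G_{\Rb}) = \Pos^{\fr}_\Theta(\Gamma, G_{\Rb})$, and this set is semi-algebraic — positivity of a framing is again checkable on finitely many tuples of points of $\Lambda_p$ together with points of $\Lambda_h$ (using a fundamental domain for the $\Gamma$-action), a finite semi-algebraic system. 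Now the $\Fb$-extension of this semi-algebraic set is, by transfer, exactly the locus where that finite system holds over $\Fb$, which by the definition of $\Theta$-positively frameable and \Cref{thm-intro: weak proximality} is $\Pos^{\fr}_\Theta(\Gamma, G_{\Fb})$. The point that makes the two cases genuinely different — and the main obstacle to address carefully — is that over a non-Archimedean $\Fb$ a $\Theta$-positive representation need not be frameable (the failure of boundary maps to extend to parabolic fixed points, as discussed in \Cref{subsection: ExNonFramablePosRepr}), so one cannot expect $\left(\Pos_\Theta(\Gamma, G_{\Rb})\right)_{\Fb}$ to equal $\Pos_\Theta(\Gamma, G_{\Fb})$; the extension necessarily lands in the smaller, and correct, set $\Pos^{\fr}_\Theta(\Gamma, G_{\Fb})$. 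Verifying that the finite semi-algebraic presentation genuinely captures frameability over every real closed $\Fb$ — i.e.\ that positivity of the finitely many generating tuples forces positivity of all cyclically ordered tuples in $\Lambda_p$, equivariantly — is the step requiring the most care, and it is where the collar lemma (\Cref{lem: collar lemma}) and the $\Gamma$-equivariance used in the proof of \Cref{thm-intro: Positivity closed} do the real work.
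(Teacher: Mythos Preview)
Your proposal has a genuine gap at the crucial step: you repeatedly assert that positivity (of $\xi^h_\rho$, or of a framing) can be \emph{checked on finitely many tuples}, and you attribute this to ``the collar-lemma arguments and the proof of \Cref{thm-intro: Positivity closed}''. But the proof of \Cref{thm: closedness} does \emph{not} give a finite characterization: it expresses $\Pos_\Theta(\Gamma,G_{\Fb})$ as an \emph{infinite} intersection $\bigcap_{n\ge 3}\bigcap_{x\in X_n}\varphi_x^{-1}((\Fc_{\Theta,\Fb}^n)_{\ge 0})$ over all cyclically ordered tuples of hyperbolic fixed points. An infinite conjunction is not a first-order formula, so Tarski--Seidenberg transfer does not apply to it, and it does not by itself establish semi-algebraicity. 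Likewise, \Cref{thm-intro: weak proximality} characterizes positivity via \emph{all} hyperbolic $\gamma$, which is again infinitely many conditions. The reduction to finitely many conditions is precisely the content you are missing.

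The paper supplies this finiteness by two concrete mechanisms that your outline does not contain. In the non-uniform case it fixes a $\Gamma$-invariant ideal triangulation $\mathcal T$ of $\Hb$ with vertices in $\Lambda_p$, chooses finitely many orbit representatives $\mathcal T'\subset\mathcal T$, and proves (\Cref{propo: ThetaPosSemiAlg} via the inductive \Cref{finite lemma}) that a framing is positive if and only if the finitely many quadruples $(\xi(x_\ell),\xi(z_\ell),\xi(y_\ell),\xi(w_\ell))$ for $\ell\in\mathcal T'$ are positive. This immediately gives the semi-algebraic description of $\widetilde{\Pos}_\Theta^{\fr}$ and hence of $\Pos_\Theta^{\fr}$. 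In the uniform case there is no triangulation with vertices in $\Lambda_p$, so the paper proves a cutting-and-gluing lemma (\Cref{propo: cutting lemma}): cut the closed surface along a multicurve, apply the non-uniform result to each piece, and add finitely many proximality and quadruple-positivity compatibility conditions across the cuts. These ingredients are the substance of the theorem; your proposal treats them as already known.
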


Informally, this means that a finite number of real semi-algebraic conditions characterize $\Theta$-positively frameable representations over any real closed field, and implies that any real semi-algebraic condition which holds over $\Theta$-positive representations into $G_{\Rb}$ holds for $\Theta$-positively frameable representations into $G_{\Fb}$.
This was proven by the first author in \cite{Flamm} for Hitchin representations of closed surface groups.
\footnote{To be precise, she defines Hitchin representations into $\SL_d(\Fb)$ as the $\Fb$-extension of the Hitchin components, and proves that these are exactly the representations satisfying the second condition of \Cref{thm-intro: weak proximality}.}

\begin{remark}
    In contrast, when $\Gamma$ contains parabolic elements, the set $\Pos_\Theta(\Gamma,G_{\Fb})$ is \emph{not} the $\Fb$-extension of a real semi-algebraic set, as reflected by the fact that $\Theta$-positive representations are automatically frameable over $\Rb$, while they need not be over non-Archimedean fields.
\end{remark}

In order to prove the theorem, we will use a cutting and gluing lemma of independent interest, see \Cref{propo: cutting lemma}. Roughly speaking it says that the restriction of a $\Theta$-positive representation to a subsurface is still $\Theta$-positive and, conversely, if a representation $\rho$ of $\pi_1(\Sigma)$ is $\Theta$-positive in restriction to the complement $\Sigma'$ of a simple closed curve, then one only needs to check the $\Theta$-positivity of one $4$-tuple to guarantee that $\rho$ is $\Theta$-positive.\\

This semi-algebraicity combined with the closedness \Cref{thm-intro: Positivity closed} has strong consequences regarding the real spectrum compactification of $\Hom(\Gamma, G)$, as was already shown in \cite{Flamm} for Hitchin representations.
Applying the Tarski--Seidenberg principle, we obtain our last characterization of $\Theta$-positive representations:

\begin{CoroIntro}\label{coro:Positive = limit of positive}
    Let $\rho$ be a representation of $\Gamma$ into $G_{\Fb}$. Then the following are equivalent:
    \begin{enumerate}
        \item $\rho$ is $\Theta$-positive, and is frameable if $\Gamma$ is not cocompact, 
        \item $\rho$, seen as a point of $\RSp\Hom(\Gamma,G)$, belongs to the closure of $\Pos_\Theta(\Gamma, G)$.
    \end{enumerate}
\end{CoroIntro}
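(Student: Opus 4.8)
The proof of \Cref{coro:Positive = limit of positive} is an application of the Tarski--Seidenberg transfer principle, using the ingredients established in the previous theorems. Here is the plan.

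First, recall how points of $\RSp\Hom(\Gamma,G)$ are identified with representations into $G_{\Fb}$ for $\Fb$ a real closed field extension of $\Rb$: the real spectrum of the coordinate ring of the affine variety $\Hom(\Gamma,G)$ has a point corresponding to each such $\rho \from \Gamma \to G_{\Fb}$, and a semi-algebraic subset $S \subset \Hom(\Gamma, G_{\Rb})$ has a closure in $\RSp\Hom(\Gamma,G)$ whose points are exactly those $\rho$ lying in the $\Fb$-extension $S_{\Fb}$ of $S$ --- this is the standard dictionary between the real spectrum compactification and semi-algebraic extensions, recalled in \Cref{section:RSC} following \cite{BurgerIozziParreauPozzetti_RSCCharacterVarieties2}. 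So the statement ``$\rho$, seen as a point of $\RSp\Hom(\Gamma,G)$, belongs to the closure of $\Pos_\Theta(\Gamma,G)$'' is, by definition, equivalent to ``$\rho \in \left(\Pos_\Theta(\Gamma,G_{\Rb})\right)_{\Fb}$''.

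Now I would simply combine this with \Cref{thm-intro: Positivity semi-algebraic}. Indeed, that theorem asserts precisely that $\left(\Pos_\Theta(\Gamma, G_{\Rb})\right)_{\Fb} = \Pos_\Theta(\Gamma, G_{\Fb})$ when $\Gamma$ is cocompact, and $\left(\Pos_\Theta(\Gamma, G_{\Rb})\right)_{\Fb} = \Pos_\Theta^{\fr}(\Gamma, G_{\Fb})$ when $\Gamma$ contains parabolic elements. Since $\Gamma$ is assumed to be a finitely generated Fuchsian group of the first kind, it is of finite covolume, so one of these two cases holds. In the cocompact case there are no parabolic elements and every $\Theta$-positive representation is vacuously frameable, so conditions (1) and (2) both reduce to $\rho \in \Pos_\Theta(\Gamma, G_{\Fb})$; in the cusped case, being ``frameable and $\Theta$-positive'' is exactly the definition of being $\Theta$-positively frameable (a representation is $\Theta$-positively frameable precisely when it admits a positive $\Theta$-framing, and one checks --- e.g.\ via \Cref{thm-intro: extension boundary map} and \Cref{thm-intro: weak proximality} --- that this is the same as being frameable and $\Theta$-positive), so (1) reads $\rho \in \Pos_\Theta^{\fr}(\Gamma,G_{\Fb})$. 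In either case (1) and (2) name the same set, which proves the equivalence.

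The only genuinely substantive input is \Cref{thm-intro: Positivity semi-algebraic} itself, whose proof in turn uses the semi-algebraicity of $\Pos_\Theta(\Gamma,G_{\Rb})$, the closedness \Cref{thm-intro: Positivity closed}, and the cutting-and-gluing \Cref{propo: cutting lemma}; but all of these may be assumed here. Thus the ``hard part'' of \Cref{coro:Positive = limit of positive} is not in its own proof --- which is a two-line corollary --- but rather lies upstream, and the main point to be careful about is the bookkeeping of the dictionary between the real spectrum closure and the $\Fb$-extension, together with the verification that ``frameable and $\Theta$-positive'' coincides with $\Theta$-positively frameable. I would state this last equivalence explicitly (perhaps as a short remark or lemma earlier), so that the corollary's proof can cite it cleanly.
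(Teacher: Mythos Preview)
Your approach is essentially identical to the paper's. The paper's proof (stated as \Cref{coro in text: coro:Positive = limit of positive}) invokes \Cref{lem: Characterizing closures of closed semi-algebraic sets} to translate membership in the real-spectrum closure into membership in the $\Fb$-extension, and then cites Corollaries~\ref{corollary : positively framed repr semi-algebraic} and~\ref{corol : positive semi algebraic} (the two halves of \Cref{thm-intro: Positivity semi-algebraic}) to conclude---exactly your two steps.

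One small point: you spend some effort justifying that ``frameable and $\Theta$-positive'' coincides with ``$\Theta$-positively frameable'', invoking \Cref{thm-intro: extension boundary map} and \Cref{thm-intro: weak proximality}. The paper sidesteps this entirely: when it restates the corollary in \Cref{s: Real Sectrum}, condition~(1) is phrased directly as ``$\Theta$-positively frameable'' in the non-uniform case, so the intro wording is being read as shorthand rather than as a separate condition to be reconciled. Your instinct to flag this is sound, but the references you cite do not obviously yield the equivalence in the direction ``frameable $+$ $\Theta$-positive $\Rightarrow$ $\Theta$-positively frameable'' over a general real closed field (having \emph{some} fixed flag for each parabolic image is weaker than having one that makes the element $\Theta$-positively translating). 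It is cleaner to simply read condition~(1) as ``$\Theta$-positively frameable'', as the paper does.
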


The direction (2)$\implies$(1) was already proven in \cite[Theorem 1.11, Theorem 6.20]{BurgerIozziParreauPozzetti_RSCCharacterVarieties2}.

\begin{CoroIntro} \label{coro: positive form connected components}
    If $\Gamma$ is a cocompact Fuchsian group, the set of (equivalence classes of) $\Theta$-positive representations of $\Gamma$ into $G_{\Fb}$ for all real closed fields $\Fb$ containing $\Rb$ forms a union of connected components of $\RSp\Hom(\Gamma, G)$ consisting only of injective representations with discrete image.
\end{CoroIntro}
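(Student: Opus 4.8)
The plan is to deduce the statement by assembling three ingredients already at our disposal: the characterization of $\Theta$-positive points of $\RSp\Hom(\Gamma,G)$ from \Cref{coro:Positive = limit of positive}, the clopenness of $\Pos_\Theta$ over $\Rb$ from \Cref{thm-intro: Positivity closed}, and the standard dictionary relating semi-algebraic subsets of a real affine variety to constructible subsets of its real spectrum. First, since $\Gamma$ is cocompact it is of the first kind and contains no parabolic elements: thus $\Lambda_p=\emptyset$, every representation is vacuously $\Theta$-positively frameable, $\Pos_\Theta^{\fr}(\Gamma,G_{\Fb})=\Pos_\Theta(\Gamma,G_{\Fb})$, and $\Hom_{\rho_0}(\Gamma,G_{\Fb})=\Hom(\Gamma,G_{\Fb})$. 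In particular \Cref{thm-intro: Positivity closed} over $\Fb=\Rb$ says that $\Pos_\Theta(\Gamma,G_{\Rb})$ is open and closed in $\Hom(\Gamma,G_{\Rb})$, and by \Cref{thm-intro: Positivity semi-algebraic} it is moreover a semi-algebraic subset of the real affine variety $\Hom(\Gamma,G)$.

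Next I would invoke the semi-algebraic/real-spectrum dictionary (as in Bochnak--Coste--Roy's book on real algebraic geometry): a semi-algebraic subset $S\subseteq\Hom(\Gamma,G_{\Rb})$ is open (resp.\ closed) if and only if the associated constructible subset $\widetilde S\subseteq\RSp\Hom(\Gamma,G)$ is open (resp.\ closed), and moreover the closure of $\widetilde S$ in $\RSp\Hom(\Gamma,G)$ equals $\widetilde{\overline S}$. Applying this to $S=\Pos_\Theta(\Gamma,G_{\Rb})$, which is clopen by the previous paragraph, the constructible set $\widetilde{\Pos_\Theta(\Gamma,G)}$ is clopen in $\RSp\Hom(\Gamma,G)$ and coincides with its own closure.

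I would then identify the set $\mathcal P$ of equivalence classes of $\Theta$-positive representations of $\Gamma$ into $G_{\Fb}$, ranging over all real closed fields $\Fb$, with this clopen set. On the one hand, every point of $\RSp\Hom(\Gamma,G)$ is represented by a representation into $G_{\Fb}$ for some real closed field $\Fb$ (one may take the real closure of the residue field), and conversely a representation $\Gamma\to G_{\Fb}$ over a real closed field — being a homomorphism from the coordinate ring of $\Hom(\Gamma,G)$ to $\Fb$ together with the order of $\Fb$ — determines a point of $\RSp\Hom(\Gamma,G)$. On the other hand, \Cref{coro:Positive = limit of positive}, whose frameability hypothesis is vacuous in the cocompact case, says exactly that a point of $\RSp\Hom(\Gamma,G)$ lies in $\mathcal P$ if and only if it belongs to the closure of $\Pos_\Theta(\Gamma,G)$ in $\RSp\Hom(\Gamma,G)$, i.e.\ to $\widetilde{\Pos_\Theta(\Gamma,G)}$. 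In particular $\Theta$-positivity is invariant under the equivalence relation defining $\RSp\Hom(\Gamma,G)$, so $\mathcal P$ is a well-defined subset, equal to the clopen set above. Since a clopen subset of any topological space is a union of connected components, this proves the first assertion.

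Finally, I would verify that every $\rho\in\mathcal P$ is injective with discrete image. For $\Fb=\Rb$ this is immediate: a cocompact $\Theta$-positive representation is $\Theta$-Anosov by \Cref{thm-intro: Continuity boundary map R}, hence discrete and faithful. For a general real closed field $\Fb$, injectivity follows from \Cref{thm-intro: weak proximality}: the map $\xi^h_\rho\colon\Lambda_h\to\Fc_{\Theta,\Fb}$ is a positive map, hence injective (distinct entries of a positive tuple are transverse, in particular distinct), so if $\rho(\gamma)=1$ then $\rho$-equivariance forces $\gamma$ to fix $\Lambda_h$ pointwise, whence $\gamma=1$ since a non-elementary Fuchsian group acts faithfully on $\Lambda_h$. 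Discreteness of $\rho(\Gamma)$ in $G_{\Fb}$ (for the order topology of $\Fb$) is obtained, following Beyrer--Guichard--Labourie--Pozzetti--Wienhard, from the collar lemma \Cref{lem: collar lemma} together with the weak $\Theta$-proximality of the images of hyperbolic elements, which yield a uniform lower bound on the ``size'' of $\rho(\gamma)$ in terms of the word length of $\gamma$, so that $1$ is isolated in $\rho(\Gamma)$. The only genuinely non-formal point here is this last step — pinning down the notion of discreteness over non-Archimedean real closed fields and extracting it from the collar estimate; the rest of the argument is a formal assembly of results established earlier in the paper.
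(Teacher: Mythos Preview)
Your argument for the clopenness of the set of $\Theta$-positive points in $\RSp\Hom(\Gamma,G)$ is essentially the paper's own: clopenness of $\Pos_\Theta(\Gamma,G_{\Rb})$ in $\Hom(\Gamma,G_{\Rb})$, semi-algebraicity, the real-spectrum dictionary, and \Cref{coro:Positive = limit of positive} to identify the constructible set with the $\Theta$-positive locus. This part is fine.

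The discrepancy is in your treatment of injectivity and discreteness. The paper simply invokes \Cref{propo: positive implies injective and discrete}, whose proof is a short, self-contained diamond argument valid over any real closed field: one finds three small arcs $I_k\subset\Sb^1$ such that every nontrivial $\gamma$ moves some $I_k$ off itself, and then the corresponding diamonds $\Diam_k\subset\Fc_{\Theta,\Fb}$ give an explicit open neighborhood of $\Id$ in $G_{\Fb}$ disjoint from $\rho(\Gamma\setminus\{1\})$. Your injectivity argument via \Cref{thm-intro: weak proximality} is valid but redundant given that proposition. Your discreteness argument, however, has a real gap: the collar lemma \Cref{lem: collar lemma} bounds $\alpha_{\Fb}(J_{\Fb}(\rho(\gamma)))-1$ from below only in terms of $\rho(\gamma')$ for a single linking $\gamma'$, not in terms of the word length of $\gamma$; it does not yield the displacement-versus-word-length estimate you invoke. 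Even a uniform lower bound on $\alpha_{\Fb}(J_{\Fb}(\rho(\gamma)))-1$ over all hyperbolic $\gamma$ (which would require covering every nontrivial conjugacy class by finitely many linking partners, and dealing separately with torsion) would not obviously translate to order-topology discreteness over a non-Archimedean $\Fb$, since elements with $\alpha_{\Fb}\circ J_{\Fb}$ bounded away from $1$ in $\Fb$ can still accumulate at the identity. Replace this paragraph by a direct appeal to \Cref{propo: positive implies injective and discrete}.
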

For finite volume Fuchsian groups with parabolic elements, an analogous corollary holds in relative representation varieties, see \Cref{rem:connected components of positive representations non uniform lattice}.

Note that $\Theta$-positivity of a representation is invariant under conjugation. Here, it is more convenient to state these results in the representation variety $\Hom(\Gamma,G)$, but we will include in \Cref{section:RSC} the analogous theorems for the character variety $\mathfrak X(\Gamma,G) \coloneqq \Hom(\Gamma,G)/\!\!/G$. In fact, we will prove that every $\Theta$-positive representation has bounded centralizer and closed conjugation orbit in $\Hom(\Gamma,G)$, so that the quotient map $\Hom(\Gamma,G) \to \mathfrak X(\Gamma,G)$ is a naive quotient in restriction to $\Pos_\Theta(\Gamma,G)$.

\subsection{Related works}

Let us mention that the idea to investigate positivity over real closed fields is not new. Already in \cite{BurgerPozzetti}, the authors investigate non-Archimedean limits of maximal representations in order to prove a collar lemma.
The work \cite{ BurgerIozziParreauPozzetti_RSCCharacterVarieties1} introduced the real spectrum compactification of character varieties and announced several results about Hitchin and maximal representations. Some are proven in \cite{BurgerIozziParreauPozzetti_RSCCharacterVarieties2}, and the Hitchin case is treated in \cite{Flamm}.

The papers and \cite{guichard2025generalizing,GLW,BeyrerGuichardLabouriePozzettiWienhard_PositivityCrossRatiosCollarLemma} have also set the ground for the present work by pointing out that many notions and statements about $\Theta$-positivity are semi-algebraic.
We will use in particular the semi-algebraic formulation of the collar lemma from \cite{BeyrerGuichardLabouriePozzettiWienhard_PositivityCrossRatiosCollarLemma}.
Finally the work in preparation \cite{BIPP_RSCMaximalRepr} will study maximal representations over real closed fields and recover many of the present results in that case. In particular, the authors noticed the subtle difference between $\Theta$-positive and $\Theta$-positively frameable representations in the non-Archimedean world.

\subsection{Structure of the paper}

The article is organized as follows:
\begin{itemize}
    \item \Cref{s: Preliminaries} recalls some miscellaneous background: about Fuchsian groups, about ordered fields, about real algebraic geometry and about flag varieties.

    \item \Cref{s: Positivity on flag varieties} recalls the definition and main properties of $\Theta$-positive structures on flag varieties and their generalization over ordered fields, and proves several useful lemmas.
    We finish by proving the $\PG$-condition in the case of Hermitian Lie groups.
    
    \item In \Cref{s: Definitions Positive representations}, we introduce $\Theta$-positive representations and $\Theta$-positive framings. We describe elements in their images and we prove their various equivalent characterizations, namely \Cref{thm-intro: weak proximality}, \Cref{thm-intro: extension boundary map}, and \Cref{thm: Boundary Extension Intro}. We also exhibit an example of a $\Theta$-positive representation which is not frameable.
    We conclude with the proof of \Cref{thm-intro:PG-condition}~(\ref{thm-intro:PG-condition: posfram}).
   
    \item In \Cref{section:PositivityClosed}, we prove that the $\Theta$-positive condition is closed in a fairly general setting, and deduce that $\Theta$-positive representations are irreducible.

    \item In \Cref{s: Real case} we focus on the real case: we prove the continuity properties of the boundary maps in that case (\Cref{thm-intro: Continuity boundary map R}) and deduce that real $\Theta$-positive representations are EGF and divergent (\Cref{thm-intro: Theta-positive EGF}). We also discuss the relation with relatively Anosov representations and prove \Cref{thm-intro: type preserving}.

    \item In \Cref{section:Positivitysemi-algebraic}, we return to representations over general ordered fields and prove that, for finitely generated groups,  $\Theta$-positivity is characterized by a finite number of semi-algebraic conditions (\Cref{thm-intro: Positivity semi-algebraic}). In the process  we prove a cutting and gluing lemma for positive representations, and prove the openness property of positivity in relative character varieties, concluding the proof of \Cref{thm-intro: Positivity closed} and \Cref{thm-intro:PG-condition}~(\ref{thm-intro:PG-condition: open in frameable}).

    \item Finally, in \Cref{s: Real Sectrum}, we recall the definition of the real spectrum compactification of representation varieties, and explain the consequences of our results for the subset of the real spectrum compactification corresponding to $\Theta$-positive representations (Corollaries~\ref{coro:Positive = limit of positive} and~\ref{coro: positive form connected components}).

    \item We include an appendix proving some statements about Fuchsian groups, which can be interpreted as a description of $\Theta$-positive representations into $\PSL_2(\Rb)$.
\end{itemize}

\subsection*{Acknowledgments}
The first two authors are grateful to Silvain Rideau-Kikuchi for clarifying questions concerning the Tarski--Seidenberg principle.
They further thank Marc Burger and Beatrice Pozzetti for sharing their results and explaining their work to us.
The first author thanks Max Riestenberg for pointing us to the result of the $\PG$-condition for $\SL_n(\R)$.
The second author warmly thanks Olivier Benoist for patiently answering his questions and sharing his insight on various aspects of real algebraic geometry.
The authors express their gratitude to Marc Burger for detailed feedback on a first version of this article.
The authors thank the IHP and IHES for excellent working conditions.

The authors acknowledge support of the Institut Henri Poincaré (UAR 839 CNRS-Sorbonne Université), and LabEx CARMIN (ANR-10-LABX-59-01).
This project has received funding from the European Research Council (ERC) under the European Union’s Horizon 2020 research and innovation programme (grant agreement No 101018839), ERC GA 101018839. The fourth author was partially supported by the NUS-MOE grants A-8001950-00-00, A-8000458-00-00, and A-8004148-00-00.

\tableofcontents

\section{Preliminaries} \label{s: Preliminaries}

We collect in this section some background about the tools at play in this paper: Fuchsian groups, ordered fields, real algebraic geometry, semisimple linear semi-algebraic groups, their Jordan projection, parabolic subgroups and flag varieties.

\subsection{Fuchsian groups}
\label{s: Fuchsian groups}
For details on Fuchsian groups we refer to \cite{Katok}.\break
Throughout the paper we denote by $\Hb$ the hyperbolic plane.
Via the Poincaré disc model, its group of orientation preserving isometries is identified with the group $\PU(1,1)$ acting by homographies on the unit disc in $\Cb$. We see the unit circle $\Sb^1$ as the boundary at infinity of $\Hb$. We denote by $o$ the center of the unit disk.

A \emph{Fuchsian group} $\Gamma$ is a discrete subgroup of $\Isom_+(\Hb)$. The \emph{limit set} of a Fuchsian group $\Gamma$, denoted $\Lambda(\Gamma)$ (or simply $\Lambda$ when it does not bring any confusion), is the set of limit points in $\Sb^1$ of some (hence any) $\Gamma$-orbit in $\Hb$. This limit set is a closed, $\Gamma$-invariant subset of $\Sb^1$. It is empty when $\Gamma$ is finite and consists of $1$ or $2$ points when $\Gamma$ is infinite and virtually cyclic. Otherwise, it is infinite and $\Gamma$ is called \emph{non-elementary}.

The stabilizer in $\Gamma$ of a point $x\in \Sb^1$, when non-trivial, is an infinite cyclic subgroup of $\Gamma$. Conversely, every $\gamma \in \Gamma$ of infinite order fixes at most two points in $\Sb^1$:
\begin{itemize}
    \item either $\gamma$ has two distinct fixed points $\gamma^+$ and $\gamma^-$ and 
    \[\gamma^{\pm n} \cdot x \underset{n\to +\infty}{\longrightarrow} \gamma^\pm\]
    for all $x\notin \{\gamma^-,\gamma^+\}$; in this case, $\gamma$ is called \emph{hyperbolic}, and $\gamma^+$ (resp.\ $\gamma^-$) is called the \emph{attracting fixed point} (resp.\  \emph{repelling fixed point}) of $\gamma$;
    \item or $\gamma$ has a unique fixed point $\gamma^+= \gamma^-$, and $\gamma^{\pm n}\cdot x \underset{n\to +\infty}{\longrightarrow} \gamma^+$ for all $x\in \Sb^1$; in this case $\gamma$ is called \emph{parabolic} and $\gamma^+$ the \emph{parabolic fixed point} of $\gamma$.
\end{itemize}

We call a parabolic element $\gamma$ a \emph{positive parabolic} (resp.\ \emph{negative parabolic}) if $\gamma$ translates counter-clockwise (resp. clockwise) on $\Sb^1 \setminus \gamma^+$.

Let $\Lambda_p=\Lambda_p(\Gamma)$ be the set of fixed points of parabolic elements in $\Gamma$, and let $\Lambda_h=\Lambda_h(\Gamma)$ be the set of fixed points of hyperbolic elements in $\Gamma$. Both $\Lambda_p$ and $\Lambda_h$ are $\Gamma$-invariant subsets of $\Lambda$. If $\Gamma$ is non-elementary, then $\Gamma$ acts minimally on $\Lambda$, and so $\Lambda_h$ and $\Lambda_p$ are dense in $\Lambda$ as long as they are non-empty. Furthermore, when $\Gamma$ is non-elementary, $\Lambda_h$ is always non-empty, and the set $\{(\gamma^-, \gamma^+)\mid \gamma \in \Gamma \textnormal { hyperbolic}\}$ is dense in $\Lambda^2$.

A non-elementary Fuchsian group is called \emph{of the first kind} if its limit set is all of $\Sb^1$ and \emph{of the second kind} if its limit set is properly contained in $\Sb^1$.

\subsubsection{Fuchsian groups acting on their limit sets}
The main object of interest to us will be the action of a Fuchsian group $\Gamma$ on its limit set $\Lambda$. 
We will now review some other viewpoints on these objects. 

Let us start with the definition of a convergence group action:

\begin{definition}[Convergence group]
    Let $M$ be a compact metrizable topological space. A discrete group $\Gamma$ acting on $M$ by homeomorphisms acts as a \emph{convergence group} if for every unbounded sequence $(\gamma_n)_{n\in \Nb}\subset \Gamma$, there exists a subsequence $(\gamma_{k_n})_{n\in \Nb}$ and two points $a,b \in M$ such $\gamma_{k_n}\vert_{M\setminus \{b\}}$ converges uniformly on compact sets to the constant map $\equiv a$.
\end{definition}

The action of a Fuchsian group on $\Sb^1$ is a convergence group action. More precisely, we have the following:
\begin{fact}[Convergence action] \label{fact: Fuchsian groups are convergence groups}
    Let $\Gamma$ be a Fuchsian group and $(\gamma_n)_{n\in \Nb} \subset \Gamma$ such that \[\gamma_n\cdot o \underset{n\to +\infty}{\longrightarrow} x \in \Sb^1 \textrm{ and } \gamma_n^{-1}\cdot o \underset{n\to +\infty}{\longrightarrow} y\in \Sb^1~.\] Then $\gamma_n\cdot z \underset{n\to +\infty}{\longrightarrow} x$ for all $z\in \Sb^1\setminus \{y\}$.
\end{fact}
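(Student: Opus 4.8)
The statement to prove is \Cref{fact: Fuchsian groups are convergence groups}: if $\gamma_n \cdot o \to x$ and $\gamma_n^{-1} \cdot o \to y$ in $\Sb^1$, then $\gamma_n \cdot z \to x$ for every $z \in \Sb^1 \setminus \{y\}$. The plan is to work directly in the hyperbolic plane using the triangle inequality and the elementary geometry of geodesics and horoballs, rather than appealing to any abstract convergence-group machinery (which would be circular, since this fact is one of the tools used to establish it).

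First I would set up the needed hyperbolic-geometry vocabulary. For $p,q \in \Hb$ and $\xi \in \Sb^1$, write $(p \mid q)_o$ for the Gromov product based at $o$ and recall that $\gamma_n \cdot o \to x$ in the cone topology on $\Hb \cup \Sb^1$ is equivalent to $(\gamma_n \cdot o \mid x)_o \to \infty$, and similarly for $y$. A point $z \in \Sb^1$ and its geodesic ray $[o,z)$ are the data I track. The key quantitative input is that for an isometry $g$ of $\Hb$, controlling $g \cdot o$ and $g^{-1} \cdot o$ constrains where $g$ sends a given ray: explicitly, for any $w \in \Hb$,
\[
(g \cdot w \mid g \cdot o)_o = (w \mid o)_{g^{-1} \cdot o} \geq (o \mid o)_{g^{-1}\cdot o} - \dist(w, [o, g^{-1}\cdot o]) \text{-type estimates},
\]
which I would make precise via the standard fact that the Gromov product is $1$-Lipschitz in each entry and the CAT$(-1)$ thin-triangle inequality. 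The upshot I want is: if $z \neq y$, then the ray $[o,z)$ stays at bounded distance from a fixed geodesic pointing away from $y$, hence $\dist(\gamma_n \cdot o, \gamma_n \cdot [o,z))$ and the Gromov product $(\gamma_n \cdot z \mid \gamma_n \cdot o)_o$ stay bounded below, while $(\gamma_n \cdot o \mid x)_o \to \infty$; combining via $(\gamma_n \cdot z \mid x)_o \geq \min\{(\gamma_n \cdot z \mid \gamma_n \cdot o)_o, (\gamma_n \cdot o \mid x)_o\} - \delta$ gives $(\gamma_n \cdot z \mid x)_o \to \infty$ — but this last step needs care, because the first term is only bounded below by a constant, not going to infinity, so a naive ultrametric estimate gives nothing.

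The fix, and the step I expect to be the main obstacle, is to quantify the divergence more carefully. The correct mechanism is: choose $w_n$ on the ray $[o,z)$ with $\dist(o, w_n)$ growing slowly, say $\dist(o,w_n) = \tfrac12 \dist(o, \gamma_n^{-1}\cdot o)$ once the latter is large (it diverges because $\gamma_n^{-1}\cdot o \to y \in \Sb^1$). Then since $[o,z)$ and $[o, \gamma_n^{-1} \cdot o)$ fellow-travel for a definite time (proportional to $(z \mid y)_o$, which is finite because $z \neq y$, so actually they only fellow-travel for bounded time — I should instead observe they diverge, so $w_n$ is far from the segment $[o, \gamma_n^{-1}\cdot o]$, which is what I want). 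Pushing by $\gamma_n$: $\gamma_n \cdot w_n$ lies on the ray $[\gamma_n \cdot o, \gamma_n \cdot z)$, and I estimate $(\gamma_n \cdot w_n \mid \gamma_n \cdot o)_o$ from below by $\dist(o, \gamma_n \cdot o) - \dist(o, \gamma_n \cdot w_n) + O(1)$ using that $\gamma_n \cdot w_n$ is roughly ``behind'' $\gamma_n \cdot o$ as seen from $o$ — this is where one uses that $w_n$ is far from $[o, \gamma_n^{-1}\cdot o]$ so that its image is far from the segment $[o, \gamma_n \cdot o]$, forcing the geodesic $[o, \gamma_n \cdot w_n]$ to pass near $\gamma_n \cdot o$. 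Since $\dist(o, \gamma_n \cdot w_n) \leq \dist(o, \gamma_n \cdot o) + \dist(\gamma_n \cdot o, \gamma_n \cdot w_n) = \dist(o, \gamma_n\cdot o) + \dist(o, w_n)$ and $\dist(o,w_n)$ grows strictly slower than $\dist(o, \gamma_n^{-1}\cdot o) = \dist(o, \gamma_n \cdot o)$, the Gromov product $(\gamma_n \cdot w_n \mid \gamma_n \cdot o)_o \to \infty$. Letting $\gamma_n \cdot w_n \to$ a point on the ray $\gamma_n \cdot [o,z)$ and then taking $w_n$ out to $z$ along that ray (the Gromov product to $x$ only increases by $\delta$-thinness along a ray), I conclude $(\gamma_n \cdot z \mid x)_o \to \infty$, i.e.\ $\gamma_n \cdot z \to x$.

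Finally, I would note that uniformity on $\Sb^1 \setminus \{y\}$ minus a neighborhood of $y$ is automatic from the argument, since all the estimates depend on $z$ only through the lower bound on $\dist([o,z), y)$ (equivalently an upper bound on the Gromov product $(z\mid y)_o$), which is uniform on compact subsets of $\Sb^1 \setminus \{y\}$ — though for the statement as written, pointwise convergence suffices. I would also record the reduction: by passing to a subsequence one may assume $\gamma_n \cdot o$ and $\gamma_n^{-1} \cdot o$ converge (they do here by hypothesis), so no subsequence extraction is actually needed. A cleaner alternative for the write-up, avoiding the delicate bookkeeping above, is to invoke the classical trichotomy for sequences in $\Isom_+(\Hb)$ directly from \cite{Katok}: any sequence $(\gamma_n)$ with $\gamma_n \cdot o \to x$, $\gamma_n^{-1}\cdot o \to y$ has the property that on the unit tangent bundle / on $\Sb^1$ it "collapses" everything outside $y$ to $x$ — this is stated there and I would cite it, keeping the geometric argument above as the idea of proof.
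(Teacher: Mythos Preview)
The paper does not prove this statement: it is stated as a \texttt{fact} environment (i.e.\ quoted as well-known background), with the blanket reference to \cite{Katok} given at the start of \S\ref{s: Fuchsian groups}. So there is no proof in the paper to compare against, and your closing suggestion --- to simply cite \cite{Katok} --- is exactly what the authors do.

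As for the argument you sketch: the idea is right but the execution is more involved than necessary, and the step ``the Gromov product to $x$ only increases by $\delta$-thinness along a ray'' is not quite justified as written, since $\gamma_n\cdot w_n$ lies on the ray from $\gamma_n\cdot o$ (not from $o$) to $\gamma_n\cdot z$, and you still need to rule out that this ray bends back toward some other boundary point. A cleaner route that avoids the auxiliary $w_n$ entirely: since $\gamma_n^{-1}\cdot o \to y \neq z$, the geodesic from $z$ to $\gamma_n^{-1}\cdot o$ passes within a uniform distance $R$ of $o$ (it converges to the geodesic $(z,y)$). Applying $\gamma_n$, the geodesic from $\gamma_n\cdot z$ to $o$ passes within $R$ of $\gamma_n\cdot o$; pick $p_n$ on $[o,\gamma_n\cdot z)$ with $d(p_n,\gamma_n\cdot o)\le R$. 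Then $p_n\to x$ in $\overline{\Hb}$, and since $\gamma_n\cdot z$ is the endpoint of the ray from $o$ through $p_n$, also $\gamma_n\cdot z\to x$. This is presumably the argument \cite{Katok} has in mind.
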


 We call such a sequence $(\gamma_n)_{n\in \Nb}$ an \emph{attracting sequence} in $\Gamma$, $x$ the \emph{attracting point} and $y$ the \emph{repelling point} of $(\gamma_n)_{n\in \Nb}$.

 \begin{example}
     If $\gamma \in \Gamma$ is hyperbolic or parabolic, then the sequence $(\gamma^n)_{n\in \Nb}$ is attracting, with attracting point $\gamma^+$ and repelling point $\gamma^-$.
 \end{example}

The second important property of the action of $\Gamma$ on $\Lambda$ is that it preserves a \emph{total cyclic ordering} on $\Lambda$:

\begin{definition}[Cyclic order]
\label{dfn:CyclicOrder}
    A \emph{total cyclic order} on a set $M$ is the data of a subset of \emph{cyclically ordered triples} in $\{(x,y,z)\in M^3\}$ with the following properties:
    \begin{itemize}
        \item If $(x,y,z)$ is cyclically ordered then so is $(y,z,x)$,
        \item If $(x,y,z)$ is cyclically ordered, then $(x,z,y)$ is not cyclically ordered,
        \item If $(x,y,t)$ and $(x,t,z)$ are cyclically ordered, then so is $(x,y,z)$,
        \item If $x,y,z$ are pairwise distinct, then either $(x,y,z)$ or $(z,y,x)$ is cyclically ordered.
    \end{itemize}
    An $n$-tuple $(x_1,\ldots, x_n)\in M^n$ is \emph{cyclically ordered} if $(x_i,x_j,x_k)$ is cyclically ordered for all $i<j<k$.
    If $M$ and $M'$ are sets with a total cyclic order, a map $f\colon M\to M'$ is \emph{monotone} if $(x,y,z)$ is cyclically ordered whenever $(f(x),f(y),f(z))$ is cyclically ordered.
\end{definition}

If $M$ is a cyclically ordered set, we set 
\[(x,y) = \{z\in M \mid (x,z,y) \textrm{ cyclically ordered}\} \textrm{ and } [x,y] = \{x\} \cup (x,y) \cup \{y\}~.\]

\begin{remark}
Let $M$ and $M'$ be cyclically ordered sets. Notice that if a map $f\colon M\to M'$ is monotone and injective, then it preserves the cyclic order, i.e.\ if $(a,b,c)$ is cyclically ordered, then $(f(a),f(b),f(c))$ is cyclically ordered.
\end{remark}

We equip $\Sb^1$ with the counter-clockwise cyclic order. By restriction, this induces a $\Gamma$-invariant cyclic order on $\Lambda(\Gamma)$ for any Fuchsian group $\Gamma$. \\

The following lemma will allow us to restrict to Fuchsian groups of the first kind when necessary. It is proven in \Cref{appendix} (see \Cref{coro:Fuchsian group conjugated to first kind}).

\begin{lemma}\label{Fuchsian groups lemma}
    Let $\Gamma$ be a non-elementary Fuchsian group. Then there exists a Fuchsian group $\Gamma'$ of the first kind and an isomorphism $\iota\colon \Gamma \to \Gamma'$ with the following properties:
        \begin{enumerate}
            \item there exists a continuous, $\iota$-equivariant, monotonic, surjective map \[\alpha\colon\Lambda(\Gamma)\to  \Lambda(\Gamma') = \Sb^1~;\]
            \item there exists a left-continuous $\iota^{-1}$-equivariant, monotonic, injective map \[\beta \colon \Lambda(\Gamma') = \Sb^1 \to \Lambda(\Gamma)~.\] 
        \end{enumerate}
\end{lemma}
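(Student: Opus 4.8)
The plan is to use the structure theory of non-elementary Fuchsian groups of the second kind: such a $\Gamma$ acts on its limit set $\Lambda = \Lambda(\Gamma)$, which is a Cantor subset of $\Sb^1$. The complement $\Sb^1 \setminus \Lambda$ is a countable union of disjoint open arcs, permuted by $\Gamma$; these are the \emph{gaps} of $\Lambda$. The first step is to collapse each gap to a point: define $\alpha \colon \Lambda \to \Sb^1$ by sending each gap's closure to a single point and extending monotonically. Concretely, fix a basepoint and a point in each gap-orbit; the image of $\alpha$ is obtained from $\Sb^1$ by crushing each gap, and since $\Lambda$ is a Cantor set this quotient is again a circle. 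One checks $\alpha$ is continuous (the gaps shrink, so nearby points in $\Lambda$ have nearby images), surjective, and monotone for the cyclic orders. The action of $\Gamma$ on $\Lambda$ descends through $\alpha$ to an action of $\Gamma$ on this circle: because $\Gamma$ permutes the gaps, $\alpha(\gamma x)$ depends only on $\alpha(x)$, defining a homomorphism $\Gamma \to \mathrm{Homeo}_+(\Sb^1)$; call its image $\Gamma'$ and the map $\iota$. Since $\Gamma$ acts as a convergence group on $\Sb^1$ (\Cref{fact: Fuchsian groups are convergence groups}) and minimally on $\Lambda$, the quotient action is a minimal convergence action on a circle, hence by the convergence-group characterization of Fuchsian groups (Tukia, Gabai, Casson–Jungreis) is conjugate to the action of a Fuchsian group of the first kind; replacing $\Gamma'$ by this conjugate, $\alpha$ remains continuous monotone surjective and $\iota$-equivariant, giving (1). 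One must also check $\iota$ is injective: a nontrivial $\gamma$ with $\iota(\gamma) = \mathrm{id}$ would fix $\Lambda$ pointwise after collapsing, forcing $\gamma$ to fix every gap, which is impossible for an infinite-order element of a non-elementary group (it would fix $\geq 3$ points of $\Sb^1$).

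For (2), the map $\beta \colon \Sb^1 = \Lambda(\Gamma') \to \Lambda(\Gamma)$ should be a one-sided section of $\alpha$: for $t$ in the image of a collapsed gap, there is nothing to invert canonically, so instead send $t$ to the \emph{left endpoint} (in the counter-clockwise order) of the corresponding gap of $\Lambda$; for $t$ not coming from a gap, $\beta(t)$ is the unique preimage under $\alpha$. This $\beta$ is injective (distinct points have distinct left-endpoint-or-preimage values), monotone (it respects the cyclic order on the non-gap part and assigns gap-points consistently to their left ends), and left-continuous by construction — approaching $t$ from the left within $\Sb^1$, the preimages converge to the left endpoint of the gap. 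Equivariance $\beta(\iota(\gamma) t) = \gamma \beta(t)$ holds because $\gamma$ maps the left endpoint of a gap to the left endpoint of its image gap (orientation is preserved). The left-continuity rather than continuity is exactly the price paid for the asymmetric choice of endpoint.

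The main obstacle I anticipate is making the collapsing construction precise and verifying that the quotient of $\Sb^1$ by the gap equivalence relation is genuinely a topological circle on which $\Gamma$ acts as a convergence group — i.e.\ controlling the point-set topology of a Cantor set's complement and checking that the induced $\Gamma$-action still has the convergence property (attracting/repelling dynamics survive the collapse because the gaps near the repelling point $y$ have diameters tending to zero under $\gamma_n$). Once that is in hand, invoking the convergence-group theorem to upgrade $\Gamma'$ to a bona fide Fuchsian group of the first kind is standard. Since the statement says this is proven in \Cref{appendix} as \Cref{coro:Fuchsian group conjugated to first kind}, the body of the paper only needs this as a black box; the appendix presumably carries out the gap-collapsing and convergence-group argument in detail, possibly phrased via the Cayley--Abels graph or directly via hyperbolic geometry (doubling the convex core across its boundary geodesics).
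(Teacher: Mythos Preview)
Your approach is correct but genuinely different from the paper's. You collapse the gaps of $\Lambda(\Gamma)$ to obtain a minimal convergence action on a circle, then invoke the Gabai/Casson--Jungreis/Tukia theorem to conjugate it to a Fuchsian action of the first kind; $\beta$ is the left-endpoint section of the collapse map. The paper instead argues via complex analysis: it runs a Zorn's lemma argument on the poset of discrete faithful representations $\rho\colon\Gamma\to\PSL_2(\Rb)$ admitting a $\rho$-equivariant injective holomorphic self-map of $\Hb$ (compactness of chains comes from Montel's theorem), and shows that a maximal element must be of the first kind using the Riemann mapping theorem to enlarge the domain if any gap remained. The boundary maps $\alpha,\beta$ are then extracted \emph{a posteriori} from the machinery of positive representations developed in the body of the paper (maximal boundary extensions and the left/right continuous limit maps), together with the Douady--Earle barycentric extension.

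Each route imports one deep ingredient: you use the convergence group theorem, the paper uses classical function theory. Your construction of $\alpha$ and $\beta$ is more direct and makes manifest that semi-conjugacy is a purely topological notion; the paper's route yields the stronger conclusion of an equivariant \emph{holomorphic} injection $\Hb\to\Hb$, and avoids citing the convergence group theorem as an input (it is stated in the appendix only to round out the picture). Your anticipated obstacle---that the collapsed action is still a convergence action---is harmless: the attracting and repelling points of any diverging sequence in $\Gamma$ already lie in $\Lambda(\Gamma)$, so the contraction dynamics pass through $\alpha$ by continuity. Your injectivity argument for $\iota$ should be sharpened slightly: an orientation-preserving homeomorphism of $\Sb^1$ that preserves an arc setwise must fix both endpoints, so a $\gamma$ with $\iota(\gamma)=\id$ fixes infinitely many points of $\Sb^1$ and is therefore trivial.
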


We refer to the isomorphism $\iota$ given by the above lemma as a \emph{semi-conjugacy} between $\Gamma$ and $\Gamma'$. \Cref{appendix} contains more details about semi-conjugacy between Fuchsian groups and establishes an equivalence between the following objects (see \Cref{thm: informal theorem semi-conjugacy}):
\begin{itemize}
    \item semi-conjugacy classes of Fuchsian groups,
    \item conjugacy classes of  orientation preserving minimal convergence actions on $\Sb^1$,
    \item diffeomorphism classes of hyperbolic orbifolds.
\end{itemize}

\subsubsection{Conical limit points}
Let us recall for future use the definition of a conical limit point of a Fuchsian group $\Gamma$.
\begin{definition}
    A point $x\in \Lambda(\Gamma)$ is a \emph{conical limit point} if there exists a sequence $(\gamma_n)_{n\in \Nb}\subset \Gamma$ such that $\gamma_n\cdot o \underset{n\to +\infty}{\longrightarrow} x$ and such that the distance between $\gamma_n\cdot o$ and the geodesic ray $[o,x)$ from $o$ to $x$ remains bounded.
\end{definition}

Let $(x_n)_{n\in \Nb}$ be a sequence in $\Sb^1$ converging to a point $x$. We say that $x_n$ converges \emph{from the left} (resp.\ \emph{from the right}) if, for some (hence any) $z\neq x$, $x_n \in (z,x)$ (resp.\ $x_n \in (x,z)$) for $n$ large enough.

\begin{lemma} \label{lem: Monotonous limit points}
    Let $x$ be a point in $\Lambda(\Gamma)$.
    \begin{enumerate}
        \item If $(\gamma_n)_{n\in\Nb}$ is a sequence such that $\gamma_n\cdot o\to x$ as $n\to +\infty$, then up to extracting a subsequence, there exists a non-empty interval $U\subset \Sb^1$ such that $\gamma_n\cdot U$ converges to $x$ from the left or from the right.
        \item If, moreover, $x$ is a conical limit point, then there exist a sequence $(\gamma_n)_{n\in\Nb}$ and two disjoint non-empty intervals $U$ and $V$ such that $\gamma_n\cdot V$ converges to $x$ from the left and $\gamma_n\cdot U$ converges to $x$ from the right.
    \end{enumerate}
    
\end{lemma}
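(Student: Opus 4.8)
The plan is to exploit the convergence group property (Fact 1.2) together with the basic trichotomy (hyperbolic/parabolic) for isometries, treating the two parts in sequence and reducing the second part to the first.

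\medskip
\textbf{Part (1).} Suppose $\gamma_n\cdot o\to x$. First I would pass to a subsequence so that $\gamma_n^{-1}\cdot o$ also converges, say to some point $y\in\Sb^1$; this is possible by compactness of $\Sb^1$. By Fact~\ref{fact: Fuchsian groups are convergence groups}, $(\gamma_n)$ is an attracting sequence with attracting point $x$ and repelling point $y$, so $\gamma_n|_{\Sb^1\setminus\{y\}}$ converges uniformly on compact sets to the constant map $\equiv x$. Choose a small closed interval $U\subset\Sb^1\setminus\{y\}$ with non-empty interior. Then $\gamma_n\cdot U$ is a shrinking sequence of intervals converging to $x$. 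The only remaining point is to arrange that the convergence is monotone (from one fixed side). Since $\gamma_n$ preserves the cyclic orientation of $\Sb^1$, the image $\gamma_n\cdot U$ is an oriented sub-interval of $\Sb^1$; fix any basepoint $z\neq x$ not in the closure of the $\gamma_n\cdot U$ for $n$ large (possible since these shrink to $x\neq z$), and consider for each $n$ whether $\gamma_n\cdot U$ lies in $(z,x)$-side or $(x,z)$-side relative to $x$ --- more precisely, look at whether the endpoints of $\gamma_n\cdot U$ lie in the arc $(x,z)$ or $(z,x)$. Passing to a further subsequence we may assume the answer is the same for all $n$; after possibly shrinking $U$ once more (so that both endpoints of $\gamma_n\cdot U$ land on the same side of $x$, which is automatic once the interval is short enough and contained in one of the two arcs determined by $x$ and $z$), this exactly says $\gamma_n\cdot U$ converges to $x$ from the left, or from the right. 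This proves (1).

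\medskip
\textbf{Part (2).} Now assume in addition that $x$ is a conical limit point, witnessed by a sequence $(\gamma_n)$ with $\gamma_n\cdot o\to x$ staying within bounded distance of the geodesic ray $[o,x)$. The standard consequence of conicality is that one may choose such a sequence for which $\gamma_n^{-1}\cdot o$ converges to a point $y\neq x$: indeed, the boundedness of the distance from $\gamma_n\cdot o$ to $[o,x)$ forces $\gamma_n^{-1}\cdot o$ to stay in a bounded neighborhood of the \emph{other} end of a fixed bi-infinite geodesic through $o$ asymptotic to $x$, hence (after extraction) $\gamma_n^{-1}\cdot o\to y$ for some $y$ bounded away from $x$. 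Apply Part~(1) to $(\gamma_n)$: after extraction we obtain a non-empty interval converging to $x$ monotonically, say from the right. Now the key observation is that $y\neq x$, so we may pick the interval in Part~(1) to be a \emph{small} interval $U$ on one side of $y$ and a small interval $V$ on the other side of $y$, both disjoint from a neighborhood of $y$ and from each other. Since $\gamma_n$ preserves orientation and $\gamma_n|_{\Sb^1\setminus\{y\}}\to x$ uniformly on compact sets, the images $\gamma_n\cdot U$ and $\gamma_n\cdot V$ both converge to $x$, and because $U$ and $V$ lie on opposite sides of the repelling point $y$ while orientation is preserved, their images approach $x$ from opposite sides. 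Extracting once more so that each of $\gamma_n\cdot U,\gamma_n\cdot V$ converges monotonically (by Part~(1) applied to each), and swapping the names of $U$ and $V$ if necessary, we get $\gamma_n\cdot V$ converging to $x$ from the left and $\gamma_n\cdot U$ from the right. This gives (2).

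\medskip
\textbf{Main obstacle.} The only genuinely delicate point is the ``opposite sides'' claim in Part~(2): one must verify that two intervals flanking the repelling point $y$ are sent by $\gamma_n$ to intervals flanking the attracting point $x$ on opposite sides. This is where both the orientation-preservation of $\Gamma\subset\Isom_+(\Hb)$ and the precise convergence-group dynamics (uniform convergence on $\Sb^1\setminus\{y\}$) are essential; a short argument comparing cyclic orders of the triple $(u,y,v)$ with $u\in U$, $v\in V$ and its image $(\gamma_n u,\gamma_n y,\gamma_n v)$, then letting $\gamma_n y$ be pushed toward $x$ as well (using continuity of the $\gamma_n$ at points $\neq y$ but applied carefully near $y$), makes it rigorous. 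Everything else is routine compactness and extraction.
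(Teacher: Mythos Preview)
Your Part~(1) is essentially correct and parallels the paper, but there is a small omission: you never exclude the possibility that $x\in\gamma_n\cdot U$ (equivalently $\gamma_n^{-1}\cdot x\in U$), in which case $\gamma_n\cdot U$ straddles $x$ and cannot lie on one side. Shrinking $U$ does not help unless you first control where $\gamma_n^{-1}\cdot x$ accumulates. The paper fixes this by extracting so that $\gamma_n^{-1}\cdot x\to y'$ and choosing $U$ away from both $y$ and $y'$.

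Part~(2) has a genuine gap, and it is precisely the ``main obstacle'' you flagged. First, your justification that $y\neq x$ is incorrect: the conical condition gives that the angle $\angle_{\gamma_n\cdot o}(o,x)=\angle_o(\gamma_n^{-1}\cdot o,\gamma_n^{-1}\cdot x)$ is bounded away from $0$, which forces $y\neq y'$ where $y'\coloneqq\lim\gamma_n^{-1}\cdot x$; it does \emph{not} force $\gamma_n^{-1}\cdot o$ to stay near ``the other end'' of a fixed geodesic through $o$ and $x$. Second, and more importantly, placing $U$ and $V$ on opposite sides of the repelling point $y$ does not force $\gamma_n\cdot U$ and $\gamma_n\cdot V$ to land on opposite sides of $x$. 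Your proposed fix, ``letting $\gamma_n y$ be pushed toward $x$'', fails: the convergence-group property gives uniform convergence only on $\Sb^1\setminus\{y\}$, and $\gamma_n\cdot y$ can accumulate anywhere. What actually determines the side of $x$ on which $\gamma_n\cdot U$ lands is the position of $\gamma_n^{-1}\cdot x$ relative to $U$: if $\gamma_n^{-1}\cdot x$ lies in the arc between $U$ and $V$ \emph{not} containing $y$, then $x$ separates $\gamma_n\cdot U$ from $\gamma_n\cdot V$; otherwise it does not. The paper achieves this by using $y'\neq y$ and placing $U,V$ so that $(y,U,y',V)$ is cyclically ordered; then $\gamma_n^{-1}\cdot x$ (being near $y'$) lies between $U$ and $V$ on the arc away from $y$, and the cyclic-order argument goes through. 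Your proof is missing exactly this use of the second limit point $y'$.
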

\begin{proof}
   (1) Up to extracting a subsequence of $(\gamma_n)_{n\in\Nb}$, we can assume that $\gamma_n^{-1}\cdot o$ converges to some $y\in \Sb^1$ and that $\gamma_n^{-1}\cdot x$ converges to some $y'\in\Sb^1$. Let $U=(a,b)$ be a non-empty interval whose closure does not contain $y$ nor $y'$, and $a\neq x$. By \Cref{fact: Fuchsian groups are convergence groups}, the sequences $(\gamma_n^{-1}\cdot a)_{n\in\Nb}$ and $(\gamma_n^{-1}\cdot o)_{n\in\Nb}$ converge to the same point, so for $n$ large enough, $U$ does not contain $\gamma_n^{-1}\cdot a$ nor $\gamma_n^{-1}\cdot x$. Up to extracting a further subsequence, one can assume that either 
   \[(a,\gamma_n\cdot z,x)=\gamma_n\cdot (\gamma_n^{-1} \cdot a, z, \gamma_n^{-1}\cdot x)\]
   is cyclically ordered for all $n\in \Nb$ and all $z\in U$, or 
   \[(x,\gamma_n\cdot z,a)=\gamma_n\cdot (\gamma_n^{-1} \cdot x, z, \gamma_n^{-1}\cdot a)\]
   is cyclically ordered for all $n\in \Nb$ and all $z\in U$. Since $y\notin\overline{U}$, \Cref{fact: Fuchsian groups are convergence groups} implies that $\gamma_n\cdot U$ converges to $x$. Thus, $\gamma_n\cdot U$ converges to $x$ from the left or the right.

   (2) Assume now that $x$ is a conical limit point. Then there is a sequence $(\gamma_n)_{n\in\Nb}$ so that $\gamma_n\cdot o\to x$ as $n\to+\infty$ and  $d(\gamma_n\cdot o, [o,x))$ is bounded. As before, by extracting a subsequence, we may assume that $\gamma_n^{-1}\cdot o$ converges to some $y\in\Sb^1$ and $\gamma_n^{-1}\cdot x$ converges to some $y'\in\Sb^1$. Then, by elementary hyperbolic geometry, the angle $\angle_o(\gamma_n^{-1} \cdot o, \gamma_n^{-1}\cdot x) = \angle_{\gamma_n\cdot o}(o,x)$ is bounded away from $0$. Since $\gamma_n^{-1}\cdot o \underset{n\to +\infty}{\longrightarrow} y$ and $\gamma_n^{-1}\cdot x \underset{n\to +\infty}{\longrightarrow} y'$, we deduce that $y\neq y'$. We can thus choose two intervals $U=(a,b)$ and $V= (c,d)$ such that $(y,a,b,y',c,d)$ is cyclically ordered, and $a\neq x\neq c$. Then for all $z\in U$ and $z'\in V$ and all $n$ large enough, $(\gamma_n^{-1}\cdot a, z, \gamma_n^{-1} \cdot x)$ and $(\gamma_n^{-1}\cdot x, z', \gamma_n^{-1} \cdot c)$ are cyclically ordered. As before, we conclude that $\gamma_n\cdot V$ and $\gamma_n\cdot U$ respectively converge to $x$ from the left and from the right.
   \end{proof}

\subsubsection{The geometrically finite case}
The previous discussion is more concrete and explicit when the group $\Gamma$ is finitely generated and non-elementary (equivalently, \emph{geometrically finite}). In that case, up to passing to a finite index subgroup, $\Gamma$ is also torsion free and the quotient $S = \Gamma \backslash \Hb$ is a complete hyperbolic surface. This surface is the union of a compact core and finitely many ends which are either \emph{funnels} (quotients of a hyperbolic half-plane by a hyperbolic isometry) or \emph{cusps} (quotients of a horodisc by a parabolic isometry). The group $\Gamma$ is of the first kind if and only if $S$ has finite volume -- i.e.\ all its ends are cusps. We refer to these groups as \emph{lattices} in $\Isom_+(\Hb)$.

When $S$ is compact, $\Gamma$ is isomorphic to a closed surface group. In that case, the group $\Gamma$ is Gromov hyperbolic with boundary homeomorphic to $\Sb^1$. There are only two invariant cyclic ordering on $\partial_\infty \Gamma$ and choosing one amounts to choosing an orientation of $S$. Modulo a compatible choice of orientation, the action of $\Gamma$ on $\Sb^1$ is conjugated to the action of $\Gamma$ on $\partial_\infty \Gamma$ by a homeomorphism preserving the orientation, and the same will hold for every Fuchsian group isomorphic to~$\Gamma$. Finally, all such groups are of the first kind.

When $S$ is non-compact, $\Gamma$ is a free group with at least $2$ generators. It is again Gromov hyperbolic, and its Gromov boundary $\partial_\infty \Gamma$ is a Cantor set. There are then several possible cyclic orderings on $\partial_\infty \Gamma$, but only one is compatible with the cyclic ordering of $\Lambda(\Gamma)$, and choosing this cyclic ordering amounts to prescribing the topological type of $S$. For instance, a one-holed torus and a three-holed sphere, whose fundamental groups are both free groups in two generators, will give rise to different cyclic orderings of its boundary at infinity. More precisely, we have the following:

\begin{fact}
    Let $\Gamma$ be a Fuchsian group isomorphic to a free group with finitely many generators. Then there exists a unique total cyclic order on $\partial_\infty \Gamma$ for which there is a continuous, monotonic, $\Gamma$-equivariant map $\eta\colon \partial_\infty \Gamma \to \Lambda(\Gamma)$.

    If $\Gamma'$ is another Fuchsian group isomorphic to $\Gamma$, then the cyclic orderings on $\partial_\infty \Gamma \simeq \partial_\infty \Gamma'$ are compatible if and only if the surfaces $\Gamma \backslash \Hb$ and $\Gamma'\backslash \Hb$ are homeomorphic. 
\end{fact}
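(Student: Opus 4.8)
The plan is to build the cyclic order on $\partial_\infty\Gamma$ out of a canonical $\Gamma$-equivariant continuous surjection $\eta\colon\partial_\infty\Gamma\to\Lambda(\Gamma)$ whose only non-singleton fibres are two-point sets lying over the parabolic fixed points, by transporting the cyclic order of $\Lambda(\Gamma)$ and ``splitting'' each such fibre according to the direction in which the corresponding parabolic translates the circle. Uniqueness will follow from the fact that $\eta$ is itself the unique continuous $\Gamma$-equivariant map to $\Lambda(\Gamma)$, and the comparison with $\Gamma'$ will be reduced, via the semi-conjugacy machinery of \Cref{appendix}, to the statement that the $\Gamma$-action on $\partial_\infty\Gamma$ together with its cyclic order recovers, and is recovered by, the conjugacy class of the minimal convergence action of $\Gamma$ on $\Sb^1$. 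Since both the cyclic order on $\partial_\infty\Gamma$ and the homeomorphism type of $\Gamma\backslash\Hb$ are invariant under semi-conjugacy, I would first use \Cref{Fuchsian groups lemma} (\Cref{coro:Fuchsian group conjugated to first kind}) to reduce to the case where $\Gamma$ and $\Gamma'$ are of the first kind, so that $\Lambda(\Gamma)=\Sb^1$ and every non-trivial fibre of $\eta$ is a cusp.

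\emph{The map $\eta$.} A free group of finite rank is non-elementary word-hyperbolic, so $\partial_\infty\Gamma$ is a Cantor set carrying a convergence $\Gamma$-action. A finitely generated Fuchsian group is geometrically finite; when it has no parabolics it is convex cocompact and the orbit map extends to an equivariant homeomorphism $\partial_\infty\Gamma\to\Lambda(\Gamma)$, while in general, relative to its finitely many conjugacy classes of (cyclic, maximal) parabolic subgroups, there is a canonical $\Gamma$-equivariant continuous surjection $\eta\colon\partial_\infty\Gamma\to\Lambda(\Gamma)$ collapsing $\partial_\infty\langle p\rangle$ to the fixed point $p^+$ of each parabolic $p$ and injective elsewhere; concretely $\eta$ sends a boundary point represented by a word-geodesic ray $(\gamma_n)$ in $\Gamma$ to $\lim_n\gamma_n\cdot o\in\overline\Hb$, a ray entering a cusp accumulating exactly on the associated parabolic point. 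I would then record that $\eta$ is the \emph{unique} continuous $\Gamma$-equivariant map $\partial_\infty\Gamma\to\Lambda(\Gamma)$: by a north--south dynamics argument built on \Cref{fact: Fuchsian groups are convergence groups}, any such map must send the attracting fixed point in $\partial_\infty\Gamma$ of an infinite-order $\gamma$ to $\gamma^+\in\Lambda(\Gamma)$, and such points are dense.

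\emph{Construction and uniqueness of the cyclic order.} For a parabolic $p$, if $p$ is positive parabolic then $p^nz\in(z,p^+)$ and $p^{-n}z\in(p^+,z)$ for all $n\ge1$ and all $z\neq p^+$ (and the reverse if $p$ is negative); lifting through $\eta$ shows that the $\eta$-preimages of the points of $\Lambda(\Gamma)$ tending to $p^+$ from the $(z,p^+)$-side accumulate onto the attracting fixed point $u$ of $p$ in $\partial_\infty\Gamma$, and those from the other side onto the repelling one $v$. This dictates the definition: call $(a,b,c)\in(\partial_\infty\Gamma)^3$ cyclically ordered when either $\eta(a),\eta(b),\eta(c)$ are distinct and $(\eta(a),\eta(b),\eta(c))$ is cyclically ordered in $\Lambda(\Gamma)$, or two of them form a fibre $\{u,v\}=\partial_\infty\langle p\rangle$ placed so that $(u,v,\cdot)$ is cyclically ordered when $p$ is positive parabolic and $(v,u,\cdot)$ when $p$ is negative. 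I would check that this is a total cyclic order preserved by $\Gamma$ (equivariance using that conjugation by $\Isom_+(\Hb)$ preserves the positive/negative dichotomy), whose associated order topology is the Cantor topology of $\partial_\infty\Gamma$, and with respect to which $\eta$ is monotone and continuous (a local order-embedding off the fibres). For uniqueness, any total cyclic order on $\partial_\infty\Gamma$ inducing its topology and admitting a continuous monotone $\Gamma$-equivariant map to $\Lambda(\Gamma)$ has that map equal to $\eta$ by the previous paragraph; monotonicity and the injectivity of $\eta$ off the fibres force it to agree with the order above on all triples with pairwise distinct $\eta$-images; and on the remaining triples agreement is forced because the required order topology determines to which point of $\partial_\infty\langle p\rangle$ the preimages of a one-sided approach to $p^+$ converge.

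\emph{Comparison with $\Gamma'$, and the main difficulty.} Having reduced to the first-kind case, $\Lambda(\Gamma)=\Sb^1$ and, since $\eta$ is surjective, the adjacent pairs of $(\partial_\infty\Gamma,\mathcal O)$ are exactly the fibres $\partial_\infty\langle p\rangle$; collapsing all of them recovers the minimal convergence action of $\Gamma$ on $\Sb^1$, which by \Cref{appendix} (\Cref{thm: informal theorem semi-conjugacy}) is a complete invariant of the diffeomorphism type of $\Gamma\backslash\Hb$. Hence an isomorphism $\phi\colon\Gamma\to\Gamma'$ whose boundary homeomorphism $\partial_\infty\phi$ carries the cyclic order of $\partial_\infty\Gamma$ to that of $\partial_\infty\Gamma'$ descends to a $\phi$-equivariant orientation-preserving conjugacy of these circle actions, whence $\Gamma\backslash\Hb\cong\Gamma'\backslash\Hb$. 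Conversely an orientation-preserving homeomorphism $\Gamma\backslash\Hb\to\Gamma'\backslash\Hb$ yields, again by \Cref{appendix}, an isomorphism $\phi$ realized by an orientation-preserving homeomorphism $h\colon\Sb^1\to\Sb^1$ with $h\Gamma h^{-1}=\Gamma'$; since $h$ is order-preserving, sends parabolic points to parabolic points, and respects the positive/negative type, it lifts through $\eta$ and $\eta'$, re-blowing up the cusps consistently, to a $\phi$-equivariant cyclic-order isomorphism $\partial_\infty\Gamma\to\partial_\infty\Gamma'$, i.e.\ compatibility; allowing orientation-reversing homeomorphisms, this holds up to reversing the cyclic orders. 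The step I expect to be most delicate is the fibrewise analysis underlying the construction and uniqueness of the cyclic order: one must control uniformly, over all parabolic subgroups, how the two ends of $\partial_\infty\langle p\rangle$ are seen from the two sides of $p^+$, and verify that these local prescriptions glue into a single total cyclic order inducing the Cantor topology --- this one point simultaneously delivers monotonicity, the correct topology, and uniqueness. The construction of $\eta$ in the presence of cusps, and the precise recovery statement for the circle action, are then standard --- geometric finiteness and relative hyperbolicity for the former, \Cref{appendix} for the latter.
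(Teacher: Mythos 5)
The paper states this Fact without proof, citing Katok, so there is no proof in the paper to compare against; what you have written is a fill-in of a background claim. Your overall route --- the Cannon--Thurston/Floyd surjection $\eta$ collapsing the two-point Gromov boundary of each parabolic subgroup, pulling back the cyclic order of $\Lambda(\Gamma)$ and splitting each fibre according to the sense of translation of the corresponding parabolic, then appealing to the semi-conjugacy machinery of the Appendix for the comparison with $\Gamma'$ --- is the standard one and is sound in outline.

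The one substantive point I would flag concerns the uniqueness clause, and it is one that your plan actually fixes without saying so. With the paper's definition of \emph{monotone} ($(f(x),f(y),f(z))$ cyclically ordered $\Rightarrow$ $(x,y,z)$ cyclically ordered), the monotonicity of $\eta$ imposes no constraint at all on the relative order of the two points inside a fibre $\eta^{-1}(p^+)$: any triple containing both points of a fibre has a non-injective, hence non-cyclically-ordered, image. Continuity of $\eta$ is with respect to the fixed Gromov topology of $\partial_\infty\Gamma$, independent of the chosen cyclic order. So if $\mathcal O$ is a cyclic order for which $\eta$ is continuous, monotone and equivariant, transposing the two points in every $\Gamma$-translate of a single fibre produces a second $\Gamma$-invariant cyclic order $\mathcal O'\neq\mathcal O$ with exactly the same properties. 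Uniqueness only holds once one adds the requirement that the cyclic order generate the topology of $\partial_\infty\Gamma$ --- which is precisely the hypothesis you silently slip into the uniqueness paragraph (``any total cyclic order on $\partial_\infty\Gamma$ \emph{inducing its topology} \dots''). You should state explicitly that you are strengthening the hypotheses (or, equivalently, that the Fact as phrased is slightly loose and that what one really means is ``the unique cyclic order compatible with the Gromov topology''), since as literally written the uniqueness fails and your own argument depends on that extra hypothesis. The rest --- the existence of $\eta$ via geometric finiteness, the fibrewise splitting rule, and the comparison with $\Gamma'$ by collapsing back to a circle action and invoking \Cref{thm: informal theorem semi-conjugacy} --- is the right plan, modulo the details you already flag as delicate.
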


The map $\eta$ in the above fact is a homeomorphism if and only if $\Gamma$ is \emph{convex cocompact}, i.e.\ the surface $S = \Gamma \backslash \Hb$ does not have cusps. More generally, it has the following behavior: for all $x\neq y \in \partial_\infty \Gamma$, $\xi(x) \neq \xi(y)$ unless $x$ and $y$ are the attracting and repelling fixed points of an element $\gamma \in \Gamma$ in $\partial_\infty \Gamma$ representing a simple closed curve bounding an end of $S$. In that case, up to the correct choice of orientation of $\gamma$, the cyclic ordering on $\partial_\infty \Gamma$ is such that for all $z\notin \{x,y\}$, $(x,y,z)$ is cyclically ordered, and $\xi$ maps $x$ and $y$ to the attracting and repelling fixed points of $\gamma$ in $\Sb^1$. In particular, $\xi(x) \neq \xi(y)$ when $\gamma$ is hyperbolic (equivalently, the curve bounds a funnel), and $\xi(x) = \xi(y)$ when $\gamma$ is parabolic (equivalently, the curve bounds a cusp).

\subsection{Ordered fields}
\label{sec: ordered fields}
For an introduction to ordered fields we recommend \cite[Chapter 1]{Scheiderer_RAG}.
Let $\Fb$ be an ordered field, i.e.\ a field together with a total order that is compatible with the field operations.
It is \emph{real closed} if every positive element is a square and every odd degree polynomial has a root. Note that every ordered field has a \emph{real closure}, i.e.\ an algebraic field extension that is real closed and whose order extends the original one.
One says that $\Fb$ is \emph{non-Archimedean} if there exists $x \in \Fb$ with $x>n$ for all $n\in \Nb$.
Otherwise we say that $\Fb$ is Archimedean. 
Every Archimedean field is a subfield of $\Rb$ \cite[1.1.18 Theorem]{Scheiderer_RAG}.

\begin{example}
\label{example:OrderedFields}
We list here some relevant examples:
\begin{itemize}
\item Obviously, the field of real numbers $\Rb$ is real  closed.
\item The field of rational numbers $\Qb$, equipped with its unique order, is not real closed. Its real closure is the field $\overline{\Qb}^r \coloneqq \overline{\Qb} \cap \Rb$ of real algebraic numbers.
\item The field $\Rb(X)$ of rational fractions over $\Rb$ has a unique order for which $X>0$ but $X<\lambda$ for all $\lambda \in \Rb_{>0}$. It is not real closed since, for instance, $X$ is positive but does not have a square root.
\item The field of \emph{real Puiseux series} is the set of formal series
\[
\Rb(X)^\wedge \coloneqq \Bigl\{  \sum_{k=k_0}^{\infty} c_k X^{k/m} \, \Bigm| \, k_0 \in \Zb, \, m \in \Nb\setminus\{0\} , \, c_k \in \Rb \Bigr\},
\]
together with formal addition and multiplication.
An element\break $\sum_{k=k_0}^{\infty} c_k X^{k/m}$ is positive if $c_{k_0} > 0$.
With this order $\Rb(X)^\wedge$ is real closed, see e.g.\ \cite[Theorem 2.91]{BasuPollackRoy_AlgorithmsRealAlgebraicGeometry}. It contains the field $\Rb(X)$ whose real closure is thus the subfield of the real Puiseux series that are algebraic over $\Rb(X)$ (\cite[Example 1.3.6~(b)]{BochnakCosteRoy_RealAlgebraicGeometry}).
\end{itemize}
\end{example}

Another important class of examples of real closed fields are the Robinson fields, which are quotients of order-convex subrings of the hyperreals.
Before we can define them, we need some notations.
Fix a non-principal ultrafilter $\omega$ on $\Nb$, i.e.\ a finitely additive probability measure on $\Nb$ with values in $\{0,1\}$ that gives probability zero to finite sets.
A point $x$ in a topological space $X$ is the \emph{$\omega$-limit} of a sequence $(x_n)_{n\in\Nb}$ in $X$, if for every neighborhood $U$ of $x$, one has $x_n\in U$ for $\omega$-almost all $n$, and we write $\lim_\omega x_n=x$.
In this case, $x$ is an accumulation point of the sequence $(x_n)$, namely there is a subsequence converging to $x$ in the usual sense.
An important feature is that every sequence in a compact space has an $\omega$-limit.

\begin{example}[Hyperreals]
The \emph{field of hyperreals} is defined as the ultraproduct 
\[\Rb_\omega\coloneqq \Rb^{\Nb}/\sim\]
endowed with pointwise addition and multiplication, where $(x_n)_{n \in \Nb} \sim (y_n)_{n\in \Nb}$ if and only if the two sequences coincide $\omega$-almost everywhere.
The corresponding equivalence class in $\Rb_\omega$ will be denoted by $[x_n]$.
We can endow $\Rb_\omega$ with an order $\leq_\omega$ coming from the order on $\Rb$, more precisely, we have $[x_n]\leq_\omega [y_n]$ if and only if $x_n \leq y_n$ for $\omega$-almost all $n \in \Nb$.
There is a canonical order-preserving embedding $\Rb \hookrightarrow \Rb_\omega$ by sending a real number $t$ to the equivalence class of the constant sequence $(t)_{n\in \N}$, also denoted $t$.
Since $\Rb$ is real closed, so is $\Rb_\omega$.
Note that $\Rb_\omega$ is non-Archimedean, since for example the class of the sequence $x_n = n$ is larger than any natural number.
\end{example}

\begin{example}[Robinson fields]
\label{example:RobinsonField}
Let $(\lambda_n)_{n \in \Nb}$ be a sequence of scalars with $\lambda_n \geqslant 1$ for all $n\in \Nb$ and $\lambda_n \to +\infty$ as $n \to +\infty$.
Then $[\lambda_n ] \eqqcolon \lambda \geqslant_\omega t$ for all $t \in \Rb$.
The set
\[\Oc^\mathbf{\lambda} \coloneqq \{ x \in \Rb_\omega \mid -\lambda^m <_\omega x <_\omega \lambda^m  \textnormal{ for some } m \in \Zb\} \]
is an order-convex subring of $\Rb_\omega$ with maximal ideal
\[\Ic^\mathbf{\lambda} \coloneqq \{ x \in \Rb_\omega \mid -\lambda^m <_\omega x <_\omega \lambda^m \textnormal{ for all } m \in \Zb\}. \]
The quotient $\Rbom \coloneqq \Oc^\mathbf{\lambda}/\Ic^\mathbf{\lambda}$ is the \emph{Robinson field} associated to the non-principal ultrafilter $\omega$ and the scaling sequence $\mathbf{\lambda}$.
Since $\Oc^\lambda$ is order-convex (i.e.\ 
$0 \leq x \leq y$ and $y \in \mathcal{O}$, implies that also $x \in \mathcal{O}$), the order on $\Rb_\omega$ descends to an order on $\Rbom$.
With this order $\Rbom$ is a non-Archimedean real closed field.
We refer to \cite[\S 2.1]{BurgerIozziParreauPozzetti_RSCCharacterVarieties2} for more details.
\end{example}

We are particularly interested in ordered fields that admit order-compatible absolute values.

\begin{definition}
Let $\Fb$ be a field. 
An \emph{absolute value} on $\Fb$ is a map $\abs{\cdot} \from \Fb \to \Rb_{\geqslant 0}$ satisfying
\begin{enumerate}
\item $\abs{x} = 0 \iff x =0$,
\item $\abs{xy}=\abs{x}\abs{y}$ for all $x,y \in \Fb$,
\item $\abs{x+y} \leq \abs{x} + \abs{y}$ for all $x,y \in \Fb$.
\end{enumerate}
The absolute value is called \emph{trivial} if $\abs{x}=1$ for all $x \in \Fb^\times$, and \emph{ultrametric} if $\abs{x+y} \leq \max\{\abs{x},\abs{y}\}$ for all $x,y \in \Fb$.
\end{definition}

If $\Fb$ is additionally  ordered, then we say that the absolute value is \emph{order-compatible} if for all $x, y \in \Fb$ with $0 \leq x \leq y$ we have $\abs{x}\leq \abs{y}$.
We recall the following definition from \cite[\S 5]{Brumfiel_RSCTeichmullerSpace}.
\begin{definition}
\label{def: big element}
    Let $\Fb$ be an ordered field.
    Then $b\in \Fb$ is a \emph{big element} if for all $x\in \Fb$ there exists $n\in \N$ such that $x \leq b^n$.
\end{definition}

If $\Fb$ admits a big element $b$, one can define an order-compatible absolute value $\abs{\cdot}_b\from \Fb \to \Rb_{\geqslant 0}$ by setting
\[ \abs{x}_b \coloneqq \exp\left(\inf\left\{\frac{p}{q} \in \Qb \mid x^q<b^p\right\}\right),\]
mimicking the definition of the standard logarithm.
If $\Fb$ is Archimedean we have $\abs{x}_b=\abs{x}^{\frac{1}{\log(b)}}$, where $\abs{\cdot}$ is the standard absolute value on $\Rb$.
As soon as $\Fb$ is non-Archimedean, $\abs{\cdot}_b$ is an ultrametric absolute value and every non-trivial order-compatible absolute value on $\Fb$ is of the form $\abs{\cdot}_b$ for some big element $b\in\Fb$ \cite[Lemma 10.11]{Flamm}.

\begin{example}
    In $\Rb(X)$, endowed with the order from \Cref{example:OrderedFields}, the element $\tfrac{1}{X}$ is a big element, and $\abs{\tfrac{P}{Q}}_{1/X} = e^{-\textnormal{deg } (P)+ \textnormal{deg }(Q)}$.
    In the Robinson field $\Rb_\omega^\lambda$, $\lambda$ is a big element, and its associated absolute value is $\abs{x}_\lambda= \lim_\omega \vert x_n\vert^{1/\lambda_n}$.
    
    The field of hyperreals $\Rb_\omega$ does not admit a big element, and in fact no non-trivial order-compatible absolute value.
\end{example}

The order on $\Fb$ induces the \emph{order topology}, where a subbasis of open sets is given by the open intervals $(a,b) = \{ x \in \Fb \mid a<x<b\}$.
The resulting topology turns $\Fb$ into a Hausdorff topological field, which is not locally compact and totally disconnected whenever $\Fb \neq \Rb$, see e.g.\ \cite[Remark 1.2.12 (4), Exercise 1.2.2]{Scheiderer_RAG}.
When $\Fb$ admits a big element $b$, its order topology is equivalent to the topology induced by the distance $d(x,y) \coloneqq |x-y|_b$, and is thus metrizable.

\subsection{Semi-algebraic sets and maps}
\label{section:SemiAlgSets}
We summarize general definitions and results from real algebraic geometry.
We refer the reader to \cite{BochnakCosteRoy_RealAlgebraicGeometry}, in particular Chapters 1, 2 and 5, for more details and proofs.
The main objects of study in real algebraic geometry are semi-algebraic sets.

From now on let $\Fb$ be a real closed field.

\begin{definition}
\label{dfn_SemiAlgSet}
A subset $\mathcal{B} \subset \Fb^n$ is a \emph{basic semi-algebraic set}, if there exists a polynomial $f \in \Fb[X_1,\ldots,X_n]$ such that
\[ \mathcal{B} = \mathcal{B}(f)= \{ x \in \Fb^n \mid f(x)>0\}.\]
A subset $X \subset \Fb^n$ is \emph{semi-algebraic} if it is a Boolean combination of basic semi-algebraic sets, i.e.\ $X$ is obtained by taking finite unions and intersections of basic semi-algebraic sets and their complements. A map $f \from X \to Y$ between two semi-algebraic sets $X \subset \Fb^n$ and $Y \subset \Fb^m$ is called \emph{semi-algebraic} if its graph $\textrm{Graph}(f) \subset X \times Y$ is semi-algebraic in $\Fb^{n+m}$.
\end{definition}

We endow $\Fb^n$ with the product topology.
For this topology, polynomial functions are continuous, thus the basic semi-algebraic sets are open and form a basis of the product topology.
In the case $\Fb=\Rb$, this topology agrees with the analytic topology on $\Rb^n$.
Semi-algebraic subsets of $\Fb^n$ are endowed with the subspace topology, and we denote this the \emph{semi-algebraic} topology.

Any algebraic subset of $\Fb^n$ is semi-algebraic, and any polynomial or rational map is a semi-algebraic map. However, in the setting of real closed fields, the Tarski--Seidenberg theorem (which fails for algebraic sets) is an important reason to consider semi-algebraic sets as opposed to algebraic sets:

\begin{theorem}[Tarski--Seidenberg theorem, {\cite[Theorem 5.2.1]{BochnakCosteRoy_RealAlgebraicGeometry}}]
\label{thm_TarskiSeidenbergproj}
Let\break $\pr \from \Fb^n \to \Fb^{n-1}$ denote the projection onto the first $n-1$ coordinates. Then, for any semi-algebraic set $X\subset \Fb^n$, the set $\pr(X) \subset \Fb^{n-1}$ is semi-algebraic.
\end{theorem}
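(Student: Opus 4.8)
The plan is to follow the classical Cohen--H\"ormander argument, which reduces the theorem to a uniform description of the real roots and signs of a finite family of one-variable polynomials whose coefficients vary as parameters. Write a point of $\Fb^n$ as $(y,t)$ with $y=(x_1,\dots,x_{n-1})\in\Fb^{n-1}$ and $t=x_n\in\Fb$, so that $\pr$ forgets the coordinate $t$.

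First I would reduce to a combinatorial statement about sign conditions. Any semi-algebraic $X\subset\Fb^n$ can be written as $X=\{(y,t)\mid \Phi(\operatorname{sign} P_1(y,t),\dots,\operatorname{sign} P_s(y,t))\}$ for a finite family $\mathcal P=\{P_1,\dots,P_s\}\subset\Fb[x_1,\dots,x_{n-1}][t]$ and a Boolean function $\Phi$ on $\{-1,0,1\}^s$. Then $y\in\pr(X)$ if and only if some sign vector $\epsilon$ with $\Phi(\epsilon)$ true is \emph{realized} by $\mathcal P(y,\cdot)$ at some $t\in\Fb$. Hence it suffices to construct a finite partition $\Fb^{n-1}=\bigsqcup_\alpha C_\alpha$ into semi-algebraic pieces on each of which the set of sign vectors realized by $\mathcal P(y,\cdot)$ over $\Fb$ does not depend on $y\in C_\alpha$: then $\pr(X)$ is the union of those $C_\alpha$ admitting a $\Phi$-satisfying realized sign vector, which is semi-algebraic.

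I would prove this partition lemma by induction on the pair $(d,m)$, where $d=\max_i\deg_t P_i$ and $m=\#\{i\mid \deg_t P_i=d\}$, ordered lexicographically. The induction requires first enlarging $\mathcal P$, while keeping it finite, by the coefficients in $t$ of each $P_i$ (polynomials in $y$ alone), by the derivatives $\partial P_i/\partial t$, and by suitable remainders of $P_i$ modulo $P_j$ when $\deg_t P_i\ge\deg_t P_j$, after partitioning $\Fb^{n-1}$ according to the signs of the coefficients of $P_j$ so that its $t$-degree and the sign of its leading coefficient become constant. Arranging this enlargement so that the complexity measure $(d,m)$ strictly decreases once one top-degree polynomial is removed is the delicate bookkeeping, and I expect it to be the main obstacle. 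Granting it, the base case $d=0$ is immediate: partition $\Fb^{n-1}$ by the signs of the $P_i\in\Fb[y]$. For the inductive step, fix a top-degree $P$; after partitioning by the signs of its coefficients, assume $\deg_t P(y,\cdot)$ and the sign of its leading coefficient are constant, then apply the inductive hypothesis to the smaller family containing $\partial P/\partial t$ and the relevant remainders to obtain a semi-algebraic piece $C$ over which the number and the order of the roots $\zeta_1(y)<\dots<\zeta_r(y)$ of $\partial P/\partial t$ are constant and the sign of every member of $\mathcal P$ is constant at each $\zeta_k(y)$ and on each of the $r+1$ complementary intervals of $\Fb$. Since $P$ is strictly monotone between consecutive critical points, it has at most one root in each such interval, and whether it does is decided by the (now determined) signs of $P$ at the two endpoints, using the sign of the leading coefficient together with the parity of $\deg_t P$ at the two unbounded ends; moreover the sign of $P_i$ at a root of $P$ equals the sign, at that point, of the remainder of $P_i$ by $P$ (or by the relevant lower-degree polynomial), which is already determined.

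Thus over $C$ the roots of all of $\mathcal P$ form a fixed number of interlaced semi-algebraic functions, and the sign of each $P_i$ is constant on each of the finitely many resulting strata of $\Fb$; in particular the realized sign vectors are constant on $C$, which is exactly what the partition lemma requires. Combined with the reduction of the second paragraph, this proves the theorem. Alternatively, one may deduce the statement from quantifier elimination for the theory of real closed fields, of which it is precisely the geometric shadow, but the argument above has the advantage of being elementary and self-contained.
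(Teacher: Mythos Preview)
The paper does not prove this theorem; it is quoted as a background result from Bochnak--Coste--Roy \cite[Theorem~5.2.1]{BochnakCosteRoy_RealAlgebraicGeometry} and used as a black box throughout. So there is no ``paper's own proof'' to compare against.

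Your outline is the standard Cohen--H\"ormander elimination (equivalently, the backbone of cylindrical algebraic decomposition), and the overall strategy is correct: reduce to realizability of sign conditions, then inductively eliminate a top-degree polynomial $P$ by enlarging the family with $\partial P/\partial t$ and the pseudo-remainders needed to evaluate $P$ at the roots of the lower-degree polynomials (and at the roots of $\partial P/\partial t$), after stratifying the parameter space by the signs of leading coefficients. Two places where your write-up is a bit loose: first, the remainders you need are those of $P$ by each of the other polynomials and by $\partial P/\partial t$ (so that you can read off $\operatorname{sign} P$ at their roots and thus locate the roots of $P$ relative to theirs), rather than ``the remainder of $P_i$ by $P$'' as you wrote; the sign of $P_i$ at a root of $P$ then follows from the interlacing, not from a separate remainder. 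Second, the induction as stated requires that the enlargement step strictly lowers the complexity $(d,m)$; this is where one must use \emph{pseudo}-division (multiplying by a suitable power of the leading coefficient of the divisor) rather than ordinary division, and it is worth saying so explicitly. With those two clarifications the argument goes through and is essentially the proof in the cited reference.
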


As a consequence of the Tarski--Seidenberg theorem, one can prove that closures and interiors of semi-algebraic sets are also semi-algebraic sets, see \cite[Propositions 2.2.2]{BochnakCosteRoy_RealAlgebraicGeometry}, that the composition of two semi-algebraic maps is a semi-algebraic map \cite[Propositions 2.2.6]{BochnakCosteRoy_RealAlgebraicGeometry}, and that the images and preimages of semi-algebraic sets under semi-algebraic maps are also semi-algebraic sets \cite[Proposition 2.2.7]{BochnakCosteRoy_RealAlgebraicGeometry}.

Recall that if $\Fb\neq \Rb$, then $\Fb$ is totally-disconnected for the order topology on $\Fb$.
However we have the following notion of connectedness for semi-algebraic sets.

\begin{definition} \label{dfn_SemiAlgConn}
A semi-algebraic set $X \subset \Fb^n$ is \emph{semi-algebraically connected} if it cannot be written as the disjoint union of two non-empty semi-algebraic subsets of $\Fb^n$ both of which are closed (and hence open) in $X$. 
\end{definition}

This notion of semi-algebraic connectedness agrees with connectedness over $\Rb$, and in general gives rise to well-behaved components:

\begin{theorem}[{\cite[Theorem 2.4.4 and Theorem 2.4.5]{BochnakCosteRoy_RealAlgebraicGeometry}}]
\label{thm_RConnSemiAlgConn}
Every semi-algebraic subset $X\subset\Fb^n$ can be written as a disjoint union of a finite number of semi-algebraically connected semi-algebraic sets, each of which is both closed and open in $X$.
When $\Fb=\Rb$, a semi-algebraic set of $\Rb^n$ is semi-algebraically connected if and only if it is connected for the analytic topology.
\end{theorem}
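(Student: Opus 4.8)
The plan is to derive both assertions from the \emph{cylindrical algebraic decomposition} (CAD) of semi-algebraic sets, which I would establish first. Precisely: given any finite family of polynomials $f_1,\dots,f_k \in \Fb[X_1,\dots,X_n]$, there is a finite partition of $\Fb^n$ into semi-algebraic \emph{cells}, each semi-algebraically homeomorphic to $(0,1)^d$ for some $0 \le d \le n$ (with $(0,1)^0$ a point), such that every $f_i$ has constant sign on each cell; moreover the closure relations between cells have the familiar cylindrical structure. This is proven by induction on $n$: for $n=1$ a polynomial has finitely many roots, which cut $\Fb$ into finitely many open intervals and points, each interval being affinely homeomorphic to $(0,1)$; for the inductive step one applies the Tarski--Seidenberg theorem (\Cref{thm_TarskiSeidenbergproj}) to the projection $\Fb^n \to \Fb^{n-1}$ to decompose $\Fb^{n-1}$ into cells over which the roots of each $f_i$ in the last variable are constant in number and vary continuously and semi-algebraically, then cuts each resulting cylinder along the graphs of these root functions. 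I expect \emph{this} step to be the main obstacle: continuity and semi-algebraicity of the root functions require some care, and this is exactly the content of \cite[§2.3]{BochnakCosteRoy_RealAlgebraicGeometry}; the rest of the argument is bookkeeping built on top of it.

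Next I would check that each cell is semi-algebraically connected. Since a semi-algebraic homeomorphism carries semi-algebraic clopen subsets to semi-algebraic clopen subsets, this reduces to showing $(0,1)^d$ is semi-algebraically connected. For $d=1$ this follows from the $n=1$ description: a non-empty proper semi-algebraic subset of an open interval that is both open and closed in it would have a boundary point inside the interval, contradicting openness or closedness. For $d \ge 2$ one argues by induction on $d$: if $A \subseteq (0,1)^d$ is semi-algebraic and clopen, then for each $t$ its slice $A \cap \bigl((0,1)^{d-1}\times\{t\}\bigr)$ is semi-algebraic and clopen, hence by induction empty or everything; the set of $t$ for which it is everything is semi-algebraic by Tarski--Seidenberg and clopen in $(0,1)$, hence empty or all of $(0,1)$.

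With these in hand I would prove the first sentence. Apply the CAD to polynomials defining $X$, so that $X = C_1 \sqcup \cdots \sqcup C_m$ with each $C_i$ a semi-algebraically connected cell. Form the graph on $\{C_1,\dots,C_m\}$ joining $C_i$ and $C_j$ whenever $C_i \cup C_j$ is semi-algebraically connected, and let $X_1,\dots,X_r$ be the unions of cells over the connected components of this graph. Each $X_i$ is semi-algebraic and semi-algebraically connected: a semi-algebraic subset of $X_i$ that is clopen in $X_i$ is clopen in each $C_\ell \subset X_i$, hence (by connectedness of $C_\ell$) a union of whole cells, and closed under the edge relation, hence empty or all of $X_i$. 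By maximality the $X_i$ are pairwise disjoint and each is clopen in $X$, giving the required decomposition; finiteness is immediate since $r \le m$. Uniqueness follows the standard pattern: if $X = \bigsqcup_\ell Y_\ell$ is any decomposition into semi-algebraically connected clopen pieces, then each $X_i$ meets some $Y_\ell$, and $X_i \cap Y_\ell$ is clopen in $X_i$, so $X_i \subseteq Y_\ell$; symmetrically $Y_\ell \subseteq X_i$.

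Finally, the comparison with $\Rb$. Over $\Rb$ a connected semi-algebraic set is trivially semi-algebraically connected, so only the converse needs work. I would show every semi-algebraically connected semi-algebraic $X \subseteq \Rb^n$ is path-connected, hence connected. Each cell is homeomorphic to $(0,1)^d$, so path-connected; and the cylindrical structure of the CAD lets one join a point of $C_i$ to a point of $C_j$ by a path inside $C_i \cup C_j$ precisely when $C_i \cup C_j$ is (semi-algebraically) connected, by following graph sections across the common boundary. Thus each $X_i$ from Step~3, being a chained union of path-connected pieces, is path-connected, and therefore the semi-algebraically connected components $X_1,\dots,X_r$ of $X$ are exactly its topological connected components.
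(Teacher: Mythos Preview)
The paper does not prove this theorem itself; it is quoted as background from \cite{BochnakCosteRoy_RealAlgebraicGeometry}, and your sketch via cylindrical algebraic decomposition is exactly the argument given there and is essentially correct.

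One step you pass over too quickly is why the unions $X_\ell$ over graph-components of cells are clopen in $X$; the phrase ``by maximality'' does not quite do it, since you have not yet shown that the $X_\ell$ are the maximal semi-algebraically connected subsets (only that they are semi-algebraically connected and partition $X$). The gap is easily filled: if $X_\ell$ were not closed in $X$, pick $p\in\overline{X_\ell}\cap(X\setminus X_\ell)$; since $\overline{X_\ell}=\bigcup_{C_i\subset X_\ell}\overline{C_i}$ is a finite union, $p\in\overline{C_i}\cap C_j$ for some cell $C_i\subset X_\ell$ and some cell $C_j\not\subset X_\ell$. But then $C_i\cup C_j$ is semi-algebraically connected --- any semi-algebraic subset closed in $C_i\cup C_j$ and containing $C_i$ must contain $\overline{C_i}\cap(C_i\cup C_j)\ni p$, hence meets the connected cell $C_j$, hence contains it --- giving an edge from $C_i$ to $C_j$ and a contradiction. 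Openness follows since $X\setminus X_\ell$ is the finite union of the remaining (closed) $X_k$.
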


The finite number of semi-algebraically connected semi-algebraic sets given by \Cref{thm_RConnSemiAlgConn} are called the \emph{semi-algebraically connected components} of $X$.
When $\Fb=\Rb$ these agree with the connected components for the analytic topology.

Finally, over $\Rb$ a subset in $\Rb^n$ is compact if and only if it is closed and bounded.
The following proposition justifies why closed and bounded semi-algebraic sets are the right analogue of compact sets in real algebraic geometry.
\begin{proposition}[{\cite[Proposition 2.5.7]{BochnakCosteRoy_RealAlgebraicGeometry}}]
\label{propo_ProjClosedBd}
    Let $X$ be a closed and bounded semi-algebraic subset of $\Fb^n$ and $\pr\from \Fb^n \to \Fb^{n-1}$ the projection on the space of the first $n-1$ coordinates. 
    Then $\pr(X)$ is a closed and bounded semi-algebraic set.
\end{proposition}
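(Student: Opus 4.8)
The plan is to dispatch semi-algebraicity and boundedness directly, and to reduce closedness to the completeness of the theory of real closed fields (the Tarski--Seidenberg transfer principle). Semi-algebraicity of $\pr(X)$ is exactly \Cref{thm_TarskiSeidenbergproj}. For boundedness, if $X \subseteq [-R,R]^n$ for some $R \in \Fb$, then visibly $\pr(X) \subseteq [-R,R]^{n-1}$. So the real content is that $\pr(X)$ is closed, and this is the step I expect to be the main obstacle, precisely because over a non-Archimedean $\Fb$ there is no honest compactness to invoke.

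To prove closedness, write $X = \{x \in \Fb^n \mid \Phi(x,a)\}$, where $\Phi(x_1,\dots,x_n,t_1,\dots,t_k)$ is a quantifier-free formula in the language of ordered fields with no parameters and $a \in \Fb^k$ collects the coefficients of the polynomials cutting out $X$; this is possible by the very definition of a semi-algebraic set. Put $\Psi(y,t) :\equiv \exists s\, \Phi(y_1,\dots,y_{n-1},s,t)$, so that $\{y \in \Fb^{n-1}\mid \Psi(y,a)\} = \pr(X)$, and consider, in the free variable tuple $t$, the formulas
\begin{align*}
\mathrm{Bdd}(t) &:\equiv \exists M\ \forall x\ \bigl(\Phi(x,t) \to \textstyle\sum_{i=1}^n x_i^2 \le M\bigr),\\
\mathrm{Cl}(t) &:\equiv \forall y\ \Bigl[\bigl(\forall \varepsilon>0\ \exists x\ (\Phi(x,t) \wedge \textstyle\sum_{i=1}^n (x_i-y_i)^2 < \varepsilon)\bigr) \to \Phi(y,t)\Bigr],\\
\mathrm{Cl}'(t) &:\equiv \forall z\ \Bigl[\bigl(\forall \varepsilon>0\ \exists y\ (\Psi(y,t) \wedge \textstyle\sum_{i=1}^{n-1} (y_i-z_i)^2 < \varepsilon)\bigr) \to \Psi(z,t)\Bigr].
\end{align*}
Since $\Fb$ is real closed, positive elements have square roots, so the Euclidean balls of radius in $\Fb_{>0}$ and the coordinate boxes generate the same topology on $\Fb^m$; hence $\mathrm{Cl}(t)$ (resp.\ $\mathrm{Cl}'(t)$) says precisely that $\{x\mid \Phi(x,t)\}$ (resp.\ $\{y\mid \Psi(y,t)\}$) is closed in the order topology. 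Now the sentence $\sigma :\equiv \forall t\,\bigl[(\mathrm{Bdd}(t)\wedge \mathrm{Cl}(t)) \to \mathrm{Cl}'(t)\bigr]$ has no parameters and is true over $\Rb$: a closed bounded subset of $\Rb^n$ is compact, so its image under the continuous map $\pr$ is compact, hence closed. By completeness of the theory of real closed fields, $\sigma$ holds over $\Fb$ as well; specialising to $t=a$ and using that $X$ is closed and bounded yields that $\pr(X)$ is closed, finishing the proof.

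The delicate point is thus the reduction of ``closed and bounded'' to a parameter-free first-order sentence: one must note that finitely many parameters describe $X$ (immediate), that $\mathrm{Cl}(t)$ genuinely captures topological closedness in the order topology of $\Fb^n$ — which uses that $\Fb$ is real closed so that $\varepsilon$-balls interleave with coordinate boxes — and that $\sigma$ is parameter-free so transfer applies. A model-theory-free alternative would induct on $n$, using that a closed bounded semi-algebraic subset of $\Fb$ is a finite union of points and closed bounded intervals and hence has a minimum, and then analyse the closure of the fibrewise-minimum section of $X \to \pr(X)$; the bookkeeping there is heavier, so I would favour the transfer argument above.
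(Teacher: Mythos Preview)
The paper does not give its own proof of this proposition; it is quoted verbatim from Bochnak--Coste--Roy and is used as a black box. So there is no in-paper argument to compare against.

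Your proof is correct. Semi-algebraicity and boundedness are indeed immediate, and your reduction of closedness to a parameter-free first-order sentence $\sigma$ is clean: you correctly package the parameters of $X$ into a tuple $a$, observe that the closedness and boundedness conditions are expressible by first-order formulas in the order topology (using that positive elements of a real closed field have square roots so that $\varepsilon$-balls and coordinate boxes generate the same topology), and then invoke completeness of the theory of real closed fields to transfer the truth of $\sigma$ from $\Rb$---where it is just Heine--Borel---to $\Fb$. There is no circularity: the completeness of RCF is a purely syntactic quantifier-elimination result that does not rely on the proposition you are proving.

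Two remarks. First, in the paper's internal ordering you are making a forward reference (the transfer principle \Cref{thm_TarskiSeidenberg} appears in the next subsection), and in Bochnak--Coste--Roy the proposition likewise sits in Chapter~2 while transfer is only developed in Chapter~5; their original proof is therefore more hands-on, working directly with the cell-decomposition-type structure of semi-algebraic sets rather than appealing to model completeness. Your route is shorter and, once transfer is available, is exactly the kind of argument the paper itself deploys repeatedly in later sections. Second, your parenthetical sketch of a transfer-free inductive proof (via the fibrewise minimum) is the germ of the direct argument; it would work but, as you say, the bookkeeping is heavier.
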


\subsection{Real closed extensions}
\label{s: extensions semi-algebraic sets}
In this section, let $\Fb$ be a real closed field and $\Kb$ a real closed extension of $\Fb$. 

\begin{definition}
\label{dfn_ExtSemiAlgSets}
Let $X \subset \Fb^n$ be a semi-algebraic set given as
\[ X = \bigcup_{i=1}^s\bigcap_{j=1}^{r_i} \{x \in \Fb^n \mid f_{ij}(x) \ast_{ij} 0 \},\]
with $f_{ij} \in \Fb[X_1,\ldots,X_n]$ and $\ast_{ij}$ is either $<$ or $=$ for $i = 1, \ldots,s$ and $j=1,\ldots,r_i$.
The \emph{$\Kb$-extension $X_{\Kb}$} of $X$ is the set given by the same Boolean combination of sign conditions as $X$, more precisely
\[X_{\Kb} = \bigcup_{i=1}^s\bigcap_{j=1}^{r_i} \{x \in \Kb^n \mid f_{ij}(x) \ast_{ij} 0 \}.\]
\end{definition}

Note that $X_{\Kb}$ is semi-algebraic. Also, using the Tarski--Seidenberg theorem (\Cref{thm_TarskiSeidenbergproj}), one can show that $X_{\Kb}$ depends only on the set $X$, and not on the Boolean combination describing it, see \cite[Proposition 5.1.1]{BochnakCosteRoy_RealAlgebraicGeometry}.

The Tarski--Seidenberg theorem also implies a useful transfer principle. In order to describe this, we need several concepts from logic.

\begin{definition}
    A \emph{first-order formula of the language of ordered fields with parameters in $\Fb$},     abbreviated as a \emph{formula of $\mathcal L(\Fb)$}, is a formula written with a finite number of conjunctions, disjunctions, negations, implications, biconditionals, and universal or existential quantifiers on variables, starting from atomic formulas which are formulas of the kind
    \[f(x_1,\ldots, x_n) = 0 \quad \textnormal{or} \quad g(x_1,\ldots,x_n) > 0, \]
    where $f$ and $g$ are polynomials with coefficients in $\Fb$. 
    The \emph{free variables} of such a formula are those variables of the polynomials appearing in the formula which are not quantified. If such a formula has no free variables, then we call it a \emph{sentence of $\mathcal L(\Fb)$}.
\end{definition}

The Tarski--Seidenberg theorem, restated in first order logic, says that every formula of $\mathcal L(\Fb)$ is equivalent to a quantifier-free one. In particular, every sentence of $\mathcal L(\Fb)$ is equivalent to a quantifier-free one, and so its truth value is decidable. As a consequence, we have:

\begin{proposition}[Tarski--Seidenberg transfer principle, {\cite[Proposition 5.2.3]{BochnakCosteRoy_RealAlgebraicGeometry}}]
\label{thm_TarskiSeidenberg}
    If $\Phi$ is a sentence of $\mathcal{L}(\Fb)$, then $\Phi$ holds true in $\Fb$ if and only if it holds true in $\Kb$.
\end{proposition}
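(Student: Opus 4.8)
The plan is to use the quantifier-elimination form of Tarski--Seidenberg recalled above to replace $\Phi$ by an equivalent quantifier-free sentence, and then to note that a quantifier-free sentence is a Boolean combination of conditions $c = 0$ or $c > 0$ imposed on fixed elements $c \in \Fb$ (the values of constant polynomials with coefficients in $\Fb$), whose truth is visibly unchanged under the order-preserving embedding $\Fb \hookrightarrow \Kb$. Note first that it is enough to prove one implication --- that $\Phi$ true in $\Fb$ forces $\Phi$ to hold in $\Kb$ --- since applying this to the sentence $\neg\Phi$ yields the converse.

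Concretely, I would prove by induction on the number of quantifiers of a formula $\Phi(x_1,\dots,x_n)$ of $\mathcal L(\Fb)$ that the set $\{\,x \in \Kb^n \mid \Kb \models \Phi(x)\,\}$ is the $\Kb$-extension of $\{\,x \in \Fb^n \mid \Fb \models \Phi(x)\,\}$; the case of a sentence is $n = 0$, where a semi-algebraic subset of $\Fb^0$ or $\Kb^0$ is just a truth value. When $\Phi$ is quantifier-free this is clear: a polynomial over $\Fb$ evaluated at a point of $\Fb^n$ produces the same element of $\Fb \subseteq \Kb$ in either field and the sign of an element of $\Fb$ does not change in $\Kb$, so the two solution sets are cut out by the same sign conditions (this uses that the $\Kb$-extension is well defined independently of the chosen description, \cite[Proposition 5.1.1]{BochnakCosteRoy_RealAlgebraicGeometry}). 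The Boolean steps are trivial. For an existential step $\Phi = \exists x_{n+1}\,\Theta$, the inductive hypothesis identifies the $\Kb$-solution set of $\Theta$ with $S_{\Kb}$, where $S \subseteq \Fb^{n+1}$ is the $\Fb$-solution set of $\Theta$; hence the $\Kb$-solution set of $\Phi$ equals $\pr(S_{\Kb})$, and everything reduces to the identity $\pr(S_{\Kb}) = (\pr S)_{\Kb}$ for semi-algebraic $S$.

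This identity is the heart of the argument and the step I expect to be the main obstacle. By Tarski--Seidenberg (\Cref{thm_TarskiSeidenbergproj}) the projection $\pr S$ is semi-algebraic over $\Fb$, hence defined by some quantifier-free $\Psi \in \mathcal L(\Fb)$; what has to be shown is that this \emph{same} $\Psi$ defines $\pr(S_{\Kb})$ over $\Kb$. This follows from the field-independence of the elimination procedure: eliminating $x_{n+1}$ from the quantifier-free description of $S$ --- by a cylindrical algebraic decomposition, or by sign determination of a family of subresultants via Thom's lemma --- yields a quantifier-free formula whose coefficients are universal polynomial expressions over $\Zb$ in the coefficients of the input, and whose correctness holds over every real closed field containing those coefficients, in particular over $\Fb$ and over $\Kb$ simultaneously. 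Conceptually this is the assertion that the theory of real closed fields is model complete, so that the inclusion $\Fb \subseteq \Kb$ of two of its models is elementary --- which, with the parameters of $\Phi$ adjoined as constants, is exactly the proposition.
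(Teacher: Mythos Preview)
Your proposal is correct and follows essentially the same approach as the paper: the paper does not give a full proof but simply notes (in the sentence preceding the proposition) that Tarski--Seidenberg quantifier elimination reduces any sentence of $\mathcal L(\Fb)$ to a quantifier-free one, whose truth value is then decidable and hence preserved under the extension $\Fb\hookrightarrow\Kb$. Your write-up is a more detailed elaboration of precisely this idea, with the inductive reduction to the projection identity $\pr(S_{\Kb})=(\pr S)_{\Kb}$ making explicit what the paper leaves to the cited reference.
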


Using this one can prove an extension theorem for semi-algebraic maps.

\begin{theorem}[{\cite[Propositions 5.3.1, 5.3.3, 5.3.5]{BochnakCosteRoy_RealAlgebraicGeometry}}]
\label{thm_ExtSemiAlgMaps}
Let $X \subset \Fb^n$ and $Y \subset \Fb^m$ be two semi-algebraic sets, and $f \from X \to Y$ a semi-algebraic map.
Then the $\Kb$-extension of the graph of $f$ is the graph of a semi-algebraic map $f_{\Kb} \from X_{\Kb} \to Y_{\Kb}$, that is called the \emph{$\Kb$-extension} of $f$.
Furthermore, $f$ is injective (respectively surjective, respectively bijective) if and only if $f_{\Kb}$ is injective (respectively surjective, respectively bijective), and $f$ is continuous if and only if $f_{\Kb}$ is continuous.
\end{theorem}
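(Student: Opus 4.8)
The plan is to deduce \Cref{thm_ExtSemiAlgMaps} from the results already recalled in the excerpt: the extension operation $X \mapsto X_{\Kb}$ on semi-algebraic sets, its independence of the chosen Boolean description (\cite[Proposition 5.1.1]{BochnakCosteRoy_RealAlgebraicGeometry}), the Tarski--Seidenberg theorem on quantifier elimination (\Cref{thm_TarskiSeidenbergproj}), and the transfer principle (\Cref{thm_TarskiSeidenberg}). The overall strategy is that every property we wish to transfer (''being the graph of a map'', injectivity, surjectivity, continuity) can be written as a first-order sentence of $\mathcal{L}(\Fb)$ once we fix defining formulas for $X$, $Y$ and $\mathrm{Graph}(f)$, so the transfer principle does the work.

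First I would fix a semi-algebraic description of $G \coloneqq \mathrm{Graph}(f) \subseteq \Fb^{n+m}$ and of $X \subseteq \Fb^n$, $Y \subseteq \Fb^m$, and set $G_{\Kb}$, $X_{\Kb}$, $Y_{\Kb}$ to be their $\Kb$-extensions; by \cite[Proposition 5.1.1]{BochnakCosteRoy_RealAlgebraicGeometry} these do not depend on the descriptions. The statement ''for every $x \in X$ there is exactly one $y \in Y$ with $(x,y) \in G$'' is a sentence of $\mathcal{L}(\Fb)$ (quantifiers ranging over $\Fb^n$ and $\Fb^m$, atomic formulas the sign conditions defining $X$, $Y$, $G$), and it holds in $\Fb$ because $f$ is a function; by \Cref{thm_TarskiSeidenberg} it holds in $\Kb$, so $G_{\Kb}$ is the graph of a map $f_{\Kb}\from X_{\Kb} \to Y_{\Kb}$. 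The same scheme handles the remaining claims: injectivity is the sentence $\forall x_1 \forall x_2 \forall y\, ((x_1,y)\in G \wedge (x_2,y)\in G \Rightarrow x_1 = x_2)$; surjectivity is $\forall y\,(y \in Y \Rightarrow \exists x\, (x,y) \in G)$; bijectivity is their conjunction. For continuity at a point, I would use the order-topology $\varepsilon$--$\delta$ formulation: $\forall a \forall \varepsilon\, (a \in X \wedge \varepsilon > 0 \Rightarrow \exists \delta\, (\delta > 0 \wedge \forall x\, (x \in X \wedge \|x - a\|^2 < \delta^2 \Rightarrow \|f(x) - f(a)\|^2 < \varepsilon^2)))$, which is again a sentence of $\mathcal{L}(\Fb)$ (squared norms keep everything polynomial, and one should note the basic semi-algebraic sets form a basis of the product topology, as recalled just before \Cref{dfn_SemiAlgSet}). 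Each sentence is true in $\Fb$ iff true in $\Kb$, giving all the ''if and only if'' assertions at once.

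The one genuine subtlety — and the step I expect to need the most care — is that the transfer principle presupposes that $X_{\Kb}$, $Y_{\Kb}$, $G_{\Kb}$ really are the sets cut out \emph{in $\Kb$} by the formulas we use to express these properties, and that extension commutes with the Boolean operations, products and projections appearing implicitly (e.g.\ the graph of $f_{\Kb}$ is by definition $G_{\Kb}$, but one must check $G_{\Kb} \subseteq X_{\Kb} \times Y_{\Kb}$, which follows since $\mathrm{Graph}(f) \subseteq X \times Y$ is a first-order fact that transfers). This is exactly the content of the cited \cite[Propositions 5.3.1, 5.3.3, 5.3.5]{BochnakCosteRoy_RealAlgebraicGeometry}, so in the write-up I would state that extension is compatible with Boolean combinations and Cartesian products (an immediate consequence of \Cref{dfn_ExtSemiAlgSets} together with \cite[Proposition 5.1.1]{BochnakCosteRoy_RealAlgebraicGeometry}), and then invoke \Cref{thm_TarskiSeidenberg} for each of the four sentences above. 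No further computation is needed; the proof is essentially a dictionary between topological/set-theoretic properties and sentences of $\mathcal{L}(\Fb)$.
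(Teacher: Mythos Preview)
Your proposal is correct and follows exactly the approach the paper relies on: the paper does not give its own proof of this statement but cites \cite[Propositions 5.3.1, 5.3.3, 5.3.5]{BochnakCosteRoy_RealAlgebraicGeometry} and explicitly remarks that ``using this [the transfer principle] one can prove an extension theorem for semi-algebraic maps''. Your argument via first-order sentences and \Cref{thm_TarskiSeidenberg} is precisely how those propositions are established in that reference.
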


We noted previously that closures and interiors of semi-algebraic sets are semi-algebraic, and that the semi-algebraic connected components of a semi-algebraic set are also semi-algebraic. Thus, the transfer principle also has the following consequences.

\begin{theorem}[{\cite[Propositions 5.3.5 and 5.3.6]{BochnakCosteRoy_RealAlgebraicGeometry}}]
\label{thm_ExtConnComp}
Let $X \subset \Fb^n$ be semi-algebraic.
Then 
\begin{enumerate}
    \item 
    \label{thm_ExtConnComp: closedness}
    $X$ is closed (respectively open) if and only if $X_{\Kb}$ is closed (respectively open);
    \item 
    \label{thm_ExtConnComp: closed and bounded}
    $X$ is closed and bounded if and only if $X_{\Kb}$ is closed and bounded;
    \item $X$ is semi-algebraically connected if and only if $X_{\Kb}$ is semi-algebraically connected;
    \item if $C_1, \ldots, C_m$ are the semi-algebraically connected components of $X$, then $(C_1)_{\Kb}, \ldots, (C_m)_{\Kb}$ are the semi-algebraically connected components of $X_{\Kb}$.
\end{enumerate}
\end{theorem}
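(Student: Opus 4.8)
The plan is to deduce all four statements from the Tarski--Seidenberg transfer principle (\Cref{thm_TarskiSeidenberg}), by rephrasing each property as the truth of a first-order sentence of $\mathcal L(\Fb)$ in which the polynomials defining $X$ figure only as parameters, and by exploiting that forming the $\Kb$-extension amounts precisely to reinterpreting those formulas over $\Kb$. First I would record two auxiliary facts. (i) Closure and interior are given by \emph{uniform} first-order formulas: $x\in\overline X$ iff $\forall\varepsilon>0\,\exists y\,(y\in X\wedge\|x-y\|^2<\varepsilon^2)$, and dually for the interior. Since such a formula is equivalent over $\Fb$ to a quantifier-free one (\Cref{thm_TarskiSeidenbergproj}) and this equivalence transfers to $\Kb$, one gets $(\overline X)_{\Kb}=\overline{X_{\Kb}}$, and similarly that taking interiors commutes with $\Kb$-extension. (ii) A semi-algebraic set $S\subseteq\Fb^n$ is empty iff $S_{\Kb}$ is empty, because the sentence $\neg\exists x\,(x\in S)$ transfers.

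For parts (1) and (2) this is then immediate. Since $X$ is closed iff $\overline X\setminus X=\emptyset$, and the $\Kb$-extension of $\overline X\setminus X$ equals $\overline{X_{\Kb}}\setminus X_{\Kb}$ by (i), fact (ii) shows $X$ is closed iff $X_{\Kb}$ is closed; openness follows by passing to complements, using $(\Fb^n\setminus X)_{\Kb}=\Kb^n\setminus X_{\Kb}$, or dually via interiors. For boundedness I would transfer the sentence ``$\exists M\,\forall x\,(x\in X\to\|x\|^2\le M^2)$'' directly. Conjoining the two gives (2).

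Parts (3) and (4) are the substantial ones, the obstacle being that ``$X$ is semi-algebraically connected'' a priori quantifies over semi-algebraic subsets of $X$ and is not visibly first-order. To get around this I would invoke a cell decomposition adapted to $X$ of cylindrical-algebraic type: $\Fb^n$ is partitioned into finitely many cells, each semi-algebraically homeomorphic to an open box $(0,1)^d_\Fb$, with $X$ a union of cells. The construction is carried out by a fixed recursive procedure on the polynomials defining $X$, whose real-root data behave compatibly under the extension $\Fb\subseteq\Kb$; running it over $\Kb$ then produces a cell decomposition adapted to $X_{\Kb}$ whose cells are exactly the $\Kb$-extensions of the cells over $\Fb$, and each such extension is semi-algebraically homeomorphic to $(0,1)^d_{\Kb}$ (by \Cref{thm_ExtSemiAlgMaps}), hence semi-algebraically connected. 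Two cells contained in $X$ lie in the same component of $X$ exactly when they are joined by a chain of cells of $X$ with consecutive closures meeting; but ``$\overline c\cap\overline{c'}=\emptyset$'' is the emptiness of a semi-algebraic set, so by (i) and (ii) it has the same truth value over $\Fb$ and over $\Kb$. Hence the adjacency graph on cells, and with it the partition into components, is literally the same on both sides. Since $\Kb$-extension commutes with finite unions, each semi-algebraically connected component $C_i$ of $X$ (semi-algebraic by \Cref{thm_RConnSemiAlgConn}) is a union of cells whose $\Kb$-extension $(C_i)_{\Kb}$ is the union of the $\Kb$-extensions of the same cells, hence semi-algebraically connected; it is clopen in $X_{\Kb}$ by the argument of (1) applied to the emptiness statements encoding ``$C_i=X\cap\overline{C_i}$'' and its open analogue; and the $(C_i)_{\Kb}$ are pairwise disjoint with union $X_{\Kb}$ by transferring Boolean identities. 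This yields (4), and (3) is the case of a single component.

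The only step where I expect genuine work, rather than bookkeeping with the transfer principle, is the compatibility of the cell decomposition with the extension $\Fb\subseteq\Kb$ --- equivalently, producing a single first-order formula $\theta(x,y)$ (with the coefficients of the defining polynomials of $X$ as parameters) expressing ``$x$ and $y$ lie in the same semi-algebraically connected component''. An alternative of the same real-algebraic cost would be to combine the equivalence of semi-algebraic connectedness with semi-algebraic path-connectedness (\Cref{thm_RConnSemiAlgConn}) with a uniform bound, depending only on the complexity of $X$, on the number and degrees of the pieces of a connecting path, so that existence of such a path becomes a first-order condition.
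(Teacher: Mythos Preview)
The paper does not prove this theorem; it is quoted as background from Bochnak--Coste--Roy (Propositions 5.3.5 and 5.3.6) without argument. Your sketch is correct and follows essentially the approach of that reference: parts (1) and (2) are direct applications of the transfer principle to first-order sentences, while parts (3) and (4) go through a cylindrical algebraic decomposition adapted to $X$ whose cells and adjacency relations are preserved under $\Kb$-extension. You have correctly identified the compatibility of the cell decomposition with the field extension as the only substantive step; the alternative you mention, via semi-algebraic path-connectedness together with uniform complexity bounds on connecting paths, is indeed another standard route to the same conclusion.
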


\subsection{Completeness of ordered fields}
\label{subsection:CompletenessOrderedFields}

In the real case, the extension of positive boundary maps defined on a dense subset of $\Sb^1$ relies on the completeness properties of the reals.
In the case of non-Archimedean fields there are several non-equivalent notions of completeness.
We focus on the following, which will allow us to extend boundary maps.

\begin{definition}
\label{dfn: Cantor complete}
    An ordered field is \emph{Cantor complete} if every nested sequence of closed bounded intervals has non-empty intersection.
\end{definition}

Robinson fields (\Cref{example:RobinsonField}) are also examples of Cantor complete ordered fields.
This is stated without proof in e.g.\ \cite[Example 6.10]{hall2015ordered}.
We thus give a proof here, which is adapted from the proof that the hyperreals are Cantor complete in \cite[Theorem 11.10.1]{Goldblatt}.

\begin{proposition}
    Robinson fields are Cantor complete.
\end{proposition}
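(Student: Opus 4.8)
The plan is to represent a Robinson field $\Rbom = \Oc^\lambda / \Ic^\lambda$ and take a nested sequence of closed bounded intervals $[a^{(k)}, b^{(k)}]$ in $\Rbom$ with $a^{(k)} \leq a^{(k+1)} \leq b^{(k+1)} \leq b^{(k)}$, and produce a point in the intersection. The natural idea is to lift: choose representatives $a^{(k)} = [a^{(k)}_n]$ and $b^{(k)} = [b^{(k)}_n]$ in $\Oc^\lambda$, and then try to build a sequence $(x_n)_{n \in \Nb}$ in $\Rb$ whose class lies in all the intervals. The key technical tool is that the ultrafilter $\omega$ lets us ``diagonalize'': for each fixed $k$, the inequalities $a^{(k)} \leq_\omega b^{(k)}$ and $a^{(k)} \leq_\omega a^{(k+1)}$, $b^{(k+1)} \leq_\omega b^{(k)}$ hold $\omega$-almost everywhere (after possibly adjusting representatives on an $\omega$-null set we may even assume they hold everywhere, or work with the $\omega$-a.e. versions directly).

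First I would set up the diagonal argument carefully. For each $n \in \Nb$, consider the finitely many (or all $k \leq n$, say) real intervals $[a^{(k)}_n, b^{(k)}_n]$ for $k = 1, \dots, n$. These need not be nested in $\Rb$ for a fixed $n$ — nestedness only holds $\omega$-asymptotically — so I would define $A_n \coloneqq \max_{1 \leq k \leq n} a^{(k)}_n$ and $B_n \coloneqq \min_{1 \leq k \leq n} b^{(k)}_n$ and pick $x_n$ to be, say, $A_n$ if $A_n \leq B_n$, and otherwise an arbitrary value such as $a^{(n)}_n$ (this ``otherwise'' case will be seen to occur only on an $\omega$-null set for each fixed comparison). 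Then I would verify that $x \coloneqq [x_n]$ defines an element of $\Oc^\lambda$: since $a^{(1)}, b^{(1)} \in \Oc^\lambda$ we have $-\lambda^m \leq_\omega a^{(1)}$ and $b^{(1)} \leq_\omega \lambda^m$ for some $m$, and $a^{(1)} \leq_\omega x \leq_\omega b^{(1)}$ should follow, placing $x$ in the order-convex ring $\Oc^\lambda$.

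The heart of the proof is then to show, for each fixed $k$, that $a^{(k)} \leq_\omega x$ and $x \leq_\omega b^{(k)}$ in $\Rb_\omega$, i.e. that $a^{(k)}_n \leq x_n \leq b^{(k)}_n$ for $\omega$-almost all $n$. Fix $k$. For all $n \geq k$, by construction $x_n \geq A_n \geq a^{(k)}_n$ whenever we are in the first case $A_n \leq B_n$; so I must show the first case holds $\omega$-a.e., i.e. that $\max_{j \leq n} a^{(j)}_n \leq \min_{j \leq n} b^{(j)}_n$ for $\omega$-almost all $n$. This is where I expect the main obstacle: a priori this is a statement about growing families of indices, and the ultrafilter only controls each pairwise comparison $a^{(i)}_n \leq b^{(j)}_n$ (which holds $\omega$-a.e. by nestedness, since $[a^{(i)}, b^{(i)}]$ and $[a^{(j)}, b^{(j)}]$ are nested in $\Rbom$ hence $a^{(i)} \leq_\omega b^{(j)}$) on its own $\omega$-full set, and an infinite intersection of $\omega$-full sets need not be $\omega$-full. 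The standard resolution — and the reason the diagonal is capped at $k \leq n$ — is a ``first $n$ coordinates'' trick: one shows that for each fixed pair $(i,j)$ the set $\{n : a^{(i)}_n \leq b^{(j)}_n\}$ is $\omega$-full, hence its complement $N_{ij}$ is $\omega$-null; then for each $n$, the finitely many bad pairs with $i,j \leq n$ contribute, and one argues that $\{n : \exists\, i,j \leq n \text{ with } n \in N_{ij}\}$ is $\omega$-null because it is contained in $\bigcup_{i,j \leq f(n)} N_{ij}$ in a way that, combined with the fact that each $N_{ij}$ excludes a final (cofinite-in-$\omega$-measure) set, forces it to be $\omega$-null. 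Concretely: for fixed $k$, the set $\{n : a^{(k)}_n \leq x_n \leq b^{(k)}_n\}$ contains $\{n \geq k\} \cap \bigcap_{i,j \leq k}\{n : a^{(i)}_n \leq b^{(j)}_n \text{ and nestedness at level } \leq k \text{ holds}\}$ once we re-examine the definition to only need level-$\leq k$ comparisons eventually; this is a finite intersection of $\omega$-full sets, hence $\omega$-full. I would phrase the construction so that membership of $x_n$ in $[a^{(k)}_n, b^{(k)}_n]$ only requires finitely many ($\omega$-full) conditions for each $k$, completing the argument. Finally, passing to the quotient by $\Ic^\lambda$, the class of $x$ in $\Rbom$ lies in every $[a^{(k)}, b^{(k)}]$, so the intersection is non-empty.
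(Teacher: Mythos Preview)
Your overall strategy---lift to representatives in $\Oc^\lambda\subset\Rb_\omega$, build a diagonal element, then verify membership in each interval using only finitely many $\omega$-full conditions---is correct and will work. However, the concrete construction you give has exactly the gap you anticipate but do not close. With $x_n=A_n=\max_{k\le n}a^{(k)}_n$ in the ``good'' case $A_n\le B_n$ and arbitrary otherwise, showing $a^{(k)}_n\le x_n\le b^{(k)}_n$ for $\omega$-almost all $n$ forces you to know that the good case itself holds $\omega$-almost everywhere. But $\{n:A_n>B_n\}=\{n:\exists\,i,j\le n,\ n\in N_{ij}\}$ is a union over a family growing with $n$, and there is no reason it should be $\omega$-null (for instance, if each $N_{k,1}$ happened to be $\{k\}$, every $n\ge 1$ would be bad). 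The inclusion you assert with ``$i,j\le k$'' is therefore false for the $x_n$ you actually defined: whether you are in the good case at index $n$ depends on all levels up to $n$, not just up to $k$.

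The fix you gesture at is easy to make precise. For each $n$, let $m(n)$ be the largest $m\le n$ such that the first $m$ real intervals $[a^{(1)}_n,b^{(1)}_n],\dots,[a^{(m)}_n,b^{(m)}_n]$ are genuinely nested, and set $x_n\coloneqq a^{(m(n))}_n\in\bigcap_{j\le m(n)}[a^{(j)}_n,b^{(j)}_n]$. Then for each fixed $k$, the set $\{n:m(n)\ge k\}$ contains the finite intersection of the $\omega$-full sets witnessing nestedness at levels $\le k$, hence is itself $\omega$-full; on that set $x_n\in[a^{(k)}_n,b^{(k)}_n]$. Order-convexity of $\Oc^\lambda$ and the bound $a^{(1)}\le_\omega x\le_\omega b^{(1)}$ then place $x$ in $\Oc^\lambda$, and its class in $\Rbom$ lies in every $[a^{(k)},b^{(k)}]$.

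The paper proceeds differently: instead of an adaptive diagonal, it preprocesses the representatives, modifying each $(x^k_n)_n$ and $(y^k_n)_n$ on an $\omega$-null set so that for \emph{every} $n$ the real intervals $[x^1_n,y^1_n]\supset[x^2_n,y^2_n]\supset\cdots$ are honestly nested. After that, Cantor completeness of $\Rb$ applied coordinatewise produces $z_n\in\bigcap_k[x^k_n,y^k_n]$ with no diagonal at all. Your route is a bit more direct once the adaptive $m(n)$ is in place; the paper's route makes the reduction to $\Rb$'s completeness cleaner, at the cost of a more technical representative-fixing step.
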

\begin{proof}
    Let $[x^k,y^k] \subset \Rbom$ be a nested sequence of non-empty closed intervals, i.e.\ $x^k, y^k \in \Rbom$ with $x^k \leq x^{k+1} \leq y^{k+1} \leq y^k$ for all $k \in \Nb$.
    Without loss of generality we can assume that all of the above inequalities are strict.
    For all $k \in \Nb$, represent $x^k$ by a sequence $(x_n^k)_{n \in \Nb}$ of elements in $\Rb$, and similarly $y^k$ by a sequence $(y_n^k)_{n \in \Nb}$ of elements in $\Rb$.

    We now argue that we can assume that for every $k,n\in\Nb$, we have
    \[x_n^1<x_n^2<\dots<x_n^k<y_n^k<\dots<y_n^2<y_n^1.\]
   Note that for all $k \in \Nb$ there exists an $\omega$-full set $A_k \subset \Nb$ such that if $n\in A_k$, then $x_n^k <y_n^k$, $x_n^\ell<x_n^k$, and $y_n^k<y_n^\ell$ for all $\ell<k$. For each $n\in\Nb$, set $s_{n,0}\coloneqq0$, and for each $m\in\Nb$, iteratively define 
   \[s_{n,m}\coloneqq\inf\{k \mid k> s_{n,m-1}\text{ and }n\in A_k\}\in\Nb\cup\{\infty\}.\]
   Then for each $n,m\in\Nb$, by replacing every term of the (possibly empty, finite, or infinite) sequences 
   \[(x_n^k)_{s_{n,m-1}<k<s_{n,m}}\quad\text{and}\quad(y_n^k)_{s_{n,m-1}<k<s_{n,m}},\] 
   we may assume that they are increasing and decreasing respectively, and
   \begin{itemize}
       \item if $s_{n,m-1}+1<s_{n,m}<\infty$, then for all $s_{n,m-1}<k<s_{n,m}$, we have
       \[x_n^{s_{n,m-1}}<x_n^k<x_n^{s_{n,m}}\quad\text{and}\quad y_n^{s_{n,m}}<y_n^k<y_n^{s_{n,m-1}},\]
       \item if $s_{n,m-1}<s_{n,m}=\infty$, then for all $k,\ell>s_{n,m-1}$, we have
       \[x_n^{s_{n,m-1}}<x_n^k<y_n^\ell<y_n^{s_{n,m-1}}.\]
   \end{itemize}
   For each $k$, the set of positive integers $n$ such that $x_n^k$ is replaced by the above procedure lies in the complement of $A_k$. Thus, the replaced sequence also represents the point $x_\omega^k$ in $\Rb^\lambda_\omega$. On the other hand, by construction, the required inequalities hold.
   
   For all $n \in \Nb$ we can now consider the nested intersection $\bigcap_{k \in \Nb} [x_n^k,y_n^k]$.
    Since $\Rb$ is Cantor complete, $\bigcap_{k \in \Nb} [x_n^k,y_n^k] \neq \emptyset$, and we choose elements $z_n \in \bigcap_{k \in \Nb} [x_n^k,y_n^k]$ for all $n \in \Nb$.
    It follows that $\ulim z_n \in \bigcap_{k\in \Nb}[x^k, y^k]$, which finishes the proof.
\end{proof}

\begin{remark}
   On the other hand, the field of real Puiseux series (\Cref{example:OrderedFields}) is not Cantor complete. 
Indeed, we have $\bigcap_{k \in \Nb} [k,X^{-1/k}] = \emptyset$. 
\end{remark}

The reals are Cantor-complete, and have the stronger property that any intersection of a nested sequence of closed intervals is an interval. This is far from true over other Cantor complete fields. In fact, this property is equivalent to the supremum property, which is known to characterize $\Rb$ among ordered fields.
For example for a non-Archimedean ordered field $\Fb$ consider $\bigcap_{k \in \Nb}[-1/k,1/k]$, which is equal to the set of \emph{infinitesimals} of $\Fb$, namely $\{x \in \Fb\mid \max\{x,-x\} \leq r, \ \forall\ r\in \Qb_{>0}\}$.
This set is not an interval.\\

The non-empty intersection property of nested subsets can be extended to all closed and bounded semi-algebraic subsets in higher dimensions, since morally closed and bounded semi-algebraic sets are the right analogue of compact sets in real algebraic geometry.

\begin{proposition}\label{propo: cantor complete closed sets intersect non-empty}
    Let $\Fb$ be a real closed Cantor complete field, and $(C_k)$ a sequence of closed and bounded semi-algebraic subsets of $\Fb^n$ with $C_{k+1}\subset C_k$.
    Then 
    \[\bigcap_{k\in \Nb} C_k \neq \emptyset~.\]
\end{proposition}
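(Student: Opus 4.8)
The plan is to reduce the $n$-dimensional statement to the one-dimensional Cantor completeness property by peeling off coordinates one at a time. The key tool is that, by \Cref{propo_ProjClosedBd}, the projection of a closed and bounded semi-algebraic set onto the first $n-1$ coordinates is again closed and bounded semi-algebraic. So let $\pr \colon \Fb^n \to \Fb^{n-1}$ be this projection. Then $(\pr(C_k))_{k \in \Nb}$ is a nested sequence of closed and bounded semi-algebraic subsets of $\Fb^{n-1}$ (nestedness is clear since $C_{k+1} \subset C_k$). Arguing by induction on $n$, with the base case $n=1$ being exactly \Cref{dfn: Cantor complete} (Cantor completeness) together with the fact that a closed and bounded semi-algebraic subset of $\Fb$ is a finite union of closed bounded intervals and isolated points, we may assume there exists a point $x' \in \bigcap_{k} \pr(C_k)$.

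The next step is to promote $x'$ to a point of $\bigcap_k C_k$. Consider the fibers $F_k \coloneqq C_k \cap (\{x'\} \times \Fb) = \{t \in \Fb \mid (x',t) \in C_k\}$. Each $F_k$ is a closed and bounded semi-algebraic subset of $\Fb$ (being the intersection of $C_k$ with an affine line, then projected to the last coordinate — semi-algebraic by Tarski--Seidenberg, closed and bounded because $C_k$ is). Since $x' \in \pr(C_k)$ for every $k$, each $F_k$ is non-empty. The sets $F_k$ are nested: $F_{k+1} \subset F_k$. By the one-dimensional case again, $\bigcap_k F_k \neq \emptyset$; picking $t$ in this intersection gives $(x',t) \in \bigcap_k C_k$, as desired.

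There is one subtlety to address carefully, and I expect it to be the main point requiring attention rather than a genuine obstacle: the one-dimensional statement I am invoking is not literally \Cref{dfn: Cantor complete}, since a closed bounded semi-algebraic subset of $\Fb$ need not be a single interval. One must first establish the lemma that a nested sequence $(D_k)$ of non-empty closed bounded semi-algebraic subsets of $\Fb$ has non-empty intersection. This follows because each $D_k$ decomposes (by \Cref{thm_RConnSemiAlgConn}, or directly) into finitely many semi-algebraically connected components, each of which is a closed bounded interval $[a,b]$ (possibly a point). One can then argue, for instance, by choosing for each $k$ a component $I_k$ of $D_k$ such that $I_k \cap D_{k+1} \neq \emptyset$ and, among those, one can extract (using that $D_k$ has finitely many components and a pigeonhole/König-type argument over the nested structure) a nested subsequence of intervals $I_{k_1} \supset I_{k_1'} \cap D_{k_2} \supset \cdots$; more cleanly, replace $D_k$ by $\widetilde D_k \coloneqq$ the union of those components of $D_k$ that meet $\bigcap_{j} D_j$'s ``trace'' — but the slickest route is: let $a_k = \inf D_k$ and note $D_k$ closed and bounded gives $a_k \in D_k$; the sequence $(a_k)$ need not be monotone, so instead set $m_k \coloneqq \min\{ \text{left endpoints of components of } D_k \text{ meeting } D_{k+1}\}$ and track a single shrinking interval. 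Once this one-dimensional lemma is in hand, the induction above goes through verbatim.

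In fact, the cleanest formulation avoids the component bookkeeping entirely: define $J_k \coloneqq \{ x \in \Fb \mid [x, \sup D_k] \cap D_j \neq \emptyset \text{ for all } j \geq k\}$ — no, this still needs care. I would simply state and prove the one-dimensional lemma as a preliminary, using that each $D_k$ has finitely many semi-algebraically connected components: since the components of $D_{k+1}$ are each contained in a unique component of $D_k$ (by connectedness and nestedness, as components of $D_k$ are open and closed in $D_k$), the components, ordered by inclusion of their ``parents,'' form a finitely-branching tree of infinite height, which by König's lemma has an infinite branch; the corresponding nested sequence of closed bounded intervals has non-empty intersection by Cantor completeness of $\Fb$, and this intersection lies in $\bigcap_k D_k$. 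This finishes the proof.
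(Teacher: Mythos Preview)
Your proposal is correct and follows essentially the same approach as the paper: induction on $n$ via the projection $\pr\colon\Fb^n\to\Fb^{n-1}$ (using \Cref{propo_ProjClosedBd}), then intersecting the nested fibers over a point $x'\in\bigcap_k\pr(C_k)$ by the one-dimensional case. For the base case $n=1$, the paper makes a direct recursive choice of nested components $I_k\subset C_k$ with $I_k\cap C_n\neq\emptyset$ for all $n\ge k$ (implicitly a pigeonhole argument), which is exactly your K\"onig's lemma argument rephrased.
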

\begin{proof}
    We proceed by induction on $n$.
    For the base case $n=1$, each $C_k$ is a finite union of disjoint closed intervals, each of which is a semi-algebraically connected component of $C_k$. Choose recursively, for each $k\in\Nb$, a semi-algebraically connected component $I_k \subset C_k$ such that $I_{k}\subset I_{k-1}$ and $I_k \cap C_n \neq \emptyset$ for all $n\geqslant k$. By the Cantor completeness of $\Fb$, $\bigcap_{k\in \Nb} I_k$ is non-empty and contained in $\bigcap_{k\in \Nb} C_k$.

    Assume now $n > 1$.
    We consider the projection $\pr \from \Fb^n \to \Fb^{n-1}$ onto the first $n-1$ coordinates.
    By the Tarski--Seidenberg principle (\Cref{thm_TarskiSeidenbergproj}), $\pr(C_k)$ is a semi-algebraic subset of $\Fb^{n-1}$ for all $k\in \N$.
    Furthermore, it is closed and bounded by \Cref{propo_ProjClosedBd}, and $\pr(C_{k+1})\subset \pr(C_k)$.
    By the induction hypothesis, there exists $x\in \bigcap_{k\in \Nb} \pr(C_k)$. Set $D_k \coloneqq C_k \cap \pr^{-1}(x)$. Then $D_k$ is a closed non-empty bounded semi-algebraic subset of $\pr^{-1}(x)\simeq \Fb$, and $D_{k+1} \subset D_k$. By the base case, $\bigcap_{k\in \Nb} D_k \subset \bigcap_{k\in \Nb} C_k$ is non-empty.
\end{proof}

Since the closure of a semi-algebraic set is semi-algebraic, this readily yields:

\begin{corollary}
\label{propo:NestedIntersectionPropertyCantorComplete}
    Let $\Fb$ be a real closed Cantor complete field, and $(U_k)$ a sequence of open and bounded semi-algebraic subsets of $\Fb^n$ with $\overline{U}_{k+1}\subset U_k$.
    Then 
    \[\bigcap_{k\in \Nb} U_k \neq \emptyset~.\]
\end{corollary}

\subsection{Semisimple linear semi-algebraic groups}
\label{section:SemiAlgGroups}
Let $\Fb$ be any real closed field. We will now define semisimple linear semi-algebraic groups over $\Fb$ and discuss some of their basic properties.
For an introduction to linear algebraic groups one can consult \cite{borelBook}.
For a detailed account in the context of real closed fields and semisimple linear semi-algebraic groups we highly recommend \cite{Appenzeller_GroupsRCF}.

\begin{definition}
\label{dfn: linear semi-algebraic group}
A \emph{linear semi-algebraic group} over $\Fb$ is a subgroup $G_{\Fb}\subset\GL_n(\Fb)$ (for some $n\in\Nb$) for which there exists an algebraic subgroup $\mathbf G\subset\GL_n$ defined over $\Fb$ so that $G_{\Fb}$ lies in $\mathbf{G}(\Fb)$ and contains the semi-algebraically connected component of $\Id$ in $\mathbf G(\Fb)$. We refer to the minimal such $\mathbf G$ as the linear algebraic group \emph{compatible} with $G_{\Fb}$. We also say that $G_{\Fb}$ is \emph{semisimple} if $\mathbf G$ is semisimple.
\end{definition}

\begin{example}\label{example: group}
    Let $\mathfrak g$ be a semisimple Lie algebra over $\Fb$. The group $\Aut(\mathfrak g)\subset \GL(\mathfrak g)$ is (the group of $\Fb$ points of) a linear semisimple algebraic group over $\Fb$. The semi-algebraically connected component $\Inn(\mathfrak g)$ of $\Aut(\mathfrak g)$ is a semisimple linear semi-algebraic group. 
\end{example}

In the case when $G_{\Fb}$ is semi-algebraically connected, then its compatible linear algebraic group $\mathbf G$ is connected in the algebraic sense (equivalently over $\Rb$, the set of $\Cb$-points $\mathbf{G}(\Cb)$ of $\mathbf{G}$ is connected for the analytic topology).

While a linear semi-algebraic group over $\Fb$ is not necessarily algebraic, it is a finite union of semi-algebraically connected components of $\GL_n(\Fb)$, and is therefore semi-algebraic. All group operations in a linear semi-algebraic group are continuous semi-algebraic maps.

Any semisimple algebraic group $\mathbf{G}$ defined over $\Fb$ is isomorphic to the $\Fb$-extension of an algebraic group defined over $\overline{\Qb}^r$, and two algebraic groups over $\overline{\Qb}^r$ are isomorphic over $\overline{\Qb}^r$ if and only if their $\Fb$-extension are isomorphic over $\Fb$. There is thus no loss of generality in assuming that $\mathbf G$ is defined over $\overline{\Qb}^r$, which we do from now on.

Now, the semi-algebraically connected components of $\mathbf G(\Fb)$ are defined over $\overline{\Qb}^r$. Hence any linear semi-algebraic group $G_{\Fb}$ over $\Fb$ compatible to $\mathbf G$ is the $\Fb$-extension of a semi-algebraic linear group $G$ over $\overline{\Qb}^r$. 

From now on, we thus fix a semisimple linear semi-algebraic group $G$ over $\overline{\Qb}^r$, with associated linear algebraic group $\mathbf G$, and we denote by $G_{\Fb} \subset \mathbf G(\Fb)$ its $\Fb$-extension for every real closed field $\Fb$.\\

Recall that the algebraic group $\GL_1$ (defined over $\Qb$) has as its $\Fb$-points the multiplicative group $\Fb^\times$. Let $\mathbf A \subset \mathbf G$ be a \emph{maximal split torus}, i.e.\ an algebraic subgroup isomorphic to $\GL_1^d$ over $\Qbar$, of maximal dimension $d$.
We denote by $A_{\Fb}$ the semi-algebraically connected component of $\mathbf A(\Fb)$, which is contained in $G_{\Fb}$. We call $A_{\Fb}$ the \emph{(multiplicative) Cartan subspace of $G_{\Fb}$.}

The adjoint action of $\mathbf A(\overline{\Qb}^r)$ on the Lie algebra $\mathfrak g$ of $\mathbf G$ is simultaneously diagonalizable, so we may decompose
\[\mathfrak g=\mathfrak g_1+\sum_{\alpha\in\Phi}\mathfrak g_\alpha,\]
where $\mathfrak g_\alpha=\{X\in\mathfrak g \mid \Ad(s)X=\alpha(s)X\}$ for any character $\alpha \colon \mathbf A(\overline{\Qb}^r) \to \GL_1$, and $\Phi$ is the set of \emph{multiplicative roots}, i.e.\ those non-trivial characters $\alpha$ for which $\mathfrak g_\alpha$ is non-zero.
There exists a subset $\Phi^+\subset\Phi$ such that 
\begin{itemize}
    \item $\Phi=\Phi^+\sqcup\Phi^-$, where $\Phi^-\coloneqq \{-\alpha \mid \alpha\in\Phi^+\}$.
\item if $\alpha,\alpha'\in\Phi^+$ such that $\alpha\cdot\alpha'\in\Phi$, then $\alpha\cdot\alpha'\in\Phi^+$.
\end{itemize}
Fix once and for all the choice of such a $\Phi^+$, and refer to the roots in $\Phi^+$ as the \emph{positive roots} of $\mathbf G$. We say that a positive root is \emph{simple} if it cannot be written as a product of two positive roots, and we let $\Delta\subset\Phi^+$ denote the set of simple roots. Every positive root can be written uniquely (up to re-ordering) as a product of simple roots.

The \emph{Weyl group} $W$ of $G$ is the quotient of the normalizer in $\mathbf G$ of $\mathbf A$ by the centralizer in $\mathbf G$ of $\mathbf A$. Observe that $W$ acts on $\mathbf A$ algebraically, and the induced action on the set of characters of $\mathbf A$ leaves $\Phi$ invariant. Let $w_0\in W$ be the \emph{longest element}, i.e.\ the unique element such that  $w_0\cdot\Phi^+=\Phi^-$. Using $w_0$, we may define the \emph{opposition involution} \[\iota\colon \Delta\to\Delta\]
given by $\iota(\alpha)=(\alpha\circ w_0)^{-1}$.

Now, given a real closed field $\Fb$, each multiplicative root $\alpha\in\Phi$ defines a character
\[\alpha_{\Fb} \colon A_{\Fb} \to \GL_1(\Fb).\]
Let us set
\[\Phi_{\Fb}\coloneqq \{\alpha_{\Fb}\mid\alpha\in\Phi\},\quad\Phi_{\Fb}^\pm\coloneqq\{\alpha_{\Fb}\mid\alpha\in\Phi^\pm\},\quad\text{and}\quad\Delta_{\Fb}\coloneqq\{\alpha_{\Fb} \mid \alpha\in\Delta\}.\]
We refer to the characters in $\Phi_{\Fb}$, $\Phi_{\Fb}^+$, $\Phi_{\Fb}^-$, and $\Delta_{\Fb}$ as the \emph{roots}, \emph{positive roots}, \emph{negative roots}, and \emph{simple roots} of $G_{\Fb}$. 

The Cartan subspace $A_{\Fb}$ is invariant under the natural action of the Weyl group $W$ on $\mathbf A(\Fb)$. Furthermore,
\[C_{\Fb}\coloneqq \{s\in A_{\Fb}\mid\alpha(s)\ge 1\text{ for all }\alpha_{\Fb}\in\Delta_{\Fb}\}\]
is a semi-algebraic fundamental domain for the $W$-action on $A_{\Fb}$. We refer to $C_{\Fb}$ as the \emph{positive (multiplicative) Weyl chamber} of $G_{\Fb}$.

\begin{remark}
    Classically, (restricted) roots and Weyl chambers are defined on the Lie algebra of $\mathbf A$, but this requires taking a logarithm. In contrast, multiplicative roots are algebraic and thus the positive multiplicative Weyl chamber is semi-algebraic and defined consistently over any real closed field: $C_{\Fb}$ \emph{is indeed} the $\Fb$-extension of the positive Weyl chamber over $\overline{\Qb}^r$. 
\end{remark}

\subsection{Multiplicative Jordan projection}
\label{section: Jordan projection}
An element $g\in G_{\Fb}\subset \GL_n(\Fb)$ is called
\begin{itemize}
    \item \emph{hyperbolic} if $g$ is diagonalizable over $\Fb$ with positive eigenvalues,
    \item \emph{elliptic} if $g$ is diagonalizable over $\Fb[\sqrt{-1}]$ with eigenvalues of modulus $1$, and
    \item \emph{unipotent} if $(g-\Id)^n=\Id$.
\end{itemize} Let $\mathcal H_{\Fb}$ (respectively $\mathcal E_{\Fb}$, $\mathcal U_{\Fb}$) denote the set of elements in $G_{\Fb}$ that are hyperbolic (respectively, elliptic, unipotent). These semi-algebraic subsets of $G_{\Fb}$ are the $\Fb$-extensions of the corresponding subset $\mathcal H, \mathcal E, \mathcal U$ over $\overline{\Qb}^r$, and the intersection of any two of these sets is the identity.

The next proposition is a generalization of the Jordan decomposition theorem to the $\Fb$-extension $G_{\Fb}$ of $G$, see \cite[Lemma 4.5, Lemma 4.6, Proposition 4.7]{BurgerIozziParreauPozzetti_RSCCharacterVarieties2}.

\begin{proposition}[(Multiplicative) Jordan decomposition and projection]
\label{propo: Jordan proj}
Suppose that $G_{\Fb}$ is semi-algebraically connected. For every $g \in G_{\Fb}$ there exist unique commuting
elements $g_e \in \mathcal{E}_{\Fb}$, $g_h \in \mathcal{H}_{\Fb}$ and $g_u \in \mathcal{U}_{\Fb}$ with $g = g_e g_h g_u$ (the \emph{Jordan decomposition} of $g$). Furthermore, we have the following:
\begin{enumerate}
\item The $G_{\Fb}$-conjugacy class of any element in $\mathcal{H}_{\Fb}$ meets $C_{\Fb}$ at exactly one point,
\item The map $J_{\Fb} \colon G_{\Fb} \to C_{\Fb}$ such that $J_{\Fb}(g)$ is conjugated to $g_h$ for every $g\in G_{\Fb}$ is semi-algebraic and continuous.
\end{enumerate}
The map $J_{\Fb}$ is called the \emph{(multiplicative) Jordan projection} of $G_{\Fb}$.
\end{proposition}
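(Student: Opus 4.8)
This is a mild strengthening of the Jordan decomposition results in \cite[Lemmas 4.5--4.6, Proposition 4.7]{BurgerIozziParreauPozzetti_RSCCharacterVarieties2}, and the plan is to construct the decomposition by hand and then deduce the properties of $J_{\Fb}$ from the real case via the transfer principle. \emph{The decomposition.} Since $\Fb$ is real closed it is perfect, so the classical multiplicative Jordan decomposition (see e.g.\ \cite{borelBook}) gives, for each $g \in G_{\Fb} \subseteq \mathbf G(\Fb)$, a unique factorization $g = g_s g_u$ into commuting semisimple and unipotent elements of $\mathbf G(\Fb)$. Unipotent elements always lie in the semi-algebraically connected component of $\Id$ in $\mathbf G(\Fb)$, which by definition is contained in $G_{\Fb}$, so $g_u \in G_{\Fb}$ and hence $g_s = g g_u^{-1} \in G_{\Fb}$. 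I would then split $g_s$: diagonalizing over $\Fb[\sqrt{-1}]$, each eigenvalue $\lambda$ factors as $\lambda = \mu_\lambda \nu_\lambda$ with $\mu_\lambda \coloneqq \sqrt{\lambda\overline{\lambda}} \in \Fb_{>0}$ (a square root exists since $\Fb$ is real closed) and $\nu_\lambda \overline{\nu_\lambda} = 1$; letting $g_h$ and $g_e$ act on the eigenspaces of $g_s$ with eigenvalues $\mu_\lambda$ and $\nu_\lambda$ yields commuting elements with $g_s = g_h g_e$, $g_h$ hyperbolic and $g_e$ elliptic. This construction is equivariant under $\mathrm{Gal}(\Fb[\sqrt{-1}]/\Fb)$, so $g_h, g_e \in \GL_n(\Fb)$, and their membership in $\mathbf G$ follows from the structure theory of diagonalizable $\Fb$-groups applied to the Zariski closure of $\langle g_s \rangle$ (it splits as a split torus times an anisotropic subgroup, and $g_h$, $g_e$ are the corresponding components). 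As $g_h$ sits in a split torus it lies in the identity component, so $g_h \in G_{\Fb}$ and $g_e = g g_u^{-1} g_h^{-1} \in G_{\Fb}$. For uniqueness, if $g = g_e' g_h' g_u'$ is another factorization into commuting elliptic, hyperbolic and unipotent elements, then $g_e' g_h'$ is semisimple and commutes with $g_u'$, so uniqueness of the semisimple--unipotent decomposition gives $g_u' = g_u$ and $g_e' g_h' = g_s$; consequently $g_h^{-1} g_h' = g_e (g_e')^{-1}$ is at once hyperbolic and elliptic, hence $\Id$ because $\mathcal H_{\Fb} \cap \mathcal E_{\Fb} = \{\Id\}$, so the decomposition is unique.

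\emph{Part (1).} A hyperbolic element has positive eigenvalues, so it lies in the identity component of some maximal split torus of $\mathbf G$; since all maximal split tori are $G_{\Fb}$-conjugate (\cite{borelBook}), every class in $\mathcal H_{\Fb}$ meets $A_{\Fb}$. Because $\mathbf G$ is connected (equivalently, $G_{\Fb}$ is semi-algebraically connected), $C_{\Fb}$ is a fundamental domain for the $W$-action on $A_{\Fb}$ meeting each orbit in exactly one point; combined with the fact that $G_{\Fb}$-conjugacy between elements of $A_{\Fb}$ reduces to the action of the relative Weyl group $W$ (cf.\ \cite{BurgerIozziParreauPozzetti_RSCCharacterVarieties2}), each class in $\mathcal H_{\Fb}$ meets $C_{\Fb}$ in exactly one point. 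This produces the set-map $J_{\Fb}$ with $J_{\Fb}(g)$ conjugate to $g_h$.

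\emph{Part (2).} For continuity and semi-algebraicity the efficient route is transfer. All of $G$, $\mathcal H$, $\mathcal E$, $\mathcal U$ and the closed Weyl chamber $C$ are semi-algebraic and defined over $\overline{\Qb}^r$, and the entire assertion of the proposition --- existence and uniqueness of the decomposition together with the existence of a continuous semi-algebraic map $J$ such that $J(g) \in C$ is conjugate to $g_h$ for every $g$ --- can be written as a sentence of $\mathcal L(\overline{\Qb}^r)$, the Jordan decomposition being first-order definable through its defining properties and its uniqueness. Over $\Rb$ this sentence is the classical statement, the continuity of the real Jordan projection being standard (it can be read off from the description of $J$ via the moduli of eigenvalues of $g$ in a faithful representation, or cited from \cite{borelBook,BurgerIozziParreauPozzetti_RSCCharacterVarieties2}). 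Since the sentence holds over the real closed field $\Rb$, by \Cref{thm_TarskiSeidenberg} it holds over $\overline{\Qb}^r$ and hence over $\Fb$; and by \Cref{thm_ExtSemiAlgMaps} the map $J_{\Fb}$ is the $\Fb$-extension of the real Jordan projection, so it is semi-algebraic and continuous, and $J_{\Fb}(g)$ is conjugate to $g_h$ by the transferred statement.

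\emph{Main obstacle.} The algebra above is routine; the real work is bookkeeping. The two delicate points are: (i) making precise that the real Jordan projection --- ``hyperbolic part of the semisimple part, conjugated into the closed Weyl chamber'' --- is genuinely a continuous semi-algebraic function of $g$, and phrasing the whole package as a first-order sentence so that \Cref{thm_TarskiSeidenberg} applies verbatim; and (ii) verifying that the classical $\mathbf G(\Fb)$-level decomposition descends to $G_{\Fb}$, which, as above, relies only on unipotents and split tori lying in the identity component.
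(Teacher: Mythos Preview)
The paper does not give its own proof of this proposition: it simply states the result and refers to \cite[Lemmas 4.5--4.6, Proposition 4.7]{BurgerIozziParreauPozzetti_RSCCharacterVarieties2}. Your write-up is essentially a correct sketch of the standard argument behind that reference --- classical Jordan decomposition, splitting the semisimple part via the anisotropic/split decomposition of the diagonalizable hull of $g_s$, reduction to the Weyl chamber, and then transfer from $\Rb$ for continuity --- so there is no meaningful divergence to compare.

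One small comment: your phrasing of Part~(2) is a little loose when you say the whole package ``can be written as a sentence of $\mathcal L(\overline{\Qb}^r)$''. The cleaner way (and the way the paper's framework expects you to argue) is not to encode ``there exists a continuous semi-algebraic map'' as a first-order sentence, but rather to first exhibit the graph of $J$ over $\overline{\Qb}^r$ as a concrete semi-algebraic set, then invoke \Cref{thm_ExtSemiAlgMaps} to get that $J_{\Fb}$ is its $\Fb$-extension and is continuous, and finally use \Cref{thm_TarskiSeidenberg} only for the elementwise statement that $J_{\Fb}(g)$ is $G_{\Fb}$-conjugate to $g_h$. You essentially say this in your ``main obstacle'' paragraph, so this is a matter of presentation rather than a gap.
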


Once again, notations are consistent: the Jordan projection $J_{\Fb}$ is the $\Fb$-extension of the Jordan projection over $\overline{\Qb}^r$.

\subsection{Parabolic subgroups and flag varieties}
\label{sec: flag}
Fix a non-empty subset of simple roots $\Theta\subset \Delta$. Define the nilpotent subalgebras
\begin{align}\label{equation: u} \mathfrak u_\Theta\coloneqq \sum_{\alpha\in\Phi^+\setminus\Span(\Delta\setminus\Theta)}\mathfrak g_\alpha\quad\text{and}\quad\mathfrak u_\Theta^{\rm opp}\coloneqq\sum_{\alpha\in\Phi^-\setminus\Span(\Delta\setminus\Theta)}\mathfrak g_\alpha.\end{align}
The \emph{$\Theta$-standard parabolic subgroup}, denoted $\mathbf P_\Theta\subset\mathbf G$, and its \emph{opposite}, denoted $\mathbf P_\Theta^{\rm opp}\subset\mathbf G$, are the normalizers of $\mathfrak u_\Theta$ and $\mathfrak u_\Theta^{\rm opp}$ respectively. Let $\mathbf U_\Theta$ and $\mathbf U_\Theta^{\rm opp}$ denote the unipotent radicals of $\mathbf P_\Theta$ and $\mathbf P_\Theta^{\rm opp}$ respectively, and set $\mathbf L_\Theta\coloneqq \mathbf P_\Theta\cap\mathbf P_\Theta^{\rm opp}$. We also define $\mathbf A_\Theta \coloneqq \bigcap_{\alpha \in \Delta \setminus \Theta} \ker(\alpha) \subset \mathbf L_\Theta$.
Then $\mathbf{L}_\Theta$ is the centralizer of $\mathbf{A}_\Theta$ in $\mathbf{G}$. The groups $\mathbf P_\Theta$, $\mathbf P_\Theta^{\rm opp}$, $\mathbf U_\Theta$, $\mathbf U_\Theta^{\rm opp}$, $\mathbf L_\Theta$, and $\mathbf A_\Theta$ are all algebraic subgroups of $\mathbf G$. Recall that $\iota \colon \Delta\to\Delta$ denotes the opposition involution. One can verify that ${\bf P}_\Theta^{\rm opp}$ is conjugate in ${\bf G}$ to ${\bf P}_{\iota(\Theta)}$. We say that $\Theta$ is \emph{symmetric} if $\iota(\Theta)=\Theta$, or equivalently, if $\mathbf P_{\Theta}$ is conjugate to $\mathbf P_{\Theta}^{\rm opp}$. 

Observe that
\[P_{\Theta,\Fb}\coloneqq G_{\Fb} \cap \mathbf P_\Theta(\Fb),\quad P_{\Theta,\Fb}^{\rm opp}\coloneqq G_{\Fb} \cap \mathbf P_\Theta^{\rm opp}(\Fb),\quad U_{\Theta,\Fb} \coloneqq G_{\Fb}\cap \mathbf U_\Theta(\Fb)\]
\[U_{\Theta,\Fb}^{\rm opp}\coloneqq G_{\Fb} \cap \mathbf U_\Theta^{\rm opp}(\Fb),\quad L_{\Theta,\Fb}\coloneqq G_{\Fb}\cap \mathbf L_\Theta(\Fb), \quad\text{and}\quad A_{\Theta,\Fb} \coloneqq G_{\Fb} \cap \mathbf A_\Theta(\Fb)\]  
are all semi-algebraic  subgroups of $G_{\Fb}$, and are the $\Fb$-extensions of the corresponding groups defined over $\overline{\Qb}^r$. We refer to $P_{\Theta,\Fb}$ as the \emph{$\Theta$-standard parabolic subgroup of $G_{\Fb}$} and $P_{\Theta,\Fb}^{\rm opp}$ as its \emph{opposite}. Then $U_{\Theta,\Fb}$ and $U_{\Theta,\Fb}^{\rm opp}$ are respectively the unipotent radicals of $P_{\Theta,\Fb}$ and $P_{\Theta,\Fb}^{\rm opp}$. Since, over nilpotent Lie algebras, the exponential map is polynomial, we have continuous semi-algebraic bijections \[\exp \colon \mathfrak u_{\Theta,\Fb}\to U_{\Theta,\Fb}\quad\text{and}\quad\exp\colon\mathfrak u_{\Theta,\Fb}^{\rm opp}\to U_{\Theta,\Fb}^{\rm opp}.\]

Suppose now that $G_{\Fb}$ is semi-algebraically connected, in which case $\mathbf G$ is connected. The $\Theta$-flag manifold of $\mathbf G$ is the quotient
\[\mathbf F_{\Theta}\coloneqq \mathbf G/\mathbf P_{\Theta},\]
which is known to be a projective variety over $\overline{\Qb}^r$ such that the ${\bf G}$-action on ${\bf F}_\Theta$ is algebraic.
We may now define the \emph{$\Theta$-flag variety} of $G_{\Fb}$ as
\[\Fc_{\Theta,\Fb} \coloneqq \mathbf F_\Theta(\Fb).\]
This has the structure of a semi-algebraic set, and the $G_{\Fb}$-action on $\Fc_{\Theta,\Fb}$ is semi-algebraic (see e.g.\ \cite[Proposition 1.7.6]{Scheiderer_RAG})\footnote{More precisely, one can prove the existence of an affine chart $\mathbf O \subset \mathbf F_\Theta$, defined over $\overline{\Qb}^r$, such that $\mathbf O(\Fb)$ contains all the $\Fb$-points of $\mathbf F_\Theta$, identifying $\mathbf F_\Theta(\Fb)$ with a closed and bounded semi-algebraic set.}.

Over $\Rb$, the flag variety $\Fc_{\Theta, \Rb}$ is compact and connected, hence $G_{\Rb}$ (which contains the identity component of $\mathbf G(\Rb)$) acts transitively on $\mathbf F_{\Theta,\Rb}$ with stabilizer $P_{\Theta,\Rb}$. Hence the same holds over every real closed field $\Fb$, and we deduce the identification:
\[\mathcal F_{\Theta, \Fb} = G_{\Fb}/ P_{\Theta, \Fb}~.\]

As before, our notations are consistent: $\Fc_{\Theta,\Fb}$ is the $\Fb$-extension of $\Fc_\Theta \coloneqq \Fc_{\Theta, \overline{\Qb}^r}$.

Let us denote by $p_\Theta \in \Fc_\Theta \subset \Fc_{\Theta,\Fb}$ and $p_\Theta^\opp\in \Fc_{\iota(\Theta)}\subset \Fc_{\iota(\Theta),\Fb}$ the points fixed respectively by $P_\Theta$ and $P_{\Theta}^\opp$. We say that $x_1\in \Fc_{\Theta,\Fb}$ and $x_2\in \Fc_{\iota(\Theta),\Fb}$ are \emph{transverse} and write $x_1\pitchfork x_2$ if there is some $g\in G_{\Fb}$ such that $g\cdot (x_1,x_2) = (p_\Theta, p_{\Theta}^\opp)$. The set of transverse pairs is the unique open $G_{\Fb}$-orbit in $\Fc_{\Theta,\Fb} \times \Fc_{\iota(\Theta),\Fb}$. When $\Theta\subset\Delta$ is symmetric, we have $\Fc_{\Theta}=\Fc_{\iota(\Theta)}$, so transversality makes sense for pairs of flags in $\Fc_{\Theta,\Fb}$. 

Observe that  $P_{\Theta,\Fb}$ acts transitively on $\{y\in \Fc_{\iota(\Theta),\Fb} \mid p_\Theta \pitchfork y\}$, and the stabilizer in $P_{\Theta,\Fb}$ of $p_{\Theta}^\opp$ is $P_{\Theta,\Fb} \cap P_{\Theta,\Fb}^\opp = L_{\Theta,\Fb}$. 
The \emph{Levi decomposition} of parabolic subgroups states that the map
\[U_{\Theta,\Fb}\times L_{\Theta,\Fb}\to P_{\Theta,\Fb}\]
given by $(u,\ell)\mapsto u\ell$ is a semi-algebraic  bijection. We deduce that the map
\begin{eqnarray*}
U_{\Theta,\Fb}& \to &\{x\in\Fc_{\iota(\Theta),\Fb} \mid x\pitchfork p_\Theta\}\\
u & \mapsto & u \cdot p_\Theta^\opp
\end{eqnarray*}
is a semi-algebraic bijection. Similarly, the map 
\begin{eqnarray*}
U_{\Theta,\Fb}^\opp& \to &\{x\in\Fc_{\iota(\Theta),\Fb} \mid x\pitchfork p_\Theta^\opp\}\\
u & \mapsto & u \cdot p_\Theta
\end{eqnarray*}
is a semi-algebraic bijection.

\begin{example}
    The group $\PSL_2(\Fb)$ is the semi-algebraically connected component containing the identity of the $\Fb$-points of the algebraic group $\mathsf{PGL}_2$. It admits (up to conjugation) a unique parabolic subgroup
    \[P_{\Theta,\Fb} \coloneqq \left\{ \begin{pmatrix} a & b \\ 0 & a^{-1}\end{pmatrix}\middle\vert\ a\in \Fb^*, b\in \Fb\right\}/\{\pm \Id\}~,\]
    with unipotent radical 
    \[U_{\Theta,\Fb} \coloneqq \left\{\begin{bmatrix} 1 & b \\ 0 & 1\end{bmatrix}\middle\vert\ b\in \Fb\right\}~.\]
    The associated flag variety identifies with the projective space $\mathbf{P}^1(\Fb) \simeq \Fb \sqcup \{\infty\}$, and 
    \[p_\Theta = \left[ \begin{matrix} 1\\0 \end{matrix}\right] \simeq \infty \quad \textrm{ and }  p_\Theta^\opp = \left[ \begin{matrix} 0\\1 \end{matrix}\right] \simeq 0~.\]
    Two points in $\mathbf{P}^1(\Fb)$ are transverse if and only if they are distinct. Finally, $U_\Theta\simeq \Fb$ acts on $\Fb = \mathbf{P}^1(\Fb) \backslash \{\infty\} = \{x\in \mathbf{P}^1(\Fb) \mid x \pitchfork p_\Theta\}$ by translations.
\end{example}

The flag manifold $\Fc_{\Theta,\Fb}$ can also be described using the Lie algebra $\mathfrak g_{\Fb}$ as followed. Since ${\bf G}$ is semisimple, the group $\Inn(\mathfrak g)\subset\Aut(\mathfrak g)$ of inner automorphisms is a connected algebraic subgroup whose Lie algebra is $\mathfrak g$. Since $G_{\Fb}$ is semi-algebraically connected, it follows that $\Ad(G_{\Fb})$ lies in $\Inn(\mathfrak g_{\Fb})$, and so as $G_{\Fb}$-spaces, we have
\[\Fc_{\Theta,\Fb}\cong\{\Ad(g)\cdot\mathfrak p_{\Theta,\Fb}\mid g\in G_{\Fb}\}\cong\{g\cdot\mathfrak p_{\Theta,\Fb}\mid g\in \Inn(\mathfrak g_{\Fb})\}.\]

Let $\Aut_1(\mathfrak g_{\Fb})$ denote the subgroup of $\Aut(\mathfrak g_{\Fb})$ that preserves the type of all parabolic subalgebras, i.e.\ 
\[\Aut_1(\mathfrak g_{\Fb})\coloneqq\left\{g\in\Aut(\mathfrak g_{\Fb})\;\middle\vert\begin{array}{c}\text{there exists }g'\in\Inn(\mathfrak g_{\Fb})\text{ such that}\\
\text{for all }\alpha\in\Phi,\,\,g(\mathfrak g_{\alpha,\Fb})=g'(\mathfrak g_{\alpha,\Fb})\end{array}\right\}.\footnote{Equivalently, $\Aut_1(\mathfrak{g}_{\Fb})$ is the subgroup of $\Aut(\mathfrak{g}_{\Fb})$ acting trivially on the Dynkin diagram of the restricted root system of $\mathbf{G}$.}\]
From the description of $\Fc_{\Theta,\Fb}$ above, we may extend the $G_{\Fb}$-action on $\Fc_{\Theta,\Fb}$ (which factors through $\Ad(G_{\Fb})$) to a $\Aut_1(\mathfrak g_{\Fb})$-action. Notice that $\Aut_1(\mathfrak g_{\Fb})$ contains $\Inn(\mathfrak g_{\Fb})$, so it is necessarily a union of semi-algebraically connected components of $\Aut(\mathfrak g_{\Fb})$, and hence is a linear semi-algebraic group. We will see later that this extension is useful because the $\Aut_1(\mathfrak g_{\Fb})$ action has better transitivity properties on certain classes of subsets of $\Fc_{\Theta,\Fb}$ called diamonds.

\subsection{Proximalities}\label{sec: proximality}
We now introduce notions of proximality of elements in $G_{\Fb}$, related to their action on $\Fc_{\Theta,\Fb}$. Recall that $C_{\Fb} \subset A_{\Fb}$ is the multiplicative Weyl chamber of $G_{\Fb}$ and $J_{\Fb}\colon G_{\Fb}\to C_{\Fb}$ is the Jordan projection (see \Cref{propo: Jordan proj}). Let $\Theta\subset\Delta$ be non-empty.

\begin{definition}[Weak $\Theta$-proximality] An element $g \in G_{\Fb}$ is \emph{weakly $\Theta$-proximal over $\Fb$} if
$\alpha_{\Fb}(J_{\Fb}(g)) > 1$ for all $\alpha \in \Theta$.
\end{definition}

Suppose that $G_{\Fb}$ is semi-algebraically connected. Let $g\in G_{\Fb}$ be a weakly $\Theta$-proximal element, $g= g_h g_e g_u$ its Jordan decomposition, and $h\in G_{\Fb}$ be such that $h^{-1} g_h h\in C_{\Fb}$. Then the pair
\[(g^+,g^-) = h\cdot (p_\Theta, p_\Theta^\opp)\in \Fc_{\Theta,\Fb} \times \Fc_{\iota(\Theta),\Fb}\]
does not depend on the choice of $h$. Indeed, if $h'\in G_{\Fb}$ satisfies $h'^{-1}g_hh'=h^{-1}g_hh$, then $h'^{-1}h$ stabilizes $C_{\Fb}$, and hence fixes both $p_\Theta$ and $p_\Theta^{\rm opp}$.
The points $g^+$ and $g^-$ are transverse and fixed by $g$. We call them respectively the \emph{weak attracting flag} and \emph{weak repelling flag} of $g$.

The term ``weakly proximal'' is meant to make the distinction with a stronger form of proximality, which we mention here though it will not appear in the rest of the paper. We call an element $g\in G_{\Fb}$ \emph{strongly $\Theta$-proximal} if $\alpha_{\Fb}(J_{\Fb}(g))$ is a \emph{big element} of $\Fb$.

Over $\Rb$, weak and strong proximality are equivalent to the classical notion of proximality (see \cite{benoist1997proprietes}). In that case, $g^+ \in \Fc_{\Theta, \Rb}$ is the unique attracting fixed flag of $g$, and $g^n \cdot x$ converges to $g^+$ for any $x$ transverse to $g^-$. One could prove that the same dynamical behavior persists over a real closed field $\Fb$, when $g$ is strongly $\Theta$-proximal.
In contrast, when $g$ is only weakly $\Theta$-proximal, the basin of attraction of $g^+$ might be empty.

On the other hand, the notion of weak proximality will be useful to us because it is semi-algebraic. More precisely, let us denote by $\Prox_\Theta(G_{\Fb})$ the set of weakly $\Theta$-proximal elements of $G_{\Fb}$.

\begin{proposition} \label{prop: continuity g->g+ hyperbolic}
    The set $\Prox_\Theta(G_{\Fb})$ is an open semi-algebraic subset of $G_{\Fb}$ and the map
    \[\begin{array}{ccc}
        \Prox_\Theta(G_{\Fb}) & \to & \Fc_{\Theta,\Fb}\\
        g & \mapsto & g^+
    \end{array}\]
    is semi-algebraic and continuous.
\end{proposition}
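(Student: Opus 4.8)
The plan is to reduce everything to the semi-algebraicity and continuity of the Jordan projection $J_{\Fb}$ (\Cref{propo: Jordan proj}) together with the semi-algebraic structure of the flag variety and the parabolic subgroups. First, I would observe that $\Prox_\Theta(G_{\Fb})$ is, by definition, the preimage under $J_{\Fb}$ of the set $\{s \in C_{\Fb} \mid \alpha_{\Fb}(s) > 1 \text{ for all } \alpha \in \Theta\}$. Since each $\alpha_{\Fb}\colon A_{\Fb} \to \GL_1(\Fb)$ is a regular (polynomial) map, this is a finite intersection of basic semi-algebraic subsets of $C_{\Fb}$, hence open and semi-algebraic in $C_{\Fb}$; as $J_{\Fb}$ is continuous and semi-algebraic, its preimage $\Prox_\Theta(G_{\Fb})$ is open and semi-algebraic in $G_{\Fb}$. (If $G_{\Fb}$ is not assumed semi-algebraically connected one passes to the identity component, on which the Jordan projection is defined, and notes that $\Prox_\Theta$ is stable under the action of the finite group of components; but I expect the statement is implicitly in the connected setting as in \Cref{propo: Jordan proj}.)

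The real content is the continuity and semi-algebraicity of $g \mapsto g^+$. The cleanest route is to exhibit the graph of this map as the image of a semi-algebraic set under a semi-algebraic map, and then to argue continuity by a connectedness/properness argument. Concretely, consider the set
\[
Z \coloneqq \{(g,h,x) \in G_{\Fb} \times G_{\Fb} \times \Fc_{\Theta,\Fb} \mid g \in \Prox_\Theta(G_{\Fb}),\ h^{-1} J_{\Fb}(g)\, h \in C_{\Fb},\ x = h \cdot p_\Theta\}.
\]
Wait---I should be slightly careful: $J_{\Fb}(g)$ is conjugate to the hyperbolic part $g_h$, not to $g$ itself, so the correct condition is that $h$ conjugates $g_h$ into $C_{\Fb}$; but since the map $g \mapsto g_h$ (the hyperbolic part of the Jordan decomposition) is itself semi-algebraic by \Cref{propo: Jordan proj}, the set $Z$ is cut out by semi-algebraic conditions and is semi-algebraic. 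The key algebraic fact, already established in the text just before the definition of the weak attracting flag, is that $h \cdot p_\Theta$ depends only on $g$ and not on the choice of $h$: if $h'^{-1} g_h h' = h^{-1} g_h h$ then $h'^{-1}h$ normalizes $C_{\Fb}$ and hence fixes $p_\Theta$. Therefore the projection $Z \to G_{\Fb} \times \Fc_{\Theta,\Fb}$ forgetting $h$ is injective, and its image is exactly the graph of $g \mapsto g^+$. By Tarski--Seidenberg (\Cref{thm_TarskiSeidenbergproj}) this image is semi-algebraic, so $g \mapsto g^+$ is a semi-algebraic map. One must also check surjectivity of $Z \to \Prox_\Theta(G_{\Fb})$ onto the first factor, i.e.\ that a conjugating $h$ always exists; this is part (1) of \Cref{propo: Jordan proj}.

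It remains to prove continuity, and this is the step I expect to be the main obstacle, since a semi-algebraic map need not be continuous and one cannot simply invoke ``continuity of eigenspaces'' as over $\Rb$. The standard real-algebraic argument is: a semi-algebraic map is continuous iff its graph is closed \emph{and} the map is locally bounded, or more robustly, one uses that $\Fc_{\Theta,\Fb}$ is closed and bounded (it embeds as a closed bounded semi-algebraic set by the footnote in \Cref{sec: flag}), so that the projection $\Gamma_{g \mapsto g^+} \to \Prox_\Theta(G_{\Fb})$ is a proper semi-algebraic map; hence it suffices to show the graph is closed in $\Prox_\Theta(G_{\Fb}) \times \Fc_{\Theta,\Fb}$. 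To see the graph is closed, suppose $(g_n, g_n^+)$ converges (along a semi-algebraic curve, by the curve selection lemma, or via the definable choice / cell decomposition machinery) to $(g, y)$ with $g \in \Prox_\Theta(G_{\Fb})$; one picks conjugating elements $h_n$ and, using that the relevant group of possible $h$'s modulo the stabilizer of $(p_\Theta, p_\Theta^\opp)$, namely $A_{\Fb}$-cosets, can be taken in a closed bounded semi-algebraic section, extracts a limit $h$ with $h^{-1} g_h h \in C_{\Fb}$, whence $y = h \cdot p_\Theta = g^+$. Since we only need this over a real closed field where sequential compactness fails, the honest way to phrase it is: the set of $(g,h)$ with $g \in \Prox_\Theta(G_{\Fb})$ and $h^{-1} g_h h \in C_{\Fb}$ and $h$ in a fixed semi-algebraic transversal to $L_{\Theta,\Fb}$ is closed; restrict to $h$ bounded using that $C_{\Fb}$ is the $\Fb$-extension of the Weyl chamber over $\overline{\Qb}^r$ and the conjugation is algebraic, so on any closed bounded piece of $\Prox_\Theta(G_{\Fb})$ the conjugator can be chosen in a closed bounded set; then the graph of $g\mapsto g^+$, being the image of a closed bounded semi-algebraic set under a continuous (polynomial) semi-algebraic map restricted to such pieces, is closed. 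Combined with properness this gives continuity of $g \mapsto g^+$. Alternatively, and perhaps more in the spirit of the paper, one invokes the Tarski--Seidenberg transfer principle (\Cref{thm_TarskiSeidenberg}): the statement ``$g \mapsto g^+$ is continuous'' is a first-order sentence of $\mathcal L(\overline{\Qb}^r)$, it holds over $\Rb$ by the classical theory of proximal elements, hence it holds over $\overline{\Qb}^r$ and therefore over every real closed $\Fb$ by \Cref{thm_ExtSemiAlgMaps}. This last route is the shortest and I would adopt it, using the explicit semi-algebraic description of $g\mapsto g^+$ above to know it is a semi-algebraic map defined over $\overline{\Qb}^r$ whose continuity can be transferred from $\Rb$.
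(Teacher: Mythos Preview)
Your proposal is correct and follows essentially the same approach as the paper: you establish openness and semi-algebraicity of $\Prox_\Theta(G_{\Fb})$ via the continuous semi-algebraic Jordan projection, describe the graph of $g\mapsto g^+$ as a semi-algebraic projection of the set of pairs $(g,h)$ with $h^{-1}g_h h\in C_{\Fb}$, and then (after exploring a direct closed-graph argument) settle on transferring continuity from $\Rb$ via \Cref{thm_ExtSemiAlgMaps}. The paper does exactly this, invoking the classical stability of attracting fixed points over $\Rb$ for the last step; your intermediate properness discussion is unnecessary but not wrong.
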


\begin{proof}
    Since the map $\alpha\circ J: G \to \overline \Qb^r$ is a continuous semi-algebraic map defined over $\overline \Qb^r$, the set 
    \[\Prox_\Theta(G) = (\alpha \circ J)^{-1}(\overline{\Qb}^r_{>1}) \subset G\]
    is an open semi-algebraic subset whose $\Fb$-extension is
    \[\Prox_\Theta(G_{\Fb}) = (\alpha_{\Fb} \circ J_{\Fb})^{-1}(\Fb_{>1}) \subset G_{\Fb}~,\]
    which is also open and semi-algebraic, see \Cref{thm_ExtConnComp}.

    The graph of the map $g\mapsto g^+$ is the set 
    \[\{(g,h\cdot p_\Theta)\mid (g,h)\in \Prox_\Theta(G_{\Fb}) \times G_{\Fb}, \ h g_h h^{-1} \in C_{\Fb}\},\]
    which is semi-algebraic and defined over $\overline{\Qb}^r$.
    Hence the map $g\mapsto g^+$ is the $\Fb$-extension of the same map over $\overline{\Qb}^r$. To prove that $g\mapsto g^+$ is continuous, it is thus enough to prove it over $\Rb$, see \Cref{thm_ExtSemiAlgMaps}. 

    Now, for $g\in \Prox_\Theta(G_{\Rb})$, the point $g^+$ is the unique attracting fixed point of $g$ in $\Fc_{\Theta,\Rb}$ since weak and strong proximality are equivalent here. The continuity of the map $g\mapsto g^+$ then follows from the classical stability theorem for attracting fixed points in differentiable dynamics.
\end{proof}

\subsection{%
\texorpdfstring{%
Divergence for sequences in $G_{\Rb}$}%
{Divergence for sequences in G\_R}}
\label{subsection: divergence for sequences}
In this section, we restrict to $\Fb= \Rb$ and recall some further dynamical properties of groups acting on flag varieties in the real case.

Let $K_{\Rb}$ be a maximal compact subgroup of $G_{\Rb}$ and $\mathfrak k_{\Rb}\subset \mathfrak g_{\Rb}$ its Lie algebra.
We can choose $K_{\Rb}$ so that $\mathfrak k_{\Rb}$ is orthogonal (with respect to the Killing form on $\mathfrak g_{\Rb}$) to the Lie algebra of the Cartan subspace $A_{\Rb}$.
Let us now recall the definition of the Cartan projection.

\begin{theorem}\label{KAK}
For every $g\in G_{\Rb}$, there exists $m,\ell\in K_{\Rb}$ and a unique $\mu(g)\in C_{\Rb}$ such that $g=m\mu(g)\ell$. Furthermore,
\begin{enumerate}
    \item the map $\mu\colon G_{\Rb}\to C_{\Rb}$ given by $g\mapsto \mu(g)$ is continuous. 
    \item if $\Theta\subset\Delta$ is non-empty, $p_{\Theta,\Rb}\in\Fc_{\Theta,\Rb}$ is the point fixed by $P_{\Theta,\Rb}$, and $\alpha_{\Rb}(\mu(g))>1$ for all $\alpha\in\Theta$, then $\mathcal U_{\Theta}(g)\coloneqq m (p_{\Theta,\Rb}) \in \Fc_{\Theta,\Rb}$ depends only on $g$ (and not on the choice of $m$).
\end{enumerate}
\end{theorem}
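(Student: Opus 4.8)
The plan is to follow the classical argument, adapted to the multiplicative (semi-algebraic) normalization in use here. First I would recall that $G_\Rb$ is a real reductive Lie group with maximal compact $K$ chosen so that $\mathfrak k \perp \mathfrak a$ for the Killing form; write $\mathfrak g = \mathfrak k \oplus \mathfrak p$ for the Cartan decomposition of the Lie algebra, where $\mathfrak p$ is the orthogonal complement of $\mathfrak k$, and note that the Lie algebra of $A_\Rb$ sits inside $\mathfrak p$ as a maximal abelian subalgebra $\mathfrak a$. The existence statement $g = m \exp(X) \ell$ with $m,\ell \in K$ and $X \in \mathfrak p$ is the polar decomposition: it follows from the Cartan involution $\theta$ and the fact that $G_\Rb/K$ is a Riemannian symmetric space of non-positive curvature, on which $K$ acts by isometries fixing the basepoint $eK$ while $\exp(\mathfrak p)$ acts simply transitively; concretely, given $g$, the geodesic from $eK$ to $g\cdot eK$ has the form $\exp(tX)\cdot eK$ for a unique $X \in \mathfrak p$, and then $m := g\exp(-X) \in K$. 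To get $X$ into the positive Weyl chamber one uses that every element of $\mathfrak p$ is $K$-conjugate to an element of $\overline{\mathfrak a^+}$ (maximality of $\mathfrak a$ plus the Weyl group acting as $N_K(\mathfrak a)/Z_K(\mathfrak a)$), and absorbing this conjugation into $m$ and $\ell$ produces $g = m \mu(g) \ell$ with $\mu(g) := \exp(X) \in C_\Rb$. Uniqueness of $\mu(g)$ follows because $\exp(X)$ and $\exp(X')$ are $G_\Rb$-conjugate hyperbolic elements only if $X,X'$ lie in the same $W$-orbit, and each $W$-orbit meets the closed chamber exactly once — this is the same uniqueness already recorded for the Jordan projection in \Cref{propo: Jordan proj}(1).

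For part~(1), continuity of $\mu$, I would argue as follows. The map $g \mapsto g\cdot eK$ is continuous $G_\Rb \to G_\Rb/K$, and on the symmetric space the map sending a point to the $X \in \overline{\mathfrak a^+}$ with $\exp(X)\cdot eK$ equal to that point (i.e. the vector-valued distance to the basepoint) is continuous — this is standard, or one can invoke that $\mu$ is semi-algebraic (being built from polynomial data: singular values / eigenvalues of $g^Tg$ in a suitable representation) and that a semi-algebraic map which is locally bounded and has no jumps is continuous; cleanest is to cite the classical fact (e.g. Benoist's work, already referenced in the text) that the Cartan projection is continuous and proper. I would state it via the symmetric-space description to keep it self-contained.

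For part~(2): assume $\alpha_\Rb(\mu(g)) > 1$ for all $\alpha \in \Theta$, and suppose $g = m\mu(g)\ell = m'\mu(g)\ell'$ are two $KA_\Rb K$-decompositions. Then $u := m^{-1}m' = \mu(g)\,\ell' \ell^{-1}\,\mu(g)^{-1} \in K$, so $u$ centralizes... — more precisely, set $k := \mu(g)^{-1} u\, \mu(g) = \ell' \ell^{-1} \in K$, so $u \,\mu(g) = \mu(g)\, k$, i.e. $\mu(g)^{-1} u\,\mu(g) = k$. The key point is that conjugation by $\mu(g)$ contracts $\mathfrak u_\Theta^{\mathrm{opp}}$ and expands $\mathfrak u_\Theta$ (by the root condition on $\Theta$) while preserving $\mathfrak l_\Theta$; since $u$ and $k$ both lie in the compact group $K$, the sequence $\mu(g)^{-n} u\, \mu(g)^n$ stays bounded, which forces $u$ to lie in the parabolic $P_{\Theta,\Rb}$ (the expanding directions must be absent), and similarly $\mu(g)^{n} u\, \mu(g)^{-n}$ bounded forces $u \in P_{\Theta,\Rb}^{\mathrm{opp}}$; hence $u \in L_{\Theta,\Rb} = P_{\Theta,\Rb}\cap P_{\Theta,\Rb}^{\mathrm{opp}}$, which stabilizes $p_\Theta$. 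Therefore $m'(p_\Theta) = m u^{-1}(p_\Theta)\cdot(\text{something in }K\cap L_\Theta)$ — carefully, $m' = m u$ with $u \in L_{\Theta,\Rb}\cap K \subset \mathrm{Stab}(p_\Theta)$, so $m'(p_\Theta) = m(p_\Theta)$, giving well-definedness of $\mathcal U_\Theta(g)$. I would write out the contraction/expansion estimate using that the eigenvalues of $\mathrm{Ad}(\mu(g))$ on $\mathfrak g_\alpha$ are $\alpha_\Rb(\mu(g))$, which is $>1$ for $\alpha$ a positive combination involving $\Theta$, $=1$ for $\alpha \in \mathrm{Span}(\Delta\setminus\Theta)$, and $<1$ for the opposite roots.

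The main obstacle I anticipate is packaging part~(2) cleanly: one must handle the case $\alpha_\Rb(\mu(g)) = 1$ on the roots spanned by $\Delta\setminus\Theta$ (so $\mu(g)$ is not regular and $u$ need only land in $L_{\Theta,\Rb}$, not in $Z_K(A_\Rb)$), and be careful that the boundedness argument for a sequence $\mu(g)^{-n}u\mu(g)^n$ in $K$ genuinely forces the $\mathfrak u_\Theta$-component of $u$ (written in coordinates via the open cell $U_\Theta^{\mathrm{opp}} L_\Theta U_\Theta$) to vanish — an unbounded expanding coordinate in a relatively compact set is the contradiction. Everything else (existence, uniqueness of $\mu$, continuity) is standard structure theory that I would cite rather than reprove.
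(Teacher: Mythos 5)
The paper does not actually prove this theorem; it is stated without proof as recalled background (the multiplicative Cartan, or $KAK$, decomposition), with no argument following the statement. So there is no ``paper proof'' to compare against, and your task was to reconstruct standard structure theory. Your sketch for existence, uniqueness of $\mu(g)$, and continuity (part~(1)) is the standard approach and is fine.

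For part~(2), however, there is a genuine gap. You set $u = m^{-1}m' \in K$ and $k = \ell'\ell^{-1} \in K$ with $\mu(g)^{-1} u\, \mu(g) = k$, and then assert that ``the sequence $\mu(g)^{-n} u\, \mu(g)^n$ stays bounded.'' This does not follow: the hypothesis gives only that the \emph{first} conjugate $\mu(g)^{-1}u\mu(g)$ lies in $K$, not that the iterates do. Indeed $\mu(g)^{-2}u\mu(g)^2 = \mu(g)^{-1}k\mu(g)$, and there is no reason for this to lie in $K$ or even to be bounded, so the contraction/expansion estimate has nothing to bite on. The correct route is to bring in the Cartan involution $\theta$ fixing $K$: since $\theta(\mu(g)) = \mu(g)^{-1}$ and $\theta$ fixes both $u$ and $k$, applying $\theta$ to the identity $\mu(g)^{-1}u\,\mu(g) = k$ yields $\mu(g)\, u\, \mu(g)^{-1} = k$ as well. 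Comparing the two gives $\mu(g)^2 u\, \mu(g)^{-2} = u$, so $u$ commutes with $\mu(g)^2$, hence $u \in Z_{G_\Rb}(\mu(g))$. The condition $\alpha_\Rb(\mu(g))>1$ for $\alpha\in\Theta$ then forces $Z_{G_\Rb}(\mu(g)) \subset L_{\Theta,\Rb}$ (the centralizer is generated by $A_\Rb$ and the root groups $U_\alpha$ with $\alpha(\mu(g))=1$, and these roots all lie in $\mathrm{Span}(\Delta\setminus\Theta)$). Thus $u \in L_{\Theta,\Rb} \subset P_{\Theta,\Rb}$ stabilizes $p_\Theta$, and $m'(p_\Theta) = m\,u\,(p_\Theta) = m(p_\Theta)$ as desired. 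Everything else in your write-up, including the caution about $\mu(g)$ being singular on $\Delta\setminus\Theta$, is sound; it is only the boundedness-of-iterates step that needs to be replaced by the involution argument.
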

The map $\mu$ in \Cref{KAK} is the \emph{(multiplicative) Cartan projection}. 

Let $\Theta\subset\Delta$ be a non-empty subset. The following lemma is well-known, see for example Kapovich--Leeb--Porti \cite[Section~4]{KLP17} or Canary--Zhang--Zimmer \cite[Proposition~2.3]{CZZ2}. 

\begin{lemma}\label{lemma: divergence in flag manifold}
Let $x\in\Fc_{\Theta,\Rb}$, let $y\in\Fc_{\iota(\Theta),\Rb}$, and let $(g_n)_{n\in\Nb}$ be a sequence in $G_{\Rb}$. For each $n$, let $g_n=m_n\mu(g_n)\ell_n$ be a Cartan decomposition. Then the following are equivalent:
\begin{enumerate}
\item $\alpha(\mu(g_n))\to+\infty$, $\mathcal U_\Theta(g_n)\to x$ and $\mathcal U_{\iota(\Theta)}(g_n^{-1})\to y$ as $n \to +\infty$.
\item The sequence $(g_n)_{n\in\Nb}$, when restricted to $\{z\in \Fc_{\Theta,\Rb} \mid z\pitchfork y\}$, converges uniformly on compact sets to the constant map with image $\{x\}$.
\item The sequence $(g_n^{-1})_{n\in\Nb}$, when restricted to $\{z\in \Fc_{\iota(\Theta),\Rb} \mid z\pitchfork x\}$, converges uniformly on compact sets to the constant map with image $\{y\}$.
\item
\label{lemma: divergence in flag manifold: open sets}
There are open sets $U\subset\Fc_{\Theta,\Rb}$ and $V\subset\Fc_{\iota(\Theta),\Rb}$ such that $g_n(z)\to x$ for all $z\in U$ and $g_n^{-1}(z)\to y$ for all $z\in V$.
\end{enumerate}
\end{lemma}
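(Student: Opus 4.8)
The plan is to exploit the evident symmetry of the statement, then treat the easy implications directly and the substantive ones via the Cartan decomposition of \Cref{KAK}. Replacing $(g_n,\Theta,x,y)$ by $(g_n^{-1},\iota(\Theta),y,x)$ exchanges (ii) and (iii) while fixing (i) and (iv): indeed $\mu(g_n^{-1})$ is conjugate to $\mu(g_n)^{-1}$ by a representative of the longest Weyl element $w_0$, so $\alpha(\mu(g_n))\to+\infty$ for all $\alpha\in\Theta$ if and only if $\beta(\mu(g_n^{-1}))\to+\infty$ for all $\beta\in\iota(\Theta)$, and $\mathcal U_\Theta(g_n)$, $\mathcal U_{\iota(\Theta)}(g_n^{-1})$ enter (i) symmetrically. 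Hence it suffices to prove $(i)\Rightarrow(ii)$, $(ii)\wedge(iii)\Rightarrow(iv)$, and $(iv)\Rightarrow(i)$; the middle one is immediate, taking $U=\{z\in\Fc_{\Theta,\Rb}\mid z\pitchfork y\}$ and $V=\{z\in\Fc_{\iota(\Theta),\Rb}\mid z\pitchfork x\}$.

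For $(i)\Rightarrow(ii)$ I would write $g_n=m_n\mu(g_n)\ell_n$ with $m_n,\ell_n\in K$ and use that the torus element $a_n\coloneqq\mu(g_n)$ acts on the affine chart $\{z\in\Fc_{\Theta,\Rb}\mid z\pitchfork p_\Theta^{\opp}\}\cong\mathfrak u_\Theta^{\opp}(\Rb)$ \emph{linearly}, namely as $\Ad(a_n)|_{\mathfrak u_\Theta^{\opp}}$, which scales $\mathfrak g_{-\gamma}$ by $\gamma(a_n)^{-1}$. Since every $\gamma\in\Phi^+\setminus\Span(\Delta\setminus\Theta)$ has positive coefficient on some simple root of $\Theta$ and $\alpha(a_n)\geq 1$ for all $\alpha\in\Delta$, the hypothesis $\alpha(a_n)\to+\infty$ on $\Theta$ forces $\gamma(a_n)\to+\infty$, so $a_n$ contracts this chart uniformly on compacts toward the base point $p_\Theta$. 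Given a compact $C\subset\{z\mid z\pitchfork y\}$: writing $\mathcal U_{\iota(\Theta)}(g_n^{-1})=\ell_n^{-1}\cdot q$ for a fixed flag $q\pitchfork p_\Theta$, the convergence $\mathcal U_{\iota(\Theta)}(g_n^{-1})\to y$ together with openness of transversality and compactness of $C$ places $\ell_n(C)$ eventually in a fixed compact subset of $\{z\mid z\pitchfork q\}$; applying $a_n$ collapses this toward $p_\Theta$; and since $m_n$ acts as an isometry (for a $K$-invariant metric) with $m_n\cdot p_\Theta=\mathcal U_\Theta(g_n)\to x$, one concludes that $g_n(C)=m_na_n\ell_n(C)$ lies in arbitrarily small neighborhoods of $x$. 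By the duality above this also yields $(i)\Rightarrow(iii)$.

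For $(iv)\Rightarrow(i)$ I would show that every subsequence of $(g_n)$ has a further subsequence satisfying (i), which is enough. Pass to a subsequence with $m_n\to m$, $\ell_n\to\ell$ in $K$ and $\alpha(\mu(g_n))$ convergent in $[1,+\infty]$ for every $\alpha\in\Delta$, and let $\Theta_\infty$ be the set of simple roots where the limit is $+\infty$. If some $\beta\in\Theta$ were not in $\Theta_\infty$, then projecting along the $G_{\Rb}$-equivariant fibration $\Fc_{\Theta,\Rb}\to\Fc_{\{\beta\},\Rb}$ (valid since $\{\beta\}\subset\Theta$), the image $\bar U$ of $U$ would be a non-empty open set on which $g_n\to\bar x$; but on the affine chart of $\Fc_{\{\beta\},\Rb}$ the linear maps $\Ad(\mu(g_n))$ converge to a linear map $L$ that is \emph{non-zero} (non-zero on $\mathfrak g_{-\beta}$, where the scaling $\beta(\mu(g_n))^{-1}$ stays bounded below), so $g_n\to m\circ L\circ\ell$ there, and a non-zero linear map composed with homeomorphisms is non-constant on every open set --- a contradiction. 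Hence $\alpha(\mu(g_n))\to+\infty$ for all $\alpha\in\Theta$; feeding this into the already established $(i)\Rightarrow(ii)$ along the subsequence, with $x,y$ replaced by $x'\coloneqq\lim\mathcal U_\Theta(g_n)$ and $y'\coloneqq\lim\mathcal U_{\iota(\Theta)}(g_n^{-1})$, and comparing with (iv) on the non-empty open set $U\cap\{z\mid z\pitchfork y'\}$ forces $x'=x$; the dual comparison on $V$ (using $g_n^{-1}$) forces $y'=y$. Thus the subsequence satisfies (i).

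The hard part will be the contraction estimate inside $(i)\Rightarrow(ii)$ --- upgrading the blow-up of the Cartan projection on $\Theta$, together with convergence of the Cartan flags $\mathcal U_\Theta(g_n)$ and $\mathcal U_{\iota(\Theta)}(g_n^{-1})$, to genuinely uniform (not merely pointwise) convergence on compacta --- and the closely related bookkeeping in $(iv)\Rightarrow(i)$ that turns ``$g_n\to x$ on \emph{some} open set'' into all three assertions of (i); the essential mechanism behind the latter is the elementary observation that a non-zero linear endomorphism of a positive-dimensional vector space is never constant on an open subset, so the limiting map $m\circ L\circ\ell$ cannot be constant.
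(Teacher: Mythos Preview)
The paper does not give its own proof of this lemma; it cites it as well-known, referring to Kapovich--Leeb--Porti \cite[Section~4]{KLP17} and Canary--Zhang--Zimmer \cite[Proposition~2.3]{CZZ2}. So there is no in-paper argument to compare against.

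Your outline is correct and follows the standard route via the $KAK$ decomposition and the linear $\Ad$-action of the Cartan factor on the big cell $\{z\pitchfork p_\Theta^{\rm opp}\}\cong\mathfrak u_\Theta^{\rm opp}$. A couple of places deserve slightly more care but do not affect correctness. In $(i)\Rightarrow(ii)$, the passage from ``$\ell_n^{-1}\cdot q\to y$ and $C\subset\{z\pitchfork y\}$'' to ``$\ell_n(C)$ lies in a fixed compact subset of $\{z\pitchfork q\}$'' is cleanest done by first passing to a subsequence with $\ell_n\to\ell$ (using compactness of $K$), checking $\ell(C)\subset\{z\pitchfork q\}$, and then invoking the usual every-subsequence-has-a-further-subsequence trick to get uniform convergence for the full sequence; your text gestures at this but does not say it. In $(iv)\Rightarrow(i)$, the same subsequence bookkeeping is needed: one shows each subsequence has a further one satisfying (i), and since all three assertions in (i) are convergence statements with the \emph{same} prescribed limits $+\infty$, $x$, $y$, this yields (i) for the full sequence. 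The key step---that a non-zero linear limit of $\Ad(\mu(g_n))|_{\mathfrak u_{\{\beta\}}^{\rm opp}}$ cannot produce a constant map on an open set---is exactly right, and your identification of the chart and the eigenvalue bounds (using $\alpha(\mu(g_n))\geq 1$ for all $\alpha\in\Delta$, so that the contraction factors on $\mathfrak u_{\{\beta\}}^{\rm opp}$ stay in $(0,1]$ and converge along the chosen subsequence) is sound.
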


If these conditions are satisfied, we say that the sequence $(g_n)_{n\in \Nb}$ is a \emph{$\Theta$-attracting sequence} in $G_{\Rb}$, and call $x$ and $y$ the \emph{attracting flag} and \emph{repelling flag} of $(g_n)_{n\in \Nb}$ respectively.

\begin{example}
    If $g \in G_{\Rb}$ is $\Theta$-proximal, then the sequence $(g^n)_{n\in \Nb}$ is $\Theta$-attracting, with attracting flag $g^+$ and repelling flag $g^-$.
\end{example}

\begin{definition}\label{definition: divergent sequence and representation}
    A sequence $(g_n)_{n \in \Nb} \in G_{\Rb}^{\Nb}$ is \emph{$\Theta$-divergent} if $\alpha(\mu(g_n)) \to +\infty$ as $n\to +\infty$ for all $\alpha \in \Theta$.
    An element $g \in G_{\Rb}$ is \emph{$\Theta$-divergent} if the sequence $(g^n)_{n \in \Nb}$ is so.
    A representation $\rho$ of a discrete group $\Gamma$ into $G_{\Rb}$ is \emph{$\Theta$-divergent} if $(\rho(\gamma_n))_{n \in \Nb}$ is $\Theta$-divergent for every sequence $(\gamma_n)_{n\in\Nb}\subset \Gamma$ that leaves every finite set.
\end{definition}

The following is an easy consequence of \Cref{lemma: divergence in flag manifold}.

\begin{proposition}\label{prop: dynamics divergent sequence}
    Any $\Theta$-divergent sequence in $G_{\Rb}$ has a $\Theta$-attracting subsequence.
\end{proposition}
\begin{proof}
Let $(g_n)_{n\in \Nb} \in G_{\Rb}^{\Nb}$ be a $\Theta$-divergent sequence. Since $\Fc_{\Theta,\Rb}$ and $\Fc_{\iota(\Theta),\Rb}$ are compact, we can choose a subsequence $(k_n)_{k\in \Nb}$ such that $\mathcal U_\Theta(g_{k_n})$ and $U_{\iota(\Theta)}(g_{k_n}^{-1})$ converge respectively to points $x$ and $y$.
\end{proof}

\section{%
\texorpdfstring{%
$\Theta$-positive structures on flag varieties}%
{Theta-positive structures on flag varieties}}
\label{s: Positivity on flag varieties}

In this section, we recall the foundational work of Guichard and Wienhard \cite{guichard2025generalizing}, who introduced the notion of $\Theta$-positive tuples on certain real flag varieties, and we explain that it adapts well over any real closed field.

Throughout this section, $G$ is a semi-algebraically connected, semisimple linear semi-algebraic group over $\overline{\Qb}^r$.
We also fix a non-empty subset $\Theta$ of the simple roots of $G$, which we assume to be symmetric (see Sections~\ref{section:SemiAlgGroups} and~\ref{sec: flag} for definitions). 
As always, let $\Fb$ be a real closed field, and denote the $\Fb$-extensions of all semi-algebraic maps and sets defined over $\overline{\Qb}^r$ with a subscript $\Fb$.

\subsection{%
\texorpdfstring{%
$\Theta$-positive structures for general ordered fields}%
{Theta-positive structures for general ordered fields}}
\label{subsection: PositiveStructures}

Recall that $P_{\Theta,\Fb}$ and $P_{\Theta,\Fb}^\opp$ denote the $\Theta$-standard parabolic subgroup of $G_{\Fb}$ and its opposite respectively, $U_{\Theta,\Fb}$ and $U_{\Theta,\Fb}^\opp$ denote their respective unipotent radicals, and $p_\Theta$ and $p_\Theta^\opp$ denote their respective fixed points in $\Fc_{\Theta,\Fb} =\Fc_{\iota(\Theta),\Fb}$.
Set
\[U_{\Theta,\Fb}^\pitchfork \coloneqq \{u \in U_{\Theta,\Fb} \mid u\cdot p_\Theta^\opp \pitchfork p_\Theta^\opp\}.\]
It follows from the discussion in \Cref{sec: flag} that the map $U_{\Theta,\Fb}^\pitchfork\to\Fc_{\Theta,\Fb}$ given by $u\mapsto u\cdot p_\Theta^\opp$ is a semi-algebraic bijection onto the set of flags that are transverse to both $p_\Theta$ and $p_\Theta^\opp$.

\begin{definition}
\label{def: theta positive structure}
We say that $G_{\Fb}$ admits a \emph{$\Theta$-positive structure} if there exists a semi-algebraically connected component $C$ of $U_{\Theta,\Fb}^{\pitchfork}$ that is a semigroup, i.e.\ for all $g,h \in C$ we have $gh \in C$.
\end{definition}

\begin{example}
    When $G_{\Fb} = \PSL_2(\Fb)$, the set
    \[U_\Theta^\pitchfork = \left\{u_b \coloneqq \begin{bmatrix} 1 & b\\ 0 & 1\end{bmatrix} \,\middle\vert\, \ b\in \Fb^*\right\}\]
    has two connected components, both of which are semigroups:
    \[U_\Theta^{>0} \coloneqq \{ u_b\mid b>0\} \quad \textrm{ and } \quad U_\Theta^{<0} \coloneqq \{u_b\mid b<0\}~.\]
\end{example}

Since transversality is defined over $\overline{\Qb}^r$, the set $U_{\Theta, \Fb}^\pitchfork$ is the $\Fb$-extension of $U_\Theta^\pitchfork \coloneqq U_{\Theta, \Qbar}^\pitchfork$. Suppose now that $G$ admits a $\Theta$-positive structure, i.e.\ there is a semi-algebraically connected component $C$ of $U_\Theta^\pitchfork$ that is a semigroup. Then the $\Fb$-extension $C_{\Fb}$ of $C$ is a semi-algebraically connected component of $U_{\Theta,\Fb}^\pitchfork$ that is a semigroup, and so $G_{\Fb}$ admits a $\Theta$-positive structure. Conversely, since semi-algebraically connected components of $U_{\Theta,\Fb}^\pitchfork$ are defined over $\overline{\Qb}^r$ (see \Cref{thm_RConnSemiAlgConn}), every $\Theta$-positive structure on $G_{\Fb}$ is the $\Fb$-extension of a $\Theta$-positive structure on $G$. 
Hence $G_{\Fb}$ admits a $\Theta$-positive structure if and only if $G_{\Rb}$ does. Now, over $\Rb$, semi-algebraically connected components are the same as analytic components, so \Cref{def: theta positive structure} is equivalent to Guichard and Wienhard's notion of $\Theta$-positivity (\cite[Theorem 12.2]{guichard2025generalizing}). 
In conclusion:
\begin{itemize}
    \item The $\Theta$-positive structures in $G_{\Rb}$ classified by Guichard and Wienhard in \cite{guichard2025generalizing} are semi-algebraic and defined over $\overline{\Qb}^r$, and
    \item any $\Theta$-positive structure over $G_{\Fb}$ as in \Cref{def: theta positive structure} is the $\Fb$-extension of one of Guichard and Wienhard's $\Theta$-positive structures.
\end{itemize}

Real flag varieties admitting a positive structures are classified by Guichard--Wienhard. The list is as follows.

\begin{theorem}[{\cite[Theorem 1.1]{guichard2025generalizing}}]
\label{thm: GW classification positive structures}
    Let $G_{\Rb}$ be a simple real Lie group.
    Then $G_{\Rb}/P_{\Theta,\Rb}$ admits a positive structure if an only if either
    \begin{itemize}
        \item $G_{\Rb}$ is a split real form, $\Theta=\Delta$ (in which case $\Fc_{\Theta,\Rb}$ is the complete flag variety of $G_{\Rb}$); or
        \item $G_{\Rb}$ is Hermitian of tube type and of real rank $r$ and $\Theta=\{\alpha_r\}$, where $\alpha_r$ is the long simple restricted root (in which case $\Fc_{\Theta,\Rb}$ is the Shilov boundary of the symmetric space associated to $G_{\Rb}$); or
        \item 
        $G_{\Rb}$ is locally isomorphic to $\SO(p+1,p+k)$, $p>1$, $k>1$ and $\Theta=\{\alpha_1,\ldots,\alpha_p\}$, where $\alpha_1, \ldots, \alpha_p$ are the long simple restricted roots; or
        \item $G_{\Rb}$ is the real form of $F_4$, $E_6$, $E_7$, or $E_8$ whose system of restricted roots is of type $F_4$, and $\Theta=\{\alpha_1,\alpha_2\}$, where $\alpha_1, \alpha_2$ are the long simple restricted roots.
    \end{itemize}  
\end{theorem}

\begin{remark}
    Over a general ordered field $\Kb$ (not necessarily real closed), the only satisfying notion of semi-algebraically connected component of a semi-algebraic set $X_{\Kb}$ is the $\Kb$-points of a semi-algebraically connected component of $X_{{\overline \Kb}^r}$.
    (By Tarski--Seidenberg's quantifier elimination the semi-algebraically connected components of $X_{{\overline \Kb}^r}$ can be described by polynomials with coefficients in $\Kb$.) 
    With this definition, one can make sense of \Cref{def: theta positive structure} over any ordered field.
    However, this does not lead to any new $\Theta$-positive structures.
\end{remark}

\subsection{%
\texorpdfstring{%
Structure of $U_{\Theta,\Fb}^\pitchfork$}%
{Structure of U {Theta,F} pitchfork}}\label{hats}
In their work, Guichard and Wienhard \cite{guichard2025generalizing} analyzed the structure of $U_{\Theta,\Rb}^\pitchfork$, which we describe briefly here. 

Notice that the adjoint action of $L_{\Theta,\Fb}$ on the Lie algebra $\mathfrak u_{\Theta,\Fb}$ restricts to a semisimple action of the Abelian group $A_{\Theta,\Fb}$ on $\mathfrak u_{\Theta,\Fb}$, so we may decompose $\mathfrak u_{\Theta,\Fb}$ into its weight spaces
\[\mathfrak u_{\Theta,\Fb}=\bigoplus_{\beta\in \Phi_{\Theta}}\mathfrak u_{\beta,\Fb}.\]
It is straightforward to verify that for each $\beta\in\Phi_\Theta$, 
\[\mathfrak u_{\beta,\Fb}=\bigoplus_{\alpha\in\Phi\colon\alpha|_{A_{\Theta,\Fb}}=\beta}\mathfrak g_{\alpha,\Fb}.\]
Hence, if $\alpha\in\Phi$ satisfies $\alpha|_{A_{\Theta,\Fb}}=\beta$, then we will often abuse notation by denoting $\mathfrak u_{\alpha,\Fb}\coloneqq\mathfrak u_{\beta,\Fb}$.

Let $L_{\Theta,\Fb}^\circ$ denote the semi-algebraically connected component of $L_{\Theta,\Fb}$ that contains the identity. The following theorem is a summary of results due to Guichard and Wienhard.

\begin{theorem}{\ }\label{theorem: GWU}
\begin{enumerate}
\item $G_{\Rb}$ admits a $\Theta$-positive structure if and only if for each $\alpha\in\Theta$, there is a $L_{\Theta,\Rb}^\circ$-invariant sharp closed convex cone in $\mathfrak u_{\alpha,\Rb}$ \cite[Definition 3.1, Theorem 12.2]{guichard2025generalizing}.
\item If a $L_{\Theta,\Rb}^\circ$-invariant sharp closed convex cone in $\mathfrak u_{\alpha,\Rb}$ exists, then it is unique up to negation \cite[Remark 3.2, Proposition 3.6]{guichard2025generalizing}. 
\item Suppose that $G_{\Rb}$ admits a $\Theta$-positive structure. For each $\alpha\in\Theta$, choose a $L_{\Theta,\Rb}^\circ$-invariant sharp closed convex cone $c_\alpha\subset\mathfrak u_{\alpha,\Rb}$. 
The semigroup generated by \[\bigcup_{\alpha\in\Theta}\{\exp(X)\mid X\in c_\alpha\}\]
is closed, and its interior is a connected component of $U_{\Theta,\Fb}^\pitchfork$ \cite[Theorem 1.4]{guichard2025generalizing}.
Furthermore, every connected component of $U_{\Theta,\Fb}^\pitchfork$ that is a semigroup arises in this way \cite[Theorem 12.2]{guichard2025generalizing}.
\end{enumerate}
\end{theorem}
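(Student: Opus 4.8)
The plan is not to reprove these statements from scratch: Parts (1)--(3) are, as indicated, theorems of Guichard--Wienhard \cite{guichard2025generalizing}, and the only thing that genuinely needs checking is that our \Cref{def: theta positive structure} matches their original definition. This compatibility was already established in the discussion preceding the theorem: over $\Rb$ the semi-algebraically connected components of $U_{\Theta,\Rb}^\pitchfork$ are its connected components for the analytic topology, so \Cref{def: theta positive structure} is literally Guichard--Wienhard's notion; and every $\Theta$-positive structure over a real closed field $\Fb$ is the $\Fb$-extension of one over $\Rb$, because ``$C$ is a semigroup'' is a semi-algebraic condition and semi-algebraically connected components are preserved under $\Fb$-extension (\Cref{thm_RConnSemiAlgConn}, \Cref{thm_ExtConnComp}). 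Thus it suffices to prove everything over $\Rb$, which is the content of \cite[Definition 3.1, Remark 3.2, Proposition 3.6, Theorem 12.2]{guichard2025generalizing}; the role of this ``proof'' is then only to recall the shape of their argument.

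For the implication ``cones $\Rightarrow$ positive structure'' of (1) and the first half of (3): given, for each $\alpha\in\Theta$, an $L_{\Theta,\Rb}^\circ$-invariant sharp closed convex cone $c_\alpha\subset\mathfrak u_{\alpha,\Rb}$ (each first-layer space $\mathfrak u_{\alpha,\Rb}$ being abelian, so that $\exp$ identifies it with a closed abelian subgroup of $U_{\Theta,\Rb}$), I would let $S$ be the subsemigroup of $U_{\Theta,\Rb}$ generated by $\bigcup_{\alpha\in\Theta}\exp(c_\alpha)$. The core of the argument is a Lusztig-type normal form: fixing a reduced expression adapted to $\Theta$, every element of $S$ has a parametrization as an ordered product of exponentials of elements of (the positive parts of) the corresponding first-layer pieces, and the $L_{\Theta,\Rb}^\circ$-invariance together with the convexity and sharpness of the $c_\alpha$ ensure that the transition maps between parametrizations coming from different reduced expressions are well defined and positivity-preserving --- the ``generalized braid relations''. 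From this one extracts that $S\subset U_{\Theta,\Rb}^\pitchfork$, that $\overline S$ is a full-dimensional closed subset of $U_{\Theta,\Rb}$ whose topological interior lies in $U_{\Theta,\Rb}^\pitchfork$, and hence that the interior $S^\circ$ is open, connected, stable under multiplication, and relatively closed in $U_{\Theta,\Rb}^\pitchfork$ --- i.e.\ a connected component that happens to be a semigroup.

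For the converse in (1) and the ``furthermore'' in (3): starting from a connected component $C$ of $U_{\Theta,\Rb}^\pitchfork$ that is a semigroup, one recovers the cones by setting $c_\alpha\coloneqq\exp^{-1}\big(\overline{C\cap\exp(\mathfrak u_{\alpha,\Rb})}\big)$. This is convex because $\exp(\mathfrak u_{\alpha,\Rb})$ is abelian and $C$ is a semigroup; sharp because $C$, being a single component of the transverse locus, contains no line through the identity inside $\exp(\mathfrak u_{\alpha,\Rb})$; and $L_{\Theta,\Rb}^\circ$-invariant because conjugation by the connected group $L_{\Theta,\Rb}^\circ$ preserves $U_{\Theta,\Rb}^\pitchfork$, permutes its components, and therefore fixes $C$. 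Feeding these $c_\alpha$ back into the previous paragraph returns $C$, which gives the ``every component arises this way'' clause. Finally, for uniqueness (2) I would reduce to the representation-theoretic assertion that the $L_{\Theta,\Rb}^\circ$-module $\mathfrak u_{\alpha,\Rb}$ admits at most one invariant sharp closed convex cone up to sign; Guichard--Wienhard verify this along their classification: $\mathfrak u_{\alpha,\Rb}$ is one-dimensional when $\Theta=\Delta$, a Euclidean Jordan algebra carrying its symmetric cone in the Hermitian case, and one checks the $\SO(p,q)$ and exceptional cases from the explicit module structure.

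The step I expect to be the real obstacle --- and the reason one invokes \cite{guichard2025generalizing} rather than redoing it here --- is the well-definedness of the positive normal form under change of reduced expression (the generalized braid relations) and the module-theoretic input to (2). Everything else is formal: manipulation of semigroups and connected components, together with the $\Fb$-extension dictionary recalled in the first paragraph.
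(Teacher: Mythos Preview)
Your proposal is correct and aligns with the paper's treatment: the paper gives no proof of this theorem at all, simply citing \cite{guichard2025generalizing} for each part, so your recognition that nothing needs reproving beyond the compatibility of definitions (already handled in the preceding discussion) is exactly right. Your additional sketch of the Guichard--Wienhard argument goes beyond what the paper does and is a reasonable summary, though strictly unnecessary for the paper's purposes.
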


Let $\mathcal S_{\Fb}$ denote the set of semi-algebraically connected components of $U_{\Theta,\Fb}^\pitchfork$ that are semigroups. Notice that the $L_{\Theta,\Fb}$-action on $U_{\Theta,\Fb}$ by conjugation leaves $U_{\Theta,\Fb}^\pitchfork$ invariant, and thus induces an action of $L_{\Theta,\Fb}$ on $\mathcal S_{\Fb}$,
\[\Phi_{\Fb}\colon L_{\Theta,\Fb}\to {\rm Sym}(\mathcal S_{\Fb})\]
given by $\Phi_{\Fb}(g)\cdot C=gCg^{-1}$, where ${\rm Sym}(\mathcal S_{\Fb})$ denotes the symmetric group on the finite set $\mathcal S_{\Fb}$. Let $L_{\Theta,\Fb}^*$ denote the kernel of $\Phi_{\Fb}$. The following corollary is a consequence of the structure of $U_{\Theta,\Fb}^\pitchfork$ described by \Cref{theorem: GWU} and the transfer principle.

\begin{corollary}\label{cor: GW consequences}
If $G_{\Fb}$ admits a $\Theta$-positive structure, then the set $\mathcal S_{\Fb}$ admits a free and transitive action by the group $\{\pm1\}^{\Theta}$. Furthermore, the image of the homomorphism $\Phi_{\Fb}$
lies in $\{\pm1\}^{\Theta}$ (viewed as a subset of ${\rm Sym}(\mathcal S_{\Fb})$). In particular, the action of $L_{\Theta,\Fb}/L_{\Theta,\Fb}^*$ on $\mathcal S_{\Fb}$ is free.
\end{corollary}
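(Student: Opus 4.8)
The statement is purely a consequence of \Cref{theorem: GWU}, transported to $\Fb$ via the transfer principle. First I would establish it over $\Rb$, where everything is classical, and then observe that all the objects involved ($U_{\Theta,\Fb}^\pitchfork$, its finitely many semi-algebraically connected components, the conjugation action of $L_{\Theta,\Fb}$, the homomorphism $\Psi_{\Fb}$) are $\Fb$-extensions of the corresponding objects over $\overline{\Qb}^r$, whence the conclusion over any real closed $\Fb$ follows from \Cref{thm_ExtConnComp} and \Cref{thm_ExtSemiAlgMaps}.

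\medskip

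\noindent\emph{Step 1: the $\{\pm1\}^\Theta$-action on $\mathcal S_{\Rb}$.} By \Cref{theorem: GWU}(1)--(3), over $\Rb$ the choice of a $\Theta$-positive structure amounts to a choice, for each $\alpha\in\Theta$, of one of the two $L_{\Theta,\Rb}^\circ$-invariant sharp closed convex cones $\pm c_\alpha\subset\mathfrak u_{\alpha,\Rb}$ (part (2) gives exactly two), and every semigroup component of $U_{\Theta,\Rb}^\pitchfork$ arises this way (part (3)). Thus the assignment sending $(\epsilon_\alpha)_{\alpha\in\Theta}\in\{\pm1\}^\Theta$ to the component generated by $\bigcup_{\alpha\in\Theta}\exp(\epsilon_\alpha c_\alpha)$ is a surjection $\{\pm1\}^\Theta\to\mathcal S_{\Rb}$; defining the $\{\pm1\}^\Theta$-action by $\epsilon\cdot C_\delta=C_{\epsilon\delta}$ makes this surjection equivariant for the translation action on $\{\pm1\}^\Theta$, which is free and transitive. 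It remains to see the surjection is injective, i.e.\ distinct sign vectors give distinct components: this follows because the component determines each cone $\epsilon_\alpha c_\alpha$ as the closure of $\exp^{-1}$ of the intersection of the component with $U_{\Theta,\Rb}^{\pitchfork}\cap\exp(\mathfrak u_{\alpha,\Rb})$ (the one-parameter ``root subgroup'' directions), so the sign vector is recovered from the component. Hence $\{\pm1\}^\Theta$ acts freely and transitively on $\mathcal S_{\Rb}$.

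\medskip

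\noindent\emph{Step 2: $\Psi_{\Rb}$ lands in $\{\pm1\}^\Theta$.} The group $L_{\Theta,\Rb}$ acts on $\mathcal S_{\Rb}$ by conjugation, i.e.\ by $g\cdot C=gCg^{-1}$, and one checks this action commutes with the $\{\pm1\}^\Theta$-action: conjugation by $g\in L_{\Theta,\Rb}$ sends the component attached to cones $(\epsilon_\alpha c_\alpha)$ to the one attached to $(\Ad(g)\epsilon_\alpha c_\alpha)$, and since $\Ad(g)$ permutes the root spaces $\mathfrak u_{\alpha,\Rb}$ among themselves via the (trivial, for $\alpha\in\Theta$ after accounting for the $L_\Theta$-structure — more precisely $L_{\Theta,\Rb}$ preserves each $\mathfrak u_{\alpha,\Rb}$ by definition of the weight decomposition under $A_{\Theta,\Rb}$) action and rescales the cone $c_\alpha$ up to sign by uniqueness in \Cref{theorem: GWU}(2), the effect is precisely a sign flip $\epsilon_\alpha\mapsto \pm\epsilon_\alpha$. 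Therefore $\Psi_{\Rb}(g)$ acts on $\mathcal S_{\Rb}$ as an element of $\{\pm1\}^\Theta$, using the identification $\mathcal S_{\Rb}\cong\{\pm1\}^\Theta$ from Step 1. In particular $L_{\Theta,\Rb}/L_{\Theta,\Rb}^*$ embeds into $\{\pm1\}^\Theta$, which acts freely, so its action on $\mathcal S_{\Rb}$ is free.

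\medskip

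\noindent\emph{Step 3: transfer to $\Fb$.} The set $U_{\Theta,\Fb}^\pitchfork$ is the $\Fb$-extension of $U_\Theta^\pitchfork$ over $\overline{\Qb}^r$ (as noted after \Cref{def: theta positive structure}), and by \Cref{thm_ExtConnComp}(4) its semi-algebraically connected components are the $\Fb$-extensions of those of $U_\Theta^\pitchfork$, hence are in canonical bijection with the components over $\Rb$; being a semigroup is a first-order condition preserved by this bijection (the semigroup multiplication is a semi-algebraic map over $\overline{\Qb}^r$, apply \Cref{thm_ExtSemiAlgMaps}), so $\mathcal S_{\Fb}$ is in canonical bijection with $\mathcal S_{\Rb}$. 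Likewise the conjugation action $L_{\Theta,\Fb}\times U_{\Theta,\Fb}^\pitchfork\to U_{\Theta,\Fb}^\pitchfork$ is the $\Fb$-extension of the corresponding map over $\overline{\Qb}^r$, so it induces an action on $\mathcal S_{\Fb}$ compatible with the bijection $\mathcal S_{\Fb}\cong\mathcal S_{\Rb}$, and $\Psi_{\Fb}$ corresponds to $\Psi_{\overline{\Qb}^r}$, hence to $\Psi_{\Rb}$. The $\{\pm1\}^\Theta$-action on $\mathcal S_{\Fb}$ is defined by transporting the one on $\mathcal S_{\Rb}$; by Steps 1--2 it is free and transitive, $\Psi_{\Fb}$ lands in $\{\pm1\}^\Theta$, and $L_{\Theta,\Fb}/L_{\Theta,\Fb}^*$ acts freely on $\mathcal S_{\Fb}$.

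\medskip

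\noindent\emph{Main obstacle.} The genuinely substantive point is injectivity of the surjection $\{\pm1\}^\Theta\to\mathcal S_{\Rb}$ in Step 1 and the ``sign-flip'' computation in Step 2, i.e.\ checking that the component of $U_{\Theta,\Rb}^\pitchfork$ determines the tuple of cones and that $L_\Theta^\circ$-conjugation can only rescale each cone by a positive factor or flip its sign. Both are essentially contained in the cited results of Guichard--Wienhard (\cite[Remark 3.2, Propositions 3.6, Theorem 12.2]{guichard2025generalizing}), so the argument amounts to extracting these statements precisely and then invoking the transfer machinery of \Cref{thm_ExtConnComp,thm_ExtSemiAlgMaps}; no new difficulty arises over $\Fb$.
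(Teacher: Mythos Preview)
Your proof is correct and follows essentially the same route as the paper: reduce to $\Rb$ via the transfer principle, use \Cref{theorem: GWU}(2)--(3) to parametrize $\mathcal S_{\Rb}$ by sign vectors and show that $\Ad(g)c_\alpha=\pm c_\alpha$ for $g\in L_{\Theta,\Rb}$, then transport everything back to $\Fb$. One small point the paper makes explicit that you leave implicit in Step~2: to apply the uniqueness in \Cref{theorem: GWU}(2) to $\Ad(g)c_\alpha$, you need that this cone is again $L_{\Theta,\Rb}^\circ$-invariant, which follows because $L_{\Theta,\Rb}^\circ$ is normal in $L_{\Theta,\Rb}$.
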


\begin{proof}
By the transfer principle, it suffices to prove the corollary for $\Fb=\Rb$. In this case, the first statement is an immediate consequence of \Cref{theorem: GWU}. 

For each $\alpha\in\Theta$, let $c_\alpha\subset\mathfrak u_{\alpha,\Rb}$ be a $L_{\Theta,\Rb}^\circ$-invariant sharp closed convex cone. Since $L_{\Theta,\Rb}^\circ\subset L_{\Theta,\Rb}$ is a normal subgroup, it is straightforward to verify that for any $g\in L_{\Theta,\Rb}$ and $\alpha\in\Theta$, $\Ad(g)\cdot c_\alpha$ is also a $L_{\Theta,\Rb}^\circ$-invariant sharp closed convex cone, so \Cref{theorem: GWU}(2) implies that $\Ad(g)\cdot c_\alpha=\pm c_\alpha$. Hence, the adjoint action of $L_{\Theta,\Rb}$ on $\bigoplus_{\alpha\in\Theta}\mathfrak u_{\alpha,\Rb}\subset\mathfrak u_{\Theta,\Rb}$ gives a homomorphism 
\[L_{\Theta,\Rb}\to\{\pm1\}^{\Theta}\subset{\rm Sym}(\mathcal S_{\Rb}).\]
It is straightforward to check that the above $L_{\Theta,\Rb}$-action on $\mathcal S_{\Rb}$ agrees with $\Phi_{\Rb}$.
\end{proof}

\begin{remark}
Guichard and Wienhard \cite[Section 5.1]{guichard2025generalizing}  gave a description of the $L_{\Theta,\Rb}^\circ$-invariant sharp closed convex cone in $\mathfrak u_{\alpha,\Rb}$, which is semi-algebraic over $\overline{\Qb}^r$. As such, the transfer principle implies that \Cref{theorem: GWU} in fact holds with $\Rb$ replaced with an arbitrary real closed field $\Fb$.
\end{remark}

In general, the $L_{\Theta,\Fb}$-action on $\mathcal S_{\Fb}$ is not transitive. On the other hand, since $G_{\Fb}$ is semi-algebraically connected, $\Ad(L_{\Theta,\Fb})$ lies in the Levi factor $\hat L_{\Theta,\Fb}$ of $\Aut_1(\mathfrak g_{\Fb})$ (with respect to the maximal torus and the choice of positive roots obtained by pushing forward the ones we chose for $G_{\Fb}$ via $\Ad$). Via the semi-algebraic isomorphism 
\[\exp \colon \mathfrak u_{\Theta,\Fb}\to U_{\Theta,\Fb},\] 
we have a semi-algebraic action of $\hat L_{\Theta,\Fb}$ on $U_{\Theta,\Fb}$, which induces an action 
\[\hat\Phi_{\Fb} \colon \hat L_{\Theta,\Fb}\to{\rm Sym}(\mathcal S_{\Fb})\]
of $\hat L_{\Theta,\Fb}$ on $\mathcal S_{\Fb}$ that extends the $L_{\Theta,\Fb}$ action, i.e.\ $\hat\Phi_{\Fb}\circ\Ad=\Phi_{\Fb}$. Let $\hat L_{\Theta,\Fb}^*$ denote the kernel of $\hat\Phi_{\Fb}$. 

From the definition of $\Aut_1(\mathfrak g_{\Fb})$, one sees that $\hat L_{\Theta,\Fb}$ leaves invariant every root space of $\mathfrak g$, and hence $\mathfrak u_{\alpha,\Fb}$ for all $\alpha\in\Theta$. Since $\Ad(L_{\Theta,\Fb}^\circ)\subset \hat L_{\Theta,\Fb}$ is a normal subgroup, the same argument as the proof of \Cref{cor: GW consequences} implies that the image of $\hat\Phi_{\Fb}$ lies in $\{\pm1\}^{\Theta}$. 

An application of the transfer principle to a result of Guichard and Wienhard \cite[Proposition 5.2]{guichard2025generalizing} yields the following:

\begin{theorem}
\label{thm:ActionLThetaOnSemigroups}
 If $G_{\Fb}$ admits a $\Theta$-positive structure, then the image of $\hat\Phi_{\Fb}$ is $\{\pm1\}^\Theta$. In particular, the action of $\hat{L}_{\Theta,\Fb}/\hat{L}_{\Theta,\Fb}^*$ on $\mathcal S_{\Fb}$ is transitive and free.
\end{theorem}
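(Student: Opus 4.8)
The plan is to reduce everything to the real case via the Tarski--Seidenberg transfer principle, exactly as in the proof of \Cref{cor: GW consequences}, and then invoke the cited result of Guichard--Wienhard \cite[Proposition 5.2]{guichard2025generalizing}. First I would observe that the statement is a conjunction of three assertions: (a) the image of $\hat\Psi_{\Fb}$ equals $\{\pm1\}^\Theta$; (b) the induced $\hat L_{\Theta,\Fb}/\hat L_{\Theta,\Fb}^*$-action on $\mathcal S_{\Fb}$ is transitive; and (c) this action is free. Assertion (c) is immediate from the definition of $\hat L_{\Theta,\Fb}^*$ as the kernel of $\hat\Psi_{\Fb}$ — the quotient always acts faithfully, hence freely once we know (via \Cref{cor: GW consequences}) that the image is a subgroup of $\{\pm1\}^\Theta$ acting freely on $\mathcal S_{\Fb}$; and (b) follows formally from (a) together with the free and transitive $\{\pm1\}^\Theta$-action on $\mathcal S_{\Fb}$ supplied by \Cref{cor: GW consequences}: a subgroup of a group acting simply transitively on a set acts transitively if and only if it is the whole group. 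So the crux is (a).

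For (a), the key point is that the construction of $\hat\Psi_{\Fb}$ is \emph{semi-algebraic and defined over} $\overline{\Qb}^r$: the group $\hat L_{\Theta}$, its action on $\mathfrak u_\Theta$ via the polynomial exponential, the finite set $\mathcal S$ of semigroup components of $U_\Theta^\pitchfork$ (finite and defined over $\overline{\Qb}^r$ by \Cref{thm_RConnSemiAlgConn}), and hence the homomorphism $\hat\Psi$ to the finite symmetric group $\mathrm{Sym}(\mathcal S)$ are all defined over $\overline{\Qb}^r$, and their $\Fb$-extensions recover the corresponding objects over $\Fb$ by \Cref{thm_ExtConnComp}. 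The statement ``the image of $\hat\Psi_{\Fb}$ is all of $\{\pm1\}^\Theta$'' can be encoded as a sentence of $\mathcal L(\overline{\Qb}^r)$: for each target sign vector $\epsilon\in\{\pm1\}^\Theta$ (there are finitely many), there exists $g\in\hat L_\Theta$ such that, for every component $C\in\mathcal S$, $\hat\Psi(g)$ sends $C$ to the component prescribed by $\epsilon$ — and each of these finitely many conditions is a first-order formula in the coefficients (membership of $g$ in $\hat L_\Theta$, and the condition that $gCg^{-1}$ coincides with a fixed component, itself a quantifier-free semi-algebraic condition). By \Cref{thm_TarskiSeidenberg}, this sentence holds over $\Fb$ if and only if it holds over $\overline{\Qb}^r$, and by a further application it holds over $\overline{\Qb}^r$ if and only if it holds over $\Rb$. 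Over $\Rb$, it is precisely the content of \cite[Proposition 5.2]{guichard2025generalizing}, which asserts that the stabilizer in $\hat L_{\Theta,\Rb}$ of a positive semigroup, together with the full $\hat L_{\Theta,\Rb}$, realizes all sign changes on the cones $c_\alpha$, $\alpha\in\Theta$.

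The main obstacle I anticipate is purely bookkeeping: making precise that the $\Rb$-result of Guichard--Wienhard, which is phrased in terms of the adjoint action on the convex cones $c_\alpha\subset\mathfrak u_{\alpha,\Rb}$ and the parametrization of components by their sign vectors (as in \Cref{theorem: GWU}(3)), is literally the sentence whose transfer we want — i.e.\ identifying the bijection $\mathcal S_{\Rb}\cong\{\pm1\}^\Theta$ used there with the one coming from \Cref{cor: GW consequences}, and checking that $\hat\Psi_{\Rb}$ acts on $\mathcal S_{\Rb}$ the way the cone-negation action does (this is the same verification already invoked at the end of the proof of \Cref{cor: GW consequences}, now for $\hat L_\Theta$ rather than $L_\Theta$, using that $\hat L_{\Theta,\Fb}$ preserves every root space by the very definition of $\Aut_1(\mathfrak g_{\Fb})$). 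Once this identification is in place, the transfer argument is automatic, and freeness and transitivity of the quotient action follow as above.
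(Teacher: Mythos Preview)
Your proposal is correct and follows exactly the paper's approach: the paper's entire proof is the single sentence ``An application of the transfer principle to a result of Guichard and Wienhard \cite[Proposition 5.2]{guichard2025generalizing} yields the following,'' together with the preceding remark that the image of $\hat\Psi_{\Fb}$ lies in $\{\pm1\}^\Theta$ by the same argument as \Cref{cor: GW consequences}. You have simply spelled out the details of that transfer argument and the formal deduction of transitivity and freeness, which the paper leaves implicit.
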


Notice that for every $C\in\mathcal S_{\Fb}$, the set
\[C^{-1}\coloneqq\{u^{-1}\mid u\in C\}\]
also lies in $\mathcal{S}_{\Fb}$. As a particular case of \Cref{thm:ActionLThetaOnSemigroups}, we have the following corollary.

\begin{corollary}
There exists $w_0\in \hat L_{\Theta,\Fb}$ such that 
$w_0\cdot C=C^{-1}$ for all $C\in\mathcal S_{\Fb}$. Furthermore, $w_0$ is unique up to right multiplication by an element in $\hat{L}_{\Theta,\Fb}^*$.
\end{corollary}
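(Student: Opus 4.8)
The plan is to recognize the inversion involution $\sigma\colon\mathcal S_{\Fb}\to\mathcal S_{\Fb}$, $C\mapsto C^{-1}$, as lying in the image of $\hat\Psi_{\Fb}$, after which both the existence and the uniqueness of $w_0$ fall out of \Cref{thm:ActionLThetaOnSemigroups}. Note that $\sigma$ is a well-defined permutation of the finite set $\mathcal S_{\Fb}$, since it was already observed before the statement that $C^{-1}\in\mathcal S_{\Fb}$ whenever $C\in\mathcal S_{\Fb}$.

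First I would check that $\sigma$ commutes with the $\hat L_{\Theta,\Fb}$-action on $\mathcal S_{\Fb}$, i.e.\ that $g\cdot(C^{-1})=(g\cdot C)^{-1}$ for all $g\in\hat L_{\Theta,\Fb}$ and $C\in\mathcal S_{\Fb}$. This holds because $\hat L_{\Theta,\Fb}$ acts on $U_{\Theta,\Fb}$ through its linear (Lie-algebra-automorphism) action on $\mathfrak u_{\Theta,\Fb}$ transported by the exponential bijection $\exp\colon\mathfrak u_{\Theta,\Fb}\to U_{\Theta,\Fb}$; since $\exp$ (being the genuine exponential of a nilpotent Lie algebra) intertwines Lie algebra automorphisms with group automorphisms via the Baker--Campbell--Hausdorff formula, each $g\in\hat L_{\Theta,\Fb}$ acts on the group $U_{\Theta,\Fb}$ by an automorphism, and automorphisms commute with inversion of subsets. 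Equivalently, $\hat\Psi_{\Fb}(g)$ and $\sigma$ commute in $\Sym(\mathcal S_{\Fb})$ for every $g$. (If one prefers to avoid this point, one can instead note via \Cref{theorem: GWU}(3) that each $C\in\mathcal S_{\Fb}$ is, up to taking interiors, the semigroup generated by $\bigcup_{\alpha\in\Theta}\exp(c_\alpha)$ for cones $c_\alpha\subset\mathfrak u_{\alpha,\Fb}$ determined up to sign, and that $C^{-1}$ is correspondingly generated by $\bigcup_{\alpha\in\Theta}\exp(-c_\alpha)$; under the resulting $\{\pm1\}^\Theta$-labelling of $\mathcal S_{\Fb}$ from \Cref{cor: GW consequences}, $\sigma$ is the element $(-1,\dots,-1)$.)

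Given this, existence follows. By \Cref{thm:ActionLThetaOnSemigroups} the image of $\hat\Psi_{\Fb}$ is $\{\pm1\}^\Theta$ and acts simply transitively on $\mathcal S_{\Fb}$, so fixing any $C_0\in\mathcal S_{\Fb}$ I may choose $w_0\in\hat L_{\Theta,\Fb}$ with $w_0\cdot C_0=C_0^{-1}$. For an arbitrary $C\in\mathcal S_{\Fb}$, write $C=g\cdot C_0$ with $g\in\hat L_{\Theta,\Fb}$ by transitivity; then, using the previous step and the commutativity of the image of $\hat\Psi_{\Fb}$,
\[\sigma(C)=g\cdot\sigma(C_0)=g\cdot(w_0\cdot C_0)=\hat\Psi_{\Fb}(g)\hat\Psi_{\Fb}(w_0)\cdot C_0=\hat\Psi_{\Fb}(w_0)\hat\Psi_{\Fb}(g)\cdot C_0=w_0\cdot C,\]
so $w_0\cdot C=C^{-1}$ for all $C\in\mathcal S_{\Fb}$. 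For uniqueness, if $w_0'\in\hat L_{\Theta,\Fb}$ also satisfies $w_0'\cdot C=C^{-1}$ for all $C$, then $\hat\Psi_{\Fb}(w_0')=\hat\Psi_{\Fb}(w_0)=\sigma$, hence $w_0^{-1}w_0'\in\ker\hat\Psi_{\Fb}=\hat L_{\Theta,\Fb}^*$, i.e.\ $w_0'\in w_0\,\hat L_{\Theta,\Fb}^*$; conversely every element of $w_0\,\hat L_{\Theta,\Fb}^*$ works since $\hat L_{\Theta,\Fb}^*$ acts trivially on $\mathcal S_{\Fb}$.

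The argument is short and its content lies entirely in \Cref{thm:ActionLThetaOnSemigroups}; the only place requiring a little care is the first step, namely the verification that $\hat L_{\Theta,\Fb}$ acts on $U_{\Theta,\Fb}$ by group automorphisms (so that inversion of semigroups is $\hat L_{\Theta,\Fb}$-equivariant), which in any case can be sidestepped through the explicit cone description.
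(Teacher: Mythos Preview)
Your proof is correct. Your parenthetical alternative---identifying inversion with the element $(-1,\dots,-1)\in\{\pm1\}^\Theta$ via the cone description in \Cref{theorem: GWU}(3), and then invoking surjectivity from \Cref{thm:ActionLThetaOnSemigroups}---is exactly the paper's argument (the paper first reduces to $\Fb=\Rb$ by the transfer principle in order to quote \Cref{theorem: GWU}, and does not spell out the uniqueness, which is immediate from freeness).

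Your primary route is a pleasant variant: instead of the cone description, you use that $\hat L_{\Theta,\Fb}$ acts on $U_{\Theta,\Fb}$ by \emph{group automorphisms} (via the Lie-algebra action and BCH), so inversion commutes with the action; then transitivity gives $w_0$ on a single $C_0$, and commutativity of $\{\pm1\}^\Theta$ propagates it to all $C$. This avoids both the transfer to $\Rb$ and any appeal to \Cref{theorem: GWU}, at the modest cost of the BCH verification. Either argument is fine; yours is arguably cleaner and works uniformly over $\Fb$.
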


\begin{proof}
By the transfer principle, it suffices to prove the corollary in the case $\Fb=\Rb$.
By \Cref{thm:ActionLThetaOnSemigroups} there is some $w_0\in \hat L_{\Theta,\Rb}$ such that $\hat\Phi_{\Rb}(w_0)=(-1,-1,\dots,-1)$. For any $C\in\mathcal S_{\Rb}$, \Cref{theorem: GWU} implies that for each $\alpha\in\Theta$, there is a $L_{\Theta,\Rb}^\circ$-invariant sharp closed convex cone $c_\alpha\subset\mathfrak u_{\alpha,\Rb}$ such that $C$ is the interior of the semigroup generated by $\bigcup_{\alpha\in\Theta}\{\exp(X) \mid X\in c_\alpha\}$.
Then $C^{-1}$ is the interior of the semigroup generated by $\bigcup_{\alpha\in\Theta}\{\exp(X) \mid X\in -c_\alpha\}$, so by the definition of $\hat\Phi_{\Rb}$, we have $C^{-1}=w_0\cdot C$.
\end{proof}

If $C\in\mathcal S_{\Fb}$, then its closure $\overline{C}$ is necessarily a closed semi-algebraic semigroup of $U_{\Theta,\Fb}$. As another consequence of \Cref{theorem: GWU}, we have the following sharpness result for $\overline{C}$:

\begin{proposition}
\label{propo:AcuteSemigroup}
    For each $C\in\mathcal S_{\Fb}$, we have $\overline{C}\cap\overline{C}^{-1}=\{\Id\}$. In particular, if $u,u^{-1}\in\overline{C}$, then $u=\Id$. 
\end{proposition}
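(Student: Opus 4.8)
The plan is first to reduce to the case $\Fb=\Rb$ by the transfer principle (\Cref{thm_TarskiSeidenberg}): the sets $C$, $\overline C$, $\overline C^{-1}$ and $\overline C\cap\overline C^{-1}$ are semi-algebraic and defined over $\overline{\Qb}^r$, and ``$\overline C\cap\overline C^{-1}=\{\Id\}$'' is a sentence of $\mathcal L(\overline{\Qb}^r)$, hence true over $\Fb$ if and only if it is true over $\Rb$. From now on I work over $\Rb$ and invoke \Cref{theorem: GWU}(3): for each $\alpha\in\Theta$ there is an $L_{\Theta,\Rb}^\circ$-invariant sharp closed convex cone $c_\alpha\subset\mathfrak u_{\alpha,\Rb}$ such that $C$ is the interior of the semigroup $S$ generated by $\bigcup_{\alpha\in\Theta}\{\exp(X)\mid X\in c_\alpha\}$; in particular $C\subset S$.

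Next I would exploit the $\Theta$-grading of $\mathfrak u_{\Theta,\Rb}$. Write $\mathfrak u_{\Theta,\Rb}^{(1)}\coloneqq\bigoplus_{\alpha\in\Theta}\mathfrak u_{\alpha,\Rb}$, let $\pi_1$ be the projection onto this summand along the sum of the remaining weight spaces of $\mathfrak u_{\Theta,\Rb}$, and set $\phi\coloneqq\pi_1\circ\log\colon U_{\Theta,\Rb}\to\mathfrak u_{\Theta,\Rb}^{(1)}$, where $\log\coloneqq\exp^{-1}$. Since every Lie bracket in $\mathfrak u_{\Theta,\Rb}$ lands in a weight space of strictly larger $\Theta$-height, every term of the Baker--Campbell--Hausdorff series except the linear one is annihilated by $\pi_1$; hence $\phi$ is a continuous homomorphism, and $\phi(\exp X)=X$ for $X\in\mathfrak u_{\alpha,\Rb}$. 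The set $\mathfrak c\coloneqq\bigoplus_{\alpha\in\Theta}c_\alpha\subset\mathfrak u_{\Theta,\Rb}^{(1)}$ is again a sharp closed convex cone, being a direct sum of sharp closed convex cones living in independent subspaces. As every element of $S$ is a finite product of generators and $\phi$ is a homomorphism, $\phi(S)\subset\mathfrak c$, hence $\phi(\overline C)\subset\overline{\mathfrak c}=\mathfrak c$. Therefore, if $u\in\overline C\cap\overline C^{-1}$ then $\phi(u)\in\mathfrak c$ and $-\phi(u)=\phi(u^{-1})\in\mathfrak c$, so $\phi(u)=0$. It thus remains to prove: $u\in\overline C$ and $\phi(u)=0$ imply $u=\Id$.

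This last implication is the crux. Fix a norm $\|\cdot\|$ on $\mathfrak u_{\Theta,\Rb}$ and write $u=\lim_n c_n$ with $c_n\in C\subset S$, say $c_n=\exp(X_{n,1})\cdots\exp(X_{n,m_n})$ with each $X_{n,i}$ in one of the cones $c_\alpha$. Sharpness of $\mathfrak c$ yields a linear functional $\ell$ and $\varepsilon>0$ with $\ell\geq\varepsilon\|\cdot\|$ on $\mathfrak c$; since $\phi(c_n)=\sum_i X_{n,i}$ and each $X_{n,i}\in\mathfrak c$, we get $\varepsilon\sum_i\|X_{n,i}\|\leq\ell(\phi(c_n))\to\ell(\phi(u))=0$, so $L_n\coloneqq\sum_i\|X_{n,i}\|\to0$. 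I would then prove the elementary estimate, valid in the nilpotent algebra $\mathfrak u_{\Theta,\Rb}$, that there is a constant $M$ with $\bigl\|\log(\exp A_1\cdots\exp A_m)-\textstyle\sum_i A_i\bigr\|\leq M\bigl(\sum_i\|A_i\|\bigr)^2$ whenever $\sum_i\|A_i\|$ is small enough, obtained by iterating the bound $\|\mathrm{BCH}(A,B)-A-B\|\leq M'\|A\|\,\|B\|$ for $\|A\|,\|B\|\leq1$ (each monomial of $\mathrm{BCH}(A,B)-A-B$ contains at least one $A$ and at least one $B$), the key being that the resulting constant depends only on the algebra and not on the number $m$ of factors. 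Applied to $c_n$ this gives $\|\log(c_n)-\phi(c_n)\|\leq ML_n^2\to0$, and together with $\phi(c_n)\to0$ we obtain $\log(c_n)\to0$, hence $c_n\to\Id$ by continuity of $\exp$; thus $u=\Id$. Finally the ``in particular'' is immediate: if $u,u^{-1}\in\overline C^{-1}$ then $u\in\overline C$ and $u\in\overline C^{-1}$, so $u\in\overline C\cap\overline C^{-1}=\{\Id\}$.

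The main obstacle I anticipate is precisely this uniform Baker--Campbell--Hausdorff estimate: controlling only the degree-one projection $\phi$ is not enough, since the approximating products $c_n$ may have arbitrarily many factors, and one must show that all their higher-height contributions are quadratically small in $L_n$ and hence vanish in the limit; the telescoping iteration indicated above is what makes the bound independent of $m_n$.
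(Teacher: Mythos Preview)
Your proof is correct. Both your argument and the paper's hinge on the same key observation: the degree-one projection $\phi=\pi_1\circ\log$ is a continuous homomorphism sending $\overline C$ into the sharp cone $\mathfrak c=\bigoplus_\alpha c_\alpha$, so that $u\in\overline C\cap\overline C^{-1}$ forces $\phi(u)=0$. The paper extracts this via a rescaling by elements $\ell_n\in A_\Theta$ with $\alpha(\ell_n)=1/n$ (which amounts to computing $\pi_1$ as $\lim_n n\cdot\Ad_{\ell_n}$), while you compute $\phi$ directly; these are equivalent.

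The genuine difference lies in the final step, passing from $\phi(u)=0$ to $u=\Id$. The paper asserts a decomposition $\log g=\sum_\alpha X_\alpha+\text{h.o.t.}$ in which the higher-order terms are iterated brackets of the $X_\alpha$'s themselves, so that once all $X_\alpha=0$ the whole expression vanishes. For an element of the \emph{closure} $\overline C$ (rather than a finite product in $S$) this decomposition is not justified in the text and implicitly leans on finer structural results about the positive semigroup from \cite{guichard2025generalizing}. Your sequential argument avoids this: you approximate $u$ by actual finite products $c_n\in C\subset S$, use sharpness of $\mathfrak c$ to force the total input $L_n=\sum_i\|X_{n,i}\|\to 0$, and then your uniform BCH estimate $\|\log(\prod_i\exp A_i)-\sum_i A_i\|\leq M(\sum_i\|A_i\|)^2$ (with $M$ independent of the number of factors, obtained by telescoping the two-variable bound) gives $\log c_n\to 0$, hence $u=\Id$. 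This is more self-contained than the paper's version; the BCH bound you flag as the ``main obstacle'' is indeed the crux, and your indicated iteration establishes it.
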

\begin{proof} 
By the transfer principle, it suffices to prove this statement in the case $\Fb=\Rb$.
Denote by $c_\alpha\subset\mathfrak u_{\alpha,\Rb}$ the  $L_{\Theta,\Rb}^\circ$-invariant sharp closed convex cone such that $\overline{C}$ is the semigroup generated by $\bigcup_{\alpha\in\Theta}\{\exp(X) \mid X\in c_\alpha\}$.
Suppose that $u\in \overline{C}\cap\overline C^{-1}$.
We can write
\[u = \prod_{i=1}^n \exp(X_i)= \prod_{i=j}^m \exp(-Y_j)\]
with $X_i \in c_{\alpha_i}$ and $-Y_j \in -c_{\alpha_j}$.
Applying the logarithm and calculating modulo $[\mathfrak u, \mathfrak u]$ we obtain
\[\log u = \sum_{i=1}^n X_i = \sum_{j=1}^m -Y_j \mod [\mathfrak u_{\Theta,\Rb}, \mathfrak u_{\Theta,\Rb}]~.\]
Recall that $\mathfrak u_{\Theta,\Rb} = [\mathfrak u_{\Theta,\Rb}, \mathfrak u_{\Theta,\Rb}] \oplus \bigoplus_{\alpha \in \Theta} \mathfrak u_{\alpha,\Rb}$.
In particular, this implies that
\[\sum_{i=1}^n X_i = \sum_{j=1}^m -Y_j \in \bigoplus_{\alpha \in \Theta} \mathfrak u_{\alpha,\Rb}~.\]
By the sharpness of $c_\alpha \subset \mathfrak u_{\alpha,\Rb}$ for all $\alpha \in \Theta$, we conclude that
\[\sum_{i=1}^n X_i = \sum_{j=1}^m -Y_j=0~.\]
Again sharpness implies that $X_i=0$ for all $i=1,\ldots,n$, in particular $u = \Id$.
\end{proof}

\subsection{Diamonds and positive triples of flags}
\label{s: Diamonds}
From now on, we assume that $G$ admits a $\Theta$-positive structure and we choose once and for all a semi-algebraically connected component $U_{\Theta,\Fb}^{>0}$ of $U_{\Theta,\Fb}^\pitchfork$ which is a semigroup, and let $U_{\Theta,\Fb}^{<0}\coloneqq(U_{\Theta,\Fb}^{>0})^{\rm opp}$. Also, let $U_{\Theta,\Fb}^{\ge 0}$ and $U_{\Theta,\Fb}^{\le 0}$ denote the closures in $U_{\Theta,\Fb}$ of $U_{\Theta,\Fb}^{>0}$ and $U_{\Theta,\Fb}^{<0}$ respectively.

\begin{definition}
The \emph{standard diamond} and \emph{standard opposite diamond} are the subsets of $\Fc_{\Theta,\Fb}$ given by
\[\Diam_{\std} \coloneqq \{u\cdot p_\Theta^\opp \mid u\in U_{\Theta,\Fb}^{>0}\}\quad\text{and}\quad\Diam_{\std}^\opp \coloneqq \{ u \cdot p_\Theta^\opp \mid u \in U_{\Theta,\Fb}^{<0}\}~.\]
A \emph{diamond} in $\mathcal F_{\Theta,\Fb}$ is then a $g$-translate of the standard diamond for some $g \in \Aut_1(\mathfrak{g_{\Fb}})$. If $\Diam=g\cdot \Diam_{\std}$ for some $g \in \Aut_1(\mathfrak{g_{\Fb}})$, we say that the pair of points $(g\cdot p_{\Theta},g\cdot p_{\Theta}^{\rm opp})$ are \emph{extremities} of $\Diam$ and $g\cdot \Diam_{\std}^{\rm opp}$ is an \emph{opposite} of $\Diam$. 
\end{definition}

\begin{remark}\label{GW diamonds}
Our notion of diamonds is not exactly the same as the notion used in Guichard--Wienhard \cite{guichard2025generalizing}. For them, a diamond is the data of a triple $(\Diam,x,y)$, where $\Diam$ is a diamond as defined above and $(x,y)$ are extremities of $\Diam$. They then say that if $g \cdot (\Diam_{\std},p_{\Theta},p_{\Theta}^{\rm opp})=(\Diam,x,y)$ for some $g\in \Aut_1(\mathfrak g_{\Fb})$, then $g \cdot (\Diam_{\std}^{\rm opp},p_{\Theta},p_{\Theta}^{\rm opp})$ is an \emph{opposite} of $(\Diam,x,y)$. For the purpose of discussing semi-algebraicity however, it is more convenient for us that diamonds are subsets of $\Fc_{\Theta,\Fb}$ instead of triples, hence the change in terminology.

With Guichard and Wienhard's definitions, it is easy to see that diamonds have unique opposites. Indeed, if $g,g'\in \Aut_1(\mathfrak g_{\Fb})$ such that $g \cdot (\Diam_{\std},p_{\Theta},p_{\Theta}^{\rm opp})=g' \cdot (\Diam_{\std},p_{\Theta},p_{\Theta}^{\rm opp})$ then the freeness in \Cref{cor: GW consequences} implies that $g^{-1}g'\in\hat L_{\Theta,\Fb}^*$, and so $g \cdot (\Diam_{\std}^{\rm opp},p_{\Theta},p_{\Theta}^{\rm opp})=g' \cdot (\Diam_{\std}^{\rm opp},p_{\Theta},p_{\Theta}^{\rm opp})$. With our definition, this says that if we choose a pair of extremities $(a,b)$ for a diamond $\Diam$, then there is a unique opposite to $\Diam$ with extremities $(a,b)$. 

However, without specifying the extremities, the uniqueness of opposites of a diamond is not as clear (in general, a diamond does not determine its extremities). However, we will prove later that this uniqueness holds, see \Cref{prop: transverse set to a full diamond}.
\end{remark}

If $(x,y)$ is a transverse pair of flags, then any diamond with extremities $(x,y)$ is a semi-algebraically connected component of the set of flags in $\Fc_{\Theta,\Fb}$ that are transverse to both $x$ and $y$, and hence is semi-algebraic, see \Cref{thm_RConnSemiAlgConn}.

Since $\Aut_1(\mathfrak g_{\Fb})$ acts transitively on the set of transverse pairs of flags in $\Fc_{\Theta,\Fb}$, the description of the structure of $U_{\Theta,\Fb}^\pitchfork$ given in \Cref{subsection: PositiveStructures} implies that any transverse pair of flags $(x,y)$ in $\Fc_{\Theta,\Fb}$ are the extremities of exactly $2^{|\Theta|}$ different diamonds. Furthermore, \Cref{thm:ActionLThetaOnSemigroups} implies that the subgroup of $\Aut_1(\mathfrak g_{\Fb})$ that fixes both $x$ and $y$ acts transitively on the collection of diamonds that have $(x,y)$ as extremities.

With the notion of diamonds, one can define positive triples of flags.

\begin{definition}
A triple $(x,y,z)\in \mathcal F_{\Theta,\Fb}^3$ is \emph{positive} if $x$ and $z$ are transverse and $y$ belongs to a diamond with extremities $(x,z)$. 
\end{definition}

Guichard and Wienhard \cite{GW} (after applying the transfer principle) verified that positivity of a triple is invariant under permutation:
\begin{proposition}[{\cite[Proposition 13.15]{guichard2025generalizing}}]\label{permutation positive triple}
   Let $(x_1,x_2, x_3)\in \mathcal F_{\Theta,\Fb}^3$ be a positive triple. Then $(x_{\sigma(1)}, x_{\sigma(2)}, x_{\sigma(3)})$ is positive for every permutation $\sigma$ of $\{1,2,3\}$.
\end{proposition}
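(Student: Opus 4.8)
The plan is to reduce the statement to the case $\Fb=\Rb$ via the Tarski--Seidenberg transfer principle, and then to quote the result of Guichard--Wienhard over the reals. So the only genuine content is already contained in \cite{guichard2025generalizing}; what needs to be checked carefully is that positivity of a triple is a semi-algebraic condition defined over $\overline{\Qb}^r$, so that nothing new happens over an arbitrary real closed field.

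First I would record that the set of positive triples is semi-algebraic and defined over $\overline{\Qb}^r$. Unwinding the definitions, a triple $(x_1,x_2,x_3)\in\Fc_{\Theta,\Fb}^3$ is positive exactly when $x_1\pitchfork x_3$ and there exists $g\in\Aut_1(\mathfrak g_{\Fb})$ with $g\cdot p_\Theta=x_1$, $g\cdot p_\Theta^\opp=x_3$ and $g^{-1}\cdot x_2\in\Diam_{\std}$. All of the objects appearing here --- transversality, the flags $p_\Theta$ and $p_\Theta^\opp$, the group $\Aut_1(\mathfrak g_{\Fb})$ together with its action on $\Fc_{\Theta,\Fb}$, and the standard diamond $\Diam_{\std}$ --- are semi-algebraic and are the $\Fb$-extensions of their counterparts over $\overline{\Qb}^r$ (see \Cref{sec: flag} and \Cref{s: Diamonds}). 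Hence, using the Tarski--Seidenberg theorem (\Cref{thm_TarskiSeidenbergproj}) to eliminate the existential quantifier over $\Aut_1(\mathfrak g)$, the set
\[\mathcal P_\Theta\coloneqq\{(x_1,x_2,x_3)\in\Fc_\Theta^3\mid (x_1,x_2,x_3)\text{ is a positive triple}\}\]
is semi-algebraic over $\overline{\Qb}^r$, and since forming $\Fb$-extensions commutes with all the operations involved, its $\Fb$-extension $(\mathcal P_\Theta)_{\Fb}$ is precisely the set of positive triples in $\Fc_{\Theta,\Fb}^3$.

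Next I would observe that the symmetric group $S_3$ is generated by the transposition $(x_1,x_2,x_3)\mapsto(x_2,x_1,x_3)$ and the $3$-cycle $(x_1,x_2,x_3)\mapsto(x_2,x_3,x_1)$, so it suffices to prove that $(\mathcal P_\Theta)_{\Fb}$ is invariant under these two permutations of coordinates. Each of the two assertions
\[\forall\, x_1,x_2,x_3\colon\ (x_1,x_2,x_3)\in\mathcal P_\Theta\ \Longrightarrow\ (x_2,x_1,x_3)\in\mathcal P_\Theta,\]
\[\forall\, x_1,x_2,x_3\colon\ (x_1,x_2,x_3)\in\mathcal P_\Theta\ \Longrightarrow\ (x_2,x_3,x_1)\in\mathcal P_\Theta\]
is a sentence of $\mathcal L(\overline{\Qb}^r)$: the variables range over the bounded semi-algebraic set $\Fc_\Theta$ (see \Cref{sec: flag}), and ``$\in\mathcal P_\Theta$'' is the $\mathcal L(\overline{\Qb}^r)$-formula obtained in the previous paragraph. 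By the Tarski--Seidenberg transfer principle (\Cref{thm_TarskiSeidenberg}), applied to the real closed extensions $\overline{\Qb}^r\subset\Rb$ and $\overline{\Qb}^r\subset\Fb$, each of these sentences holds over $\Fb$ if and only if it holds over $\overline{\Qb}^r$ if and only if it holds over $\Rb$. Over $\Rb$ this invariance is exactly \cite[Proposition 13.15]{guichard2025generalizing}, which completes the proof. In this argument there is no real obstacle beyond Guichard--Wienhard's theorem over the reals; the only subtlety is the bookkeeping of the first step, namely making sure that positivity of a triple --- phrased through diamonds and the group $\Aut_1(\mathfrak g_{\Fb})$ --- is indeed semi-algebraic and defined over $\overline{\Qb}^r$.
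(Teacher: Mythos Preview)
Your proposal is correct and matches the paper's approach exactly: the paper does not give a proof but simply attributes the result to Guichard--Wienhard ``after applying the transfer principle,'' and you have spelled out precisely that transfer-principle argument. The bookkeeping you do (semi-algebraicity of $\mathcal P_\Theta$ over $\overline{\Qb}^r$) is the natural elaboration of what the paper leaves implicit.
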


If $(x,y,z)$ is a positive triple of flags in $\Fc_{\Theta,\Fb}$, we denote the diamond that contains $y$ with extremities $(x,z)$ by $\Diam_y(x,z)$, and denote by $\Diam_y^{\rm opp}(x,z)$ the unique diamond opposite to $\Diam_y(x,z)$ with extremities $(x,z)$, see \Cref{GW diamonds}. Notice that if $g\in \Aut_1(\mathfrak g_{\Fb})$, then 
\[g\cdot\Diam_y(x,z)=\Diam_{g\cdot y}(g\cdot x,g\cdot z)\quad\text{and}\quad g\cdot\Diam_y^{\rm opp}(x,z)=\Diam_{g\cdot y}^{\rm opp}(g\cdot x,g\cdot z).\]

The following is a basic property of diamonds proven in Guichard--Wienhard \cite{guichard2025generalizing} and Guichard--Labourie--Wienhard \cite{GLW} (after applying the transfer principle).

\begin{proposition}[{\cite[Propositions 13.7]{guichard2025generalizing}}]\label{proposition: opposite diamonds}
If $(x,y,z)\in\Fc_{\Theta,\Fb}^3$ is a positive triple of flags, then there is a unique diamond $\Diam \subset \Diam_y(x,z)$ with extremities $(x,y)$. Furthermore, $\Diam=\Diam_z^{\rm opp}(x,y)$. 
\end{proposition}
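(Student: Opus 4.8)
The plan is to follow the route already signalled in the statement: establish the assertion over $\Rb$, where it is \cite[Proposition 13.7]{guichard2025generalizing} (together with the uniqueness of opposite diamonds recorded in \cite{GLW}, modulo the dictionary between our diamonds and Guichard--Wienhard's explained in \Cref{GW diamonds}), and then transfer it to an arbitrary real closed field $\Fb$ by the Tarski--Seidenberg principle (\Cref{thm_TarskiSeidenberg}).

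The step that must be carried out with some care is checking that the whole assertion is a sentence of $\mathcal L(\overline{\Qb}^r)$. The objects $\Aut_1(\mathfrak g)$, $U_{\Theta}^{>0}$, $\Diam_{\std}$, $\Diam_{\std}^{\rm opp}$, $p_\Theta$, $p_\Theta^{\rm opp}$ and the relation $\pitchfork$ are all semi-algebraic and defined over $\overline{\Qb}^r$ (\Cref{sec: flag}, \Cref{subsection: PositiveStructures}, \Cref{s: Diamonds}); the predicate ``$\Diam$ is a diamond with extremities $(x,y)$'' unwinds to ``there is $g\in\Aut_1(\mathfrak g)$ with $g\cdot p_\Theta=x$, $g\cdot p_\Theta^{\rm opp}=y$ and $g\cdot\Diam_{\std}=\Diam$'', and $\Diam_z^{\rm opp}(x,y)$ is, in the same spirit, the set $g\cdot\Diam_{\std}^{\rm opp}$ for $g$ realizing $g\cdot(\Diam_{\std},p_\Theta,p_\Theta^{\rm opp})=(\Diam_z(x,y),x,y)$. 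Since being a positive triple, as well as inclusion and equality of uniformly bounded semi-algebraic diamonds, are first-order conditions, the claim ``for all positive triples $(x,y,z)$ there is a unique diamond $\Diam\subset\Diam_y(x,z)$ with extremities $(x,y)$, and $\Diam=\Diam_z^{\rm opp}(x,y)$'' is a sentence of $\mathcal L(\overline{\Qb}^r)$, and \Cref{thm_TarskiSeidenberg} does the rest.

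I would also record the direct computation that underlies the real case and makes the ``furthermore'' transparent. Using that $\Aut_1(\mathfrak g_{\Fb})$ is transitive on transverse pairs and that its stabilizer of $(p_\Theta,p_\Theta^{\rm opp})$ is transitive on the $2^{|\Theta|}$ diamonds with those extremities (\Cref{thm:ActionLThetaOnSemigroups}), one may arrange $x=p_\Theta$, $z=p_\Theta^{\rm opp}$, $\Diam_y(x,z)=\Diam_{\std}$, hence $y=u_0\cdot p_\Theta^{\rm opp}$ with $u_0\in U_{\Theta,\Fb}^{>0}$. The diamonds with extremities $(x,y)$ are then precisely $\{u_0u\cdot p_\Theta^{\rm opp}\mid u\in C\}$ for $C\in\mathcal S_{\Fb}$, and since $v\mapsto v\cdot p_\Theta^{\rm opp}$ is injective on $U_{\Theta,\Fb}$, such a diamond lies in $\Diam_{\std}$ exactly when $u_0C\subset U_{\Theta,\Fb}^{>0}$. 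The choice $C=U_{\Theta,\Fb}^{>0}$ works because $U_{\Theta,\Fb}^{>0}$ is a semigroup containing $u_0$; and since $p_\Theta^{\rm opp}=u_0\cdot(u_0^{-1}\cdot p_\Theta^{\rm opp})$ with $u_0^{-1}\in U_{\Theta,\Fb}^{<0}$, one reads off $\Diam_z(x,y)=u_0\cdot\Diam_{\std}^{\rm opp}$, whose opposite with extremities $(x,y)$ is $u_0\cdot\Diam_{\std}$ — the diamond just produced. The main obstacle, and the real content, is uniqueness: showing that $u_0C\subset U_{\Theta,\Fb}^{>0}$ forces $C=U_{\Theta,\Fb}^{>0}$. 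This requires the finer structure of $U_{\Theta,\Fb}^\pitchfork$ from \Cref{theorem: GWU} (the sharp convex cones $c_\alpha$ together with the sharpness in \Cref{propo:AcuteSemigroup}), and is exactly the computation done by Guichard and Wienhard — which is why I would ultimately invoke the transfer principle and their result rather than reproduce it.
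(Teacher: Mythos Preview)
Your proposal is correct and follows essentially the same route as the paper. The paper's remark after the proposition performs exactly the normalization $(x,z)=(p_\Theta,p_\Theta^{\rm opp})$, $\Diam_y(x,z)=\Diam_{\std}$, writes $y=u\cdot p_\Theta^{\rm opp}$ with $u\in U_{\Theta,\Fb}^{>0}$, and identifies $u\cdot\Diam_{\std}=\Diam_z^{\rm opp}(x,y)$ via the same observation $u^{-1}\cdot z\in\Diam_{\std}^{\rm opp}$; the uniqueness is likewise attributed to \cite[Proposition 13.7]{guichard2025generalizing}, with the transfer to $\Fb$ handled implicitly (the paper works directly over $\Fb$ using facts already established to transfer), whereas you spell out the first-order encoding more explicitly.
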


\begin{remark}
The second statement of \Cref{proposition: opposite diamonds} was not stated explicitly in {\cite[Propositions 13.7]{guichard2025generalizing}}, but it follows from the following computation. By translating by an element in $\Aut_1(\mathfrak g_{\Fb})$, we may assume that $(x,z)=(p_\Theta,p_\Theta^{\rm opp})$ and $\Diam_y(x,z)=\Diam  _{\std}$. Then there is a unique $u\in U_{\Theta,\Fb}^{>0}$ such that $u\cdot z=y$. The fact that $U_{\Theta,\Fb}^{>0}$ is a semigroup implies that $u\cdot\Diam_{\std}$ is the unique diamond in $\Diam_{\std}$ with extremities $(x,y)$. It thus suffices to verify that $\Diam_z^{\rm opp}(x,y)=u\cdot\Diam_{\std}$. Notice that $u^{-1}\in U_{\Theta,\Fb}^{<0}$, so we have that $u^{-1} \cdot z \in \Diam_\std^\opp$, hence
\[u\cdot \Diam_\std= u\cdot \Diam^\opp_{u^{-1}\cdot z}(x, z)=\Diam_z^{\rm opp}(x,y).\]
\end{remark}

\subsection{Positive quadruples of flags}

On a flag variety with a $\Theta$-positive structure, the notion of positive triple extends to a notion of positive $n$-tuple for all $n\geqslant 3$. We start by defining positive quadruples, which play a special role in the theory.

\begin{definition}
A quadruple $(x,y,z,t)\in \mathcal F_{\Theta,\Fb}^4$ is called \emph{positive} if the following equivalent conditions are satisfied:
\begin{itemize}
    \item $y$ and $t$ lie in opposite diamonds with extremities $(x,z)$;
    \item there exist $g\in \Aut_1(\mathfrak g_{\Fb})$ and $u,v\in U_{\Theta,\Fb}^{>0}$  such that
    \[(x,y,z,t) = g\cdot (p_\Theta^\opp, u \cdot p_\Theta^\opp, u v\cdot p_\Theta^\opp, p_\Theta)~.\]
\end{itemize}
\end{definition}

Unlike for triples, positivity of a quadruple is not invariant under all permutations. Guichard and Wienhard \cite{guichard2025generalizing} (after applying the transfer principle) verified that it is nevertheless invariant under the dihedral subgroup of permutations:

\begin{proposition}[{\cite[Proposition 13.19]{guichard2025generalizing}}]
    If $(x,y,z,t)\in\Fc_{\Theta,\Fb}^4$ is a positive quadruple, then so are $(t,z,y,x)$ and $(y,z,t,x)$.
\end{proposition}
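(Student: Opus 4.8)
The plan is to reduce the statement to the real field $\Fb=\Rb$ via the Tarski--Seidenberg transfer principle, and then invoke the corresponding result of Guichard--Wienhard. The first point is that the set of positive quadruples
\[\mathcal Q_{\Fb}\coloneqq\{(x,y,z,t)\in\Fc_{\Theta,\Fb}^4\mid (x,y,z,t)\text{ is positive}\}\]
is semi-algebraic and is the $\Fb$-extension of a semi-algebraic set $\mathcal Q$ defined over $\Qbar$. This follows from characterization (2) of positivity: the map
\[\Aut_1(\mathfrak g)\times U_{\Theta}^{>0}\times U_{\Theta}^{>0}\longrightarrow \Fc_{\Theta}^4,\qquad (g,u,v)\longmapsto g\cdot\bigl(p_\Theta^\opp,\ u\cdot p_\Theta^\opp,\ uv\cdot p_\Theta^\opp,\ p_\Theta\bigr)\]
is semi-algebraic and defined over $\Qbar$ (all of $\Aut_1(\mathfrak g)$, $U_\Theta^{>0}$, $p_\Theta$, $p_\Theta^\opp$, and the $\Aut_1$-action are), so its image $\mathcal Q$ is semi-algebraic over $\Qbar$ by \Cref{thm_TarskiSeidenbergproj}, and $\mathcal Q_{\Fb}=(\mathcal Q)_{\Fb}$ by the compatibility of extensions with images.

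Next, the two coordinate permutations $\sigma_1\colon(x,y,z,t)\mapsto(t,z,y,x)$ and $\sigma_2\colon(x,y,z,t)\mapsto(y,z,t,x)$ are algebraic automorphisms of $\Fc_{\Theta}^4$ defined over $\Qbar$, so the assertion ``$\sigma_i(\mathcal Q)\subseteq\mathcal Q$ for $i=1,2$'' is a sentence of $\mathcal L(\Qbar)$ (written in a fixed affine chart of $\Fc_\Theta$ defined over $\Qbar$, as in the footnote to \Cref{sec: flag}, and universally quantified over the coordinates). By the transfer principle (\Cref{thm_TarskiSeidenberg}), applied to the extensions $\Qbar\subset\Rb$ and $\Qbar\subset\Fb$, this sentence holds over $\Fb$ if and only if it holds over $\Rb$. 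Over $\Rb$ it is precisely \cite[Proposition 13.19]{guichard2025generalizing}: if $(x,y,z,t)\in\Fc_{\Theta,\Rb}^4$ is positive, then so are $(t,z,y,x)$ and $(y,z,t,x)$. This finishes the proof.

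The essentially only obstacle here is bookkeeping: one must make sure the notion of positive quadruple and the two permutations are genuinely semi-algebraic and defined over $\Qbar$, so that the transfer principle legitimately applies; the geometric content is entirely imported from the Guichard--Wienhard reference. Should one want a self-contained argument, one would instead normalize via $\Aut_1(\mathfrak g_{\Fb})$ so that $(x,t)=(p_\Theta^\opp,p_\Theta)$, $y=u\cdot p_\Theta^\opp$, $z=uv\cdot p_\Theta^\opp$ with $u,v\in U_{\Theta,\Fb}^{>0}$, and then exhibit explicit elements of $\Aut_1(\mathfrak g_{\Fb})$ --- built from a lift of the longest Weyl element and from the element $w_0\in\hat L_{\Theta,\Fb}$ with $w_0\cdot C=C^{-1}$ --- putting $\sigma_1(x,y,z,t)$ and $\sigma_2(x,y,z,t)$ back into the normal form of characterization (2); the main work in that route would be checking that the relevant conjugates land in $U_{\Theta,\Fb}^{>0}$, using that $U_{\Theta,\Fb}^{>0}$ is a semigroup.
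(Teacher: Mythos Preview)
Your proposal is correct and is exactly the approach the paper takes: the paper does not give a proof but simply states the result as imported from \cite[Proposition 13.19]{guichard2025generalizing} ``after applying the transfer principle,'' and your write-up just spells out the semi-algebraicity bookkeeping that makes this transfer legitimate. The optional self-contained route you sketch at the end is also the one Guichard--Wienhard follow in their paper.
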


As an immediate corollary, we deduce the following:

\begin{corollary}\label{opposites transverse}
If the diamonds $\Diam$ and $\Diam'$ in $\Fc_{\Theta,\Fb}$ are opposite, then every flag in $\Diam$ is transverse to every flag in $\Diam'$. 
\end{corollary}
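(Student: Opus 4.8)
The plan is to read this off directly from the preceding proposition on the dihedral invariance of positive quadruples, by exhibiting a single positive quadruple whose cyclic rotation forces the desired transversality, so no reduction to the standard diamond is needed. First I would unpack the hypothesis: if the diamonds $\Diam$ and $\Diam'$ are opposite, there is some $g\in\Aut_1(\mathfrak g_{\Fb})$ with $\Diam=g\cdot\Diam_{\std}$ and $\Diam'=g\cdot\Diam_{\std}^\opp$, so that $\Diam$ and $\Diam'$ are opposite diamonds sharing the pair of extremities $(x,z)\coloneqq(g\cdot p_\Theta,\,g\cdot p_\Theta^\opp)$. Fixing arbitrary flags $y\in\Diam$ and $t\in\Diam'$, the flags $y$ and $t$ then lie in opposite diamonds with extremities $(x,z)$, which is exactly condition~(1) in the definition of a positive quadruple; hence $(x,y,z,t)$ is positive.

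Next I would apply the preceding proposition to rotate the quadruple: $(y,z,t,x)$ is again positive. Unwinding condition~(1) for this new quadruple shows that $z$ and $x$ lie in opposite diamonds with extremities $(y,t)$; in particular $(y,t)$ occurs as a pair of extremities of a diamond. The final step is to observe that any pair of flags arising as the extremities of a diamond is transverse: every diamond is an $\Aut_1(\mathfrak g_{\Fb})$-translate of $\Diam_{\std}$, whose extremities are the transverse pair $(p_\Theta,p_\Theta^\opp)$, and $\Aut_1(\mathfrak g_{\Fb})$ acts transitively on---hence preserves---the set of transverse pairs of flags in $\Fc_{\Theta,\Fb}$ (see \Cref{s: Diamonds}). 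Therefore $y\pitchfork t$, and since $y\in\Diam$ and $t\in\Diam'$ were arbitrary, this proves the corollary.

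Since the dihedral invariance of positive quadruples is already in hand, the argument is essentially bookkeeping and I do not expect a substantial obstacle. The one point that needs a little care is the last one: one must keep straight that ``being a pair of extremities of a diamond'' is the same thing as ``being transverse'', which uses the transitivity of the $\Aut_1(\mathfrak g_{\Fb})$-action on transverse pairs rather than merely that of the $G_{\Fb}$-action; and one must make sure to use precisely the rotation $(x,y,z,t)\mapsto(y,z,t,x)$ provided by the preceding proposition rather than an arbitrary permutation, since positivity of quadruples is only dihedrally invariant, not invariant under all of $\mathfrak S_4$.
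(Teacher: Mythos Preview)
Your proof is correct and is precisely the argument the paper has in mind: the corollary is listed immediately after the dihedral invariance proposition with no separate proof, and your derivation---form the positive quadruple $(x,y,z,t)$, rotate to $(y,z,t,x)$, and read off that $(y,t)$ are extremities of a diamond, hence transverse---is exactly how the implication goes.
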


The following is a consequence of results of Guichard and Wienhard \cite{guichard2025generalizing} and Guichard, Labourie and Wienhard \cite{GLW} (after applying the transfer principle):

\begin{proposition}[{\cite[Lemma 13.21]{guichard2025generalizing}},{\cite[Corollary 3.9]{GLW}}]\label{lem: closure of diamond}
If $(x,y,z,t)\in\Fc_{\Theta,\Fb}^4$ is a positive quadruple of flags, then 
\[\overline{\Diam}_x^{\rm opp}(y,z)\subset \Diam_z(x,t).\]
\end{proposition}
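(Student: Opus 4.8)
The plan is to reduce the general case to $\Fb = \Rb$ via the transfer principle, and to treat the real case by normalising the quadruple. First I would note that both the hypothesis and the conclusion are semi-algebraic and defined over $\overline{\Qb}^r$: the set of positive quadruples is semi-algebraic, being the image of $\Aut_1(\mathfrak g)\times U_\Theta^{>0}\times U_\Theta^{>0}$ under the semi-algebraic map $(g,u,v)\mapsto g\cdot(p_\Theta^\opp,\,u p_\Theta^\opp,\,uv p_\Theta^\opp,\,p_\Theta)$ of condition (2) of the definition; for a transverse pair $(a,b)$ the finitely many diamonds with extremities $(a,b)$ are exactly the semi-algebraically connected components of the semi-algebraic set $\{w\mid w\pitchfork a,\ w\pitchfork b\}$, which are cut out uniformly over $\overline{\Qb}^r$ by \Cref{thm_RConnSemiAlgConn}; the assignment $(x,y,z)\mapsto\Diam_x^{\rm opp}(y,z)$ is then semi-algebraic by \Cref{proposition: opposite diamonds} and \Cref{GW diamonds}; and closures of semi-algebraic sets are semi-algebraic. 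Thus ``for every positive quadruple $(x,y,z,t)$ and every flag $w$, $w\in\overline{\Diam}_x^{\rm opp}(y,z)\Rightarrow w\in\Diam_z(x,t)$'' is a sentence of $\mathcal L(\overline{\Qb}^r)$, and by \Cref{thm_TarskiSeidenberg} it suffices to prove it over $\Rb$.

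For $\Fb=\Rb$, I would use the $\Aut_1(\mathfrak g_{\Rb})$-equivariance of all the constructions involved together with condition (2) of the definition to reduce to the normal form $(x,y,z,t)=(p_\Theta^\opp,\,u\cdot p_\Theta^\opp,\,uv\cdot p_\Theta^\opp,\,p_\Theta)$ with $u,v\in U_{\Theta,\Rb}^{>0}$; a direct check using $u,v,uv\in U_{\Theta,\Rb}^{>0}\subseteq U_{\Theta,\Rb}^\pitchfork$ and the semigroup property shows all the relevant pairs are transverse, and in particular that $(x,z,t)$ is a positive triple with $z=uv\cdot p_\Theta^\opp$ lying in the standard diamond $\Diam_{\std}$; since $\Diam_{\std}$ is one of the diamonds with extremities $\{x,t\}=\{p_\Theta^\opp,p_\Theta\}$, this gives $\Diam_z(x,t)=\Diam_{\std}$. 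It then remains to prove $\overline{\Diam}_x^{\rm opp}(y,z)\subseteq\Diam_{\std}$, and I would do this via three observations: (a) $\overline{\Diam}_x^{\rm opp}(y,z)$ is connected, being the closure of the semi-algebraically connected set $\Diam_x^{\rm opp}(y,z)$; (b) it contains $z$, since $z$ is an extremity of $\Diam_x(y,z)$ hence of its opposite, and $z\in\Diam_{\std}=\Diam_z(x,t)$; (c) every flag in $\overline{\Diam}_x^{\rm opp}(y,z)$ is transverse to both $x$ and $t$. Granting (c), the connected set of (a) lies inside one semi-algebraically connected component of $\{w\mid w\pitchfork x,\ w\pitchfork t\}$, and (b) identifies that component as $\Diam_{\std}$, which is the claim. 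The picture to keep in mind is $\PSL_2(\Rb)$, where a positive quadruple is just a cyclically ordered $4$-tuple on $\mathbf P^1(\Rb)$, $\Diam_x^{\rm opp}(y,z)$ is the short open arc from $y$ to $z$, and the statement is the obvious ``$[y,z]\subset(x,t)$''.

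The hard part is (c): controlling the boundary of the closure of a diamond. Transversality of $\overline{\Diam}_x^{\rm opp}(y,z)$ to $x$ says that the closure of a diamond is disjoint from its open opposite --- a strengthening of \Cref{opposites transverse}, used here because $x$ lies in the \emph{open} diamond $\Diam_x(y,z)$ --- and transversality to $t$ requires in addition that this closure stays on the ``interior side'' of the affine chart $\{w\mid w\pitchfork t\}$. Over $\Rb$ both facts are read off from the explicit stratification of the boundary of a diamond established by Guichard--Wienhard \cite[\S 13]{guichard2025generalizing} and Guichard--Labourie--Wienhard \cite[\S 3]{GLW}; concretely I would quote \cite[Lemma 13.21]{guichard2025generalizing} and \cite[Corollary 3.9]{GLW} for precisely this input, noting that at the level of $U_{\Theta,\Rb}^{\pm}$ it ultimately rests on the sharpness of the semigroup $\overline{U_{\Theta,\Rb}^{>0}}$ recorded in \Cref{propo:AcuteSemigroup}.
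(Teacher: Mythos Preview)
Your proposal is correct and is essentially the paper's approach: the paper does not give an independent proof of this proposition but cites it directly from \cite[Lemma 13.21]{guichard2025generalizing} and \cite[Corollary 3.9]{GLW}, with the transfer principle implicit for general $\Fb$. Your write-up makes the transfer-principle reduction explicit and adds a normalisation and a connectedness argument, but the substance is the same, since your step (c) --- the only nontrivial step --- is exactly what those references establish. One small remark: your decomposition into (a)+(b)+(c) is correct but somewhat artificial, because (c) already carries the full content of the proposition (if every point of $\overline{\Diam}_x^{\rm opp}(y,z)$ is transverse to $x$ and $t$, then connectedness and the location of $z$ immediately pin down the component), so the scaffolding does not reduce the problem to anything genuinely weaker than what you then cite.
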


\Cref{lem: closure of diamond} can be used to deduce the following statement about diamonds associated to positive triples of flags.

\begin{proposition} \label{lem: intersection closures diamonds}
    Let $(x,y,z)$ be a positive triple in $\Fc_{\Theta,\Fb}$.
    Then 
    \[\overline{\Diam}_z^\opp(x,y) \cap\overline{\Diam}_x^\opp(y,z) = \{y\}. \]
\end{proposition}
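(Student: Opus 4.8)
## Proof plan for \Cref{lem: intersection closures diamonds}

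The plan is to reduce to the standard configuration and then exploit \Cref{lem: closure of diamond} and the sharpness statement of \Cref{propo:AcuteSemigroup}. First I would use the transitivity of $\Aut_1(\mathfrak g_{\Fb})$ on positive triples (combining transitivity on transverse pairs with \Cref{thm:ActionLThetaOnSemigroups} to normalize the diamond containing $y$) to assume $(x,y,z)=(p_\Theta^\opp, u\cdot p_\Theta^\opp, p_\Theta)$ for a unique $u\in U_{\Theta,\Fb}^{>0}$, with $\Diam_y(x,z)=\Diam_{\std}$. Under this normalization, the computation in the remark following \Cref{proposition: opposite diamonds} gives $\Diam_z^\opp(x,y)=u\cdot\Diam_{\std}$, so $\overline{\Diam}_z^\opp(x,y)=u\cdot\overline{\Diam}_{\std}=\{uw\cdot p_\Theta^\opp\mid w\in U_{\Theta,\Fb}^{\ge 0}\}$. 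Symmetrically, I need to identify $\overline{\Diam}_x^\opp(y,z)$ in these coordinates: applying the same remark with the roles of the outer points swapped (using that $(y,z,x)$ is also a positive triple by \Cref{permutation positive triple}, with $\Diam_z(y,x)$ the appropriate diamond), one finds $\Diam_x^\opp(y,z)$ is the unique diamond inside $\Diam_{\std}$ with extremities $(y,x)=(u\cdot p_\Theta^\opp, p_\Theta^\opp)$. Since $u^{-1}\in U_{\Theta,\Fb}^{<0}$ and translating by $u$ sends $p_\Theta^\opp$ to $y$, this diamond is $u\cdot\Diam_{\std}^\opp$, hence $\overline{\Diam}_x^\opp(y,z)=u\cdot\overline{\Diam}_{\std}^\opp=\{uv\cdot p_\Theta^\opp\mid v\in U_{\Theta,\Fb}^{\le 0}\}$.

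With both sets written down, the intersection $\overline{\Diam}_z^\opp(x,y)\cap\overline{\Diam}_x^\opp(y,z)$ consists of those flags of the form $uw\cdot p_\Theta^\opp=uv\cdot p_\Theta^\opp$ with $w\in U_{\Theta,\Fb}^{\ge 0}$ and $v\in U_{\Theta,\Fb}^{\le 0}$. Now $u$ acts by a bijection, and the map $U_{\Theta,\Fb}^\pitchfork\to\Fc_{\Theta,\Fb}$, $a\mapsto a\cdot p_\Theta^\opp$, is injective on $U_{\Theta,\Fb}$ (it is the restriction of the semi-algebraic bijection $U_{\Theta,\Fb}\to\{x\mid x\pitchfork p_\Theta\}$ from \Cref{sec: flag}); more carefully, $\overline{\Diam}_{\std}$ and $\overline{\Diam}_{\std}^\opp$ are the images under this injective map of $U_{\Theta,\Fb}^{\ge 0}$ and $U_{\Theta,\Fb}^{\le 0}$ respectively (one should check the closures match up, using that the orbit map is a homeomorphism onto its image on the relevant chart). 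Therefore the equality $uw\cdot p_\Theta^\opp=uv\cdot p_\Theta^\opp$ forces $w=v$, so $w\in U_{\Theta,\Fb}^{\ge 0}\cap U_{\Theta,\Fb}^{\le 0}=\overline{U_{\Theta,\Fb}^{>0}}\cap\overline{U_{\Theta,\Fb}^{>0}}^{-1}$. By \Cref{propo:AcuteSemigroup} this intersection is $\{\Id\}$, whence $w=\Id$ and the common point is $u\cdot p_\Theta^\opp=y$. Since $y$ clearly lies in both closed diamonds (being an extremity of each), this proves the intersection equals $\{y\}$.

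The step I expect to be the main obstacle is the bookkeeping in identifying $\overline{\Diam}_x^\opp(y,z)$ in the normalized coordinates: one must be careful that the "opposite diamond with prescribed extremities $(y,z)$" is taken consistently with the orientation conventions, and that passing to closures commutes with the $\Aut_1(\mathfrak g_{\Fb})$-translation and with the identification via $\exp$ and the orbit map — i.e.\ that $\overline{\Diam}_{\std}$ really is the image of $U_{\Theta,\Fb}^{\ge 0}$, not something smaller. This is where one genuinely uses that the orbit map $U_{\Theta,\Fb}\to\Fc_{\Theta,\Fb}$ is a closed embedding onto $\{x\mid x\pitchfork p_\Theta\}$ and that closure within that affine chart agrees with closure of the preimage in $U_{\Theta,\Fb}$. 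Once this is pinned down, the rest reduces cleanly to the sharpness of the positive semigroup.
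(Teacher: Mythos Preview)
Your endgame (reduce to $\overline{C}\cap\overline{C}^{-1}=\{\Id\}$ via \Cref{propo:AcuteSemigroup}) is exactly the paper's, but the obstacle you flag at the end is genuine and is not resolved by the observation that the orbit map $U_{\Theta,\Fb}\to\{w\mid w\pitchfork p_\Theta\}$ is a homeomorphism. The closure of $\Diam_{\std}$ in $\Fc_{\Theta,\Fb}$ strictly exceeds $U_{\Theta,\Fb}^{\ge 0}\cdot p_\Theta^\opp$: it contains the extremity $p_\Theta$, which is not transverse to itself. Already for $\PSL_2(\Fb)$ one has $\overline{\Diam}_{\std}=[0,\infty]\ni\infty$. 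With your two diamonds identified as $u\cdot\Diam_{\std}$ and $u\cdot\Diam_{\std}^\opp$, the closures in $\mathbf P^1(\Fb)$ intersect in $\{u,\infty\}=\{y,p_\Theta\}$, not $\{y\}$. (There is also a labeling slip: under your normalization $(x,z)=(p_\Theta^\opp,p_\Theta)$, the remark after \Cref{proposition: opposite diamonds} yields $\Diam_x^\opp(y,z)=u\cdot\Diam_{\std}$, whereas $\Diam_z^\opp(x,y)$ has extremities $(p_\Theta^\opp,u\cdot p_\Theta^\opp)$ and is not a $U_{\Theta,\Fb}$-translate of $\Diam_{\std}$ at all. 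But even after fixing this, the closure problem remains.)

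The paper's way around this is to introduce a fourth point: pick $t$ with $(x,y,z,t)$ positive and normalize so that $t=p_\Theta$ and $y=p_\Theta^\opp$ (so $y$, not an outer vertex, sits at the chart origin). Then \Cref{lem: closure of diamond} applied to this quadruple gives $\overline{\Diam}_z^\opp(x,y)\subset\Diam_x(z,t)$ and $\overline{\Diam}_x^\opp(y,z)\subset\Diam_z(x,t)$, and since these open diamonds consist of flags transverse to $t=p_\Theta$, both closures lie entirely in the affine chart $U_{\Theta,\Fb}\cdot p_\Theta^\opp$. At that point one does not need exact identifications: \Cref{proposition: opposite diamonds} gives only the \emph{containments} $\Diam_z^\opp(x,y)\subset\Diam_{\std}$ and $\Diam_x^\opp(y,z)\subset\Diam_{\std}^\opp$, and taking closures within the chart then lands in $U_{\Theta,\Fb}^{\ge 0}\cdot p_\Theta^\opp$ and $U_{\Theta,\Fb}^{\le 0}\cdot p_\Theta^\opp$, after which \Cref{propo:AcuteSemigroup} finishes. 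The missing ingredient in your plan is precisely this use of the quadruple lemma to trap the closures in the chart.
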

\begin{proof}
    Choose $t \in \Fc_{\Theta,\Fb}$ such that $(x,y,z,t)$ is positive. Without loss of generality, we can assume that $t = p_\Theta$, $y= p_\Theta^\opp$, $x \in \Diam_\std$ and $z\in \Diam_\std^\opp$. By \Cref{lem: closure of diamond} we have $\overline{\Diam}_z^\opp(x,y) \subset \Diam_x(z,t)$ and $\overline{\Diam}_x^\opp(y,z) \subset \Diam_z(x,t)$. In particular, both closures of diamonds are contained in the locus transverse to $t$, that is, $U_{\Theta,\Fb} \cdot p_\Theta^\opp$.

    By \Cref{proposition: opposite diamonds}, $\Diam_z^\opp(x,y)=\Diam_t^\opp(y,x) \subset \Diam_x(y,t)=\Diam_\std = U_{\Theta,\Fb}^{>0} \cdot p_\Theta^\opp$. Since $\overline{\Diam}_z^\opp(x,y) \subset U_{\Theta,\Fb} \cdot p_\Theta^\opp$, it follows that  \[\overline \Diam_z^\opp(x,y) \subset U_{\Theta,\Fb}^{\geqslant 0} \cdot p_\Theta^\opp~.\] Similarly, we have 
    \[\overline \Diam_x^\opp(y,z) \subset U_{\Theta,\Fb}^{\leq 0} \cdot p_\Theta^\opp~.\]
Since $U_{\Theta,\Fb}^{\geqslant 0} \cap U_{\Theta,\Fb}^{\leq 0} = \{\Id\}$ by \Cref{propo:AcuteSemigroup}, we conclude that 
    \[\overline \Diam_z^\opp(x,y)\cap \overline \Diam_x^\opp(y,z) = \{p_\Theta^\opp\} = \{y\}~. \qedhere\]
\end{proof}

\subsection{Positive tuples of flags}
We can now define positive $n$-tuples of flags:

\begin{definition}
\label{def:PositiveTuples}
    Let $n\ge 3$. An $n$-tuple $(x_1,\ldots, x_n)\in \Fc_{\Theta,\Fb}^n$ is \emph{positive} if the following equivalent conditions are satisfied:
    \begin{enumerate}
        \item For all integers $1\le i<n$, there exists a diamond $\Diam_{i,n}$ with extremities $(x_i,x_n)$ such that 
        \begin{itemize}
            \item for all integers $i<k<n$, $x_k \in \Diam_{i,n}$,
            \item for all integers $1\le k<i$, $x_k$ lies in the diamond with extremities $(x_i,x_n)$ that is opposite to $\Diam_{i,n}$;
        \end{itemize} 
        \item There exists $g\in \Aut_1(\mathfrak g_{\Fb})$ and $(u_2,\ldots ,u_{n-1})\in (U_{\Theta,\Fb}^{>0})^{n-2}$ such that
        \[(x_1,\ldots, x_n) = g\cdot (p_\Theta^\opp, u_2\cdot p_\Theta^\opp, u_2u_3\cdot p_\Theta^\opp, \ldots, u_2\ldots u_{n-1}\cdot p_\Theta^\opp, p_\Theta)~.\]
    \end{enumerate}
     We write $(\Fc_{\Theta,\Fb}^n)_{>0}$ for the set of positive $n$-tuples in $\Fc_{\Theta,\Fb}$. 
\end{definition}

Again, Guichard and Wienhard \cite{guichard2025generalizing} verified (after applying the transfer principle) that positivity is invariant under dihedral permutations:

\begin{proposition}[{\cite[Lemma 13.24]{guichard2025generalizing}}]
    If $(x_1,x_2, \ldots, x_n)$ is a positive $n$-tuple, then so are $(x_n,\ldots, x_2, x_1)$ and $(x_2,\ldots, x_n, x_1)$.
\end{proposition}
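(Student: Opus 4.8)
The plan is a transfer argument: the statement over $\Fb = \Rb$ is exactly \cite[Lemma 13.24]{guichard2025generalizing}, and we push it to an arbitrary real closed field via the Tarski--Seidenberg transfer principle (\Cref{thm_TarskiSeidenberg}). It suffices to treat the reversal $\sigma_1\colon (x_1,\dots,x_n)\mapsto(x_n,\dots,x_1)$ and the cyclic shift $\sigma_2\colon(x_1,\dots,x_n)\mapsto(x_2,\dots,x_n,x_1)$, since these generate the dihedral group; each is a coordinate permutation of $\Fc_{\Theta,\Fb}^n$, hence algebraic, defined over $\Qb$, and equal to the $\Fb$-extension of the corresponding map over $\overline{\Qb}^r$.

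First I would record that $(\Fc_{\Theta,\Fb}^n)_{>0}$ is semi-algebraic and is the $\Fb$-extension of the set $(\Fc_\Theta^n)_{>0}\subseteq\Fc_{\Theta,\overline{\Qb}^r}^n$. This is immediate from condition~(2) of \Cref{def:PositiveTuples}: membership of an $n$-tuple in $(\Fc_{\Theta,\Fb}^n)_{>0}$ is the existential condition that there exist $g\in\Aut_1(\mathfrak g_{\Fb})$ and $(u_2,\dots,u_{n-1})\in(U_{\Theta,\Fb}^{>0})^{n-2}$ realizing the prescribed identity, and $\Aut_1(\mathfrak g_{\Fb})$ and $U_{\Theta,\Fb}^{>0}$ are $\Fb$-extensions of their $\overline{\Qb}^r$-counterparts, being (a union of, respectively, a single) semi-algebraically connected component(s), which are defined over $\overline{\Qb}^r$ by \Cref{thm_RConnSemiAlgConn}.

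It then remains to transfer. For $i\in\{1,2\}$, the inclusion $\sigma_i\bigl((\Fc_\Theta^n)_{>0}\bigr)\subseteq(\Fc_\Theta^n)_{>0}$ is expressed by a sentence $\Phi_i$ of $\mathcal L(\overline{\Qb}^r)$, namely a universal statement ranging over $\Fc_\Theta^n$ (in a fixed affine chart defined over $\overline{\Qb}^r$) of an implication between two semi-algebraic conditions. By \cite[Lemma 13.24]{guichard2025generalizing} the sentence $\Phi_i$ holds in $\Rb$; since $\overline{\Qb}^r\subseteq\Rb$ is a real closed extension, \Cref{thm_TarskiSeidenberg} gives that $\Phi_i$ holds in $\overline{\Qb}^r$, and applying \Cref{thm_TarskiSeidenberg} once more to the real closed extension $\overline{\Qb}^r\subseteq\Fb$ shows $\Phi_i$ holds in $\Fb$, which is the assertion to be proved.

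The only delicate point --- the place I would be most careful about --- is the reduction in the second paragraph, i.e.\ verifying that the definition of positivity genuinely exhibits $(\Fc_{\Theta,\Fb}^n)_{>0}$ as the $\Fb$-extension of a fixed $\overline{\Qb}^r$-semi-algebraic set rather than something a priori field-dependent; this rests on the fact, used throughout this section, that $\Fc_{\Theta,\Fb}$ and all the groups involved are $\Fb$-extensions of objects already defined over $\overline{\Qb}^r$. Once that is in place the transfer step is entirely routine.
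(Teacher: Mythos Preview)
Your proposal is correct and is exactly the approach the paper takes: the proposition is stated with a citation to \cite[Lemma 13.24]{guichard2025generalizing} for the real case, and the surrounding text explicitly says the result holds over $\Fb$ ``after applying the transfer principle.'' The paper does not spell out the transfer argument in the detail you give (it relies on \Cref{propo: positive n-tuples is semi-algebraic} and the ambient discussion that all objects are $\Fb$-extensions of $\overline{\Qb}^r$-defined ones), but your write-up is a faithful and slightly more explicit version of the same route.
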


In particular, $(x_1,\dots,x_n)\in\Fc_{\Theta,\Fb}^n$ is positive if and only if for all $i<j$, there exists a diamond $\Diam_{i,j}$ with extremities $x_i$ and $x_j$ such that 
\begin{itemize}
    \item for all $i<k<j$, $x_k \in \Diam_{i,j}$,
    \item for all $k<i$ and $k>j$, $x_k$ lies in the diamond with extremities $(x_i,x_j)$ that is opposite to $\Diam_{i,j}$ .
\end{itemize}  

\begin{example}
Recall that $\mathbf{P}^1(\Fb) \simeq \Rb \sqcup \{\infty\}$ is the unique flag variety of the group $G_{\Fb} = \PSL_2(\Fb)$. The order on $\Fb$ induces a cyclic order on $\mathbf{P}^1(\Fb)$, invariant under the action of $\PSL_2(\Fb)$. The elements of $\Aut_1(\mathfrak{g}(\Fb)) = \PGL_2(\Fb)$ either preserve or reverse the cyclic order. A tuple $(x_1,\ldots, x_n) \in \mathbf{P}^1(\Fb)$ is positive if and only if either $(x_1,\ldots, x_n)$ or $(x_n, \ldots, x_1)$ is cyclically ordered.
\end{example}

Verifying positivity of an $n$-tuple with the definition can be cumbersome, and it is often useful to characterize it by the positivity of well-chosen sub-tuples. Later we will use the following recursive characterization, which is a rephrasing of a result of Guichard, Labourie, and Wienhard \cite{GLW}.

\begin{proposition}[{\cite[Proposition 3.1~(4)]{GLW}}]\label{lem: AddingElemToPosTuple}
    Let $(x_0,\ldots, x_n,x_{n+1}) \in \mathcal F_{\Theta,\Fb}^{n+2}$. If $(x_0, \ldots, x_n)$ is a positive tuple and there exists $1 \leq j \leq n-1$ such that\break $(x_0, x_j, x_n, x_{n+1})$ is positive, then $(x_0, \ldots, x_n, x_{n+1})$ is positive.
\end{proposition}

As consequences of \Cref{lem: AddingElemToPosTuple}, we have the following pair of corollaries, both of which are proven via a straightforward induction argument using \Cref{lem: AddingElemToPosTuple} as the inductive step. 

\begin{corollary}\label{corollarly: gluing}
    A tuple of flags $(a,x_1, \ldots, x_k, b, y_1, \ldots ,y_l)$ in $\Fc_{\Theta,\Fb}$ is positive if and only if $(a, x_1, \ldots, x_k, b)$ and $(b, y_1, \ldots, y_l, a)$ are positive tuples, and there exists $1\leq i \leq k$ and $1\leq j \leq l$ such that $(a,x_i,b,y_j)$ is positive. 
\end{corollary}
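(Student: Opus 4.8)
The plan is to prove both directions using \Cref{lem: AddingElemToPosTuple} as the inductive engine, together with the dihedral invariance of positivity. First I would dispose of the ``only if'' direction, which is essentially immediate: if the full tuple $(a, x_1, \ldots, x_k, b, y_1, \ldots, y_l)$ is positive, then every sub-tuple obtained by selecting a subset of entries in cyclic order is positive (this is a standard consequence of \Cref{def:PositiveTuples} — e.g. via condition (2), restrict the list $(u_2, \ldots, u_{n-1})$ to the appropriate partial products, using that $U_{\Theta,\Fb}^{>0}$ is a semigroup so that products of consecutive $u_i$'s stay in $U_{\Theta,\Fb}^{>0}$). Hence $(a, x_1, \ldots, x_k, b)$, $(b, y_1, \ldots, y_l, a)$ (using dihedral invariance to cyclically rotate), and $(a, x_i, b, y_j)$ for every $i, j$ are all positive. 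In particular one may pick $i = 1$, $j = 1$.

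For the ``if'' direction, suppose $(a, x_1, \ldots, x_k, b)$ and $(b, y_1, \ldots, y_l, a)$ are positive and $(a, x_i, b, y_j)$ is positive for some fixed $i, j$. The strategy is to build up the full tuple one entry at a time. Start from the positive tuple $(a, x_1, \ldots, x_k, b)$. I will first append $y_1$: to apply \Cref{lem: AddingElemToPosTuple} with $(x_0, \ldots, x_n) = (a, x_1, \ldots, x_k, b)$ and $x_{n+1} = y_1$, I need some interior index whose associated $4$-tuple with endpoints $a, b$ and new point $y_1$ is positive, i.e. I need $(a, x_m, b, y_1)$ positive for some $1 \le m \le k$. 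This follows from the positivity of $(b, y_1, \ldots, y_l, a)$ combined with the positivity of $(a, x_i, b, y_j)$: indeed, from $(b, y_1, \ldots, y_l, a)$ positive one extracts that $(a, b, y_1, y_j)$ is positive (if $1 \le j$; handle $j=1$ directly), and then one glues this with $(a, x_i, b, y_j)$ along the common flags $a, b, y_j$ to conclude $(a, x_i, b, y_1)$ is positive — this last step is itself an instance of adding a point to a positive quadruple via \Cref{lem: AddingElemToPosTuple} (or a direct diamond computation using \Cref{proposition: opposite diamonds} and \Cref{lem: closure of diamond}). Having appended $y_1$, I iterate: to append $y_2$ to $(a, x_1, \ldots, x_k, b, y_1)$ I need $(a, x_m', b, y_2)$ positive for some interior index, and I already know $(a, x_i, b, y_1)$ positive and $(b, y_1, y_2, \ldots)$ positive, so the same gluing argument (now with $y_1$ playing the role of the ``known'' interior point) produces $(a, x_i, b, y_2)$; and so on. After $l$ steps the full tuple is positive.

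The main obstacle, and the step deserving the most care, is the repeated claim that positivity of the quadruples $(a, x_i, b, y_j)$ and the positivity of the tuple $(b, y_1, \ldots, y_l, a)$ together force positivity of $(a, x_i, b, y_{j'})$ for the other indices $j'$ — in other words, that the ``diamond side'' of $b$ containing all the $x$'s is consistently opposite to the ``diamond side'' containing all the $y$'s, as witnessed by a single quadruple. The cleanest way to handle this is: normalize by $\Aut_1(\mathfrak g_{\Fb})$ so that $(a, b) = (p_\Theta^{\mathrm{opp}}, p_\Theta)$, so that $\{x_1, \ldots, x_k\} \subset \Diam_{\mathrm{std}}$ and $\{y_1, \ldots, y_l\} \subset \Diam_{\mathrm{std}}^{\mathrm{opp}}$ are forced by the two individual positivity hypotheses (after checking that $(a, x_i, b, y_j)$ positive pins down which of the $2^{|\Theta|}$ diamonds with extremities $(a,b)$ each group lies in, and that these are opposite); then condition (2) of \Cref{def:PositiveTuples} for the full tuple reduces to checking that writing each $x_i = u_i \cdot p_\Theta^{\mathrm{opp}}$ with $u_i \in U_{\Theta,\Fb}^{>0}$ increasing and each $y_j = v_j \cdot p_\Theta^{\mathrm{opp}}$ with $v_j \in U_{\Theta,\Fb}^{<0}$ suitably ordered assembles into a single positive tuple — which is exactly what \Cref{lem: AddingElemToPosTuple} delivers inductively. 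I would present the induction cleanly and reduce the base/gluing step to one invocation of \Cref{lem: AddingElemToPosTuple}, citing dihedral invariance each time I rotate a tuple, rather than redoing the normalization at every stage.
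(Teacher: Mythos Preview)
Your approach is the same as the paper's---induction via \Cref{lem: AddingElemToPosTuple}---and the ``only if'' direction is handled correctly. There is, however, a mechanical slip in the inductive step of the ``if'' direction that you should fix, and fixing it makes the argument much shorter than what you outline.

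After you have appended $y_1$ and hold the positive tuple $(a, x_1, \ldots, x_k, b, y_1)$, the \emph{last} entry is now $y_1$, not $b$. So \Cref{lem: AddingElemToPosTuple} requires a positive quadruple of the form $(a,\, ?,\, y_1,\, y_2)$ with $?$ some interior entry, not $(a, x_m', b, y_2)$ as you wrote. Taking $? = b$ gives $(a, b, y_1, y_2)$, which is just a cyclic rotation of the sub-quadruple $(b, y_1, y_2, a)$ of the hypothesis $(b, y_1, \ldots, y_l, a)$. The same works at every subsequent step: to append $y_{m+1}$ to $(a, x_1, \ldots, x_k, b, y_1, \ldots, y_m)$ use the quadruple $(a, b, y_m, y_{m+1})$. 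So the ``main obstacle'' you identify---showing $(a, x_i, b, y_{j'})$ positive for all $j'$---is not needed at all beyond the first step; the normalization by $\Aut_1(\mathfrak g_{\Fb})$ and the diamond discussion are likewise unnecessary.

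A slightly cleaner route avoids even the one gluing step: first append $y_j$ to $(a, x_1, \ldots, x_k, b)$ using the given quadruple $(a, x_i, b, y_j)$; then append $y_{j+1}, \ldots, y_l$ to the right using the quadruples $(a, b, y_m, y_{m+1})$ as above; finally rotate and append $y_{j-1}, \ldots, y_1$ on the other side using the quadruples $(y_m, a, b, y_{m-1})$. This is presumably what the paper has in mind by ``a straightforward induction.''
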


\begin{corollary}\label{check quadruples}
    A tuple of flags $(x_1,\dots,x_n)$ is positive if and only if for all $1\le i<j<k<l\le n$ the quadruple $(x_i, x_j, x_k, x_l)$ is positive.
\end{corollary}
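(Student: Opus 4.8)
The forward direction is trivial: if $(x_1,\dots,x_n)$ is positive, then by the characterization preceding \Cref{lem: AddingElemToPosTuple} (and dihedral invariance), every sub-tuple of a positive tuple is positive, so in particular every $(x_i,x_j,x_k,x_l)$ with $i<j<k<l$ is positive. For the converse, I would argue by induction on $n$, with $n=4$ being the trivial base case. For the inductive step, assume the statement holds for all tuples of length $n$, and suppose $(x_1,\dots,x_n,x_{n+1})$ has all of its $4$-element sub-tuples (in the induced order) positive. First apply the inductive hypothesis to $(x_1,\dots,x_n)$ to conclude it is a positive $n$-tuple. Then pick any $j$ with $1\le j\le n-1$, say $j=1$ if $n\ge 3$ — more carefully, I want a $j$ with $1\le j\le n-1$ so that $\Cref{lem: AddingElemToPosTuple}$ applies; taking $j=n-1$ works when $n\ge 2$. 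By hypothesis the quadruple $(x_1,x_{n-1},x_n,x_{n+1})$ is positive (these indices satisfy $1\le n-1<n<n+1$ and $1< n-1$ requires $n\ge 3$, so we need $1\le 1<n-1<n<n+1$; when $n=3$ this reads $(x_1,x_2,x_3,x_4)$ which is positive by hypothesis). Hence \Cref{lem: AddingElemToPosTuple}, applied with the positive $n$-tuple $(x_1,\dots,x_n)$ and the index $j=n-1$, yields that $(x_1,\dots,x_n,x_{n+1})$ is positive.

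The one subtlety to handle carefully is the edge case and the exact index bookkeeping in \Cref{lem: AddingElemToPosTuple}: that proposition requires a $j$ with $1\le j\le n-1$ such that $(x_0,x_j,x_n,x_{n+1})$ is positive, where the tuple being extended is $(x_0,\dots,x_n)$. Re-indexing our situation so that the positive tuple is $(x_1,\dots,x_n)$ (length $n$, indices $1$ through $n$) and the new point is $x_{n+1}$, I need an index $j$ strictly between $1$ and $n$ so that $(x_1,x_j,x_n,x_{n+1})$ is positive. Such a $j$ exists precisely when $n\ge 3$ (take $j=2$, so the quadruple is $(x_1,x_2,x_n,x_{n+1})$, whose indices $1<2\le n-1<n<n+1$ are increasing, hence positive by hypothesis), and when $n=3$ the inductive step extends a positive $3$-tuple $(x_1,x_2,x_3)$ to $(x_1,x_2,x_3,x_4)$, which is already covered directly by the hypothesis that $(x_1,x_2,x_3,x_4)$ is positive. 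So the induction starts cleanly at $n=4$ and the step is valid for all $n\ge 4$.

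I do not expect any real obstacle here; the content is entirely in \Cref{lem: AddingElemToPosTuple} (which is quoted from \cite{GLW}), and the proof is a routine induction. The only thing to get right is to make sure, at each step, that the four indices $1, j, n, n+1$ (after re-indexing) are genuinely in increasing order so that the relevant quadruple is among the ones assumed positive, and to note that dihedral invariance of positivity of tuples is what lets us freely use "every sub-tuple" in the forward direction. I would write this up in a few lines, invoking \Cref{lem: AddingElemToPosTuple} for the inductive step and the characterization of positive tuples via diamonds (together with dihedral invariance) for the forward implication.
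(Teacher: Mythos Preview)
Your proposal is correct and matches the paper's approach exactly: the paper simply states that this corollary is ``proven via a straightforward induction argument using \Cref{lem: AddingElemToPosTuple} as the inductive step,'' which is precisely what you do. Your index bookkeeping is a bit verbose but sound; the forward direction follows from the diamond characterization stated just before \Cref{lem: AddingElemToPosTuple}, and for the inductive step taking $j=2$ (in your indexing) gives the needed positive quadruple $(x_1,x_2,x_n,x_{n+1})$.
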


\Cref{lem: AddingElemToPosTuple} also has the following implication for intersections of diamonds.

\begin{corollary}\label{intersection of diamonds}
    If $(a,b,c,d)$ is a positive quadruple of flags in $\Fc_{\Theta,\Fb}$, then 
    \[\Diam_c(b,d)\cap\Diam_b(a,c)=\Diam_a^{\rm opp}(b,c).\]
\end{corollary}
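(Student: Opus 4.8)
The goal is to show that for a positive quadruple $(a,b,c,d)$ one has $\Diam_c(b,d)\cap\Diam_b(a,c)=\Diam_a^{\rm opp}(b,c)$. By translating by an element of $\Aut_1(\mathfrak g_{\Fb})$, I would first normalize the configuration: put $(b,c) = (p_\Theta^\opp, p_\Theta)$ and choose the normalization so that $\Diam_b(a,c) = \Diam_{\std}$, the standard diamond with extremities $(p_\Theta, p_\Theta^\opp)$. Then $a \in \Diam_{\std}$, so there is a unique $u \in U_{\Theta,\Fb}^{>0}$ with $a = u\cdot p_\Theta^\opp$. Since $(a,b,c,d)$ is positive, $d$ lies in the diamond with extremities $(b,c)$ opposite to the one containing $a$; as $a \in \Diam_b(a,c)$ corresponds to the semigroup $U_{\Theta,\Fb}^{>0}$ acting on $p_\Theta^\opp$, the flag $d$ lies in $\Diam_{\std}^\opp$, i.e.\ $d = v\cdot p_\Theta^\opp$ for a unique $v \in U_{\Theta,\Fb}^{<0}$. (Alternatively one can cite the parametrization in condition (2) of the definition of positive quadruple directly.)

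Next I would identify each of the three diamonds in this normalized picture. The diamond $\Diam_a^{\rm opp}(b,c)$ is the diamond with extremities $(p_\Theta^\opp, p_\Theta)$ opposite to $\Diam_b(a,c) = \Diam_{\std}$; since $\Diam_{\std} = U_{\Theta,\Fb}^{>0}\cdot p_\Theta^\opp$, its opposite with the same extremities is $\Diam_{\std}^\opp = U_{\Theta,\Fb}^{<0}\cdot p_\Theta^\opp$. So the claim reduces to
\[
\Diam_c(b,d)\cap \Diam_{\std} = \Diam_{\std}^\opp,
\]
wait — that cannot be right as stated, so I must instead express $\Diam_c(b,d)$ carefully: $\Diam_c(b,d)$ is the diamond with extremities $(b,d) = (p_\Theta^\opp, v\cdot p_\Theta^\opp)$ containing $c = p_\Theta$. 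Using $v \in U_{\Theta,\Fb}^{<0}$ and the semigroup property, $v^{-1} \in U_{\Theta,\Fb}^{>0}$, and applying $v$ to the standard configuration shows $\Diam_c(b,d) = v\cdot \Diam_{\std}^{(\text{suitable})}$; concretely $\Diam_c(b,d)$ equals the set $\{v u'\cdot p_\Theta^\opp \mid u' \in U_{\Theta,\Fb}^{>0}\}$ after identifying extremities correctly. I would then invoke \Cref{lem: closure of diamond} (applied to the positive quadruple $(b,c,d,a)$ or a dihedral reordering, which is again positive) to get the nesting $\overline{\Diam}_a^{\rm opp}(b,c)\subset \Diam_c(b,d)$ and $\overline{\Diam}_a^{\rm opp}(b,c)\subset \Diam_b(a,c)$, giving the inclusion $\Diam_a^{\rm opp}(b,c)\subset \Diam_c(b,d)\cap\Diam_b(a,c)$.

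For the reverse inclusion — which I expect to be the main obstacle — I would proceed as in the proof of \Cref{lem: intersection closures diamonds}: suppose $w\cdot p_\Theta^\opp$ lies in both $\Diam_c(b,d)$ and $\Diam_b(a,c) = \Diam_{\std}$. Membership in $\Diam_{\std}$ forces $w\in U_{\Theta,\Fb}^{>0}$. Membership in $\Diam_c(b,d)$, after the explicit parametrization above, forces $v^{-1}w \in U_{\Theta,\Fb}^{<0}$ (expressing that the flag lies "before" $c$ relative to $b$ and $d$), equivalently $w^{-1}v\in U_{\Theta,\Fb}^{>0}$; and being in $\Diam_a^{\rm opp}(b,c)$ is exactly the condition $u^{-1}w\in U_{\Theta,\Fb}^{<0}$. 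The hard part is extracting these semigroup membership conditions cleanly from the geometric definition of the diamonds $\Diam_c(b,d)$ and relating them: this is essentially a computation with the parametrization $U_{\Theta,\Fb}^{\pitchfork}\to\Fc_{\Theta,\Fb}$, $u\mapsto u\cdot p_\Theta^\opp$, using that $U_{\Theta,\Fb}^{>0}$ is a semigroup whose closure is sharp (\Cref{propo:AcuteSemigroup}), exactly as in \Cref{lem: intersection closures diamonds}. Once the three conditions on $w$ are in hand, I would conclude by an induction via \Cref{lem: AddingElemToPosTuple} exactly as advertised: knowing $(a,b,c,d)$ positive and the flag $x := w\cdot p_\Theta^\opp$ lies in the two diamonds, one checks the relevant $4$-tuples (such as $(a,b,x,c)$ or $(b,x,c,d)$) are positive and assembles $(a,b,x,c,d)$ into a positive $5$-tuple, from which $x\in\Diam_a^{\rm opp}(b,c)$ follows by unwinding the definition of positive $5$-tuple. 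I would double-check whether a direct argument via \Cref{lem: intersection closures diamonds} applied to the positive triples $(b,c,d)$ and $(a,b,c)$ — intersecting $\overline\Diam_c^\opp(\cdot)$ type sets — short-circuits the induction, since that lemma already packages the sharpness argument.
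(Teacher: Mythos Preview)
Your proposal wanders around the actual argument without landing on it cleanly. The paper's proof is three lines and uses no normalization at all: for any flag $e$, membership $e\in\Diam_c(b,d)$ is equivalent to the quadruple $(a,b,e,d)$ being positive (because $(a,b,c,d)$ positive means $a$ lies in the diamond opposite $\Diam_c(b,d)$), and $e\in\Diam_b(a,c)$ is equivalent to $(a,e,c,d)$ being positive. By \Cref{lem: AddingElemToPosTuple} (applied after a cyclic shift, say to $(d,a,b,e)$ with the auxiliary quadruple $(d,a,e,c)$), these two conditions together are equivalent to the $5$-tuple $(a,b,e,c,d)$ being positive, which in turn is equivalent to $e\in\Diam_a^{\rm opp}(b,c)$ (again via \Cref{lem: AddingElemToPosTuple}, starting from $(a,b,e,c)$ and the given $(a,b,c,d)$).

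Your plan has several concrete problems. First, your normalization is inconsistent: after setting $(b,c)=(p_\Theta^\opp,p_\Theta)$ you cannot also have $\Diam_b(a,c)=\Diam_{\std}$, since $\Diam_b(a,c)$ has extremities $(a,c)$, not $(b,c)=(p_\Theta^\opp,p_\Theta)$; and $\Diam_a^{\rm opp}(b,c)$ is opposite to $\Diam_a(b,c)$, not to $\Diam_b(a,c)$, since opposites share extremities. You noticed something was off (``wait --- that cannot be right'') but did not fix it. Second, for the easy inclusion you cite \Cref{lem: closure of diamond}, but the relevant input is \Cref{proposition: opposite diamonds} together with the observation $\Diam_a(b,c)=\Diam_d(b,c)$. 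Third, when you finally reach the $5$-tuple idea at the end, the $4$-tuples you name, $(a,b,x,c)$ and $(b,x,c,d)$, are the wrong ones for the direction you are proving: the hypotheses give you $(a,b,x,d)$ and $(a,x,c,d)$, and those are what \Cref{lem: AddingElemToPosTuple} needs. Drop the normalization and the semigroup computation entirely; the whole content is the translation between diamond membership and positivity of quadruples.
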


\begin{proof}
Let $e\in\Fc_{\Theta,\Fb}$. Since $(a,b,c,d)$ is positive, $e\in \Diam_c(b,d)\cap\Diam_b(a,c)$ if and only if $(a,b,e,d)$ and $(a,e,c,d)$ are positive. By \Cref{lem: AddingElemToPosTuple}, this is equivalent to requiring that the tuple $(a,b,e,c,d)$ is positive, i.e.\ $e\in\Diam_a^{\rm opp}(b,c)$.
\end{proof}

We verify that positivity of tuples of flags is an open and semi-algebraic condition:

\begin{proposition}
\label{propo: positive n-tuples is semi-algebraic}
    The set of positive $n$-tuples of flags is an open semi-algebraic subset of $\Fc_{\Theta,\Fb}^n$.
\end{proposition}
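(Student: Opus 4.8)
The plan is to work over a fixed real closed field $\Fb$ and show the two assertions—semi-algebraicity and openness—separately, reducing both to already-established facts. First, I would handle semi-algebraicity. By Corollary~\ref{check quadruples}, an $n$-tuple $(x_1,\dots,x_n)$ is positive if and only if every quadruple $(x_i,x_j,x_k,x_l)$ with $i<j<k<l$ is positive. Since a finite intersection of semi-algebraic subsets is semi-algebraic, and each such condition is the pullback of the set of positive quadruples under the (polynomial, hence semi-algebraic) coordinate projection $\Fc_{\Theta,\Fb}^n \to \Fc_{\Theta,\Fb}^4$, it suffices to prove that the set of positive quadruples is semi-algebraic. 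For this I would use characterization~(2) in the definition of positive quadruple: $(x,y,z,t)$ is positive iff it lies in the image of the semi-algebraic map
\[
\Aut_1(\mathfrak g_{\Fb}) \times U_{\Theta,\Fb}^{>0}\times U_{\Theta,\Fb}^{>0} \longrightarrow \Fc_{\Theta,\Fb}^4,\qquad (g,u,v)\mapsto g\cdot(p_\Theta^\opp,\,u\cdot p_\Theta^\opp,\,uv\cdot p_\Theta^\opp,\,p_\Theta).
\]
The domain is semi-algebraic ($\Aut_1(\mathfrak g_{\Fb})$ is a linear semi-algebraic group and $U_{\Theta,\Fb}^{>0}$ is a semi-algebraically connected component of $U_{\Theta,\Fb}^\pitchfork$, hence semi-algebraic), the map is semi-algebraic, and by Tarski--Seidenberg (Proposition~\ref{thm_TarskiSeidenbergproj}, applied via the graph) the image is semi-algebraic. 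This gives semi-algebraicity of $(\Fc_{\Theta,\Fb}^n)_{>0}$.

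Next, openness. Since $(\Fc_{\Theta,\Fb}^n)_{>0}$ is semi-algebraic and is the $\Fb$-extension of the corresponding set $(\Fc_\Theta^n)_{>0}$ over $\overline{\Qb}^r$ (all the data—$\Aut_1(\mathfrak g)$, $U_\Theta^{>0}$, the orbit map—being defined over $\overline{\Qb}^r$), by Theorem~\ref{thm_ExtConnComp}\eqref{thm_ExtConnComp: closedness} it is open in $\Fc_{\Theta,\Fb}^n$ if and only if it is open in $\Fc_{\Theta,\Rb}^n$. So it suffices to prove openness over $\Rb$. Over $\Rb$ I would again reduce to quadruples via Corollary~\ref{check quadruples}: being positive is a finite conjunction of conditions each of which is an open condition (pullback of openness of positive quadruples under a continuous projection), so it is enough that the set of positive quadruples is open in $\Fc_{\Theta,\Rb}^4$. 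For the quadruple case over $\Rb$, characterization~(1) says $(x,y,z,t)$ is positive iff $y$ and $t$ lie in opposite diamonds with extremities $(x,z)$; equivalently $x,z$ are transverse, $y$ lies in a diamond $\Diam$ with extremities $(x,z)$, and $t$ lies in the opposite diamond. Transversality is open, and the diamonds with given extremities are, by the discussion after the definition, exactly the (finitely many) connected components of the set of flags transverse to both $x$ and $z$; since this latter set varies continuously (it is the complement of the incidence varieties of $x$ and of $z$) and has locally constant combinatorial type, "$y$ and $t$ lie in opposite components of the transverse locus of $(x,z)$" is an open condition on $(x,y,z,t)$ in the region where $x\pitchfork z$, $x\pitchfork y$, $x\pitchfork t$, $z\pitchfork y$, $z\pitchfork t$ (the last transversalities being automatic by Corollary~\ref{opposites transverse} but also open). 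Assembling: on the open transverse region the map sending $(x,z)$ to the partition of their transverse locus into components is locally constant, so membership of $y$ and $t$ in a fixed pair of opposite components is open.

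The main obstacle I expect is the openness of the positive-quadruple condition over $\Rb$—more precisely, making rigorous the claim that "lying in opposite diamonds with extremities $(x,z)$" is an open condition as $(x,z)$ vary, since the diamonds themselves move with $(x,z)$. The cleanest route is probably to trivialize locally: near a transverse pair $(x_0,z_0)$, act by a continuous (even semi-algebraic) local section of $\Aut_1(\mathfrak g_{\Rb}) \to \{\text{transverse pairs}\}$ to bring $(x,z)$ to $(p_\Theta^\opp, p_\Theta)$, after which the condition becomes "$y\in U_{\Theta,\Rb}^{>0}\cdot p_\Theta^\opp$ and $t \in U_{\Theta,\Rb}^{<0}\cdot p_\Theta^\opp$" (or with the roles swapped), and each of $U_{\Theta,\Rb}^{>0}$, $U_{\Theta,\Rb}^{<0}$ is open in $U_{\Theta,\Rb}^\pitchfork$ by definition of a $\Theta$-positive structure, hence the corresponding diamonds are open in the transverse locus. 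I would spell this reduction out and then invoke the dihedral symmetry (Proposition after Definition of positive quadruples) to cover both orderings. Everything else is bookkeeping with Tarski--Seidenberg and the transfer principle.
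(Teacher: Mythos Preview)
Your proposal is correct. The overall strategy matches the paper's, but with two differences worth noting.

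For semi-algebraicity, the paper applies Tarski--Seidenberg directly to characterization~(2) in Definition~\ref{def:PositiveTuples} for general $n$: the map
\[
\Aut_1(\mathfrak g_{\Fb})\times (U_{\Theta,\Fb}^{>0})^{n-2}\to \Fc_{\Theta,\Fb}^n,\qquad (g,u_2,\dots,u_{n-1})\mapsto g\cdot(p_\Theta^\opp,\,u_2\cdot p_\Theta^\opp,\,\dots,\,u_2\cdots u_{n-1}\cdot p_\Theta^\opp,\,p_\Theta)
\]
is semi-algebraic with semi-algebraic domain, and its image is $(\Fc_{\Theta,\Fb}^n)_{>0}$. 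Your reduction to quadruples via Corollary~\ref{check quadruples} is correct but an unnecessary detour; the same parametrization argument works directly for $n$-tuples.

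For openness, the paper simply invokes \cite[Proposition~13.27]{guichard2025generalizing} over $\Rb$ and then the transfer principle. Your approach---transfer to $\Rb$, reduce to quadruples, then trivialize via a local section of $\Aut_1(\mathfrak g_{\Rb})\to\{\text{transverse pairs}\}$---is a legitimate self-contained alternative. One small point: after normalizing $(x,z)$ there are $2^{|\Theta|}$ diamonds with those extremities, not just $\Diam_{\std}$ and $\Diam_{\std}^{\rm opp}$; but since you may choose the base value of the section to send $y_0$ into $\Diam_{\std}$ (by transitivity of $\hat L_{\Theta,\Rb}$ on $\mathcal S_{\Rb}$, Theorem~\ref{thm:ActionLThetaOnSemigroups}), or alternatively since the condition is in any case a finite union of open conditions indexed by $\mathcal S_{\Rb}$, this does not affect the argument. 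Your route trades a citation for an explicit local computation; the paper's is shorter but relies on the Guichard--Wienhard reference.
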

\begin{proof}
    The fact that $(\Fc_{\Theta,\Fb}^n)_{>0}\subset\Fc_{\Theta,\Fb}^n$ is open is an immediate consequence of \cite[Proposition 13.27]{guichard2025generalizing} and the transfer principle. We now verify that\break $(\Fc_{\Theta,\Fb}^n)_{>0}\subset\Fc_{\Theta,\Fb}^n$ is semi-algebraic. Since $\Aut_1(\mathfrak g_{\Fb})$ and $ U_{\Theta,\Fb}^{n-2}$ are semi-algebraic groups whose action on $\Fc_{\Theta,\Fb}^n$ is semi-algebraic, the map 
    \[\pi \colon  \Aut_1(\mathfrak g_{\Fb}) \times U_{\Theta,\Fb}^{n-2}  \mapsto  \Fc_{\Theta,\Fb}^n\]
    given by
    \[(g,u_1,\ldots, u_{n-2}) \mapsto g\cdot (p_\Theta, p_\Theta^\opp, u_1 \cdot p_\Theta^\opp, u_1 u_2 \cdot p_\Theta^\opp, \ldots, u_1 \cdots u_n \cdot p_\Theta^\opp)
    \]
    is semi-algebraic. Thus, $(\Fc_{\Theta,\Fb}^n)_{>0} = \pi( \Aut_1(\mathfrak{g}_{\Fb}) \times (U_{\Theta,\Fb}^{>0})^{n-2})$ is semi-algebraic.
\end{proof}

\begin{remark}
    The case of $\PSL_2(\Fb)$ is a particular case of a common feature to Hermitian groups of tube type. For such groups, there exists a partial cyclic order on $\Fc_{\Theta, \Fb}$ which is preserved by $G_{\Fb}$ but reversed by some element of $\Aut_1(\mathfrak{g}(\Fb))$. Positive tuples are tuples that are cyclically ordered up to dihedral permutation and thus form two semi-algebraically connected components which are preserved by $G_{\Fb}$. Choosing one component amounts to choosing a cyclic order or, equivalently, a consistent choice of diamond with two given extremities. 

    For other groups $G_{\Fb}$ that admit a $\Theta$-positive structure that are not Hermitian, the group $\Aut_1(\mathfrak g_{\Fb})$ can be semi-algebraically connected (for example, $G_{\Fb}=\SL_{2k+1}(\Fb)$), in which case the set of positive tuples is connected and there is no consistent way of choosing a diamond between two points. Yet, it is a good mental picture to think of positive tuples as ``cyclically ordered tuples'', up to an indeterminacy which is lifted once we are given a sub-triple.

    In fact, one could always construct a finite covering $\Fc_{\Theta,\Rb}^*$ of $\Fc_{\Theta,\Rb}$ (namely, $G_{\Rb} / P_{\Theta,\Rb}^*$ where $P_{\Theta, \Rb}^* = L_{\Theta_{\Rb}}^* U_\Theta$) over which the $\Theta$-positive structure defines a partial cyclic order, and such that a tuple in $\Fb_{\Theta,\Rb}$ is positive if and only if admits a cyclically ordered lift to $\Fb_{\Theta,\Rb}^*$. While we believe that developing this point of view could be useful, we decided to stick with the classical presentation of $\Theta$-positivity in the present paper. 
\end{remark}

\subsection{Uniqueness of opposites}

The next proposition implies that every diamond has a unique opposite.

\begin{proposition} \label{prop: transverse set to a full diamond}
    Every diamond $\Diam$ in $\Fc_{\Theta,\Fb}$ has a unique opposite, which is the set
    \[\{x\mid \forall y\in \overline \Diam, x\pitchfork y\}~,\]
    where $\overline{\Diam}$ denotes the closure of $\Diam$ in $\Fc_{\Theta,\Fb}$.
\end{proposition}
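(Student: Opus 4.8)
The plan is to reduce to the standard diamond $\Diam_{\std}$ via the transitive $\Aut_1(\mathfrak g_{\Fb})$-action on transverse pairs, then identify the transversal set explicitly. After translating, it suffices to prove: the set $T \coloneqq \{x \mid x \pitchfork y \text{ for all } y \in \overline{\Diam}_{\std}\}$ equals $\Diam_{\std}^{\opp}$, and that any opposite of $\Diam_{\std}$ equals this set. First I would show $\Diam_{\std}^{\opp} \subseteq T$: given $z \in \Diam_{\std}^{\opp}$ and $y \in \Diam_{\std}$, pick $x = p_\Theta$, $t = p_\Theta^{\opp}$; then (using that $U_{\Theta,\Fb}^{>0}$ is a semigroup) the quadruple $(x,y,z,t)$ — or a suitable reordering of $p_\Theta^{\opp}, y, z, p_\Theta$ — is positive, so by \Cref{opposites transverse} every flag in $\Diam_{\std}$ is transverse to every flag in $\Diam_{\std}^{\opp}$; transversality with points of the \emph{closure} $\overline{\Diam}_{\std}$ then follows since $\overline{\Diam}_{\std} = U_{\Theta,\Fb}^{\ge 0}\cdot p_\Theta^{\opp}$ and, by \Cref{lem: closure of diamond} applied to a positive quadruple with $z$ in the interior of the opposite diamond, $\overline{\Diam}_{\std} \subset \Diam_z(p_\Theta, \cdot)$, which lies in the transverse locus of $z$. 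Here it is cleaner to fix $z$ in the \emph{open} opposite diamond first and deduce $\overline{\Diam}_{\std} \subseteq \{y \mid y \pitchfork z\}$ from \Cref{lem: closure of diamond}, which is exactly the statement that the closure of one diamond sits inside a diamond with one extremity at $z$, hence avoids the non-transverse locus of $z$.

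Next I would prove $T \subseteq \Diam_{\std}^{\opp}$. Every point of $T$ is in particular transverse to $p_\Theta \in \overline{\Diam}_{\std}$ and to $p_\Theta^{\opp} \in \overline{\Diam}_{\std}$, so $T \subseteq U_{\Theta,\Fb}^{\pitchfork}\cdot p_\Theta^{\opp}$, which is the union of the $2^{|\Theta|}$ diamonds with extremities $(p_\Theta, p_\Theta^{\opp})$, i.e. $\bigsqcup_{C \in \mathcal S_{\Fb}} C \cdot p_\Theta^{\opp}$ together with (part of) their common boundary — more precisely $\{u \cdot p_\Theta^{\opp} \mid u \in U_{\Theta,\Fb}^{\pitchfork}\}$. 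So it suffices to show that no point of any diamond $C\cdot p_\Theta^{\opp}$ with $C \ne U_{\Theta,\Fb}^{<0}$ lies in $T$. I would argue this by a separation/boundary argument: take $y$ in the boundary $\overline{\Diam}_{\std}\setminus \Diam_{\std}$ of the standard diamond where $y = u\cdot p_\Theta^{\opp}$ with $u \in U_{\Theta,\Fb}^{\ge 0}\setminus U_{\Theta,\Fb}^{>0}$, so $u$ is a non-identity element with $u \cdot p_\Theta^{\opp} = p_\Theta^{\opp}$ failing transversality — concretely, one shows that a point $x = v\cdot p_\Theta^{\opp}$ with $v \in C$, $C \neq U_{\Theta,\Fb}^{<0}$, fails to be transverse to \emph{some} point of $\overline{\Diam}_{\std}$ because $v^{-1}\cdot\overline{\Diam}_{\std} = \overline{v^{-1}\cdot \Diam_{\std}}$ contains $p_\Theta^{\opp}$: indeed $v^{-1} \in C^{-1}$, and $C^{-1} \ne U_{\Theta,\Fb}^{>0}$, so $v^{-1}\cdot\Diam_{\std}$ is a diamond distinct from $\Diam_{\std}$ with one extremity $p_\Theta$; then by \Cref{intersection of diamonds}-type reasoning or by \Cref{propo:AcuteSemigroup} its closure meets the non-transverse locus of $p_\Theta^{\opp}$... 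The cleanest route: $x = v \cdot p_\Theta^{\opp} \in T$ forces $v^{-1}\cdot p_\Theta^{\opp}$ and everything in $v^{-1}\cdot \overline{\Diam}_{\std} = \overline{v^{-1}U_{\Theta,\Fb}^{>0}}\cdot p_\Theta^{\opp}$ to be transverse to $p_\Theta^{\opp}$; but $\overline{v^{-1}U_{\Theta,\Fb}^{>0}}$ contains $v^{-1}$ itself (as $\Id \in \overline{U_{\Theta,\Fb}^{>0}}$), and $v^{-1}\cdot p_\Theta^{\opp}$ is transverse to $p_\Theta^{\opp}$ iff $v^{-1} \in U_{\Theta,\Fb}^{\pitchfork}$, which holds; that is not yet a contradiction, so instead I use that $T$ is $\Stab(p_\Theta,p_\Theta^{\opp})$-invariant and, by \Cref{thm:ActionLThetaOnSemigroups}, $\hat L_{\Theta,\Fb}/\hat L_{\Theta,\Fb}^*$ permutes the diamonds freely and transitively, so if $T$ contained a point of one off-diagonal diamond it would contain a point of every diamond, in particular of $\Diam_{\std}$ itself — but a point of $\Diam_{\std}$ is not transverse to all of $\overline{\Diam}_{\std}$ (it is not transverse to itself). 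This contradiction shows $T$ meets only the component $U_{\Theta,\Fb}^{<0}\cdot p_\Theta^{\opp} = \Diam_{\std}^{\opp}$, and combined with the first inclusion gives $T = \Diam_{\std}^{\opp}$.

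Finally, uniqueness of the opposite: any opposite $\Diam'$ of $\Diam$ is by definition $g\cdot \Diam_{\std}^{\opp}$ for some $g$ with $g \cdot \Diam_{\std} = \Diam$; but every opposite is a diamond lying in the transversal set $\{x \mid x \pitchfork y \ \forall y \in \overline{\Diam}\}$ by \Cref{opposites transverse} and the closure argument above, and since that transversal set equals a single diamond (by the computation just done, transported back by $g$), $\Diam'$ must be that diamond. I expect the main obstacle to be pinning down the boundary behaviour cleanly — i.e. verifying that $\overline{\Diam}_{\std} = U_{\Theta,\Fb}^{\ge 0}\cdot p_\Theta^{\opp}$ and that points of $\overline{\Diam}_{\std}$ genuinely fail transversality with points of the wrong diamonds — and organizing the reduction so that \Cref{lem: closure of diamond}, \Cref{propo:AcuteSemigroup}, and \Cref{thm:ActionLThetaOnSemigroups} do the work without circularity.
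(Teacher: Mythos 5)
Your first inclusion, $\Diam_{\std}^{\opp} \subseteq \{x\mid \forall y\in\overline{\Diam}_{\std},\ x\pitchfork y\}$, is in the same spirit as the paper's (choose auxiliary points in $\Diam_{\std}^{\opp}$, squeeze $\overline{\Diam}_{\std}$ inside a diamond via \Cref{lem: closure of diamond}, finish with \Cref{opposites transverse}); it is imprecise about which quadruple is fed into \Cref{lem: closure of diamond} but is repairable. The reverse inclusion, however, has a real gap. You correctly abandon your boundary attempt, but the replacement — ``$T$ is $\Stab(p_\Theta,p_\Theta^{\opp})$-invariant and $\hat L_{\Theta,\Fb}/\hat L_{\Theta,\Fb}^*$ permutes the diamonds freely and transitively, so if $T$ met one off-diagonal diamond it would meet all of them, in particular $\Diam_{\std}$'' — rests on a false premise. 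The set $T$ is the transversal set of the \emph{fixed} diamond $\overline{\Diam}_{\std}$; an element $l\in\hat L_{\Theta,\Fb}$ carries $T$ to the transversal set of $l\cdot\overline{\Diam}_{\std}$, which equals $T$ only when $l$ preserves $\Diam_{\std}$, i.e.\ $l\in\hat L_{\Theta,\Fb}^*$. But $\hat L_{\Theta,\Fb}^*$ by definition preserves every diamond with extremities $(p_\Theta,p_\Theta^\opp)$, so conjugating by such $l$ can never move a point of $T$ from one component $C\cdot p_\Theta^\opp$ to another. The elements realizing the transitive permutation of \Cref{thm:ActionLThetaOnSemigroups} are precisely the ones that do \emph{not} preserve $T$, so the symmetry argument never gets started.

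The missing idea is the paper's perturbation-plus-connectedness step. Writing $x=v\cdot p_\Theta^\opp$ with $v\in U_{\Theta,\Fb}^\pitchfork$, transversality of $x$ to every $u\cdot p_\Theta^\opp$ with $u\in U_{\Theta,\Fb}^{>0}$ gives $v^{-1}U_{\Theta,\Fb}^{>0}\subset U_{\Theta,\Fb}^\pitchfork$. Conjugating $v$ by a torus element that scales it toward $\Id$ shows $v^{-1}U_{\Theta,\Fb}^{>0}\cap U_{\Theta,\Fb}^{>0}\neq\emptyset$; since $v^{-1}U_{\Theta,\Fb}^{>0}$ is connected and $U_{\Theta,\Fb}^{>0}$ is a connected component of $U_{\Theta,\Fb}^\pitchfork$, this forces $v^{-1}U_{\Theta,\Fb}^{>0}\subset U_{\Theta,\Fb}^{>0}$, and letting $u\to\Id$ yields $v^{-1}\in U_{\Theta,\Fb}^{>0}$, i.e.\ $v\in U_{\Theta,\Fb}^{<0}$. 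Replace your symmetry step with this.
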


\begin{proof}
Let $\Diam$ be a diamond in $\Fc_{\Theta,\Fb}$, and let $(a,b)$ be a choice of extremities of $\Diam$. By applying an element in $\Aut_1(\mathfrak g_{\Fb})$, we may assume that $(\Diam,a,b)=(\Diam_{\std},p_\Theta,p_\Theta^\opp)$. It suffices to show that 
\[\Diam_{\std}^{\rm opp}=\{x\mid \forall y\in \overline \Diam_{\std}, x\pitchfork y\}.\]

First, we prove that $\Diam_{\std}^{\rm opp}\subset \{x\mid \forall y\in \overline \Diam, x\pitchfork y\}$. Pick any $c\in\Diam_{\std}^{\rm opp}$, and choose $d,e\in\Diam_{\std}^{\rm opp}$ such that $(a,d,c,e,b)$ is positive. Since $(a,d,c,e)$ is positive, it follows from definition that $c\in\Diam_a^{\rm opp}(d,e)$. 
Then by \Cref{opposites transverse}, $c$ is transverse to every flag in $\Diam_a(d,e)$. Since $(a,d,e,b)$ is positive, \Cref{lem: closure of diamond} implies that
\[\overline{\Diam}_e^{\rm opp}(a,b)\subset\Diam_a(d,e),\]
and so $c$ is transverse to every flag in $\overline{\Diam}_e^{\rm opp}(a,b)=\overline{\Diam}_{\std}$. 

Next, we prove that $\Diam_{\std}^{\rm opp}\supset \{x\mid \forall y\in \overline \Diam, x\pitchfork y\}$. Let $x\in\Fc_{\Theta,\Fb}$ be transverse to every point in $\overline \Diam$. In particular, $x$ is transverse to $p_\Theta$ and $p_\Theta^\opp$, hence we can write $x= v\cdot p_\Theta^\opp$ for some $v\in U_{\Theta,\Fb}^\pitchfork$. It suffices to show that $v\in U_{\Theta,\Fb}^{<0}$. For every $u\in U_{\Theta,\Fb}^{>0}$, $x = v\cdot p_\Theta^\opp$ is transverse to $u\cdot p_\Theta^\opp$, i.e.\ $v^{-1} u \in U_{\Theta,\Fb}^\pitchfork$. We thus get that $v^{-1}\cdot U_{\Theta,\Fb}^{>0} \subset U_{\Theta,\Fb}^\pitchfork$. We claim that $v^{-1}\cdot U_{\Theta,\Fb}^{>0} \subset U_{\Theta,\Fb}^{>0}$. Indeed, since $U_{\Theta,\Fb}^{>0}\subset U_{\Theta,\Fb}$ is open, for each $u\in U_{\Theta,\Fb}^{>0}$, we can find some $l\in T_{\Theta,\Fb}$ such that $l v l^{-1}$ is sufficiently close to the identity so that $lv^{-1}l^{-1} u$ is still in $U_{\Theta,\Fb}^{>0}$. Conjugating by $l^{-1}$, we get that $v^{-1} (l^{-1} u l)\in U_{\Theta,\Fb}^{>0}$. Hence $v^{-1}\cdot U_{\Theta,\Fb}^{>0}\cap U_{\Theta,\Fb}^{>0} \neq \emptyset$. Since $v^{-1}\cdot U_{\Theta,\Fb}^{>0} \subset U_{\Theta,\Fb}^\pitchfork$ and $U_\Theta^{>0}$ is a connected component of $U_{\Theta,\Fb}^\pitchfork$, we conclude that $v^{-1}\cdot U_{\Theta,\Fb}^{>0} \subset U_{\Theta,\Fb}^{>0}$. This is possible only if $v^{-1}\in U_{\Theta,\Fb}^{>0}$, or equivalently, if $v\in U_{\Theta,\Fb}^{<0}$.
\end{proof}

Henceforth, we denote the opposite of any diamond $\Diam$ in $\Fc_{\Theta,\Fb}$ by $\Diam^{\rm opp}$. Notice that $(\Diam^{\rm opp})^{\rm opp}=\Diam$.

As mentioned above, a diamond does not determine its extremities in general. However, using \Cref{prop: transverse set to a full diamond}, one can prove that if two pairs of extremities of a diamond share a common point, then they must be equal:

\begin{proposition}
    \label{lem: ExtremitiesDiamonds}
    If $(x,y)$ and $(x,z)$ are extremities of the same diamond, then $y=z$.
\end{proposition}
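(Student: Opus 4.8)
The plan is to reduce to the standard diamond and then use the characterization of the opposite diamond from \Cref{prop: transverse set to a full diamond}. Suppose $(x,y)$ and $(x,z)$ are both pairs of extremities of a diamond $\Diam$. Applying an element of $\Aut_1(\mathfrak g_{\Fb})$, we may assume that $(\Diam, x, y) = (\Diam_{\std}, p_\Theta, p_\Theta^{\rm opp})$; the goal becomes to show that if $(\Diam_{\std}, x', z)$ is a valid extremity presentation of $\Diam_{\std}$ with $x' = p_\Theta$, then $z = p_\Theta^{\rm opp}$.

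The key observation is that \Cref{prop: transverse set to a full diamond} shows the opposite $\Diam^{\rm opp}$ of a diamond $\Diam$ is intrinsic to $\Diam$ (independent of extremities), namely $\Diam^{\rm opp} = \{w \mid \forall v \in \overline\Diam,\ w \pitchfork v\}$. So both $z$ and $y$ must be extremities of the \emph{same} opposite diamond $\Diam_{\std}^{\rm opp}$: if $(\Diam_{\std}, p_\Theta, z)$ is an extremity presentation, then $z \in \Diam_{\std}^{\rm opp}$ as a point (since $z$ is an extremity, it is the image of $p_\Theta^{\rm opp}$ under some $g \in \Aut_1$ fixing $\Diam_{\std}$ and $p_\Theta$, hence lies in $g\cdot\Diam_{\std}^{\rm opp} = \Diam_{\std}^{\rm opp}$, the last equality because $\Diam_{\std}^{\rm opp}$ is determined by $\Diam_{\std}$). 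Now I use the remark following \Cref{GW diamonds}: there is a unique opposite of $\Diam_{\std}$ with extremities $(p_\Theta, z)$, and also a unique one with extremities $(p_\Theta, p_\Theta^{\rm opp})$; but both of these equal $\Diam_{\std}^{\rm opp}$, so $(p_\Theta, z)$ and $(p_\Theta, p_\Theta^{\rm opp})$ are two extremity presentations of the same diamond $\Diam_{\std}^{\rm opp}$ sharing the common point $p_\Theta$.

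At this point one might worry this is circular, so instead I would argue directly. Since $z$ is an extremity of $\Diam_{\std}$ sharing the other extremity $p_\Theta$, write $z = g \cdot p_\Theta^{\rm opp}$ where $g \in \Aut_1(\mathfrak g_{\Fb})$ satisfies $g \cdot \Diam_{\std} = \Diam_{\std}$ and $g \cdot p_\Theta = p_\Theta$. By \Cref{thm:ActionLThetaOnSemigroups}, the subgroup of $\Aut_1(\mathfrak g_{\Fb})$ fixing both $p_\Theta$ and $p_\Theta^{\rm opp}$ is $\hat L_{\Theta,\Fb}$, and its action on $\mathcal S_{\Fb}$ factors through $\hat L_{\Theta,\Fb}/\hat L_{\Theta,\Fb}^*$ which acts freely; an element of $\hat L_{\Theta,\Fb}$ fixing $\Diam_{\std} \in \mathcal S_{\Fb}$ therefore lies in $\hat L_{\Theta,\Fb}^*$. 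But $g$ fixes $p_\Theta$ and $\Diam_{\std}$, not a priori $p_\Theta^{\rm opp}$; however, $g$ maps the pair of transverse points $(p_\Theta, p_\Theta^{\rm opp})$ to $(p_\Theta, z)$, and both $p_\Theta^{\rm opp}$ and $z$ are flags transverse to $p_\Theta$. I claim $g$ must fix $p_\Theta^{\rm opp}$: the stabilizer of $p_\Theta$ is $P_{\Theta,\Fb}$ (extended to $\Aut_1$ by the Levi-type factor), which acts on $\{w \mid w \pitchfork p_\Theta\} \cong U_{\Theta,\Fb}^{\rm opp}$; within this, $\Diam_{\std}$ meets this transverse locus and its closure determines $p_\Theta^{\rm opp}$ as — via \Cref{lem: intersection closures diamonds} applied to a positive triple with vertex $p_\Theta^{\rm opp}$, or more simply — the unique point of $\overline{\Diam}_{\std}^{\rm opp}$ transverse to every point of $\overline{\Diam}_{\std}$ that also lies in $\overline{\Diam}_{\std}^{\rm opp}$ itself. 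Concretely: $p_\Theta^{\rm opp} \in \Diam_{\std}^{\rm opp}$ and also $p_\Theta^{\rm opp}$ is an extremity; since $g$ preserves $\Diam_{\std}$, it preserves $\Diam_{\std}^{\rm opp}$ (by intrinsic characterization), and it fixes $p_\Theta$; I then invoke that a diamond together with one extremity determines the other, which is exactly the content to be proved — so I must instead finish via the \emph{semigroup structure}.

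The cleanest finish avoiding circularity: with $(\Diam_{\std}, p_\Theta, p_\Theta^{\rm opp})$ fixed, suppose $z$ is another point with $(\Diam_{\std}, p_\Theta, z)$ a valid extremity presentation. Then $z \pitchfork p_\Theta$, so $z = v \cdot p_\Theta^{\rm opp}$ for a unique $v \in U_{\Theta,\Fb}^\pitchfork$; moreover $z$ being an extremity of $\Diam_{\std}$ forces, via \Cref{prop: transverse set to a full diamond} applied to the opposite, that $z$ is transverse to every point of $\overline{\Diam}_{\std}^{\rm opp}$, equivalently (same proposition) $z \in \Diam_{\std}$ — wait, rather $z$ is transverse to all of $\overline{\Diam}_{\std}$, placing $z \in \Diam_{\std}^{\rm opp}$, i.e. $v \in U_{\Theta,\Fb}^{<0}$. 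Symmetrically, running the argument of \Cref{prop: transverse set to a full diamond} with the roles of the diamond and its opposite swapped, $p_\Theta^{\rm opp}$ is transverse to all of $\overline{\Diam_{\std}^{\rm opp}}$, and since the extremity structure gives a $g \in \hat L_{\Theta,\Fb}$ with $g \cdot p_\Theta^{\rm opp} = z$ and $g \cdot \Diam_{\std} = \Diam_{\std}$, Corollary~\ref{cor: GW consequences}/\Cref{thm:ActionLThetaOnSemigroups} force $g \in \hat L_{\Theta,\Fb}^*$, hence $g$ acts trivially on $\mathcal S_{\Fb}$; but then $g \cdot \Diam_{\std}^{\rm opp} = \Diam_{\std}^{\rm opp}$ and $g$ fixes $p_\Theta$. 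The main obstacle is precisely pinning down that such a $g$ then fixes $p_\Theta^{\rm opp}$: here I would use that $g \in \hat L_{\Theta,\Fb}^*$ acts trivially on $\mathcal S_{\Fb}$ and fixes $p_\Theta$, so $g$ lies in the stabilizer of $p_\Theta$ in $\Aut_1$, which is the parabolic, and $g$ preserves each semigroup component $U_{\Theta,\Fb}^{>0}$; then $z = g\cdot p_\Theta^{\rm opp}$ and $p_\Theta^{\rm opp}$ are both the unique fixed point of that parabolic's Levi, or more directly, $v \in U_{\Theta,\Fb}^{<0}$ and also $g$ conjugating things forces $v$ to normalize $U_{\Theta,\Fb}^{>0}$, and by \Cref{propo:AcuteSemigroup}-type sharpness the only such $v$ in $U_{\Theta,\Fb}$ is the identity, giving $z = p_\Theta^{\rm opp}$. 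I expect the bulk of the write-up to be carefully arranging these transversality and freeness inputs so the argument is manifestly non-circular.
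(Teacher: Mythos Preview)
Your proposal has genuine gaps. Several of the intermediate claims are simply false once you remember that the target value is $z = p_\Theta^{\rm opp}$, which lies in $\overline{\Diam}_{\std}$, not in $\Diam_{\std}^{\rm opp}$. Concretely: the assertion that ``$z$ is transverse to all of $\overline{\Diam}_{\std}$, placing $z \in \Diam_{\std}^{\rm opp}$'' cannot hold, since $p_\Theta^{\rm opp} \in \overline{\Diam}_{\std}$ is not transverse to itself; likewise the earlier claim that $z \in \Diam_{\std}^{\rm opp}$ because $z = g\cdot p_\Theta^{\rm opp}$ fails because $p_\Theta^{\rm opp}$ is an \emph{extremity} of $\Diam_{\std}^{\rm opp}$, not a point of it. Your attempt to place the element $g$ (which fixes $p_\Theta$ and $\Diam_{\std}$) inside $\hat L_{\Theta,\Fb}$ is also unjustified: $\hat L_{\Theta,\Fb}$ is by definition the stabilizer of \emph{both} $p_\Theta$ and $p_\Theta^{\rm opp}$, so asserting $g \in \hat L_{\Theta,\Fb}$ is equivalent to the conclusion you want. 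What you actually know is $g \in \hat P_{\Theta,\Fb} = U_{\Theta,\Fb}\cdot\hat L_{\Theta,\Fb}$, and the whole problem is to kill the unipotent part. Your final sentence gestures at this (``$v$ normalizes $U_{\Theta,\Fb}^{>0}$ and by sharpness $v=\Id$''), but you have not established that $v$ normalizes $U_{\Theta,\Fb}^{>0}$; from $g\cdot\Diam_{\std}=\Diam_{\std}$ you only get $v\,(\ell U_{\Theta,\Fb}^{>0}\ell^{-1}) = U_{\Theta,\Fb}^{>0}$ for some $\ell\in\hat L_{\Theta,\Fb}$, and you have not shown $\ell\in\hat L_{\Theta,\Fb}^*$.

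The paper's proof sidesteps all of these boundary issues by introducing an auxiliary point $w$ in the \emph{interior} of $\Diam$. Using \Cref{proposition: opposite diamonds} and \Cref{prop: transverse set to a full diamond}, it shows that the two diamonds $\Diam_y(x,w)$ and $\Diam_z(x,w)$ coincide (both contain $\Diam^{\rm opp}$); after normalizing so that $x=p_\Theta$, $w=p_\Theta^{\rm opp}$, and this common diamond is $\Diam_{\std}$, one writes $y=u_y\cdot w$ and $z=u_z\cdot w$ with $u_y,u_z\in U_{\Theta,\Fb}^{>0}$, observes that $u_y\cdot\Diam_{\std} = \Diam^{\rm opp} = u_z\cdot\Diam_{\std}$, and concludes $u_y^{-1}u_z,\,u_z^{-1}u_y \in U_{\Theta,\Fb}^{\ge 0}$, whence $u_y=u_z$ by \Cref{propo:AcuteSemigroup}. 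The interior point $w$ is the missing idea: it lets you work entirely with genuine (open) diamonds and land cleanly in $U_{\Theta,\Fb}^{\ge 0}\cap U_{\Theta,\Fb}^{\le 0}$.
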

\begin{proof}
    Let $\Diam$ be  the diamond in $\Fc_{\Theta,\Fb}$ that have both $(x,y)$ and $(x,z)$ as extremities, and let $w\in\Diam$. By \Cref{proposition: opposite diamonds}, 
    \[\Diam^\opp_w(x,y) \subset \Diam_y(x,w)\quad\text{and}\quad\Diam^\opp_w(x,z) \subset \Diam_z(x,w).\] 
    At the same time, \Cref{prop: transverse set to a full diamond} implies that $\Diam^\opp_w(x,y)=\Diam^{\rm opp}=\Diam^\opp_w(x,z)$, so $\Diam_z(x,w)$ and $\Diam_y(x,w)$ intersect non-trivially. Since $\Diam_z(x,w)$ and $\Diam_y(x,w)$ are both semi-algebraically connected components of the set of flags in $\Fc_{\Theta,\Fb}$ that are transverse to both $w$ and $x$, they must agree.

    Applying an element $g\in \Aut_1(\mathfrak g_{\Fb})$, we can thus assume that $x=p_\Theta$, $w=p_\Theta^{\rm opp}$, and $\Diam_z(x,w)=\Diam_y(x,w)=\Diam_{\rm std}$. Let $u_y, u_z \in U_{\Theta,\Fb}^{>0}$ such that $y = u_y \cdot w$ and $z = u_z \cdot w$. Notice that $u_y\cdot\Diam_{\std}$ and $\Diam^\opp_w(x,y)$ both lie in $\Diam_{\std}$ and have $(x,y)$ as extremities, so \Cref{proposition: opposite diamonds} implies that $u_y\cdot\Diam_{\std}=\Diam^\opp_w(x,y)$. Similarly, $u_z \cdot \Diam_\std=\Diam^\opp_w(x,z)$. Since $\Diam^\opp_w(x,y) =\Diam^\opp_w(x,z)$, we have that $u_y^{-1} u_z$ and $u_z^{-1} u_y$ both leave $\Diam_{\std}$ invariant.
    
    Observe that $\{u\in U_{\Theta,\Fb}\mid u\cdot\Diam_{\std}=\Diam_{\std}\}\subset U_{\Theta,\Fb}^{\ge 0}$; indeed, if $u\in U_{\Theta,\Fb}$ satisfies $u\cdot\Diam_{\std}=\Diam_{\std}$, then $u\cdot p_\Theta^{\rm opp}\in \overline{\Diam}_{\std}=U_{\Theta,\Fb}^{\ge 0}\cdot p_\Theta^{\rm opp}$, and so $u\in U_{\Theta,\Fb}^{\ge 0}$. 
    Hence, $u_y^{-1} u_z, u_z^{-1} u_y \in U_{\Theta,\Fb}^{\geqslant 0}$.
    Then by \Cref{propo:AcuteSemigroup}, $u_z=u_y$, and thus $y=z$.
\end{proof}

\subsection{Positive maps and sequences}

The cyclic invariance of positivity for $n$-tuples allows to define more generally a \emph{positive map} from a cyclically ordered set $\Lambda$ to $\Fc_\Theta$. 

\begin{definition}
    Let $\Lambda$ be a cyclically ordered set.
    A map from $\Lambda$ to $\Fc_\Theta$ is \emph{positive} if it maps any cyclically order $n$-tuple to a positive $n$-tuple.
\end{definition}

By \Cref{check quadruples}, to verify that a map is positive, it it is enough to verify that it sends cyclically ordered quadruples to positive quadruples.

\begin{example}
    Recall that $\mathbf{P}^1(\Fb)$ admits two natural cyclic orders, which are reverses of each other. Pick one of them. A tuple $(x_1,\ldots, x_n)\in (\mathbf{P}^1(\Fb))^n$ is positive if and only if either $(x_1,\ldots, x_n)$ or $(x_n,\ldots, x_1)$ is cyclically ordered. Hence, a map $f\colon \Lambda \to \mathbf{P}^1(\Fb)$ is positive if and only if it is monotonous and injective.
\end{example}

If $\Lambda$ is a set with a total order $<$, then this total order induces a canonical cyclic order on $\Lambda$ for which a tuple is cyclically ordered if it is increasing up to a cyclic permutation. Namely:
\[\mathcal C = \{(a,b,c)\mid a<b<c \textrm{ or } b<c<a \textrm{ or } c<a<b\}~.\]
Thus it makes sense to discuss the notion of a positive map from a totally ordered set (such as $\Nb$ or $\Zb$ equipped with their standard order) to $\Fc_{\Theta,\Fb}$. With this, we may define the notion of a positive (infinite or bi-infinite) sequence.

\begin{definition}
    A sequence $(a_n)_{n\in \Nb} \in \Fc_\Theta^{\Nb}$ (resp.\ $(a_n)_{n\in \Zb} \in \Fc_\Theta^{\Zb}$) is \emph{positive} if the map $n\mapsto a_n$ is positive.
\end{definition}

A positive sequence $(a_n)_{n\in \Zb}$ in $\Fc_\Theta$ gives rise to a nested sequence of diamonds \[\overline \Diam_{a_0}^\opp(a_{-(n+1)}, a_{n+1}) \subset \Diam_{a_0}^\opp(a_{-n}, a_n) \quad (n\geqslant 1)\] by \Cref{lem: closure of diamond}. Hence, as a direct consequence of \Cref{propo:NestedIntersectionPropertyCantorComplete}, we obtain the following.

\begin{corollary}\label{corollary: diamond nonempty intersection over cantor complete fields}
    Let $\Fb$ be a real closed Cantor complete field and $(a_n)_{n\in \Zb}\in \Fc_\Theta^{\Zb}$ be a positive sequence. Then the intersection of diamonds \[\bigcap_{n\geqslant 1} \Diam_{a_0}^\opp(a_{-n},a_n)\] is non-empty.
\end{corollary}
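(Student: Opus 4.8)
The plan is to reduce the statement to \Cref{propo:NestedIntersectionPropertyCantorComplete} by exhibiting the diamonds $\Diam_{a_0}^\opp(a_{-n},a_n)$ as a nested sequence of open bounded semi-algebraic subsets of $\Fc_{\Theta,\Fb}$ (which sits inside an affine chart and is closed and bounded as a semi-algebraic set, per the footnote after the definition of $\Fc_{\Theta,\Fb}$). First I would record, as noted in the statement, that \Cref{lem: closure of diamond} applied to the positive quadruple $(a_{-(n+1)}, a_{-n}, a_n, a_{n+1})$ (after a dihedral permutation to get it into the shape $(x,y,z,t)$ with $x=a_0$ in the appropriate slot — more carefully, apply the Proposition with the four flags $a_{-(n+1)},a_0,a_{n+1}$ and one more auxiliary flag) gives
\[
\overline{\Diam}_{a_0}^\opp(a_{-(n+1)},a_{n+1}) \subset \Diam_{a_0}^\opp(a_{-n},a_n)
\]
for every $n\ge 1$. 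The key point to verify here is that the relevant quadruples are indeed positive: this follows from \Cref{check quadruples} since $(a_n)_{n\in\Zb}$ is a positive sequence, so every sub-quadruple in increasing index order is positive, and then one invokes \Cref{lem: closure of diamond} in the form $\overline{\Diam}_x^{\rm opp}(y,z)\subset\Diam_z(x,t)$ with the indices chosen so that the diamond $\Diam_{a_0}^\opp(a_{-n},a_n)$ appears on the right; a dihedral permutation via the relevant proposition handles the fact that $a_0$ is the "middle" index rather than an extremity.

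Next I would note that each $\Diam_{a_0}^\opp(a_{-n},a_n)$ is a diamond, hence a semi-algebraically connected component of the set of flags transverse to both $a_{-n}$ and $a_n$, and in particular is an open semi-algebraic subset of $\Fc_{\Theta,\Fb}$ (using \Cref{thm_RConnSemiAlgConn} and the remark in \Cref{s: Diamonds} that diamonds with given extremities are semi-algebraic). Since $\Fc_{\Theta,\Fb}$ itself is identified with a closed and bounded semi-algebraic set, each diamond is bounded. Thus the sequence $U_n \coloneqq \Diam_{a_0}^\opp(a_{-n},a_n)$ is a nested sequence of open bounded semi-algebraic subsets with $\overline{U}_{n+1}\subset U_n$, and \Cref{propo:NestedIntersectionPropertyCantorComplete} immediately yields $\bigcap_{n\ge 1} U_n \neq \emptyset$.

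The main obstacle I anticipate is purely bookkeeping: getting the index positions right so that the containment $\overline{\Diam}_{a_0}^\opp(a_{-(n+1)},a_{n+1})\subset\Diam_{a_0}^\opp(a_{-n},a_n)$ genuinely follows from \Cref{lem: closure of diamond} as stated (which is phrased for a specific slot assignment $(x,y,z,t)$), together with the dihedral-invariance propositions for positive tuples. Everything else — openness, boundedness, semi-algebraicity of diamonds — is already in the excerpt, and the Cantor-completeness input is exactly \Cref{propo:NestedIntersectionPropertyCantorComplete}. One should also make sure that the $U_n$ are nonempty, which is automatic since $a_0 \in \Diam_{a_0}^\opp(a_{-n},a_n)$ would be false ($a_0$ is an extremity-adjacent index, not in the open diamond) — rather, nonemptiness follows because, e.g., $a_1 \in \Diam_{a_0}^\opp(a_{-n},a_n)$ for $n\ge 1$ by positivity of $(a_{-n},a_0,a_1,a_n)$ and the definition of positive tuples via opposite diamonds.
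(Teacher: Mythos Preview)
Your proposal is correct and follows essentially the same route as the paper: establish the nesting $\overline{\Diam}_{a_0}^\opp(a_{-(n+1)},a_{n+1})\subset\Diam_{a_0}^\opp(a_{-n},a_n)$ via \Cref{lem: closure of diamond}, then invoke \Cref{propo:NestedIntersectionPropertyCantorComplete}. The paper states this in one sentence without spelling out the bookkeeping; your anticipated obstacle is real but, as you say, is resolved by a dihedral permutation (concretely, apply \Cref{lem: closure of diamond} to the positive quadruple $(a_n,a_{n+1},a_{-(n+1)},a_{-n})$ and then identify $\Diam_{a_n}^\opp(a_{n+1},a_{-(n+1)})=\Diam_{a_0}^\opp(a_{-(n+1)},a_{n+1})$ and $\Diam_{a_{-(n+1)}}(a_n,a_{-n})=\Diam_{a_0}^\opp(a_{-n},a_n)$).

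One small slip in your final aside: $a_1$ lies in $\Diam_{a_0}(a_{-n},a_n)$, not in its opposite, since $a_0$ and $a_1$ are on the same side of $\{a_{-n},a_n\}$. Nonemptiness of each $U_n$ needs no separate argument anyway---diamonds are translates of the standard diamond and hence nonempty by construction (or, if you want a witness, take $a_{n+1}\in\Diam_{a_0}^\opp(a_{-n},a_n)$).
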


Note however, that this intersection may or may not be a (``degenerate'') diamond or semi-algebraic itself; compare already the considerations in \Cref{subsection:CompletenessOrderedFields} in the one-dimensional case, where this intersection might not be an interval.
However, if $a_{+\infty}\coloneqq\lim_{n\to\infty}a_n$ and $a_{-\infty}\coloneqq \lim_{n\to\infty}a_{-n}$ both exist, then the intersection
\[\bigcap_{n\geqslant 1} \Diam_{a_0}^\opp (a_{-n}, a_n)\]
 is, in some sense, a ``degenerate diamond'' between $a_{+\infty}$ and $a_{-\infty}$. We will need the following more precise statements:

\begin{proposition}
\label{onesided squeeze}
Let $(a_n)_{n\geqslant 0}$ be a positive sequence in $\Fc_{\Theta,\Fb}$ that converges to a flag $a_\infty \in \Fc_{\Theta,\Fb}$. Then $(a_\infty, a_1, a_2,\ldots , a_n, \ldots )$ is positive, and for any $x\in \Diam_{a_2}^\opp(a_1, a_\infty)$, we have \[\bigcup_{n\geqslant 2} \Diam_x^\opp(a_1,a_n) = \Diam_x^\opp(a_1,a_\infty)~, \quad\text{and}\quad\bigcap_{n\geqslant 2} \overline{\Diam}_{a_1}^\opp(x,a_n) = \overline{\Diam}_{a_1}^\opp(x,a_\infty)~.\]
In particular, if $(b_n)_{n\in\Nb}$ is a sequence for which there is an increasing sequence of integers $(k_n)_{n\in\Nb}$ such that $b_n\in \Diam_{a_1}^{\rm opp}(a_{k_n},a_\infty)$ for all $n\in\Nb$, then $b_n\to a_\infty$ as $n\to\infty$.
\end{proposition}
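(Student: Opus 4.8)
The statement has three parts: (a) positivity of the infinite tuple $(a_\infty, a_1, a_2, \ldots)$; (b) two identities describing nested unions/intersections of diamonds converging to diamonds with one extremity replaced by $a_\infty$; and (c) a ``squeeze'' consequence for sequences $b_n$. I would prove them in this order, since (c) follows almost immediately from the intersection identity in (b), and (b) uses (a) together with \Cref{lem: closure of diamond}, \Cref{proposition: opposite diamonds}, and the continuity/openness properties of diamonds and transversality established in \Cref{s: Positivity on flag varieties}.

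\textbf{Step 1: positivity of $(a_\infty, a_1, \ldots, a_n, \ldots)$.} By \Cref{check quadruples} it suffices to check that every quadruple $(a_\infty, a_i, a_j, a_k)$ with $1 \le i < j < k$ is positive; and since $(a_1, \ldots, a_n)$ is positive for every $n$ by hypothesis, it is enough to handle quadruples of the form $(a_\infty, a_i, a_j, a_k)$. Fix such $i<j<k$. The set of positive quadruples is open (\Cref{propo: positive n-tuples is semi-algebraic}), so it suffices to show $(a_m, a_i, a_j, a_k)$ is positive for $m$ less than $i$ (hence for $m$ large — here I would instead look at $(a_m, a_i, a_j, a_k)$ with $m < i$, using that $a_m \to a_\infty$; note $m<i$ so this is a genuine positive sub-quadruple of $(a_m, a_i, a_j, a_k, \ldots)$) and pass to the limit $m \to \infty$ using openness. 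The only subtlety is that the limit quadruple could \emph{a priori} be non-positive but on the boundary of the positive locus — but positivity being an \emph{open} condition means the limit lies in the \emph{closure}, so I must rule out degeneracy. This is handled by noting that $a_\infty$, being a limit of the positive sequence, is transverse to $a_i$ and $a_k$ (the transverse locus is open and contains all $(a_m, a_i)$, $(a_m, a_k)$ for $m<i$, and transversality of the pair is preserved under limits provided we stay in the open orbit — here one uses that $a_\infty \ne a_i$, $a_\infty \ne a_k$, which follows because distinct positive flags are transverse hence distinct, and a convergent sequence of pairwise-transverse flags has a limit transverse to all earlier terms). Given transversality of the extremities, the limit quadruple is automatically positive (not merely in the closure of the positive locus), because the positive locus among quadruples with fixed transverse extremities $(a_i, a_k)$ is exactly one of the finitely many diamonds with those extremities, which is both open and closed in the transverse locus.

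\textbf{Step 2: the two diamond identities.} Fix $x \in \Diam_{a_2}^\opp(a_1, a_\infty)$. For the union identity: by \Cref{lem: closure of diamond} applied to the positive quadruple $(a_1, a_2, a_n, a_\infty)$ (for $n \ge 2$; positivity from Step 1 and $(x_1,\dots,x_n)$ positive) we get $\overline{\Diam}_{a_1}^\opp(a_2, a_n) \subset \Diam_{a_n}(a_1, a_\infty)$, wait — I should instead directly use the nesting $\Diam_x^\opp(a_1, a_n) \subset \Diam_x^\opp(a_1, a_{n+1})$, which follows from \Cref{intersection of diamonds} or \Cref{lem: closure of diamond} applied to $(a_1, a_n, a_{n+1}, x)$-type quadruples, so the left side is an increasing union; then show it equals $\Diam_x^\opp(a_1, a_\infty)$ by a two-sided inclusion, the inclusion $\subset$ coming from $a_n \in \Diam_{a_1}(x, a_\infty)$-type containments and $\supset$ coming from openness of each $\Diam_x^\opp(a_1, a_n)$ together with $a_n \to a_\infty$ (any flag in $\Diam_x^\opp(a_1, a_\infty)$ lies, by openness and continuity of the diamond's dependence on its extremities, in $\Diam_x^\opp(a_1, a_n)$ for $n$ large). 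The intersection identity is the ``dual'' statement and I would prove it by taking opposites: applying $\Diam \mapsto \Diam^\opp$ (\Cref{prop: transverse set to a full diamond}) and closures to the union identity, using that opposite of an increasing union of diamonds is a decreasing intersection of (closures of) the opposite diamonds, and that $\overline{\Diam}_{a_1}^\opp(x, a_n)$ is exactly the opposite configuration to $\Diam_x^\opp(a_1, a_n)$ by \Cref{proposition: opposite diamonds}. Alternatively, and perhaps cleaner, I would prove the intersection identity directly: $\supset$ is clear since $\overline{\Diam}_{a_1}^\opp(x, a_\infty) \subset \overline{\Diam}_{a_1}^\opp(x, a_n)$ for each $n$ by \Cref{lem: closure of diamond} (positive quadruple $(x, a_1, a_\infty, a_n)$ in the appropriate order — note $a_\infty$ sits ``before'' $a_n$); for $\subset$, take $y$ in every $\overline{\Diam}_{a_1}^\opp(x, a_n)$; then $y$ is transverse to every $a_n$, hence to $a_\infty$ (limit, open orbit), and a small argument with \Cref{lem: intersection closures diamonds} or the semigroup sharpness \Cref{propo:AcuteSemigroup} pins $y$ down to $\overline{\Diam}_{a_1}^\opp(x, a_\infty)$.

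\textbf{Step 3: the squeeze.} Given $b_n \in \Diam_{a_1}^\opp(a_{k_n}, a_\infty)$ with $k_n \to \infty$, I want $b_n \to a_\infty$. Suppose not; then (passing to a subsequence, using that $\Fc_{\Theta,\Fb}$ is a closed bounded semi-algebraic set, hence — over $\Rb$, or for the purpose of the statement which is about convergence — we may extract a convergent subsequence with limit $b \ne a_\infty$; over a general real closed field one argues instead directly) there is a neighborhood $V$ of $a_\infty$ with $b_n \notin V$ infinitely often. By Step 2's intersection identity with $x = a_{k_1}$ (note $b_n \in \Diam_{a_1}^\opp(a_{k_n}, a_\infty) \subset \overline{\Diam}_{a_1}^\opp(a_{k_1}, a_\infty)$ for $k_n \ge k_1$, using nesting), and since $\Diam_{a_1}^\opp(a_{k_n}, a_\infty)$ shrinks — more precisely, $\bigcap_n \Diam_{a_1}^\opp(a_{k_n}, a_\infty) = \{a_\infty\}$ by combining the intersection identity with \Cref{lem: intersection closures diamonds} (the intersection $\bigcap_n \overline{\Diam}_{a_1}^\opp(a_{k_n}, a_\infty)$ meets $\overline{\Diam}_{a_\infty}^\opp(\ldots)$ only at $a_\infty$) — any accumulation point of $(b_n)$ lies in this intersection, hence equals $a_\infty$, a contradiction.

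\textbf{Main obstacle.} I expect Step 1 (and the transversality-at-the-limit issue it raises, which recurs in Steps 2 and 3) to be the main difficulty: the subtle point everywhere is that positivity and transversality are \emph{open} conditions, so passing to limits only gives membership in the \emph{closure} of the good locus, and one must separately argue non-degeneracy — that the limit flag $a_\infty$ (or $b$) is genuinely transverse to the relevant flags, not on the transversality wall. The key leverage is the rigidity provided by \Cref{propo:AcuteSemigroup} / \Cref{lem: intersection closures diamonds} (sharpness of the semigroup $\overline{U}_{\Theta,\Fb}^{\ge 0}$) which converts ``in the closure of a diamond and in the closure of its opposite'' into ``equal to an extremity''; this is what prevents the limit from being degenerate and pins down the intersections.
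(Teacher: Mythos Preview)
Your overall strategy matches the paper's, and Steps 2 and 3 are essentially correct: the ``dualize the union via $\Diam^\pitchfork = \overline{\Diam}^{\rm opp}$'' approach for the intersection identity is exactly what the paper does (using \Cref{prop: transverse set to a full diamond}), and the squeeze argument via \Cref{lem: intersection closures diamonds} is right. The genuine gap is in Step 1.

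You correctly identify that the obstacle is showing $a_\infty$ is transverse to the earlier $a_i$, but your proposed resolution --- ``a convergent sequence of pairwise-transverse flags has a limit transverse to all earlier terms'' --- is false in general (transversality is open, not closed), and you give no argument for why it should hold for \emph{positive} sequences specifically. Invoking sharpness (\Cref{propo:AcuteSemigroup} / \Cref{lem: intersection closures diamonds}) does not obviously help here: sharpness tells you that $\overline{\Diam} \cap \overline{\Diam}^{\rm opp}$ is a single point, but it does not by itself place $a_\infty$ in the \emph{interior} of any diamond. Your fallback (``the diamond is closed in the transverse locus, so once we know transversality we are done'') is correct but circular: it presupposes exactly the transversality you need to establish.

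The paper's key trick, which you are missing, is a \emph{buffer index}. For $m \ge n+2$ the positivity of $(a_0, a_1, a_{n+1}, a_m)$ gives $a_m \in \Diam_{a_1}^{\rm opp}(a_0, a_{n+1})$, hence the limit $a_\infty$ lies in the \emph{closed} set $\overline{\Diam}_{a_1}^{\rm opp}(a_0, a_{n+1})$. Now \Cref{lem: closure of diamond}, applied to the positive quadruple $(a_n, a_{n+1}, a_0, a_1)$, embeds this closed diamond into the \emph{open} diamond $\Diam_{a_{n+1}}(a_1, a_n) = \Diam_{a_2}^{\rm opp}(a_1, a_n)$. So $a_\infty$ sits in an open diamond with extremities $(a_1, a_n)$, which gives transversality and the positivity of $(a_1, a_2, a_n, a_\infty)$ for free; \Cref{lem: AddingElemToPosTuple} then upgrades this to positivity of the full tuple $(a_\infty, a_1, \ldots, a_n)$. (A minor slip: you want $m > k$ large, not ``$m$ less than $i$'', since $a_m \to a_\infty$ as $m \to \infty$.)
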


\begin{proof}
First, we show that $(a_\infty,a_1,a_2,\dots,a_n,\dots)$ is positive. Observe that for all positive integers $n\ge 3$, $(a_n,a_{n+1},a_0,a_1)$ is positive, so \Cref{lem: closure of diamond} implies that  
\[\overline{\Diam}_{a_1}^{\rm opp}(a_0,a_{n+1})\subset\Diam_{a_{n+1}}(a_1,a_n)=\Diam_{a_2}^{\rm opp}(a_1,a_n).\]
For all integers $m\ge n+2$, $(a_0,a_1,a_{n+1},a_m)$ is positive, so $a_m\in \overline{\Diam}_{a_1}^{\rm opp}(a_0,a_{n+1})$, which implies that $a_\infty\in \overline{\Diam}_{a_1}^{\rm opp}(a_0,a_{n+1})$. It follows from the above inclusion that $(a_1,a_2,a_n,a_\infty)$ is positive for all positive integers $n\ge 3$. Since $(a_1,\dots,a_n)$ is positive, \Cref{lem: AddingElemToPosTuple} implies that $(a_\infty,a_1,\dots,a_n)$ is positive for all integers $n\ge 3$. The positivity of  $(a_\infty,a_1,a_2,\dots,a_n,\dots)$ follows.

Next, we prove the first equality of the lemma. By \Cref{lem: AddingElemToPosTuple}, the sequence $(a_\infty,x,a_1,a_2,\dots,a_n,\dots)$ is positive. In particular, for all integers $n\ge 2$, 
\[\Diam_x^{\rm opp}(a_1,a_n)=\Diam_{a_\infty}^{\rm opp}(a_1,a_n)\subset\Diam_{a_n}(a_1,a_\infty)=\Diam_x^{\rm opp}(a_1,a_\infty),\]
where the first equality holds because $(a_1,a_n,a_\infty,x)$ is positive, the inclusion holds by \Cref{proposition: opposite diamonds} and the positivity of the triple $(a_1,a_n,a_\infty)$, and the the second equality holds because $(a_1,a_n,a_\infty,x)$ is positive. Taking union over all integers $n\ge 2$ gives
\[\bigcup_{n\geqslant 2} \Diam_x^\opp(a_1,a_n)\subset \Diam_x^\opp(a_1,a_\infty).\]
Thus, to prove the first equality, we need to show the reverse inclusion, i.e.\ that if $b\in\Fc_{\Theta,\Fb}$ is a flag such that $(x,a_1,b,a_\infty)$ is positive, then $(x,a_1,b,a_n)$ is positive for some integer $n\ge 2$. Since $a_n\to a_\infty$ as $n\to\infty$, this is immediate from \Cref {propo: positive n-tuples is semi-algebraic}. 

To prove the second equality, note that we can rewrite the first equality as
\[\bigcup_{n\geqslant 3} \Diam_{a_2}(a_1,a_n) = \Diam_{a_2}(a_1,a_\infty)~.\]
Applying it to the sequence $(x,a_1, a_2, \ldots a_n,\ldots)$, we get
\begin{equation} \label{eq: union diamonds}
    \bigcup_{n\geqslant 2} \Diam_{a_1}(x,a_n) = \Diam_{a_1}(x,a_\infty)~.
\end{equation}

If $A$ is a subset of $\Fc_{\Theta,\Fb}$, define
\[A^\pitchfork \coloneqq \{z\in \Fc_{\Theta,\Fb}\mid z\pitchfork y \textrm{ for all } y\in A\}~.\]
The equality \eqref{eq: union diamonds} gives
\[\Diam_{a_1}(x,a_\infty)^\pitchfork = \left(\bigcup_{n\geqslant 2} \Diam_{a_1}(x,a_n)\right)^\pitchfork = \bigcap_{n\geqslant 2} \Diam_{a_1}(x,a_n)^\pitchfork~.\]
Since $\Diam^\pitchfork = \overline{\Diam}^\opp$ for every diamond $\Diam$ by \Cref{prop: transverse set to a full diamond}, we conclude that
\[\overline{\Diam}^\opp_{a_1}(x,a_\infty) = \bigcap_{n\geqslant 2} \overline{\Diam}^\opp_{a_1}(x,a_n)~.\]

Finally, observe that the assumption on the sequence $(b_n)_{n\in\Nb}$ implies that for all integers $n \ge 2$, $\Diam_{a_1}^{\rm opp}(a_n,a_\infty)$ contains a tail of $(b_n)_{n\in\Nb}$. At the same time, 
\[\overline{\Diam}_{a_1}^{\rm opp}(a_n,a_\infty)\subset \overline{\Diam}_{a_1}^{\rm opp}(a_n,x)\cap\overline{\Diam}_x^{\rm opp}(a_1,a_\infty),\] 
so every sub-sequential limit of $(b_n)_{n\in\Nb}$ lies in 
\[\overline{\Diam}_{a_1}^\opp(a_\infty,x)\cap \overline{\Diam}_x^{\rm opp}(a_1,a_\infty)=\{a_\infty\},\]
where the equality is by \Cref{lem: intersection closures diamonds}.
\end{proof}

\begin{corollary}\label{lem: squeeze lemma for diamond not real}
    Let $(a_n)_{n\in \Zb}$ be a positive sequence in $\Fc_{\Theta,\Fb}^{\Zb}$. Suppose that there is some $a_{+\infty},a_{-\infty}\in\Fc_{\Theta,\Fb}$ such that $a_n\to a_{+\infty}$ and $a_{-n}\to a_{-\infty}$ as $n\to+\infty$. 
    \begin{enumerate}
    \item If $a_{+\infty}=a_{-\infty}\eqqcolon a_\infty$, then
    \[\bigcap_{n\geqslant 1} \Diam_{a_0}^\opp (a_{-n}, a_n) = \{a_\infty\}~.\]
    \item If $(a_{-\infty},a_{-1},a_1,a_{+\infty})$ is positive, then 
    \[\bigcap_{n\geqslant 1} \Diam_{a_0}^\opp (a_{-n}, a_n) = \overline{\Diam}_{a_0}^\opp(a_{-\infty},a_{+\infty})~.\]
    \end{enumerate}
\end{corollary}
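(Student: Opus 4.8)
The plan is to reduce both statements to the ``one-sided'' results already established in \Cref{onesided squeeze}, applied separately to the positive (forward) and negative (backward) tails of the sequence, and then to intersect. Throughout, write $D_n \coloneqq \Diam_{a_0}^\opp(a_{-n},a_n)$, so that $(D_n)_{n \geq 1}$ is a nested sequence of diamonds by \Cref{lem: closure of diamond} (applied to the positive quadruples $(a_{-(n+1)}, a_{-n}, a_n, a_{n+1})$, after reindexing and using dihedral invariance). The key observation is that a flag $b$ lies in $\bigcap_{n \geq 1} D_n$ if and only if $(a_{-n}, b, a_n, a_0)$ — equivalently $(a_0, a_{-n}, b, a_n)$ up to dihedral permutation — is positive for every $n \geq 1$; that is, $b$ is a flag that can be inserted into the positive bi-infinite sequence $(\dots, a_{-2}, a_{-1}, b, a_1, a_2, \dots)$ between $a_{-1}$ and $a_1$ (using \Cref{lem: AddingElemToPosTuple} to check that insertion only needs to be verified against finitely many terms at a time).

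First I would handle case (1), where $a_{+\infty} = a_{-\infty} = a_\infty$. The inclusion $a_\infty \in \bigcap_n D_n$ follows because, by \Cref{onesided squeeze} applied to $(a_0, a_1, a_2, \dots)$, the limit $a_\infty$ lies in $\overline{\Diam}_{a_1}^\opp(a_0, a_k)$ for all large $k$, and symmetrically for the backward sequence; combining, $a_\infty \in D_n$ for every $n$. For the reverse inclusion, let $b \in \bigcap_n D_n$. Then for each $n$, $b \in \Diam_{a_0}^\opp(a_{-n}, a_n) \subset \overline{\Diam}_{a_{-1}}^\opp(a_{-n}, a_1) \cap \overline{\Diam}_{a_1}^\opp(a_{-1}, a_n)$ (using \Cref{lem: closure of diamond} on the positive quadruples $(a_{-n}, a_0, a_n, a_{\pm 1})$). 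Applying the second displayed equality of \Cref{onesided squeeze} to the forward sequence, $\bigcap_{n \geq 2} \overline{\Diam}_{a_1}^\opp(a_{-1}, a_n) = \overline{\Diam}_{a_1}^\opp(a_{-1}, a_\infty)$, and to the backward sequence, $\bigcap_{n \geq 2} \overline{\Diam}_{a_{-1}}^\opp(a_{-n}, a_1) = \overline{\Diam}_{a_{-1}}^\opp(a_\infty, a_1)$. Hence $b \in \overline{\Diam}_{a_{-1}}^\opp(a_\infty, a_1) \cap \overline{\Diam}_{a_1}^\opp(a_{-1}, a_\infty)$, which equals $\{a_\infty\}$ by \Cref{lem: intersection closures diamonds} (applied to the positive triple $(a_{-1}, a_\infty, a_1)$, whose positivity comes from the positivity of $(\dots, a_{-1}, a_\infty, a_1, \dots)$ established in \Cref{onesided squeeze}). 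This gives $b = a_\infty$.

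For case (2), assuming $(a_{-\infty}, a_{-1}, a_1, a_{+\infty})$ positive, I would first check the inclusion $\overline{\Diam}_{a_0}(a_{-\infty}, a_{+\infty}) \subset \bigcap_n D_n$: by \Cref{onesided squeeze} the bi-infinite sequence $(\dots, a_{-1}, a_{-\infty}, a_{+\infty}, a_1, \dots)$ — or rather its two one-sided halves glued via the hypothesis and \Cref{corollarly: gluing} — is positive, and any flag in $\overline{\Diam}_{a_0}(a_{-\infty}, a_{+\infty})$ is a limit of flags inserted strictly between $a_{-\infty}$ and $a_{+\infty}$, each of which lies in all $D_n$ by monotonicity of the nested diamonds, so the closed diamond is contained in the (closed) intersection. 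For the reverse inclusion, take $b \in \bigcap_n D_n$; arguing as in case (1) with the ``union'' equalities of \Cref{onesided squeeze} instead of the ``intersection'' ones, one gets $b \in \overline{\Diam}_{a_1}^\opp(a_{-1}, a_{+\infty}) \cap \overline{\Diam}_{a_{-1}}^\opp(a_{-\infty}, a_1)$, and then \Cref{intersection of diamonds} (applied to the positive quadruple $(a_{-1}, a_{-\infty}, a_{+\infty}, a_1)$, which holds by the hypothesis plus dihedral invariance) identifies this intersection with $\overline{\Diam}_{a_0'}^\opp(a_{-\infty}, a_{+\infty})$ for the appropriate extremity, and \Cref{proposition: opposite diamonds} together with \Cref{lem: ExtremitiesDiamonds} lets one rewrite that as $\overline{\Diam}_{a_0}(a_{-\infty}, a_{+\infty})$.

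The main obstacle I anticipate is bookkeeping: keeping track of which diamonds are opposite to which, and in particular making sure every time I invoke \Cref{proposition: opposite diamonds}, \Cref{lem: closure of diamond}, \Cref{lem: intersection closures diamonds}, or \Cref{intersection of diamonds} the relevant quadruple or triple is genuinely positive with the correct cyclic ordering of its entries. The cleanest way to manage this is to fix, once and for all, a normalizing element of $\Aut_1(\mathfrak g_{\Fb})$ sending $(a_{-1}, a_1)$ (or $(a_{-\infty}, a_{+\infty})$ in case (2)) to $(p_\Theta^\opp, p_\Theta)$ with $\Diam_{a_0}(\cdot,\cdot)$ going to $\Diam_\std$, and then carry out the entire argument inside $U_{\Theta,\Fb}^{>0}$, where the nested diamonds become a nested sequence of sub-semigroup translates $u_{-n}\cdots \cdot \Diam_\std^\opp$ and the sharpness statement \Cref{propo:AcuteSemigroup} does all the heavy lifting for the ``collapse to a point'' in case (1). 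Once the reduction to $\PSL_2$-style combinatorics in $U_{\Theta,\Fb}^{>0}$ is set up, the two cases are genuinely parallel and differ only in whether the forward and backward limit flags coincide or are transverse.
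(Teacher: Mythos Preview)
Your approach is essentially the paper's: apply \Cref{onesided squeeze} to each tail to trap $\bigcap_n D_n$ inside the intersection of two closed diamonds sharing one extremity, then invoke \Cref{lem: intersection closures diamonds} for case (1) and \Cref{intersection of diamonds} for case (2). The paper in fact omits the reverse inclusions entirely (so your extra care there is well-placed), and the specific quadruples you cite --- $(a_{-n}, a_0, a_n, a_{\pm 1})$ --- are mis-ordered (the bi-infinite sequence gives $(a_{-n}, a_{-1}, a_0, a_1, a_n)$), but the inclusions $D_n \subset \Diam_{a_0}^{\rm opp}(a_{-n}, a_{\pm 1})$ you need follow directly from \Cref{proposition: opposite diamonds} applied to the positive triples $(a_{-n}, a_{\pm 1}, a_n)$; the coordinate normalization into $U_{\Theta,\Fb}^{>0}$ that you propose at the end is unnecessary.
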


\begin{proof}
    Since $a_{-n} \to a_{-\infty}$ as $n\to +\infty$, \Cref{onesided squeeze} gives
    \[\bigcap_{n\geqslant 1} \Diam_{a_0}^\opp (a_{-n}, a_n)\subset \bigcap_{n\geqslant 1} \Diam_{a_0}^\opp (a_{-n}, a_1) \subset \overline \Diam_{a_0}^\opp(a_{-\infty}, a_1).\]
    Similarly, \[\bigcap_{n\geqslant 1} \Diam_{a_0}^\opp (a_{-n}, a_n)\subset \bigcap_{n\geqslant 1} \Diam_{a_0}^\opp (a_{-1}, a_n) \subset \overline \Diam_{a_0}^\opp(a_{-1}, a_{+\infty})~,\]
    so we have 
\[\bigcap_{n\geqslant 1} \Diam_{a_0}^\opp (a_{-n}, a_n) \subset \overline \Diam_{a_0}^\opp(a_{-\infty}, a_1)\cap \overline{\Diam}_{a_0}^\opp(a_{-1}, a_{+\infty}).\]
If $a_\infty=a_{+\infty}=a_{-\infty}$, then by \Cref{lem: intersection closures diamonds}, $\overline \Diam_{a_0}^\opp(a_\infty, a_1)\cap \overline{\Diam}_{a_0}^\opp(a_{-1}, a_\infty) = \{a_\infty\}$, so (1) holds.
If $(a_{-\infty},a_{-1},a_1,a_{+\infty})$ is positive, then \Cref{intersection of diamonds} implies that $\overline \Diam_{a_0}^\opp(a_{-\infty}, a_1)\cap \overline{\Diam}_{a_0}^\opp(a_{-1}, a_{+\infty}) = \overline{\Diam}_{a_0}^\opp(a_{-\infty},a_{+\infty})$, so (2) holds.
\end{proof}

Over $\Rb$, the least upper bound property implies that positive sequences have limits. Combining this with \Cref{onesided squeeze} yields the following consequence.

\begin{corollary}[{\cite[Proposition 3.14]{GLW}}]
\label{proposition: least upper bounded property for positive sequence}
    If $(a_n)_{n\geqslant 0}$ is a positive sequence in $\Fc_{\Theta,\Rb}$, then it converges to a point $a_\infty \in \Fc_{\Theta,\Rb}$. In particular, the sequence $(a_\infty, a_1, a_2,\ldots , a_n, \ldots )$ is positive, and for any $x\in \Diam_{a_2}^\opp(a_1, a_\infty)$, we have \[\bigcup_{n\geqslant 2} \Diam_x^\opp(a_1,a_n) = \Diam_x^\opp(a_1,a_\infty)~,\quad\text{and} \quad \bigcap_{n\geqslant 2} \overline{\Diam}_{a_1}^\opp(a_n, x) = \overline{\Diam}_{a_1}^\opp(a_\infty, x)~.\]
\end{corollary}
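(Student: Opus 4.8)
The plan is to establish the convergence statement first; once $(a_n)_{n\ge 0}$ is known to converge to some $a_\infty\in\Fc_{\Theta,\Rb}$, every remaining assertion (positivity of $(a_\infty,a_1,a_2,\dots)$ and the two identities for unions and intersections of diamonds) is just \Cref{onesided squeeze} applied to the now-convergent sequence $(a_n)_{n\ge 0}$, so nothing new is needed there. The only substantive point is therefore: a positive sequence in $\Fc_{\Theta,\Rb}$ converges.

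Since $\Fc_{\Theta,\Rb}$ is compact, I would first extract indices $n_0<n_1<n_2<\cdots$ with $a_{n_k}\to a_\infty$. The subsequence $(a_{n_k})_{k\ge 0}$ is again positive and converges to $a_\infty$, so \Cref{onesided squeeze} applies to it and gives, in particular, that $(a_\infty,a_{n_0},a_{n_1},a_{n_2},\dots)$ is a positive sequence, together with the union identity
\[\bigcup_{l\ge j+1}\Diam_{x}^\opp(a_{n_j},a_{n_l})=\Diam_{x}^\opp(a_{n_j},a_\infty)\]
for every $j\ge 1$ and every $x\in\Diam_{a_{n_{j+1}}}^\opp(a_{n_j},a_\infty)$ (apply the union identity of \Cref{onesided squeeze} to the reindexed positive sequence $c_i\coloneqq a_{n_{j+i-1}}$, which converges to $a_\infty$). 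A short dihedral bookkeeping shows that $x=a_{n_0}$ is an admissible choice: the quadruple $(a_\infty,a_{n_0},a_{n_j},a_{n_{j+1}})$ is positive (it is an ordered subtuple of the positive sequence above), and in a positive quadruple the two entries lying on opposite sides of a chosen pair of extremities lie in opposite diamonds, so $a_{n_0}$ and $a_{n_{j+1}}$ lie in opposite diamonds with extremities $(a_{n_j},a_\infty)$, i.e.\ $a_{n_0}\in\Diam_{a_{n_{j+1}}}^\opp(a_{n_j},a_\infty)$.

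Next I would prove $a_m\to a_\infty$ as $m\to\infty$. Given $m$ large, let $k=k(m)$ be maximal with $n_k\le m$ and set $j=k-1$, so that $j\to\infty$ with $m$ and $n_0<n_j<m$. For every $l\ge k+1$ one has $n_l>m$, hence $(a_{n_0},a_{n_j},a_m,a_{n_l})$ is an ordered subtuple of the positive sequence $(a_i)_i$ and is positive; reading off that the first and third entries lie on opposite sides of the extremities $(a_{n_j},a_{n_l})$ gives $a_m\in\Diam_{a_{n_0}}^\opp(a_{n_j},a_{n_l})$. As this holds for all $l\ge k+1\ge j+1$, the union identity above (with $x=a_{n_0}$) yields $a_m\in\Diam_{a_{n_0}}^\opp(a_{n_j},a_\infty)$. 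Applying the last assertion of \Cref{onesided squeeze} to the positive sequence $(a_\infty,a_{n_0},a_{n_1},\dots)$, which converges to $a_\infty$, with the test sequence $(a_m)_m$ and the (after a routine extraction) strictly increasing integer sequence $k(m)$, I conclude $a_m\to a_\infty$. This gives convergence, and the rest of the statement follows from \Cref{onesided squeeze} applied to $(a_n)_{n\ge 0}$.

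The main obstacle is the combinatorial bookkeeping with the dihedral symmetry of positive tuples: one must verify that every auxiliary quadruple written above is genuinely positive after the relevant dihedral permutation, that each named diamond carries exactly the extremities claimed, and in particular that the membership $a_{n_0}\in\Diam_{a_{n_{j+1}}}^\opp(a_{n_j},a_\infty)$ required to invoke the union identity of \Cref{onesided squeeze} really holds. The analytic ingredients—compactness of $\Fc_{\Theta,\Rb}$, and passing to the limit $a_{n_l}\to a_\infty$ inside the open positivity conditions (cf.\ \Cref{propo: positive n-tuples is semi-algebraic})—are comparatively routine.
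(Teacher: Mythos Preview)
Your proof is correct and follows the same two-step plan as the paper: first establish that the positive sequence converges, then read off the remaining assertions directly from \Cref{onesided squeeze}. The paper does not actually supply its own convergence argument; the text preceding the corollary simply says that ``over $\Rb$, the least upper bound property implies that positive sequences have limits'' and defers to the cited reference. You instead give a self-contained proof of convergence: extract a convergent subsequence by compactness of $\Fc_{\Theta,\Rb}$, then bootstrap to the full sequence by trapping each $a_m$ between subsequence terms and invoking the last clause of \Cref{onesided squeeze}. This is a genuinely different (and arguably cleaner) route to convergence, since it uses only compactness and the already-proven \Cref{onesided squeeze}, rather than an external monotone-convergence argument.

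The only point that needs a little more care than you indicate is the anchor index when you invoke the last clause of \Cref{onesided squeeze}: as stated, that clause uses the diamond $\Diam_{a_1}^{\rm opp}(\cdot,\cdot)$, whereas your containment is in $\Diam_{a_{n_0}}^{\rm opp}(a_{n_j},a_\infty)$. This is harmless, since for $j\ge 2$ the positive tuple $(a_\infty,a_{n_0},a_{n_1},a_{n_j})$ places $a_{n_0}$ and $a_{n_1}$ in the same diamond with extremities $(a_{n_j},a_\infty)$, so $\Diam_{a_{n_0}}^{\rm opp}(a_{n_j},a_\infty)=\Diam_{a_{n_1}}^{\rm opp}(a_{n_j},a_\infty)$ and you may apply the clause directly to the subsequence $(a_{n_k})_{k\ge 0}$. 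With that adjustment the bookkeeping closes.
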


\subsection{Semi-positive tuples and maps}

When proving closedness of $\Theta$-positive representations, we will need to consider limits of positive tuples, which we will call \emph{semi-positive tuples}.
We collect here the properties that will be needed in \Cref{s: Closedness} to prove that the set of $\Theta$-positive representations is closed.

\begin{definition}
    A tuple $(x_1,\ldots, x_n) \in \Fc_{\Theta,\Fb}$ is \emph{semi-positive} if it belongs to the closure of the set of positive tuples.
    
    A map $\xi$ from a cyclically ordered set to $\Fc_{\Theta,\Fb}$ is semi-positive if the image of every cyclically ordered tuple is semi-positive.
\end{definition}

Note that the set of semi-positive $n$-tuples $(\Fc_{\Theta,\Fb}^n)_{\geqslant 0}$ is semi-algebraic, since the closure of a semi-algebraic set is semi-algebraic, see \Cref{section:SemiAlgSets}.

We are interested in conditions that guarantee that a semi-positive tuple is, in fact positive. The following proposition is a rephrasing of the fact that positive tuples form a union of semi-algebraically connected components of pairwise transverse tuples. This was proven by Guichard and Wienhard \cite{guichard2025generalizing} (after applying the transfer principle).

\begin{proposition}[{\cite[Lemma 13.27]{guichard2025generalizing}}]\label{prop: transverse + semi positive implies positive}
    Let $(x_1,\ldots, x_n) \in \Fc_{\Theta,\Fb}$ be a semi-positive tuple. If $x_i$ is transverse to $x_j$ for all $i \neq j$, then $(x_1,\ldots, x_n)$ is positive.
\end{proposition}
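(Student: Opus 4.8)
\textbf{Proof plan for Proposition~\ref{prop: transverse + semi positive implies positive}.}

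The plan is to reduce the statement to the cited result of Guichard--Wienhard (\cite[Lemma 13.27]{guichard2025generalizing}), which after the transfer principle says that the set of positive $n$-tuples is a union of semi-algebraically connected components of the set of pairwise-transverse $n$-tuples in $\Fc_{\Theta,\Fb}^n$. First I would set up the relevant semi-algebraic sets: let
\[
T_n \coloneqq \{(x_1,\ldots,x_n)\in \Fc_{\Theta,\Fb}^n \mid x_i \pitchfork x_j \text{ for all } i\neq j\}
\]
be the set of pairwise-transverse tuples. Since transversality is an open semi-algebraic condition (it is the complement of a Zariski-closed condition inside a semi-algebraic set), $T_n$ is open and semi-algebraic. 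The set $(\Fc_{\Theta,\Fb}^n)_{>0}$ of positive tuples is open and semi-algebraic by \Cref{propo: positive n-tuples is semi-algebraic}, and it is contained in $T_n$ (positivity of a tuple forces pairwise transversality, by \Cref{def:PositiveTuples}(2) together with the transversality of $p_\Theta$, $p_\Theta^\opp$ and the diamond description, or directly from \Cref{check quadruples} and the transversality of the extremities of a positive quadruple). The content of \cite[Lemma 13.27]{guichard2025generalizing} (over $\Rb$, hence over $\Fb$ by transfer) is precisely that $(\Fc_{\Theta,\Fb}^n)_{>0}$ is moreover \emph{closed} in $T_n$; being also open in $T_n$, it is thus a union of semi-algebraically connected components of $T_n$.

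With this in hand the proof is immediate. Let $(x_1,\ldots,x_n)$ be semi-positive with all $x_i$ pairwise transverse, so the tuple lies in $T_n$. Being semi-positive, it is a limit of positive tuples; since each positive tuple lies in $(\Fc_{\Theta,\Fb}^n)_{>0}$ and this set is closed in $T_n$, the limit tuple lies in $(\Fc_{\Theta,\Fb}^n)_{>0}$ as well, i.e.\ it is positive. (If one prefers to phrase it via components: the tuple lies in some semi-algebraically connected component $C$ of $T_n$; since $(\Fc_{\Theta,\Fb}^n)_{>0}$ is a nonempty union of such components and $C$ must be one of them — because $C$ contains a point of the closure of $(\Fc_{\Theta,\Fb}^n)_{>0}$, hence intersects the closure of a union of components, hence intersects one of those components and therefore equals it — we conclude $C\subset (\Fc_{\Theta,\Fb}^n)_{>0}$.)

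The only genuine subtlety — and the one place where I expect to need care rather than routine bookkeeping — is the transfer of \cite[Lemma 13.27]{guichard2025generalizing} from $\Rb$ to an arbitrary real closed field $\Fb$. Over $\Rb$ the statement ``$(\Fc_{\Theta,\Rb}^n)_{>0}$ is a union of connected components of $T_{n,\Rb}$'' is a statement about the analytic topology, which coincides with semi-algebraic connectedness by \Cref{thm_RConnSemiAlgConn}; one then invokes \Cref{thm_ExtConnComp}(4), which says the semi-algebraically connected components of $X_{\Fb}$ are the $\Fb$-extensions of those of $X$, applied to $X = T_{n,\overline{\Qb}^r}$ and to $X = (\Fc_{\Theta,\overline{\Qb}^r}^n)_{>0}$ (all these sets being defined over $\overline{\Qb}^r$, as noted throughout \Cref{s: Positivity on flag varieties}). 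Since $(\Fc_{\Theta,\overline{\Qb}^r}^n)_{>0}$ is a union of components of $T_{n,\overline{\Qb}^r}$, taking $\Fb$-extensions preserves this, giving the claim over $\Fb$. Everything else is formal.
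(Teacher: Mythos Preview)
Your proposal is correct and is essentially the same as the paper's approach: the paper does not give a detailed proof, but simply remarks that the proposition is a rephrasing of the fact (from \cite[Lemma 13.27]{guichard2025generalizing}, after applying the transfer principle) that positive tuples form a union of semi-algebraically connected components of the set of pairwise-transverse tuples. Your write-up just makes the routine deduction from this fact explicit, including the bookkeeping for the transfer from $\Rb$ to $\Fb$.
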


In general, the flags in a semi-positive tuple need not be pairwise transverse. However, using \Cref{prop: transverse + semi positive implies positive}, we deduce the following result, which allows us to use pairwise transversality of certain pairs in a semi-positive tuple to deduce transversality of other pairs.

\begin{lemma}
\label{lem: semi-positive + a bit transverse implies transverse}
    Let $(x_1,\ldots, x_6) \in \Fc_{\Theta,\Fb}$ be a semi-positive $6$-tuple. If $x_1$ is transverse to $x_6$ and $x_3$ is transverse to $x_4$, then $x_2$ is transverse to $x_5$.
\end{lemma}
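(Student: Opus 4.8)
The strategy is to reduce the transversality of $x_2$ and $x_5$ to an application of \Cref{prop: transverse + semi positive implies positive} on a suitably chosen sub-tuple that is \emph{pairwise} transverse. The obstruction to applying that proposition directly to $(x_1,\ldots,x_6)$ is that we only know two transversality relations among the fifteen pairs. So the idea is to select the $4$-tuple $(x_1,x_3,x_4,x_6)$ — which is semi-positive as a sub-tuple of a semi-positive tuple — and observe that it has both of its ``diagonal'' pairs transverse, namely $x_1 \pitchfork x_6$ (hypothesis) and $x_3 \pitchfork x_4$ (hypothesis). A $4$-tuple with both diagonals transverse is automatically pairwise transverse: indeed if $(a,b,c,d)$ is semi-positive with $a\pitchfork c$ and $b\pitchfork d$, then by \Cref{prop: transverse + semi positive implies positive} it would follow that $(a,b,c,d)$ is positive \emph{provided} all six pairs are transverse, which is circular; instead one argues that semi-positivity of $(a,b,c,d)$ forces each ``edge'' pair to be transverse once one of the diagonals is. This is precisely the content we need to establish as a preliminary step: a semi-positive quadruple with one transverse diagonal has all five other pairs transverse.

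\textbf{Key steps.} First I would prove the quadruple claim: if $(a,b,c,d)$ is semi-positive and $a \pitchfork c$, then $a\pitchfork b$, $b\pitchfork c$, $c\pitchfork d$, $d\pitchfork a$, and $b\pitchfork d$. To see this, write $(a,b,c,d)$ as a limit of positive quadruples $(a_k,b_k,c_k,d_k)$. Since $a\pitchfork c$ and transversality is an open condition (the transverse pairs form the open $G_{\Fb}$-orbit), we may apply an element of $\Aut_1(\mathfrak g_{\Fb})$ to normalize $(a,c)=(p_\Theta^\opp,p_\Theta)$ and similarly push the $a_k,c_k$ close to this pair; then $b_k, d_k \in U_{\Theta,\Fb}^\pitchfork\cdot p_\Theta^\opp$ lie in diamonds with extremities $(a_k,c_k)$, and in the limit $b,d$ lie in $\overline{U_{\Theta,\Fb}^{\ge 0}}\cdot p_\Theta^\opp$ or $\overline{U_{\Theta,\Fb}^{\le 0}}\cdot p_\Theta^\opp$, which are contained in the locus transverse to both $a$ and $c$ (since $U_{\Theta,\Fb}\cdot p_\Theta^\opp$ is the transverse-to-$p_\Theta$ locus, and these elements are in $U_{\Theta,\Fb}$). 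This gives $a\pitchfork b$, $b\pitchfork c$, $c\pitchfork d$, $d\pitchfork a$. For $b\pitchfork d$: $b=u\cdot p_\Theta^\opp$ with $u \in U_{\Theta,\Fb}^{\ge 0}$ and $d = v\cdot p_\Theta^\opp$ with $v\in U_{\Theta,\Fb}^{\le 0}$ (using that in a positive quadruple $b$ and $d$ lie in \emph{opposite} diamonds, so this sign pattern is preserved in the limit); then $b\pitchfork d$ is equivalent to $u^{-1}v \in U_{\Theta,\Fb}^\pitchfork$, which holds because $U^{\ge 0}$ and $U^{\le 0}$ sit in opposite closed diamonds, whose points are pairwise transverse by \Cref{opposites transverse} extended to closures — alternatively this follows from \Cref{prop: transverse + semi positive implies positive} applied once we know the other four pairs are transverse and we bootstrap, or more cleanly from \Cref{lem: closure of diamond} giving $\overline{\Diam}$-against-$\Diam$ transversality. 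Once the quadruple claim is in hand, apply it to $(x_1,x_3,x_4,x_6)$ with transverse diagonal $x_1\pitchfork x_6$: this yields $x_1\pitchfork x_3$, $x_3\pitchfork x_6$, $x_1\pitchfork x_4$, and $x_4\pitchfork x_6$. Next apply it to $(x_1,x_2,x_3,x_6)$, which is semi-positive with diagonal $x_1\pitchfork x_3$ now known transverse (or equivalently $x_1 \pitchfork x_6$), to get $x_2\pitchfork x_3$, $x_1 \pitchfork x_2$, $x_2\pitchfork x_6$; symmetrically $(x_1,x_4,x_5,x_6)$ gives $x_4 \pitchfork x_5$, $x_1\pitchfork x_5$, $x_5\pitchfork x_6$. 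Finally consider the $6$-tuple: we now have all pairs among $\{x_1,x_2,x_3,x_4,x_6\}$ transverse and all pairs among $\{x_1,x_4,x_5,x_6\}$ transverse; the only pair not yet covered is $(x_2,x_5)$ and $(x_2,x_4)$, $(x_3,x_5)$ — wait, I should instead directly finish: apply the quadruple claim to $(x_2,x_3,x_4,x_5)$, a semi-positive quadruple with diagonal $x_3\pitchfork x_4$, to conclude $x_2\pitchfork x_5$ (the ``other diagonal'').

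\textbf{Main obstacle.} The crux — and the step I expect to require the most care — is the bootstrapping lemma for semi-positive quadruples: showing that a single transverse diagonal in a semi-positive $4$-tuple forces the remaining diagonal (and all edges) to be transverse. The edge-transversality is the routine limit argument sketched above, but the conclusion $b\pitchfork d$ from $b\pitchfork$ everything-else-and-$d$ must genuinely use the positivity structure (opposite diamonds are pairwise transverse, \Cref{opposites transverse}, together with sharpness of the semigroup cone, \Cref{propo:AcuteSemigroup}), not merely the open-orbit description of transversality; one has to track that the limiting elements $u,v$ land in $U_{\Theta,\Fb}^{\ge 0}$ and $U_{\Theta,\Fb}^{\le 0}$ respectively and then invoke that $\overline{\Diam}_{\std}$ and $\overline{\Diam}_{\std}^\opp$ are transverse, which follows from \Cref{lem: closure of diamond} (taking closures). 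Once this quadruple bootstrapping is established, the rest is a purely combinatorial cascade of four applications, each time feeding a newly-proved transversality into the next quadruple, and then a final application of \Cref{prop: transverse + semi positive implies positive} is not even needed — $x_2\pitchfork x_5$ drops out as the missing diagonal of $(x_2,x_3,x_4,x_5)$.
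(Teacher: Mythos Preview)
Your approach has a genuine gap: the ``quadruple bootstrapping lemma'' you propose --- that a semi-positive $4$-tuple $(a,b,c,d)$ with one transverse pair has all pairs transverse --- is false. Already in $\mathbf{P}^1(\Rb)$, the tuple $(0,0,1,\infty)$ is semi-positive (limit of $(0,\epsilon,1,\infty)$ as $\epsilon\to 0^+$) with the diagonal $0\pitchfork 1$ transverse, yet $a=b=0$ is not transverse; likewise $(0,0,1,0)$ is semi-positive with $0\pitchfork 1$ but with the other diagonal $b=d=0$ non-transverse. The failure in your sketch is exactly where you write ``\Cref{opposites transverse} extended to closures'': this extension does not hold, since $\overline{\Diam}_\std$ and $\overline{\Diam}_\std^\opp$ share the extremity $p_\Theta^\opp$, so limits in $U_\Theta^{\ge 0}\cdot p_\Theta^\opp$ and $U_\Theta^{\le 0}\cdot p_\Theta^\opp$ can both collapse to $p_\Theta^\opp$. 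There is also a systematic confusion between edges and diagonals in your cascade: in $(x_1,x_3,x_4,x_6)$ the pair $(x_1,x_6)$ is an edge, and in $(x_2,x_3,x_4,x_5)$ the pair $(x_3,x_4)$ is an edge, so even if a diagonal-based lemma were true it would not apply at those steps.

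The paper's proof avoids this by never trying to extract transversality from a single transverse pair in a quadruple. Instead it uses both hypotheses together and introduces auxiliary \emph{buffer} flags: after normalizing $x_1^n=x_1$, $x_6^n=x_6$, it picks $p,q$ in the diamond opposite to the $x_i^n$ so that \Cref{lem: closure of diamond} traps $x_3,x_4$ strictly inside $\Diam_{x_1}(p,q)$; then $x_3\pitchfork x_4$ and \Cref{prop: transverse + semi positive implies positive} make $(p,x_3,x_4,q)$ positive. Further buffers $r,s$ with $(p,x_3,r,s,x_4,q)$ positive are pushed back to the approximating sequence by openness, and \Cref{lem: AddingElemToPosTuple} then forces $x_2\in\overline{\Diam}_q^\opp(p,r)$ and $x_5\in\overline{\Diam}_p^\opp(s,q)$. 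These two closed diamonds are genuinely separated (by $r,s$), so \Cref{lem: closure of diamond} and \Cref{opposites transverse} give $x_2\pitchfork x_5$. The buffer points are precisely what prevents the collision that breaks your quadruple lemma.
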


\begin{proof}
    By the transfer principle, it is enough to work over $\Rb$ (or more generally a field whose topology is metrizable).
    For each $i=1,\dots,6$, let $(x_i^n)_{n\ge 1}$ be sequences in $\Fc_{\Theta,\Rb}$ converging to $x_i$ and such that $(x_1^n,\ldots, x_6^n)$ is positive. Since $G_{\Rb}$ acts transitively on pairs of transverse flags and since $x_1$ and $x_6$ are transverse, we can assume without loss of generality that $x_1^n=x_1$ and $x_6^n = x_6$ for all $n$. Since there are finitely many diamonds between $x_1$ and $x_6$, up to extracting a subsequence, we can assume that 
    \[\Diam_{x_2^n}(x_1,x_6) = \Diam_{x_2^m}(x_1,x_6)\]
    for all positive integers $m,n$. 
    
    Choose a points $p,q \in \Diam_{x_2^n}^\opp(x_1,x_6)$ such that $(x_1,p,q,x_6)$ is positive. Since $x_3^n,x_4^n\in \Diam_{x_2^n}(x_1,x_6)$ for all integers $n\ge 1$, it follows from \Cref{lem: closure of diamond} that
    \[x_3,x_4\in \overline{\Diam}_{x_2^n}(x_1,x_6)=\overline{\Diam}_p^{\rm opp}(x_1,x_6)\subset\Diam_{x_1}(p,q).\]
    In particular, both $x_3$ and $x_4$ are transverse to both $p$ and $q$. By assumption, $x_3$ is transverse to $x_4$, so \Cref{prop: transverse + semi positive implies positive} implies that $(p,x_3,x_4,q)$ is positive. Thus, we may choose points $r,s\in\Fc_{\Theta,\Fb}$ such that $(p,x_3,r,s,x_4,q)$ is positive. By the openness in \Cref{propo: positive n-tuples is semi-algebraic}, for all sufficiently large integers $n$, $(p,x_3^n,r,s,x_4^n,q)$ is positive and so \Cref{lem: AddingElemToPosTuple} implies that 
    \[(p,x_1,x_2^n,x_3^n,r,s,x_4^n,x_5^n,x_6,q)\]is positive. In particular, $x_2\in\overline{\Diam}_q^{\rm opp}(p,r)$ and $ x_5\in\overline{\Diam}_p^{\rm opp}(s,q)$. It now follows from \Cref{lem: closure of diamond} and \Cref{opposites transverse} that $x_2$ and $x_5$ are transverse. 
\end{proof}

\subsection{%
\texorpdfstring{%
$\PG$-condition for flag varieties}%
{PG-condition for flag varieties}}
\label{subs:PGConditionHermFlagVarities}

Let $\Fc_{\Theta,\Fb}$ be a flag variety over a real closed field $\Fb$ admitting a positive structure. Let us recall and specify the definition of the \emph{$\PG$-condition} from the introduction:
\begin{definition}
    Given an integer $N>0$, we say that $\Fc_{\Theta,\Fb}$ satisfies the \emph{$\PG_N$-condition} if, for every positive $N$-tuple $(x_1,\ldots, x_N) \in \Fc_{\Theta,\Fb}^N$ and every $y\in \Fc_{\Theta,\Fb}$, there exists $i\in \{1,\ldots, N\}$ such that $y$ is transverse to $x_i$.

    We say that $\Fc_{\Theta,\Fb}$ satisfies the  $\PG$-condition if it satisfies the $\PG_N$-condition for large enough $N$.
\end{definition}

The list of possible $\Fc_{\Theta,\Fb}$ that admit a positive structure is given in \Cref{thm: GW classification positive structures}.
To support \Cref{conj:PG-condition} that the $\PG$-condition always holds, we explain here that it is satisfied by two families of positive structures.

Note that the $\PG_N$-condition is a semi-algebraic condition over $\Qbar$. Hence it is valid over $\Rb$ if and only if it is valid over any real closed field by the transfer principle.

\subsubsection{The split real case}

Let us consider here $G$ a split real form (defined over $\Qbar$) of a simple algebraic group and denote by $\Fc_\Delta^G$ its complete flag variety (here, $\Delta$ denotes the set of all simple roots of $G$). This flag variety admits a positive structure, whose construction dates back to the work of Lusztig \cite{Lusztig_TotalPositivityReductiveGroups}.
Here we investigate the $\PG$-condition for these flag varieties.\\

Let us start with the case of $\SL_n$. We briefly recall that, in that case, we can take $P_\Delta$ and $P_\Delta^\opp$ to be the group of lower triangular and upper triangular matrices respectively, and $U_\Delta$ and $U_\Delta^\opp$ to be respectively the subgroups of $P_\Delta$ and $P_\Delta^\opp$ with diagonal coefficients equal to $1$.
Lusztig's positive structure on $\Fc_\Delta^{\SL_n}$ is then defined by the semigroup $U_\Delta^{>0}$ of \emph{totally positive} unipotent matrices, i.e.\ matrices $u\in U_{\Delta}$ for which all minors that are not obviously $0$ are positive.

The $\PG$-condition is satisfied for the complete flag variety of $\Fc_\Delta^{\SL_n}$. This is essentially due to Saldanha--Shapiro--Shapiro \cite{SaldanhaShapiroShapiro_FinitenessGrassmannConvexity}, as was already pointed out by Riestenberg and Smillie in \cite[Theorem 72]{RiestenbergSmillie_UniversalHTSpaces}.

\begin{theorem}[Saldanha--Shapiro--Shapiro \cite{SaldanhaShapiroShapiro_FinitenessGrassmannConvexity}] \label{theo-PG for SLn}
The complete flag variety of $\SL_n(\Rb)$ satisfies the $\PG$-condition.
\end{theorem}

This theorem is not explicitly stated as such in \cite{SaldanhaShapiroShapiro_FinitenessGrassmannConvexity}, but follows from \Cref{lemma: Vanishing minor convex curve} below.
Following \cite{SaldanhaShapiroShapiro_FinitenessGrassmannConvexity}, we call a curve $u\colon \Rb \to U_{\Delta,\Rb}$ \emph{flag-convex} if
\[u(t)^{-1} u'(t) = \left(\begin{matrix}
        0 & & \cdots & & 0 \\
        a_1(t) & \!\! 0 & & & \\
        0 & a_2(t) & 0 & & \vdots \\
        \vdots & \ddots & \ddots & \ddots & \\
        0 & \cdots & 0 & a_{n-1}(t) & 0\end{matrix}\right)\]
    for some positive functions $a_1, \ldots, a_{n-1}\colon \Rb \to \Rb_{>0}$.

\begin{lemma}[{\cite[Theorem 1 and Lemma 2]{SaldanhaShapiroShapiro_FinitenessGrassmannConvexity}}] \label{lemma: Vanishing minor convex curve}
    Let $u\colon \Rb \to \mathrm U_{\Delta,\Rb}$ be a $\mathcal C^1$ flag-convex curve and $k$ an integer between $1$ and $n-1$. Then the function
\[m_k\colon t\mapsto \det\left(u(t)_{n-k+i,j}\right)_{1\leq i,j\leq k}\]
vanishes at at most $r(k,n)$ points, where 
\[r(1,n) = n-1\]
and
\[r(k,n) = \frac{(n-k+1)^{2k-3}}{2^{k-3}}\]
for $k > 1$.
\end{lemma}

Let us now explain how to deduce Theorem \ref{theo-PG for SLn} from this lemma.

\begin{proof}[Proof of Theorem \ref{theo-PG for SLn}]
Let us fix
\begin{equation} \label{eq: condition N for SLn}
    N > \sum_{k=1}^{n-1} r(k,n)~.
\end{equation}
Let $(x_1,\ldots, x_N)$ be a positive tuple in $\Fc_{\Delta,\Rb}^{\SL_n}$ and $y$ another flag. We want to prove that $y$ is transverse to one of the $x_i$, i.e.\ for all $1\leq k\leq n-1$ the $k$-dimensional subspace $x_i^k$ of $x_i$ is in direct sum with the $(n-k)$-dimensional subspace $y^{n-k}$ of $y$.\\

Let us fix an auxiliary flag $z$ which is transverse to $y$ and such that $(x_1,\ldots, x_N,z)$ is positive. 
After applying an element of $\SL_n(\Rb)$ we can assume that $x_1$ is the standard flag
\[p = \Span(e_1)\subset \Span(e_1,e_2) \subset \ldots \subset \Span(e_1,\ldots, e_{n-1})~,\]
$z$ is the standard opposite flag
\[p^\opp=\Span(e_n) \subset \Span(e_{n-1},e_n) \subset \ldots, \subset \Span(e_2,\ldots, e_n)~,\]
and $x_i$ belongs to the standard diamond for all $i=2,\ldots, N$.

We can thus write $x_i = u_i\cdot x_1$ for a unique $u_i\in U_{\Delta,\Rb}$, and the positivity of the tuple $(x_1,\ldots, x_N,z)$ ensures that $u_{i-1}^{-1}u_{i}$ belongs to the sub-semigroup $U_{\Delta,\Rb}^{>0}$ of totally positive unipotent lower triangular matrices for all $i=2,\ldots,N$.

Since $y$ is also transverse to $z$, we can write $y= v\cdot x_1$ for some $v\in U_{\Delta,\Rb}$.
As explained in the proof of \cite[Lemma 2]{SaldanhaShapiroShapiro_FinitenessGrassmannConvexity}, one can then find a smooth flag-convex curve $u \colon \R \to U_{\Delta,\Rb}$ such that $u(i) = v^{-1}u_i$ for all $i=1,\ldots,N$.

Since $N>\sum_{k=1}^{n-1} r(k,n)$, applying \Cref{lemma: Vanishing minor convex curve}, we deduce that there exists some integer $i_0 \leq N$ such that $m_k(i_0)\neq 0$ for all $1\leq k \leq n-1$.
By definition of $m_k$, this means that $v^{-1} u_{i_0}(\Span(e_1,\ldots, e_k))$ is transverse to $\Span(e_1,\ldots, e_{n-k})$ for all $k$.
Hence
$u_{i_0}(\Span(e_1,\ldots, e_k)) = x_{i_0}^k$ is transverse to $v\cdot \Span(e_1,\ldots, e_{n-k}) = y^{n-k}$ for all $k$, i.e.\ $y$ is transverse to $x_{i_0}$.

Hence $\Fc_{\Delta,\Rb}^{\SL_n}$ satisfies the $\PG_N$-condition for $N$ as in \eqref{eq: condition N for SLn}.
\end{proof}

\begin{remark}
    The value of $N$ for which we obtain the $\PG_N$-condition grows at least like 
    \[r\left(\frac{n}{2}, n\right) \gg \left( \frac{n}{2}\right)^n\]
    as $n$ grows.
    
    The so-called \emph{Grassmannian conjecture}, formulated by Shapiro--Shapiro in \cite{ShapiroShapiro_GrassmannConvConj}, predicts that the optimal value for $r(k,n)$ in \Cref{lemma: Vanishing minor convex curve} is 
    \[r(k,n) = k(n-k)~.\]
    This would imply the $\PG_N$-property of $\Fc_\Delta^{\SL_n}$ for $N> \sum_{k=1}^{n-1} k(n-k) = \frac{n^3-n}{6}$, a much better bound.
\end{remark}

The $\PG$-condition passes to sub-flag varieties. To make this more precise, let us introduce the notion of \emph{positive embedding} of flag varieties with positive structures.

\begin{definition}
    Let $G$ and $H$ be semisimple algebraic groups (defined over $\Qbar$) and $\Fc_\Theta^G$ and $\Fc_{\Theta'}^H$ two flag varieties of $G$ and $H$ respectively admitting a positive structure. We say that $\Fc_\Theta^G$ admits a \emph{positive embedding} into $\Fc_{\Theta'}^H$ if there exists an algebraic representation $\rho\colon G\to H$ and a $\rho$-equivariant embedding $\phi\colon\Fc_\Theta^G \to \Fc_{\Theta'}^H$ such that:
    \begin{itemize}
        \item for every $x,y\in \Fc_\Theta^G$, $x$ and $y$ are transverse if and only if $\phi(x)$ and $\phi(y)$ are transverse,
        \item for every tuple $(x_1,\ldots, x_k)\in \Fc_\Theta^G$, $(x_1,\ldots, x_k)$ is positive if and only if $(\phi(x_1),\ldots, \phi(x_k))$ is positive.
    \end{itemize}
\end{definition}

Complete flag varieties of real split linear algebraic groups that embed in $\SL_n$ have been classified by Sambarino in \cite{Sambarino_ZariskiClosuresPosRepr}. 

\begin{fact} \label{fact: list Positive embeddings}
We have the following positive embeddings:
\begin{itemize}
    \item $\Fc_\Delta^{\Sp_{2n}}$ embeds positively in $\Fc_\Delta^{\SL_{2n}}$,
    \item $\Fc_\Delta^{\SO_{n,n+1}}$ embeds positively in $\Fc_\Delta^{\SL_{2n+1}}$,
    \item $\Fc_\Delta^{\mathrm{G}_2}$ embeds positively in $\Fc_\Delta^{\SL_{7}}$.
\end{itemize}
\end{fact}

Now, we have the following easy proposition:

\begin{proposition} \label{prop: PG positive embedding}
    Suppose $\Fc_\Theta^G$ embeds positively in $\Fc_{\Theta'}^H$. If $\Fc_{\Theta'}^H$ satisfies the $\PG_N$ condition, then so does $\Fc_\Theta^G$.
\end{proposition}

\begin{proof}
    Let $(x_1,\ldots, x_N) $ be a positive $N$-tuple in $\Fc_\Theta^G$ and $y$ another point in $\Fc_\Theta^G$.
    Let $\phi$ be a positive embedding of $\Fc_\Theta^G$ into $\Fc_{\Theta'}^H$.
    Then $(\phi(x_1), \ldots, \phi(x_N))$ is a positive $N$-tuple.
    Since $\Fc_{\Theta'}^H$ satisfies the $\PG_N$-condition, there exists $1\leq i \leq N$ such that $\phi(x_i)$ is transverse to $\phi(y)$.
    Hence $y$ is transverse to $x_i$ by one of the defining properties of positive embeddings.
\end{proof}

Combining this proposition with \Cref{theo-PG for SLn} and \Cref{fact: list Positive embeddings}, we obtain the following.

\begin{corollary}
    For every real closed field $\Fb$, the complete flag varieties of $\SL_n(\Fb)$, $\Sp_{2n}(\Fb)$, $\SO_{n,n+1}(\Fb)$ and $\mathrm{G}_2(\Fb)$ satisfy the $\PG$-condition.
\end{corollary}

\subsubsection{The hermitian case}
Let us now turn to the case when $G_{\Rb}$ is a simple Hermitian Lie group of tube type.

The \emph{Shilov boundary} of the symmetric space of $G$ is the flag variety $\check{S} \coloneqq G_{\Rb}/P_{\Theta,\Rb}$ for $\Theta=\{\alpha_r\}$, where $\alpha_r$ is the simple long restricted\footnote{This is the same over $\Rb$ and $\overline{\Qb}^r$\cite[\S 5.2]{Appenzeller_GroupsRCF}.} root (defined over $\overline{\Qb}^r$). The Shilov boundary carries a $2$-cocycle with integral values, the \emph{generalized Maslov index}, whose properties are summarized in the following proposition.

\begin{proposition}[{\cite[Th\'eor\`eme~3.3, Th\'eor\`eme~3.5]{Clerc_MaslovTube}}, {\cite[Theorem~6.2]{Clerc_MaslovNonTube}}]
\label{propo:MaslovIndex}
    There exists a function $m \colon \check{S}^3 \to \Rb$, called the \emph{generalized Maslov index}, with the following properties:
    \begin{enumerate}
        \item $m$ takes integral values \footnote{That $m$ takes integral values is not true if we do not assume $G_{\Rb}$ to be of tube type.} in the interval $[-r,r]$, where $r$ is the real rank of $G_{\Rb}$;

        \item $m$ is $G_{\Rb}$-invariant;
        
        \item For all $(x_1,x_2,x_3)\in \check{S}^3$, and any permutation $\sigma$ of $\{1,2,3\}$,
        \[m(x_{\sigma(1)},x_{\sigma(2)}, x_{\sigma(3)})= \textnormal{sgn}(\sigma)m(x_1,x_2,x_3)~,\]
        where $\textnormal{sgn}$ denotes the sign of $\sigma$;
        \item
        For all $(x_1,x_2,x_3,y)\in \check{S}^4$,
        $m(x_1, x_2, x_3) = m(x_1, x_2, y) + m(x_2, x_3, y) + m(x_3, x_1, y)$;
        \item 
        If $m(x_1,x_2,x_3)$ is \emph{maximal} (i.e.\ equal to $r$), then $x_i \pitchfork x_j$ for all $i\neq j \in \{1,2,3\}$.
    \end{enumerate}
\end{proposition}

We will make use of the cocycle property of $m$, and that maximality implies transversality to prove our conjecture in this case.

\begin{theorem}
    Let $N>2r$.
    Let $(x_1,\ldots,x_N) \in \check{S}^N$ be a positive $N$-tuple, and $y \in \check{S}$.
    Then $y$ is transverse to at least one of the $x_i$.
\end{theorem}
\begin{proof}
    Since $(x_1,\ldots,x_N)$ is positive, up to reversing the order of the tuple, we can assume that $m(x_1,x_i,x_{i+1})=r$ for all $i=2,\ldots,N-1$.
    By iterating the cocycle property of $m$, \Cref{propo:MaslovIndex}, setting $x_{N+1}=x_1$, we get 
    \begin{align*}
        \frac{1}{N}\sum_{i=1}^{N} m(x_i,x_{i+1},y)=
        \frac{1}{N}\sum_{i=2}^{N-1} m(x_1,x_i,x_{i+1})=\frac{(N-2)r}{N} =r-\frac{2r}{N}>r-1,
    \end{align*}
    where the last inequality holds because $N>2r$.
    Suppose that all $(x_i,x_{i+1},y)$ are non-maximal, thus $m(x_i,x_{i+1},y)\leq r-1$ for all $i$, as $m$ takes only integer values (\Cref{propo:MaslovIndex}).
    This yields  
    \[r-1 < \frac{1}{N}\sum_{i=1}^{N} m(x_i,x_{i+1},y)\leq r-1,\]
    a contradiction.
    Thus at least one of the triples $(x_i,x_{i+1},y)$ is maximal for some $i$, and in particular $y$ is transverse to $x_i$ and $x_{i+1}$ by \Cref{propo:MaslovIndex}.
\end{proof}

The statement of the above theorem is given in first order logic. Applying the Tarski--Seidenberg principle (\Cref{thm_TarskiSeidenberg}) thus yields the following.

\begin{corollary}
    Let $G$ be a Hermitian group of tube type of rank $r$ and $\Fb$ a real closed field.
    Then $\check{S}_{\Fb}$ satisfies the $\PG_N$ condition for $N>2r$.
\end{corollary}

\section{%
\texorpdfstring{%
$\Theta$-positive representations for general real closed fields}%
{Theta-positive representations for general real closed fields}}
\label{s: Definitions Positive representations}

Let $G$ denote a semi-algebraically connected, semisimple linear semi-algebraic group over $\overline{\Qb}^r$, let $\Theta$ be a non-empty symmetric collection of simple roots of $G$, and let $\Fc_{\Theta}$ denote the associated flag variety. As before, let $\Fb$ be a real closed field, and denote the $\Fb$-extensions of all semi-algebraic maps and sets defined over $\overline{\Qb}^r$ with a subscript $\Fb$. In this section, we assume that $G_{\Fb}$ admits a $\Theta$-positive structure.

Let $\Gamma\subset \Isom_+(\Hb)$ be a non-elementary Fuchsian group.
We will be particularly interested in the case where $\Gamma$ is finitely generated, but for now let us consider the general case. We finally introduce the central definition of this paper: 

\begin{definition}\label{dfn: positive representations}
    A representation $\rho \from \Gamma\to G_{\Fb}$ is \emph{$\Theta$-positive} if there exists a non-empty $\Gamma$-invariant subset $D$ of $\Lambda(\Gamma)$ and a $\rho$-equivariant, positive map $\xi \from D \to \Fc_{\Theta,\Fb}$.
\end{definition}

Recall from \Cref{Fuchsian groups lemma} that any Fuchsian group $\Gamma$ is semi-conjugated to a Fuchsian group $\Gamma'$ of the first kind. Our first remark will be that positive representations of $\Gamma$ and $\Gamma'$ are the same:

\begin{proposition}\label{propo: equivalence positivity first and second kind Fuchsian groups}
    Let $\iota \colon \Gamma \to \Gamma'$ be a semi-conjugacy between $\Gamma$ and $\Gamma'$, where $\Gamma'$ is a Fuchsian group of the first kind. Then a representation $\rho \colon \Gamma' \to G_{\Fb}$ is $\Theta$-positive if and only if $\rho\circ \iota$ is $\Theta$-positive.
\end{proposition}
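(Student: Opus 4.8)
The plan is to prove both implications by transporting the boundary map across the semi-conjugacy, using the monotonic maps $\alpha$ and $\beta$ from \Cref{Fuchsian groups lemma} together with the observation that positivity of tuples of flags only depends on their cyclic order and that monotonic maps between cyclically ordered sets interact well with positive maps.

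First I would recall the setup: \Cref{Fuchsian groups lemma} gives an isomorphism $\iota\colon\Gamma\to\Gamma'$, a continuous $\iota$-equivariant monotonic surjection $\alpha\colon\Lambda(\Gamma)\to\Lambda(\Gamma')=\Sb^1$, and a left-continuous $\iota^{-1}$-equivariant monotonic injection $\beta\colon\Sb^1\to\Lambda(\Gamma)$. Here a key point (already built into the statement of \Cref{Fuchsian groups lemma}, and which I would invoke) is that $\alpha\circ\beta=\id_{\Sb^1}$, so $\beta$ is a monotonic section of $\alpha$.

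For the implication ``$\rho$ $\Theta$-positive $\implies$ $\rho\circ\iota$ $\Theta$-positive'': suppose $\xi\colon D\to\Fc_{\Theta,\Fb}$ is a $\rho$-equivariant positive map defined on a non-empty $\Gamma'$-invariant subset $D\subseteq\Lambda(\Gamma')=\Sb^1$. Set $D'\coloneqq\beta(D)\subseteq\Lambda(\Gamma)$ and $\xi'\coloneqq\xi\circ\alpha|_{D'}=\xi\circ(\beta|_D)^{-1}$. Then $D'$ is non-empty; it is $\Gamma$-invariant because $\beta$ is $\iota^{-1}$-equivariant and $D$ is $\Gamma'$-invariant. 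The map $\xi'$ is $(\rho\circ\iota)$-equivariant: for $\gamma\in\Gamma$ and $x\in D$, $\xi'(\gamma\cdot\beta(x))=\xi'(\beta(\iota(\gamma)\cdot x))=\xi(\iota(\gamma)\cdot x)=\rho(\iota(\gamma))\cdot\xi(x)=(\rho\circ\iota)(\gamma)\cdot\xi'(\beta(x))$. Finally $\xi'$ is positive: if $(y_1,\dots,y_n)$ is a cyclically ordered tuple in $D'$, write $y_j=\beta(x_j)$ with $x_j\in D$; since $\beta$ is monotonic and injective it preserves the cyclic order (by the Remark after \Cref{dfn:CyclicOrder}), so $(x_1,\dots,x_n)$ is cyclically ordered in $D$, hence $(\xi(x_1),\dots,\xi(x_n))=(\xi'(y_1),\dots,\xi'(y_n))$ is positive. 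Thus $\rho\circ\iota$ is $\Theta$-positive.

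For the converse: suppose $\rho\circ\iota$ is $\Theta$-positive with positive $(\rho\circ\iota)$-equivariant map $\eta\colon E\to\Fc_{\Theta,\Fb}$ on a non-empty $\Gamma$-invariant $E\subseteq\Lambda(\Gamma)$. Here the natural move is to push forward along $\alpha$, but $\alpha$ need not be injective, so one must check that $\eta$ is constant on fibers of $\alpha$ meeting $E$; this is the main obstacle and where one uses the structure of the semi-conjugacy. Concretely, $\alpha$ collapses a point of $\Sb^1$ to a single point except over certain points coming from funnels/complementary intervals, and on each non-trivial fiber the two relevant endpoints are the attracting/repelling fixed points of a hyperbolic element $\gamma$ with $\gamma$ acting on that fiber; monotonicity of $\eta$ together with the non-density argument (the fiber is a closed interval collapsed by $\alpha$) forces $\eta$ to agree on its endpoints and hence, by a squeezing argument using that positive maps send nested cyclically ordered tuples to nested diamonds, to be constant on the whole fiber intersected with $E$. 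The cleaner route, which I would actually take, is to avoid this entirely: simply restrict $\eta$ to $E\cap\beta(\Sb^1)$ if that is non-empty, or more robustly, define $D\coloneqq\alpha(E)\subseteq\Sb^1$ and $\xi(x)\coloneqq\eta(z)$ for any $z\in E$ with $\alpha(z)=x$, after first showing well-definedness as above. Once well-defined, $D$ is non-empty and $\Gamma'$-invariant (using $\iota$-equivariance of $\alpha$ and surjectivity onto the relevant orbit), $\xi$ is $\rho$-equivariant by the same equivariance computation as before, and $\xi$ is positive because $\alpha$ is monotonic: given a cyclically ordered tuple in $D$, its preimage under a section can be taken cyclically ordered in $E$ (choosing representatives consistently, possible since $\alpha$ is monotone and surjective), and $\eta$ maps it to a positive tuple. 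Since by \Cref{Fuchsian groups lemma} we also have $\beta$ available as a genuine monotone section, I would use $\beta$ to produce the representatives, reducing the converse to: $\eta\circ\beta|_{\alpha(E)}$ is well-defined and positive — and well-definedness of $\eta\circ\beta$ on $\alpha(E)$ is automatic since $\beta$ is a section of $\alpha$, one only needs $\beta(\alpha(E))\subseteq E$, which follows from $\Gamma$-invariance of $E$ and minimality of the $\Gamma$-action on $\Lambda(\Gamma)$ together with the fact that $\beta(\alpha(z))$ and $z$ lie in the same $\alpha$-fiber. This last inclusion, and the verification that it suffices, is where I expect to spend the most care.
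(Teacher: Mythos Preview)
Your forward direction (using $\beta$ to pull back) is fine and is a minor variant of the paper's argument. The converse, however, has a genuine gap.

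The problem is the step $\beta(\alpha(E))\subseteq E$. Knowing that $\beta(\alpha(z))$ and $z$ lie in the same $\alpha$-fiber does \emph{not} combine with $\Gamma$-invariance of $E$ to give $\beta(\alpha(z))\in E$: a non-trivial fiber of $\alpha$ consists of the two endpoints $\{\gamma^+,\gamma^-\}$ of a complementary interval of $\Lambda(\Gamma)$, and there is no reason these lie in the same $\Gamma$-orbit, nor that a given $\Gamma$-invariant $E$ contains both. Your earlier suggestion of showing that $\eta$ is constant on fibers is also problematic: $\eta$ is only assumed positive, and two flags $\eta(\gamma^+)$, $\eta(\gamma^-)$ arising from a positive map on distinct points will be \emph{transverse}, not equal. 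Finally, the appeal to ``minimality of the $\Gamma$-action on $\Lambda(\Gamma)$'' does not help, since $E$ need not be closed.

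The missing idea is the paper's key lemma: $\alpha$ is injective when restricted to any single $\Gamma$-orbit. The argument is short: if $\alpha(\gamma\cdot x)=\alpha(x)$ with $\gamma\cdot x\neq x$, then $\alpha^{-1}(\alpha(x))$ is a $\gamma$-invariant interval meeting $\Lambda(\Gamma)$ in a non-empty open set; density of $\Lambda_h(\Gamma)$ then produces two non-commuting hyperbolic elements whose images under $\iota$ share the fixed point $\alpha(x)$, a contradiction. Once you have this, both directions become immediate: pick any $x$ in the domain, restrict to the orbit $\Gamma\cdot x$ (resp.\ $\Gamma'\cdot\alpha(x)$), and transport along the bijection $\alpha|_{\Gamma\cdot x}$.
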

\begin{proof}
    Let $\alpha\colon\Lambda(\Gamma) \to \Sb^1$ be a continuous surjective monotonous $\iota$-equivariant map given by \Cref{Fuchsian groups lemma}.
    We first prove that $\alpha$ is injective in restriction to any $\Gamma$-orbit. Suppose for the purpose of contradiction that there is some $x\in\Lambda(\Gamma)$ and some $\gamma\in\Gamma$ such that $x\neq \gamma\cdot x$ and $\alpha(\gamma\cdot x)= \alpha(x)$. Since $\alpha$ is $\iota$-equivariant, $\alpha(x)$ is a fixed point of $\iota(\gamma)$. Since $x\neq \gamma \cdot x$, the monotonicity of $\alpha$ implies that $\alpha^{-1}(\alpha(x))$ must be a $\gamma$-invariant interval containing $x$ in its interior. Since $x\in\Lambda(\Gamma)$, this means that $\alpha^{-1}(\alpha(x))\cap\Lambda(\Gamma)$ is open. By the density of $\Lambda_h(\Gamma)\subset\Lambda(\Gamma)$, there are hyperbolic elements $\gamma_1,\gamma_2\in\Gamma$ such that $\gamma_1^+$ and $\gamma_2^+$ are distinct and lie in $\alpha^{-1}(\alpha(x))$.
    This means that $\alpha(\gamma_1^+)=\alpha(x)=\alpha(\gamma_2^+)$, so $\iota(\gamma_1)$ and $\iota(\gamma_2)$ share a fixed point, and thus commute. However, $\gamma_1$ and $\gamma_2$ do not commute because $\gamma_1^+\neq\gamma_2^+$, which contradicts the fact that $\iota$ is an isomorphism. 
    
    If $\rho\colon\Gamma'\to G_{\Fb}$ is positive, then there is a positive $\rho$-equivariant map $\xi \colon D \to \Fc_{\Theta,\Fb}$ defined on a $\Gamma'$-invariant subset $D\subset\Sb^1$. Let $x\in \Lambda(\Gamma)$ be a point in $\alpha^{-1}(D)$. Since $\alpha|_{\Gamma\cdot x}$ is $\iota$-equivariant and injective, $\xi \circ \alpha\vert_{ \Gamma \cdot x}$ is a positive $\rho\circ \iota$-equivariant map. Hence $\rho \circ \iota$ is positive.

    Conversely, suppose that $\rho\circ\iota\colon\Gamma\to G_{\Fb}$ is positive. Then there is a positive, $\rho\circ\iota$-equivariant map and $\xi\colon D\to \Fc_{\Theta,\Fb}$ defined on $\Gamma$-invariant subset $D\subset \Lambda(\Gamma)$. Choose $x\in D$. Then $\xi\circ (\alpha|_{\Gamma\cdot x})^{-1}$ is a positive $\rho$-equivariant map defined on $\alpha(\Gamma\cdot x)$, so $\rho$ is positive.
\end{proof}

Since $\Theta$-positive representations of $\Gamma$ and $\Gamma'$ are the same, any statement about $\Theta$-positive representations that does not involve the limit sets or the distinction between parabolic and hyperbolic elements can be proven for either $\Gamma$ or $\Gamma'$. On the other hand, if $\Gamma$ is of the second kind, some hyperbolic elements of $\Gamma$ (that we sometimes call \emph{peripheral}) are mapped to parabolic elements of $\Gamma'$ by $\iota$, so that positivity ``sees'' these elements as parabolic. For this reason, it is often more convenient to assume that $\Gamma$ is of the first kind.

\subsection{Discreteness}

Let us start by proving that $\Theta$-positive representations over any ordered field are faithful and discrete (for the order topology, see \Cref{sec: ordered fields}).

\begin{proposition}
\label{propo: positive implies injective and discrete}
    Let $\rho \colon \Gamma \to G_{\Fb}$ be $\Theta$-positive.
    Then $\rho$ is injective and its image is discrete in $G_{\Fb}$. 
\end{proposition}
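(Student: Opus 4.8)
The plan is to exploit the fact that $\Theta$-positivity of a representation forces a strong transversality condition on images of elements of $\Gamma$ with distinct fixed points on $\Sb^1$, and then to bootstrap this to injectivity and discreteness. First I would fix a non-empty $\Gamma$-invariant set $D \subset \Lambda(\Gamma)$ and a $\rho$-equivariant positive map $\xi \from D \to \Fc_{\Theta,\Fb}$. Since $\Gamma$ is non-elementary, $D$ is infinite (it contains a $\Gamma$-orbit, and every $\Gamma$-orbit in $\Lambda(\Gamma)$ is infinite), so we may pick four distinct points in $D$; their images under $\xi$ form a positive quadruple, hence in particular are pairwise transverse by \Cref{opposites transverse} (every flag in a diamond is transverse to every flag in the opposite diamond). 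More usefully, any two distinct points of $D$ are sent to transverse flags: given $x \neq y$ in $D$, extend to a cyclically ordered quadruple $(x,y,z,t)$ with $z,t \in D$ (possible since $D$ is infinite and $\Gamma$ acts minimally on $\Lambda(\Gamma)$, so $D$ is dense), and positivity of $(x,y,z,t)$ gives $\xi(x) \pitchfork \xi(z)$, etc.; cyclically permuting gives transversality of every pair.

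\textbf{Injectivity.} Suppose $\rho(\gamma) = \Id$ for some $\gamma \in \Gamma \setminus \{\id\}$. I would argue that $\gamma$ must then fix every point of $D$: indeed for $x \in D$, the flag $\xi(\gamma \cdot x) = \rho(\gamma) \cdot \xi(x) = \xi(x)$; but if $\gamma \cdot x \neq x$ then $\xi(\gamma\cdot x)$ and $\xi(x)$ are transverse (by the previous paragraph) and equal, which is impossible since no flag is transverse to itself. Hence $\gamma$ fixes $D$ pointwise. But $D$ is infinite, while a nontrivial element of $\Isom_+(\Hb)$ fixes at most two points of $\Sb^1$ — contradiction. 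Therefore $\rho$ is injective. (One should note $D$ contains at least three points, which suffices.)

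\textbf{Discreteness.} For discreteness I would use a similar argument together with continuity of the $G_{\Fb}$-action on $\Fc_{\Theta,\Fb}$ and the fact that the set $(\Fc_{\Theta,\Fb}^4)_{>0}$ of positive quadruples is open (\Cref{propo: positive n-tuples is semi-algebraic}). Pick four distinct points $x_1,x_2,x_3,x_4 \in D$ chosen so that $(x_1,x_2,x_3,x_4)$ is cyclically ordered; then $(\xi(x_1),\ldots,\xi(x_4))$ is a positive quadruple. Suppose $(\gamma_n) \subset \Gamma$ is a sequence with $\rho(\gamma_n) \to \Id$ in $G_{\Fb}$. Then $\rho(\gamma_n) \cdot (\xi(x_1),\ldots,\xi(x_4)) \to (\xi(x_1),\ldots,\xi(x_4))$, which is a positive quadruple; by openness of $(\Fc_{\Theta,\Fb}^4)_{>0}$, the tuple $(\xi(\gamma_n \cdot x_1),\ldots,\xi(\gamma_n \cdot x_4))$ is positive for $n$ large, hence in particular the four flags $\xi(\gamma_n \cdot x_i)$ are pairwise transverse, so $\gamma_n \cdot x_1, \gamma_n \cdot x_2, \gamma_n\cdot x_3, \gamma_n \cdot x_4$ are four distinct points of $\Sb^1$. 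But a convergence group action has the property that any sequence $(\gamma_n)$ with $\gamma_n \cdot o$ bounded (equivalently, no subsequence is an attracting sequence) is eventually constant on such a fixed finite configuration only finitely often — more precisely, if $(\gamma_n)$ is not eventually the identity, then after passing to a subsequence it is an attracting sequence (\Cref{fact: Fuchsian groups are convergence groups}), so all but one of $\gamma_n \cdot x_1,\ldots,\gamma_n\cdot x_4$ converge to a common point $a$, forcing two of them to collide for large $n$ — contradicting pairwise distinctness. Hence $\rho(\gamma_n) \to \Id$ implies $\gamma_n = \id$ eventually, i.e.\ $\{\Id\}$ is isolated in $\rho(\Gamma)$, which together with the group structure gives that $\rho(\Gamma)$ is discrete.

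\textbf{Main obstacle.} The delicate point is the discreteness argument: one must be careful that $G_{\Fb}$ is not locally compact when $\Fb \neq \Rb$, so "discrete" must be handled via the order topology directly, and the convergence-group dichotomy must be invoked correctly — specifically, that a sequence $(\gamma_n)$ in $\Gamma$ which is not eventually trivial has, after extraction, either a subsequence that is an attracting sequence (collapsing the finite configuration) or is eventually constant; in the latter case $\rho$ injective handles it, and in the former the collapse contradicts transversality. Making this dichotomy precise (and ruling out the case where $\gamma_n$ ranges over finitely many values, reducing to injectivity) is the part requiring the most care, though it is essentially bookkeeping once the transversality input is in place.
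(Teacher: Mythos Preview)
Your injectivity argument is clean and correct: $\xi$ separates distinct points of $D$ (by transversality coming from positivity), so a nontrivial element of $\ker\rho$ would fix all of $D$, which has more than two points.

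The discreteness argument, however, has a genuine gap --- in fact two.

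First, the claimed contradiction is not one. You conclude from positivity of $(\xi(\gamma_n x_1),\ldots,\xi(\gamma_n x_4))$ that the four points $\gamma_n x_1,\ldots,\gamma_n x_4$ are pairwise distinct, and then say the convergence--group collapse ``forces two of them to collide''. But $\gamma_n$ is a homeomorphism of $\Sb^1$, so $\gamma_n x_1,\ldots,\gamma_n x_4$ are \emph{always} pairwise distinct; convergence of several of them to a common point $a$ in $\Sb^1$ never makes them equal. Nor does this collapse say anything about the flags $\xi(\gamma_n x_i)=\rho(\gamma_n)\xi(x_i)$, which converge to the four \emph{distinct} flags $\xi(x_i)$ regardless of what $\gamma_n$ does on $\Sb^1$, since $\xi$ is not assumed continuous. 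So there is no contradiction between the dynamics of $\gamma_n$ on $\Sb^1$ and the positivity of the image quadruple.

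Second, even if you had a valid sequential argument, it would not prove discreteness: for a general real closed field $\Fb$ the order topology need not be first countable (the hyperreals are an example the paper mentions explicitly), so ``every sequence $\rho(\gamma_n)\to\Id$ is eventually $\Id$'' does not imply that $\{\Id\}$ is open in $\rho(\Gamma)$.

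The paper sidesteps both issues by working with an explicit open neighborhood of $\Id$ rather than sequences. It chooses three points $a_1,a_2,a_3\in D$ and short intervals $I_k=(b_k,c_k)\ni a_k$ with endpoints in $D$; properness of the $\Isom_+(\Hb)$-action on ordered triples of $\Sb^1$ gives that every nontrivial $\gamma\in\Gamma$ moves some $I_k$ off itself. On the flag side one forms the diamonds $\Diam_k=\Diam_{\xi(a_k)}(\xi(b_k),\xi(c_k))$ and checks that $\gamma(I_k)\cap I_k=\emptyset$ forces $\rho(\gamma)\Diam_k\cap\Diam_k=\emptyset$. Then
\[
U=\{g\in G_{\Fb}\mid g\Diam_k\cap\Diam_k\neq\emptyset\text{ for }k=1,2,3\}
\]
is an open neighborhood of $\Id$ with $\rho^{-1}(U)=\{\id\}$, giving discreteness and injectivity in one stroke. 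If you want to rescue your line of attack, this is the missing bridge: compare $\gamma$ acting on $\Sb^1$ with $\rho(\gamma)$ acting on diamonds, rather than on individual flags.
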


\begin{remark}
    An important difference between $\Rb$ and non-Archimedean ordered fields is that, for non-Archimedean ordered fields, $G_{\Fb}$ is not locally compact. In particular, the discreteness of $\rho(\Gamma)$ does not imply that any bounded subset of $G_{\Fb}$ contains finitely many points in $\rho(\Gamma)$. Equivalently, a ball in the Bruhat--Tits building may very well contain infinitely many points of a given $\rho(\Gamma)$ orbit. In fact, in many situations, there will exist a point $x$ in the Bruhat--Tits building of $G_{\Fb}$ and a sequence $(\gamma_n)_{n\in \Nb} \subset \Gamma$ such that $\lim_{n\to +\infty}d(x,\rho(\gamma_n)\cdot x) = 0$. This is the case for instance when $\Gamma$ is a closed surface group and $G_{\Fb} = \PSL_2(\Fb)$. 
\end{remark}

\begin{proof}[Proof of \Cref{propo: positive implies injective and discrete}]
    We can assume without loss of generality that $\Gamma$ is of the first kind. Let $D$ be a $\Gamma$-invariant subset of $\Sb^1$ and $\xi\colon D \to \Fc_{\Theta,\Fb}$ a $\rho$-equivariant positive map. Let $a_1,a_2,a_3$ be $3$ distinct points in $D$ and, for $k\in \{1,2,3\}$, let $I_k = (b_k,c_k)$ be an interval of length less than some $\varepsilon \in \Rb_{>0}$ containing $a_k$ and with extremities in $D$.

    If $g\in \Isom_+(\Hb)$ satisfies that $g(I_k)\cap I_k \neq \emptyset$ for all $k$, then there exists $(a_1',a_2',a_3')$ $\varepsilon$-close to $(a_1,a_2,a_3)$ which is moved by less than $\varepsilon$ under $g$. Since $\Isom_+(\Hb)$ acts freely and properly on triples of distinct points in $\Sb^1$, this forces $g$ to be very close to the identity. In particular, since $\Gamma\subset\Isom_+(\Hb)$ is discrete, for $\varepsilon$ small enough, we deduce that every $\gamma\in \Gamma\setminus \{\Id_\Gamma\}$ satisfies $\gamma(I_k)\cap I_k = \emptyset$ for some $k\in \{1,2,3\}$.

    Now, for such sufficiently small $\varepsilon$, consider the diamond $\Diam_k = \Diam_{\xi(a_k)}(\xi(b_k), \xi(c_k))$. Then for every $\gamma \in \Gamma\setminus \{\Id_\Gamma\}$, there is some $k\in \{1,2,3\}$ such that $\gamma(I_k)\cap I_k = \emptyset$, in which case  
    \[\rho(\gamma)(\Diam_k) \cap \Diam_k = \Diam_{\xi(\gamma\cdot a_k)}(\xi(\gamma\cdot b_k), \xi(\gamma\cdot c_k)) \cap \Diam_{\xi(a_k)}(\xi(b_k), \xi(c_k)) = \emptyset\]
    since the tuple $(\xi(b_k), \xi(a_k), \xi(c_k), \xi(\gamma \cdot b_k), \xi(\gamma \cdot a_k), \xi(\gamma \cdot c_k))$ is positive. As such, if we set $U\coloneqq \{g\in G_{\Fb} \mid g(\Diam_k)\cap \Diam_k \neq \emptyset \textrm{ for } k=1,2,3\}$, then $U$ is an open neighborhood of $\Id_{G_{\Fb}}$ whose preimage by $\rho$ is $\{\Id_\Gamma\}$. Hence $\rho$ is injective and has discrete image.
\end{proof}

\subsection{Positively translating elements}
\label{subsec:PosTranslElem}

An (almost) immediate consequence of $\Theta$-positivity of a representation $\rho$ is that the image of an infinite order element admits a positive orbit. In this section, we investigate such elements in more details.

\begin{definition}
\label{dfn:PosRotPosTransl}
    Given $g\in G_{\Fb}$, we say that a point $x\in \mathcal F_{\Theta,\Fb}$ has \emph{positive $g$-orbit} if the sequence $(g^n\cdot x)_{n\in \Zb}$ is positive, and that $g$ is \emph{$\Theta$-positively rotating} if there exists a point $x\in \mathcal F_{\Theta,\Fb}$ with positive $g$-orbit.
    
    We say that $g$ is \emph{$\Theta$-positively translating} if there exists $p$ and $x$ in $\mathcal F_{\Theta,\Fb}$ such that $p$ is fixed by $g$ and $(x,g\cdot x, g^2\cdot x, p)$ is positive. In that case, we call $(x,p)$ a \emph{$\Theta$-positively translating pair} for $g$.
\end{definition}

Notice that if $g$ is $\Theta$-positively rotating and $x\in\Fc_{\Theta,\Fb}$ is a point such that the sequence $(g^n\cdot x)_{n\in \Nb}$ is positive, then $x$ has a positive $\langle g\rangle$-orbit, i.e.\ for all positive integers $n$, the tuple $(g^{-n}\cdot x,\dots,g^{-1}\cdot x,x,g\cdot x,\dots,g^n\cdot x)$ is positive.

\begin{figure}[ht]
    \centering
    \begin{tikzpicture}[scale=1.5]
        \draw (0,0) circle (1cm);
        \draw (-.3,{-sqrt(1-.3*.3)}) node{$\bullet$};
        \draw[below] (-.3,{-sqrt(1-.3*.3)}) node{$x$};
        \draw (-.9,{sqrt(1-.9*.9)}) node{$\bullet$};
        \draw[left] (-.9,{sqrt(1-.9*.9)}) node{$g \cdot x$};
        \draw (-.1,{sqrt(1-.1*.1)}) node{$\bullet$};
        \draw[above] (-.1,{sqrt(1-.1*.1)}) node{$g^2 \cdot x$};
        \draw (.4,{sqrt(1-.4*.4)}) node{$\bullet$};
        \draw[right] (.4,{sqrt(1-.4*.4)+.2}) node{$g^3 \cdot x$};
        \draw (.8,{sqrt(1-.8*.8)}) node{$\bullet$};
        \draw[right] (.8+.05,{sqrt(1-.8*.8)}) node{$g^4 \cdot x$};
        \draw[right] (1,0) node{$\vdots$};
        \draw (.7,{-sqrt(1-.7*.7)}) node{$\bullet$};
        \draw[right] (.7,{-sqrt(1-.7*.7)}) node{$g^n \cdot x$};
        \draw[below] (.2,{-sqrt(1-.2*.2)}) node{$\ldots$};
        \draw (4,0) circle (1cm);
        \draw (5,0) node{$\bullet$};
        \draw[right] (5,0) node{$p$};
        \draw (3.7,{-sqrt(1-.3*.3)}) node{$\bullet$};
        \draw[below] (3.7,{-sqrt(1-.3*.3)}) node{$x$};
        \draw (3.1,{sqrt(1-.9*.9)}) node{$\bullet$};
        \draw[left] (3.1,{sqrt(1-.9*.9)}) node{$g \cdot x$};
        \draw (3.9,{sqrt(1-.1*.1)}) node{$\bullet$};
        \draw[above] (3.9,{sqrt(1-.1*.1)}) node{$g^2 \cdot x$};
    \end{tikzpicture}
    \caption{A $\Theta$-positively rotating element (left) and a $\Theta$-positively translating element (right)}
\end{figure}

From the definition above it is clear that $\Theta$-positively translating is a semi-algebraic condition over $\Qbar$. In contrast, the following proposition, together with the transfer principle, shows that ``$\Theta$-positively rotating'' is not characterized by real semi-algebraic conditions.

\begin{proposition}
\label{propo: PosTransPosRot}
    If $g \in G_{\Fb}$ is $\Theta$-positively translating, then it is $\Theta$-positively rotating. If $\Fb$ is Archimedean, then the converse is also true.
    
    In contrast, if $\Fb$ is non-Archimedean, there exists $g\in \PSL_2(\Fb)$ such that the action of $g$ on $\mathbf{P}^1(\Fb)$ is $\Theta$-positively rotating but does not have a fixed point.
\end{proposition}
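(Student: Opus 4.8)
Here is my plan for proving Proposition~\ref{propo: PosTransPosRot}.

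\textbf{The implication ``positively translating $\Rightarrow$ positively rotating.''} Suppose $g$ has a $\Theta$-positively translating pair $(x,p)$, so $p$ is $g$-fixed and $(x, g\cdot x, g^2\cdot x, p)$ is positive. The plan is to show that $(g^n\cdot x)_{n\in\Zb}$ is a positive sequence by verifying positivity of every quadruple $(g^i\cdot x, g^j\cdot x, g^k\cdot x, g^\ell\cdot x)$ with $i<j<k<\ell$ (using \Cref{check quadruples}). After translating by $g^{-i}$ (which preserves positivity and fixes $p$), it suffices to treat the case $i=0$. One first shows by induction, using \Cref{lem: AddingElemToPosTuple} applied with the fixed point $p$ playing the role of the ``last'' flag, that $(x, g\cdot x, g^2\cdot x, \ldots, g^n\cdot x, p)$ is positive for every $n\ge 2$: indeed, given that $(x, g\cdot x, \ldots, g^n\cdot x)$ is positive and that $(x, g\cdot x, g^2\cdot x, p)$ is positive, applying $g^{n-2}$ to the latter shows $(g^{n-2}\cdot x, g^{n-1}\cdot x, g^n\cdot x, p)$ is positive — wait, one needs the quadruple $(x, g^j\cdot x, g^n\cdot x, p)$ for some intermediate $j$, which follows from the already-established positivity of $(x,g\cdot x,\ldots,g^n\cdot x, p)$ restricted to a sub-quadruple together with dihedral invariance. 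Concretely: assume inductively that $(x, g\cdot x,\ldots, g^{n-1}\cdot x, p)$ is positive; apply $g$ to get that $(g\cdot x,\ldots, g^n\cdot x, g\cdot p)=(g\cdot x,\ldots,g^n\cdot x, p)$ is positive; now \Cref{corollarly: gluing} (with $a=x$, $b=p$, the $x_i$'s being $g\cdot x,\ldots,g^{n-1}\cdot x$, and the single flag $g^n\cdot x$ on the other side) glues these provided one intermediate quadruple $(x, g^j\cdot x, p, g^n\cdot x)$ is positive, which holds for $j=1$ by dihedral invariance of the positive quadruple $(g\cdot x, g^2\cdot x, p, \ldots)$ obtained from the base case. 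Once $(x, g\cdot x,\ldots, g^n\cdot x, p)$ is positive for all $n$, every quadruple $(x, g^j\cdot x, g^k\cdot x, g^\ell\cdot x)$ with $0<j<k<\ell$ is a sub-quadruple of such a positive tuple (it sits before $p$ in the cyclic order), hence positive; quadruples involving negative indices follow by applying a suitable power of $g$. This gives positivity of the full bi-infinite orbit, so $g$ is $\Theta$-positively rotating.

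\textbf{The converse when $\Fb$ is Archimedean.} Here $\Fb\subset\Rb$, so we may work over $\Rb$ and use \Cref{proposition: least upper bounded property for positive sequence}. If $x$ has positive $g$-orbit, then $(g^n\cdot x)_{n\ge 0}$ is a positive sequence, hence converges to some $p_+\in\Fc_{\Theta,\Rb}$, and likewise $(g^{-n}\cdot x)_{n\ge 0}$ converges to some $p_-$. By continuity of the $g$-action, $g\cdot p_+ = p_+$, so $p_+$ is a fixed point of $g$. By \Cref{proposition: least upper bounded property for positive sequence}, the sequence $(p_+, g\cdot x, g^2\cdot x, \ldots)$ is positive; in particular its initial quadruple $(p_+, g\cdot x, g^2\cdot x, g^3\cdot x)$ is positive, and after translating by $g^{-1}$ and using dihedral invariance we obtain that $(x, g\cdot x, g^2\cdot x, p_+)$ is positive. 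Thus $(x, p_+)$ is a $\Theta$-positively translating pair and $g$ is $\Theta$-positively translating.

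\textbf{The non-Archimedean counterexample.} Take $\Fb$ non-Archimedean with a big element, realize $\PSL_2(\Fb)$ acting on $\mathbf P^1(\Fb)\simeq \Fb\sqcup\{\infty\}$ with its cyclic order, and recall that a tuple is positive iff it is cyclically ordered up to reversal (the last Example in \Cref{s: Positivity on flag varieties}). The idea is to choose $g$ elliptic — diagonalizable over $\Fb[\sqrt{-1}]$ with eigenvalues of modulus $1$ — but ``infinitesimally rotating,'' so that $g$ has no fixed point in $\mathbf P^1(\Fb)$ yet all its orbits march monotonically around the circle. Concretely, one can take $g$ conjugate to the image of a rotation-type matrix $\begin{pmatrix} \cos\theta & \sin\theta \\ -\sin\theta & \cos\theta\end{pmatrix}$ where $\cos\theta, \sin\theta\in\Fb$ satisfy $\cos^2\theta+\sin^2\theta=1$ with $\sin\theta$ a positive infinitesimal (such elements exist: parametrize $\cos\theta=\frac{1-t^2}{1+t^2}$, $\sin\theta=\frac{2t}{1+t^2}$ with $t\in\Fb$ a positive infinitesimal). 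On $\Fb = \mathbf P^1(\Fb)\setminus\{\infty\}$ this $g$ acts by the homography $s\mapsto \frac{s\cos\theta + \sin\theta}{-s\sin\theta + \cos\theta}$; one checks its fixed-point equation $-s^2\sin\theta + s(\cos\theta - \cos\theta) + \sin\theta = 0$, i.e. $s^2 = 1$ forces $s=\pm 1$ only if those are genuine fixed points — in fact a rotation by $\theta\ne 0,\pi$ has no fixed point on $\mathbf P^1$ over any field where $-1$ is not a square, which holds since $\Fb$ is real closed. So $g$ has no fixed point. It remains to check that for a suitable starting point $x$ (say $x=0$), the orbit $(g^n\cdot 0)_{n\in\Zb}$ is cyclically ordered: because $\sin\theta$ is infinitesimal, each application of $g$ moves a point of $\mathbf P^1(\Fb)$ ``a little counterclockwise,'' and one verifies by a direct computation with the homography that consecutive triples $(g^{n}\cdot 0, g^{n+1}\cdot 0, g^{n+2}\cdot 0)$ are cyclically ordered and that the orbit does not ``wrap around'' (which would require $\theta$ to be commensurable with $\pi$, impossible for infinitesimal $\theta$). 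Hence $(g^n\cdot 0)_{n\in\Zb}$ is positive, so $g$ is $\Theta$-positively rotating but not $\Theta$-positively translating.

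\textbf{Main obstacle.} The bookkeeping in the first implication — chaining \Cref{lem: AddingElemToPosTuple}, \Cref{corollarly: gluing}, and dihedral invariance to get positivity of $(x, g\cdot x,\ldots, g^n\cdot x, p)$ for all $n$ — requires care to make sure the right intermediate quadruple is available at each induction step; this is the part most likely to need a careful lemma rather than a one-line argument. For the counterexample, the delicate point is verifying rigorously that the infinitesimal-angle orbit is genuinely cyclically monotone on all of $\Zb$ without wrapping; the cleanest route is probably to transport to the action on the affine line and use the explicit homography together with the fact that $|{\sin\theta}|$ being infinitesimal keeps every finite partial orbit inside an arc of length $\ll 2\pi$, so positivity of each quadruple reduces to a sign computation in $\Fb$.
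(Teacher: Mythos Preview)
Your overall strategy matches the paper's in all three parts, but there are a couple of execution issues.

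For the first implication, the attempted use of \Cref{corollarly: gluing} with $g^n\cdot x$ ``on the other side'' of $p$ is misapplied: in the target tuple $(x, g\cdot x,\ldots, g^n\cdot x, p)$ all iterates sit on the \emph{same} side of $p$, so there is nothing to glue with $a=x$, $b=p$. The paper's route is simpler and avoids this: since $g$ fixes $p$, applying $g^k$ to the base quadruple gives $(g^k\cdot x, g^{k+1}\cdot x, g^{k+2}\cdot x, p)$ positive for every $k\in\Zb$; one then cyclically rotates the inductive hypothesis $(x,\ldots, g^{n-1}\cdot x, p)$ to $(p, x,\ldots, g^{n-1}\cdot x)$ and appends $g^n\cdot x$ via \Cref{lem: AddingElemToPosTuple} using the quadruple $(p, g^{n-2}\cdot x, g^{n-1}\cdot x, g^n\cdot x)$. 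Your first instinct (before the ``wait'') was essentially this.

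For the Archimedean converse there is a genuine gap. You pass to $\Rb$ and obtain a fixed flag $p_+\in\Fc_{\Theta,\Rb}$, but the claim concerns an arbitrary Archimedean real closed $\Fb$ (e.g.\ $\Fb=\overline{\Qb}^r$), and you give no argument that $p_+$ actually lies in $\Fc_{\Theta,\Fb}$. The paper closes this by forward-referencing \Cref{propo:PosTransUniqueFP}, which characterizes $p_+$ semi-algebraically, hence over~$\Fb$.

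For the counterexample your infinitesimal-rotation idea is exactly the paper's. The paper makes the positivity verification painless by starting at a positive infinitesimal $a$ rather than at $0$: from the explicit homography one checks $g\cdot a>a$ and $g\cdot a$ is again a positive infinitesimal, so the forward orbit is strictly increasing inside the infinitesimal interval and cannot wrap; this sidesteps any ``commensurability with $\pi$'' heuristics, which have no direct meaning over $\Fb$.
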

\begin{proof}
    If $g$ is $\Theta$-positively translating, then, by $G_{\Fb}$-equivariance, we have that $(g^k\cdot x, g^{k+1}\cdot x, g^{k+2}\cdot x, p)$ is positive for all $k\in \Zb$, and the transitivity property of positivity (\Cref{lem: AddingElemToPosTuple}) implies that the whole $\langle g\rangle$-orbit of $x$ is positive. Hence $g$ is $\Theta$-positively rotating. 

    Let $g\in G_{\Rb}$ be $\Theta$-positively rotating, and let $x\in \mathcal F_{\Theta,\Rb}$ be a point whose $\langle g\rangle$-orbit is positive.
    By \Cref{proposition: least upper bounded property for positive sequence}, the sequence $(g^n\cdot x)_{n\in \Nb}$ has a limit $p$ as $n\to +\infty$, which is fixed by $g$ and satisfies that $(x,g\cdot x, g^2\cdot x,\ldots, p)$ is positive. Hence $g$ is $\Theta$-positively translating.
    If now $\Fb \subset \Rb$ is any real closed subfield, and $g\in G_{\Fb}$ a $\Theta$-positively rotating element, then $g$ viewed as an element in $G_{\Rb}$ is $\Theta$-positively translating by the above argument, hence it is $\Theta$-positively translating over $\Fb$ by the transfer principle.

    Let now $\Fb$ be a non-Archimedean real closed field.
    Let $\varepsilon \in \Fb$ be positive and smaller than every positive real number, and set
    \[g = \begin{bmatrix}
                    \sqrt{1-\varepsilon^2} & \varepsilon \\ 
                    -\varepsilon & \sqrt{1-\varepsilon^2}
    \end{bmatrix} \in \PSL_2(\Fb)~. \]
    For every $a \in \Fb \subset \mathbf P^1(\Fb)$, we have
    \[
    g\cdot a = \frac{\sqrt{1-\varepsilon^2} \ a + \varepsilon}{\sqrt{1-\varepsilon^2}-\varepsilon a}=a + \varepsilon \frac{1+a^2}{\sqrt{1-\varepsilon^2} - \varepsilon a}~.
    \]
    Hence, if $a$ is positive and smaller than any positive real number, then $g\cdot a > a$ and $g\cdot a$ is positive and smaller than any positive real number.
    By induction, we conclude that the $\langle g\rangle$-orbit of $a$ is positive.
    On the other hand, $\det(g) = 1$ and $\vert \textnormal{Tr}(g) \vert_{\Fb} <2$, hence $g$ does not have a fixed point in $\mathbf{P}^1(\Fb)$.
\end{proof}

\begin{remark}
\label{rem: infinitesimal rotation}
    Informally, the element $g$ in the above proof is an ``infinitesimal rotation''. It can be thought of as a limit of rotations of smaller and smaller angles. The orbits of such rotations remain positive for a longer and longer time, so that in the limit the full orbit is positive. 
\end{remark}

Recall from \Cref{hats} that $\hat{L}_{\Theta,\Fb}$ is the Levi factor of $\Aut_1(\mathfrak g_{\Fb})$ and $\hat{L}_{\Theta,\Fb}^*\subset\hat{L}_{\Theta,\Fb}$ is the stabilizer of every semi-algebraically connected component of $U_{\Theta,\Fb}^\pitchfork$.

\begin{proposition}\label{propo: PosTransUL}
    An element $g \in \Ad(G_{\Fb})$ is $\Theta$-positively translating if and only if it  is conjugated in $\Aut_1(\mathfrak{g}_{\Fb})$ to $ul$ for some $u\in U_{\Theta,\Fb}^{>0}$ and some $l\in \hat{L}_{\Theta,\Fb}^*$.
\end{proposition}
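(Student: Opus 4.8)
The plan is to reduce both implications to the ``model'' situation in which the $g$-fixed flag equals $p_\Theta$ and the auxiliary flag equals $p_\Theta^{\opp}$. Two facts make this legitimate: $\Aut_1(\mathfrak g_{\Fb})$ acts transitively on transverse pairs of flags, and being $\Theta$-positively translating is invariant under conjugation in $\Aut_1(\mathfrak g_{\Fb})$ — the latter because if $p$ is fixed by $g$ and $(x,g\cdot x,g^{2}\cdot x,p)$ is positive, then for $k\in\Aut_1(\mathfrak g_{\Fb})$ the flag $k\cdot p$ is fixed by $kgk^{-1}$ and $(k\cdot x,(kgk^{-1})\cdot k\cdot x,(kgk^{-1})^{2}\cdot k\cdot x,k\cdot p)=k\cdot(x,g\cdot x,g^{2}\cdot x,p)$ is positive, since positivity of tuples is $\Aut_1(\mathfrak g_{\Fb})$-invariant. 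I will also use that the stabilizer of $p_\Theta$ in $\Aut_1(\mathfrak g_{\Fb})$ admits the Levi decomposition $U_{\Theta,\Fb}\rtimes\hat L_{\Theta,\Fb}$, with $\hat L_{\Theta,\Fb}$ the common stabilizer of $p_\Theta$ and $p_\Theta^{\opp}$; that $\hat L_{\Theta,\Fb}$ preserves $U_{\Theta,\Fb}^{\pitchfork}$ and permutes its semigroup components $\mathcal S_{\Fb}$ via $\hat\Psi_{\Fb}$, with $\hat L_{\Theta,\Fb}^{*}=\ker\hat\Psi_{\Fb}$; and that $\hat L_{\Theta,\Fb}/\hat L_{\Theta,\Fb}^{*}$ acts freely and transitively on $\mathcal S_{\Fb}$ (\Cref{thm:ActionLThetaOnSemigroups}).

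\emph{The ``if'' direction.} By the conjugation invariance above it suffices to check that $g_0\coloneqq ul$ is $\Theta$-positively translating when $u\in U_{\Theta,\Fb}^{>0}$ and $l\in\hat L_{\Theta,\Fb}^{*}$. Here $g_0$ fixes $p_\Theta$, and since $l$ fixes $p_\Theta^{\opp}$ one computes $g_0\cdot p_\Theta^{\opp}=u\cdot p_\Theta^{\opp}$ and $g_0^{2}\cdot p_\Theta^{\opp}=u\,(lul^{-1})\cdot p_\Theta^{\opp}$, with $lul^{-1}\in U_{\Theta,\Fb}^{>0}$ because $l\in\ker\hat\Psi_{\Fb}$. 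Hence $(p_\Theta^{\opp},g_0\cdot p_\Theta^{\opp},g_0^{2}\cdot p_\Theta^{\opp},p_\Theta)=(p_\Theta^{\opp},u\cdot p_\Theta^{\opp},u(lul^{-1})\cdot p_\Theta^{\opp},p_\Theta)$ is a positive quadruple by condition~(2) in the definition of positive quadruple (taking the ambient element equal to $\Id$), so $(p_\Theta^{\opp},p_\Theta)$ is a $\Theta$-positively translating pair for $g_0$.

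\emph{The ``only if'' direction.} Suppose $g$ is $\Theta$-positively translating with pair $(x,p)$. Since $(x,g\cdot x,g^{2}\cdot x,p)$ is a positive quadruple we have $x\pitchfork p$, so after conjugating $g$ inside $\Aut_1(\mathfrak g_{\Fb})$ I may assume $x=p_\Theta^{\opp}$ and $p=p_\Theta$. Then $g$ fixes $p_\Theta$, hence $g=u_0l_0$ with $u_0\in U_{\Theta,\Fb}$ and $l_0\in\hat L_{\Theta,\Fb}$, and $g\cdot p_\Theta^{\opp}=u_0\cdot p_\Theta^{\opp}$, $g^{2}\cdot p_\Theta^{\opp}=u_0(l_0u_0l_0^{-1})\cdot p_\Theta^{\opp}$. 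Applying condition~(2) in the definition of positive quadruple to $(p_\Theta^{\opp},u_0\cdot p_\Theta^{\opp},u_0(l_0u_0l_0^{-1})\cdot p_\Theta^{\opp},p_\Theta)$ produces $h\in\Aut_1(\mathfrak g_{\Fb})$ fixing both $p_\Theta^{\opp}$ and $p_\Theta$ (hence $h\in\hat L_{\Theta,\Fb}$) together with $a,b\in U_{\Theta,\Fb}^{>0}$ satisfying $(hah^{-1})\cdot p_\Theta^{\opp}=u_0\cdot p_\Theta^{\opp}$ and $(hah^{-1})(hbh^{-1})\cdot p_\Theta^{\opp}=u_0(l_0u_0l_0^{-1})\cdot p_\Theta^{\opp}$; injectivity of $U_{\Theta,\Fb}\to\{y\mid y\pitchfork p_\Theta\}$, $w\mapsto w\cdot p_\Theta^{\opp}$, then gives $u_0=hah^{-1}$ and $l_0u_0l_0^{-1}=hbh^{-1}$. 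Thus $u_0$ and $l_0u_0l_0^{-1}$ both lie in the semigroup component $C\coloneqq h\cdot U_{\Theta,\Fb}^{>0}\in\mathcal S_{\Fb}$; since $l_0u_0l_0^{-1}$ also lies in $l_0\cdot C$ and distinct components of $U_{\Theta,\Fb}^{\pitchfork}$ are disjoint, $l_0\cdot C=C$, and freeness of the $\hat L_{\Theta,\Fb}/\hat L_{\Theta,\Fb}^{*}$-action on $\mathcal S_{\Fb}$ forces $l_0\in\hat L_{\Theta,\Fb}^{*}$. Finally, transitivity of that action provides $k\in\hat L_{\Theta,\Fb}$ with $k\cdot C=U_{\Theta,\Fb}^{>0}$, and then $kgk^{-1}=(ku_0k^{-1})(kl_0k^{-1})$ with $ku_0k^{-1}\in U_{\Theta,\Fb}^{>0}$ and $kl_0k^{-1}\in\hat L_{\Theta,\Fb}^{*}$ (normality of $\hat L_{\Theta,\Fb}^{*}$ in $\hat L_{\Theta,\Fb}$), so $g$ is conjugate in $\Aut_1(\mathfrak g_{\Fb})$ to an element of $U_{\Theta,\Fb}^{>0}\hat L_{\Theta,\Fb}^{*}$.

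I expect the main obstacle to be the bookkeeping around the non-connected group $\Aut_1(\mathfrak g_{\Fb})$: pinning down the stabilizers of $p_\Theta$ and $p_\Theta^{\opp}$ together with the Levi decomposition, and keeping track of which component of $U_{\Theta,\Fb}^{\pitchfork}$ each unipotent part belongs to. The one genuinely substantive point is that the single positive quadruple $(p_\Theta^{\opp},g\cdot p_\Theta^{\opp},g^{2}\cdot p_\Theta^{\opp},p_\Theta)$ forces $u_0$ and $l_0u_0l_0^{-1}$ into the same semigroup component; combined with the $\{\pm1\}^{\Theta}$-torsor structure of $\mathcal S_{\Fb}$, this is precisely what pins down $l_0\in\hat L_{\Theta,\Fb}^{*}$.
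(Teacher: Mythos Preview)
Your proof is correct and follows essentially the same strategy as the paper: reduce to the model pair $(p_\Theta^{\opp},p_\Theta)$ via $\Aut_1(\mathfrak g_{\Fb})$-transitivity, use the Levi decomposition of the stabilizer of $p_\Theta$, and read off from the positivity of $(p_\Theta^{\opp},g\cdot p_\Theta^{\opp},g^2\cdot p_\Theta^{\opp},p_\Theta)$ that the unipotent and its $l$-conjugate lie in the same semigroup component, forcing $l\in\hat L_{\Theta,\Fb}^{*}$ by freeness. The only difference is cosmetic: the paper normalizes in one step so that additionally $g\cdot p_\Theta^{\opp}\in\Diam_{\std}$ (using that $(x,g\cdot x,p)$ is a positive triple and that $\Aut_1(\mathfrak g_{\Fb})$ acts transitively on diamonds with given extremities), which lands $u$ directly in $U_{\Theta,\Fb}^{>0}$ and avoids your final conjugation by $k$. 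Your version is actually more complete, since you also spell out the ``if'' direction, which the paper's proof omits.
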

\begin{proof}
    Assume $g$ is $\Theta$-positively translating. Let $p,x\in \mathcal F_{\Theta,\Fb}$ be such that $g\cdot p = p$ and $(x,g\cdot x, g^2 \cdot x, p)$ is positive.
    In particular, $x$ and $p$ are transverse.
    Since $(x,g\cdot x, p)$ is positive, up to translating by an element in $\Aut_1(\mathfrak g_{\Fb})$, we can thus assume that $x= p_{\Theta}^\opp$, $p = p_\Theta$ and $g\cdot x \in \Diam_\std$.
    Since $g$ fixes $p = p_\Theta$, we can write $g=ul$ with $u\in U_{\Theta,\Fb}$ and $l\in \hat{L}_{\Theta,\Fb}$.
    Then $g\cdot x = u\cdot p_\Theta \in \Diam_\std$, so we may deduce that $u \in U_{\Theta,\Fb}^{>0}$.
    Finally, we have $(x, g\cdot x, g^2 \cdot x, p) = (p_\Theta^\opp, u\cdot p_\Theta^\opp, u (lul^{-1}) \cdot p_\Theta^\opp, p_\Theta)$.
    The positivity of this quadruple then implies that $lul^{-1} \in U_{\Theta,\Fb}^{>0}$.
    Hence $l$ belongs to $\hat{L}_{\Theta,\Fb}^*$ by definition.
\end{proof}

Note that the point $p$ fixed by a $\Theta$-positively translating element is not unique. However, over $\Rb$, there is a preferred choice of such a fixed point, given by the limit of the $\langle g\rangle$-orbit of $x$.

\begin{proposition}
\label{propo: OverRPosRotImpliesDiv}
    Let $g\in G_{\Rb}$ be a $\Theta$-positively rotating element.
    Then $g$ is $\Theta$-divergent and there exists a unique pair $(g^+,g^-)\in \Fc_{\Theta,\Rb}^2$ such that, for every $y\in \Fc_{\Theta,\Rb}$ with positive $\langle g \rangle$-orbit, the sequences $(g^n\cdot y)_{n\in\Nb}$ and $(g^{-n}\cdot y)_{n\in\Nb}$ converge to $g^+$ and $g^-$ respectively.
\end{proposition}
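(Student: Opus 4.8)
The plan is to reduce everything to a single positive $g$-orbit and to use the one-sided convergence lemma (\Cref{proposition: least upper bounded property for positive sequence}) together with the characterization of $\Theta$-attracting sequences in \Cref{lemma: divergence in flag manifold}. Fix a point $y\in\Fc_{\Theta,\Rb}$ with positive $g$-orbit $(g^n\cdot y)_{n\in\Zb}$. Applying \Cref{proposition: least upper bounded property for positive sequence} to the positive sequence $(g^{n}\cdot y)_{n\ge 1}$ gives a limit $g^+\coloneqq\lim_{n\to+\infty}g^n\cdot y$, and applying it to $(g^{-n}\cdot y)_{n\ge 1}$ gives a limit $g^-\coloneqq\lim_{n\to+\infty}g^{-n}\cdot y$. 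The first task is independence of $g^+$ and $g^-$ from the choice of $y$: if $y'$ is another point with positive $g$-orbit, then I would argue that the combined bi-infinite sequence obtained by interleaving (or, more carefully, that $y'$ lies in one of the diamonds $\Diam_{g^k\cdot y}^{\opp}(g^{k-1}\cdot y, g^{k+1}\cdot y)$ via \Cref{lem: AddingElemToPosTuple}, since positivity of both orbits plus positivity of a single well-chosen quadruple forces the union to be positive) yields $g^n\cdot y'\in\Diam_{g^1\cdot y}^{\opp}(g^{k_n}\cdot y, g^+)$ for a suitable increasing sequence $k_n$; the ``squeeze'' conclusion of \Cref{proposition: least upper bounded property for positive sequence} (equivalently \Cref{onesided squeeze}) then forces $g^n\cdot y'\to g^+$, and symmetrically $g^{-n}\cdot y'\to g^-$. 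This also gives uniqueness of the pair $(g^+,g^-)$, since any pair with the stated convergence property must be these limits.

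Next I would establish $\Theta$-divergence of $g$, i.e.\ that $\alpha(\mu(g^n))\to+\infty$ as $n\to+\infty$ for all $\alpha\in\Theta$. By \Cref{lemma: divergence in flag manifold} (specifically the equivalence of (i) and (iv)) and \Cref{prop: dynamics divergent sequence}, it suffices to exhibit a subsequence $(g^{n_k})$ that is $\Theta$-attracting, or directly to verify condition (iv): find open sets $U,V$ on which $g^n$ and $g^{-n}$ converge to constants. Here I use that $y$ has positive $g$-orbit and that $g^+$, $g^-$ are transverse: indeed $(g^{-1}\cdot y, y, g\cdot y, g^+)$ being positive (and similarly near $-\infty$) forces $g^+\pitchfork g^-$ by \Cref{opposites transverse} applied to the relevant opposite diamonds. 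Then I claim that for any $x$ in the diamond $\Diam_{g\cdot y}(y, g^+)$ — an open set — one has $g^n\cdot x\to g^+$: this follows from the ``in particular'' clause of \Cref{proposition: least upper bounded property for positive sequence}, since $g^n\cdot x$ lies eventually in $\Diam_y^{\opp}(g^{k_n}\cdot y, g^+)$ for a suitable increasing sequence of exponents (using $g$-equivariance and the nesting of diamonds from \Cref{lem: closure of diamond}). The symmetric statement gives an open set $V$ on which $g^{-n}$ converges to $g^-$. With $U$ and $V$ in hand, \Cref{lemma: divergence in flag manifold}(iv)$\Rightarrow$(i) yields $\alpha(\mu(g^n))\to+\infty$ for all $\alpha\in\Theta$, which is exactly $\Theta$-divergence of $g$ (and simultaneously re-identifies $g^+$, $g^-$ as the attracting and repelling flags of the $\Theta$-attracting sequence $(g^n)$, consistent with the limits already computed).

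The main obstacle I anticipate is the independence-of-$y$ step: one must carefully produce, from two a priori unrelated positive $g$-orbits, enough positivity relations to place $g^n\cdot y'$ inside a nested sequence of diamonds shrinking to $g^+$. The clean way to do this is: the quadruple $(y, y', g\cdot y, g\cdot y')$ — or some cyclically correct reordering of $\{y,y',g\cdot y,g\cdot y'\}$ — should be positive because both $\{g^n\cdot y\}$ and $\{g^n\cdot y'\}$ are positive and $g$ preserves positivity, but one has to check the relative cyclic position of $y$ and $y'$, which may genuinely differ; if $y'$ sits in the ``opposite'' diamond one replaces $y'$ by $g^{-N}\cdot y'$ or shifts indices. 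Once one valid quadruple is positive, \Cref{lem: AddingElemToPosTuple} bootstraps to positivity of the full interleaved bi-infinite sequence $(\dots, g^{-1}\cdot y, g^{-1}\cdot y', y, y', g\cdot y, g\cdot y', \dots)$, and then \Cref{proposition: least upper bounded property for positive sequence} applied to this sequence forces $g^n\cdot y$ and $g^n\cdot y'$ to share the same forward limit, and likewise backward. All remaining steps are routine applications of the diamond calculus (\Cref{lem: closure of diamond}, \Cref{lem: intersection closures diamonds}, \Cref{proposition: opposite diamonds}) already assembled in \Cref{s: Positivity on flag varieties}.
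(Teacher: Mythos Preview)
Your outline has the order inverted, and this creates the very obstacle you flag. The paper proves $\Theta$-divergence \emph{before} independence, which then makes independence immediate. For divergence from a single orbit $(g^n\cdot x)_{n\in\Zb}$, the paper takes the open set $U = V = \Diam_{g^{-1}\cdot x}^{\opp}(x, g\cdot x)$: for any $w$ there, $(g^{-1}\cdot x, x, w, g\cdot x)$ is positive by definition, so $g$-equivariance and iterated \Cref{lem: AddingElemToPosTuple} make the interleaved sequence $(\dots, g^{-1}\cdot x, g^{-1}\cdot w, x, w, g\cdot x, g\cdot w, \dots)$ positive, whence $g^n\cdot w \to g^+(x)$ and $g^{-n}\cdot w \to g^-(x)$ by \Cref{onesided squeeze}; then \Cref{lemma: divergence in flag manifold}(iv) yields divergence. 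Once $(g^n)$ is known to be $\Theta$-attracting with limits $g^\pm(x)$, independence is a one-liner: for any other $y$ with positive $g$-orbit, pick $z \in \Diam_{g^{-1}\cdot y}^{\opp}(y, g\cdot y)$ that is also transverse to both $g^\pm(x)$ (the diamond is open, transversality is open and dense); then $g^n\cdot z \to g^+(y)$ by the same interleaving argument applied to $y$, while $g^n\cdot z \to g^+(x)$ by \Cref{lemma: divergence in flag manifold}(ii), forcing $g^+(y) = g^+(x)$.

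Your direct attack on independence --- manufacturing a positive quadruple that mixes two a priori unrelated positive $g$-orbits --- does not go through as written: nothing guarantees that $y'$ (or any shift $g^{-N}\cdot y'$) lies in some $\Diam_{g^k\cdot y}^{\opp}(g^{k-1}\cdot y, g^{k+1}\cdot y)$, since these diamonds do not cover $\Fc_{\Theta,\Rb}$ in higher rank, and positivity of the two separate orbits gives no relation between them. Separately, your claim that $g^+ \pitchfork g^-$ is false in general (take $g$ unipotent in $\PSL_2(\Rb)$, where $g^+ = g^-$); fortunately your divergence argument does not actually use it, as condition (iv) of \Cref{lemma: divergence in flag manifold} makes no transversality assumption on the two limit flags.
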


\begin{proof}
    Let $x$ be a point in $\mathcal F_{\Theta,\Rb}$ with positive $\langle g\rangle$-orbit.
    By \Cref{proposition: least upper bounded property for positive sequence}, the sequences $(g^n\cdot x)_{n\in \Nb}$ and $(g^{-n}\cdot x)_{n\in \Nb}$ have limits, denoted $g^+(x)$ and $g^-(x)$ respectively, as $n\to +\infty$. Observe that if $w\in\Diam_{g^{-1}\cdot x}^{\rm opp}(x,g\cdot x)$, then \Cref{lem: AddingElemToPosTuple} implies that the bi-infinite sequence
    \[(\dots,g^{-1}\cdot x,g^{-1}\cdot w,x,w,g\cdot x,g\cdot w,\dots)\]
    is positive, and contains $(g^n\cdot x)_{n\in\Zb}$ as a bi-infinite subsequence. Thus, \Cref{onesided squeeze} implies that $(g^n\cdot w)_{n\in\Nb}$ and $(g^{-n}\cdot w)_{n\in\Nb}$ converge to $g^+(x)$ and $g^-(x)$ respectively. 
    
    Since $\Diam^{\rm opp}_{g^{-1}\cdot x}(x,g\cdot x)\subset\Fc_{\Theta,\Rb}$ is open, \Cref{lemma: divergence in flag manifold} implies that $g$ is $\Theta$-divergent, and that for every $z\in\Fc_{\Theta,\Rb}$ that is transverse to $g^+(x)$ and $g^-(x)$, the sequences $(g^n\cdot z)_{n\in\Nb}$ and $(g^{-n}\cdot z)_{n\in\Nb}$ converge to $g^+(x)$ and $g^-(x)$ respectively. Thus, for any $y\in \Fc_{\Theta,\Rb}$ with a positive $\langle g\rangle$-orbit, the set $\Diam_{g^{-1}\cdot y}(y,g\cdot y)$ contains a flag $z$ that is transverse to both $g^+(x)$ and $g^-(x)$, so
    \[g^+(y)=\lim_{n\to+\infty}g^n\cdot z=g^+(x)\quad\text{and}\quad g^-(y)=\lim_{n\to+\infty}g^{-n}\cdot z=g^-(x).\]
    The pair $(g^+,g^-)\coloneqq(g^+(x),g^-(x))$ thus has the required property.
\end{proof}

Over a non-Archimedean field, given a $\Theta$-positively translating element $g$ and a $\Theta$-positively translating pair $(x,p)$ for $g$, the sequence $g^n\cdot x$ may not have a limit.
For instance, let 
\[g= \begin{bmatrix} 1 & 1 \\ 0 & 1\end{bmatrix} \in \PSL_2(\Fb).\]
Then $g$ fixes $\infty \in \mathbf{P}^1(\Fb)$ and $(0, g\cdot 0, g^2 \cdot 0, \infty)$ is positive, hence $g$ is $\Theta$-positively translating.
Yet, $g^n\cdot 0 = n$ does not converge in $\mathbf{P}^1(\Fb) $ (since $\Fb$ contains elements larger than any integer).

Nevertheless, the following proposition characterizes $g^+$ semi-algebraically as the ``minimal'' fixed point with respect to which $g$ translates positively. This allows to define $g^+$ and $g^-$ over any real closed field.

\begin{proposition}
    \label{propo:PosTransUniqueFP}
    Let $g\in G_{\Fb}$ be $\Theta$-positively translating.
    Then there exists a unique point $p_0\in\Fc_{\Theta,\Fb}$ such that $(x,p_0)$ is a $\Theta$-positively translating pair for $g$, and $\Diam_{g\cdot x} (x,p_0) \subset \Diam_{g\cdot x}(x,p)$ for all $\Theta$-positive translating pairs $(x,p)$ for $g$.
\end{proposition}

\begin{proof}
    \Cref{propo:PosTransUniqueFP} is a (admittedly, rather elaborate) sentence in the first order logic of ordered fields with coefficients in $\overline{\Qb}^r$.
    By the Tarski--Seidenberg principle, it is thus enough to verify it over $\Rb$. 

     Over $\Rb$, if $p$ is fixed by $g$ and $(x, g\cdot x, g^2 \cdot x, p)$ is positive, then $g^n\cdot x \in \Diam_{g\cdot x}(x,p)$ and by \Cref{propo: OverRPosRotImpliesDiv}, $g^n\cdot x\to g^+$. Then by \Cref{proposition: least upper bounded property for positive sequence}, 
     \[\Diam_{g\cdot x}(x,g^+) = \bigcup_{n\geqslant 2} \Diam_{g\cdot x}(x, g^n\cdot x) \subset \Diam_{g\cdot x}(x,p).\] 
     Thus $p_0\coloneqq g^+$ satisfies the desired properties. 

     For the uniqueness, suppose that $p_1$ is a fixed point of $g$ such that if $(x,p)$ is a $\Theta$-positively translating pair for $g$, then $(x,p_1)$ is a $\Theta$-positive translating pair for $g$ and $\Diam_{g\cdot x}(x,p_1) \subset \Diam_{g\cdot x}(x,p)$ for every $\Theta$-positively translating pair $(x,p)$.
     Then (by mutual inclusion), $\Diam_{g\cdot x}(x,p_0) = \Diam_{g\cdot x}(x,p_1)$, and so $p_0 = p_1$ by \Cref{lem: ExtremitiesDiamonds}.
\end{proof}

From now on, we denote by $g^+$ the point $p_0$ given by \Cref{propo:PosTransUniqueFP}, and set $g^- \coloneqq (g^{-1})^+~$, which we call respectively the \emph{forward and backward fixed points} of the $\Theta$-positively translating element $g \in G_{\Fb}$. The proof of \Cref{propo:PosTransUniqueFP} shows that these notations are compatible with the ones of \Cref{propo: OverRPosRotImpliesDiv} when $g \in G_{\Rb}$. Furthermore, since the description of $p_0$ in \Cref{propo:PosTransUniqueFP} is semi-algebraic, the map from the set of $\Theta$-positively translating elements in $G_{\Fb}$ to $\Fc_{\Theta,\Fb}$ that sends $g$ to $g^+$ is semi-algebraic. 

As a corollary, we obtain the following characterization of $\Theta$-positively translating elements that are proximal.

\begin{corollary}\label{corol: CharWeakThetaProxForPosTrans}
    Let $g\in G_{\Fb}$ be $\Theta$-positively translating.
    Then $g$ is weakly $\Theta$-proximal if and only if its backward and forward fixed points $g^-$ and $g^+$ are transverse. In that case, $g^+$ and $g^-$ are respectively the weak attracting and repelling fixed flags of $g$. Finally, if $(x,p)$ is a translating pair for $g$, then
    \[(g^-,g^{-1}\cdot x, x, g\cdot x, g^+)\]
    is positive.
\end{corollary}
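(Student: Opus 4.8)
The plan is to reduce everything to two applications of facts already established about $\Theta$-positively translating elements and diamonds, using \Cref{propo:PosTransUniqueFP} to pin down the extremities. Throughout, fix a $\Theta$-positively translating pair $(x,p)$ for $g$; by \Cref{propo:PosTransUniqueFP} we may replace $p$ by $p=g^+$ and still have a translating pair, so $(x, g\cdot x, g^2\cdot x, g^+)$ is positive. The same proposition applied to $g^{-1}$ (which is $\Theta$-positively translating with $g^-=(g^{-1})^+$, since $g^{-1}$ has the positive orbit $(g^{-n}\cdot x)_{n\in\Zb}$ once $(g^n\cdot x)_{n\in\Zb}$ is positive — note $(g^n\cdot x)_{n\in\Zb}$ is positive by \Cref{propo: PosTransPosRot}) gives that $(g^{-1}\cdot x, x, g\cdot x, g^-)$ is positive, equivalently $(g^-, g^{-1}\cdot x, x, g\cdot x)$ is positive by dihedral invariance.

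First I would prove the ``in particular'' positivity statement, namely that $(g^-, g^{-1}\cdot x, x, g\cdot x, g^+)$ is positive. We have $(g^-, g^{-1}\cdot x, x, g\cdot x)$ positive from the previous paragraph and $(x, g\cdot x, g^2\cdot x, g^+)$ positive. To glue these I would like to invoke \Cref{lem: AddingElemToPosTuple}, adding $g^+$ to the positive tuple $(g^-, g^{-1}\cdot x, x, g\cdot x)$: this needs the positivity of a quadruple of the form $(g^-, \ast, g\cdot x, g^+)$ with $\ast$ one of the interior terms. The natural candidate is $(g^-, x, g\cdot x, g^+)$. To get this, note that $(x, g\cdot x, g^2\cdot x, g^+)$ positive plus \Cref{lem: AddingElemToPosTuple} applied repeatedly gives $(x, g\cdot x, \dots, g^n\cdot x, g^+)$ positive, and translating by $g^{-1}$ (equivariance) gives $(g^{-1}\cdot x, x, g\cdot x, \dots)$ positive with $g\cdot g^+ $; but $g$ fixes $g^+$, so in fact $(g^{-1}\cdot x, x, g\cdot x, g^2\cdot x, g^+)$ is positive. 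Extending further downward the same way and taking the relevant sub-quadruple yields $(g^-, x, g\cdot x, g^+)$ once we know $g^-$ sits in the appropriate diamond; alternatively, since $g^-=\lim g^{-n}\cdot x$ is not available over non-Archimedean fields, I would instead argue semi-algebraically: the whole statement of the corollary is a first-order sentence over $\overline{\Qb}^r$, so by Tarski--Seidenberg it suffices to check it over $\Rb$, where $g^\pm$ are genuine limits by \Cref{propo: OverRPosRotImpliesDiv} and $(g^-, g^{-1}\cdot x, x, g\cdot x, g^+)$ positive follows immediately from \Cref{proposition: least upper bounded property for positive sequence} applied to the bi-infinite positive sequence $(\dots, g^{-1}\cdot x, x, g\cdot x, \dots)$ together with its forward and backward limits.

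Next I would prove the equivalence ``$g$ weakly $\Theta$-proximal $\iff$ $g^-\pitchfork g^+$''. For the forward direction: if $g$ is weakly $\Theta$-proximal, it has a weak attracting flag $a$ and weak repelling flag $b$ which are transverse and fixed by $g$ (\Cref{sec: proximality}); it remains to identify $a=g^+$, $b=g^-$. Again by transfer it suffices to work over $\Rb$, where $a$ is the genuine attracting fixed point; since $(g^n\cdot x)\to g^+$ and $g^+$ is fixed and distinct configuration forces $g^+=a$ (e.g. $g^+$ lies in the basin of $a$, being transverse to $b$ for a suitable choice of $x$, hence $g^n\cdot g^+\to a$, but $g^n\cdot g^+=g^+$), and symmetrically $g^-=b$. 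For the converse: suppose $g^-\pitchfork g^+$. By transfer, over $\Rb$ the positive sequence $(g^n\cdot x)$ converges to $g^+$ and $(g^{-n}\cdot x)$ to $g^-$; since $g^-\pitchfork g^+$ and $x$ can be chosen transverse to both (as $\Diam_{g^{-1}\cdot x}(x,g\cdot x)$ meets the transverse locus), \Cref{lemma: divergence in flag manifold} shows $(g^n)$ is $\Theta$-attracting, so $g$ is $\Theta$-divergent with attracting flag $g^+$, which forces $\alpha(\mu(g^n))\to\infty$ and hence $\alpha(J_{\Rb}(g))>1$ for all $\alpha\in\Theta$, i.e. $g$ is weakly $\Theta$-proximal; the identification of $g^\pm$ with the weak attracting/repelling flags is as above.

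The main obstacle is that the cleanest arguments (convergence of $g^{\pm n}\cdot x$, identification of attracting flags) are only valid over $\Rb$, while the statement is asserted over an arbitrary real closed field $\Fb$. The resolution — which I would emphasize — is that every clause of the corollary is expressible as a first-order sentence in the language of ordered fields with parameters in $\overline{\Qb}^r$ (using that $g\mapsto g^+$, the Jordan projection, weak $\Theta$-proximality, transversality, and positivity of tuples are all semi-algebraic and defined over $\overline{\Qb}^r$), so the Tarski--Seidenberg transfer principle (\Cref{thm_TarskiSeidenberg}) reduces the whole proof to the real case, where \Cref{propo: OverRPosRotImpliesDiv}, \Cref{proposition: least upper bounded property for positive sequence}, \Cref{lemma: divergence in flag manifold}, and classical proximal dynamics do all the work. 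A secondary technical point to handle carefully is the choice of $x$ transverse to both $g^+$ and $g^-$: one uses that $\Diam_{g^{-1}\cdot x}(x, g\cdot x)$ is open and nonempty and meets the (open, dense) locus transverse to any fixed pair of flags, exactly as in the proof of \Cref{propo: OverRPosRotImpliesDiv}.
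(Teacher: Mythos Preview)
Your overall strategy coincides with the paper's: reduce via the transfer principle to $\Fb=\Rb$ and then use the real dynamics of \Cref{propo: OverRPosRotImpliesDiv} and \Cref{lemma: divergence in flag manifold}. Two steps, however, are not correctly executed.

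First, the positivity of the $5$-tuple $(g^-, g^{-1}\cdot x, x, g\cdot x, g^+)$ cannot be established before the equivalence, and it does not follow ``immediately'' from \Cref{proposition: least upper bounded property for positive sequence}. That corollary (and \Cref{onesided squeeze}) only adjoins \emph{one} limit to a one-sided positive sequence; it gives, say, $(g^+, g^{-1}\cdot x, x, g\cdot x)$ positive, and separately $(g^-, g^{-1}\cdot x, x, g\cdot x)$ positive, but does not place $g^+$ and $g^-$ in a common positive tuple. In fact this is impossible without first knowing $g^+\pitchfork g^-$, since flags in a positive tuple are pairwise transverse. The paper therefore proves the $5$-tuple statement \emph{after} the equivalence, under the hypothesis that $g$ is weakly $\Theta$-proximal: one observes that $(g^{-n}\cdot x,\ldots, g^n\cdot x)$ is positive for all $n$, so the $5$-tuple is semi-positive; then $g^\pm\in\overline{\Diam}_x^{\rm opp}(g^{-2}\cdot x, g^2\cdot x)\subset \Diam_x^{\rm opp}(g^{-1}\cdot x, g\cdot x)$ by \Cref{lem: closure of diamond}, so all pairs are transverse and \Cref{prop: transverse + semi positive implies positive} applies.

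Second, in the converse direction of the equivalence, the implication ``$\alpha(\mu(g^n))\to\infty$ hence $\alpha(J_{\Rb}(g))>1$'' is false in general (a unipotent element already gives a counterexample). What you actually have is stronger: an open set attracted by $g$ to $g^+$ and an open set attracted by $g^{-1}$ to $g^-$, with $g^+\pitchfork g^-$. This is precisely the dynamical characterization of $\Theta$-proximality over $\Rb$ that the paper invokes directly, bypassing the Cartan projection entirely. Once $g$ is $\Theta$-proximal, the identification $g^+=a_g$, $g^-=r_g$ follows from \Cref{lemma: divergence in flag manifold} as you indicate.
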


\begin{proof}
    Again, this statement is semi-algebraic and it is thus enough to prove it in the case $\Fb=\Rb$.

    Recall that $g\in G_{\Rb}$ is $\Theta$-proximal if and only if it has attracting and repelling fixed flags $a_g$ and $r_g$, i.e.\ $a_g$ and $r_g$ are transverse, and there are open sets $V^+$ and $V^-$ containing $a_g$ and $r_g$ respectively, so that $(g^n|_{V^+})_{n\in\Nb}$ converges uniformly on compact sets to the constant map whose image is $a_g$, and $(g^{-n}|_{V^-})_{n\in\Nb}$ converges uniformly on compact sets to the constant map whose image is $r_g$. 

    At the same time, since $g\in G_{\Rb}$ is $\Theta$-positively translating, \Cref{propo: OverRPosRotImpliesDiv} implies that there is an open set $U\subset\Fc_{\Theta,\Fb}$ (namely, the set of points in $\Fc_{\Theta,\Fb}$ with positive $\langle g\rangle$-orbits) such that for all $p\in U$, $g^n\cdot p\to g^+$ and $g^{-n}\cdot p\to g^-$ as $n\to +\infty$.

    Thus, if $g$ is $\Theta$-proximal, then \Cref{lemma: divergence in flag manifold}, implies that $g^+=a_g$ and $g^-=r_g$. In particular, $g^+$ and $g^-$ are transverse. Conversely, if $g^+$ and $g^-$ are transverse, then there are open sets $V^+$ and $V^-$ containing $g^+$ and $g^-$ respectively, such that every point in $V^+$ is transverse to $g^-$ and every point in $V^-$ is transverse to $g^+$. It follows from \Cref{lemma: divergence in flag manifold} that the sequence $(g^n|_{V^+})_{n\in\Nb}$ converges uniformly on compact sets to the constant map whose image is $g^+$, and $(g^{-n}|_{V^-})_{n\in\Nb}$ converges uniformly on compact sets to the constant map whose image is $g^-$, so $g$ is $\Theta$-proximal. 

    If $(x,p)$ is a translating pair for $g$, then 
    \[(g^{-n}\cdot x,g^{-1}\cdot x,x,g\cdot x,g^n\cdot x)\] 
    is positive for all integers $n\ge 2$, so $(g^-,g^{-1}\cdot x,x,g\cdot x,g^+)$ is semi-positive. Since $g^-,g^+\in\overline{\Diam}_x^{\rm opp}(g^{-2}\cdot x,g^2\cdot x)$, \Cref{lem: closure of diamond} implies that $g^-,g^+\in\Diam_x^{\rm opp}(g^{-1}\cdot x,g\cdot x)$, so by \Cref{opposites transverse}, $g^-$ and $g^+$ are transverse to $g^{-1}\cdot x$, $x$, and $g\cdot x$. Since $g^-$ and $g^+$ are transverse, \Cref{prop: transverse + semi positive implies positive} implies that $(g^-,g^{-1}\cdot x,x,g\cdot x,g^+)$ is positive.
\end{proof}

Finally, let us prove the following characterization of proximal elements:
\begin{proposition}\label{lem: CharPosTransWeaklyProx}
    Let $g$ be an element of $G_{\Fb}$ for which there exist $x,y\in \mathcal F_{\Theta,\Fb}$ such that $(x,g\cdot x, g^2\cdot x, g\cdot y, y)$ is positive. Then $g$ is $\Theta$-positively translating. Moreover, the intersections
    \[\bigcap_{n\in \Nb} g^n\cdot \Diam_{g\cdot x}(x,y)~ \quad\text{and}\quad \bigcap_{n\in \Nb} g^{-n}\cdot \Diam_{g\cdot x}^\opp(x,y)\]
    are nested and contain $g^+$ and $g^-$ respectively. In particular, $g$ is weakly $\Theta$-proximal, and the tuple \[(g^-, g^{-n}\cdot x,\ldots , x, \ldots, g^n\cdot x, g^+ , g^n \cdot y, \ldots y, \ldots , g^{-n}\cdot y)\] is positive for all integers $n>0$.
\end{proposition}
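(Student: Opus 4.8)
The plan is to first establish that $g$ is $\Theta$-positively translating, then to analyze the nested intersections of diamonds, and finally to assemble the positive tuple from the accumulated data. For the first part, since $(x,g\cdot x,g^2\cdot x,g\cdot y,y)$ is positive, the sub-quadruple $(x,g\cdot x,g^2\cdot x,y)$ is positive by \Cref{check quadruples}; applying $g$ gives that $(g\cdot x,g^2\cdot x,g^3\cdot x,g\cdot y)$ is positive, and combining with the original $5$-tuple via \Cref{lem: AddingElemToPosTuple} (adding $g^3\cdot x$ between $g^2\cdot x$ and $g\cdot y$) shows $(x,g\cdot x,g^2\cdot x,g^3\cdot x,g\cdot y,y)$ is positive. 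Iterating, $(x,g\cdot x,\ldots,g^n\cdot x,g\cdot y,y)$ is positive for all $n$. But this alone does not give a \emph{fixed} point; rather, I would argue more directly: by the transfer principle it suffices to prove the whole statement over $\Rb$ (the statement is a first-order sentence with parameters in $\overline{\Qb}^r$ once we note that ``$g$ is $\Theta$-positively translating'', ``weakly $\Theta$-proximal'', and the descriptions of $g^\pm$ are all semi-algebraic — see \Cref{propo:PosTransUniqueFP} and \Cref{prop: continuity g->g+ hyperbolic}). Over $\Rb$ I can then use \Cref{proposition: least upper bounded property for positive sequence} to take limits.

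Working over $\Rb$: from the positive bi-infinite-looking data, observe that $(g^n\cdot x)_{n\geq 0}$ is a positive sequence (it embeds in the positive tuples above), so it converges to some $x_+$ by \Cref{proposition: least upper bounded property for positive sequence}; since $g$ acts continuously on $\Fc_{\Theta,\Rb}$ and $g\cdot(g^n\cdot x)=g^{n+1}\cdot x$, the limit $x_+$ is fixed by $g$, and $(x,g\cdot x,g^2\cdot x,x_+)$ is positive, so $g$ is $\Theta$-positively translating with $(x,x_+)$ a translating pair; hence $x_+=g^+$ by \Cref{propo:PosTransUniqueFP}. Symmetrically $(g^{-n}\cdot x)_{n\geq 0}$ — which is positive because reversing the tuple $(x,g\cdot x,\ldots)$ and noting the $y$-side data gives positivity of $(\ldots,g^{-1}\cdot x,x,y,g\cdot y,\ldots)$, hence of $(\ldots,g^{-1}\cdot x,x)$ — converges to $g^-$. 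For the nested-intersection claims, note $\Diam_{g\cdot x}(x,y)$ makes sense since $(x,g\cdot x,y)$ is a positive triple (a sub-triple of the $5$-tuple, after checking transversality of $x$ and $y$, which holds as they are non-adjacent in a positive tuple — use \Cref{opposites transverse} via \Cref{lem: closure of diamond}, or directly \Cref{check quadruples}). Then $g^n\cdot\Diam_{g\cdot x}(x,y)=\Diam_{g^{n+1}\cdot x}(g^n\cdot x,g^n\cdot y)$, and positivity of the long tuple $(x,g\cdot x,\ldots,g^n\cdot x,\ldots,g^n\cdot y,\ldots,g\cdot y,y)$ together with \Cref{lem: closure of diamond} and \Cref{proposition: opposite diamonds} gives the nesting $g^{n+1}\cdot\Diam_{g\cdot x}(x,y)\subset g^n\cdot\Diam_{g\cdot x}(x,y)$; since $g^{-n}\cdot x,g^n\cdot y$ lie in $g^n\cdot\Diam^\opp$-type sets converging (as $n\to\infty$) by \Cref{onesided squeeze} to $g^-,g^+$, one gets $g^+\in\bigcap_n g^n\cdot\Diam_{g\cdot x}(x,y)$ and $g^-\in\bigcap_n g^{-n}\cdot\Diam^\opp_{g\cdot x}(x,y)$.

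Weak $\Theta$-proximality of $g$ then follows from \Cref{corol: CharWeakThetaProxForPosTrans}: I must check $g^+\pitchfork g^-$. Since $g^+=\lim g^n\cdot x$ and $g^-=\lim g^{-n}\cdot x$, and since $(g^{-n}\cdot x,g^{-1}\cdot x,x,g\cdot x,g^n\cdot x)$ is positive for all $n\geq 2$, passing to the limit gives that $(g^-,g^{-1}\cdot x,x,g\cdot x,g^+)$ is semi-positive; as in the proof of \Cref{corol: CharWeakThetaProxForPosTrans}, \Cref{lem: closure of diamond} and \Cref{opposites transverse} force $g^-,g^+$ transverse to $g^{-1}\cdot x,x,g\cdot x$, and the transversality $g^-\pitchfork g^+$ is exactly what one extracts from the presence of the $y$-branch: $(g^-,\ldots,x,\ldots,g^+,\ldots,y,\ldots)$ being semi-positive with $g^-$ and $g^+$ separated by the $x$-side on one arc and the $y$-side on the other arc, apply \Cref{lem: semi-positive + a bit transverse implies transverse} with the transverse pairs $(g^{-1}\cdot x, g\cdot x)$ (transverse since non-adjacent) and $(x,\text{something on the }y\text{-side})$ to conclude $g^-\pitchfork g^+$. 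Finally, with $g^+\pitchfork g^-$ established, the full tuple $(g^-,g^{-n}\cdot x,\ldots,x,\ldots,g^n\cdot x,g^+,g^n\cdot y,\ldots,y,\ldots,g^{-n}\cdot y)$ is semi-positive (limit of positive tuples as in the construction) and pairwise transverse where needed, so \Cref{prop: transverse + semi positive implies positive} — applied after checking the remaining transversalities via \Cref{lem: semi-positive + a bit transverse implies transverse} — upgrades it to positive. The transfer principle carries everything back to arbitrary real closed $\Fb$.

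\textbf{Main obstacle.} The delicate point is verifying the pairwise transversality conditions needed to invoke \Cref{prop: transverse + semi positive implies positive} on the final long tuple — in particular that $g^+\pitchfork g^-$ and that both are transverse to all the $g^{\pm n}\cdot x$ and $g^{\pm n}\cdot y$. The $y$-branch is essential here: without it one only gets that $g$ is $\Theta$-positively translating but not necessarily with transverse fixed flags (cf.\ the infinitesimal-rotation and unipotent examples in \Cref{propo: PosTransPosRot} and after \Cref{propo: OverRPosRotImpliesDiv}). Threading \Cref{lem: semi-positive + a bit transverse implies transverse} correctly to propagate the two given transversalities (from the $x$-side and the $y$-side) to the pair $(g^-,g^+)$ is where the real content lies.
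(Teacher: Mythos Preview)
Your overall strategy matches the paper's: reduce to $\Rb$ via the transfer principle (noting, as the paper does, that the conclusion is a \emph{countable family} of semi-algebraic statements rather than a single first-order sentence), build up long positive tuples by induction, locate $g^+$ and $g^-$ in the nested intersections, and then upgrade a semi-positive limit tuple to a positive one via \Cref{prop: transverse + semi positive implies positive}. The paper's induction is slightly tighter than yours---it establishes directly that $(g^n\cdot y,\ldots,y,x,g\cdot x,\ldots,g^n\cdot x)$ is positive for every $n$ (two applications of \Cref{lem: AddingElemToPosTuple} per step, one on the $x$-side and one on the $y$-side), which immediately gives the nestedness and, after shifting by $g^{-N}$, the semi-positivity of the final tuple.

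The place where you go astray is the transversality step $g^+\pitchfork g^-$, which you flag as the ``main obstacle'' and try to attack with \Cref{lem: semi-positive + a bit transverse implies transverse}. This is unnecessary, and as written your application of that lemma is not made precise. The point is that you have already done the work: once you know $g^+\in\bigcap_{n} g^n\cdot\Diam_{g\cdot x}(x,y)\subset\Diam_{g\cdot x}(x,y)$ and $g^-\in\bigcap_{n} g^{-n}\cdot\Diam^{\opp}_{g\cdot x}(x,y)\subset\Diam^{\opp}_{g\cdot x}(x,y)$, the flags $g^+$ and $g^-$ lie in \emph{opposite} diamonds, and \Cref{opposites transverse} gives $g^+\pitchfork g^-$ in one line. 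The paper exploits exactly this: every pair of flags among $\{g^-,g^+\}\cup\{g^k\cdot x\}\cup\{g^k\cdot y\}$ can be placed in opposite diamonds (using the long positive tuple and its $g$-shifts), so all the pairwise transversalities needed for \Cref{prop: transverse + semi positive implies positive} follow at once from \Cref{opposites transverse}, with no recourse to \Cref{lem: semi-positive + a bit transverse implies transverse}.
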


\begin{figure}[ht]
    \centering
    \begin{tikzpicture}[scale=2]
        \draw (0,0) circle (1cm);
        \draw (0,1) node{$\bullet$};
        \draw[above] (0,1) node{$x$};
        \draw (0,-1) node{$\bullet$};
        \draw[below] (0,-1) node{$y$};
        \draw (.4,{sqrt(1-.4*.4)}) node{$\bullet$};
        \draw[right] (.4,{sqrt(1-.4*.4)+.2}) node{$g  x$};
        \draw (.6,{sqrt(1-.6*.6)}) node{$\bullet$};
        \draw[right] (.6+.05,{sqrt(1-.6*.6)+.05}) node{$g^2  x$};
        \draw[right] (.8,{sqrt(1-.8*.8)+.05}) node{\rotatebox{-20}{$\ddots$}};
        \draw (.95,{sqrt(1-.95*.95)}) node{$\bullet$};
        \draw[right] (.95,{sqrt(1-.95*.95)}) node{$g^n  x$};
        \draw (1,0) node{$\bullet$};
        \draw[right, blue] (1,0) node{$g^+$};
        \draw (.8,{-sqrt(1-.8*.8)}) node{$\bullet$};
        \draw[right] (.8,{-sqrt(1-.8*.8)}) node{$g^n  y$};
        \draw[right] (.4+.05,{-sqrt(1-.4*.4)}) node{\rotatebox{-110}{$\ddots$}};
        \draw (.2,{-sqrt(1-.2*.2)}) node{$\bullet$};
        \draw[below] (.2+.05,{-sqrt(1-.2*.2)}) node{$g y$};
        \draw[left] (-.5,{sqrt(1-.5*.5)+.05}) node{\rotatebox{70}{$\ddots$}};
        \draw (-.95,{sqrt(1-.95*.95)}) node{$\bullet$};
        \draw[left] (-.95,{sqrt(1-.95*.95)}) node{$g^{-n}  x$};
        \draw (-.8,-{sqrt(1-.8*.8)}) node{$\bullet$};
        \draw[left, blue] (-.8,-{sqrt(1-.8*.8)}) node{$g^-$};
        \draw (-.6,{-sqrt(1-.6*.6)}) node{$\bullet$};
        \draw[left] (-.6,{-sqrt(1-.6*.6)}) node{$g^{-n}  y$};
        \draw[left] (-.2,{-sqrt(1-.2*.2)}) node{\rotatebox{20}{$\ddots$}};
    \end{tikzpicture}
    \caption{The orbit of $g$}
\end{figure}

\begin{proof}
Note that the hypothesis of the proposition is semi-algebraic, and the conclusion of the proposition is a countable family of semi-algebraic statements. As such, by the transfer principle, it suffices to prove the proposition over $\Rb$. Thus, for the remainder of this proof we will assume that $\Fb=\Rb$.

First, we prove by induction that for every integer $n\ge 1$, the tuple
\begin{align}\label{positive g orbit of x and y}
    (g^n \cdot y, g^{n-1}\cdot y, \ldots, y, x, g\cdot x, \ldots, g^n\cdot x)
\end{align}
is positive. The base case holds by assumption. For the inductive step, observe that the quadruples 
\[(g^n\cdot y, g^{n-1}\cdot y, g^n \cdot x, g^{n+1}\cdot x) = g^{n-1}\cdot (g\cdot y, y, g\cdot x, g^2\cdot x)\]
and
\[(g^{n+1}\cdot y,g^n\cdot y, g^n\cdot x, g^{n+1}\cdot x) = g^n\cdot (g\cdot y,y,x, g\cdot x)\]
are both positive. Applying \Cref{lem: AddingElemToPosTuple} to the tuple \eqref{positive g orbit of x and y} twice yields the inductive step.

It follows immediately from the positivity of the tuple \eqref{positive g orbit of x and y} that $g$ is $\Theta$-positively rotating, and the intersections in the statement of the proposition are nested. Since $\Fb=\Rb$, \Cref{propo: PosTransPosRot} implies that $g$ is $\Theta$-positively translating, and
\[g^+ =\lim_{n \to +\infty} g^n\cdot x \in \bigcap_{n\in \Nb} g^n \cdot \overline{\Diam}_{g\cdot x}(x,y)=\bigcap_{n\in \Nb} g^n \cdot \Diam_{g\cdot x}(x,y).\]
Similarly, \[g^- \in \bigcap_{n\in \Nb} g^{-n} \cdot \Diam_{g\cdot x}^\opp(x,y)~.\] 
It follows that any pair of flags in the set 
\[\{g^-,g^+\}\cup\{g^n\cdot x\mid n\in\Zb\}\cup\{g^n\cdot y \mid n\in\Zb\}\]
are contained in opposite diamonds and are thus transverse by \Cref{opposites transverse}. In particular $g^-$ and $g^+$ are transverse, so \Cref{corol: CharWeakThetaProxForPosTrans} implies that $g$ is weakly $\Theta$-proximal. Furthermore, for each positive integer $n$, the positivity of the tuple \eqref{positive g orbit of x and y} implies that
\[(g^-, g^{-n}\cdot x,\ldots , x, \ldots, g^n\cdot x, g^+ , g^n \cdot y, \ldots y, \ldots , g^{-n}\cdot y)\]
is semi-positive, so \Cref{prop: transverse + semi positive implies positive} implies that it is positive.
\end{proof}

\subsection{%
\texorpdfstring{%
Weakly $\Theta$-proximal representations}%
{Weakly Theta-proximal representations}}
\label{section : proximal representations}
We will now discuss the proximality properties of the elements in the images of $\Theta$-positive representations. To do so, it is convenient to introduce the following notion.

\begin{definition}
    A representation $\rho \colon \Gamma \to G_{\Fb}$ is \emph{weakly $\Theta$-proximal} if $\rho(\gamma)$ is weakly $\Theta$-proximal for every hyperbolic element $\gamma \in \Gamma$.
\end{definition}

Recall that $\Lambda_h\subset \Sb^1$ denotes the set of attracting fixed points of hyperbolic elements in $\Gamma$. It is a standard fact that for any Fuchsian group $\Gamma$ and any hyperbolic elements $\gamma,\gamma'\in\Gamma$, if $\gamma^+=\gamma'^+$, then $\gamma$ and $\gamma'$ commute. As a consequence, if $\rho\colon\Gamma\to G_{\Fb}$ is weakly $\Theta$-proximal, then we may define a $\rho$-equivariant map 
\[\xi^h_\rho \from \Lambda_h \to \Fc_{\Theta,\Fb}\] 
that sends $\gamma^+\in\Lambda_h$ to the weakly attracting flag of $\rho(\gamma)$. We refer to $\xi^h_\rho$ as the \emph{$\Theta$-proximal limit map} of $\rho$.

The main goal of this subsection is to prove \Cref{thm-intro: weak proximality}, which we restate here.

\begin{theorem}[\Cref{thm-intro: weak proximality}]\label{theorem: limit map extends to proximal limit map}
    Let $\Gamma$ be a Fuchsian group of the first kind and $\rho$ a representation of $\Gamma$ into $G_{\Fb}$, where $\Fb$ is a real closed field.
    Then the following are equivalent:
    \begin{enumerate}
        \item
        \label{weak proximality: item positive}
        $\rho$ is $\Theta$-positive;
        \item 
        \label{weak proximality: item positive on fixed points}
        $\rho$ is weakly $\Theta$-proximal, and the $\Theta$-proximal limit map $\xi^h_\rho$ of $\rho$ is positive.
    \end{enumerate}
\end{theorem}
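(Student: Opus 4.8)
\emph{Plan.} The implication \eqref{weak proximality: item positive on fixed points} $\Rightarrow$ \eqref{weak proximality: item positive} is essentially immediate: if $\rho$ is weakly $\Theta$-proximal and $\xi^h_\rho$ is positive, then taking $D = \Lambda_h$ (which is non-empty and $\Gamma$-invariant since $\Gamma$ is non-elementary) and $\xi = \xi^h_\rho$ exhibits $\rho$ as $\Theta$-positive directly from \Cref{dfn: positive representations}. So the real content is \eqref{weak proximality: item positive} $\Rightarrow$ \eqref{weak proximality: item positive on fixed points}. First I would fix a positive $\rho$-equivariant map $\xi\colon D \to \Fc_{\Theta,\Fb}$ on some non-empty $\Gamma$-invariant $D \subset \Sb^1$. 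The key idea is to apply \Cref{lem: CharPosTransWeaklyProx}: given a hyperbolic $\gamma \in \Gamma$, I want to produce flags $x,y$ in the image of $\xi$ with $(x,\rho(\gamma)\cdot x,\rho(\gamma)^2\cdot x,\rho(\gamma)\cdot y,y)$ positive, which forces $\rho(\gamma)$ to be $\Theta$-positively translating and weakly $\Theta$-proximal, with $\rho(\gamma)^\pm$ the weak attracting/repelling flags.

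\emph{Producing the positive $5$-tuple.} Since $\Gamma$ acts minimally on $\Sb^1 = \Lambda(\Gamma)$ (first kind) and $D$ is $\Gamma$-invariant and non-empty, $D$ is dense in $\Sb^1$; in particular $D$ is infinite and accumulates everywhere. Pick a point $z \in D$ with $z \notin \{\gamma^+,\gamma^-\}$. Then the bi-infinite sequence $(\gamma^n \cdot z)_{n\in\Zb}$ lies in $D$ and, by the dynamics of hyperbolic elements on $\Sb^1$ recalled in \Cref{s: Fuchsian groups}, it is cyclically ordered with $\gamma^n\cdot z \to \gamma^+$ and $\gamma^{-n}\cdot z \to \gamma^-$ as $n\to +\infty$. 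Choose a second point $w \in D$ lying in the open arc between $\gamma^+$ and $z$ on the appropriate side so that $(z,\gamma\cdot z,\gamma^2\cdot z,\gamma\cdot w,w)$ is cyclically ordered (such $w$ exists by density of $D$, after possibly replacing $\gamma$ by $\gamma^{-1}$ to fix the orientation). Applying the positive map $\xi$, the tuple $(\xi(z),\rho(\gamma)\cdot\xi(z),\rho(\gamma)^2\cdot\xi(z),\rho(\gamma)\cdot\xi(w),\xi(w))$ is positive by $\rho$-equivariance of $\xi$ and positivity. Now \Cref{lem: CharPosTransWeaklyProx} applies with $x = \xi(z)$, $y = \xi(w)$, yielding that $\rho(\gamma)$ is weakly $\Theta$-proximal, with forward and backward fixed points $\rho(\gamma)^+, \rho(\gamma)^-$, and that the enlarged tuples
\[(\rho(\gamma)^-,\rho(\gamma)^{-n}\cdot x,\ldots,x,\ldots,\rho(\gamma)^n\cdot x,\rho(\gamma)^+,\rho(\gamma)^n\cdot y,\ldots,y,\ldots,\rho(\gamma)^{-n}\cdot y)\]
are positive for all $n>0$. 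By \Cref{corol: CharWeakThetaProxForPosTrans}, $\rho(\gamma)^+$ is precisely the weak attracting flag of $\rho(\gamma)$, so $\xi^h_\rho(\gamma^+) = \rho(\gamma)^+$ is well-defined; one checks independence of the choices ($z$, $w$) because any two such choices give flags with $\rho(\gamma)$-orbits interleaving in the same positive configuration, and the attracting flag is intrinsic to $\rho(\gamma)$.

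\emph{Positivity of $\xi^h_\rho$.} It remains to show $\xi^h_\rho\colon \Lambda_h \to \Fc_{\Theta,\Fb}$ is positive, i.e.\ maps cyclically ordered quadruples to positive quadruples (by \Cref{check quadruples} it suffices to check quadruples). Let $\gamma_1^+,\gamma_2^+,\gamma_3^+,\gamma_4^+$ be a cyclically ordered quadruple in $\Lambda_h$. The strategy is to compare $\xi^h_\rho$ with the given $\xi$ on a common $\Gamma$-orbit inside $D$. Fix $z\in D\setminus(\text{finite set of fixed points involved})$. For each $i$, the points $\gamma_i^{\pm n}\cdot z$ converge to $\gamma_i^\pm$ from definite sides as $n\to\infty$, and $\xi(\gamma_i^n\cdot z) = \rho(\gamma_i)^n\cdot\xi(z)$; by the last clause of \Cref{lem: CharPosTransWeaklyProx} (or \Cref{onesided squeeze}), combined with the fact that $\rho(\gamma_i)$ is weakly proximal so $\rho(\gamma_i)^n\cdot\xi(z)$ and $\xi^h_\rho(\gamma_i^+)=\rho(\gamma_i)^+$ sit in compatible nested diamonds, one shows that replacing each $\gamma_i^+$ by a suitably deep point $\gamma_i^{n_i}\cdot z \in D$ yields a quadruple whose $\xi$-image is positive and which "converges into" the quadruple $(\xi^h_\rho(\gamma_1^+),\ldots,\xi^h_\rho(\gamma_4^+))$ in the sense that the latter lies in the closure, with each consecutive pair transverse (transversality of $\xi^h_\rho(\gamma_i^+)$ and $\xi^h_\rho(\gamma_j^+)$ for $i\neq j$ follows since they are limits of flags $\rho(\gamma_i)^{n_i}\cdot\xi(z)$ and $\rho(\gamma_j)^{n_j}\cdot\xi(z)$ lying in opposite diamonds). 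Then \Cref{prop: transverse + semi positive implies positive} upgrades semi-positivity to positivity, so $(\xi^h_\rho(\gamma_1^+),\ldots,\xi^h_\rho(\gamma_4^+))$ is positive.

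\emph{Main obstacle.} The routine parts are the dynamical bookkeeping on $\Sb^1$ and the invocations of \Cref{lem: CharPosTransWeaklyProx} and \Cref{corol: CharWeakThetaProxForPosTrans}. The delicate step is the last one: showing that the limit quadruple $(\xi^h_\rho(\gamma_i^+))_i$ is \emph{pairwise transverse}, so that the semi-positivity obtained by approximation can be promoted to genuine positivity. This requires carefully arranging the approximating points $\gamma_i^{n_i}\cdot z$ so that the consecutive flags $\rho(\gamma_i)^{n_i}\cdot\xi(z)$ land in diamonds that remain opposite in the limit — for which \Cref{lem: closure of diamond}, \Cref{opposites transverse}, and the explicit nested-diamond description of $\rho(\gamma_i)^\pm$ from \Cref{lem: CharPosTransWeaklyProx} are the right tools. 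Once transversality is secured, \Cref{prop: transverse + semi positive implies positive} finishes the argument.
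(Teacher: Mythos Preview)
Your argument for weak $\Theta$-proximality of each $\rho(\gamma)$ is correct and matches the paper: pick two points of $D$ separating $\gamma^+$ from $\gamma^-$, obtain a cyclically ordered $5$-tuple in $D$, push it through $\xi$, and invoke \Cref{lem: CharPosTransWeaklyProx}. This is exactly the content of the paper's \Cref{propo: positive implies weak proximal}.

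The gap is in your argument for the positivity of $\xi^h_\rho$. You propose to approximate each $\xi^h_\rho(\gamma_i^+)=\rho(\gamma_i)^+$ by points $\rho(\gamma_i)^{n_i}\cdot\xi(z)$, deduce that the target quadruple is semi-positive (``lies in the closure''), and then upgrade to positive via \Cref{prop: transverse + semi positive implies positive}. The problem is that over a non-Archimedean real closed field $\Fb$ the sequence $\rho(\gamma_i)^{n}\cdot\xi(z)$ need not converge to $\rho(\gamma_i)^+$ at all (see the discussion after \Cref{propo:PosTransUniqueFP}: weak proximality does not give a dynamical attracting point unless the eigenvalue gap is a big element). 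So you have no way to place $(\rho(\gamma_1)^+,\ldots,\rho(\gamma_4)^+)$ in the closure of positive quadruples by this route, and the semi-positivity step is unjustified.

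The paper sidesteps this entirely. Its \Cref{corollary: limit map extends to proximal limit map} argues \emph{directly}, without any limiting process: for each $i$ it chooses $x_i,y_i\in D$ with $(x_i,\gamma_i^+,y_i)$ cyclically ordered and uses \Cref{lem: CharPosTransWeaklyProx} not for convergence but for the purely semi-algebraic containment $\rho(\gamma_i)^+\in\Diam_{\xi(\gamma_i\cdot x_i)}(\xi(x_i),\xi(y_i))$, which yields the positive quadruple $(\xi(x_i),\rho(\gamma_i)^+,\xi(y_i),\xi(x_{i+1}))$. Then the long positive tuple $(\xi(x_1),\xi(y_1),\ldots,\xi(x_k),\xi(y_k))$, which comes from positivity of $\xi$, is enlarged by iterated applications of \Cref{lem: AddingElemToPosTuple} to insert each $\rho(\gamma_i)^+$ between $\xi(x_i)$ and $\xi(y_i)$. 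This gives positivity of $(\rho(\gamma_1)^+,\ldots,\rho(\gamma_k)^+)$ in one step, with no appeal to closures or transversality lemmas, and works uniformly over any real closed field. Your ``main obstacle'' is thus an artifact of the approach; the diamond containment you mention at the end is exactly what makes the paper's gluing argument go through, but it should replace rather than supplement the approximation scheme.
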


\begin{remark}
If $\Gamma$ is non-elementary and of the second kind, \Cref{Fuchsian groups lemma} implies that there is a Fuchsian group $\Gamma'$ of the first kind and a semi-conjugacy $\iota \colon \Gamma\to\Gamma'$. Observe that if $\gamma$ is a \emph{peripheral hyperbolic element} of $\Gamma$, i.e.\ a hyperbolic element preserving a connected component of $\Sb^1\setminus \Lambda(\Gamma)$, then $\iota(\gamma)\in \Gamma'$ is parabolic. It thus follows from \Cref{propo: equivalence positivity first and second kind Fuchsian groups} that \Cref{theorem: limit map extends to proximal limit map} would hold if we replace $\Lambda_h$ by the set of fixed points of non-peripheral hyperbolic elements.
\end{remark}

The proof of \Cref{theorem: limit map extends to proximal limit map} will require two lemmas. The first gives the proximality properties of elements in the image of $\rho$.

\begin{lemma}\label{propo: positive implies weak proximal}
    Let $\Gamma$ be of the first kind, and $\rho \from \Gamma \to G_{\Fb}$ a $\Theta$-positive representation. Then:
        \begin{enumerate}
        \item for every hyperbolic element $\gamma \in \Gamma$, $\rho(\gamma)$ is weakly $\Theta$-proximal and $\Theta$-positively translating.
        \item for every parabolic element $\eta \in \Gamma$, $\rho(\eta)$ is $\Theta$-positively rotating.
        \end{enumerate}
\end{lemma}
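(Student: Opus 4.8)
The strategy is to fix a $\Theta$-positive boundary map $\xi \colon D \to \Fc_{\Theta,\Fb}$ defined on some non-empty $\Gamma$-invariant $D \subset \Lambda(\Gamma) = \Sb^1$, and to feed well-chosen points of $D$ into \Cref{lem: CharPosTransWeaklyProx} (for hyperbolic elements) and into the definition of $\Theta$-positively rotating (for parabolic elements). Since $\Gamma$ is non-elementary and of the first kind, $D$ is infinite, and by minimality of the $\Gamma$-action on $\Sb^1$ we may choose points of $D$ in any prescribed non-empty open arc.

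\textbf{The hyperbolic case.} Let $\gamma \in \Gamma$ be hyperbolic with attracting fixed point $\gamma^+$ and repelling fixed point $\gamma^-$. First I would pick a point $x \in D$ with $x \notin \{\gamma^+, \gamma^-\}$, say $x$ in the arc $(\gamma^-, \gamma^+)$ on one side. Then the bi-infinite sequence $(\gamma^n \cdot x)_{n\in\Zb}$ lies in $D$ (by $\Gamma$-invariance), is cyclically ordered on $\Sb^1$ (this is the standard north–south dynamics of a hyperbolic isometry: the points $\gamma^n \cdot x$ march monotonically from $\gamma^-$ to $\gamma^+$ along one arc), and hence its image $(\xi(\gamma^n \cdot x))_{n\in\Zb}$ is a positive sequence in $\Fc_{\Theta,\Fb}$. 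Next I would pick a second point $y \in D$ in the \emph{other} arc between $\gamma^-$ and $\gamma^+$, so that $(\gamma^{n}\cdot y)$ marches along the complementary arc; then the finite tuple $(x, \gamma\cdot x, \gamma^2 \cdot x, \gamma\cdot y, y)$ is cyclically ordered on $\Sb^1$ — one checks the six triples directly from the positions of these five points on the circle — so $(\xi(x), \rho(\gamma)\xi(x), \rho(\gamma)^2\xi(x), \rho(\gamma)\xi(y), \xi(y))$ is a positive quintuple. Applying \Cref{lem: CharPosTransWeaklyProx} with $g = \rho(\gamma)$ then yields exactly that $\rho(\gamma)$ is $\Theta$-positively translating and weakly $\Theta$-proximal, which is assertion~(1). (One must be slightly careful that the arc containing $x$ and the arc containing $y$ are chosen so the cyclic order of the quintuple comes out right; after possibly swapping $x$ and $y$ or replacing $\gamma$ by $\gamma^{-1}$ in the bookkeeping, this is automatic.)

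\textbf{The parabolic case.} Let $\eta \in \Gamma$ be parabolic with unique fixed point $\eta^+$. Pick any $x \in D \setminus \{\eta^+\}$. Then $(\eta^n \cdot x)_{n\in\Zb}$ lies in $D$ and is cyclically ordered on $\Sb^1$: a parabolic element translates $\Sb^1 \setminus \{\eta^+\}$ monotonically in one direction (this is the content of the ``positive/negative parabolic'' dichotomy recalled in \Cref{s: Fuchsian groups}), so the points $\eta^n \cdot x$ accumulate onto $\eta^+$ from both sides in a monotone fashion, and every finite sub-tuple $(\eta^{n_1}\cdot x, \dots, \eta^{n_k}\cdot x)$ with $n_1 < \dots < n_k$ is cyclically ordered. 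Consequently $(\xi(\eta^n \cdot x))_{n\in\Zb} = (\rho(\eta)^n \cdot \xi(x))_{n\in\Zb}$ is a positive bi-infinite sequence, which says precisely that $\xi(x)$ has positive $\rho(\eta)$-orbit, so $\rho(\eta)$ is $\Theta$-positively rotating. This gives assertion~(2).

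\textbf{Main obstacle.} The only real content beyond elementary Fuchsian-group dynamics is ensuring one genuinely has the five (resp.\ all) required points inside the domain $D$ with the correct cyclic configuration; but $D$ being $\Gamma$-invariant handles the orbits $\{\eta^n \cdot x\}$ and $\{\gamma^n \cdot x\}$ for free, and choosing the two seed points $x, y$ in prescribed arcs is possible because $\Gamma$ acts minimally on $\Lambda(\Gamma) = \Sb^1$ and $D$ is a non-empty $\Gamma$-invariant subset, hence meets every non-empty open arc. So the proof is really an exercise in translating the circle dynamics of hyperbolic and parabolic isometries into statements about cyclically ordered tuples, and then invoking \Cref{lem: CharPosTransWeaklyProx}; no single step is hard, but the cyclic-order bookkeeping for the quintuple $(x, \gamma\cdot x, \gamma^2\cdot x, \gamma\cdot y, y)$ is the place where care is needed.
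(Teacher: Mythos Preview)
Your proof is correct and follows essentially the same route as the paper: for hyperbolic $\gamma$ you pick $x,y\in D$ on opposite arcs of $\{\gamma^+,\gamma^-\}$ (equivalently, $(\gamma^-,x,\gamma^+,y)$ cyclically ordered), observe that the quintuple $(x,\gamma x,\gamma^2 x,\gamma y,y)$ is cyclically ordered, and invoke \Cref{lem: CharPosTransWeaklyProx}; for parabolic $\eta$ you use that the orbit of any $x\in D\setminus\{\eta^+\}$ is cyclically ordered. The paper's argument is identical in substance, only terser on the density of $D$ and the cyclic-order verification.
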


\begin{proof}
    Let $D$ be a non-empty $\Gamma$-invariant subset of $\Sb^1$ and $\xi \colon D \to \Fc_{\Theta,\Fb}$ a $\rho$-equivariant positive map.
    Let $\gamma \in \Gamma$ be a hyperbolic element.
    Choose $x \neq y \in D$ such that $(\gamma^-,x,\gamma^+,y)$ is cyclically ordered. Then $(x, \gamma \cdot x, \gamma^2 \cdot x, \gamma \cdot y, y)$ is cyclically ordered.
    By positivity and $\rho$-equivariance of $\xi$, the tuple 
    $(\xi(x), \rho(\gamma) \cdot \xi(x), \rho(\gamma)^2 \cdot \xi(x), \rho(\gamma) \cdot \xi(y), \xi(y))$ is positive.
    Thus \Cref{lem: CharPosTransWeaklyProx} implies (1).

    Let $\eta \in \Gamma$ be a parabolic element. For any $x \in D\setminus \eta^+$, either $(x, \eta\cdot x, \ldots, \eta^n\cdot x)$ is cyclically ordered for every $n$ (if $\eta$ is a positive parabolic) or $(\eta^n\cdot x, \ldots , \eta\cdot x, x)$ is cyclically ordered for all $n$ (if $\eta$ is a negative parabolic). In any case, since positivity is invariant under dihedral permutations, we deduce that the sequence $(\rho(\eta)^n \cdot \xi(x))_{n \in \Nb} = (\xi(\eta^n\cdot x))_{n\in \Nb}$ is positive, hence (2) holds. 
\end{proof}

\begin{remark}
If $\Gamma$ is of the first kind and $\eta\in\Gamma$ is a parabolic element, then there are $\Theta$-positive representations $\rho\colon\Gamma\to G_{\Fb}$ such that $\rho(\eta)$ is not $\Theta$-positively translating, see \Cref{subsection: ExNonFramablePosRepr}. 
\end{remark}

In light of \Cref{propo: positive implies weak proximal}, we may use the $\Theta$-proximal limit map of $\rho$ to extend any $\rho$-equivariant, positive map defined on a $\Gamma$-invariant non-empty subset of $\Sb^1$ to a subset that contains $\Lambda_h$:

\begin{lemma}\label{corollary: limit map extends to proximal limit map}
    Let $\Gamma$ be of the first kind. If $\rho \colon\Gamma\to G_{\Fb}$ is a representation and $\xi \from D \to \Fc_{\Theta,\Fb}$ is a positive and $\rho$-equivariant map defined on a $\Gamma$-invariant non-empty subset $D$ of $\Sb^1$, then $\xi$ and $\xi^h_\rho$ agree on $D\cap\Lambda_h$, and the map
    \[\begin{array}{rccl}
    \xi \cup \xi^h_\rho \colon & D\cup \Lambda_h & \to  &\Fc_{\Theta,\Fb} \\
    & x & \mapsto & \begin{cases*} \xi^h_\rho(x) & \textrm{if $x\in \Lambda_h$}, \\ \xi(x) & \textrm{otherwise}\end{cases*}
    \end{array}\]
    is again $\rho$-equivariant and positive, where $\xi^h_\rho$ is the $\Theta$-proximal limit map of $\rho$ (which is weakly $\Theta$-proximal by \Cref{propo: positive implies weak proximal}). 
\end{lemma}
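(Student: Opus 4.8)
The plan is to verify two things: first that $\xi$ and $\xi^h_\rho$ agree on the overlap $D \cap \Lambda_h$, and second that the merged map $\xi \cup \xi^h_\rho$ sends cyclically ordered tuples to positive tuples (equivariance being immediate since both pieces are $\rho$-equivariant and $\Lambda_h$, $D$ are $\Gamma$-invariant). For the agreement: if $x \in D \cap \Lambda_h$, pick a hyperbolic $\gamma$ with $\gamma^+ = x$. Then $\gamma \cdot x = x$, so $\xi(x) = \xi(\gamma \cdot x) = \rho(\gamma) \cdot \xi(x)$, i.e. $\xi(x)$ is a fixed point of $\rho(\gamma)$. By \Cref{propo: positive implies weak proximal}, $\rho(\gamma)$ is weakly $\Theta$-proximal and $\Theta$-positively translating; I would argue $\xi(x)$ must equal its weak attracting flag $\rho(\gamma)^+ = \xi^h_\rho(x)$. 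To see this, choose $y \neq x$ in $D$ with $(\gamma^-, x, \gamma^+, y)$ — wait, $x = \gamma^+$ here, so instead take $y, z \in D$ distinct from $x$ and from $\gamma^-$ with $(\gamma^-, y, z, x)$ cyclically ordered; then $(\rho(\gamma)^{-n} \cdot \xi(y), \dots, \xi(y), \xi(z), \dots, \rho(\gamma)^n \cdot \xi(z))$ together with the fixed point $\xi(x)$ forms a positive configuration, and since $\rho(\gamma)$ is $\Theta$-positively translating with forward fixed point $\rho(\gamma)^+$, the uniqueness in \Cref{propo:PosTransUniqueFP} forces $\xi(x) = \rho(\gamma)^+$. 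More directly: $(\xi(y), \rho(\gamma)\cdot\xi(y), \rho(\gamma)^2\cdot\xi(y), \xi(x))$ is positive (image of the cyclically ordered $(y, \gamma y, \gamma^2 y, x)$ since $x = \gamma^+$ and $\gamma^n y \to \gamma^+$), so $\xi(x)$ is a fixed point making $(\xi(y), \rho(\gamma))$ a $\Theta$-positively translating pair, and by the minimality characterization of $\rho(\gamma)^+$ in \Cref{propo:PosTransUniqueFP} combined with \Cref{lem: ExtremitiesDiamonds} one gets $\xi(x) = \rho(\gamma)^+ = \xi^h_\rho(x)$.

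Next, for positivity of the merged map, by \Cref{check quadruples} it suffices to check that every cyclically ordered quadruple in $D \cup \Lambda_h$ maps to a positive quadruple. Write such a quadruple as $(x_1, x_2, x_3, x_4)$ with each $x_i \in D \cup \Lambda_h$. For each $x_i \in \Lambda_h \setminus D$, choose a hyperbolic $\gamma_i$ with $\gamma_i^+ = x_i$; the key approximation step is that for any point $p \in D$ distinct from $\gamma_i^-$ and $x_i$, the iterates $\gamma_i^n \cdot p \in D$ converge to $x_i = \gamma_i^+$ from one fixed side (left or right, depending on the relative position of $p$) as $n \to \infty$, by the convergence group property (\Cref{fact: Fuchsian groups are convergence groups}), and $\xi(\gamma_i^n \cdot p) = \rho(\gamma_i)^n \cdot \xi(p)$ converges to $\xi^h_\rho(x_i)$ — this last convergence is exactly the content of \Cref{lem: CharPosTransWeaklyProx}, which gives that the positive orbit of $\rho(\gamma_i)$ on $\xi(p)$ has limit $\rho(\gamma_i)^+$ (using that $\Fb$ need not be $\Rb$, but \Cref{lem: CharPosTransWeaklyProx} and its proof via the transfer principle still provide the limit as $\rho(\gamma_i)^+$ since $\rho(\gamma_i)$ is weakly $\Theta$-proximal, hence the forward/backward fixed flags are transverse and \Cref{corol: CharWeakThetaProxForPosTrans} applies giving convergence in the dynamical sense after transfer). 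Actually I must be careful here: over non-Archimedean $\Fb$ the sequence $\rho(\gamma_i)^n \cdot \xi(p)$ genuinely converges in $\Fc_{\Theta,\Fb}$ because $\rho(\gamma_i)$ is weakly $\Theta$-proximal (not merely positively translating), so by \Cref{corol: CharWeakThetaProxForPosTrans} the element is $\Theta$-proximal and the standard dynamics — which is semi-algebraic, hence valid over all real closed fields by transfer — gives $\rho(\gamma_i)^n \cdot \xi(p) \to \rho(\gamma_i)^+ = \xi^h_\rho(x_i)$.

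Putting this together: given a cyclically ordered quadruple $(x_1, x_2, x_3, x_4)$ in $D \cup \Lambda_h$, I replace each $x_i \in \Lambda_h \setminus D$ by an approximating sequence $x_i^{(n)} = \gamma_i^{n} \cdot p_i \in D$ chosen so that $(x_1^{(n)}, x_2^{(n)}, x_3^{(n)}, x_4^{(n)})$ stays cyclically ordered in $D$ for all large $n$ (possible by choosing each $p_i$ close enough to $x_i$ and on the correct side, using \Cref{lem: Monotonous limit points} or directly the convergence-group dynamics); then $\xi$ of this quadruple is positive by hypothesis, and $\xi(x_i^{(n)}) \to (\xi \cup \xi^h_\rho)(x_i)$ as shown. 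Hence $((\xi\cup\xi^h_\rho)(x_1), \dots, (\xi\cup\xi^h_\rho)(x_4))$ is semi-positive, i.e. lies in the closure of the positive quadruples. To upgrade semi-positivity to positivity I invoke \Cref{prop: transverse + semi positive implies positive}: it remains to check the four flags are pairwise transverse. Transversality among flags both of which lie in $\xi(D)$ or are $\xi^h_\rho$-values follows because for any two of the approximating (or genuine) points $a \neq b$ in $D$, $\xi(a) \pitchfork \xi(b)$ (positivity of the pair, via a positive triple through them), and transversality is a closed condition's complement — more precisely, I use that in a semi-positive tuple arising as a limit of positive tuples where the relevant pairs of approximants were transverse, \Cref{lem: semi-positive + a bit transverse implies transverse} or a direct limiting argument gives transversality of the limit pairs, since two distinct points of $\Lambda_h$ are fixed points of non-commuting hyperbolics and their $\xi^h_\rho$-images are the transverse attracting flags of a positive configuration.

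The main obstacle I anticipate is the transversality verification needed to apply \Cref{prop: transverse + semi positive implies positive}: one must ensure that when several of the $x_i$ coincide in $\Lambda_h$ with the \emph{repelling} point of a hyperbolic used to approximate another, no degeneration occurs — but distinctness of the four points $x_1, \dots, x_4$ in $\Sb^1$ together with the fact that each approximating iterate sequence converges to the \emph{attracting} fixed point (which is the $x_i$ itself, not some $\gamma^-$) rules this out, provided the $p_i$'s are chosen away from all the $\gamma_j^{\pm}$. A clean way to organize the transversality step is: first show $\xi^h_\rho$ alone is positive on $\Lambda_h$ (this is essentially the content that will be proven as part of \Cref{theorem: limit map extends to proximal limit map}, or can be bootstrapped here), using \Cref{lem: CharPosTransWeaklyProx} to produce, for any cyclically ordered triple $(\gamma_1^+, \gamma_2^+, \gamma_3^+)$ in $\Lambda_h$, an explicit positive configuration; then the pairwise transversality within $\xi^h_\rho(\Lambda_h)$ is automatic, transversality between $\xi(D)$-values is automatic, and the mixed case follows by the same limiting argument. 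I would therefore structure the proof as: (i) agreement on $D \cap \Lambda_h$; (ii) the approximation lemma $\xi(\gamma^n p) \to \xi^h_\rho(\gamma^+)$ from one side; (iii) semi-positivity of images of cyclically ordered quadruples by passing to the limit; (iv) pairwise transversality; (v) conclude via \Cref{prop: transverse + semi positive implies positive} and \Cref{check quadruples}.
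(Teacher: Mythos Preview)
Your limiting approach in step~(ii)--(iii) has a genuine gap over non-Archimedean $\Fb$: the convergence $\rho(\gamma_i)^n\cdot\xi(p)\to\rho(\gamma_i)^+$ need not hold. Weak $\Theta$-proximality only says the eigenvalue gaps satisfy $\alpha_{\Fb}(J_{\Fb}(\rho(\gamma_i)))>1$ in $\Fb$, which does \emph{not} imply contracting dynamics unless these gaps are big elements (strong proximality); the paper is explicit that for a merely weakly proximal element the basin of attraction of $g^+$ may be empty (see \Cref{sec: proximality}). Your appeal to transfer is illegitimate here: the statement ``$g^n\cdot p\to g^+$ as $n\to\infty$'' is not first-order in the language of ordered fields --- it quantifies over natural numbers and encodes a limit, so the Tarski--Seidenberg principle does not apply. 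Consequently your tuple $((\xi\cup\xi^h_\rho)(x_1),\dots,(\xi\cup\xi^h_\rho)(x_4))$ is not exhibited as a limit of positive tuples, and semi-positivity is not established.

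The paper avoids limits entirely. For each $\gamma_i^+$ it surrounds the point by $x_i,y_i\in D$ so that $(x_i,\gamma_i x_i,\gamma_i^2 x_i,\gamma_i y_i,y_i)$ is cyclically ordered, applies \Cref{lem: CharPosTransWeaklyProx} (which \emph{is} a first-order statement for each fixed $n$, proven by transfer) to place $\rho(\gamma_i)^+$ directly inside the diamond $\Diam_{\xi(\gamma_i x_i)}(\xi(x_i),\xi(y_i))$, and then inserts the flags $\rho(\gamma_i)^+$ one at a time into the already-positive tuple $(\xi(x_1),\xi(y_1),\dots,\xi(x_k),\xi(y_k))$ using \Cref{lem: AddingElemToPosTuple}. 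No convergence, no semi-positivity, no transversality check.

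A smaller issue: your argument for agreement on $D\cap\Lambda_h$ is incomplete. Knowing that $(\xi(y),\xi(\gamma^+))$ is a $\Theta$-positively translating pair and that $\rho(\gamma)^+$ is the minimal such fixed point only gives $\Diam_{\rho(\gamma)\xi(y)}(\xi(y),\rho(\gamma)^+)\subset\Diam_{\rho(\gamma)\xi(y)}(\xi(y),\xi(\gamma^+))$, which is containment, not equality --- \Cref{lem: ExtremitiesDiamonds} does not bridge this. You need points on \emph{both} sides of $\gamma^+$: the paper takes $x,y\in D$ with $(\gamma^-,x,\gamma^+,y)$ cyclically ordered, gets $\rho(\gamma)^+\in\overline{\Diam}^\opp_{\xi(y)}(\xi(x),\xi(\gamma^+))\cap\overline{\Diam}^\opp_{\xi(x)}(\xi(\gamma^+),\xi(y))$, and concludes by \Cref{lem: intersection closures diamonds} that this intersection is $\{\xi(\gamma^+)\}$.
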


\begin{proof}
First, we prove that $\xi$ and $\xi^h_\rho$ agree on $D\cap\Lambda_h$. Suppose that $\gamma\in\Gamma$ is a hyperbolic element such that $\gamma^+\in D\cap\Lambda_h$. Since $D\subset\Sb^1$ is dense, there exists $x,y\in D$ such that the tuple $(\gamma^-,x,\gamma\cdot x,\gamma_+,\gamma\cdot y,y)$ is positive. Since $\xi$ is positive and $\rho$-equivariant, it follows that both $(\xi(x),\xi(\gamma^+))$ and $(\xi(y),\xi(\gamma^+))$ are $\Theta$-positively translating pairs for $\rho(\gamma)$. Then by \Cref{propo:PosTransUniqueFP} and the definition of $\xi^h_\rho$, 
\[\xi^h_\rho(\gamma^+)\in\overline{\Diam}_{\xi(\gamma\cdot x)}(\xi(x),\xi(\gamma^+))=\overline{\Diam}_{\xi(y)}^{\rm opp}(\xi(x),\xi(\gamma^+)).\]
Similarly, $\xi^h_\rho(\gamma^+)\in\overline{\Diam}_{\xi(x)}^{\rm opp}(\xi(y),\xi(\gamma^+))$. By \Cref{lem: intersection closures diamonds}, 
\[\overline{\Diam}_{\xi(y)}^{\rm opp}(\xi(x),\xi(\gamma^+))\cap\overline{\Diam}_{\xi(x)}^{\rm opp}(\xi(y),\xi(\gamma^+))=\xi(\gamma^+),\]
so $\xi^h_\rho(\gamma^+)=\xi(\gamma^+)$.

Next, we show that $\xi\cup\xi^h_\rho$ is positive. Pick any cyclically ordered tuple in $D\cup\Lambda_h$. Since $D$ and $\Lambda_h$ are both dense in $\Sb^1$, by enlarging the tuple if necessary, we may assume that it is of the form
\[(x_1,\gamma_1^+,y_1,x_2,\gamma_2^+,y_2,\dots,x_k,\gamma_k^+,y_k)\]
for some positive integer $k\ge 2$, where $\gamma_1,\dots,\gamma_k\in\Gamma$ are hyperbolic elements and $x_i,y_i\in D$ for all $i=1,\dots,k$. We want to show that the tuple of flags
\begin{align}\label{long tuple}
\begin{split}
\big(\xi(x_1),\rho(\gamma_1)^+,\xi(y_1),\xi(x_2),\rho(\gamma_2)^+,\xi(y_2),\dots,\xi(x_k),\rho(\gamma_k)^+,\xi(y_k)\big)
\end{split}
\end{align}
is positive. 

Notice that for all $i=1,\dots,k$, the tuple 
    \[(x_i,\gamma_i\cdot x_i,\gamma_i^2\cdot x_i,\gamma_i\cdot y_i,y_i)\] 
    is cyclically ordered. From \Cref{lem: CharPosTransWeaklyProx} and the $\rho$-equivariance and positivity of $\xi$, we get that $\rho(\gamma_i)^+ \in \Diam_{\xi(\gamma_i \cdot x_i)}(\xi(x_i),\xi(y_i))$. In particular, if we set $x_{k+1}\coloneqq x_1$, then the tuple
    \[(\xi(x_i),\rho(\gamma_i)^+, \xi(y_i),\xi(x_{i+1}))\]
    is positive. At the same time, the positivity of $\xi$ implies that the tuple
\begin{align*}
\big(\xi(x_1),\xi(y_1),\xi(x_2),\xi(y_2),\dots,\xi(x_k),\xi(y_k)\big)
\end{align*}
is positive. The positivity of \eqref{long tuple} now follows from iterated applications of \Cref{lem: AddingElemToPosTuple}.
\end{proof}

We can now prove \Cref{theorem: limit map extends to proximal limit map}

\begin{proof}[Proof of \Cref{theorem: limit map extends to proximal limit map}]
    The direction (\ref{thm-intro: weak proximality: item positive on fixed points}) $\implies$ (\ref{thm-intro: weak proximality: item positive}) is clear. For the other direction, if we assume that $\rho$ is $\Theta$-positive, then \Cref{propo: positive implies weak proximal} implies that $\rho$ is weakly $\Theta$-proximal, while \Cref{corollary: limit map extends to proximal limit map} implies that $\xi^h_\rho$ is positive.
\end{proof}

\begin{corollary}
\label{cor: positivity finite-index subgroup}
    Let $\Gamma$ be Fuchsian group, $\rho \colon \Gamma \to G_{\Fb}$ a representation and $\Gamma' < \Gamma$ a finite-index subgroup.
    Then $\rho$ is $\Theta$-positive if and only if its restriction to $\Gamma'$ is $\Theta$-positive.
\end{corollary}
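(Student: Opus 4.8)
The plan is to prove the two directions separately, using \Cref{theorem: limit map extends to proximal limit map} to reduce everything to a statement about the $\Theta$-proximal limit map $\xi^h_\rho$ on $\Lambda_h$. First I would reduce to the case where $\Gamma$ is of the first kind: by \Cref{Fuchsian groups lemma} there is a semi-conjugacy $\iota\colon\Gamma\to\Gamma'_{\mathrm{fk}}$ with $\Gamma'_{\mathrm{fk}}$ of the first kind, and \Cref{propo: equivalence positivity first and second kind Fuchsian groups} says $\rho$ is $\Theta$-positive iff $\rho\circ\iota$ is; moreover $\iota$ carries the finite-index subgroup $\Gamma'<\Gamma$ to a finite-index subgroup of $\Gamma'_{\mathrm{fk}}$, which is again of the first kind (finite-index subgroups of a first-kind Fuchsian group are of the first kind, since they have the same limit set). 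So from now on I assume $\Gamma$ — and hence $\Gamma'$ — is of the first kind, and in particular $\Lambda(\Gamma')=\Lambda(\Gamma)=\Sb^1$.

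For the direction ``$\rho$ $\Theta$-positive $\implies$ $\rho|_{\Gamma'}$ $\Theta$-positive'': if $\xi\colon D\to\Fc_{\Theta,\Fb}$ is a $\rho$-equivariant positive map on a non-empty $\Gamma$-invariant $D\subset\Sb^1$, then $D$ is in particular $\Gamma'$-invariant and non-empty, and $\xi$ is $\rho|_{\Gamma'}$-equivariant and still positive (positivity is a property of the map, not of the group). So $\rho|_{\Gamma'}$ is $\Theta$-positive with the same boundary map. This direction is immediate and needs no serious work.

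For the converse, ``$\rho|_{\Gamma'}$ $\Theta$-positive $\implies$ $\rho$ $\Theta$-positive'': apply \Cref{theorem: limit map extends to proximal limit map} to $\Gamma'$ to get that $\rho|_{\Gamma'}$ is weakly $\Theta$-proximal on hyperbolic elements of $\Gamma'$ and that the map $\xi^h_{\rho|_{\Gamma'}}\colon\Lambda_h(\Gamma')\to\Fc_{\Theta,\Fb}$, sending $\gamma^+$ to the weak attracting flag of $\rho(\gamma)$, is positive. The key observation is that $\Lambda_h(\Gamma')=\Lambda_h(\Gamma)$: every hyperbolic $\gamma\in\Gamma$ has a power $\gamma^k\in\Gamma'$ (with $k$ the index, or a divisor of it), $\gamma^k$ is again hyperbolic with $(\gamma^k)^+=\gamma^+$, so $\gamma^+\in\Lambda_h(\Gamma')$; the reverse inclusion is trivial. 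Moreover the weak attracting flag is a conjugation/power invariant in the sense that if $g\in G_{\Fb}$ is weakly $\Theta$-proximal then so is $g^k$ for $k\ge 1$, with the same weak attracting flag (this is immediate from the definition via the Jordan projection, since $J_{\Fb}(g^k)=J_{\Fb}(g)^k$ and $\alpha_{\Fb}(J_{\Fb}(g)^k)=\alpha_{\Fb}(J_{\Fb}(g))^k>1$, and the attracting flag is read off the hyperbolic part of the Jordan decomposition, which is unchanged under the power). Hence $\rho(\gamma^k)$ weakly $\Theta$-proximal forces $\rho(\gamma)$ to have the same hyperbolic Jordan part eigenvalue data and be weakly $\Theta$-proximal too, with $\xi^h_\rho(\gamma^+)$ equal to the weak attracting flag of $\rho(\gamma^k)=\xi^h_{\rho|_{\Gamma'}}(\gamma^+)$. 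Therefore $\rho$ is weakly $\Theta$-proximal and $\xi^h_\rho=\xi^h_{\rho|_{\Gamma'}}$ as maps on $\Lambda_h(\Gamma)=\Lambda_h(\Gamma')$; in particular $\xi^h_\rho$ is positive. Applying \Cref{theorem: limit map extends to proximal limit map} again, this time to $\Gamma$, we conclude that $\rho$ is $\Theta$-positive.

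The only mildly delicate point — and the one I would write out carefully — is the claim that weak $\Theta$-proximality of $g^k$ implies weak $\Theta$-proximality of $g$ with the same attracting flag; everything else is routine bookkeeping. This follows because $J_{\Fb}$ is compatible with powers (the hyperbolic part of the Jordan decomposition of $g^k$ is $g_h^k$, up to the same conjugator, by uniqueness in \Cref{propo: Jordan proj}), so $\alpha_{\Fb}(J_{\Fb}(g))>1\iff\alpha_{\Fb}(J_{\Fb}(g^k))>1$ for each $\alpha\in\Theta$, and the weak attracting flag $g^+=h\cdot p_\Theta$ is defined through the same conjugator $h$ with $h^{-1}g_h h\in C_{\Fb}$, which also satisfies $h^{-1}g_h^k h\in C_{\Fb}$. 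So $g$ and $g^k$ share the weak attracting flag, completing the argument.
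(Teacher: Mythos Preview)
Your proof is correct and follows essentially the same approach as the paper: reduce to the first-kind case via \Cref{propo: equivalence positivity first and second kind Fuchsian groups}, observe that $\Lambda_h(\Gamma)=\Lambda_h(\Gamma')$ because every hyperbolic $\gamma\in\Gamma$ has a power in $\Gamma'$, and then apply \Cref{theorem: limit map extends to proximal limit map} to transfer weak $\Theta$-proximality and positivity of $\xi^h_\rho$ between $\Gamma$ and $\Gamma'$. Your forward direction is actually slightly more direct than the paper's (you use the trivial observation that a $\Gamma$-invariant domain is $\Gamma'$-invariant, whereas the paper uniformly invokes the theorem for both directions), and you spell out more carefully why weak $\Theta$-proximality of $g^k$ implies that of $g$ with the same attracting flag---a point the paper takes for granted.
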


\begin{proof}
    By \Cref{propo: equivalence positivity first and second kind Fuchsian groups} we can without loss of generality assume that $\Gamma$ is of the first kind. 
    
    Observe that if $\gamma \in \Gamma$ be hyperbolic, the assumption that $\Gamma'<\Gamma$ is of finite-index, implies that there exists $k>0$ such that $\gamma^k \in \Gamma'$. This implies that $\Lambda_h(\Gamma)=\Lambda_h(\Gamma')$. In particular, $\Gamma'$ is also of the first kind. This also implies that a representation $\rho:\Gamma\to G_{\Fb}$ is weakly $\Theta$-proximal if and only if its restriction to $\Gamma'$ is weakly $\Theta$-proximal, and the $\Theta$-proximal limit map of $\rho$ and $\rho|_{\Gamma'}$ agree. The corollary now follows by applying \Cref{theorem: limit map extends to proximal limit map} to both $\rho$ and $\rho|_{\Gamma'}$.
\end{proof}

\subsection{%
\texorpdfstring{%
Boundary extensions of $\Theta$-positive representations}%
{Boundary extensions of Theta-positive representations}}
\label{subsec:boundary extension}
For the remainder of this section we assume that $\Gamma$ is of the first kind and $\rho \from \Gamma\to G_{\Fb}$ is a representation. The goal of this subsection is to characterize when $\rho$ is positive using boundary extensions, which we now define.

\begin{definition} \label{def: Boundary Extension}
    A \emph{$\Theta$-boundary extension} of $\rho$ is a pair $(\Vc,\zeta)$ where $\Vc$ is a closed $\rho(\Gamma)$-invariant subset of $\Fc_{\Theta,\Fb}$ and $\zeta\colon  \Vc \to \Sb^1$ is a continuous $(\rho,\Id)$-equivariant (not necessarily surjective) map. We call this $\Theta$-boundary extension 
    \begin{itemize}
        \item \emph{transverse} if $x$ and $y$ are transverse whenever $\zeta(x)\neq \zeta(y)$;
        \item \emph{positive} if $(x_1, \ldots, x_n) \in \Vc^n$ is positive whenever $(\zeta(x_1), \ldots, \zeta(x_n))$ is cyclically ordered.
    \end{itemize}
    A \emph{partial section of $\zeta$} is a $\rho$-equivariant map $\xi\colon D \to \Vc$ defined on a non-empty $\Gamma$-invariant subset $D$ of $\Sb^1$, such that $\zeta \circ \xi = \Id_D$.
\end{definition}

The notion of a boundary extension was introduced by \cite{Weisman} in the study of generalized relative Anosov representations over the real field, where it is defined as a surjective map. In our setting over general real closed fields, we drop the surjectivity assumption for reasons explained later, see \Cref{non-surjectivity} and \Cref{sss:Surjectivity Cantor Complete}.

Notice that if $(\Vc,\zeta)$ is a positive $\Theta$-boundary extension for $\rho$, then any partial section of $\zeta$ is positive. Thus, if $\zeta$ admits a partial section, then $\rho$ is positive. However, we don't know in general that $\zeta$ admits a partial section.
Indeed, one could imagine that $\zeta(\Vc) \subset \Lambda_h\cup \Lambda_p$ and the stabilizer in $\Gamma$ of every point in $\Vc$ is trivial. Nevertheless, we still have the following result:

\begin{proposition} \label{prop: Boundary extension implies positive}
    If the representation $\rho$ admits a positive $\Theta$-boundary extension $(\Vc,\zeta)$, then $\rho$ is $\Theta$-positive.
\end{proposition}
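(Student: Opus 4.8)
Proof proposal.

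The plan is to produce a $\rho$-equivariant positive map $\xi \colon D \to \Fc_{\Theta,\Fb}$ for some non-empty $\Gamma$-invariant subset $D$ of $\Sb^1$, which is exactly what \Cref{dfn: positive representations} requires. The difficulty, as noted in the remark preceding the statement, is that $\zeta$ need not admit a partial section, so we cannot simply invert $\zeta$. Instead, I would exploit the dynamics of $\Gamma$ on $\Sb^1$ together with the convergence-group property to ``pull back'' points of $\Vc$ to $\Sb^1$ via limits. Concretely, the first step is to observe that since $\Vc$ is non-empty, closed, and $\rho(\Gamma)$-invariant, and $\zeta$ is continuous and equivariant, the image $\zeta(\Vc)$ is a non-empty $\Gamma$-invariant subset of $\Sb^1$; because $\Gamma$ is of the first kind and acts minimally on $\Sb^1 = \Lambda(\Gamma)$, the closure of $\zeta(\Vc)$ is all of $\Sb^1$. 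In particular $\zeta(\Vc)$ meets every orbit closure, and I will aim to build $\xi$ on a single $\Gamma$-orbit.

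The key step is the following: pick any $v_0 \in \Vc$ and set $x_0 \coloneqq \zeta(v_0) \in \Sb^1$; define $\xi$ on the orbit $D \coloneqq \Gamma \cdot x_0$ by $\xi(\gamma \cdot x_0) \coloneqq \rho(\gamma) \cdot v_0$. The only thing to check is that this is well-defined, i.e. that if $\gamma \cdot x_0 = \gamma' \cdot x_0$ then $\rho(\gamma) \cdot v_0 = \rho(\gamma') \cdot v_0$; equivalently, writing $\eta = \gamma'^{-1}\gamma$, that $\eta \cdot x_0 = x_0$ implies $\rho(\eta) \cdot v_0 = v_0$. If $\Stab_\Gamma(x_0)$ is trivial there is nothing to prove, so suppose $\eta$ generates the (infinite cyclic) stabilizer; then $\eta$ is hyperbolic or parabolic with fixed point $x_0$. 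I would argue that $v_0$ must then be fixed by $\rho(\eta)$: otherwise, since the $g$-orbit $(\rho(\eta)^n \cdot v_0)_{n \in \Zb}$ is a sequence in the compact set $\Fc_{\Theta,\Fb}$, it has accumulation points, and by equivariance and continuity $\zeta(\rho(\eta)^n \cdot v_0) = \eta^n \cdot x_0 = x_0$ for all $n$, so the whole orbit of $v_0$ lies in the fibre $\zeta^{-1}(x_0)$; combined with the positivity/transversality of $(\Vc,\zeta)$ — points of $\Vc$ over distinct points of $\Sb^1$ are transverse, and cyclically ordered triples pull back to positive triples — one deduces that $\rho(\eta)$ has a positive orbit on $\zeta^{-1}(x_0)$ unless that orbit is a single point. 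Here I would use \Cref{lem: CharPosTransWeaklyProx} or \Cref{propo: PosTransPosRot}: a positive $\rho(\eta)$-orbit forces $\rho(\eta)$ to be $\Theta$-positively rotating, but then (choosing auxiliary flags transverse to the fibre, supplied by the positive boundary extension structure over nearby points of $\Sb^1$) the orbit cannot degenerate, contradicting that it stays in the single fibre $\zeta^{-1}(x_0)$ — unless it is constant. Hence $\rho(\eta) \cdot v_0 = v_0$, establishing well-definedness.

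Finally, I would check that $\xi \colon D \to \Vc \subset \Fc_{\Theta,\Fb}$ is positive: given a cyclically ordered tuple $(\gamma_1 \cdot x_0, \ldots, \gamma_n \cdot x_0)$ in $D$, applying $\zeta$ and using $\zeta \circ \xi = \Id$ on $D$ (which holds by construction, since $\zeta(\rho(\gamma)\cdot v_0) = \gamma \cdot \zeta(v_0) = \gamma \cdot x_0$) shows that $(\zeta(\xi(\gamma_1\cdot x_0)), \ldots, \zeta(\xi(\gamma_n \cdot x_0)))$ is cyclically ordered in $\Sb^1$, whence by the positivity of the boundary extension $(\xi(\gamma_1\cdot x_0), \ldots, \xi(\gamma_n\cdot x_0))$ is a positive tuple. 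Equivariance is immediate from the definition. Thus $\rho$ is $\Theta$-positive. The main obstacle I expect is the well-definedness step: ruling out the possibility that $\rho(\eta)$ acts on the fibre $\zeta^{-1}(x_0)$ with a non-trivial positive orbit that never escapes the fibre. The resolution should come from the fact that a $\Theta$-positively rotating element has, over the real closure, attracting/repelling flags that are transverse to generic flags and in particular transverse to the images under $\xi$ of points near $x_0$ — forcing the orbit to converge to a point not in $\zeta^{-1}(x_0)$ if it is non-constant — or, more directly and field-independently, from \Cref{prop: transverse + semi positive implies positive} and \Cref{propo:AcuteSemigroup} applied to the nested diamonds generated by the orbit, which collapse to a point.
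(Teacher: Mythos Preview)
Your approach has a genuine gap at the well-definedness step, and it is precisely the scenario the paper's remark before the proposition warns against. You want to show that if $\eta$ generates $\Stab_\Gamma(x_0)$ then $\rho(\eta)\cdot v_0 = v_0$, and you argue that otherwise ``$\rho(\eta)$ has a positive orbit on $\zeta^{-1}(x_0)$''. But the positivity hypothesis on $(\Vc,\zeta)$ says nothing about tuples whose $\zeta$-images coincide: it only asserts positivity of $(x_1,\dots,x_n)$ when $(\zeta(x_1),\dots,\zeta(x_n))$ is \emph{cyclically ordered}, hence pairwise distinct. Since $\zeta(\rho(\eta)^n\cdot v_0)=x_0$ for all $n$, no positivity of the $\rho(\eta)$-orbit of $v_0$ follows. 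Your proposed rescues do not work either: sandwiching $v_0$ and $\rho(\eta)\cdot v_0$ between $\rho(\eta)^n\cdot w$ and $\rho(\eta)^n\cdot w'$ only shows both lie in a common nested sequence of diamonds, and over a non-Archimedean $\Fb$ such intersections need not be singletons (this is exactly the content of \Cref{subsection:CompletenessOrderedFields} and \Cref{non-surjectivity}). So the well-definedness of $\xi$ genuinely fails in general, and one cannot avoid this by choosing $x_0$ with trivial stabilizer, since it may happen that $\zeta(\Vc)\subset\Lambda_h\cup\Lambda_p$.

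The paper circumvents this entirely by never attempting to build a section of $\zeta$. Instead, it uses points of $\Vc$ only as \emph{witnesses}: given a hyperbolic $\gamma$, one picks $x,y\in\Vc$ with $(\zeta(x),\gamma^+,\zeta(y),\gamma^-)$ cyclically ordered, deduces that $(x,\rho(\gamma)x,\rho(\gamma)^2x,\rho(\gamma)y,y)$ is a positive $5$-tuple, and invokes \Cref{lem: CharPosTransWeaklyProx} to conclude that $\rho(\gamma)$ is weakly $\Theta$-proximal with $\rho(\gamma)^+$ lying in a specific diamond. The boundary map is then $\xi^h_\rho\colon\gamma^+\mapsto\rho(\gamma)^+$, whose well-definedness is automatic (the weak attracting flag is canonically attached to $\rho(\gamma)$), and its positivity is checked by inserting each $\rho(\gamma_i)^+$ into a long positive tuple via repeated applications of \Cref{lem: AddingElemToPosTuple}. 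This is the characterization of positivity in \Cref{theorem: limit map extends to proximal limit map}.
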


To prove \Cref{prop: Boundary extension implies positive}, we use the following lemma.

\begin{lemma}\label{skjfghjksh}
    Assume $\rho$ admits a positive $\Theta$-boundary extension $(\Vc,\zeta)$. Let $\gamma$ be a hyperbolic element in $\Gamma$, and let $x,y\in \Vc$ be such that $(\zeta(x),\gamma^+, \zeta(y), \gamma^-)$ is cyclically ordered. Then $\rho(\gamma)$ is weakly $\Theta$-proximal and 
    \[\rho(\gamma)^+ \in \Diam_{x}^\opp(\rho(\gamma)\cdot x,\rho(\gamma)\cdot y)~.\]
\end{lemma}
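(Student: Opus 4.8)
The plan is to exploit the hyperbolic dynamics of $\gamma$ on $\Sb^1$ together with the positivity and continuity of the boundary extension $\zeta$ to produce a positive orbit, and then invoke \Cref{lem: CharPosTransWeaklyProx}. First I would set up notation: since $(\zeta(x), \gamma^+, \zeta(y), \gamma^-)$ is cyclically ordered, in particular $\zeta(x), \zeta(y) \notin \{\gamma^+, \gamma^-\}$, so both $x$ and $y$ lie over points not fixed by $\gamma$. The convergence dynamics of a Fuchsian group (\Cref{fact: Fuchsian groups are convergence groups}, or the explicit hyperbolic dynamics recalled in \Cref{s: Fuchsian groups}) give that $\gamma^n \cdot \zeta(x) \to \gamma^+$ and $\gamma^{-n}\cdot\zeta(x) \to \gamma^-$ as $n \to +\infty$, and likewise for $\zeta(y)$. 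By $(\rho,\Id)$-equivariance, $\zeta(\rho(\gamma)^n \cdot x) = \gamma^n \cdot \zeta(x)$, and since $\Vc$ is closed and the cyclic order on $\Sb^1$ is arranged so that, for $n \ge 0$, the tuple $(\zeta(x), \gamma\cdot\zeta(x),\ldots,\gamma^n\cdot\zeta(x), \gamma\cdot\zeta(y), \zeta(y))$ is cyclically ordered (this is the standard fact that a hyperbolic element translates monotonically on the two arcs of $\Sb^1 \setminus \{\gamma^+,\gamma^-\}$, using the hypothesis that $\zeta(x)$ and $\zeta(y)$ are on the correct sides of $\gamma^{\pm}$), positivity of $(\Vc,\zeta)$ yields that $(x, \rho(\gamma)\cdot x, \rho(\gamma)^2\cdot x, \rho(\gamma)\cdot y, y)$ is a positive quintuple in $\Vc$.

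With that positive quintuple in hand, \Cref{lem: CharPosTransWeaklyProx} applies directly with $g = \rho(\gamma)$: it tells us that $\rho(\gamma)$ is $\Theta$-positively translating, that it is weakly $\Theta$-proximal, and that $\rho(\gamma)^+$ lies in the nested intersection $\bigcap_{n\in\Nb} \rho(\gamma)^n \cdot \Diam_{\rho(\gamma)\cdot x}(x,y)$. In particular $\rho(\gamma)^+ \in \Diam_{\rho(\gamma)\cdot x}(x,y)$. It remains only to translate this into the opposite-diamond language of the statement: by \Cref{proposition: opposite diamonds} (applied to the positive triple $(x, \rho(\gamma)\cdot x, y)$, equivalently reading off which diamond with extremities $(\rho(\gamma)\cdot x, \rho(\gamma)\cdot y)$ one is in), one has $\Diam_{\rho(\gamma)\cdot x}(x,y) = \Diam_x^{\opp}(\rho(\gamma)\cdot x, \rho(\gamma)\cdot y)$ — this is exactly the identity $\Diam_z(x,w) = \Diam_x^{\opp}(z,w)$ valid whenever $(x,z,w)$ is positive, which holds here since $(x, \rho(\gamma)\cdot x, \rho(\gamma)\cdot y)$ is positive (being a sub-triple of the positive quintuple, using cyclic/dihedral invariance of triple positivity, \Cref{permutation positive triple}). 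Combining gives $\rho(\gamma)^+ \in \Diam_x^{\opp}(\rho(\gamma)\cdot x, \rho(\gamma)\cdot y)$, as desired.

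The main obstacle I anticipate is the bookkeeping in the first paragraph: verifying carefully that the hypothesis $(\zeta(x),\gamma^+,\zeta(y),\gamma^-)$ cyclically ordered places $\zeta(x)$ and $\zeta(y)$ on the two distinct arcs determined by $\gamma^{\pm}$ in such a way that $(x,\rho(\gamma)x,\rho(\gamma)^2 x, \rho(\gamma)y, y)$ — and not its reverse — comes out positive. Concretely one checks: $\zeta(x)$ lies on the arc from $\gamma^-$ to $\gamma^+$ (say, going counterclockwise through $\zeta(x)$), so its forward $\gamma$-iterates march monotonically toward $\gamma^+$ while staying on that arc; $\zeta(y)$ lies on the complementary arc (from $\gamma^+$ to $\gamma^-$ counterclockwise through $\zeta(y)$), so its forward iterates march monotonically toward $\gamma^+$ from the other side, i.e.\ $\gamma\cdot\zeta(y)$ is between $\gamma^+$ and $\zeta(y)$. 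Hence $(\zeta(x), \gamma\cdot\zeta(x), \ldots, \gamma^n\cdot\zeta(x), \gamma\cdot\zeta(y), \zeta(y))$ is cyclically ordered for every $n$, and applying positivity of $(\Vc,\zeta)$ to the subtuple of length $5$ (with $n = 2$) gives the needed positive quintuple. I would phrase this using the explicit dynamics in \Cref{s: Fuchsian groups} rather than re-deriving it. Everything downstream is then a clean invocation of \Cref{lem: CharPosTransWeaklyProx} and \Cref{proposition: opposite diamonds}.
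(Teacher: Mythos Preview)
Your approach is essentially the paper's: produce the positive quintuple $(x, \rho(\gamma) x, \rho(\gamma)^2 x, \rho(\gamma) y, y)$ from the cyclic ordering on $\Sb^1$ and the positivity of $(\Vc,\zeta)$, then invoke \Cref{lem: CharPosTransWeaklyProx}. The paper's proof is the same two-line argument.

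There is, however, a genuine bookkeeping slip in your final translation step. The equality you write,
\[
\Diam_{\rho(\gamma)\cdot x}(x,y) \;=\; \Diam_x^{\opp}(\rho(\gamma)\cdot x, \rho(\gamma)\cdot y),
\]
cannot hold: the left-hand diamond has extremities $(x,y)$ while the right-hand one has extremities $(\rho(\gamma)\cdot x, \rho(\gamma)\cdot y)$. Likewise your general ``identity $\Diam_z(x,w) = \Diam_x^{\opp}(z,w)$'' equates diamonds with different extremities and is not what \Cref{proposition: opposite diamonds} asserts (that proposition gives an inclusion between diamonds with different extremities, not an equality). The fix is immediate: either take $n=1$ rather than $n=0$ in the nested intersection of \Cref{lem: CharPosTransWeaklyProx}, yielding $\rho(\gamma)^+ \in \rho(\gamma)\cdot\Diam_{\rho(\gamma) x}(x,y) = \Diam_{\rho(\gamma)^2 x}(\rho(\gamma) x, \rho(\gamma) y)$, and then observe that $\rho(\gamma)^2 x$ and $x$ lie in opposite diamonds with extremities $(\rho(\gamma) x, \rho(\gamma) y)$ since the quadruple $(x, \rho(\gamma) x, \rho(\gamma)^2 x, \rho(\gamma) y)$ is positive; or, more directly, read off from the final positive tuple in \Cref{lem: CharPosTransWeaklyProx} that $(x, \rho(\gamma) x, \rho(\gamma)^+, \rho(\gamma) y)$ is a positive sub-quadruple, which is exactly the statement $\rho(\gamma)^+ \in \Diam_x^{\opp}(\rho(\gamma) x, \rho(\gamma) y)$.
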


\begin{proof}
    Since the tuple $(\zeta(x),\gamma^+, \zeta(y), \gamma^-)$ is cyclically ordered, so is the tuple $\big(\zeta(x), \gamma\cdot \zeta(x), \gamma^2\cdot \zeta(x), \gamma \cdot \zeta(y), \zeta(y)\big)$. Since $(\Vc,\zeta)$ is a positive $\Theta$-boundary extension, the tuple $\big(x,\rho(\gamma)\cdot y, \rho(\gamma)^2\cdot x, \rho(\gamma)\cdot y, y\big)$ is positive, and the lemma follows from \Cref{lem: CharPosTransWeaklyProx}.
\end{proof}

\begin{proof}[Proof of \Cref{prop: Boundary extension implies positive}]
By \Cref{theorem: limit map extends to proximal limit map}, we need to show that $\rho$ is weakly $\Theta$-proximal and its $\Theta$-proximal limit map $\xi^h_\rho\colon\Lambda_h\to\Fc_{\Theta,\Fb}$ is positive. To do so, let $\gamma_1,\ldots, \gamma_n \in \Gamma$ be hyperbolic elements such that $(\gamma_1^+\ldots, \gamma_n^+)$ is cyclically ordered. It suffices to verify that for all $i=1,\dots,n$, $\rho(\gamma_i)$ is weakly $\Theta$-proximal, and the tuple $(\rho(\gamma_1)^+, \ldots, \rho(\gamma_n)^+)$ is positive.

Since $\zeta(\Vc)\subset\Sb^1$ is non-empty and $\Gamma$-invariant, it is dense. Thus, for all $i=1,\dots,n$, we can choose $x_i, y_i\in  \Vc$ such that the tuple
\begin{align*}
(\zeta(x_1), \gamma_1\cdot \zeta(x_1), \gamma_1^+, \gamma_1\cdot \zeta(y_1), \zeta(y_1),&\zeta(x_2), \gamma_2\cdot \zeta(x_2), {\gamma_2}^+, \gamma_2\cdot \zeta(y_2), \zeta(y_2), \\
&\ldots, \zeta(x_n), \gamma_n\cdot \zeta(x_n), {\gamma_n}^+, \gamma_n\cdot \zeta(y_n), \zeta(y_n))
\end{align*}
is cyclically ordered. In particular, for each $i=1,\dots,n$, the tuple 
\[(\zeta(x_i), \gamma_i\cdot \zeta(x_i), \gamma_i^+, \gamma_i\cdot \zeta(y_i), \zeta(y_i)\] 
is cyclically ordered, it follows that $(\zeta(x_i), \gamma_i^+, \zeta(y_i),\gamma_i^-)\big)$ is also cyclically ordered. Thus, \Cref{skjfghjksh} implies that the element $\rho(\gamma_i)$ is weakly $\Theta$-proximal and the tuple $(x_i, \rho(\gamma_i)\cdot x_i, \rho(\gamma_i)^+, \rho(\gamma_i)\cdot y_i)$ is positive. At the same time, by positivity of the $\Theta$-boundary extension, the tuple 
\[(x_1, \rho(\gamma_1)\cdot x_1, \rho(\gamma_1)\cdot y_1, y_1, \ldots, x_n, \rho(\gamma_n)\cdot x_n, \rho(\gamma_n) \cdot y_n, y_n)\] is positive.
Hence, repeated applications of \Cref{lem: AddingElemToPosTuple} yields 
\[(x_1, \rho(\gamma_1)\cdot x_1, \rho(\gamma_1)^+, \rho(\gamma_1)\cdot y_1, y_1, \ldots, x_n, \rho(\gamma_n)\cdot x_n, \rho(\gamma_n)^+, \rho(\gamma_n) \cdot y_n, y_n)\] is positive. In particular, $(\rho(\gamma_1)^+, \ldots, \rho(\gamma_n)^+)$ is positive.
\end{proof}

The converse of \Cref{prop: Boundary extension implies positive} is also true; in fact, we have the following stronger statement:

\begin{theorem}\label{prop: converse}
If $\rho$ is $\Theta$-positive, then it admits a (necessarily unique) positive $\Theta$-boundary extension $(\bdext, \zeta^{\mathrm M}_\rho)$ which is maximal in the following sense:
\begin{itemize}
    \item For every $\rho$-equivariant positive map $\xi \from D \subset \Sb^1 \to \Fc_{\Theta,\Fb}$, we have $\xi(D)\subset \bdext $ and $\zeta \circ \xi = \Id\vert_{ D}$,
    \item For every positive $\Theta$-boundary extension $(\Vc, \zeta)$, we have \[\Vc \subset \bdext \quad\text{and}\quad\zeta= {\zeta^{\mathrm M}_\rho}\vert_{ \Vc}~.\]
\end{itemize}
\end{theorem}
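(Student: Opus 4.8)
The strategy is to build the maximal boundary extension directly from the positivity structure. Since $\rho$ is $\Theta$-positive, Theorem~\ref{theorem: limit map extends to proximal limit map} gives us a canonical positive $\rho$-equivariant map $\xi^h_\rho\colon\Lambda_h\to\Fc_{\Theta,\Fb}$. The idea is to define $\bdext$ to be the set of flags $z\in\Fc_{\Theta,\Fb}$ that are ``consistent with $\xi^h_\rho$'' in the following sense: there should be a well-defined point $\zeta^M_\rho(z)\in\Sb^1$ such that, inserting $z$ at the slot $\zeta^M_\rho(z)$, the map $\xi^h_\rho$ extended by $z\mapsto\zeta^M_\rho(z)$ remains positive. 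Concretely, I would set
\[
\bdext \coloneqq \Bigl\{z\in\Fc_{\Theta,\Fb} \;\Bigm|\; \exists\, t\in\Sb^1 \text{ such that } (a,z,b) \text{ is positive for all } a,b\in\Lambda_h \text{ with } (a,t,b) \text{ cyclically ordered}\Bigr\},
\]
and show that such a $t$ is unique, defining $\zeta^M_\rho(z)\coloneqq t$. (One has to be slightly careful at $t\in\Lambda_h$: there the condition should force $z=\xi^h_\rho(t)$, which follows from \Cref{lem: intersection closures diamonds} exactly as in the proof of \Cref{corollary: limit map extends to proximal limit map}.)

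The verification then breaks into several steps. First, \emph{uniqueness of $t$}: if $t\neq t'$ both worked for the same $z$, pick hyperbolic elements with attracting fixed points separating $t$ from $t'$ in two different ways; the two resulting positivity constraints on $z$ would put $z$ in a diamond and its opposite simultaneously, contradicting \Cref{propo:AcuteSemigroup} (via \Cref{lem: intersection closures diamonds} or \Cref{opposites transverse}). Second, \emph{$\rho(\Gamma)$-invariance and equivariance}: since $\xi^h_\rho$ is $\rho$-equivariant and $\Gamma$ acts on $\Lambda_h$ and $\Sb^1$ preserving the cyclic order, $\rho(\gamma)\cdot\bdext=\bdext$ and $\zeta^M_\rho(\rho(\gamma)\cdot z)=\gamma\cdot\zeta^M_\rho(z)$ are immediate from the definition. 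Third, \emph{closedness of $\bdext$ and continuity of $\zeta^M_\rho$}: this is where the density of $\Lambda_h$ in $\Sb^1$ and the openness of positivity (\Cref{propo: positive n-tuples is semi-algebraic}) come in — if $z_k\to z$ with $\zeta^M_\rho(z_k)=t_k$, pass to a subsequence with $t_k\to t$; for $a,b\in\Lambda_h$ with $(a,t,b)$ cyclically ordered one has $(a,t_k,b)$ cyclically ordered for $k$ large, so $(a,z_k,b)$ is positive, and by openness of the positive locus together with $z_k\to z$ one concludes $(a,z,b)$ is positive (a limit of positive tuples which is also verified positive for a slightly larger cyclically ordered sub-configuration — one should phrase this using that positivity of a quadruple is open and that we can choose auxiliary points strictly between). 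This shows $z\in\bdext$ and $\zeta^M_\rho(z)=t$, giving both closedness and sequential continuity (the topology on $\Fc_{\Theta,\Fb}$ is metrizable over $\Rb$; over a general real closed field one transfers via \Cref{thm_ExtSemiAlgMaps}, or argues directly that $\zeta^M_\rho$ is continuous because the relevant sets are semi-algebraic and the fibers are closed). Fourth, \emph{positivity of $(\bdext,\zeta^M_\rho)$}: given $z_1,\dots,z_n\in\bdext$ with $(\zeta^M_\rho(z_1),\dots,\zeta^M_\rho(z_n))$ cyclically ordered, interleave points of $\Lambda_h$ strictly between consecutive $\zeta^M_\rho(z_i)$'s, apply positivity of $\xi^h_\rho$ on those together with the defining property of each $z_i$, and assemble via repeated applications of \Cref{lem: AddingElemToPosTuple} (mirroring the end of the proof of \Cref{corollary: limit map extends to proximal limit map}).

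Finally, \emph{maximality}. Given any positive boundary map $\xi\colon D\to\Fc_{\Theta,\Fb}$, \Cref{corollary: limit map extends to proximal limit map} shows $\xi\cup\xi^h_\rho$ is positive on $D\cup\Lambda_h$; hence for $x\in D$ the flag $\xi(x)$ satisfies the defining condition of $\bdext$ with $t=x$, so $\xi(x)\in\bdext$ and $\zeta^M_\rho(\xi(x))=x$, i.e.\ $\zeta^M_\rho\circ\xi=\Id_D$. Given any positive $\Theta$-boundary extension $(\Vc,\zeta)$, take $z\in\Vc$ and set $t\coloneqq\zeta(z)$; for $a,b\in\Lambda_h$ with $(a,t,b)$ cyclically ordered, choose hyperbolic $\gamma_a,\gamma_b$ with $\gamma_a^+=a$, $\gamma_b^+=b$ and use \Cref{skjfghjksh}-type arguments (with the roles of $\xi^h_\rho$ and $\zeta$ combined) to see that $(\xi^h_\rho(a),z,\xi^h_\rho(b))$ is positive — concretely, pick $u,v\in\Vc$ near $a$ and $w,w'\in\Vc$ near $b$, so that by positivity of $(\Vc,\zeta)$ suitable long tuples through $z$, $u,v,w,w'$ are positive, then feed $\rho(\gamma_a)^+=\xi^h_\rho(a)$ and $\rho(\gamma_b)^+=\xi^h_\rho(b)$ into them via \Cref{lem: AddingElemToPosTuple}. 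This yields $z\in\bdext$ with $\zeta^M_\rho(z)=t=\zeta(z)$, so $\Vc\subset\bdext$ and $\zeta=\zeta^M_\rho|_\Vc$. Uniqueness of $(\bdext,\zeta^M_\rho)$ among maximal extensions is then formal. The main obstacle I anticipate is step three: making the continuity/closedness argument clean over an arbitrary (non-metrizable) real closed field — I expect to route it through semi-algebraicity (the graph of $\zeta^M_\rho$ over $\Rb$ is semi-algebraic by \Cref{propo: positive n-tuples is semi-algebraic} and Tarski--Seidenberg, then extend via \Cref{thm_ExtSemiAlgMaps}), rather than attempting a direct topological argument over $\Fb$.
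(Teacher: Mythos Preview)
Your overall strategy—define $\bdext$ as the set of flags ``compatible'' with $\xi^h_\rho$, then verify the boundary-extension axioms—is essentially the paper's, but there are two concrete gaps.

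\textbf{The definition via triples is too weak.} Positivity of a triple of flags is permutation-invariant (\Cref{permutation positive triple}), so asking that $(\xi^h_\rho(a),z,\xi^h_\rho(b))$ be positive only says $z$ lies in \emph{some} diamond with extremities $(\xi^h_\rho(a),\xi^h_\rho(b))$; it does not pin down which one. In the test case $G_{\Fb}=\PSL_2(\Fb)$, a triple is positive iff its entries are pairwise distinct, so your condition reduces to $z\notin\xi^h_\rho(\Lambda_h\setminus\{t\})$. For any $z\notin\xi^h_\rho(\Lambda_h)$ this holds for \emph{every} $t\in\Sb^1$, so uniqueness of $t$ fails and $\zeta^M_\rho$ is not well defined (your uniqueness argument implicitly assumes that the constraint places $z$ in a specific diamond, which triples do not do). The fix is to use quadruples: require that for all $a,b,c\in\Lambda_h$ with $(a,t,b,c)$ cyclically ordered, $(\xi^h_\rho(a),z,\xi^h_\rho(b),\xi^h_\rho(c))$ is positive. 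The paper packages this as a nested intersection $\bigcap_n\Diam^\opp_{\xi(z_0)}(\xi(x_n),\xi(x'_n))$ indexed by sequences in $D$ approaching $t$ from both sides.

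\textbf{Closedness and continuity over general $\Fb$ cannot be obtained by transfer.} The set $\bdext$ is an intersection over the countable, non-semi-algebraic index set $\Lambda_h$ and is typically \emph{not} semi-algebraic; moreover $\zeta^M_\rho$ takes values in the real circle $\Sb^1$, which is not an $\Fb$-semi-algebraic object. So there is no graph over $\Rb$ to which \Cref{thm_ExtSemiAlgMaps} could be applied, and your proposed route through Tarski--Seidenberg is unavailable. The paper circumvents this by rewriting $\bdext$ as an explicit intersection (over all finite cyclically ordered tuples in $D$) of finite unions of \emph{closures} of diamonds—manifestly closed in the order topology of $\Fc_{\Theta,\Fb}$, with no sequential or transfer arguments. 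Continuity of $\zeta^M_\rho$ is then proven directly by showing that the preimage of any open interval contains, around each of its points, an open diamond.
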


Together, \Cref{prop: Boundary extension implies positive} and \Cref{prop: converse} imply \Cref{thm: Boundary Extension Intro}.

To prove \Cref{prop: converse}, we first construct $(\bdext, \zeta^{\mathrm M}_\rho)$. If $\rho$ is $\Theta$-positive, there is a $\rho$-equivariant, positive limit map $\xi \from D\to \Fc_{\Theta,\Fb}$, where $D\subset \Sb^1$ is a $\Gamma$-invariant subset. For any point $x\in\Sb^1$, choose $z\in D\setminus\{x\}$, and choose sequences $(x_n)_{n\in\Nb}$ and $(x_n')_{n\in\Nb}$ in $D$ that converge to $x$, such that for all $n\in\Nb$,
\[(z,x_1,\dots,x_{n-1},x_n,x,x_n',x_{n-1}',\dots,x_1')\]
is cyclically ordered.
Then $(x_n,x,x_m',z)$ is cyclically ordered for all $m,n\in\Nb$. Set 
\[\bdext(x) \coloneqq \bigcap_{n\in\Nb} \Diam^\opp_{\xi(z)}(\xi(x_n),\xi(x_n')) \subset \Fc_{\Theta,\Fb}~.\]

We will now verify that $\bdext(x)$ depends only on $\rho$ and $x$, and not on any of the other choices made in its definition.

\begin{lemma}\label{lemma: boundary extension pointwise independence}
Suppose that $\rho$ is $\Theta$-positive, and $x\in\Sb^1$. The set $\bdext(x)$ constructed above does not depend on the choice of $z$, $(x_n)_{n\in\Nb}$ and $(x_n')_{n\in\Nb}$, and $\xi \colon D\to \Fc_{\Theta,\Fb}$. In particular, if $\gamma\in\Gamma$ is hyperbolic, then $\rho(\gamma)^+\in\bdext(\gamma^+)$.
\end{lemma}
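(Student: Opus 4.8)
The plan is to show that the set $\bdext(x)$ is independent of all the auxiliary choices by a two-step argument: first fixing $\xi$ and $z$ while varying the approximating sequences, then varying $z$, and finally varying $\xi$ itself. Throughout, the key tool will be \Cref{onesided squeeze} (together with \Cref{lem: AddingElemToPosTuple}, \Cref{lem: closure of diamond}, and \Cref{lem: intersection closures diamonds}), which lets us compare nested sequences of diamonds whose extremities converge monotonically to $x$ from each side.

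\emph{Step 1: Independence of the approximating sequences (for fixed $\xi$ and $z$).} Suppose $(x_n)_{n\in\Nb},(y_n)_{n\in\Nb}$ both approach $x$ from the right in $D$ and $(x_n')_{n\in\Nb},(y_n')_{n\in\Nb}$ both approach $x$ from the left in $D$, all cyclically ordered with respect to $z$. By interleaving and passing to subsequences, it suffices to compare $\bigcap_n \Diam^\opp_{\xi(z)}(\xi(x_n),\xi(x_n'))$ with $\bigcap_n \Diam^\opp_{\xi(z)}(\xi(y_n),\xi(y_n'))$ when $(x_n)$ is a subsequence of $(y_n)$ and likewise on the left; here one uses \Cref{lem: closure of diamond} applied to the positive quadruples $(\xi(z),\xi(y_n),\xi(x_n),\ldots)$ and their left-sided counterparts to get that the two nested intersections are squeezed between each other, hence equal. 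Since the diamonds $\Diam^\opp_{\xi(z)}(\xi(x_n),\xi(x_n'))$ are indeed nested by \Cref{lem: closure of diamond} (as the tuple $(\xi(z),\xi(x_{n+1}),\xi(x_n),\ldots,\xi(x_n'),\xi(x_{n+1}'))$ is positive), the intersection makes sense.

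\emph{Step 2: Independence of $z$ and of $\xi$.} For a different basepoint $z'\in D\setminus\{x\}$, choose a single pair of sequences $(x_n),(x_n')$ approaching $x$ that is cyclically ordered with respect to both $z$ and $z'$ (possible since $z,z'\neq x$ and $D$ is dense), and observe that $\Diam^\opp_{\xi(z)}(\xi(x_n),\xi(x_n'))$ and $\Diam^\opp_{\xi(z')}(\xi(x_n),\xi(x_n'))$ are both diamonds with extremities $(\xi(x_n),\xi(x_n'))$, and by \Cref{prop: transverse set to a full diamond} together with \Cref{lem: AddingElemToPosTuple} applied to $(\xi(z),\xi(x_n),\xi(x_n'),\xi(z'))$ one checks they are \emph{the same} diamond for each $n$; hence the intersections agree. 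For independence of $\xi$: if $\xi:D\to\Fc_{\Theta,\Fb}$ and $\xi':D'\to\Fc_{\Theta,\Fb}$ are two positive $\rho$-equivariant maps, then by \Cref{corollary: limit map extends to proximal limit map} both extend to positive maps defined on $D\cup\Lambda_h$ and on $D'\cup\Lambda_h$ respectively, and these extensions agree with $\xi^h_\rho$ on $\Lambda_h$; using the density of $\Lambda_h$ one can replace the approximating sequences for both $\xi$ and $\xi'$ by sequences landing in $\Lambda_h$, at which point the defining intersections for $\xi$ and $\xi'$ are literally the same. (Alternatively, one shows directly that $\xi\cup\xi'$ extends to a positive map on $D\cup D'$ by \Cref{lem: AddingElemToPosTuple}, which reduces to the single-$\xi$ case.)

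\emph{Step 3: The consequence $\rho(\gamma)^+\in\bdext(\gamma^+)$.} Given a hyperbolic $\gamma$, extend $\xi$ to include $\gamma^+$ via $\xi^h_\rho$ using \Cref{corollary: limit map extends to proximal limit map}. Take $z=\gamma^-$ and the sequences $x_n=\gamma^n\cdot w$, $x_n'=\gamma^n\cdot w'$ for suitable $w,w'\in D$ with $(\gamma^-,w,\gamma^+,w')$ cyclically ordered; then \Cref{propo:PosTransUniqueFP} (or \Cref{lem: CharPosTransWeaklyProx}) gives $\xi^h_\rho(\gamma^+)=\rho(\gamma)^+\in\Diam^\opp_{\xi(\gamma^-)}(\xi(\gamma^n w),\xi(\gamma^n w'))$ for every $n$, so $\rho(\gamma)^+$ lies in the intersection, which is $\bdext(\gamma^+)$ by the independence just proven.

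The main obstacle I anticipate is the independence of $\xi$ in Step 2: a priori two positive boundary maps need not be comparable on the nose (their domains $D,D'$ may be disjoint and contain no hyperbolic fixed points beyond $\Lambda_h$), so the argument must route through \Cref{corollary: limit map extends to proximal limit map} to bring both maps into contact with $\xi^h_\rho$ on the dense set $\Lambda_h$, and then a careful density/monotonicity argument is needed to show that replacing the approximating sequences by $\Lambda_h$-valued ones does not change the nested intersection — this last point again leans on the squeeze provided by \Cref{onesided squeeze} and \Cref{lem: closure of diamond}.
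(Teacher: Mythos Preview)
Your proposal is correct and follows essentially the same approach as the paper: independence of the approximating sequences via a subsequence/interleaving argument, independence of $z$ by observing that the positivity of $(\xi(z),\xi(z'),\xi(x_n),\xi(x_n'))$ forces the two opposite diamonds to coincide, independence of $\xi$ by routing through $\xi^h_\rho$ on $\Lambda_h$ via \Cref{corollary: limit map extends to proximal limit map}, and the hyperbolic consequence via \Cref{lem: CharPosTransWeaklyProx}. One small simplification: you do not actually need \Cref{onesided squeeze} or \Cref{prop: transverse set to a full diamond} here---the nesting of the diamonds and the equality $\Diam^\opp_{\xi(z)}(\xi(x_n),\xi(x_n'))=\Diam^\opp_{\xi(z')}(\xi(x_n),\xi(x_n'))$ follow directly from the definition of positive quadruples, so the argument is more elementary than you anticipate.
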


\begin{proof}

We first fix $z$, $\xi$, and $(x_n)_{n\in\Nb}$, and show that $\bdext(x)$ does not depend on the choice of $(x_n')_{n\in\Nb}$. First, observe that $\bdext(x)$ remains unchanged if we replace the sequence $(x_n')_{n\in\Nb}$ in its construction with a subsequence; indeed, if $(k_n)_{n\in\Nb}$ is a strictly increasing sequence of integers, then 
\begin{align*}
\bigcap_{n\in\Nb} \Diam^\opp_{\xi(z)}(\xi(x_n),\xi(x_n'))&=\bigcap_{n\in\Nb} \Diam^\opp_{\xi(z)}(\xi(x_{k_n}),\xi(x_{k_n}'))\\
&\subset\bigcap_{n\in\Nb} \Diam^\opp_{\xi(z)}(\xi(x_n),\xi(x_{k_n}')) \\
&\subset\bigcap_{n\in\Nb} \Diam^\opp_{\xi(z)}(\xi(x_n),\xi(x_n'))=\bigcap_{n\in\Nb} \Diam^\opp_{\xi(z)}(\xi(x_n),\xi(x_n')).
\end{align*}

Let
$(y_n')_{n\in\Nb}$ be a sequence in $D$ converging to $x$ from the right with respect to $z$. If $(x_n')_{n\in\Nb}$ and $(y_n')_{n\in\Nb}$ share a common sub-sequence, then the above implies that constructing $\bdext(x)$ using $(x_n')_{n\in\Nb}$ or $(y_n')_{n\in\Nb}$ yields the same set. On the other hand, if $(x_n')_{n\in\Nb}$ and $(y_n')_{n\in\Nb}$ do not share a common subsequence, then by taking subsequences, we may assume that for all $n\in\Nb$, $(x, x_{n+1}',y'_n, x_n', z)$ is cyclically ordered. By the positivity of $\xi$, we get that for all $n\in\Nb$, 
    \begin{align*}
    \Diam^\opp_{\xi(z)}(\xi(x_{n+1}),\xi(x_{n+1}')) \subset\Diam^\opp_{\xi(z)}(\xi(x_n),\xi(y_n')) \subset \Diam^\opp_{\xi(z)}(\xi(x_n),\xi(x_n'))~.
    \end{align*}

By taking intersections over all $\Nb$, this implies that 
\[\bigcap_{n\in \Nb} \Diam^\opp_{\xi(z)}(\xi(x_n),\xi(y'_n))= \bigcap_{n\in \Nb} \Diam^\opp_{\xi(z)}(\xi(x_n),\xi(x'_n))~.\]
The same argument shows that if we fix $z$ and $\xi$, then $\bdext$ does not depend on the choice of $(x_n)_{n\in\Nb}$ as well.

Next, we fix $\xi$ and show that $\bdext(x)$ does not depend on $z$. Let $z'\in D\setminus\{x\}$. Then observe by taking a tail of the sequences $(x_n)_{n\in\Nb}$ and $(x_n')_{n\in\Nb}$, we may assume that both of these sequences converge to $x$ from the left and from the right respectively with respect to both $z$ and $z'$. Hence,
\[\bigcap_{n\in\Nb} \Diam^\opp_{\xi(z)}(\xi(x_n),\xi(x_n'))=\bigcap_{n\in\Nb} \Diam^\opp_{\xi(z')}(\xi(x_n),\xi(x_n')).\]

Finally, we prove that $\bdext(x)$ is independent of the choice of the positive map $\xi$. By \Cref{corollary: limit map extends to proximal limit map}, for any positive equivariant map $\xi \colon D\to \Fc_{\Theta,\Fb}$, the map $\xi \cup \xi^h_\rho\colon D\cup \Lambda_h \to \Fc_{\Theta, \Fb}$ is again positive and equivariant. Then since $\bdext(x)$ does not depend on the choice of the sequences $(x_n)_{n\in\Nb}$ or $(x_n')_{n\in\Nb}$, defining $\bdext(x)$ using $\xi^h_\rho$ and sequences in $\Lambda_h$ in place of $\xi$ and sequences in $D$ yield the same set. 

For the second statement of the lemma, simply observe that if $\gamma\in\Gamma$ is hyperbolic and we choose $x,y\in D$ such that $(\gamma^-,x,\gamma^+,y)$ is cyclically ordered, then $(\gamma^n\cdot x)_{n\in\Nb}$ and $(\gamma^n\cdot y)_{n\in\Nb}$ converge to $\gamma^+$ and for all $n\in\Nb$,
\[(\gamma^-,\gamma\cdot x,\dots,\gamma^n\cdot x,\gamma^+,\gamma^n\cdot y,\dots,\gamma\cdot y,\gamma^-)\]
are cyclically ordered. In particular, $(\xi(x),\rho(\gamma)\cdot \xi(x), \rho(\gamma)^2\cdot \xi(x), \rho(\gamma)\cdot \xi(y), \xi(y))$ is positive, so we may apply \Cref{lem: CharPosTransWeaklyProx} to conclude.
\end{proof}

Next, we verify the disjointness of $\bdext(x)$ and $\bdext(y)$ when $x$ and $y$ are distinct.

\begin{lemma}\label{disjoint}
Suppose that $\rho$ is $\Theta$-positive. If $x\neq y$, then $\bdext (x) \cap \bdext (y) = \emptyset$.
\end{lemma}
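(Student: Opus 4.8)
The strategy is to choose good approximating sequences simultaneously for $x$ and $y$, place both points inside a common ``interval picture'' on $\Sb^1$, transport that picture to $\Fc_{\Theta,\Fb}$ via the positive map $\xi$, and then conclude that $\bdext(x)$ and $\bdext(y)$ sit inside a pair of \emph{opposite} diamonds, which are automatically disjoint by \Cref{opposites transverse} (in fact pairwise transverse, so a fortiori disjoint). Concretely, I would first fix a point $z\in D\setminus\{x,y\}$ and monotonic sequences $(x_n),(x_n')$ in $D$ converging to $x$ from the right and from the left (with respect to $z$), and $(y_n),(y_n')$ in $D$ converging to $y$ from the right and from the left; since $x\neq y$ and $D$ is dense, after passing to subsequences I may assume that for every $n$ the tuple
\[(z,\ x_n,\ x,\ x_n',\ y_n,\ y,\ y_n',\ z)\]
is cyclically ordered (choosing $z$ so that $(x,y,z)$ has a fixed cyclic position, and shrinking the approximating neighborhoods of $x$ and of $y$ so they become disjoint intervals, which is possible for $n$ large). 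By \Cref{lemma: boundary extension pointwise independence}, $\bdext(x)$ and $\bdext(y)$ can be computed with these particular sequences and with this particular $z$ and the given $\xi$, so no generality is lost.

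Next, apply positivity of $\xi$: for every $n$ the tuple $(\xi(z),\xi(x_n),\xi(x_n'),\xi(y_n),\xi(y_n'))$ is positive (it is the $\xi$-image of a cyclically ordered tuple, after reordering into a single cyclically ordered block $(\xi(x_n),\xi(x_n'),\xi(y_n),\xi(y_n'),\xi(z))$ up to dihedral symmetry). In particular $(\xi(x_n'),\xi(y_n'),\xi(z))$ and $(\xi(x_n),\xi(y_n),\xi(z))$ are positive triples, and I want to produce a fixed diamond separating the $x$-data from the $y$-data. Fix an index, say $n=0$, and consider the diamond $\Diam\coloneqq \Diam_{\xi(x_0')}\big(\xi(z),\xi(y_0)\big)$ with extremities $\big(\xi(z),\xi(y_0)\big)$ and its opposite $\Diam^{\rm opp}=\Diam_{\xi(y_0')}^{\rm opp}\big(\xi(z),\xi(y_0)\big)$. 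Using \Cref{lem: closure of diamond} and \Cref{corollarly: gluing} repeatedly on the positive tuples above, I would show: $\bdext(x)=\bigcap_n \Diam^\opp_{\xi(z)}(\xi(x_n),\xi(x_n'))\subset \overline{\Diam}$, because every $\xi(x_n),\xi(x_n')$ lies on the ``$x$-side'' of the extremities $(\xi(z),\xi(y_0))$, so the whole nested intersection lies in the closed diamond $\overline{\Diam}$; symmetrically $\bdext(y)=\bigcap_n \Diam^\opp_{\xi(z)}(\xi(y_n),\xi(y_n'))\subset \overline{\Diam^{\rm opp}}$. Then \Cref{opposites transverse} says every flag of $\Diam$ is transverse to every flag of $\Diam^{\rm opp}$; to upgrade this to closures I use \Cref{lem: closure of diamond}: choosing auxiliary flags just outside $\Diam$ and $\Diam^{\rm opp}$ realizes $\overline{\Diam}$ and $\overline{\Diam^{\rm opp}}$ inside genuinely opposite diamonds, hence any point of $\overline{\Diam}$ is transverse to any point of $\overline{\Diam^{\rm opp}}$. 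A transverse pair of flags is in particular distinct, so $\bdext(x)\cap\bdext(y)=\emptyset$.

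The main obstacle is the bookkeeping in the second paragraph: one must set up the cyclic order of the eight points $(z,x_n,x,x_n',y_n,y,y_n',z)$ carefully and track which diamond (out of the $2^{|\Theta|}$ with given extremities) each piece of data falls into, so that the ``$x$-side'' and the ``$y$-side'' really land in a pair of mutually opposite diamonds and not merely transverse ones. The cleanest way to handle this is to reduce, via \Cref{corollarly: gluing} and \Cref{lem: AddingElemToPosTuple}, to checking positivity of a single long tuple containing $\xi(z),\xi(x_n),\xi(x_n'),\xi(y_m),\xi(y_m')$ for all $m,n$ up to any fixed bound, and then read off from the definition of a positive tuple (\Cref{def:PositiveTuples}) that the flags indexed by the $x$-block lie in one diamond with extremities $(\xi(z),\xi(y_0))$ and those indexed by the $y$-block lie in its opposite. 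I expect the transversality-of-closures step to be routine given \Cref{lem: closure of diamond}, \Cref{lem: intersection closures diamonds}, and \Cref{prop: transverse set to a full diamond}, which already package exactly this kind of limiting argument; the real care is only in the combinatorics of placing $x$ and $y$ on opposite sides.
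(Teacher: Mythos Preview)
Your plan is correct and rests on the same core idea as the paper: once you know $(z,x_n,x_n',y_n,y_n')$ is cyclically ordered for some fixed $n$, positivity of $\xi$ places $\bdext(x)\subset\Diam^\opp_{\xi(z)}(\xi(x_n),\xi(x_n'))$ and $\bdext(y)\subset\Diam^\opp_{\xi(z)}(\xi(y_n),\xi(y_n'))$, and these two diamonds are disjoint. The paper's proof stops right there, in one line.

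Your version introduces an auxiliary separating diamond $\Diam=\Diam_{\xi(x_0')}(\xi(z),\xi(y_0))$ and its opposite, then worries about closures and an ``upgrade to closures'' step. None of this is needed: the containments $\bdext(x)\subset\Diam$ and $\bdext(y)\subset\Diam^{\rm opp}$ already hold in the \emph{open} diamonds (for instance, if $p\in\Diam^\opp_{\xi(z)}(\xi(x_0),\xi(x_0'))$ then from the positive $5$-tuple one gets $(\xi(z),\xi(x_0),p,\xi(x_0'),\xi(y_0))$ positive, whence $p\in\Diam$; symmetrically for $q$). So the ``main obstacle'' you anticipate---tracking which of the $2^{|\Theta|}$ diamonds things land in, and pushing transversality to closures via \Cref{lem: closure of diamond} and \Cref{prop: transverse set to a full diamond}---never materializes. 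Your approach works, but you can shorten it drastically by dropping the closures and simply noting that the two small diamonds (for a single index $n$) already sit inside a pair of opposite diamonds with extremities $(\xi(x_n'),\xi(y_n))$, hence are disjoint by \Cref{opposites transverse}.
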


\begin{proof}
Choose a positive limit map $\xi \colon D \to \Fc_{\Theta,\Fb}$ and some $z\in D\setminus\{x,y\}$. By switching $x$ and $y$, we may assume that $(x,z,y)$ is cyclically ordered. Choose sequences $(x_n)_{n\in\Nb}$, $(x_n')_{n\in\Nb}$ in $D$ that converge to $x$ and sequences $(y_n)_{n\in\Nb}$ and $(y_n')_{n\in\Nb}$ in $D$ that converge to $y$ such that for all $n\in\Nb$,
\[(z,x_1,\dots,x_n,x,x_n',\dots,x_1')\quad\text{and}\quad(z,y_1,\dots,y_n,y,y_n',\dots,y_1')\]
are cyclically ordered. Then for sufficiently large $n\in\Nb$, $(x_n,x,x_n',y_n,y,y_n',z)$ is cyclically ordered. By construction,
\[\bdext(x)\subset \Diam^\opp_{\xi(z)}(\xi(x_n),\xi(x_n'))\quad \text{and}\quad \bdext(y)\subset \Diam^\opp_{\xi(z)}(\xi(y_n),\xi(y_n'))~,\] hence they are disjoint.
\end{proof}

Set
    \[\bdext  \coloneqq  \bigcup_{x\in \Sb^1} \bdext (x)\]
and define the map
\[ \zeta^{\mathrm M}_\rho\colon \bdext  \to \Sb^1\]
sending $\bdext (x)$ to the point $x$. This is well-defined by \Cref{disjoint}.

\begin{remark}\label{non-surjectivity}
In general, $\bdext(x)$ might be empty, and so $\zeta^{\mathrm M}_\rho$ might not be surjective.
However, if $\gamma\in\Gamma$ is hyperbolic, then $\bdext(\gamma^+)$ is non-empty as it contains $\rho(\gamma)^+$, see \Cref{lemma: boundary extension pointwise independence}. Hence $\bdext$ is non-empty.
\end{remark}

We will now prove that $(\bdext , \zeta^{\mathrm M}_\rho)$ is the positive $\Theta$-boundary extension that satisfies the stated properties in \Cref{prop: converse}.

\begin{proof}[Proof of \Cref{prop: converse}]
The $\rho(\Gamma)$-invariance of $\bdext $ and $(\rho,\Id)$-equivariance of $\zeta^{\mathrm M}_\rho$ follows from their construction and the $\rho$-equivariance of $\xi$. Thus, it suffices to verify that $\bdext $ is closed, $\zeta^{\mathrm M}_\rho$ is continuous, $(\bdext , \zeta^{\mathrm M}_\rho)$ is positive, and the two required maximality properties hold.\\
        
    \noindent {\bf Closedness of $\bdext $.} For each integer $m\ge 2$, let $\Cc_m$ denote the set of cyclically ordered $m$-tuples in $D$.
    Then define
     \[\bdext'\coloneqq \bigcap_{m\ge 3}\bigcap_{(y_1,...,y_m) \in \Cc_m} \bigcup_{k=1}^m  \overline{\Diam}_{\xi(y_k)}(\xi(y_{k-1}),\xi(y_{k+1})),\]
     where $y_0\coloneqq y_m$ and $y_{m+1}\coloneqq y_1$.
     Notice that $\bdext'\subset\Fc_{\Theta,\Fb}$ is closed, so we need only to prove that $\bdext = \bdext'$.

    We first prove that $\bdext \subset \bdext'$. It suffices to show that for any $x\in\Sb^1$, any integer $m\ge 3$, and any $(y_1,\dots,y_m)\in\Cc_m$, there exists some $k=1,\dots,m$ such that
    \[\bdext(x)\subset\overline{\Diam}_{\xi(y_{k})}(\xi(y_{k-1}),\xi(y_{k+1}))~.\]
    By cyclically permuting $(y_1,\dots,y_m)$, we may assume that $(y_1,x,y_3,...,y_m)$ is cyclically ordered. Suppose $(x_n)_{n\in\Nb}$ and $(x_n')_{n\in\Nb}$ are sequences in $D$ that converge to $x$, such that for all $n\in\Nb$, $(y_1,x_1,\dots,x_n,x,x_n',\dots,x_1',y_3)$ is cyclically ordered. Then by the definition of $\bdext(x)$,
    \begin{align*}
     \bdext(x)&\subset \Diam_{\xi(y_1)}^\opp(\xi(x_n),\xi(x_n')) \subset \Diam_{\xi(x_n)}(\xi(y_1),\xi(y_3))\subset \overline{\Diam}_{\xi(y_2)}(\xi(y_1),\xi(y_3)).
    \end{align*}
    
Next, we prove that $\bdext \supset \bdext'$. Let $F$ be an arbitrary flag in $\bdext'$. We will specify a point $x\in\Sb^1$ such that $F\in\bdext(x)$. To do so, it suffices to find a point $z\in D$ and sequences $(x_n)_{n\in\Nb}$ and $(x_n')_{n\in\Nb}$ in $D$ that converge to $x$, so that 
\[(z,x_1,\dots,x_{n-1},x_n,x,x_n',x_{n-1}',\dots,x_1')\]
is cyclically ordered for all $n\in\Nb$, and 
\[F\in \bigcap_{n\in\Nb} \Diam^\opp_{\xi(z)}(\xi(x_n),\xi(x_n')),\]
    
Fix an angular metric $d$ on $\Sb^1$. Observe that since $D$ is dense, one can iteratively construct, for each integer $\ell\ge 2$, a cyclically ordered tuple 
\[C_\ell\coloneqq(y_1^\ell,\dots,y_{2^\ell}^\ell)\in\Cc_{2^\ell}\] 
such that 
\begin{itemize}
\item $C_\ell$ is the sub-tuple of even entries of $C_{\ell+1}$, i.e.
\[C_\ell=(y_2^{\ell+1},y_4^{\ell+1},y_6^{\ell+1},\dots,y_{2^{\ell+1}}^{\ell+1}).\]
\item For each $k=1,\dots,2^\ell$, $d(y_k^\ell,y_{k+1}^\ell)\le (\frac{2}{3})^{\ell-3}\pi$.
\end{itemize}
By cyclically permuting each $C_\ell$, we may also assume that for all integers $\ell\ge 3$, $F\in\overline{\Diam}_{\xi(y_3^\ell)}(\xi(y_2^\ell),\xi(y_4^\ell))$. Then set $z\coloneqq y_6^3$, and for each $n\in\Nb$, set
\[x_n\coloneqq y_1^{2n+1}\quad\text{and}\quad x_n'\coloneqq y_5^{2n+1}.\]

To prove that $z$, $(x_n)_{n\in\Nb}$ and $(x_n)_{n\in\Nb}$ have the required properties, we need to understand, for all $\ell\ge 3$, the relative positions of $y_1^\ell$, $y_5^\ell$, $y_1^{\ell+2}$, $y_5^{\ell+2}$, and $z$. Notice that if $i$ is the integer such that $y_2^\ell=y_i^{\ell+2}$, then $y_1^\ell=y_{i-4}^{\ell+2}$, $y_3^\ell=y_{i+4}^{\ell+2}$, $y_4^{\ell}=y_{i+8}^{\ell+2}$, and $y_5^{\ell}=y_{i+12}^{\ell+2}$, where arithmetic in the subscripts is done modulo $2^{\ell+2}$. Thus, \Cref{lem: closure of diamond} implies that 
\[F\in\overline{\Diam}_{\xi(y_3^\ell)}\left(\xi(y_2^\ell),\xi(y_4^\ell)\right)\subset\Diam_{\xi(y_{i+4}^{\ell+2})}\left(\xi(y_{i-1}^{\ell+2}),\xi(y_{i+9}^{\ell+2})\right).\]
Furthermore, a straightforward induction argument yields that $z=y^{\ell+2}_{i+2^{\ell+1}}$. Since $\overline{\Diam}_{\xi(y_3^{\ell+2})}\left(\xi(y_2^{\ell+2}),\xi(y_4^{\ell+2})\right)$ contains $F$, it intersects $\Diam_{\xi(y_{i+4}^{\ell+2})}\left(\xi(y_{i-1}^{\ell+2}),\xi(y_{i+9}^{\ell+2})\right)$, and so $i-1\le 3\le i+9$, or equivalently, $-6\le i\le 4$. It follows that $(z,y_1^\ell,y_1^{\ell+2},y_5^{\ell+2},y_5^\ell)$ is cyclically ordered. 

As such, by iterated applications of \Cref{lem: AddingElemToPosTuple}, we have that for every $n\in\Nb$, the tuple $(z,x_1,\dots,x_n,x_n',\dots,x_1')$ is cyclically ordered, and so the sequences $(x_n)_{n\in\Nb}$ and $(x_n')_{n\in\Nb}$ converge. Observe from the definition of $C_\ell$ that for all $n\in\Nb$, we have $d(x_n,x_n')\le(\frac{2}{3})^{2n-2}(4\pi)$, so $(x_n)_{n\in\Nb}$ and $(x_n')_{n\in\Nb}$ have the same limit, which we denote by $x$. Then $(x_n)_{n\in\Nb}$ and $(x_n')_{n\in\Nb}$ converge to $x$, and for all $n\in\Nb$,
\[(z,x_1,\dots,x_{n-1},x_n,x,x_n',x_{n-1}',\dots,x_1')\]
is cyclically ordered.
Also, for all $n\in\Nb$, \Cref{lem: closure of diamond} gives
\[\overline{\Diam}_{\xi(y_3^{2n+1})}\left(\xi(y_2^{2n+1}),\xi(y_4^{2n+1})\right)\subset\Diam_{\xi(y_3^{2n+1})}\left(\xi(y_1^{2n+1}),\xi(y_5^{2n+1})\right)=\Diam_{\xi(z)}^{\rm opp}\left(\xi(x_n),\xi(x_n')\right),\]
and so
\[F\in \bigcap_{n\in\Nb} \Diam^\opp_{\xi(z)}(\xi(x_n),\xi(x_n')).\]

\noindent {\bf Positivity of $(\bdext , \zeta^{\mathrm M}_\rho)$.} For any positive integer $m\ge 3$, let $(a_1,\dots,a_m)$ be a tuple of points in $\bdext$ such that $(\zeta^{\mathrm M}_\rho(a_1),\dots,\zeta^{\mathrm M}_\rho(a_m))$ is cyclically ordered. We need to prove that $(a_1,\dots,a_m)$ is a positive tuple of flags. By the density of $D$, there exists $z,y_0,y_1,\dots,y_m\in D$ such that 
\[(z,y_0,\zeta^{\mathrm M}_\rho(a_1),y_1,\zeta^{\mathrm M}_\rho(a_2),y_2,\dots,y_{m-1},\zeta^{\mathrm M}_\rho(a_m),y_m)\]
is cyclically ordered. Then for each $i=1,\dots,m$, the definition of $\bdext(\zeta^{\mathrm M}_\rho(a_i))$ implies that the quadruple $(\xi(z),\xi(y_{i-1}),a_i,\xi(y_i))$ is positive. By the positivity of $\xi$, the tuple
\[(\xi(z),\xi(y_0),\xi(y_1),\xi(y_2),\dots,\xi(y_{m-1}),\xi(y_m))\]
is positive. Iterated applications of \Cref{lem: AddingElemToPosTuple} then imply that
\[(\xi(z),\xi(y_0),a_1,\xi(y_1),a_2,\xi(y_2),\dots,\xi(y_{m-1}),a_m,\xi(y_m))\]
is positive, so $(a_1,\dots,a_m)$ is positive.\\

\noindent {\bf First maximality property.}
Pick any point $x\in D$. We need to show that $\xi(x)\in\bdext(x)$. Fix a point $z\in D\setminus \{x\}$ and sequences $(x_n)_{n\in\Nb}$ and $(x_n')_{n\in\Nb}$ in $D$ that converge to $x$ such that for all $n\in\Nb$, $(z,x_1,\dots,x_n,x,x_n',\dots,x_1')$ is cyclically ordered. Then   
\[\xi(x)\in\bigcap_{n\in\Nb} \Diam^\opp_{\xi(z)}(\xi(x_n),\xi(x_n'))=\bdext(x).\]

\noindent {\bf Second maximality property.} Let $(\Vc, \zeta)$ be a positive $\Theta$-boundary extension. Pick any point $a\in \Vc$. We need to show that $a\in\bdext(\zeta(a))$.
    
Let $b\in\Vc$ such that $\zeta(b)\neq\zeta(a)$. By density of 
\[\{(\gamma^-,\gamma^+)\mid \gamma\in\Gamma\text{ is hyperbolic}\}\subset \Sb^1\times \Sb^1,\]
we can find 
\begin{itemize}
    \item a hyperbolic element $\eta\in\Gamma$ such that $(\eta^-, \zeta(b), \eta^+, \zeta(a))$ is cyclically ordered,
    \item a sequence $(\gamma_n)_{n\in \Nb} \subset \Gamma$ of hyperbolic elements such that the sequences $(\gamma_n^+)_{n\in\Nb}$ and $(\gamma_n^-)_{n\in\Nb}$ converge to $\zeta(a)$, and for all $n\in\Nb$, 
    \[(\eta^+,\gamma_1^+,\dots,\gamma_n^+,\zeta(a),\gamma_n^-,\dots,\gamma_1^-,\zeta(b))\]
    is cyclically ordered.
\end{itemize}
Then up to replacing $\eta$ and each $\gamma_n$ by a positive power, we can assume that
    \[(\zeta(b),\eta\cdot \zeta(b), \eta^2 \cdot \zeta(b), \eta\cdot \zeta(a), \gamma_n\cdot \zeta(b), \gamma_n^2 \cdot \zeta(b), \gamma_n\cdot \zeta(a), \zeta(a), \gamma_n^{-1} \cdot \zeta(a), \gamma_n^{-2} \cdot \zeta(a), \gamma_n^{-1}\cdot \zeta(b))\]
    is cyclically ordered for all $n\in\Nb$.

    By the positivity of $\zeta$, \Cref{lem: CharPosTransWeaklyProx}, and \Cref{lem: AddingElemToPosTuple}, we deduce that for all $n\in\Nb$, the tuple 
    \[(b, \rho(\eta) \cdot b, \rho(\eta)^+, \rho(\eta)\cdot a, \rho(\gamma_n)\cdot b, \rho(\gamma_n)^+, \rho(\gamma_n)\cdot a, a, \rho(\gamma_n)^{-1} \cdot a, \rho(\gamma_n)^-, \rho(\gamma_n)^{-1}\cdot b)\]
    is positive. In particular,
    \[a\in \Diam_{\rho(\eta)^+}^\opp(\rho(\gamma_n)^+, \rho(\gamma_n)^-)~.\]
    We thus conclude that
    \[a\in \bigcap_{n\in \Nb} \Diam_{\xi^h_\rho(\eta^+)}^\opp(\xi^h_\rho({\gamma_n}^+), \xi^h_\rho({\gamma_n}^-)) = \bdext (\zeta(a))~.\]

\noindent {\bf Continuity of $\zeta^{\mathrm{M}}_\rho$.}
Finally, let us prove that $\zeta_\rho^{\mathrm{M}}$ is continuous. Let $U$ be an open interval in $\Sb^1$ and $a\in \bdext$ such that $\zeta_\rho^{\mathrm{M}}(a) \in U$.
Let $x,y\in D$ be such that the closed (counter-clockwise) interval $[x,y]\subset\Sb^1$ is contained in $U$ and contains $\zeta_\rho^{\mathrm{M}}(a)$. Observe that by construction of $(\bdext,\zeta^{\mathrm{M}}_\rho)$, the triple $(\xi(x),a,\xi(y))$ is positive, so we may set
\[V\coloneqq \Diam_a(\xi(x),\xi(y))\cap \bdext~.\]
Since $V$ is an open neighborhood of $a$ in $\bdext$, it suffices to show that $\zeta_\rho^{\mathrm{M}}(V) \subset U$. 

Suppose for the purpose of contradiction that there exists $c\in V$ such that $\zeta_\rho^{\mathrm{M}}(c)\notin U$.
Then $\zeta_\rho^{\mathrm{M}}(c)\notin[x,y]$, so $(x,\zeta_\rho^{\mathrm{M}}(a),y,\zeta_\rho^{\mathrm{M}}(c))$ is cyclically ordered. By positivity of $\zeta_\rho^{\mathrm{M}}$, $(\xi(x),a,\xi(y),c)$ is positive, or equivalently, 
\[c\in \Diam_a^\opp(\xi(x),\xi(y))~.\]
This contradicts the assumption that $c\in V\subset \Diam_a(\xi(x),\xi(y))$.
\end{proof}

Note that \Cref{prop: converse} implies that positive boundary maps are always ``compatible'' in the following sense:

\begin{corollary}\label{coro: compatibility positive boundary maps}
    Suppose that $\rho$ is $\Theta$-positive. For each $i=1,\dots,n$, let $D_i\subset\Lambda(\Gamma)$ be a non-empty, $\Gamma$-invariant subset and let 
    $\xi_i\colon D_i \to \Fc_{\Theta,\Fb}$ be a positive, $\rho$-equivariant map. If $(x_1,\dots,x_n)$ is a cyclically ordered tuple in $\Sb^1$ such that for all $i=1,\dots,n$, $x_i\in D_i$, then the tuple \[(\xi_1(x_1), \ldots, \xi_n(x_n))\]
    is positive.
\end{corollary}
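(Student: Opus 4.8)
The plan is to deduce everything from the maximality property of the canonical boundary extension $(\bdext,\zeta_\rho^M)$ established in \Cref{prop: converse}. First I would invoke the first maximality property of \Cref{prop: converse}: since $\rho$ is $\Theta$-positive, for each $i=1,\dots,n$ the positive $\rho$-equivariant map $\xi_i : D_i \to \Fc_{\Theta,\Fb}$ satisfies $\xi_i(D_i)\subset \bdext$ and $\zeta_\rho^M\circ\xi_i = \Id_{D_i}$. In particular, for each $i$, the flag $\xi_i(x_i)$ lies in $\bdext$ and $\zeta_\rho^M(\xi_i(x_i)) = x_i$.

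Now suppose $(x_1,\dots,x_n)$ is a cyclically ordered tuple in $\Sb^1$ with $x_i\in D_i$ for all $i$. Set $a_i \coloneqq \xi_i(x_i)\in\bdext$. Then $(\zeta_\rho^M(a_1),\dots,\zeta_\rho^M(a_n)) = (x_1,\dots,x_n)$ is cyclically ordered, so by the positivity of the $\Theta$-boundary extension $(\bdext,\zeta_\rho^M)$ (which is part of the conclusion of \Cref{prop: converse}), the tuple $(a_1,\dots,a_n) = (\xi_1(x_1),\dots,\xi_n(x_n))$ is positive. This is exactly the claim.

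There is essentially no obstacle here; the corollary is a direct bookkeeping consequence of \Cref{prop: converse}, which is why I would present it immediately after that proposition with a one-line proof. The only point deserving a word of care is that the $x_i$ need not be distinct: if $x_i = x_j$ for $i\neq j$ then one must check that $\xi_i(x_i) = \xi_j(x_j)$, but this follows since both equal $\bdext(x_i)$'s element selected by the respective partial section — more precisely, any flag in $\xi_i(D_i)$ mapping to $x_i$ under $\zeta_\rho^M$ lies in the fibre $\bdext(x_i) = (\zeta_\rho^M)^{-1}(x_i)$, and a positive tuple is unaffected by this coincidence since ``cyclically ordered'' for the $x_i$'s already forces the relevant indices to be arranged consistently; in fact the definition of a positive $\Theta$-boundary extension applies verbatim to tuples in $\Vc^n$ whose images are cyclically ordered, with no distinctness hypothesis. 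Hence the proof is complete.

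\begin{proof}
    By \Cref{prop: converse}, for each $i=1,\dots,n$ we have $\xi_i(D_i)\subset\bdext$ and $\zeta_\rho^M\circ\xi_i = \Id_{D_i}$, so $\zeta_\rho^M(\xi_i(x_i)) = x_i$. Since $(x_1,\dots,x_n) = (\zeta_\rho^M(\xi_1(x_1)),\dots,\zeta_\rho^M(\xi_n(x_n)))$ is cyclically ordered and $(\bdext,\zeta_\rho^M)$ is a positive $\Theta$-boundary extension, the tuple $(\xi_1(x_1),\dots,\xi_n(x_n))$ is positive.
\end{proof}
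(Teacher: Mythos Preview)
Your proof is correct and essentially identical to the paper's: both invoke the first maximality property of \Cref{prop: converse} to get $\xi_i(x_i)\in\bdext$ with $\zeta_\rho^M(\xi_i(x_i))=x_i$, and then conclude by the positivity of $(\bdext,\zeta_\rho^M)$. Your digression about possibly coinciding $x_i$ is unnecessary, since a cyclically ordered tuple (as defined in \Cref{dfn:CyclicOrder}) already consists of pairwise distinct points.
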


\begin{proof}
    By \Cref{prop: converse}, $\xi_i(x_i) \in \bdext(x_i)$ for all $i=1,\dots,n$ and 
    \[\zeta^{\mathrm{M}}_\rho( \xi_1(x_1), \ldots, \xi_n(x_n)) = (x_1, \ldots, x_n),\] 
    which is cyclically ordered. Hence $(\xi_1(x_1), \ldots, \xi_n(x_n))$ is positive by positivity of $(\bdext,\zeta_\rho^{\mathrm{M}})$.
\end{proof}

\subsubsection{Surjectivity over Cantor complete real closed fields}
\label{sss:Surjectivity Cantor Complete}
We have already seen in \Cref{non-surjectivity} that $\zeta^{\mathrm{M}}_\rho$ might not be surjective.
However, we show below that surjectivity of $\zeta^{\mathrm{M}}_\rho$ can always be achieved after extending $\Fb$ to a Cantor complete one (see \Cref{s: Real case} for the case $\Fb=\Rb$).

\begin{proposition}\label{proposition: cantor complete field admit surjective boundary extension}
    If $\Fb$ is a Cantor complete real closed field, then $\zeta^{\mathrm{M}}_\rho\colon \bdext  \to \Sb^1$ is surjective.
\end{proposition}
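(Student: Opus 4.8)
The plan is to show that the fibre $(\zeta^M_\rho)^{-1}(x)=\bdext(x)$ is non-empty for \emph{every} $x\in\Sb^1$; surjectivity of $\zeta^M_\rho$ is then immediate. Recall the construction of $\bdext(x)$: one fixes a positive $\rho$-equivariant map $\xi$ on a $\Gamma$-invariant non-empty $D\subset\Sb^1$, a point $z\in D\setminus\{x\}$, and monotone sequences $(x_n)_{n\in\Nb}$ and $(x_n')_{n\in\Nb}$ in $D$ converging to $x$ from the right and from the left with respect to $z$, so that $(x_n,x,x_m',z)$ is cyclically ordered for all $m,n$, and sets $\bdext(x)=\bigcap_{n\in\Nb}\Diam^{\rm opp}_{\xi(z)}(\xi(x_n),\xi(x_n'))$. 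By \Cref{lemma: boundary extension pointwise independence} this set depends only on $\rho$ and $x$. Since $\rho$ is $\Theta$-positive, it is weakly $\Theta$-proximal by \Cref{propo: positive implies weak proximal}, and its $\Theta$-proximal limit map $\xi^h_\rho\colon\Lambda_h\to\Fc_{\Theta,\Fb}$ is positive by \Cref{corollary: limit map extends to proximal limit map}. As $\Gamma$ is of the first kind, $\Lambda_h$ is dense in $\Sb^1$, so every $x\in\Sb^1$ is a two-sided accumulation point of $\Lambda_h$; I may therefore take $D=\Lambda_h$, $\xi=\xi^h_\rho$ and $z,x_n,x_n'\in\Lambda_h$ when computing $\bdext(x)$.

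Next I would assemble $\xi(z)$ together with the $\xi(x_n)$ and $\xi(x_n')$ into a positive bi-infinite sequence. Because $(x_n)_{n\in\Nb}$ and $(x_n')_{n\in\Nb}$ are monotone and converge to $x$ from opposite sides of $z$, the bi-infinite list of points obtained by interleaving them around $z$ and $x$ is cyclically ordered in $\Sb^1$ (up to a global reversal, which we may absorb thanks to the dihedral invariance of cyclic order). Relabelling it as a $\Zb$-indexed sequence with $0$-th term $z$ — say $b_0=z$ and, for $n\geq 1$, $\{b_{-n},b_n\}=\{x_n,x_n'\}$ with the side convention chosen so that $n\mapsto b_n$ is monotone for the induced cyclic orders on $\Zb$ and $\Sb^1$ — we get that $n\mapsto b_n$ is a monotone injection, and hence $a_n\coloneqq\xi(b_n)$ defines a positive sequence $(a_n)_{n\in\Zb}$ in $\Fc_{\Theta,\Fb}$ in the sense of \Cref{def:PositiveTuples} and the subsequent definition of positive sequences, since $\xi$ maps cyclically ordered tuples to positive tuples.

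Finally I would invoke \Cref{corollary: diamond nonempty intersection over cantor complete fields}: as $\Fb$ is a Cantor complete real closed field and $(a_n)_{n\in\Zb}$ is positive, the nested intersection $\bigcap_{n\geq 1}\Diam^{\rm opp}_{a_0}(a_{-n},a_n)$ is non-empty. By the choice of labelling this intersection coincides (as a subset of $\Fc_{\Theta,\Fb}$) with $\bigcap_{n\in\Nb}\Diam^{\rm opp}_{\xi(z)}(\xi(x_n),\xi(x_n'))=\bdext(x)$ — here one uses that a diamond is determined by its two extremities as an unordered pair, or, to sidestep this, one applies the corollary to the reversed (still positive) sequence. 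Therefore $\bdext(x)\neq\emptyset$ for all $x\in\Sb^1$, which is exactly the surjectivity of $\zeta^M_\rho$.

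There is no deep obstacle here: the statement is essentially a repackaging of \Cref{corollary: diamond nonempty intersection over cantor complete fields} (which in turn rests on \Cref{propo:NestedIntersectionPropertyCantorComplete} and \Cref{lem: closure of diamond}) together with the choice-independence established in \Cref{lemma: boundary extension pointwise independence}. The only point that genuinely needs care is the bookkeeping in the second step: one must track the left/right conventions so that the bi-infinite list of boundary points is honestly cyclically ordered and so that the nested diamonds produced by the corollary are literally the defining nested diamonds of $\bdext(x)$ — a purely orientational matter, with no analytic content.
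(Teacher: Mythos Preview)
Your proof is correct and takes essentially the same approach as the paper: show each fibre $\bdext(x)$ is a nested intersection of diamonds indexed by a positive bi-infinite sequence, and apply \Cref{corollary: diamond nonempty intersection over cantor complete fields}. The paper's proof is two sentences and leaves the bookkeeping implicit, whereas you spell out the relabelling into a $\Zb$-indexed positive sequence; your care about the orientation is not strictly necessary since $\Diam_y^{\rm opp}(a,b)=\Diam_y^{\rm opp}(b,a)$ (the diamond containing $y$ with extremities $a,b$ does not depend on the order of $a,b$, and opposites are unique by \Cref{prop: transverse set to a full diamond}), but it does no harm.
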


\begin{proof}
    By construction, $(\zeta^{\mathrm{M}}_\rho)^{-1}(x) = \bdext (x)$ is a nested intersection of diamonds. If $\Fb$ is Cantor complete it is thus non-empty for every $x$ by \Cref{corollary: diamond nonempty intersection over cantor complete fields}.
\end{proof}

\begin{remark}
    It was shown by Shelah that any ordered field can be embedded in a Cantor complete real closed field (see \cite[Theorem 1.1]{Shelah_QuiteCompleteRealClosedFields}).
\end{remark}

\begin{corollary}\label{cor:ClosedSurfExtensionLimitMap}
    Let $\Fb$ is a Cantor complete real closed field, $\Gamma$ a Fuchsian group of the first kind and $\rho \colon \Gamma \to G_{\Fb}$ a $\Theta$-positive representation.
    Then there is a $\rho$-equivariant, positive map $\xi \colon \Lambda \setminus \Lambda_p\to\Fc_{\Theta,\Fb}$.
\end{corollary}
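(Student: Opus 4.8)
\emph{Proof plan.} Since $\Gamma$ is of the first kind, $\Lambda(\Gamma)=\Sb^1$, so we must produce a positive, $\rho$-equivariant map $\xi\colon\Sb^1\setminus\Lambda_p\to\Fc_{\Theta,\Fb}$. The plan is to realize $\xi$ as a $\rho$-equivariant section of the maximal positive $\Theta$-boundary extension, which is surjective because $\Fb$ is Cantor complete. First, since $\rho$ is $\Theta$-positive, \Cref{prop: converse} furnishes the maximal positive $\Theta$-boundary extension $(\bdext,\zeta^M_\rho)$; by \Cref{proposition: cantor complete field admit surjective boundary extension}, $\zeta^M_\rho$ is surjective, i.e.\ $\bdext(x)\coloneqq(\zeta^M_\rho)^{-1}(x)\neq\emptyset$ for every $x\in\Sb^1$. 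Moreover, by \Cref{theorem: limit map extends to proximal limit map}, $\rho$ is weakly $\Theta$-proximal and its $\Theta$-proximal limit map $\xi^h_\rho\colon\Lambda_h\to\Fc_{\Theta,\Fb}$ is positive and $\rho$-equivariant; the first maximality property in \Cref{prop: converse} gives $\xi^h_\rho(\Lambda_h)\subset\bdext$ and $\zeta^M_\rho\circ\xi^h_\rho=\Id_{\Lambda_h}$, so $\xi^h_\rho(y)\in\bdext(y)$ for all $y\in\Lambda_h$.

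Next I would extend $\xi^h_\rho$ over the rest of $\Sb^1\setminus\Lambda_p$ orbit by orbit. The point is that every $x\in\Sb^1\setminus(\Lambda_h\cup\Lambda_p)$ has trivial $\Gamma$-stabilizer: in a Fuchsian group the points with non-trivial stabilizer are exactly the hyperbolic and parabolic fixed points, and $\Lambda_h$ and $\Lambda_p$ are disjoint $\Gamma$-invariant sets partitioning $\Sb^1\setminus\Lambda_p$ together with $\Sb^1\setminus(\Lambda_h\cup\Lambda_p)$. Choose a set $T$ of representatives for the $\Gamma$-orbits in $\Sb^1\setminus(\Lambda_h\cup\Lambda_p)$, and for each $t\in T$ choose a flag $f_t\in\bdext(t)$ (possible by surjectivity of $\zeta^M_\rho$). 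Define
\[
\xi\colon\Sb^1\setminus\Lambda_p\to\Fc_{\Theta,\Fb},\qquad
\xi(y)=\begin{cases}\xi^h_\rho(y)&\text{if }y\in\Lambda_h,\\[2pt]\rho(\gamma)\cdot f_t&\text{if }y=\gamma\cdot t,\ t\in T,\ \gamma\in\Gamma.\end{cases}
\]
This is well defined because the $t\in T$ have trivial stabilizer (so $\gamma\cdot t$ determines $\gamma$) and the cases are exhaustive and mutually exclusive, and it is $\rho$-equivariant by construction, using the equivariance of $\xi^h_\rho$ on $\Lambda_h$ and $\xi(\delta\gamma\cdot t)=\rho(\delta\gamma)\cdot f_t=\rho(\delta)\cdot\xi(\gamma\cdot t)$ otherwise.

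Finally I would check positivity, which is essentially automatic. For $y=\gamma\cdot t$, the $(\rho,\Id)$-equivariance of $\zeta^M_\rho$ and the $\rho(\Gamma)$-invariance of $\bdext$ give $\zeta^M_\rho(\xi(y))=\gamma\cdot\zeta^M_\rho(f_t)=\gamma\cdot t=y$, and for $y\in\Lambda_h$ we already have $\zeta^M_\rho(\xi(y))=y$; hence $\xi(y)\in\bdext(y)$ for all $y\in\Sb^1\setminus\Lambda_p$. Thus for any cyclically ordered tuple $(y_1,\dots,y_n)$ in $\Sb^1\setminus\Lambda_p$ the tuple $\bigl(\zeta^M_\rho(\xi(y_1)),\dots,\zeta^M_\rho(\xi(y_n))\bigr)=(y_1,\dots,y_n)$ is cyclically ordered, so $(\xi(y_1),\dots,\xi(y_n))$ is positive because $(\bdext,\zeta^M_\rho)$ is a positive $\Theta$-boundary extension. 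Hence $\xi$ is positive, as desired. The only genuine input is the surjectivity of $\zeta^M_\rho$ (hence Cantor completeness of $\Fb$); everything else is bookkeeping with stabilizers and equivariance, so I do not expect a serious obstacle here. (Note that this argument stops at $\Sb^1\setminus\Lambda_p$ precisely because a $\rho$-equivariant section over a parabolic fixed point $\eta^+$ would require $\rho(\eta)$ to fix a flag in $\bdext(\eta^+)$, which can fail, cf.\ \Cref{subsection: ExNonFramablePosRepr}.)
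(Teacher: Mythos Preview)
Your proof is correct and follows essentially the same approach as the paper: build a $\rho$-equivariant section of the maximal positive boundary extension $(\bdext,\zeta^M_\rho)$ orbit by orbit using trivial stabilizers on $\Sb^1\setminus(\Lambda_h\cup\Lambda_p)$ and $\xi^h_\rho$ on $\Lambda_h$, then deduce positivity from positivity of $\zeta^M_\rho$. The only cosmetic difference is that the paper first constructs the section on $\Sb^1\setminus(\Lambda_h\cup\Lambda_p)$ and then invokes \Cref{corollary: limit map extends to proximal limit map} to merge with $\xi^h_\rho$, whereas you verify directly that the combined map is a section; both are equivalent.
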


\begin{proof}
    Recall that the stabilizer in $\Gamma$ of a point $x\in \Sb^1$ is either trivial or cyclic, and in the latter case, $x$ is a hyperbolic or parabolic fixed point. 

    Choose a representative $\hat x$ of each $\Gamma$-orbit $\Gamma \cdot x$ in $\Lambda \setminus (\Lambda_h \sqcup \Lambda_p)$. Then, using \Cref{proposition: cantor complete field admit surjective boundary extension}, we may choose $y_{\hat x} \in (\zeta^{\mathrm M}_\rho)^{-1}(\hat x)$ for each of these representatives. Finally, define
    \[\begin{array}{cccc}
    \xi\colon & \Lambda\setminus (\Lambda_p\sqcup \Lambda_h) & \to & \Fc_{\Theta, \Fb}\\
    & \gamma \cdot \hat x & \mapsto & \rho(\gamma) \cdot y_{\hat x}~.
    \end{array}\]
    The map $\xi$ is well defined since the stabilizer of each $\hat x$ is trivial, and by construction it is a $\rho$-equivariant section of $\zeta^{\mathrm M}_\rho$. Hence 
    \[\xi \cup \xi^h_\rho\colon \Lambda \setminus \Lambda_p\to\Fc_{\Theta,\Fb}\]
    is a partial section of $\zeta^{\mathrm M}_\rho$.
\end{proof}

\begin{remark}
    We will see that positivity does not imply that images of parabolic elements have fixed points in $\Fc_\Theta$.
    Thus, we can in general not expect to be able to extend the limit map to $\Lambda_p$.
    In the following section we study exactly $\Theta$-positive representations for which the limit map can be defined on $\Lambda_p$.
\end{remark}

\subsection{%
\texorpdfstring{%
Framed $\Theta$-positivity}%
{Framed Theta-positivity}}
\label{s: Framed Positivity}

As before, we fix $\Gamma$ a Fuchsian group of the first kind. In the previous section, we proved that, up to enlarging the field of definition, the boundary map of a $\Theta$-positive representation can be extended to the whole circle \emph{except maybe at the fixed points of parabolic elements}, see \Cref{cor:ClosedSurfExtensionLimitMap}.
To resolve this issue, we introduce here the notion of \emph{$\Theta$-positively framed representation} in the sense of Fock--Goncharov \cite{fock2006moduli}, and characterize these representations.

Recall that $\Lambda_p = \Lambda_p(\Gamma) \subset \Sb^1$ denotes the set of fixed points of parabolic elements of $\Gamma$. Throughout this section, we assume that $\Lambda_p$ is non-empty, hence dense in $\Sb^1$.

\begin{definition}
    Given a representation $\rho \colon \Gamma \to G_{\Fb}$, a \emph{$\Theta$-framing} of $\rho$ is a $\rho$-equivariant map $\xi_p \colon \Lambda_p \to \mathcal F_{\Theta,\Fb}$.
    A \emph{$\Theta$-framed representation} is a pair $(\rho, \xi_p)$, where $\rho$ is a representation and $\xi_p$ is a $\Theta$-framing.
\end{definition}

For each $p\in \Lambda_p$, there exists a unique positive parabolic element $\gamma_p\in \Gamma$ such that $\Stab_\Gamma(p) = \langle \gamma_p\rangle$. Let $I$ denote the set of $\Gamma$-orbits in $\Lambda_p$, and choose a representative $p_i$ in each orbit $i\in I$. Then, a $\Theta$-framing $\xi_\rho$ must map $p_i$ to a fixed point of $\rho(\gamma_{p_i})$. Conversely, given $x_i\in \mathcal F_{\Theta,\Fb}$ fixed by $\rho(\gamma_{p_i})$ for all $i \in I$, there exists a unique framing of $\rho$ mapping $p_i$ to $x_i$. Hence a $\Theta$-framing is equivalent to a choice of fixed point for some representative of each conjugacy class of each positive primitive parabolic element.

When $G$ is a complex or a split real simple Lie group and $P_\Theta$ is a Borel subgroup, this definition of framed representation coincides with the one of Fock--Goncharov (see \cite[Lemma 1.1]{fock2006moduli}).

 \begin{definition}
    A \emph{$\Theta$-positively framed} representation is a $\Theta$-framed representation $(\rho, \xi_p)$ such that the map $\xi_p$ is positive.
    A representation $\rho$ is called \emph{$\Theta$-positively frameable} if it admits a positive $\Theta$-framing.
\end{definition}

Note that a $\Theta$-positively frameable representation is $\Theta$-positive.

\begin{proposition}\label{framed positive implies positively translating}
    Let $\rho \colon \Gamma \to G_{\Fb}$ be a $\Theta$-positively frameable representation. Then, for every $\gamma \in \Gamma$ of infinite order, $\rho(\gamma)$ is $\Theta$-positively translating. 
\end{proposition}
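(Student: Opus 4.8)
The plan is to reduce the statement to the two kinds of infinite order elements in $\Gamma$: hyperbolic and parabolic. Since $\rho$ is $\Theta$-positively frameable, it is in particular $\Theta$-positive, so \Cref{propo: positive implies weak proximal} already tells us that $\rho(\gamma)$ is weakly $\Theta$-proximal (hence in particular $\Theta$-positively translating) for every hyperbolic $\gamma\in\Gamma$. Thus the only case that needs genuine work is when $\gamma$ is parabolic. Up to replacing $\gamma$ by a positive power, we may assume $\gamma=\gamma_p$ is the positive primitive parabolic element fixing some $p\in\Lambda_p$ (replacing $\gamma$ by a power does not affect the conclusion, since if $\rho(\gamma)^k$ is $\Theta$-positively translating with translating pair $(x,p)$, then $(x,\rho(\gamma)\cdot x,\ldots,\rho(\gamma)^{k}\cdot x = \rho(\gamma)^k\cdot x, \ldots)$ together with \Cref{lem: AddingElemToPosTuple} gives that $\rho(\gamma)$ has a positive orbit; but more directly it suffices to produce a translating pair for $\gamma_p$ itself). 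Negative parabolics are handled by passing to inverses.

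So fix the positive primitive parabolic $\gamma_p$ with fixed point $p\in\Lambda_p$, and let $\xi_p\colon\Lambda_p\to\Fc_{\Theta,\Fb}$ be a positive $\Theta$-framing. Set $P\coloneqq\xi_p(p)$; this is a flag fixed by $\rho(\gamma_p)$. Now pick any $q\in\Lambda_p$ with $q\neq p$ (possible since $\Lambda_p$ is infinite). Since $\gamma_p$ is a positive parabolic, the orbit $(q,\gamma_p\cdot q,\gamma_p^2\cdot q,\ldots)$ accumulates at $p$ moving counter-clockwise, so $(q,\gamma_p\cdot q,\gamma_p^2\cdot q,p)$ is a cyclically ordered quadruple in $\Lambda_p$. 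Applying the positive framing $\xi_p$ and using its $\rho$-equivariance, the quadruple
\[\big(\xi_p(q),\,\rho(\gamma_p)\cdot\xi_p(q),\,\rho(\gamma_p)^2\cdot\xi_p(q),\,P\big)\]
is positive. Since $P$ is fixed by $\rho(\gamma_p)$, this is exactly the statement that $(\xi_p(q),P)$ is a $\Theta$-positively translating pair for $\rho(\gamma_p)$, i.e.\ $\rho(\gamma_p)$ is $\Theta$-positively translating.

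Finally I would address the bookkeeping of reducing to the primitive case and to positive parabolics. If $\gamma$ is parabolic with fixed point $p$, write $\gamma=\gamma_p^k$ for some nonzero integer $k$; if $k>0$ the same argument as above works verbatim with $\gamma$ in place of $\gamma_p$ (the quadruple $(q,\gamma\cdot q,\gamma^2\cdot q,p)$ is still cyclically ordered since $\gamma$ is a positive parabolic), and if $k<0$ then $\gamma^{-1}$ is a positive parabolic and the argument produces a translating pair $(x,p)$ for $\rho(\gamma)^{-1}$; but then $(x,p)$ is also a translating pair for $\rho(\gamma)$ after reversing, i.e.\ one checks $(x,\rho(\gamma)\cdot x,\rho(\gamma)^2\cdot x,p)$ is positive by applying $\rho(\gamma)^{-2}$ and using dihedral invariance of quadruple positivity (\Cref{guichard2025generalizing}, Proposition 13.19). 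The main obstacle is simply making sure the cyclic ordering of the parabolic orbit on $\Sb^1$ is correctly matched to the dihedral-invariance conventions for positive quadruples; once the orientation conventions for positive versus negative parabolics are pinned down (as in \Cref{s: Fuchsian groups}), the argument is a short and direct application of equivariance plus \Cref{lem: CharPosTransWeaklyProx} is not even needed — only the definition of $\Theta$-positively translating.
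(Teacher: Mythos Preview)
Your proof is correct and follows the same approach as the paper's: handle hyperbolic elements via \Cref{propo: positive implies weak proximal} and parabolic elements by applying the positive framing to the cyclically ordered quadruple $(q,\gamma\cdot q,\gamma^2\cdot q,\gamma^+)$ in $\Lambda_p$. One small slip: weak $\Theta$-proximality alone does not imply $\Theta$-positively translating (so your parenthetical ``hence in particular'' is not quite right), but \Cref{propo: positive implies weak proximal}(1) asserts both conclusions explicitly, so your citation still suffices; the paper also treats positive and negative parabolics in one stroke via dihedral invariance of positive quadruples rather than your reduction-to-primitive bookkeeping, but that is purely cosmetic.
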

\begin{proof}
    Every infinite order element in $\Gamma$ is either hyperbolic or parabolic. In the case when $\gamma$ is hyperbolic, the proposition follows from \Cref{propo: positive implies weak proximal}.
    It thus suffices to prove that the image of a parabolic element is $\Theta$-positively translating (and not just $\Theta$-positively rotating).

    If $\gamma \in \Gamma$ is parabolic, let $\gamma^+\in \Lambda_p$ be its fixed point and let $x\in \Lambda_p\setminus\{\gamma^+\}$. Then either $(x,\gamma \cdot x, \gamma^2 \cdot x, \gamma^+)$ or $(\gamma^2\cdot x, \gamma\cdot x, x, \gamma^+)$ is cyclically ordered (depending on whether $\gamma$ is a positive or negative parabolic).
    Hence $\xi_p(\gamma^+)$ is fixed by $\rho(\gamma)$ and $(\xi_p(x), \rho(\gamma)\cdot \xi_p(x), \rho(\gamma)^2 \cdot \xi_p(x), \xi_p(\gamma^+))$ is positive, showing that $\rho(\gamma)$ is $\Theta$-positively translating.
\end{proof}

We can now prove the counterpart of \Cref{cor:ClosedSurfExtensionLimitMap} for Fuchsian groups of the first kind with parabolic elements:
\begin{corollary}\label{extension cor}
    Let $\Fb$ a Cantor complete real closed field and $\rho \colon \Gamma \to G_{\Fb}$ a representation.
    Then $\rho$ is $\Theta$-positively frameable if and only if there exists a $\Theta$-positive $\rho$-equivariant map $\xi \colon \Sb^1 \to \mathcal F_{\Theta,\Fb}$.
\end{corollary}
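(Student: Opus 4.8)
The "only if" direction is exactly the content of \Cref{thm-intro: extension boundary map} (or rather its proof): given a positive framing $\xi_p \colon \Lambda_p \to \Fc_{\Theta,\Fb}$, the pair $(\rho,\xi_p)$ in particular exhibits $\rho$ as $\Theta$-positive, so by \Cref{prop: converse} we obtain the maximal positive boundary extension $(\bdext,\zeta^M_\rho)$ together with the partial section property $\zeta^M_\rho \circ \xi_p = \Id|_{\Lambda_p}$, i.e.\ $\xi_p(p) \in \bdext(p)$ for all $p \in \Lambda_p$. Since $\Fb$ is Cantor complete, \Cref{proposition: cantor complete field admit surjective boundary extension} says $\zeta^M_\rho$ is surjective, so I can proceed as in the proof of \Cref{cor:ClosedSurfExtensionLimitMap}: pick a representative $\hat x$ of each $\Gamma$-orbit in $\Lambda \setminus (\Lambda_h \sqcup \Lambda_p) = \Sb^1 \setminus (\Lambda_h \sqcup \Lambda_p)$ (using that $\Gamma$ is of the first kind so $\Lambda = \Sb^1$), choose $y_{\hat x} \in (\zeta^M_\rho)^{-1}(\hat x)$, and define $\xi(\gamma \cdot \hat x) \coloneqq \rho(\gamma) \cdot y_{\hat x}$ — well defined since these points have trivial stabilizer. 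Then extend by $\xi^h_\rho$ on $\Lambda_h$ and by $\xi_p$ on $\Lambda_p$. The resulting map $\xi \colon \Sb^1 \to \Fc_{\Theta,\Fb}$ is a $\rho$-equivariant partial section of $\zeta^M_\rho$ defined on all of $\Sb^1$, hence positive by the positivity of $(\bdext,\zeta^M_\rho)$ (one needs $\xi^h_\rho(\gamma^+) \in \bdext(\gamma^+)$, which is \Cref{lemma: boundary extension pointwise independence}, and $\xi(\hat x) \in \bdext(\hat x)$ by construction).

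\textbf{The converse.} Suppose there is a positive $\rho$-equivariant map $\xi \colon \Sb^1 \to \Fc_{\Theta,\Fb}$. Then the restriction $\xi_p \coloneqq \xi|_{\Lambda_p}$ is a $\rho$-equivariant map $\Lambda_p \to \Fc_{\Theta,\Fb}$, i.e.\ a $\Theta$-framing of $\rho$, and it is positive because $\Lambda_p$ carries the cyclic order restricted from $\Sb^1$ and $\xi$ maps cyclically ordered tuples to positive tuples. Hence $\rho$ is $\Theta$-positively frameable. This direction uses nothing about Cantor completeness and is essentially immediate; the only thing to note is that a cyclically ordered tuple in $\Lambda_p$ is in particular a cyclically ordered tuple in $\Sb^1$.

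\textbf{Main obstacle.} There is no real obstacle here — the statement is essentially a packaging of \Cref{cor:ClosedSurfExtensionLimitMap}, \Cref{prop: converse}, \Cref{proposition: cantor complete field admit surjective boundary extension}, and the definition of $\Theta$-positively framed representation. The one point requiring a little care is that in the "only if" direction the three pieces $\xi^h_\rho$ on $\Lambda_h$, $\xi_p$ on $\Lambda_p$, and the chosen section on $\Sb^1 \setminus (\Lambda_h \sqcup \Lambda_p)$ must glue to a globally positive map; this follows because all three pieces are partial sections of the \emph{same} map $\zeta^M_\rho$ (using the maximality and compatibility properties in \Cref{prop: converse}, together with \Cref{lemma: boundary extension pointwise independence} for $\xi^h_\rho$ and the hypothesis $\xi_p(p) \in \bdext(p)$ for the framing), and any partial section of a positive boundary extension is automatically positive, cf.\ the remark following \Cref{def: Boundary Extension} and \Cref{coro: compatibility positive boundary maps}.

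\begin{proof}
    Suppose first that $\rho$ is $\Theta$-positively frameable, with positive $\Theta$-framing $\xi_p \colon \Lambda_p \to \Fc_{\Theta,\Fb}$. In particular $\rho$ is $\Theta$-positive, so by \Cref{prop: converse} it admits the maximal positive $\Theta$-boundary extension $(\bdext, \zeta^M_\rho)$, and since $\xi_p$ is a positive boundary map we have $\xi_p(\Lambda_p) \subset \bdext$ and $\zeta^M_\rho \circ \xi_p = \Id|_{\Lambda_p}$. As $\Fb$ is Cantor complete, \Cref{proposition: cantor complete field admit surjective boundary extension} ensures that $\zeta^M_\rho$ is surjective. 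Choose a representative $\hat x$ of each $\Gamma$-orbit in $\Sb^1 \setminus (\Lambda_h \sqcup \Lambda_p)$ (recall $\Lambda(\Gamma) = \Sb^1$ since $\Gamma$ is of the first kind), and for each such $\hat x$ pick $y_{\hat x} \in (\zeta^M_\rho)^{-1}(\hat x) = \bdext(\hat x)$. Define
    \[\xi \colon \Sb^1 \to \Fc_{\Theta,\Fb}, \qquad \xi(x) \coloneqq \begin{cases} \xi^h_\rho(x) & \text{if } x \in \Lambda_h, \\ \xi_p(x) & \text{if } x \in \Lambda_p, \\ \rho(\gamma) \cdot y_{\hat x} & \text{if } x = \gamma \cdot \hat x \notin \Lambda_h \sqcup \Lambda_p. \end{cases}\]
    The last case is well defined because points of $\Sb^1 \setminus (\Lambda_h \sqcup \Lambda_p)$ have trivial stabilizer in $\Gamma$; note also that $\Lambda_h \cap \Lambda_p = \emptyset$. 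By construction $\xi$ is $\rho$-equivariant. Moreover $\xi(x) \in \bdext(x)$ for every $x \in \Sb^1$: for $x \in \Lambda_h$ this is \Cref{lemma: boundary extension pointwise independence}, for $x \in \Lambda_p$ this is the partial section property of $\xi_p$, and for the remaining points it holds by the choice of $y_{\hat x}$ together with $\rho$-equivariance of $\zeta^M_\rho$. Hence $\xi$ is a partial section of $\zeta^M_\rho$ defined on all of $\Sb^1$, that is, $\zeta^M_\rho \circ \xi = \Id_{\Sb^1}$. Since $(\bdext, \zeta^M_\rho)$ is a positive $\Theta$-boundary extension, for any cyclically ordered tuple $(x_1, \ldots, x_n)$ in $\Sb^1$ the tuple $(\zeta^M_\rho(\xi(x_1)), \ldots, \zeta^M_\rho(\xi(x_n))) = (x_1, \ldots, x_n)$ is cyclically ordered, so $(\xi(x_1), \ldots, \xi(x_n))$ is positive. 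Thus $\xi$ is a positive $\rho$-equivariant map $\Sb^1 \to \Fc_{\Theta,\Fb}$.

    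Conversely, suppose there is a positive $\rho$-equivariant map $\xi \colon \Sb^1 \to \Fc_{\Theta,\Fb}$. Its restriction $\xi|_{\Lambda_p} \colon \Lambda_p \to \Fc_{\Theta,\Fb}$ is $\rho$-equivariant, hence a $\Theta$-framing of $\rho$. It is positive: any cyclically ordered tuple in $\Lambda_p$ (equipped with the cyclic order restricted from $\Sb^1$) is a cyclically ordered tuple in $\Sb^1$, and $\xi$ sends it to a positive tuple. Therefore $\rho$ is $\Theta$-positively frameable.
\end{proof}
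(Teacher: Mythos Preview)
Your proof is correct and follows essentially the same approach as the paper. The only cosmetic difference is that the paper packages the construction of a section on $\Sb^1\setminus\Lambda_p$ into the separate \Cref{cor:ClosedSurfExtensionLimitMap} and then glues with $\xi_p$, whereas you inline that construction and split into three pieces $\Lambda_h$, $\Lambda_p$, and the rest; the underlying mechanism (maximal boundary extension, surjectivity from Cantor completeness, gluing partial sections of $\zeta^M_\rho$) is identical.
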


\begin{proof}
    If there exists a $\rho$-equivariant $\Theta$-positive map $\xi \colon \Sb^1 \to \mathcal F_{\Theta,\Fb}$, then its restriction to $\Lambda_p$ is a $\Theta$-positive framing.

    Conversely, assume $\xi_p$ is a $\Theta$-positive framing for $\rho$.
    Let $(\bdext,\zeta^{\mathrm M}_\rho)$ be the maximal positive $\Theta$-boundary extension given by \Cref{prop: converse}. By \Cref{cor:ClosedSurfExtensionLimitMap}, there is a $\rho$-equivariant partial section $\xi$ of $\zeta^{\mathrm M}_\rho$ defined on $\Lambda \setminus \Lambda_p$. By \Cref{prop: converse}, $\xi_p$ is a partial section of $\zeta^{\mathrm M}_\rho$. Hence, the map
    \[\begin{array}{cccc}
    \xi \sqcup \xi_p \colon & \Sb^1 & \to & \Fc_{\Theta,\Fb}\\
    &x & \mapsto  &
        \begin{cases*}
        \xi_p(x) &\textrm{ if $x\in \Lambda_p$},\\
        \xi(x) &\textrm{ if $x\in \Sb^1\setminus \Lambda_p$}~.
        \end{cases*}
    \end{array}
    \]
    is now a globally defined $\rho$-equivariant section of $\zeta^{\mathrm M}_\rho$, or equivalently, a positive boundary map defined on $\Sb^1$.
\end{proof}

We conclude this section by showing that a $\Theta$-positively frameable representation admits two canonical $\Theta$-positive framings.

Let $\rho$ be a $\Theta$-positively frameable representation.
Recall that, by \Cref{propo:PosTransUniqueFP} and \Cref{propo: positive implies weak proximal}, the image under $\rho$ of every parabolic element $\gamma \in \Gamma$ has well-defined forward and backward fixed points in $\Fc_{\Theta,\Fb}$, denoted respectively by $\rho(\gamma)^+$ and $\rho(\gamma)^-$.
For $p \in \Lambda_p$, let $\gamma_p$ denote the positive parabolic element that generates the stabilizer of $p$ in $\Gamma$.

\begin{proposition}
\label{lem:PosFramedRepr}
   Let $\rho\colon \Gamma \to G_{\Fb}$ be a $\Theta$-positively frameable representation. Then for any parabolic element $\gamma\in\Gamma$, we have $\rho(\gamma)^\pm\in\bdext(\gamma^+)$, where $(\bdext,\zeta^{\mathrm M}_\rho)$ is the maximal positive $\Theta$-boundary extension given by \Cref{prop: converse}. 
   
   In particular, the maps $\xi_\rho^l, \xi_\rho^r\colon \Lambda_p \to \Fc_{\Theta,\Fb}$ defined by
   \[\xi_\rho^l\colon p \mapsto \rho(\gamma_p)^+\quad \textnormal{ and } \quad\xi_\rho^r\colon p \mapsto \rho(\gamma_p)^-\]
   are $\Theta$-positive framings.
\end{proposition}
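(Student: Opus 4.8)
The statement asserts two things: first, a pointwise containment $\rho(\gamma)^\pm \in \bdext(\gamma^+)$ for every parabolic $\gamma$; second, that the resulting maps $\xi_\rho^l$ and $\xi_\rho^r$ are $\Theta$-positive framings. The plan is to prove the containment first, as it immediately yields the positivity of the framings via the compatibility machinery of \Cref{prop: converse} (more precisely the positivity of the boundary extension $(\bdext, \zeta_\rho^M)$ together with \Cref{coro: compatibility positive boundary maps}).

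\textbf{Step 1: the containment $\rho(\gamma)^\pm \in \bdext(\gamma^+)$.} Fix a parabolic $\gamma \in \Gamma$ and, without loss of generality, assume $\gamma$ is a positive parabolic (otherwise replace $\gamma$ by $\gamma^{-1}$, which swaps the roles of $\rho(\gamma)^+$ and $\rho(\gamma)^-$ but fixes $\gamma^+$). Let $\xi_p$ be a positive $\Theta$-framing and set $q \coloneqq \xi_p(\gamma^+)$; by \Cref{framed positive implies positively translating} the element $\rho(\gamma)$ is $\Theta$-positively translating, and $q$ is a fixed point witnessing this. Pick any $x \in \Lambda_p \setminus \{\gamma^+\}$; then $(x, \gamma \cdot x, \gamma^2 \cdot x, \gamma^+)$ is cyclically ordered, so $(\xi_p(x), \rho(\gamma)\cdot \xi_p(x), \rho(\gamma)^2 \cdot \xi_p(x), q)$ is positive, i.e.\ $(\xi_p(x), q)$ is a $\Theta$-positively translating pair for $\rho(\gamma)$. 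Now apply \Cref{propo:PosTransUniqueFP}: since $\rho(\gamma)^+$ is the minimal fixed point, $(\xi_p(x), \rho(\gamma)^+)$ is again a $\Theta$-positively translating pair and $\Diam_{\rho(\gamma)\cdot \xi_p(x)}(\xi_p(x), \rho(\gamma)^+) \subset \Diam_{\rho(\gamma)\cdot \xi_p(x)}(\xi_p(x), q)$; hence the whole orbit $(\rho(\gamma)^n \cdot \xi_p(x))_{n \in \Nb}$ is positive and converges to $\rho(\gamma)^+$ in the sense that $\rho(\gamma)^+ = \bigcap_{n} \Diam_{\rho(\gamma)\cdot \xi_p(x)}(\xi_p(x), \rho(\gamma)^n \cdot \xi_p(x))$ by the argument in \Cref{propo:PosTransUniqueFP}. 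The point now is that $\bdext(\gamma^+)$ was \emph{defined} (see the construction preceding \Cref{lemma: boundary extension pointwise independence}) as a nested intersection $\bigcap_n \Diam^\opp_{\xi(z)}(\xi(x_n), \xi(x'_n))$ over sequences $x_n \to \gamma^+$ from the left and $x'_n \to \gamma^+$ from the right, and by \Cref{lemma: boundary extension pointwise independence} this set is independent of all choices, so we may compute it using $\xi_p$ and the orbit sequences. Choose $z = x$ and choose both approximating sequences from inside the $\gamma$-orbit of a point between $x$ and $\gamma^+$: concretely, pick $y, y' \in \Lambda_p$ with $(x, \gamma^+, y, y', \dots)$ ordered so that $(\gamma^n \cdot y)_n$ and $(\gamma^n \cdot y')_n$ approach $\gamma^+$ from the two sides; then tracking the diamonds and using \Cref{lem: CharPosTransWeaklyProx}-style arguments (the quadruple $(\xi_p(x), \rho(\gamma)^k \cdot \xi_p(y), \rho(\gamma)^k \cdot \xi_p(y'), \dots)$) one identifies $\rho(\gamma)^+$ as lying in every $\Diam^\opp_{\xi_p(x)}(\xi_p(\gamma^n \cdot y), \xi_p(\gamma^n \cdot y'))$. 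The symmetric computation applied to $\rho(\gamma)^{-1}$ (whose forward fixed point is $\rho(\gamma)^-$, and whose orbit of any point also accumulates at $\gamma^+$ since $\gamma$ is parabolic, so $\gamma^{-n}\cdot x \to \gamma^+$ as well) gives $\rho(\gamma)^- \in \bdext(\gamma^+)$.

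\textbf{Step 2: positivity of $\xi_\rho^l$ and $\xi_\rho^r$.} First check well-definedness and $\rho$-equivariance: if $p = \delta \cdot p'$ with $\delta \in \Gamma$, then $\gamma_p = \delta \gamma_{p'} \delta^{-1}$, and from the semi-algebraic characterization of $g \mapsto g^+$ (the paragraph after \Cref{propo:PosTransUniqueFP}, where it is noted this map is $\Aut_1$-equivariant, hence $G_\Fb$-equivariant) we get $\rho(\gamma_p)^+ = \rho(\delta) \cdot \rho(\gamma_{p'})^+$, so $\xi_\rho^l$ and $\xi_\rho^r$ are $\rho$-equivariant; they are defined on $\Lambda_p$ which is $\Gamma$-invariant. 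For positivity: by Step 1, $\xi_\rho^l(p) = \rho(\gamma_p)^+ \in \bdext(p)$ and $\xi_\rho^r(p) = \rho(\gamma_p)^- \in \bdext(p)$ for every $p \in \Lambda_p$, so both maps are $\rho$-equivariant \emph{partial sections} of $\zeta_\rho^M$ in the sense of \Cref{def: Boundary Extension} ($\zeta_\rho^M \circ \xi_\rho^l = \Id_{\Lambda_p}$, likewise for $\xi_\rho^r$). Since $(\bdext, \zeta_\rho^M)$ is a positive $\Theta$-boundary extension (\Cref{prop: converse}), any partial section of $\zeta_\rho^M$ is positive: given a cyclically ordered tuple $(p_1, \dots, p_n)$ in $\Lambda_p$, the images $(\xi_\rho^l(p_1), \dots, \xi_\rho^l(p_n))$ lie in $\bdext$ and satisfy $\zeta_\rho^M(\xi_\rho^l(p_i)) = p_i$, so $(\zeta_\rho^M(\xi_\rho^l(p_1)), \dots)$ is cyclically ordered, whence by the positivity of the boundary extension the tuple $(\xi_\rho^l(p_1), \dots, \xi_\rho^l(p_n))$ is positive. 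The same applies to $\xi_\rho^r$. Hence both are $\Theta$-positive framings.

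\textbf{Main obstacle.} The routine parts (equivariance, and the deduction of positivity from the existence of a partial section) are formal. The genuine work is Step 1 — identifying $\rho(\gamma)^{\pm}$ as elements of the \emph{specific} nested intersection defining $\bdext(\gamma^+)$, which requires carefully choosing approximating sequences inside parabolic orbits and keeping track of which diamonds and opposite diamonds contain which orbit points. The subtlety is that $\bdext(\gamma^+)$ is built from sequences approaching $\gamma^+$ from the two sides, whereas the natural object produced by $\Theta$-positive translation is the orbit of a single point; reconciling these uses the orbit-independence of \Cref{lemma: boundary extension pointwise independence} and the nesting properties from \Cref{lem: closure of diamond} and \Cref{onesided squeeze}. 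I expect the cleanest route is to invoke \Cref{lem: CharPosTransWeaklyProx} with $x$ and $y$ chosen as framing images of orbit points straddling $\gamma^+$, which directly places $\rho(\gamma)^+$ (and $\rho(\gamma)^-$, reading the element backwards) in the required intersections of opposite diamonds.
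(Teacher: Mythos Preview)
Your approach is essentially the paper's: show $\rho(\gamma)^\pm \in \bdext(\gamma^+)$ using a positive framing and \Cref{propo:PosTransUniqueFP}, then conclude positivity of $\xi_\rho^l, \xi_\rho^r$ because they are partial sections of $\zeta_\rho^M$. The paper's Step~1 is slightly cleaner than yours: rather than introducing auxiliary points $y,y'$, it simply uses the bi-infinite orbit $(\gamma^n\cdot x)_{n\in\Zb}$ of a \emph{single} point $x\in\Lambda_p\setminus\{\gamma^+\}$ (the positive and negative powers approach $\gamma^+$ from the two sides simultaneously), observes that $(\xi_p(x),\xi_p(\gamma^+))$ is a translating pair for both $\rho(\gamma)$ and $\rho(\gamma)^{-1}$, and from \Cref{propo:PosTransUniqueFP} gets in one stroke that $\{\rho(\gamma)^+,\rho(\gamma)^-\}\subset \Diam^{\opp}_{\xi_p(x)}(\xi_p(\gamma^n\cdot x),\xi_p(\gamma^{-n}\cdot x))$ for all $n$, which is exactly $\bdext(\gamma^+)$.
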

\begin{proof}

Let $\xi_p\colon \Lambda_p \to \Fc_{\Theta,\Fb}$ be a $\Theta$-positive framing and choose any $x \neq \gamma^+ \in \Lambda_p$.
By positivity of $\xi_p$, the pair $(\xi_p(x), \xi_p(\gamma^+))$ is a $\Theta$-positively translating pair for both $\rho(\gamma)$ and $\rho(\gamma^{-1})$.
Hence, by \Cref{propo:PosTransUniqueFP}, $(\xi_p(x), \rho(\gamma)^+)$ and $(\xi_p(x), \rho(\gamma)^-)$ are positive translating pairs for $\rho(\gamma)$ and $\rho(\gamma)^{-1}$ respectively.
Therefore for all $n\in\Nb$, we have
    \[\{\rho(\gamma)^-, \rho(\gamma)^+\} \subset \Diam_{\xi_p(x)}^\opp(\rho(\gamma)^n \cdot \xi_p(x), \rho(\gamma)^{-n}\cdot \xi_p(x)) = \Diam_{\xi_p(x)}^\opp(\xi_p(\gamma^n \cdot x), \xi_p(\gamma^{-n}\cdot x))~.\]
Since $\gamma^n\cdot x$ and $\gamma^{-n} \cdot x$ converge to $\gamma^+$ and
\[(x,\gamma\cdot x,\dots,\gamma^n\cdot x,\gamma^+,\gamma^{-n}\cdot x,\dots,\gamma^{-1}\cdot x)\]
is cyclically ordered, we have
    \[\{\rho(\gamma)^-, \rho(\gamma)^+\} \subset\bigcap_{n>0}\Diam_{\xi_p(x)}^\opp(\xi_p(\gamma^n\cdot x), \xi_p(\gamma^{-n}\cdot x)) = \bdext (\gamma^+).\qedhere\]
\end{proof}

\begin{corollary}\label{corollary: fi frameable}
    Let $\Gamma'$ be a finite index subgroup of $\Gamma$ and $\rho \colon \Gamma \to G_{\Fb}$ a representation.
    Then $\rho$ is $\Theta$-positively frameable if and only if $\rho\vert_{ \Gamma'}$ is $\Theta$-positively frameable.
\end{corollary}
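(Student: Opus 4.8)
\textbf{Proof of \Cref{corollary: fi frameable}.} The plan is to reduce this to \Cref{cor: positivity finite-index subgroup}, which already tells us that $\Theta$-positivity passes to and from finite-index subgroups, and then to upgrade from $\Theta$-positivity to $\Theta$-positive frameability using the characterization of the framings $\xi_\rho^l$ and $\xi_\rho^r$ from \Cref{lem:PosFramedRepr}. First note that since $\Gamma'$ has finite index in $\Gamma$, the groups $\Gamma$ and $\Gamma'$ have the same limit set, so $\Gamma'$ is again of the first kind, and a parabolic element of $\Gamma$ has a positive power lying in $\Gamma'$, so $\Lambda_p(\Gamma) = \Lambda_p(\Gamma')$; in particular $\Gamma'$ has parabolic elements whenever $\Gamma$ does, and the statement is non-vacuous. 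If $\Lambda_p(\Gamma) = \emptyset$ both conditions reduce to $\Theta$-positivity by \Cref{cor: positivity finite-index subgroup}, so we may assume $\Lambda_p \neq \emptyset$.

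For the forward direction, suppose $\rho$ is $\Theta$-positively frameable, with positive framing $\xi_p \colon \Lambda_p \to \Fc_{\Theta,\Fb}$. Since $\Lambda_p(\Gamma') = \Lambda_p(\Gamma)$ and a $\Gamma$-equivariant map is a fortiori $\Gamma'$-equivariant, the restriction $\xi_p$ is itself a $\rho|_{\Gamma'}$-equivariant map on $\Lambda_p(\Gamma')$, and positivity of $\xi_p$ is a property of the map alone, not of the group. Hence $\xi_p$ is a $\Theta$-positive framing for $\rho|_{\Gamma'}$, so $\rho|_{\Gamma'}$ is $\Theta$-positively frameable.

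For the converse, suppose $\rho|_{\Gamma'}$ is $\Theta$-positively frameable. Then $\rho|_{\Gamma'}$ is $\Theta$-positive, so by \Cref{cor: positivity finite-index subgroup} the representation $\rho$ is $\Theta$-positive. The point is now to produce a \emph{$\Gamma$-equivariant} positive framing. Apply \Cref{lem:PosFramedRepr} to $\rho|_{\Gamma'}$: since $\rho|_{\Gamma'}$ is $\Theta$-positively frameable, for every parabolic $\gamma' \in \Gamma'$ the element $\rho(\gamma')$ is $\Theta$-positively translating, and the map $p \mapsto \rho(\gamma'_p)^+$ (where $\gamma'_p$ is the positive parabolic generating $\Stab_{\Gamma'}(p)$) is a $\Theta$-positive framing. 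Now for $p \in \Lambda_p(\Gamma) = \Lambda_p(\Gamma')$, the positive generator $\gamma_p$ of $\Stab_\Gamma(p)$ satisfies $\gamma_p^{k} = \gamma'_p$ for some $k \geq 1$, and $\rho(\gamma_p)$ is an infinite-order element with $\rho(\gamma_p)^{k} = \rho(\gamma'_p)$ $\Theta$-positively translating; since being $\Theta$-positively translating with a given positively translating pair $(x,p)$ is inherited along positive powers and conversely (indeed $(x, g\cdot x, g^2\cdot x, p)$ positive and $g$ fixing $p$ forces, via \Cref{lem: AddingElemToPosTuple} applied to the $g$-orbit, the whole orbit to be positive, and the same computation for $g^k$), one checks $\rho(\gamma_p)$ is itself $\Theta$-positively translating and $\rho(\gamma_p)^{+} = \rho(\gamma'_p)^{+}$. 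Thus the two framings agree on $\Lambda_p$, and the formula $p \mapsto \rho(\gamma_p)^+$ defines a $\rho$-equivariant map on $\Lambda_p(\Gamma)$: $\Gamma$-equivariance follows because $\gamma \gamma_p \gamma^{-1} = \gamma_{\gamma \cdot p}$ for $\gamma \in \Gamma$ and $p \in \Lambda_p$, and the forward fixed point transforms as $\rho(\gamma)\cdot\rho(\gamma_p)^+ = \rho(\gamma \gamma_p \gamma^{-1})^+ = \rho(\gamma_{\gamma\cdot p})^+$ by $G_{\Fb}$-equivariance of the semi-algebraic assignment $g \mapsto g^+$. Finally, positivity of this framing is the statement that $\xi_\rho^l = \xi_{\rho|_{\Gamma'}}^l$ on $\Lambda_p$, which we have just established, and $\xi_{\rho|_{\Gamma'}}^l$ is positive by \Cref{lem:PosFramedRepr}. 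Hence $\rho$ is $\Theta$-positively frameable.

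The main obstacle is the converse direction, and specifically the need to check that the canonical framing $\xi_{\rho|_{\Gamma'}}^l$ built from $\Gamma'$-generators of cusp stabilizers is actually invariant under the full group $\Gamma$ — this is exactly where one uses that $\rho(\gamma_p)$ and $\rho(\gamma_p^k)$ have the \emph{same} forward fixed point, which in turn relies on the uniqueness statement in \Cref{propo:PosTransUniqueFP} (the minimality characterization of $g^+$ is stable under passing to powers) together with the fact that a positively translating pair for $g^k$ fixing $p$ is a positively translating pair for $g$. Everything else is bookkeeping about finite-index Fuchsian subgroups having the same limit set and the same parabolic fixed points.
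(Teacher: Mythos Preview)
Your approach is the same as the paper's: both directions come down to the identity $\xi_\rho^l = \xi_{\rho|_{\Gamma'}}^l$ on $\Lambda_p(\Gamma) = \Lambda_p(\Gamma')$, and the paper's one-line proof simply asserts exactly that. Your forward direction is correct and slightly more explicit than the paper's (any positive $\Gamma$-framing is automatically a positive $\Gamma'$-framing on the same set).

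In the converse direction there is a genuine gap. To speak of $\rho(\gamma_p)^+$ at all you must first know that $\rho(\gamma_p)$ is $\Theta$-positively translating, and your justification (``translating pairs are inherited along positive powers and conversely'') does not establish this: from a translating pair $(x,q)$ for $g^k$ one cannot in general conclude that $g$ fixes $q$, nor that the finer tuple $(x,\,g\cdot x,\,g^2\cdot x,\,q)$ is positive. The missing observation is precisely the conjugation-equivariance of $h\mapsto h^+$ that you invoke later for $\Gamma$-equivariance: since $\gamma_p$ commutes with $\gamma_p^k$, one has $\rho(\gamma_p)\cdot\rho(\gamma_p^k)^+ = \bigl(\rho(\gamma_p)\,\rho(\gamma_p^k)\,\rho(\gamma_p)^{-1}\bigr)^+ = \rho(\gamma_p^k)^+$, so $\rho(\gamma_p)$ does fix $q\coloneqq\rho(\gamma_p^k)^+$. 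To then exhibit a translating pair for $\rho(\gamma_p)$, note that $q\in\Vc^M_{\rho|_{\Gamma'}}(p)=\Vc^M_\rho(p)$ (the two maximal boundary extensions coincide because both can be built from $\xi^h_\rho=\xi^h_{\rho|_{\Gamma'}}$); taking any $a\in\Vc^M_\rho$ with $\zeta^M_\rho(a)\neq p$, the $\zeta^M_\rho$-image of $(a,\rho(\gamma_p)\cdot a,\rho(\gamma_p)^2\cdot a,q)$ is the cyclically ordered tuple $(\zeta^M_\rho(a),\gamma_p\cdot\zeta^M_\rho(a),\gamma_p^2\cdot\zeta^M_\rho(a),p)$, so positivity of the boundary extension (\Cref{prop: converse}) makes $(a,q)$ a translating pair for $\rho(\gamma_p)$. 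Once $\rho(\gamma_p)$ is known to be positively translating, the equality $\rho(\gamma_p)^+=\rho(\gamma_p^k)^+$ is a semi-algebraic statement that holds over $\Rb$ (where both sides equal $\lim_n\rho(\gamma_p)^n\cdot a$) and transfers to $\Fb$. The paper's proof is equally silent on this point, but your attempted expansion does not quite fill it in.
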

\begin{proof}
    The proof is akin to that of \Cref{cor: positivity finite-index subgroup}. We simply note that, since every positive parabolic of $\Gamma$ has a positive power in $\Gamma'$, we have
    \[\Lambda_p(\Gamma') = \Lambda_p(\Gamma )\textrm{ and }\xi_\rho^l(\rho) = \xi_\rho^l(\rho\vert_{ \Gamma'})~.\qedhere\]
\end{proof}

We will see in \Cref{subsection: continuity boundary maps} that, over $\Rb$, every $\Theta$-positive representation is $\Theta$-positively frameable, and that the canonical framings $\xi_\rho^l$ and $\xi_\rho^r$ are the restrictions to $\Lambda_p$ of the left and right semi-continuous boundary maps of $\rho$. In contrast to the real case, there are examples of representations $\rho \colon \Gamma \to \PSL_2(\Fb)$, over a non-Archimedean real closed field $\Fb$, that are positive but not frameable. We will now construct one such example. These are also constructed independently by Burger--Iozzi--Parreau--Pozzetti in \cite{BIPP_RSCMaximalRepr}.

\subsubsection{Example of a non-frameable $\Theta$-positive representation}
\label{subsection: ExNonFramablePosRepr}

We saw in \Cref{propo: PosTransPosRot} an example of a $\Theta$-positively rotating element of $\PSL_2(\Fb)$ which is not $\Theta$-positively translating, and our goal is now to realize such an element as the image of a parabolic element by a positive representation. Informally, we can obtain such representations as limits of sequences of representations in $\PSL_2(\Rb)$ which are closer and closer to being Fuchsian, with some parabolic element mapped to a rotation with smaller and smaller angle.

To better visualize elements in $\PSL_2(\Fb)$ and their action on $\mathbf{P}^1(\Fb)$ we look at their actions by M\"obius transformations on the non-Archimedean upper-half plane over $\Fb$.
For this choose a square root $i$ of $-1 \in \Fb$ and consider the algebraically closed field $\Fb[i]$.
Just like in the case when $\Fb=\Rb$ we define imaginary and real parts and complex conjugation in $\Fb$, and we use the standard notations.
We define the \emph{non-Archimedean upper half plane over $\Fb$} by
\[\mathbf{H}_{\Fb} \coloneqq \{z \in \Fb[i] \mid \textnormal{Im}(z)>0\}\]
as in \cite{Brumfiel_TreeNonArchimedeanHyperbolicPlane}.
Its ``boundary'' $\{z \in \Fb[i] \mid \textnormal{Im}(z)=0\} \cup \{ \infty \}$ is then identified with $\mathbf{P}^1(\Fb)$.
Mimicking the real case, through any two distinct points $x \neq y \in \mathbf{P}^1(\Fb)$ there is an \emph{$\Fb$-line} in $\mathbf{H}_{\Fb} \cup \mathbf{P}^1(\Fb)$ whose endpoints are $x$ and $y$.
More precisely this line is a straight line parametrized by $\Fb$ if one of $x$ or $y$ is equal to $\infty \in \mathbf{P}^1(\Fb)$, and otherwise a half-circle of the form $\{z \in \Fb[i] \mid (z-c)(\bar{z} -c) = r^2\}$ for some $c,r \in \Fb$.

The group $\PGL_2(\Fb)$ acts on $\mathbf{H}_{\Fb}$ by M\"obius transformations, i.e.\
\[\begin{bmatrix}
    a & b \\
    c & d
\end{bmatrix} \cdot z = \begin{cases}
    \frac{az+b}{cz+d}, \textnormal{ if } ad-bc>0~,\\
    \frac{a\bar{z}+b}{c\bar{z}+d}, \textnormal{ otherwise }~.
\end{cases}\]
Note that since $\Fb$ is real closed the sign of the determinant of an element of $\PGL_2(\Fb)$ is well-defined. We can also define an ($\Fb$-valued) absolute value by setting $\vert x \vert_{\Fb} \coloneqq \max\{x,-x\} \in \Fb_{\geqslant 0}$ for all $x \in \Fb$ since $\Fb$ is ordered, thus the absolute value of the trace of an element in $\PSL_2(\Fb)$ is well-defined.
As over $\Rb$, we have the classification of elements of $\PSL_2(\Fb)$ into hyperbolic, parabolic and elliptic transformations in terms of fixed points in $\mathbf{H}_{\Fb} \cup \mathbf{P}^1(\Fb)$ as well as in terms of absolute values of traces by the transfer principle (\Cref{thm_TarskiSeidenberg}).
We also denote by $\textnormal{CR}$ the cross-ratio of four points (of which three are pairwise distinct) in $\mathbf{P}^1(\Fb)$ with the convention that $\textnormal{CR}(0,1,t,\infty)=t\in\Fb\cup\{\infty\}$. 

Finally, notice that the order on $\Fb$ determines a cyclic order on $\mathbf{P}^1(\Fb)$. Just as in the real case, an $n$-tuple of points in $\mathbf{P}^1(\Fb)$ is positive if and only if it is either cyclically ordered or reverse cyclically ordered. Also, an element in $\PGL_2(\Fb)$ either preserves or reverses the cyclic order on $\mathbf{P}^1(\Fb)$, and the former holds if and only if it lies in $\PSL_2(\Fb)$. For any distinct pair of points $x,y\in\mathbf{P}^1(\Fb)$, we let $(x,y)$ denote the cyclically ordered open interval with $x$ and $y$ as its backward and forward endpoints, i.e.\
\[(x,y)\coloneqq \{w\in\mathbf{P}^1(\Fb) \mid (x,w,y)\text{ is cyclically ordered}\}.\]
Then we denote $[x,y]\coloneqq (x,y)\cup\{x,y\}$.

In this paragraph, we identify the hyperbolic plane $\Hb$ with $\mathbf{H}_{\Rb}$ and $\Isom_+(\Hb)$ with $\PSL_2(\Rb)$.
Our example will be a representation of the Fuchsian group $\Gamma_{0,3}$ generated by $a$ and $b$, where $a$ maps the complement of the interval $A_1 = (3,4) \subset \Rb\cup\{\infty\}$ to $A_2 = (-4,-3)$ and $b$ maps the complement of $B_1 = (1,2)$ to $B_2= (-2,-1)$. In particular, $a$ and $b$ are hyperbolic with $a^+\in A_2$ and $b^+\in B_2$, and they span a free group of rank $2$. Topologically, the quotient $\Gamma_{0,3} \backslash \mathbf{H}_{\Rb}$ is a $3$-holed sphere. Denote $c \coloneqq b^{-1}a^{-1}$, and note that the group $\Gamma_{0,3}$ admits the following presentation
\[\langle a,b,c \mid abc=1\rangle~.\]

We will construct representations of $\Gamma_{0,3}$ into $\PSL_2(\Fb)$ by restricting to an index two subgroup of a group spanned by $3$ reflections.
Let $l_1$ and $l_2$ be two $\Fb$-lines in $\mathbf{H}_{\Fb}$ with endpoints $x_1,y_1$ and $x_2, y_2$ respectively. Up to switching the extremities we can assume without loss of generality that $(x_1,y_1,x_2)$ is cyclically ordered.
Denote respectively by $\sigma_1$ and $\sigma_2$ the reflections in $\PGL_2(\Fb)$ about $l_1$ and $l_2$.
In particular, we have the following lemma.

\begin{lemma}
\label{lem: CRBigInf}
We have
\[ \vert \Tr(\sigma_1\sigma_2) \vert_{\Fb} = \vert 4\textnormal{CR}(x_1,y_1,y_2,x_2)-2 \vert_{\Fb}~.\]
In particular, if $\textnormal{CR}(x_1,y_1,y_2,x_2)-1$ is infinitesimal and negative, then $\sigma_1 \sigma_2$ is an infinitesimal rotation. Moreover, \[\bigcup_{n\in \Zb} [(\sigma_1\sigma_2)^ny_2, (\sigma_1\sigma_2)^{n+1} y_2] \subset (x_1, x_2)~.\] 
\end{lemma}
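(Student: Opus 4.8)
The computation of $|\Tr(\sigma_1\sigma_2)|_\Fb$ is a purely algebraic identity, so by the transfer principle (\Cref{thm_TarskiSeidenberg}) it suffices to prove it over $\Rb$, or even just to verify it over $\overline{\Qb}^r$. First I would normalize using the $\PGL(2,\Fb)$-action: since three points of $\mathbf{P}^1(\Fb)$ can be sent to $0, 1, \infty$, I would place $x_1 = \infty$, $y_1 = 0$, and $x_2 = 1$, so that the cross-ratio $\textnormal{CR}(x_1,y_1,y_2,x_2) = \textnormal{CR}(\infty,0,y_2,1)$ becomes a simple function of $y_2$; with the stated normalization convention $\textnormal{CR}(0,1,t,\infty)=t$ one checks that $\textnormal{CR}(\infty, 0, y_2, 1) = \tfrac{y_2-1}{y_2}$ (or its reciprocal, depending on the convention — a short bookkeeping check fixes this). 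In these coordinates $l_1$ is the vertical line through $0$ and $\infty$, so $\sigma_1$ is $z \mapsto -\bar z$, represented by $\left(\begin{smallmatrix} -1 & 0 \\ 0 & 1\end{smallmatrix}\right)$ up to sign; and $l_2$ is the $\Fb$-line with endpoints $1$ and $y_2$, whose reflection $\sigma_2$ has an explicit matrix in terms of $1$ and $y_2$ (the reflection about the half-circle with real endpoints $p < q$ is $z \mapsto \tfrac{(p+q)\bar z - 2pq}{2\bar z - (p+q)}$). Multiplying the two matrices and extracting the trace gives a rational expression in $y_2$; substituting the value of the cross-ratio should collapse it to $\pm(4\,\textnormal{CR} - 2)$. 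I expect this to be the main (though not deep) obstacle: getting the normalization and sign conventions exactly consistent with the paper's choices, since the cross-ratio and the determinant-sign conventions for reflections both carry sign ambiguities that must cancel correctly.

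For the second assertion, once the trace formula is established it is immediate: if $t \coloneqq \textnormal{CR}(x_1,y_1,y_2,x_2) - 1$ is negative and infinitesimal, then $4\,\textnormal{CR} - 2 = 4t + 2$, so $|\Tr(\sigma_1\sigma_2)|_\Fb = |4t+2|_\Fb$ is infinitesimally close to $2$ but strictly less than $2$ (since $t<0$); hence $\sigma_1\sigma_2 \in \PSL_2(\Fb)$ is elliptic by the trace classification (transfer principle again), and it is an \emph{infinitesimal} rotation in exactly the sense of \Cref{rem: infinitesimal rotation} — the computation parallels \Cref{propo: PosTransPosRot}, where an element with $|\Tr|_\Fb < 2$ and $|\Tr|_\Fb$ infinitesimally close to $2$ has positive orbits. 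Note $\sigma_1\sigma_2$ has positive determinant so it indeed lies in $\PSL_2(\Fb)$ and preserves the cyclic order on $\mathbf{P}^1(\Fb)$.

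For the final inclusion $\bigcup_{n\in\Zb}[(\sigma_1\sigma_2)^n y_2, (\sigma_1\sigma_2)^{n+1} y_2] \subset (x_1,x_2)$: I would argue that $g \coloneqq \sigma_1\sigma_2$ maps the open interval $(x_1, x_2)$ into itself, and more precisely translates points of it ``forward'' (in the cyclic order from $x_1$ toward $x_2$) by an infinitesimal amount, the key point being that $y_2 \in (x_1,x_2)$ — which follows from the hypothesis that $(x_1,y_1,x_2)$ is cyclically ordered together with the constraint on $\textnormal{CR}(x_1,y_1,y_2,x_2)$ forcing $y_2$ to lie between $x_1$ and $x_2$. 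Since $g$ is an infinitesimal rotation with positive orbit on points of $(x_1,x_2)$, the orbit $\{g^n y_2\}_{n\in\Zb}$ stays inside $(x_1,x_2)$ and is monotone, so each closed interval $[g^n y_2, g^{n+1} y_2]$ between consecutive orbit points is contained in $(x_1,x_2)$; taking the union preserves the inclusion. In the normalized coordinates this is a direct check on the explicit M\"obius action — $g$ fixes two points of $\mathbf{P}^1(\Fb[i]) \setminus \mathbf{P}^1(\Fb)$ (a complex-conjugate pair), has no real fixed point, and the sign of the determinant shows it preserves orientation, so each $g$-orbit on the real circle winds monotonically; the hypothesis pins $y_2$ into the arc $(x_1,x_2)$ and infinitesimality prevents the orbit from wrapping around. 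The main subtlety here is making the ``monotone, stays in the interval'' claim precise over a non-Archimedean field where there is no compactness: one uses instead that, for each fixed finite $n$, the inequality $(x_1, g^n y_2, g^{n+1} y_2, x_2)$ cyclically ordered is a semi-algebraic condition that holds over $\Rb$ for genuine (small-angle) rotations and hence transfers, or one argues directly from the explicit formula for $g$ that $g$ carries $[x_1, x_2]$ into $(x_1, x_2)$ and fixes the cyclic order.
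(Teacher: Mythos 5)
Your plan for the trace formula is sound in outline—normalize three of the four points using $\PGL_2$-invariance of the cross-ratio and explicitly multiply the reflection matrices—but note that the paper's choice of normalization ($x_1=-1,\,y_1=1,\,x_2=\infty,\,y_2=t$) is more efficient than yours: it turns $\sigma_1$ into inversion $z\mapsto 1/\bar z$ and $\sigma_2$ into $z\mapsto 2t-\bar z$, so the product matrix is $\left(\begin{smallmatrix}0&1\\-1&2t\end{smallmatrix}\right)$ at a glance, whereas your normalization puts the ``hard'' line $l_2$ on a semicircle with two arbitrary real endpoints. Also your cross-ratio value $\textnormal{CR}(\infty,0,y_2,1)=\tfrac{y_2-1}{y_2}$ is wrong under the stated convention $\textnormal{CR}(0,1,t,\infty)=t$ (the correct value is $\tfrac{1}{1-y_2}$); you anticipated sign ambiguities but these two expressions are not reciprocals of one another, so the bookkeeping would require a redo rather than a sign flip. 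These are fixable, and the middle assertion (elliptic, infinitesimal rotation from $|\Tr|_{\Fb}$ infinitesimally close to $2$ and strictly less) is fine.

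The genuine gap is in the final inclusion. You assert that $g\coloneqq\sigma_1\sigma_2$ ``carries $[x_1,x_2]$ into $(x_1,x_2)$'' and ``translates points of $(x_1,x_2)$ forward by an infinitesimal amount,'' but this is false: an elliptic element never preserves a boundary arc. Concretely, in the paper's coordinates $g(z)=\tfrac{1}{2t-z}$ with $t=1-2\varepsilon$; then $g(2)=\tfrac{1}{-4\varepsilon}$, a large negative number outside $(-1,\infty)=(x_1,x_2)$ even though $2$ is inside. The rotation moves boundary points far from the fixed-point pair by large amounts; only points infinitesimally close to the endpoints of the common perpendicular of $l_1$ and $l_2$ move infinitesimally. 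Your fallback appeal to the transfer principle also does not work: ``for all $n\in\Zb$, $g^ny_2\in(x_1,x_2)$'' is a countable conjunction, not a first-order sentence, and for a fixed nonzero real $\varepsilon$ the corresponding statement over $\Rb$ is in fact false for $n\gtrsim 1/\varepsilon$ (the orbit wraps around). What the paper actually does, and what you are missing, is a direct computation that $g$ preserves the infinitesimal ball about the single point $y_1=1$: for infinitesimal $\eta$, $g(1+\eta)=\tfrac{1}{1-4\varepsilon-\eta}$, which is again infinitesimally close to $1$. Since $y_2=1-2\varepsilon$ lies in that ball, the whole orbit $\{g^ny_2\}_{n\in\Zb}$ and hence the union of the short arcs between consecutive orbit points is trapped inside the infinitesimal ball about $y_1\in(x_1,x_2)$, and therefore inside $(x_1,x_2)$. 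The containment is a statement about the orbit of the single point $y_2$, not about $g$ preserving the arc; your argument should be reframed around that observation.
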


\begin{proof}
Up to conjugating by an element of $\PSL_2(\Fb)$, we can assume that $x_1=-1$, $y_1=1$, $x_2=\infty$ and $y_2 =t$, so that $\textnormal{CR}(x_1,y_1,y_2,x_2)= \frac{t+1}{2}$. We then have $\sigma_1(z) = \frac{1}{\bar z}$ and $\sigma_2(z) = 2 t - \bar z$. Hence
\[\sigma_1\sigma_2(z) = \frac{1}{2t -z}\]
and
\[\sigma_1 \sigma_2 = \begin{bmatrix} 0 & 1\\ -1 & 2 t \end{bmatrix} \in \PSL_2(\Fb)~.\]
In particular, 
\[\vert \textnormal{Tr}(\sigma_1 \sigma_2) \vert_{\Fb}= 2t = \vert 4\textnormal{CR}(x_1,y_1,y_2,x_2) -2 \vert_{\Fb}~.\\ \]

If $ -\varepsilon \coloneqq \textnormal{CR}(x_1,y_1,y_2,x_2) - 1$ is negative and infinitesimal, then $\vert \textnormal{Tr}(\sigma_1 \sigma_2)\vert = 2-4\varepsilon$, hence $\sigma_1\sigma_2$ is an infinitesimal rotation (see \Cref{rem: infinitesimal rotation}).
For every infinitesimal $\eta$, we have
\[\sigma_1\sigma_2(1+\eta) = \frac{1}{1-4\varepsilon -\eta}~.\]
Hence $\sigma_1\sigma_2$ preserves the infinitesimal ball about $1$, namely $\{t\in \Fb \mid \vert t-1\vert_{\Fb} \leq \frac{1}{R} \textrm{ for all $R\in \Rb_{>0}$}\}$.
Since $y_2=1-2\varepsilon$ belongs to this infinitesimal ball, we conclude that its orbit is contained in this infinitesimal ball.
In particular,
\[\bigcup_{n\in \Zb} [(\sigma_1\sigma_2)^ny_2, (\sigma_1\sigma_2)^{n+1} y_2] \subset (x_1, x_2)~. \qedhere\] 
\end{proof}

Consider now the configuration of $\Fb$-lines $l_1,l_2,l_3$ as in \Cref{fig:PosReprNotFrame} with respective endpoints $x_i, y_i$ for $i=1,2,3$, and let $\sigma_i$ be the reflection about $l_i$.
We can arrange $l_1$ and $l_3$ in such a way that $\textnormal{CR}(x_1,y_1,y_3,x_3)-1$ is negative and infinitesimal, so that $\sigma_1 \sigma_3$ is an infinitesimal rotation around $p = l_1 \cap l_3$, see \Cref{lem: CRBigInf}.

\begin{figure}[ht]
    \centering
    \begin{tikzpicture}
        \draw[blue] (0,0)--(0,3);
        \draw[blue,right] (0,1.5) node{$l_2$}; 
        \draw[blue] (0,3.2) node{$x_2$}; 
        \draw[blue, below] (0,0) node{$y_2$};
        \draw[green] (-0.5,0) arc(0:180:1.5);
        \draw[green, below] (-3.5,0) node{$y_3$};
        \draw[green, below] (-.5,0) node{$x_3$};
        \draw[green] (-2,1.7) node{$l_3$}; 
        \draw[green, dashed] (-2.75,0) arc(0:180:.95);
        \draw[red] (-3,0) arc(0:180:1.5);
        \draw[red] (-4.5,1.7) node{$l_1$};
        \draw[red, below] (-6,0) node{$x_1$};
        \draw[red, below] (-3,0) node{$y_1$};
        \draw[red, dashed] (-1.875,0) arc(0:180:.95);
        \draw[below] (-3.25,.85) node{\small{$p$}};
        \draw (-3.25,.85) node{$\bullet$};
        \draw[green] (3.5,0) arc(0:180:1.5);
        \draw[red] (6,0) arc(0:180:1.5);
        \draw[->] (0,1) arc(90:65:1.5);
        \draw[above] (-.4,.95) node{$\rho(b)$};
        \draw (0,1) arc(90:115:1.5);
        \draw (0,2.8) arc(90:52:6);
        \draw[->] (0,2.8) arc(90:128:6);
        \draw[above] (-1,2.1) node{$\rho(a)$};
        \draw[->] (-3.6,1.3) arc(120:20:.5);
        \draw (-3.25,1.5) node{\tiny{$\rho(c)$}};
        \draw (-6.5,0)--(6.5,0);
        \draw[green] (-4.5,1) node{\tiny{$\sigma_1(l_3)$}}; 
        \draw[red] (-2,1) node{\tiny{$\sigma_3(l_1)$}}; 
    \end{tikzpicture}
    \caption{A $\Theta$-positive representation that is not frameable.}
    \label{fig:PosReprNotFrame}
\end{figure}

We now define $\rho \from \Gamma_{0,3} \to \PSL_2(\Fb)$ by
\[ \rho(a) = \sigma_1 \sigma_2, \quad \rho(b) = \sigma_2 \sigma_3.\]
Then $\rho(c)^{-1}=\rho(ab)=\sigma_1 \sigma_3$ is an infinitesimal rotation around $p$.

The main result of this paragraph is:
\begin{proposition}
\label{propo:PosReprNotFrame}
    The representation $\rho \colon \Gamma_{0,3} \to \PSL_2(\Fb)$ constructed above is positive, but not frameable.
\end{proposition}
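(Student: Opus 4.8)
The plan is to verify positivity and non-frameability separately, using the geometric picture of Figure~\ref{fig:PosReprNotFrame}.

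For \textbf{non-frameability}, the key observation is that $\rho(c)^{-1}=\sigma_1\sigma_3$ is an infinitesimal rotation around $p\in\mathbf H_{\Fb}$, hence $\rho(c)$ has no fixed point in $\mathbf P^1(\Fb)$ by (the transfer-principle version of) the trace criterion: $|\Tr(\sigma_1\sigma_3)|_{\Fb}<2$. Since $c\in\Gamma_{0,3}$ is a hyperbolic element (it corresponds to the third boundary curve of the three-holed sphere), any $\Theta$-framing of $\rho$ would, by \Cref{framed positive implies positively translating}, force $\rho(c)$ to be $\Theta$-positively translating, and in particular $\rho(c)$ would fix a point of $\mathbf P^1(\Fb)$ — a contradiction. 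So once positivity is established, non-frameability is immediate.

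For \textbf{positivity}, I would exhibit an explicit positive $\rho$-equivariant map on a $\Gamma_{0,3}$-invariant subset of $\Lambda(\Gamma_{0,3})$, or equivalently (via \Cref{theorem: limit map extends to proximal limit map}, after semi-conjugating $\Gamma_{0,3}$ to a first-kind group if needed — but actually it is cleaner to argue directly on $\Lambda$) check weak $\Theta$-proximality of hyperbolic elements together with positivity of $\xi^h_\rho$. The cleanest route is a ping-pong/fundamental-domain argument. Let $\Sigma_1\subset\Sigma_3$ denote the closed intervals of $\mathbf P^1(\Fb)$ bounded by $\{x_1,y_1\}$ and $\{x_3,y_3\}$ respectively (the ones ``behind'' the lines $l_1,l_3$, on the side containing the orbit of $y_2$), and similarly intervals $T_2$ behind $l_2$; note $\sigma_i$ maps the complement of the interval behind $l_i$ into that interval. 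I would set up the four-interval configuration so that the images $\rho(a^{\pm1})$, $\rho(b^{\pm1})$, $\rho(c^{\pm1})$ of the standard generators move the complements of disjoint sub-intervals of $\mathbf P^1(\Fb)$ into those sub-intervals, exactly mirroring the Fuchsian picture where $a$ maps $A_1^c$ to $A_2$, $b$ maps $B_1^c$ to $B_2$, etc. Concretely, $\rho(a)=\sigma_1\sigma_2$ maps the interval behind $\sigma_1(l_2)$ (a sub-interval of $\Sigma_1$) out to a sub-interval near $l_2$, and its inverse does the symmetric thing; the key point from \Cref{lem: CRBigInf} is that the entire $\rho(c)^{-1}$-orbit $\bigcup_{n\in\Zb}[(\sigma_1\sigma_3)^ny_2,(\sigma_1\sigma_3)^{n+1}y_2]$ stays inside $(x_1,x_3)$ — a tiny infinitesimal interval — so the dynamics of $\rho(c)$ is ``trapped'' near $p$ and does not interfere with the ping-pong of $a$ and $b$.

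Having set this up, I would define the boundary map on the limit set by the standard coding: each point of $\Lambda(\Gamma_{0,3})$ is a nested intersection of the ping-pong intervals indexed by an infinite reduced word, and its image is the corresponding nested intersection of the $\Fb$-line intervals; equivalently, one defines $\xi$ on $\Lambda_h$ by sending $\gamma^+$ to the attracting fixed point of $\rho(\gamma)$ in $\mathbf P^1(\Fb)$ (which exists for hyperbolic $\gamma$, i.e.\ $|\Tr\rho(\gamma)|_{\Fb}>2$, by the ping-pong lemma ensuring that reduced words in $a,b$ not conjugate into $\langle c\rangle$ act hyperbolically) and checks monotonicity. Because for $\PSL_2$ positivity of a tuple is just cyclic ordering up to dihedral symmetry (see the example after \Cref{propo: positive n-tuples is semi-algebraic}) and a positive map is just a monotone injective one, it suffices to check that the coding map is order-preserving, which follows from the nested, orientation-consistent arrangement of the intervals: each generator of $\Gamma_{0,3}$ preserves the counterclockwise cyclic order on $\mathbf P^1(\Fb)$ (as it lies in $\PSL_2(\Fb)$) and maps one ping-pong interval onto another in an order-compatible way. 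By \Cref{check quadruples} it even suffices to check cyclically ordered quadruples. This gives a positive $\rho$-equivariant map on $\Lambda_h(\Gamma_{0,3})$ (or on all of $\Lambda(\Gamma_{0,3})$ minus the $\Gamma$-orbit of $p$-type points), hence $\rho$ is $\Theta$-positive.

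\textbf{Main obstacle.} The delicate point is verifying that the interval configuration can genuinely be chosen so that \emph{all three} pairs of ping-pong intervals are simultaneously disjoint and correctly nested — in particular that making $\textnormal{CR}(x_1,y_1,y_3,x_3)-1$ negative infinitesimal (forcing $l_1$ and $l_3$ to nearly coincide, so $\sigma_1\sigma_3$ is an infinitesimal rotation) is compatible with keeping $l_2$ in ``generic position'' so that $\rho(a)=\sigma_1\sigma_2$ and $\rho(b)=\sigma_2\sigma_3$ are honest hyperbolic ping-pong elements with the right source/target intervals. This is precisely where \Cref{lem: CRBigInf}'s last assertion — that the $\rho(c)$-orbit of $y_2$ is confined to the infinitesimal interval $(x_1,x_3)$ — does the work: it guarantees that pushing $l_1,l_3$ together does not spill the $c$-dynamics into the $a,b$-dynamics, so the ping-pong estimates for $a$ and $b$ are insensitive to this degeneration and can be taken over verbatim from the Fuchsian model $\Gamma_{0,3}$. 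Once this compatibility is checked, everything else is routine bookkeeping with cyclic orders.
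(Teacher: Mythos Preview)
Your strategy matches the paper's: non-frameability from $\rho(c)$ being elliptic (no fixed point in $\mathbf P^1(\Fb)$), and positivity via a ping-pong argument for $a$ and $b$ combined with the confinement statement of \Cref{lem: CRBigInf}. The non-frameability part is fine and is exactly what the paper does.

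For positivity, however, there is a concrete gap. You write that ``the ping-pong estimates for $a$ and $b$ are insensitive to this degeneration and can be taken over verbatim from the Fuchsian model,'' but this is not true: since $l_1$ and $l_3$ intersect in $\mathbf H_{\Fb}$, the naive ``behind-the-line'' intervals $[x_1,y_1]$ and $[y_3,x_3]$ overlap on $\mathbf P^1(\Fb)$, so the four candidate intervals are \emph{not} disjoint and the ping-pong hypotheses fail as stated. (Relatedly, your mention of $\rho(c^{\pm 1})$ participating in a ping-pong cannot work, since $\rho(c)$ is elliptic.) The paper's fix is to absorb the $\rho(c)$-orbit tube into the interval system: set
\[
I\coloneqq\bigcup_{n\in\Zb}\bigl[\rho(c)^{n+1}\cdot y_1,\ \rho(c)^{n}\cdot y_1\bigr],\qquad J\coloneqq\sigma_2(I),
\]
verify $\rho(b)(I)=J$ and $\rho(a)(J)=I$ directly from the reflection relations, and then take
\[
U_1=\sigma_2([x_1,y_1])\setminus J,\quad U_2=[x_1,y_1]\cup I,\quad V_1=[y_3,x_3]\setminus I,\quad V_2=\sigma_2([y_3,x_3])\cup J.
\]
The inclusion $I\subset(x_1,x_3)$ from \Cref{lem: CRBigInf} is exactly what makes these four sets disjoint and cyclically arranged. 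Positivity then follows from a separate combinatorial lemma (\Cref{lem: SchottkyPositive}), which shows that \emph{any} representation of $\Gamma_{0,3}$ satisfying such a four-set ping-pong for $a,b$ alone yields the same cyclic order on the orbit $\rho(\Gamma_{0,3})\cdot\rho(a)^+$ as the inclusion $\Gamma_{0,3}\hookrightarrow\PSL_2(\Rb)$; the proof is by reducing triples to ``minimal'' representatives under the $\Gamma_{0,3}\times\Zb^3$-action and reading off the cyclic order from first letters of reduced words. Your coding sketch points in this direction but does not supply either ingredient.
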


This will follow from the following lemma of independent interest:

\begin{lemma}
\label{lem: SchottkyPositive}
    Let $\rho$ be a representation of $\Gamma_{0,3}$ into $\PSL_2(\Fb)$ such that $\rho(a)$ is hyperbolic. Assume that there exists disjoint sets $U_1, U_2, V_1, V_2 \subset \mathbf{P}^1(\Fb)$ with $\rho(a)^+ \in U_2$ and such that:
\begin{itemize}
    \item $\rho(a)(U_1^c)\subset U_2$, $\rho(a)^{-1}(U_2^c) \subset U_1$, $\rho(b)(V_1^c)\subset V_2$, $\rho(b)^{-1}(V_2^c)\subset V_1$;
    \item for all $u_1\in U_1, u_2 \in U_2, v_1\in V_1, v_2 \in V_2$, the quadruple $(u_1,u_2,v_1,v_2)$ is cyclically ordered.
\end{itemize}
Then $\rho$ is positive.
\end{lemma}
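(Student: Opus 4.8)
The plan is to exhibit an explicit positive boundary map $\xi \colon \Lambda_p(\Gamma_{0,3}) \to \mathbf{P}^1(\Fb)$ directly from the ping-pong data. Since $\Gamma_{0,3}$ is a free group of rank $2$ with free generators $a,b$ acting on $\Sb^1$ with limit set $\Lambda = \Lambda(\Gamma_{0,3})$, and since the fixed points of hyperbolic elements are dense in $\Lambda$, by \Cref{theorem: limit map extends to proximal limit map} it suffices to prove that $\rho$ is weakly $\Theta$-proximal and that its $\Theta$-proximal limit map $\xi_\rho^h \colon \Lambda_h \to \mathbf{P}^1(\Fb)$ is positive, i.e.\ monotone and injective (recall that for $\PSL_2$ a map into $\mathbf{P}^1(\Fb)$ is positive iff it is monotone and injective). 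The ping-pong hypotheses are precisely designed so that each element $\rho(\gamma)$ for $\gamma \in \Gamma_{0,3}$ of infinite order has attracting fixed point landing in a controlled region of $\mathbf{P}^1(\Fb)$ matching the cyclic position of $\gamma^+ \in \Sb^1$.

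First I would set up a ping-pong/Schottky coding of $\Lambda(\Gamma_{0,3})$ on the source side: the four intervals $A_1, A_2, B_1, B_2 \subset \Sb^1$ (attracting/repelling intervals of $a^{\pm 1}, b^{\pm 1}$) are cyclically ordered as $(A_1, B_1, B_2, A_2)$ (one should check the exact cyclic arrangement using the definition of $a$ and $b$ — $a$ maps the complement of $A_1$ into $A_2$, and $b$ maps the complement of $B_1$ into $B_2$), and every point of $\Lambda$ has an infinite reduced word expansion tracking which of the four intervals successive $\Gamma_{0,3}$-translates fall into. On the target side, the hypotheses of \Cref{lem: SchottkyPositive} give disjoint sets $U_1, U_2, V_1, V_2 \subset \mathbf{P}^1(\Fb)$ with the matching cyclic order $(U_1, U_2, V_1, V_2)$ — wait, one must be careful: the stated cyclic order is $(u_1,u_2,v_1,v_2)$ cyclically ordered, and on the source side the corresponding cyclic order must be the $\rho$-image pattern; I would identify $U_2$ with the target of $A_2$ (since $\rho(a)^+ \in U_2$ and $a^+ \in A_2$), $U_1$ with $A_1$, $V_2$ with $B_2$, $V_1$ with $B_1$, and verify $(A_1, A_2, B_1, B_2)$ is cyclically ordered in $\Sb^1$ as well (this is where the precise positions of the four defining intervals enter).

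The key steps, in order: (1) From the ping-pong conditions $\rho(a)(U_1^c) \subset U_2$, $\rho(a)^{-1}(U_2^c) \subset U_1$ and the disjointness, deduce by the classical ping-pong argument that $\rho(a)$ is hyperbolic (already assumed) with $\rho(a)^+ \in U_2$, $\rho(a)^- \in U_1$, and similarly $\rho(b)^+ \in V_2$, $\rho(b)^- \in V_1$; more generally, for any infinite reduced word $\gamma$ starting (on the left) with a given generator letter, $\rho(\gamma)^+$ lies in the corresponding set $U_i$ or $V_i$. (2) Build the boundary map: for $x \in \Lambda$ with reduced coding $x = \lim_{n} w_1 \cdots w_n \cdot o$, set $\xi(x) = \lim_n \rho(w_1) \cdots \rho(w_n) \cdot q$ for a suitable base point $q$; the nested-sets property from ping-pong shows this limit exists in $\mathbf{P}^1(\Fb)$ and is independent of $q$ (the images of the "boxes" $U_i, V_i$ under reduced words shrink to points in the appropriate sense — in the non-Archimedean setting this requires the target sets to be, say, closed balls or at least to have the nesting-to-a-point property; since $\mathbf{P}^1(\Fb)$ need not be Cantor complete, I would instead define $\xi$ only on $\Lambda_h$ via $\gamma^+ \mapsto \rho(\gamma)^+$ and check positivity there, which suffices by \Cref{theorem: limit map extends to proximal limit map}). (3) Monotonicity and injectivity of $\xi_\rho^h$: given hyperbolic $\gamma_1, \gamma_2, \gamma_3 \in \Gamma_{0,3}$ with $(\gamma_1^+, \gamma_2^+, \gamma_3^+)$ cyclically ordered in $\Sb^1$, one reads off from their reduced-word codings the sequence of ping-pong boxes, and the second hypothesis of \Cref{lem: SchottkyPositive} (that $(u_1, u_2, v_1, v_2)$ is always cyclically ordered for the four box-types) propagates under the $\rho$-action to show $(\rho(\gamma_1)^+, \rho(\gamma_2)^+, \rho(\gamma_3)^+)$ is cyclically ordered in $\mathbf{P}^1(\Fb)$. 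The crucial combinatorial input is that $\rho$, composed letter-by-letter, maps the Schottky coding on the source $\Sb^1$ to the Schottky coding on the target $\mathbf{P}^1(\Fb)$ in an order-preserving way — this reduces to the single four-box cyclic compatibility in the hypotheses. (4) Conclude via \Cref{theorem: limit map extends to proximal limit map} that $\rho$ is $\Theta$-positive.

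The main obstacle will be step (3): carefully tracking cyclic orders through reduced words in the free group and showing the four-box hypothesis is strong enough to pin down the cyclic position of $\rho(\gamma)^+$ for \emph{all} hyperbolic $\gamma$, not just the generators — in particular handling words that begin with $a^{-1}$ or $b^{-1}$ (whose attracting fixed points lie in $U_1$ resp.\ $V_1$, requiring one to also encode the ping-pong behavior of the inverses, which is given) and ensuring no cancellation issues when concatenating. A secondary subtlety is making sure the matching between the source intervals $A_1,A_2,B_1,B_2$ and the target sets $U_1,U_2,V_1,V_2$ is cyclically consistent: I would verify at the outset that, with $a$ sending $A_1^c$ to $A_2$ and $b$ sending $B_1^c$ to $B_2$, the four intervals appear in $\Sb^1$ in the cyclic order $(A_1, A_2, B_1, B_2)$ (equivalently that $c = b^{-1}a^{-1}$ behaves as drawn in \Cref{fig:PosReprNotFrame}), so that the hypothesis's cyclic order on $(u_1,u_2,v_1,v_2)$ is exactly the one needed. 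Once \Cref{lem: SchottkyPositive} is proved, \Cref{propo:PosReprNotFrame} follows by checking the explicit $\rho$ above satisfies the ping-pong hypotheses with $U_i, V_i$ taken to be suitable intervals near $\rho(a)^\pm, \rho(b)^\pm$ (using \Cref{lem: CRBigInf} to control the infinitesimal rotation $\rho(c)$), and observing that $\rho(c)$ has no fixed point in $\mathbf{P}^1(\Fb)$ since $|\Tr \rho(c)|_\Fb < 2$, so no $\Theta$-framing can exist.
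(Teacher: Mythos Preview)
Your approach has a genuine gap. You plan to invoke \Cref{theorem: limit map extends to proximal limit map}, which requires first knowing that $\rho(\gamma)$ is weakly proximal for every hyperbolic $\gamma\in\Gamma_{0,3}$ (and, since $\Gamma_{0,3}$ is of the \emph{second} kind, you would also need to pass to a semi-conjugate first-kind group and restrict to non-peripheral elements --- a point you do not address). But the lemma only assumes $\rho(a)$ is hyperbolic, and over a non-Archimedean $\Fb$ the ping-pong conditions do not force $\rho(\gamma)$ to be hyperbolic: the very example following the lemma has $\rho(c)=\rho(b^{-1}a^{-1})$ an infinitesimal elliptic rotation with no fixed point in $\mathbf{P}^1(\Fb)$, even though $c$ is hyperbolic in $\Gamma_{0,3}$ and $\rho(c)$ inherits ping-pong dynamics $\rho(c)(U_2^c)\subset V_1$ from the hypotheses. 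Your step~(1) (``$\rho(\gamma)^+$ lies in the corresponding set'') presupposes that $\rho(\gamma)^+$ exists, which is precisely what is not given. Proving proximality a priori for non-peripheral $\gamma$ via, say, \Cref{lem: CharPosTransWeaklyProx} would require exhibiting a specific positive $5$-tuple inside a single box, which the coarse four-box hypothesis does not provide; and your opening line about $\Lambda_p(\Gamma_{0,3})$ is a slip, since $\Gamma_{0,3}$ is Schottky and $\Lambda_p=\emptyset$.

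The paper sidesteps all of this by working not on $\Lambda_h$ but on the single orbit $D=\Gamma_{0,3}\cdot a^+$, defining $\xi(\gamma\cdot a^+)\coloneqq\rho(\gamma)\cdot\rho(a)^+$; this uses only the hypothesis that $\rho(a)$ is hyperbolic. Positivity is then a purely combinatorial statement: given a triple in the orbit, one normalizes the three group elements modulo a common left factor and right powers of $a$ to a \emph{minimal} triple whose reduced words begin with pairwise distinct letters (or one is trivial). The three image points then lie in three distinct boxes among $U_1,U_2,V_1,V_2$, and the cyclic order is read off from the box arrangement --- which is the same for $\rho$ as for the inclusion $\Gamma_{0,3}\hookrightarrow\PSL_2(\Rb)$. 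Your step~(3) gestures at this propagation but misses the minimal-triple reduction, which is the actual content of the argument.
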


\begin{proof}
    By the classical ping pong argument, the hypotheses imply that the representation $\rho$ is faithful. Let us characterize the cyclic order on the orbit $\rho(\Gamma_{0,3})\cdot \rho(a)^+$.

Consider the action of $\Gamma_{0,3} \times \Zb^3$ on $(\Gamma_{0,3})^3$ given by
\[(w,t_1,t_2,t_3)\cdot (\gamma_1,\gamma_2,\gamma_3) = (w\gamma_1 a^{t_1} , w\gamma_2 a^{t_2}, w\gamma_3 a^{t_3})~.\]
Call a triple $(\gamma_1,\gamma_2,\gamma_3)$ \emph{minimal} if it minimizes $\ell(\gamma_1)+ \ell(\gamma_2) + \ell(\gamma_3)$ in its orbit, where $\ell$ is the length of the reduced expression with respect to the generating set $\{a,b\}$. Note that a minimal triple $(\gamma_1,\gamma_2,\gamma_3)$ satisfies the following:
\begin{itemize}
    \item the reduced expression of each $\gamma_i$ does not end with $a$ nor $a^{-1}$ (because otherwise replacing $\gamma_i$ by $\gamma_i a^{-1}$ or $\gamma_i a$ would reduce its length and keep the triple within the same orbit),
    \item For all $i\neq j$, if $\gamma_i$ and $\gamma_j$ are non-trivial, then their reduced expression starts with a different letter (otherwise, if, say, $\gamma_1$ and $\gamma_2$ start with the letter $b$, then $(b^{-1}\gamma_1, b^{-1} \gamma_2, b^{-1}\gamma_3)$ has shorter total length, but remains in the same orbit as $(\gamma_1,\gamma_2,\gamma_3)$),
    \item If $\gamma_i = \1$, then $\gamma_{i-1}$ and $\gamma_{i+1}$ cannot both start with $a$ (otherwise, if, say, $\gamma_1 = \1$ and $\gamma_2$ starts with $a$, then $(\1, a^{-1}\gamma_2, a^{-1} \gamma_3)$ is in the same orbit as $(\1,\gamma_2,\gamma_3)$ and has shorter total length).
    \end{itemize}

    For $\gamma \in \Gamma$, set 
    \[U_\gamma = 
        \begin{cases*}
        U_1\textrm{ if $\gamma$ starts with $a^{-1}$}\\
        U_2\textrm{ if $\gamma$ starts with $a$ or $\gamma = \1$}\\
        V_1\textrm{ if $\gamma$ starts with $b^{-1}$}\\
        V_2\textrm{ if $\gamma$ starts with $b$}~.
        \end{cases*}
    \]
    A classical recurrence shows that, if $\gamma$ does not end with $a^{-1}$, then $\rho(\gamma)\cdot \rho(a)^+ \in \rho(\gamma)\cdot U_2 \subset U_\gamma$.

    Now, given $3$ distinct points $x$, $y$, and $z$ in the $\rho(\Gamma_{0,3})$ orbit of $\rho(a)^+$ we can write
    \[(x,y,z) = \rho(\eta)\cdot (\rho(\gamma_1)\cdot \rho(a)^+,\rho(\gamma_2)\cdot \rho(a)^+,\rho(\gamma_3)\cdot \rho(a)^+)\]
    for some minimal triple $(\gamma_1,\gamma_2, \gamma_3)$. Since $x,y,z$ are all distinct, at most one of the $\gamma_i$ is trivial.
    The properties of minimal triples mentioned above imply that $\rho(\gamma_i)\cdot \rho(a)^+ \in U_{\gamma_i}$ and that the $U_{\gamma_i}$ are all disjoint. Hence the cyclic order of the triple $(\rho(\gamma_1)\cdot \rho(a)^+,\rho(\gamma_2)\cdot \rho(a)^+,\rho(\gamma_3)\cdot \rho(a)^+)$ depends only on the cyclic order of $U_{\gamma_1}, U_{\gamma_2}, U_{\gamma_3}$, which only depends on the first letters of $\gamma_1,\gamma_2,\gamma_3$. 
    
    From this, it follows that the cyclic order on the orbit of $\rho(a)^+$ is the same for all representations satisfying the hypotheses of \Cref{lem: SchottkyPositive}. In particular, this cyclic order is the same for the inclusion of $\Gamma_{0,3}$ in $\PSL_2(\Rb)$. Hence, if $a^+$ denotes the attracting fixed point of $a \in \mathbf{P}^1(\Rb)$, the map
    \[
    \begin{array}{ccc}
    \Gamma \cdot a^+ & \to & \mathbf{P}^1(\Fb)\\
    \gamma \cdot a^+ & \mapsto & \rho(\gamma) \cdot \rho(a)^+
    \end{array}
    \]
    is positive.
    \end{proof}

\begin{figure}[ht]
    \centering
    \begin{tikzpicture}
        \draw[blue] (0,0)--(0,3);
        \draw[blue,right] (0,1.5) node{$l_2$}; 
        \draw[blue] (0,3.2) node{$x_2$}; 
        \draw[blue, below] (0,0) node{$y_2$};
        \draw[green] (-0.5,0) arc(0:180:1.5);
        \draw[green] (-2,1.7) node{$l_3$};
        \draw[green, below] (-3.5,0) node{$y_3$};
        \draw[green, below] (-.5,0) node{$x_3$};
        \draw[green, dashed] (-2.75,0) arc(0:180:.95);
        \draw[red] (-3,0) arc(0:180:1.5);
        \draw[red, dashed] (-1.875,0) arc(0:180:.95);
        \draw[above] (-3.25,.85) node{$p$};
        \draw (-3.25,.85) node{$\bullet$};
        \draw[red] (-4.5,1.7) node{$l_1$};
        \draw[red, below] (-6,0) node{$x_1$};
        \draw[red, below] (-3,0) node{$y_1$};
        \draw[red] (-2,1) node{\tiny{$\sigma_3(l_1)$}}; 
        \draw[green] (3.5,0) arc(0:180:1.5);
        \draw[green] (2,1.7) node{$\sigma_2(l_3)$}; 
        \draw[red] (6,0) arc(0:180:1.5);       
        \draw[red] (4.5,1.7) node{$\sigma_2(l_1)$}; 
        \draw[->] (0,1) arc(90:65:1.5);
        \draw[above] (-.4,.95) node{$\rho(b)$};
        \draw (0,1) arc(90:115:1.5);
        \draw (0,2.8) arc(90:55:6);
        \draw[->] (0,2.8) arc(90:128:6);
        \draw[above] (-1,2.1) node{$\rho(a)$};
        \draw[->] (-3.6,1.3) arc(120:20:.5);
        \draw (-3.25,1.5) node{\tiny{$\rho(c)$}};
        \draw[red,line width=3pt] (-6,-.05) -- (-2.5,-.05);
        \draw[red] (-4,-.5) node{$U_2$}; 
        \draw[red,line width=3pt] (4,-.05) -- (6,-.05);
        \draw[red, dashed] (3.75,0) arc(0:180:.95);
        \draw[red] (5,-.5) node{$U_1$};
        \draw[green, dashed] (4.625,0) arc(0:180:.95);
        \draw[green,line width=3pt] (-2.5,-.05) -- (-.5,-.05);
        \draw[green] (-1.5,-.5) node{$V_1$}; 
        \draw[green,line width=3pt] (.5,-.05) -- (4,-.05);
        \draw[green] (2.5,-.5) node{$V_2$};
        \draw (-6.5,0)--(6.5,0);
        \draw[gray,line width=3pt] (-4,0) -- (-2.5,0);
        \draw[gray] (-3.25,-.5) node{$I$};
        \draw[gray,line width=3pt] (2.5,0) -- (4,0);
        \draw[gray] (3.25,-.5) node{$J$};
    \end{tikzpicture}
    \caption{A positive representation that is not frameable.}
    \label{fig:PosReprNotFrame2}
\end{figure}

\begin{proof}[Proof of \Cref{propo:PosReprNotFrame}]
The representation $\rho$ is not frameable because the image of $c$ under $\rho$ is an infinitesimal rotation, so $\rho(c)$ does not have a fixed point in $\mathbf{P}^1(\Fb)$.

To check that $\rho$ is positive we would like to apply the above lemma.
For this we define the following subset of $\mathbf{P}^1(\Fb)$: 
\[I \coloneqq\bigcup_{n \in \Zb} [\rho(c)^{n+1} \cdot y_1, \rho(c)^{n} \cdot y_1] \subset \mathbf{P}^1(\Fb)~.\]
We also set $J\coloneqq \sigma_2(I)$.

We claim that $\rho(b)(I)=J$ and $\rho(a)(J)=I$.
Indeed, we get for all $n \in \Zb$ that 
\[\rho(b) (\rho(c)^{n+1} \cdot y_1) = \sigma_2 (\sigma_1 \rho(c)^n \cdot y_1) =\sigma_2 (\rho(c)^{-n} \cdot y_1) \in \sigma_2 (I)=J,\]
since $\sigma_1 \rho(c)^n \cdot y_1 = (\sigma_1\sigma_3)^n \sigma_1 \cdot y_1 = \rho(c)^{-n} \cdot y_1$.
Thus
\begin{align*}
    \rho(b)[\rho(c)^{n+1} \cdot y_1, \rho(c)^{n} \cdot y_1] &= [\rho(b) \rho(c)^{n+1} \cdot y_1, \rho(b) \rho(c)^{n} \cdot y_1]\\
    &= [\sigma_2 \rho(c)^{-n} \cdot y_1, \sigma_2 \rho(c)^{-(n-1)} \cdot y_1]\\
    &= \sigma_2 [\rho(c)^{-n+1} \cdot y_1,\rho(c)^{-n} \cdot y_1]~,
\end{align*}
since $\rho(b)$ preserves the cyclic order on $\mathbf{P}^1(\Fb)$ and $\sigma_2$ reverses it.
Taking the union over all $n$, we conclude that $\rho(b)(I) = J$.
The same computations with $y_3$ replacing $y_1$ and switching the roles of $\sigma_1$ and $\sigma_3$ shows that $\rho(a)^{-1}(I) = J$.

\Cref{lem: CRBigInf} applied to $\rho(c)^{-1}=\sigma_1\sigma_3$ we have that $I \subset (x_1,x_3)$.
Thus the sets 
\[ U_1 \coloneqq \sigma_2([x_1,y_1]) \setminus J, \quad U_2 \coloneqq [x_1,y_1]\cup I,\quad V_1\coloneqq [y_3,x_3] \setminus I, \quad V_2 \coloneqq \sigma_2([y_3,x_3]) \cup J \]
in \Cref{fig:PosReprNotFrame2} satisfy the conditions of the above lemma, and we conclude that $\rho$ is positive.
\end{proof}

\subsection{%
\texorpdfstring{%
$\PG$-condition and $\Theta$-positive frameability}%
{PG-condition and Theta-positive frameability}}
\label{section: PG-condition}

Recall that $\Fc_{\Theta,\Fb}$ satisfies the \emph{$\PG$-condition} if there exists an integer $N>0$ such that, for every positive tuple
$(x_1,\ldots,x_N)\in\Fc_{\Theta,\Fb}^N$ and every $y\in\Fc_{\Theta,\Fb}$, the flag $y$ is transverse to at least one of the $x_i$.
Though we could only prove this condition in some cases (see \Cref{subs:PGConditionHermFlagVarities}), we strongly believe that it is satisfied for all flag varieties with a $\Theta$-positive structure (see \Cref{conj:PG-condition}).

\begin{lemma}\label{prop: positively rotating => translating}
    Assume that $\Fc_{\Theta,\Fb}$ satisfies the $\PG$-condition. Let $g$ be an element of $G_{\Fb}$ and $(x,p)\in \Fc_{\Theta,\Fb}$ be such that $x$ has a positive $\langle g\rangle$-orbit and $p$ is fixed by $g$. Then $(x,p)$ is a positively translating pair for $g$. In particular, $g$ is positively translating if and only if it is positively rotating and has a fixed point.
\end{lemma}

Recall that a $(x,p)$ is a \emph{positively translating pair} for $g$ if $p$ is fixed by $g$ and $(x,g \cdot x, g^2\cdot x, p)$ is positive, and that if $(x,p)$ is a positively translating pair for $g$, then $p\in \bigcap_{n\in \Nb}\Diam_x^\opp(g^{-n}\cdot x, g^n\cdot x)$ (see \Cref{propo: PosTransPosRot}).

\begin{proof}
    Let $N$ be an integer for which the $\PG$-condition is satisfied. 

    We first claim that $p$ is transverse to every flag $y\in\Diam_x(g^{-2}\cdot x,g^2\cdot x)$. Indeed, for such a $y$, the sequence $(g^{4n}\cdot y)_{n\in\Zb}$ is positive: for each $n$, the flag $g^{4n}\cdot y$ belongs to \[\Diam_{g^{4n}\cdot x}(g^{4n-2}\cdot x,g^{4n+2}\cdot x)~,\] and the full $\langle g\rangle$-orbit of $x$ is positive. Hence $(y,g^4\cdot y,\ldots,g^{4(N-1)}\cdot y)$ is a positive $N$-tuple. By the $\PG$-condition, $p$ is transverse to $g^{4i}\cdot y$ for some $i\in\{0,\ldots,N-1\}$. Since $p$ is fixed by $g^4$, transversality is preserved by applying $g^{-4i}$. Hence $p$ is transverse to $y$. 
    
    Since \[\overline{\Diam}_x(g^{-1}\cdot x,g\cdot x)\subset \Diam_x(g^{-2}\cdot x,g^2\cdot x)~,\] we have that $p$ is transverse to every flag in $\overline{\Diam}_x(g^{-1}\cdot x,g\cdot x)$. By \Cref{prop: transverse set to a full diamond}, this implies \[p\in \Diam_x^{\opp}(g^{-1}\cdot x,g\cdot x)~.\] Equivalently, $(g^{-1}\cdot x,x,g\cdot x,p)$ is positive. Applying $g$ gives $(x,g\cdot x,g^2\cdot x,p)$ positive, so $(x,p)$ is a $\Theta$-positively translating pair for $g$.
\end{proof}

This allows to prove \Cref{thm-intro:PG-condition}~(\ref{thm-intro:PG-condition: posfram}).
The second part of \Cref{thm-intro:PG-condition} is proven in \Cref{cor:PG-PosFrameable-open-closed-in-frameable} in \Cref{subs: Semialg Framed Repr}.

\begin{corollary}\label{cor:PG-positive-frameable-implies-positive-framings}
    Assume that $\Fc_{\Theta,\Fb}$ satisfies the $\PG$-condition. Let $\Gamma$ be a Fuchsian group of the first kind.
    If $\rho \colon \Gamma\to G_{\Fb}$ is a $\Theta$-positive representation, then every $\Theta$-framing of $\rho$ is positive.
    In particular, $\rho$ is $\Theta$-positively frameable if and only if it is $\Theta$-positive and frameable.
\end{corollary}
\begin{proof}
    Let $\xi \colon D\to\Fc_{\Theta,\Fb}$ be a positive $\rho$-equivariant map, with $D\subset\Sb^1$ non-empty and $\Gamma$-invariant. Let $\xi_{\fr}\colon\Lambda_p\to\Fc_{\Theta,\Fb}$ be an arbitrary $\Theta$-framing of $\rho$.
    
    Fix $q\in\Lambda_p$, and let $\eta\in\Gamma$ be the positive generator of $\Stab_\Gamma(q)$. Choose $x\in D\setminus\{q\}$. Then $(\eta^n\cdot x)_{n\in\Nb}$ is cyclically ordered, so $(\rho(\eta)^n\cdot\xi(x))_{n\in\Nb}$ is a positive sequence. Since $\xi_{\fr}(q)$ is fixed by $\rho(\eta)$, \Cref{prop: positively rotating => translating} implies that $(\xi(x),\xi_{\fr}(q))$ is a $\Theta$-positively translating pair for $\rho(\eta)$. Hence
    $\xi_{\fr}(q)$ belongs to  $\bigcap_{n>0}\Diam_{\xi(x)}^{\opp} \big(\rho(\eta)^{-n}\cdot\xi(x),\rho(\eta)^n\cdot\xi(x)\big)$, which is equal to  $\bdext(q)$ by the construction of the maximal positive boundary extension $(\bdext,\zeta^{\mathrm M}_\rho)$ in \Cref{prop: converse}. Hence $\xi_{\fr}$ is a partial section of the positive boundary extension $(\bdext,\zeta^{\mathrm M}_\rho)$ and is thus positive.
\end{proof}

\section{%
\texorpdfstring{%
$\Theta$-positivity is a closed condition}%
{Theta-positivity is a closed condition}}
\label{section:PositivityClosed}

Let $\Gamma$ be a Fuchsian group (not necessarily finitely generated), and $G_{\Fb}$ as before.
We equip $\Hom(\Gamma,G_{\Fb})$ with the topology of pointwise convergence.\footnote{When $\Gamma$ is finitely generated, this coincides with the semi-algebraic topology of $\Hom(\Gamma,G_{\Fb})$.}

\subsection{Closedness}
\label{s: Closedness}

 In this section, we prove \Cref{thm-intro: Positivity closed}. Let us denote the subset of $\Hom(\Gamma, G_{\Fb})$ consisting of positive representations by $\Pos_\Theta(\Gamma, G_{\Fb})$.
 
\begin{theorem}[\Cref{thm-intro: Positivity closed}]
\label{thm: closedness}
    The set $\Pos_\Theta(\Gamma,G_{\Fb})$ is closed in $\Hom(\Gamma,G_{\Fb} )$.
\end{theorem}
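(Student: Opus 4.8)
The strategy is to use the characterization of $\Theta$-positivity via the proximal limit map on $\Lambda_h$ (\Cref{theorem: limit map extends to proximal limit map}) together with a compactness argument using semi-positive tuples. Since $\Theta$-positivity is unchanged under semi-conjugacy (\Cref{propo: equivalence positivity first and second kind Fuchsian groups}), we may assume $\Gamma$ is of the first kind. Let $(\rho_k)_{k \in \Nb}$ be a sequence in $\Pos_\Theta(\Gamma, G_{\Fb})$ converging pointwise to $\rho \colon \Gamma \to G_{\Fb}$. We want to show $\rho$ is $\Theta$-positive. By \Cref{theorem: limit map extends to proximal limit map}, for each $k$ the representation $\rho_k$ is weakly $\Theta$-proximal, and its proximal limit map $\xi_{\rho_k}^h \colon \Lambda_h \to \Fc_{\Theta,\Fb}$ is positive.

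The first key step is to control the limiting behavior of the maps $\xi_{\rho_k}^h$. Fix a hyperbolic $\gamma \in \Gamma$. The elements $\rho_k(\gamma)$ are weakly $\Theta$-proximal and converge to $\rho(\gamma)$; one must show $\rho(\gamma)$ is weakly $\Theta$-proximal and that $\xi_{\rho_k}^h(\gamma^+) = \rho_k(\gamma)^+$ converges to $\rho(\gamma)^+$. Here the difficulty is that weak $\Theta$-proximality (defined by $\alpha_{\Fb}(J_{\Fb}(g)) > 1$) is an \emph{open} condition, not closed, so it need not pass to the limit directly. The way around this is the collar lemma (\Cref{lem: collar lemma}, i.e. \cite[Theorem C, Corollary D]{BeyrerGuichardLabouriePozzettiWienhard_PositivityCrossRatiosCollarLemma}) in its semi-algebraic form: for a positive representation, the eigenvalue gaps of the images of hyperbolic elements are uniformly bounded below in terms of cross-ratios/intersection data, which persists in the limit. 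Concretely: pick hyperbolic elements $\eta_1, \eta_2, \eta_3$ with $(\eta_1^{\pm}, \eta_2^{\pm}, \eta_3^{\pm})$ in appropriate cyclic position and use that $(\xi_{\rho_k}^h(\eta_1^+), \ldots)$ is positive to get a definite lower bound on $\alpha_{\Fb}(J_{\Fb}(\rho_k(\gamma)))$ independent of $k$; this bound survives taking $k \to \infty$ by continuity of $J_{\Fb}$ (\Cref{propo: Jordan proj}), forcing $\rho(\gamma)$ to be weakly $\Theta$-proximal. Then continuity of $g \mapsto g^+$ on $\Prox_\Theta(G_{\Fb})$ (\Cref{prop: continuity g->g+ hyperbolic}) gives $\rho_k(\gamma)^+ \to \rho(\gamma)^+$.

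The second key step is to upgrade pointwise convergence of the maps to positivity of the limit map. Define $\xi \coloneqq \xi_\rho^h \colon \Lambda_h \to \Fc_{\Theta,\Fb}$ by $\xi(\gamma^+) = \rho(\gamma)^+$ (well-defined since commuting hyperbolic elements share fixed points). For any cyclically ordered tuple $(\gamma_1^+, \ldots, \gamma_n^+)$ in $\Lambda_h$, positivity of $\xi_{\rho_k}^h$ gives that $(\xi_{\rho_k}^h(\gamma_1^+), \ldots, \xi_{\rho_k}^h(\gamma_n^+))$ is positive for all $k$; passing to the limit (using Step 1), $(\xi(\gamma_1^+), \ldots, \xi(\gamma_n^+))$ is \emph{semi-positive}, i.e. lies in the closure of positive tuples (\Cref{prop: transverse + semi positive implies positive}). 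To conclude it is actually positive, by \Cref{prop: transverse + semi positive implies positive} it suffices to show the flags $\xi(\gamma_i^+)$ are pairwise transverse. Pairwise transversality of $\rho(\gamma_i)^+$ and $\rho(\gamma_j)^+$ follows because $\rho(\gamma_i \gamma_j^{N})$ (or a suitable product/conjugate) is itself weakly $\Theta$-proximal with its attracting and repelling flags being (limits controlled by) $\rho(\gamma_i)^+$ and $\rho(\gamma_j)^+$; alternatively, and more cleanly, one enlarges to a $6$-tuple and applies \Cref{lem: semi-positive + a bit transverse implies transverse}: knowing transversality of ``end'' pairs (which come from a single weakly $\Theta$-proximal element's $g^+, g^-$, transverse by definition) forces transversality of the ``middle'' pairs. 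Once $\xi$ is positive, \Cref{theorem: limit map extends to proximal limit map} applied to $\rho$ shows $\rho$ is $\Theta$-positive, completing the proof.

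\textbf{Main obstacle.} The crux is Step 1: propagating weak $\Theta$-proximality to the limit. Over $\Rb$ this is handled by the collar lemma of Beyrer--Guichard--Labourie--Pozzetti--Wienhard, and the point of the present argument is to invoke its semi-algebraic formulation so that it transfers verbatim over any real closed field $\Fb$ via the Tarski--Seidenberg principle. One has to be careful that the collar-type lower bound is expressed purely in terms of the positive structure (cross-ratios of flags, which are semi-algebraic and defined over $\overline{\Qb}^r$) and of the translation data of $\Gamma$ on $\Sb^1$, so that it makes sense and is uniform over the sequence $(\rho_k)$; once that is set up, the rest is a routine compactness-plus-transversality argument as sketched. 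A secondary subtlety is handling the case where $\Gamma$ is not of the first kind — but this is dispatched at the outset by \Cref{propo: equivalence positivity first and second kind Fuchsian groups}, noting that pointwise convergence is preserved under pre-composition with a fixed semi-conjugacy isomorphism.
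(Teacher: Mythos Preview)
Your approach is essentially the same as the paper's: reduce to first kind, use the collar lemma to propagate weak $\Theta$-proximality, pass to semi-positive tuples in the limit, and then recover positivity by the $6$-tuple transversality trick (\Cref{lem: semi-positive + a bit transverse implies transverse}). The key ingredients and their order of use match the paper's proof almost exactly.

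There is, however, one genuine technical gap. You argue with a \emph{sequence} $(\rho_k)_{k\in\Nb}$ converging to $\rho$. For a general real closed field $\Fb$, the order topology need not be sequential (e.g.\ the hyperreals $\Rb_\omega$ admit no big element, so the topology on $G_{\Fb}$ is not metrizable), and hence sequential closedness of $\Pos_\Theta(\Gamma,G_{\Fb})$ does not imply closedness. The paper explicitly avoids sequences for this reason: it instead exhibits $\Pos_\Theta(\Gamma,G_{\Fb})$ as an intersection of preimages of closed sets under continuous maps. Concretely, one first shows that the set $\CL_\Theta(\Gamma,G_{\Fb})$ of representations satisfying the collar inequality \eqref{eq: collar lemma} is closed (continuity of $J_{\Fb}$), and that every $\rho\in\CL_\Theta$ is weakly $\Theta$-proximal (\Cref{lem: collar lemma implies proximal}); then for each cyclically ordered tuple $x=(\gamma_1^+,\ldots,\gamma_n^+)$ in $\Lambda_h$ the map $\varphi_x\colon\CL_\Theta\to\Fc_{\Theta,\Fb}^n$, $\rho\mapsto(\rho(\gamma_1)^+,\ldots,\rho(\gamma_n)^+)$ is continuous, and one checks
\[
\Pos_\Theta(\Gamma,G_{\Fb}) \;=\; \bigcap_{n\ge 3}\;\bigcap_{x\in X_n}\varphi_x^{-1}\bigl((\Fc_{\Theta,\Fb}^n)_{\ge 0}\bigr),
\]
the nontrivial inclusion being exactly your Step~2. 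This is a cosmetic repackaging of your argument, but it is necessary for the statement to hold over all real closed $\Fb$.
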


The proof is essentially a semi-algebraic reformulation of the strategy developed by Beyrer, Guichard, Labourie, Pozzetti and Wienhard for closed surfaces \cite{beyrer2021positive, GLW, BeyrerGuichardLabouriePozzettiWienhard_PositivityCrossRatiosCollarLemma}. In particular, we will use their \emph{collar lemma}, which luckily they formulated in a semi-algebraic way with coefficients in $\Qbar$, so that it holds over every real closed field, even though their proof is only over $\Rb$ (see especially \cite[Appendix A]{BeyrerGuichardLabouriePozzettiWienhard_PositivityCrossRatiosCollarLemma}).

Recall from \Cref{section: Jordan projection} that $J_{\Fb}\colon G_{\Fb} \to C_{\Fb}$ denotes $\Fb$-extension of the multiplicative Jordan projection and $\alpha_{\Fb}$ the $\Fb$-extension of a multiplicative simple root $\alpha$. We also denote by $\omega_{\Fb}^\alpha$ the $\Fb$-extension of the multiplicative \emph{fundamental weight} associated to $\alpha$, and set $p^\alpha_{\Fb}(g) = (\omega^\alpha_{\Fb} \circ J_{\Fb}(g))(\omega^\alpha_{\Fb} \circ J_{\Fb}(g^{-1}))$. The precise definition of $\omega_{\Fb}^\alpha$ does not matter much: for our purposes it is enough to know that it is the $\Fb$-extension of a continuous semi-algebraic map $\omega^\alpha$ from $G$ to $\Qbar_{\geqslant 1}$.

\begin{lemma}[{\cite[Corollary D]{BeyrerGuichardLabouriePozzettiWienhard_PositivityCrossRatiosCollarLemma}}]
\label{lem: collar lemma}
    Let $\gamma, \gamma'$ be hyperbolic elements in $\Gamma$ such that $(\gamma^-, \gamma'^-, \gamma^+ ,\gamma'^+)$ is cyclically ordered.
    Then, for any $\Theta$-positive representation $\rho \colon \Gamma \to G_{\Fb}$ and any (multiplicative) root $\alpha\in \Theta$, we have
    \begin{equation} \label{eq: collar lemma}
    (\alpha_{\Fb}\circ J_{\Fb}(\rho(\gamma))-1)(p^\alpha_{\Fb}(\rho(\gamma'))-1) \geqslant 1~.
    \end{equation}
\end{lemma}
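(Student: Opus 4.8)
The target is the semi-algebraic collar lemma of Beyrer--Guichard--Labourie--Pozzetti--Wienhard, which is proven in \cite{BeyrerGuichardLabouriePozzettiWienhard_PositivityCrossRatiosCollarLemma} over $\Rb$; the plan is to reduce it to a ``group-free'' semi-algebraic statement and then transport that statement to the real closed field $\Fb$ via the transfer principle. I would proceed in three steps.

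First I would observe that the hypothesis that $(\gamma^-,\gamma'^-,\gamma^+,\gamma'^+)$ is cyclically ordered forces both $\gamma$ and $\gamma'$ to be non-peripheral: if $\gamma$ preserved a component of $\Sb^1\setminus\Lambda(\Gamma)$, its two endpoints would be $\gamma^\pm$, so the open arc between $\gamma^-$ and $\gamma^+$ would be disjoint from $\Lambda(\Gamma)$ and could contain neither $\gamma'^-$ nor $\gamma'^+\in\Lambda(\Gamma)$, contradicting the cyclic order. Then, by \Cref{theorem: limit map extends to proximal limit map} together with the remark following it (which handles second-kind groups by restricting to fixed points of non-peripheral hyperbolic elements), $\rho$ is weakly $\Theta$-proximal and its $\Theta$-proximal limit map $\xi^h_\rho$ is positive. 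Since $\xi^h_\rho$ sends $\gamma^\pm$ and $\gamma'^\pm$ (which lie in $\Lambda_h$) to the weak attracting/repelling flags $g^\pm$, ${g'}^\pm$ of $g\coloneqq\rho(\gamma)$ and $g'\coloneqq\rho(\gamma')$, applying $\xi^h_\rho$ to the cyclically ordered quadruple $(\gamma^-,\gamma'^-,\gamma^+,\gamma'^+)$ shows that $(g^-,{g'}^-,g^+,{g'}^+)$ is a positive quadruple of flags in $\Fc_{\Theta,\Fb}$.

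It then remains to prove the following statement, in which $\Gamma$ no longer appears: \emph{for weakly $\Theta$-proximal $g,g'\in G_{\Fb}$ such that $(g^-,{g'}^-,g^+,{g'}^+)$ is a positive quadruple of flags, one has $(\alpha_{\Fb}(J_{\Fb}(g))-1)(p^\alpha_{\Fb}(g')-1)\ge 1$ for every $\alpha\in\Theta$.} Every ingredient here is the $\Fb$-extension of a semi-algebraic notion defined over $\overline{\Qb}^r$ --- weak $\Theta$-proximality and the assignments $g\mapsto g^\pm$ (\Cref{prop: continuity g->g+ hyperbolic}), positivity of a quadruple of flags (\Cref{propo: positive n-tuples is semi-algebraic}), and the maps $J_{\Fb}$, $\alpha_{\Fb}$, $\omega^\alpha_{\Fb}$ and hence $p^\alpha_{\Fb}$ --- so this implication is a sentence of $\mathcal L(\overline{\Qb}^r)$. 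Over $\Rb$ weak $\Theta$-proximality coincides with ordinary $\Theta$-proximality and $g^\pm$ are the usual attracting/repelling flags, so there the statement is exactly \cite[Corollary D]{BeyrerGuichardLabouriePozzettiWienhard_PositivityCrossRatiosCollarLemma}; the Tarski--Seidenberg transfer principle (\Cref{thm_TarskiSeidenberg}) then yields it over $\Fb$.

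The hard part will be the second step: one must check that \cite[Corollary D]{BeyrerGuichardLabouriePozzettiWienhard_PositivityCrossRatiosCollarLemma} really is --- or can be rephrased as --- a statement purely about a pair of $\Theta$-proximal elements together with the positivity of the quadruple of their fixed flags, with no residual dependence on an ambient surface group, so that the reduced implication genuinely has no free variables and the transfer principle applies verbatim. Once that reformulation is in place, both the reduction in the first step and the transfer in the third are routine.
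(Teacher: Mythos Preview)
Your proposal is essentially the same approach the paper takes. The paper does not give its own proof of this lemma: it simply cites \cite[Corollary D]{BeyrerGuichardLabouriePozzettiWienhard_PositivityCrossRatiosCollarLemma} and remarks (in the paragraph introducing the lemma) that the cited paper ``luckily\ldots\ formulated [it] in a semi-algebraic way with coefficients in $\Qbar$, so that it holds over every real closed field'' (pointing specifically to Appendix~A of \cite{BeyrerGuichardLabouriePozzettiWienhard_PositivityCrossRatiosCollarLemma}). Your three-step reduction --- extract from $\Theta$-positivity a positive quadruple of weak attracting/repelling flags via $\xi^h_\rho$, isolate a group-free first-order sentence in $\mathcal L(\Qbar)$, and transfer --- is exactly a spelling-out of that remark, and your caveat about checking that Corollary~D really is a statement about a pair of proximal elements with positive fixed-flag quadruple is precisely what the paper delegates to \cite[Appendix~A]{BeyrerGuichardLabouriePozzettiWienhard_PositivityCrossRatiosCollarLemma}.
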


\begin{remark}
    Corollary D of \cite{BeyrerGuichardLabouriePozzettiWienhard_PositivityCrossRatiosCollarLemma} actually has a strict inequality in \eqref{eq: collar lemma}. However, the positive constant on the right side does not matter for our purpose, and it will be useful to have a closed condition.
\end{remark}

We say that a representation $\rho \colon \Gamma \to G_{\Fb}$ satisfying \eqref{eq: collar lemma} satisfies the \emph{collar lemma}.
Its main property of interest to us is the following:

\begin{lemma}
    \label{lem: collar lemma implies proximal}
    Let $\Gamma$ be a Fuchsian group of the first kind and $\rho\colon\Gamma \to G_{\Fb}$ a representation that satisfies the collar lemma.
    Then $\rho$ is weakly $\Theta$-proximal.
\end{lemma}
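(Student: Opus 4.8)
The statement to prove is: a representation $\rho\colon\Gamma\to G_{\Fb}$ of a first-kind Fuchsian group that satisfies the collar lemma inequality \eqref{eq: collar lemma} is weakly $\Theta$-proximal, i.e.\ $\rho(\gamma)$ is weakly $\Theta$-proximal for every hyperbolic $\gamma\in\Gamma$. Recall that weak $\Theta$-proximality of $\rho(\gamma)$ means $\alpha_{\Fb}(J_{\Fb}(\rho(\gamma)))>1$ for all $\alpha\in\Theta$. So the task is to rule out the possibility that $\alpha_{\Fb}(J_{\Fb}(\rho(\gamma)))=1$ for some hyperbolic $\gamma$ and some $\alpha\in\Theta$ (note $\alpha_{\Fb}\circ J_{\Fb}$ always takes values $\geq 1$ on the positive Weyl chamber, so the only bad case is equality).

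\textbf{First step: produce a linked pair.} Fix a hyperbolic $\gamma\in\Gamma$ and a root $\alpha\in\Theta$. Since $\Gamma$ is non-elementary and of the first kind, the set $\{(\eta^-,\eta^+)\mid \eta\in\Gamma\text{ hyperbolic}\}$ is dense in $\Lambda^2 = (\Sb^1)^2$ (this is recalled in \Cref{s: Fuchsian groups}). Hence I can choose a hyperbolic element $\gamma'\in\Gamma$ whose fixed points $(\gamma'^-,\gamma'^+)$ interleave those of $\gamma$, that is, $(\gamma^-,\gamma'^-,\gamma^+,\gamma'^+)$ is cyclically ordered. (Concretely, pick $\gamma'$ to be a high power of a hyperbolic element conjugating a small neighborhood of $\gamma^-$ across to a small neighborhood near $\gamma^+$, or simply invoke density to place $\gamma'^\pm$ in the two arcs $(\gamma^-,\gamma^+)$ and $(\gamma^+,\gamma^-)$ appropriately.) Then the collar lemma hypothesis applies to the ordered pair $(\gamma,\gamma')$, giving
\[(\alpha_{\Fb}\circ J_{\Fb}(\rho(\gamma))-1)\,(p^\alpha_{\Fb}(\rho(\gamma'))-1)\geq 1.\]

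\textbf{Second step: derive the strict inequality.} Since $p^\alpha_{\Fb} = (\omega^\alpha_{\Fb}\circ J_{\Fb}(\cdot))(\omega^\alpha_{\Fb}\circ J_{\Fb}((\cdot)^{-1}))$ is, by construction, the $\Fb$-extension of a semi-algebraic map valued in $\Qbar_{\geq 1}$, the factor $p^\alpha_{\Fb}(\rho(\gamma'))-1$ is $\geq 0$, hence in particular is a well-defined element of $\Fb$. If $\alpha_{\Fb}(J_{\Fb}(\rho(\gamma)))=1$, then the left-hand side of the displayed inequality equals $0\cdot(p^\alpha_{\Fb}(\rho(\gamma'))-1)=0$, which cannot be $\geq 1$ in the ordered field $\Fb$. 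This contradiction shows $\alpha_{\Fb}(J_{\Fb}(\rho(\gamma)))>1$. Since $\alpha\in\Theta$ and $\gamma$ were arbitrary, $\rho(\gamma)$ is weakly $\Theta$-proximal for every hyperbolic $\gamma$, i.e.\ $\rho$ is weakly $\Theta$-proximal.

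\textbf{Anticipated obstacle.} The argument is essentially trivial once the interleaving element $\gamma'$ is produced, so the only real content is the existence of such a $\gamma'$. This is where first-kindness is used: one needs a hyperbolic element whose axis crosses the axis of $\gamma$, and the density statement for pairs of fixed points provides exactly this. I should be slightly careful that the cyclic ordering in the collar lemma is stated for the specific order $(\gamma^-,\gamma'^-,\gamma^+,\gamma'^+)$; by the dihedral symmetry of the configuration (swapping the roles of $\gamma$ and $\gamma'$, or reversing orientation) one can always arrange the pair to be in the form required by \Cref{lem: collar lemma}, possibly after replacing $\gamma'$ by $\gamma'^{-1}$ or relabelling. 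No subtlety about Archimedean vs.\ non-Archimedean fields arises: the contradiction $0\geq 1$ is valid in any ordered field.
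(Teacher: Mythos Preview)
Your proof is correct and follows essentially the same approach as the paper: produce a hyperbolic element $\gamma'$ whose axis is linked with that of $\gamma$ (using that $\Gamma$ is of the first kind), then read off $\alpha_{\Fb}(J_{\Fb}(\rho(\gamma)))>1$ from the collar inequality. The only cosmetic difference is that the paper rearranges the inequality to $\alpha_{\Fb}\circ J_{\Fb}(\rho(\gamma))\geq 1+\frac{1}{p^\alpha_{\Fb}(\rho(\gamma'))-1}>1$, whereas you argue by contradiction; your version has the minor advantage of not needing to check $p^\alpha_{\Fb}(\rho(\gamma'))\neq 1$ separately.
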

\begin{proof}
    Fix some hyperbolic element $\gamma \in \Gamma$. Since $\Gamma$ is of the first kind, we can find some hyperbolic element $\gamma'\in \Gamma$ such that $(\gamma^-, \gamma'^-, \gamma^+ ,\gamma'^+)$ is cyclically ordered ($\Gamma$ is of the first kind). Then, using the inequality \eqref{eq: collar lemma}, we obtain that
    \[\alpha_{\Fb}\circ J_{\Fb}(\rho(\gamma)) \geqslant 1 + \frac{1}{p^{\alpha}_{\Fb}(\rho(\gamma'))-1}> 1\]
    for every $\alpha\in \Theta$.
    Hence $\rho(\gamma)$ is weakly $\Theta$-proximal.
\end{proof}

We are now ready to prove \Cref{thm: closedness}.

\begin{proof}[Proof of \Cref{thm: closedness}]
    Without loss of generality, we can assume that $\Gamma$ is of the first kind, see \Cref{propo: equivalence positivity first and second kind Fuchsian groups}.
    
    Roughly speaking, we show that the set of positive representations coincides with the set of representations satisfying the collar lemma, and such that the map sending a fixed point of a hyperbolic element to its weak attracting fixed flag is semi-positive.\footnote{In the proof below we refrain from using converging sequences of representations, because there are real closed fields whose order topology is not sequential, e.g.\ the hyperreals.}

    Let \[\CL_\Theta(\Gamma,G_{\Fb})\subset \Hom(\Gamma,G_{\Fb})
    \]
    be the subset of representations satisfying the collar lemma.
    Then $\CL_\Theta(\Gamma,G_{\Fb})$ is closed in $\Hom(\Gamma,G_{\Fb})$, since $J_{\Fb}$ is continuous, see \Cref{propo: Jordan proj}.

    Recall that the map $\Prox_\Theta(G_{\Fb}) \to \Fc_{\Theta,\Fb}$ sending a weakly $\Theta$-proximal element $g$ to its weak attracting fixed flag $g^+$ is continuous (\Cref{prop: continuity g->g+ hyperbolic}).
    For any integer $n\ge 3$, denote by $X_n$ the set of all hyperbolic $n$-tuples $(\gamma_1,\ldots,\gamma_n)$ in $ \Gamma$ such that the $n$-tuple $((\gamma_1)^+,\ldots,(\gamma_n)^+)$ is cyclically ordered. Then by \Cref{lem: collar lemma implies proximal}, for any $x=(\gamma_1,\dots,\gamma_n) \in X_n$, the composition of maps
    \[
    \begin{array}{cccccc}
    \varphi_x \colon &\CL_\Theta(\Gamma,G_{\Fb}) &\to& (\Prox_\Theta(G_{\Fb}))^n &\to& \Fc_{\Theta,\Fb}^n,\\
    &\rho &\mapsto& (\rho(\gamma_1),\ldots,\rho(\gamma_n)) &\mapsto& (\rho(\gamma_1)^+,\ldots,\rho(\gamma_n)^+)
    \end{array}
    \]
    is defined and continuous (recall that we endowed $\CL_\Theta(\Gamma,G_{\Fb})$ with the subspace topology of the topology of pointwise convergence).
    
    We claim that 
    \[\Pos_\Theta(\Gamma,G_{\Fb}) = \bigcap_{n \ge 3} \bigcap_{x \in X_n} \varphi_x^{-1}((\Fc_{\Theta,\Fb}^n)_{\geqslant 0})~,\]
    where $(\Fc_{\Theta,\Fb}^n)_{\geqslant 0}$ denotes the closed subset of semi-positive $n$-tuples in $\Fc_{\Theta,\Fb}$.
    Since $\CL_\Theta(\Gamma,G_{\Fb})$ is also closed in $\Hom(\Gamma,G_{\Fb})$ this shows that $\Pos_\Theta(\Gamma,G_{\Fb})$ is closed.
    
    Let us now prove the claim. 
    By \Cref{lem: collar lemma}, we have that every $\Theta$-positive representation satisfies the collar lemma, which proves the inclusion $\subseteq$.

    For the other inclusion $\supseteq$, let $\rho$ be a representation that satisfies the collar lemma and such that
    \[
    \xi^h_\rho \colon \Lambda_h \to \Fc_{\Theta,\Fb}, \quad 
    \gamma^+ \mapsto \rho(\gamma)^+
    \]
    is semi-positive.
    We want to prove that $\xi^h_\rho$ is positive and, by \Cref{prop: transverse + semi positive implies positive} it is enough to prove that $\xi^h_\rho$ is transverse.

    Let $x$ and $y$ be two distinct points in $\Lambda_h$. By density of pairs $(\gamma^-,\gamma^+)$ in $\Sb^1\times \Sb^1$, we can find hyperbolic elements $\gamma, \eta\in \Gamma$ such that the sextuple $(\gamma^-,\gamma^+, x, \eta^-, \eta^+, y)$ is cyclically ordered. Hence $(\xi^h_\rho(\gamma^-), \xi^h_\rho(\gamma^+), \xi^h_\rho(x), \xi^h_\rho(\eta^-), \xi^h_\rho(\eta^+), \xi^h_\rho(y))$ is semi-positive.
    Since $\rho$ is weakly $\Theta$-proximal, we know that $\xi^h_\rho(\gamma^-)$ and $\xi^h_\rho(\eta^-)$ are transverse to $\xi^h_\rho(\gamma^+)$ and $\xi^h_\rho(\eta^+)$ respectively.
    By \Cref{lem: semi-positive + a bit transverse implies transverse}, we conclude that $\xi^h_\rho(x)$ and $\xi^h_\rho(y)$ are transverse.
\end{proof}

\subsection{Irreducibility}
\label{s: irreducibility}

As an application of closedness of the set of $\Theta$-positive representations we conclude that $\Theta$-positive representations are irreducible in the following sense.
Again we do not assume that $\Gamma$ is finitely generated.

\begin{proposition}
\label{propo: positive implies irreducible}
    Let $\Gamma$ be a Fuchsian group and $\rho \colon \Gamma \to G_{\Fb}$ a $\Theta$-positive representation.
    Then $\rho(\Gamma)$ is not contained in any proper parabolic subgroup of $G_{\Fb}$.
\end{proposition}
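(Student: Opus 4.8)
The plan is to argue by contradiction, reducing to the case of a Fuchsian group of the first kind and exploiting the semi-algebraicity of the statement together with the closedness result just proven. Suppose $\rho(\Gamma)$ is contained in a proper parabolic subgroup $Q$ of $G_{\Fb}$. Up to conjugation and possibly replacing $\Theta$ by a larger symmetric set of simple roots, we may assume $Q = P_{\Theta', \Fb}$ for some non-empty $\Theta' \subset \Delta$; then $\rho(\Gamma)$ fixes the point $p_{\Theta'} \in \Fc_{\Theta',\Fb}$. By \Cref{propo: equivalence positivity first and second kind Fuchsian groups} we may assume $\Gamma$ is of the first kind, so by \Cref{theorem: limit map extends to proximal limit map} the $\Theta$-proximal limit map $\xi^h_\rho \colon \Lambda_h \to \Fc_{\Theta,\Fb}$ is positive; in particular, for any three hyperbolic elements $\gamma_1, \gamma_2, \gamma_3$ with $(\gamma_1^+, \gamma_2^+, \gamma_3^+)$ cyclically ordered, the triple $(\rho(\gamma_1)^+, \rho(\gamma_2)^+, \rho(\gamma_3)^+)$ is a positive triple of flags in $\Fc_{\Theta,\Fb}$, hence in particular $\rho(\gamma_1)^+$ and $\rho(\gamma_3)^+$ are transverse.

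The key point will be to show that all the flags $\rho(\gamma)^+$ (for $\gamma$ hyperbolic) are incompatible with being fixed by a common proper parabolic: since $\rho(\Gamma)$ fixes $p_{\Theta'}$, every element $\rho(\gamma)$ lies in $P_{\Theta',\Fb}$, so its weak attracting flag $\rho(\gamma)^+ \in \Fc_{\Theta,\Fb}$ lies in the $P_{\Theta',\Fb}$-orbit of some flag not transverse to the "position" determined by $p_{\Theta'}$. More concretely, I would use the following: the set of flags in $\Fc_{\Theta,\Fb}$ that are the attracting flag of some weakly $\Theta$-proximal element of $P_{\Theta',\Fb}$ is contained in the closed sub-variety $\{y \in \Fc_{\Theta,\Fb} : y \text{ is not transverse to } z\}$ for an appropriate flag $z$ determined by $\Theta'$ — essentially because a proper parabolic subgroup preserves a non-trivial filtration, so it cannot act proximally with attracting flag "in general position". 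Since $\Gamma$ of the first kind contains, for every $\gamma$, hyperbolic elements $\gamma', \gamma''$ with $(\gamma'^+, \gamma^+, \gamma''^+)$ cyclically ordered, positivity of $\xi^h_\rho$ forces $\rho(\gamma')^+ \pitchfork \rho(\gamma'')^+$, and more: by the density of $\{(\eta^-,\eta^+)\}$ in $\Sb^1 \times \Sb^1$ and \Cref{lem: semi-positive + a bit transverse implies transverse}-type arguments one gets that the image $\xi^h_\rho(\Lambda_h)$ contains tuples of pairwise transverse flags of arbitrary length, hence cannot lie in such a proper closed sub-variety unless $G_{\Fb}$ itself is degenerate, contradicting that $G$ is semisimple with $\Theta$ non-empty.

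An alternative and perhaps cleaner route, which I would pursue in parallel, is via closedness and the real case: by the transfer principle and \Cref{thm_ExtSemiAlgMaps}, being contained in a proper parabolic is a semi-algebraic condition on $\Hom(\Gamma, G)$ (for $\Gamma$ finitely generated; for general $\Gamma$ one reduces to a finitely generated subgroup using \Cref{cor: positivity finite-index subgroup} and the fact that a non-elementary Fuchsian group contains a non-elementary finitely generated subgroup with the same limit set after semi-conjugacy). Over $\Rb$, a $\Theta$-positive representation is $\Theta$-Anosov when $\Gamma$ is cocompact, hence Zariski-dense in a reductive group with no proper parabolic containing the image; and in the general finitely generated case one uses that $\rho$ is discrete and faithful (\Cref{propo: positive implies injective and discrete}) together with the positivity of $\xi^h_\rho$ to rule out a global fixed flag. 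The main obstacle I anticipate is making the "attracting flags of proximal elements of a proper parabolic are non-generic" claim precise and field-independent: one must phrase it semi-algebraically over $\overline{\Qb}^r$ so the transfer principle applies, and then verify over $\Rb$ that a proper parabolic subgroup $Q$ acting on $\Fc_{\Theta,\Rb}$ cannot have the attracting fixed flags of its $\Theta$-proximal elements form a subset containing four pairwise-transverse flags — this should follow because such a $Q$ preserves a proper sub-flag-variety structure, but it requires a careful case analysis (or an appeal to the structure of parabolic subgroups and the fact that transversality is an open dense condition incompatible with living inside a proper $Q$-invariant subset). Once that lemma is in hand, combining it with the positivity of $\xi^h_\rho$ and the density of hyperbolic fixed points closes the argument.
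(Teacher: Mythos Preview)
Your proposal has a genuine gap in both routes. In Route~1 the central claim is incorrect as stated: it is \emph{not} true that the attracting flags of weakly $\Theta$-proximal elements of a proper parabolic $Q$ lie in a fixed ``non-transverse'' subvariety. Already for $G_{\Fb}=\PSL_2(\Fb)$ with $Q$ the stabilizer of a point $p\in\mathbf P^1(\Fb)$, a hyperbolic $g\in Q$ has $\{g^+,g^-\}\ni p$, so $g^+$ can be \emph{any} point of $\mathbf P^1(\Fb)$ (either $p$ itself, or any point transverse to $p$ when $g^-=p$). The actual constraint is on the \emph{pair} $(g^+,g^-)$, not on $g^+$ alone, and turning that into a contradiction with positivity of $\xi^h_\rho$ requires a different argument than the one you sketch. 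Route~2 is not worked out: reducing to $\Rb$ via transfer would require the statement to be first-order in $\rho$, but ``$\rho$ is $\Theta$-positive'' is not obviously expressed by finitely many semi-algebraic conditions at this point of the paper (that comes only in \Cref{section:Positivitysemi-algebraic}, and only for lattices).

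The paper's proof uses a completely different and much shorter idea that you are missing: if $\rho(\Gamma)\subset P$, pass to the \emph{semi-simplification} $\rho_{ss}$ into the Levi factor $L$ of $P$, which lies in the closure of the conjugacy orbit of $\rho$ and is therefore still $\Theta$-positive by \Cref{thm: closedness}. The centralizer $Z_{G_{\Fb}}(\rho_{ss}(\Gamma))$ then contains the center $Z(L)$, which is unbounded since $P$ is proper. On the other hand, any element of this centralizer fixes every $\rho_{ss}(\gamma)^+$, hence fixes the image of the positive map $\xi^h_{\rho_{ss}}$; by Guichard--Wienhard (via transfer) the stabilizer of a positive tuple is closed and bounded. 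This contradiction is the whole proof.
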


\begin{proof}
    We can without loss of generality assume that $\Gamma$ is of the first kind, see \Cref{propo: equivalence positivity first and second kind Fuchsian groups}.
    
    Assume by contradiction that $\rho(\Gamma)$ is contained in some parabolic subgroup $P \subset G_{\Fb}$ (potentially different from $P_{\Theta,\Fb}$).
    In the closure of $\{g \rho g^{-1}\mid g\in G_{\Fb}\} \subset \Hom(\Gamma,G_{\Fb})$ there exists a representation $\rho_{ss} \colon \Gamma \to G_{\Fb}$ whose image is contained in the Levi factor $L$ of $P$ ($\rho_{ss}$ is the semi-simplification), see \Cref{section:SemiAlgGroups}.
    Since the set of $\Theta$-positive representations is closed (\Cref{thm: closedness}), we obtain that $\rho_{ss}$ is $\Theta$-positive.
    Now if we take any $g$ in the centralizer $Z_{G_{\Fb}}(\rho_{ss}(\Gamma))$ of $\rho_{ss}(\Gamma)$ in $G_{\Fb}$, then for any hyperbolic element $\gamma \in \Gamma$ we have 
    \[(\rho_{ss}(\gamma))^+ = (g \rho_{ss}(\gamma) g^{-1})^+ = g \rho_{ss}(\gamma)^+~.\]
    Hence $Z_{G_{\Fb}}(\rho_{ss}(\Gamma))$ fixes pointwise the image of the positive map $\xi^h_\rho$.
    Since\break $Z_{G_{\Fb}}(\rho_{ss}(\Gamma))$ is a semi-algebraic subset of $G_{\Fb}$, we can use the transfer principle and \cite[Theorem 1.6]{guichard2025generalizing} to conclude that $Z_{G_{\Fb}}(\rho_{ss}(\Gamma))$ is closed and bounded (the analogue of compactness in real algebraic geometry, see \Cref{thm_ExtConnComp}~(\ref{thm_ExtConnComp: closed and bounded})).
    On the other hand, since $\rho_{ss}$ has image in $L$, its centralizer contains the center $Z(L)$ of $L$. By construction of parabolic subgroups, $Z(L)$ is unbounded, a contradiction.
\end{proof}

\section{%
\texorpdfstring{%
More on $\Theta$-positivity in the real case}%
{More on Theta-positivity in the real case}}
\label{s: Real case}

In this section, we study further properties of $\Theta$-positive representations that are specific to the real case, and relate them to several generalizations of the notion of $\Theta$-Anosov representations.
In particular, we show that $\Theta$-positive representations over $\Rb$ are extended geometrically finite and $\Theta$-divergent, see \Cref{thm-intro: Theta-positive EGF}, and give conditions on when they are in fact relatively $\Theta$-Anosov.

Throughout this section, $G_{\Rb}$ is a \emph{real} semisimple Lie group with a $\Theta$-positive structure.

\subsection{Divergence and continuity of boundary maps}
\label{subsection: continuity boundary maps}

We start by proving that $\Theta$-positive representations over $\Rb$ of a Fuchsian group of the first kind admit ``canonical'' equivariant left and right continuous boundary maps defined on $\Sb^1$, which are continuous at conical limit points. In particular, such $\Theta$-positive representations are $\Theta$-positively frameable. From this, we deduce that over $\Rb$, every $\Theta$-positive representation of a closed surface group is $\Theta$-Anosov. In the process, we also prove that $\Theta$-positive representations over $\Rb$ are \emph{$\Theta$-divergent} (see \Cref{definition: divergent sequence and representation}).\\

Let $\Gamma$ be a non-elementary Fuchsian group with limit set $\Lambda$, and let $\rho\from\Gamma\to G_{\Rb}$ be a $\Theta$-positive representation. We say that a point $x\in\Lambda$ \emph{is a left limit} (respectively, is a right limit) if there is a sequence of points in $\Lambda$ that converges to $x$ from the left (respectively, from the right). If $x\in\Lambda$ is a left limit (respectively, right limit), choose a positive $\rho$-equivariant map $\xi \colon D\subset\Lambda\to\Fc_{\Theta,\Rb}$ and a sequence $(x_n)_{n\in\Nb}$ in $D$ that converges to $x$ from the left (respectively, from the right), and define 
\[\xi_\rho^l(x) \coloneqq \lim_{n\to +\infty} \xi(x_n), \quad \textnormal{(respectively, } \quad \xi_\rho^r(x) \coloneqq \lim_{n\to +\infty} \xi(x_n))~.\]
Here, notice that since $D$ is $\Gamma$-invariant, it is dense in $\Lambda$, so such a sequence $(x_n)_{n\in\Nb}$ exists, and the above limits exist by \Cref{proposition: least upper bounded property for positive sequence}.

\begin{lemma}\label{lemma: left and right limit independent of the choice of limit map}
If $x\in\Lambda$ is a left limit (respectively, right limit), then  $\xi_\rho^l(x)$ (respectively, $\xi_\rho^r(x)$) does not depend on the choice of the map $\xi$ nor choice of the sequence $(x_n)_{n\in \Nb}$. 
\end{lemma}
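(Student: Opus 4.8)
The statement has two parts: independence of the choice of the positive map $\xi$, and independence of the choice of the sequence $(x_n)_{n\in\Nb}$ converging to $x$ from the left (resp. right). I would treat the sequence-independence first, since it is the cleaner of the two and feeds into the other part.

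First, fix a positive $\rho$-equivariant map $\xi:D\to\Fc_{\Theta,\Rb}$ and suppose $(x_n)_{n\in\Nb}$ and $(x_n')_{n\in\Nb}$ are two sequences in $D$ both converging to $x$ from the left. By \Cref{proposition: least upper bounded property for positive sequence}, the limits $a\coloneqq\lim_n\xi(x_n)$ and $a'\coloneqq\lim_n\xi(x_n')$ exist. If the two sequences share a common subsequence we are immediately done, so assume not; then, after passing to subsequences and relabelling, I may interleave them so that for all $n$ the tuple $(x_1,\dots,x_n,x_n',\dots,x_1',z)$ is cyclically ordered for a fixed auxiliary point $z\in D$ lying to the right of $x$. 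Applying $\xi$ and using positivity, the tuple $(\xi(x_1),\dots,\xi(x_n),\xi(x_n'),\dots,\xi(x_1'),\xi(z))$ is positive; in particular for each fixed $m$, the sequence $(\xi(x_n'))_{n\geq m}$ lies in $\Diam^{\rm opp}_{\xi(x_m)}(\xi(x_{m+1}),\xi(z))$-type diamonds whose extremities converge, on one side, to $a$. Using the ``squeeze'' statement of \Cref{onesided squeeze} (its last assertion, about a sequence trapped in a shrinking tower of opposite diamonds whose endpoints approach $a$), I conclude $a'=a$. The same argument with the cyclic order reversed handles the ``from the right'' case. This shows $\xi_\rho^l(x)$ and $\xi_\rho^r(x)$ are well-defined once $\xi$ is fixed.

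Second, for independence of $\xi$: let $\xi:D\to\Fc_{\Theta,\Rb}$ and $\xi':D'\to\Fc_{\Theta,\Rb}$ be two positive $\rho$-equivariant maps. By \Cref{coro: compatibility positive boundary maps}, for any cyclically ordered tuple whose entries lie alternately in $D$ and $D'$, the image under the appropriate map is positive — in other words $\xi$ and $\xi'$ are ``jointly positive''. Equivalently, one may invoke \Cref{prop: converse}: both $\xi(D)$ and $\xi'(D')$ sit inside the maximal boundary extension $\bdext$, with $\zeta_\rho^M\circ\xi=\Id_D$ and $\zeta_\rho^M\circ\xi'=\Id_{D'}$. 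Now pick a sequence $(x_n)$ in $D$ and a sequence $(x_n')$ in $D'$, both converging to $x$ from the left, interleaved so that $(x_1,x_1',x_2,x_2',\dots,z)$ is cyclically ordered; joint positivity gives that $(\xi(x_1),\xi'(x_1'),\xi(x_2),\xi'(x_2'),\dots,\xi(z))$ is positive, and then the same squeeze argument as above (\Cref{onesided squeeze}) forces $\lim_n\xi(x_n)=\lim_n\xi'(x_n')$. Hence $\xi_\rho^l(x)$ is independent of $\xi$; symmetrically for $\xi_\rho^r(x)$.

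\textbf{Main obstacle.} The only real subtlety is bookkeeping the cyclic order when interleaving two sequences that approach $x$ from the same side: one must pass to subsequences to ensure strict alternation $(\dots, x_n, x_n', \dots)$ (or $(x_n, x_n', x_{n+1}, x_{n+1}', \dots)$), and be careful that the auxiliary point $z$ is chosen on the correct side so that all the relevant quadruples are genuinely cyclically ordered and the diamonds nest in the right direction. Once the combinatorics of the cyclic order is set up correctly, both parts reduce to a single application of the squeeze lemma \Cref{onesided squeeze} (equivalently \Cref{proposition: least upper bounded property for positive sequence} over $\Rb$), so there is no analytic difficulty beyond what is already packaged in those statements.
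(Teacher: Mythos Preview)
Your approach is essentially the same as the paper's: interleave the two sequences, invoke \Cref{coro: compatibility positive boundary maps} to get a single positive sequence in $\Fc_{\Theta,\Rb}$, and conclude via convergence of positive sequences over $\Rb$. The paper actually does both independences in one stroke---it takes two different maps $\xi,\xi'$ and two different sequences from the start, interleaves them into one positive sequence $(a_n)$, and observes that since $(a_n)$ converges (\Cref{proposition: least upper bounded property for positive sequence}) all its subsequences share the same limit; there is no need for the diamond-squeeze machinery of \Cref{onesided squeeze}.

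One genuine slip: in your first step the claimed ordering $(x_1,\dots,x_n,x_n',\dots,x_1',z)$ cannot be cyclically ordered when both $(x_n)$ and $(x_n')$ approach $x$ from the \emph{same} side---that tuple would force $(x_n')$ to decrease away from $x$. The correct interleaving is the one you use in the second step, namely $(x_1,x_1',x_2,x_2',\dots,z)$ after passing to subsequences so that the terms alternate. You flag exactly this bookkeeping as the main obstacle, so the fix is clear; just make the first step consistent with the second.
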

\begin{proof}
We prove the claim for the case when $x$ is a left limit; the proof of the case when $x$ is a right limit is identical.
Let $\xi \colon D \to \Fc_{\Theta,\Rb}$ and $\xi'\colon D' \to \Fc_{\Theta, \Rb}$ be two positive $\rho$-equivariant boundary maps, and $(x_n)_{n\in \Nb} \subset D$ and $(y_n)_{n\in \Nb}\subset {D'}$ be two sequences converging to $x$ from the left. We want to show that
\[\lim_{n\to+\infty}\xi(x_n)=\lim_{n\to+\infty}\xi'(y_n).\]
By \Cref{proposition: least upper bounded property for positive sequence}, both of these limits exist, so by taking sub-sequences, we may assume that the two sequences are disjoint, and that the sequence $(z_n)_{n\in\Nb}$ in $D\cup D'$ defined by $z_{2k-1}=x_k$ and $z_{2k}=y_k$ for all $k\in\Nb$, also converges to $x$ from the left. For all $k\in\Nb$, let $a_{2k-1}\coloneqq \xi(x_k)$ and $a_{2k}\coloneqq \xi'(y_k)$. Then by \Cref{coro: compatibility positive boundary maps}, $(a_n)_{n\in\Nb}$ is a positive sequence that contains both $(\xi(x_n))_{n\in\Nb}$ and $(\xi'(y_n))_{n\in\Nb}$ as subsequences. Again by \Cref{proposition: least upper bounded property for positive sequence}, $(a_n)_{n\in\Nb}$ converges, so the required equality holds.

\end{proof}

Let $\Lambda_l=\Lambda_l(\Gamma)$ (respectively, $\Lambda_r=\Lambda_r(\Gamma)$) denote the set of points in $\Lambda(\Gamma)$ that are left limits (respectively, right limits). In the case when $\Gamma$ is of the first kind, then $\Lambda_l=\Lambda_r=\Lambda=\Sb^1$. In general, observe that both $\Lambda_l$ and $\Lambda_r$ are $\Gamma$-invariant subsets of $\Lambda$, and the perfectness of $\Lambda$ implies that their union is all of $\Lambda$. Then by \Cref{lemma: left and right limit independent of the choice of limit map}, we have well-defined, $\rho$-equivariant maps
\[\xi_\rho^l \colon \Lambda_l\to\Fc_{\Theta,\Rb}\quad\text{and}\quad\xi_\rho^r \colon \Lambda_r\to\Fc_{\Theta,\Rb}.\]

\begin{proposition} \label{prop: the maps are left and right continuous}
The $\rho$-equivariant maps $\xi_\rho^l$ and $\xi_\rho^r$ are positive and respectively left and right continuous.
\end{proposition}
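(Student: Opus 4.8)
The statement has two parts: positivity of $\xi_\rho^l$ and $\xi_\rho^r$, and their one-sided continuity. I will treat $\xi_\rho^l$; the argument for $\xi_\rho^r$ is identical after reversing the cyclic order. The plan is to fix a positive $\rho$-equivariant map $\xi\colon D\to\Fc_{\Theta,\Rb}$ on a dense $\Gamma$-invariant subset $D$ (which by \Cref{corollary: limit map extends to proximal limit map} we may take to contain $\Lambda_h$), and to express $\xi_\rho^l$ everywhere as a limit of values of $\xi$ taken along sequences approaching from the left. By \Cref{lemma: left and right limit independent of the choice of limit map} these limits are well-defined, so the only work is to verify positivity and continuity.

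\emph{Positivity of $\xi_\rho^l$.} Take a cyclically ordered $n$-tuple $(y_1,\dots,y_n)$ in $\Lambda_l$. For each $i$ choose a sequence $(y_i^k)_{k\in\Nb}$ in $D$ converging to $y_i$ from the left; since the $y_i$ are distinct and cyclically ordered, for $k$ large the tuple $(y_1^k,\dots,y_n^k)$ is cyclically ordered in $D$, so $(\xi(y_1^k),\dots,\xi(y_n^k))$ is positive. Passing to the limit gives that $(\xi_\rho^l(y_1),\dots,\xi_\rho^l(y_n))$ is \emph{semi-positive}. To upgrade to positive, by \Cref{prop: transverse + semi positive implies positive} it suffices to prove pairwise transversality. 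For this I would insert, between each consecutive pair $y_i,y_{i+1}$, the fixed points $\gamma_i^-,\gamma_i^+$ of a hyperbolic element $\gamma_i$ (dense pairs $(\gamma^-,\gamma^+)$, using $\Lambda_h\subset D$ and \Cref{propo: positive implies weak proximal}), forming an enlarged cyclically ordered tuple; the associated enlarged tuple of flags is semi-positive, and $\xi_\rho^l(\gamma_i^\pm)=\xi(\gamma_i^\pm)=\rho(\gamma_i)^\pm$ are transverse by weak $\Theta$-proximality. Then \Cref{lem: semi-positive + a bit transverse implies transverse} (applied to the relevant $6$-subtuples, with $\gamma_i^-$ transverse to $\gamma_i^+$ and $\gamma_j^-$ transverse to $\gamma_j^+$ surrounding a pair $y_i,y_j$) yields transversality of $\xi_\rho^l(y_i)$ and $\xi_\rho^l(y_j)$, hence positivity. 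Note this also shows $\xi_\rho^l$ agrees with $\xi$, and with $\xi^h_\rho$, on $D\supset\Lambda_h$.

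\emph{Left continuity of $\xi_\rho^l$.} Fix $x\in\Lambda_l$ and a sequence $(t_m)_{m\in\Nb}$ in $\Lambda_l$ converging to $x$ from the left; I must show $\xi_\rho^l(t_m)\to\xi_\rho^l(x)$. The idea is to squeeze. Choose auxiliary points: $z\in D$ with $x$ between $z$ and the $t_m$ on the appropriate side, and a sequence $(x_k)_{k\in\Nb}$ in $D$ converging to $x$ from the left with $(z,x_k,x_{k+1},x)$ cyclically ordered; then $\xi(x_k)\to\xi_\rho^l(x)$, and by \Cref{coro: compatibility positive boundary maps} together with the first part the bi-infinite-type tuple $(\xi(z),\dots,\xi(x_k),\xi(x_{k+1}),\dots,\xi_\rho^l(x))$ is positive. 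For each $m$, since $t_m$ is a left limit approached from the left by points of $D$ and lies strictly between the $x_k$'s and $x$ for $k$ large (refining indices so that $x_{k_m}$ is cyclically between $x_{k_m-1}$ and $t_m$), one gets $\xi_\rho^l(t_m)\in\overline{\Diam}^{\rm opp}_{\xi(z)}(\xi(x_{k_m}),\xi_\rho^l(x))$ — more precisely $\xi_\rho^l(t_m)\in\Diam^{\rm opp}_{\xi(x_1)}(\xi(x_{k_m}),\xi_\rho^l(x))$ after relabeling, using that the whole relevant tuple is positive. Since $k_m\to\infty$ and $\xi(x_{k})\to\xi_\rho^l(x)$, the ``squeeze'' statement in \Cref{onesided squeeze} (its last assertion, with $a_\infty=\xi_\rho^l(x)$ and $b_m=\xi_\rho^l(t_m)$) forces $\xi_\rho^l(t_m)\to\xi_\rho^l(x)$. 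This is the step I expect to be the main obstacle: one must carefully set up the bookkeeping of which diamonds contain $\xi_\rho^l(t_m)$ and check that \Cref{onesided squeeze} applies verbatim — in particular that the increasing index sequence $(k_m)$ can genuinely be chosen, which uses that the $t_m$ approach $x$ monotonically and that $D$ is dense. Everything else reduces to \Cref{lem: AddingElemToPosTuple}, \Cref{proposition: least upper bounded property for positive sequence}, and the already-established independence lemma. Finally, $\xi_\rho^l$ is left continuous by construction (and the analogous argument gives right continuity of $\xi_\rho^r$).
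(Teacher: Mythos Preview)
Your proof is correct in outline and takes a genuinely different route from the paper's, so a brief comparison is in order.

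\textbf{Positivity.} The paper reduces to the first-kind case via the semi-conjugacy of \Cref{Fuchsian groups lemma} and then observes that $\xi_\rho^l(x)$ lies in $\bdext(x)$ for every $x$, so that $\xi_\rho^l$ is a section of the maximal positive boundary extension $(\bdext,\zeta_\rho^M)$ from \Cref{prop: converse}, hence positive. Your argument is more elementary: you pass to the limit to get semi-positivity and then upgrade to positivity via \Cref{lem: semi-positive + a bit transverse implies transverse}, exactly as in the closedness proof (\Cref{thm: closedness}). This avoids the boundary-extension machinery at the cost of a slightly more hands-on transversality check. One caveat: you invoke \Cref{corollary: limit map extends to proximal limit map}, \Cref{propo: positive implies weak proximal} and \Cref{coro: compatibility positive boundary maps}, all of which are stated only for $\Gamma$ of the first kind; for second-kind $\Gamma$ you should either reduce first via \Cref{propo: equivalence positivity first and second kind Fuchsian groups} (as the paper does), or restrict the inserted hyperbolic elements $\gamma_i$ to non-peripheral ones so that $\rho(\gamma_i)$ is genuinely weakly $\Theta$-proximal.

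\textbf{Left continuity.} Here the paper's argument is considerably slicker than yours. Once positivity of $\xi_\rho^l$ is established, \Cref{lemma: left and right limit independent of the choice of limit map} applies with $\xi=\xi_\rho^l$ itself: for any $(x_n)$ in $\Lambda_l$ converging to $x$ from the left, $\lim_n\xi_\rho^l(x_n)=\xi_\rho^l(x)$ by definition, and that is exactly left continuity. Your squeeze via \Cref{onesided squeeze} is correct (the choice of $k_m$ as the largest index with $x_{k_m}$ cyclically before $t_m$ indeed satisfies $k_m\to\infty$ since $t_m\to x$), but it reproves what falls out for free from the independence lemma once you know $\xi_\rho^l$ is positive.
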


To prove \Cref{prop: the maps are left and right continuous}, recall from \Cref{Fuchsian groups lemma} that there is a Fuchsian group $\Gamma'$ of the first kind, and a semi-conjugacy $\iota \colon \Gamma\to\Gamma'$.
Let $\alpha\colon\Lambda(\Gamma)\to  \Lambda(\Gamma') = \Sb^1$ be the continuous, $\iota$-equivariant, monotonic, surjective map.

\begin{lemma}\label{injective alpha}
The restrictions of $\alpha$ to both $\Lambda_l(\Gamma)$ and $\Lambda_r(\Gamma)$ are injective. 
\end{lemma}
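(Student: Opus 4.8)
The statement to prove is that $\alpha$ restricted to $\Lambda_l(\Gamma)$ (resp. $\Lambda_r(\Gamma)$) is injective, where $\alpha\colon\Lambda(\Gamma)\to\Sb^1$ is the continuous, monotonic, surjective, $\iota$-equivariant map from \Cref{Fuchsian groups lemma}. I will treat the case of $\Lambda_l(\Gamma)$; the case of $\Lambda_r(\Gamma)$ is symmetric (with ``left'' and ``right'' exchanged throughout). The key point is that a monotonic continuous surjection $\alpha$ fails to be injective precisely on the ``collapsed intervals'', i.e. the fibers $\alpha^{-1}(t)$ that are non-degenerate. So I want to show that a non-degenerate fiber $\alpha^{-1}(t)$ contains at most one left limit point.

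\textbf{Key steps.} First I would recall the structure of the fibers of $\alpha$. Since $\alpha$ is monotonic, continuous and surjective, each fiber $\alpha^{-1}(t)$ is either a single point or a closed sub-interval (arc) $[a_t, b_t]$ of $\Lambda(\Gamma)$ with $a_t \ne b_t$. (This is the standard description of semi-conjugacies; if needed I would cite \Cref{appendix}, but it already follows from monotonicity plus continuity plus surjectivity of $\alpha$ applied to the cyclically ordered $\Sb^1$.) The second step is the crucial observation: if $\alpha^{-1}(t) = [a_t, b_t]$ is non-degenerate, then its \emph{interior} meets $\Lambda(\Gamma)$ in an open subset of $\Lambda(\Gamma)$, and on this open subset I would argue — exactly as in the proof of \Cref{propo: equivalence positivity first and second kind Fuchsian groups} — that there are two hyperbolic elements whose attracting fixed points lie in the interior; $\iota$ of these would then share a fixed point of $\iota$ (namely $t$), hence commute, hence the original elements commute, contradicting non-elementarity. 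Wait — that argument shows a non-degenerate fiber cannot have interior meeting $\Lambda$, unless $\Lambda$ is not all of $\Sb^1$. So I need to be more careful: the collapsed intervals correspond precisely to the complementary intervals (gaps) of $\Lambda(\Gamma)$ in $\Sb^1$, plus possibly their endpoints. Concretely, the non-degenerate fibers $\alpha^{-1}(t)$ are exactly the closures $\overline{I}$ of the connected components $I$ of $\Sb^1 \setminus \Lambda(\Gamma)$ (each such $\overline{I}$ being $[c,d]$ where $c,d$ are the endpoints of the gap), together with at most one more point — actually $\alpha^{-1}(t)=\{c,d\}$ with the open arc $(c,d)$ being a gap, so $\alpha^{-1}(t)\cap\Lambda(\Gamma)=\{c,d\}$. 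So the third step: a non-degenerate fiber intersects $\Lambda(\Gamma)$ in exactly the two endpoints $c < d$ of a complementary gap $(c,d)$, where $c$ is the ``right'' endpoint of that gap-arc and $d$ the ``left'' endpoint.

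\textbf{Concluding the argument.} Now I claim: of the two points $c, d$ in such a fiber, only one can be a left limit. Indeed, $d$ is the left endpoint of the gap $(c,d)$, meaning on the immediate left of $d$ the points of $\Sb^1$ lie in $\Lambda(\Gamma)$ accumulating to $d$ — so $d$ \emph{is} a left limit. But $c$ is the right endpoint of the gap, so on its immediate left is the open interval $(c,d)$ which is disjoint from $\Lambda(\Gamma)$; hence no sequence in $\Lambda(\Gamma)$ converges to $c$ from the left, i.e. $c \notin \Lambda_l(\Gamma)$. (Here I should double-check the orientation conventions: with the counter-clockwise cyclic order, if $(c,d)$ is a complementary arc with $(c, z, d)$ cyclically ordered for $z$ in the gap, then approaching $d$ ``from the left'' means from within $(c,d)$-side — I will need to match this precisely to \Cref{s: Fuchsian groups}'s definition of convergence from the left/right, and flip $c,d$ if the convention goes the other way.) Therefore $\alpha^{-1}(t) \cap \Lambda_l(\Gamma)$ has exactly one point for every non-degenerate fiber, and is trivially a single point for degenerate fibers; combined with surjectivity of $\alpha$, this gives that $\alpha|_{\Lambda_l(\Gamma)}$ is injective. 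Finally I should note $\Lambda_l(\Gamma)$ might not contain \emph{any} representative for some fiber $t$ if that fiber is $\{c\}$-type — but a degenerate fiber is a single point which is automatically both a left and right limit since $\Lambda(\Gamma)$ is perfect, so that case is fine, and the non-degenerate fibers always contribute their point $d \in \Lambda_l$. The symmetric statement for $\Lambda_r(\Gamma)$ follows by replacing $d$ with $c$.

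\textbf{Main obstacle.} The main thing to get right is the bookkeeping of orientations and the precise identification of non-degenerate fibers of $\alpha$ with closures of complementary gaps of $\Lambda(\Gamma)$ — this is ``well known'' but needs the monotonicity/continuity/surjectivity of $\alpha$ from \Cref{Fuchsian groups lemma} together with the fact that $\Lambda(\Gamma)$ is perfect, and I want to cite the detailed discussion in \Cref{appendix} rather than reprove it. Everything else (the left-limit/right-limit dichotomy at the two endpoints of a gap) is then immediate from the definitions in \Cref{s: Fuchsian groups}.
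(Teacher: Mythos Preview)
Your approach is correct and uses the same key lemma as the paper --- namely, the argument (borrowed from the proof of \Cref{propo: equivalence positivity first and second kind Fuchsian groups}) that no fiber of $\alpha$ can contain a nonempty relatively open subset of $\Lambda(\Gamma)$, because two hyperbolic fixed points in such a set would give non-commuting elements of $\Gamma$ whose images in $\Gamma'$ share a fixed point. The difference is purely organizational.

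You take a detour: first classify \emph{all} non-degenerate fibers of $\alpha$ as consisting of the two endpoints of a single complementary gap of $\Lambda(\Gamma)$, and then observe that of those two endpoints only one is a left limit. The paper instead goes straight to the conclusion: given distinct $x,y\in\Lambda_l(\Gamma)$, both open arcs $I_1,I_2$ between them intersect $\Lambda(\Gamma)$ (precisely because $x$ and $y$ are both left limits), so whichever arc gets collapsed by $\alpha$ contains an open piece of $\Lambda(\Gamma)$, and the contradiction follows immediately. This is shorter because it never needs the full structural statement about fibers.

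Two small cautions about your route. First, the characterization ``non-degenerate fibers of $\alpha$ are exactly the pairs of endpoints of gaps'' is not something you can simply cite from \Cref{appendix}; it is essentially equivalent to what you are proving, and you do need the hyperbolic-elements argument you sketch in your step~2 to rule out fibers containing three or more points of $\Lambda(\Gamma)$ (e.g.\ two adjacent gap closures). Second, your orientation bookkeeping is indeed flipped: with the paper's conventions, if $(c,d)$ is a gap then the interval immediately to the left of $d$ is $(c,d)$ itself, so $d\notin\Lambda_l$ and $c\in\Lambda_l$, the opposite of what you wrote (as you suspected). Neither issue is fatal, but the paper's direct argument avoids both.
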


\begin{proof}
We will prove this for $\Lambda_l(\Gamma)$, the case of $\Lambda_r(\Gamma)$ is similar. Pick a distinct pair of points $x,y\in\Lambda_l(\Gamma)$, and let $I_1$ and $I_2$ be the two open sub-intervals of $\Sb^1$ that have $x$ and $y$ as endpoints. Notice that $I_1\cap\Lambda(\Gamma)$ and $I_2\cup\Lambda(\Gamma)$ are both non-empty. 

Suppose for the purpose of contradiction that $\alpha(x)=\alpha(y)\eqqcolon p\in\Lambda(\Gamma')$. Then the fact that $\alpha$ is monotonic implies that $\alpha(I_1)=p$ or $\alpha(I_2)=p$. Assume without loss of generality that $\alpha(I_1)=p$. Since $I_1\cap\Lambda$ is an open subset of $\Lambda$, the density of $\Lambda_h(\Gamma)$ in $\Lambda(\Gamma)$ implies that $I_1$ contains a pair of distinct points in $\Lambda_h(\Gamma)$. Let $\gamma,\gamma'\in\Gamma$ be hyperbolic elements that fix these two points. Then $\gamma$ and $\gamma'$ do not commute. However, this means that $\iota(\gamma)$ and $\iota(\gamma')$ in $\Gamma'$ are non-commuting elements that fix $p$. This violates a standard fact from hyperbolic geometry, namely that the group generated by a pair of non-commuting, infinite order elements in $\PSL_2(\Rb)$ that share a common fixed point cannot be discrete. 
\end{proof}

Using \Cref{injective alpha}, we can now prove the positivity claim of \Cref{prop: the maps are left and right continuous} by reducing it to the case where $\Gamma$ is of the first kind and using the maximal boundary extensions $(\bdext,\zeta_\rho^\textnormal{M})$ given by \Cref{prop: converse} (these are constructed only in the case when $\Gamma$ is of the first kind).

\begin{proof}[Proof of \Cref{prop: the maps are left and right continuous}]
Again, we will only give the proof for $\xi_\rho^l$; the proof for $\xi_\rho^r$ is identical. 

First, we prove that $\xi_\rho^l$ is positive. By \Cref{injective alpha}, $\alpha|_{\Lambda_l(\Gamma)} \colon \Lambda_l(\Gamma)\to\Lambda(\Gamma')$ is a monotonic, $\iota$-equivariant bijection. This implies that 
\[\xi_\rho^l\circ(\alpha|_{\Lambda_l(\Gamma)})^{-1}=\xi_{\rho\circ\iota^{-1}}^l,\]
so it suffices to prove that $\xi_{\rho\circ\iota^{-1}}^l$ is positive. Furthermore, by \Cref{propo: equivalence positivity first and second kind Fuchsian groups}, $\rho$ is positive if and only if $\rho\circ\iota^{-1}$ is positive, so we have reduced to the case where $\Gamma$ is of the first kind.

In the case when $\Gamma$ is of the first kind, pick any $x\in\Lambda(\Gamma)$. Then choose a positive, $\rho$-equivariant map $\xi \colon D\to\Fc_{\Theta,\Rb}$, and let $(x_n)_{n\in\Nb}$ and $(x_n')_{n\in\Nb}$ be sequences in $D$ that converge to $x$ from the left and the right respectively. Then observe that $\xi^l_\rho(x)$ belongs to
    \[\bigcap_{n\ge 2} \Diam_{\xi(x_1)}^\opp(\xi(x_n), \xi(x'_n)) = \Vc^\textnormal{M}_\rho(x)~.\]
    Hence $\xi^l_\rho$ is a section of $(\zeta^{\mathrm M}_\rho,\Vc^\textnormal{M}_\rho)$ and is thus positive by \Cref{prop: Boundary extension implies positive}.

Next, we prove that $\xi_\rho^l$ is left continuous. By \Cref{lemma: left and right limit independent of the choice of limit map}, $\xi^l_\rho$ is does not depend on the choice of $\rho$-equivariant positive map $\xi \colon D\to\Fc_{\Theta,\Rb}$. Thus, if we take $\xi=\xi^l_\rho$, we get that, for any sequence $(x_n)_{n\in \Nb}\in \Sb^1$ converging to $x$ from the left,
    \[\lim_{n\to +\infty} \xi^l_\rho(x_n) = \xi^l_\rho(x)~,\]
    i.e.\ $\xi^l_\rho$ is left continuous.
\end{proof}

As a corollary of the existence of $\xi_\rho^l$ and $\xi_\rho^r$ we can finally conclude that $\Theta$-positive representations over $\Rb$ are frameable:

\begin{corollary} \label{coro: positive => frameable over R}
    Let $\Gamma$ be a Fuchsian group of the first kind and $\rho \colon \Gamma \to G_{\Rb}$ a $\Theta$-positive representation. 
    Then $\rho$ is $\Theta$-positively frameable.
    
    Moreover, for any positive parabolic element of $\Gamma$ with fixed point $\gamma^+\in \Lambda_p$, we have
    \[\xi^r_\rho(\gamma^+) = \rho(\gamma)^+ \quad \textrm{ and } \quad \xi^l_\rho(\gamma^+) = \rho(\gamma)^-~.\]
    In other words both $\xi^l_\rho$ and $\xi^r_\rho$ extend the canonical framings defined in \Cref{lem:PosFramedRepr}.
\end{corollary}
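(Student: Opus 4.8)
\textbf{Proof plan for \Cref{coro: positive => frameable over R}.}

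The plan is to deduce frameability from the mere existence of the positive left and right boundary maps $\xi^l_\rho$ and $\xi^r_\rho$ on $\Sb^1$ constructed in \Cref{prop: the maps are left and right continuous}. Since $\Gamma$ is of the first kind, $\Lambda_p \subset \Sb^1$ is the full set of parabolic fixed points, and restricting either $\xi^l_\rho$ or $\xi^r_\rho$ to $\Lambda_p$ gives a $\rho$-equivariant map $\Lambda_p \to \Fc_{\Theta,\Rb}$; it is positive because $\xi^l_\rho$ (resp. $\xi^r_\rho$) is a positive map on all of $\Sb^1$ and positivity of tuples is inherited by sub-tuples. Hence $\rho$ is $\Theta$-positively frameable, and in fact exhibits (at least) two framings.

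The remaining task is to identify these two restrictions with the canonical framings $\xi^l_\rho = \rho(\gamma_p)^-$ and $\xi^r_\rho = \rho(\gamma_p)^+$ from \Cref{lem:PosFramedRepr}, i.e. to compute $\xi^l_\rho(\gamma^+)$ and $\xi^r_\rho(\gamma^+)$ for a positive parabolic $\gamma$ with fixed point $\gamma^+$. First I would fix any $x \in \Sb^1 \setminus \{\gamma^+\}$ and note that, since $\gamma$ is a positive parabolic, the forward orbit $(\gamma^n \cdot x)_{n\in\Nb}$ converges to $\gamma^+$ \emph{from the right} and the backward orbit $(\gamma^{-n}\cdot x)_{n\in\Nb}$ converges to $\gamma^+$ \emph{from the left} (with respect to any auxiliary point); this is exactly the definition of positive parabolic recalled in \Cref{s: Fuchsian groups}. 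Now pick a positive $\rho$-equivariant map $\xi$ defined on the $\Gamma$-orbit of $x$ (which exists since $\rho$ is $\Theta$-positive, after possibly enlarging the domain via \Cref{corollary: limit map extends to proximal limit map}). By definition of $\xi^r_\rho$,
\[
\xi^r_\rho(\gamma^+) = \lim_{n\to+\infty} \xi(\gamma^n\cdot x) = \lim_{n\to+\infty} \rho(\gamma)^n \cdot \xi(x).
\]
By \Cref{framed positive implies positively translating}, $\rho(\gamma)$ is $\Theta$-positively translating, and the sequence $(\xi(\gamma^n\cdot x))_{n\in\Zb} = (\rho(\gamma)^n\cdot\xi(x))_{n\in\Zb}$ is a positive $g$-orbit for $g = \rho(\gamma)$. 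By \Cref{propo: OverRPosRotImpliesDiv}, the limit $\lim_{n\to+\infty}\rho(\gamma)^n\cdot\xi(x)$ exists and equals $\rho(\gamma)^+$ (independently of the chosen point $\xi(x)$ with positive $\rho(\gamma)$-orbit), and the proof of \Cref{propo:PosTransUniqueFP} shows this limit coincides with the semi-algebraically defined forward fixed point also denoted $\rho(\gamma)^+$. Hence $\xi^r_\rho(\gamma^+) = \rho(\gamma)^+$. The identical argument applied to the backward orbit — using that $(\gamma^{-n}\cdot x)$ converges to $\gamma^+$ from the left, so that this orbit computes $\xi^l_\rho(\gamma^+)$ — together with $\rho(\gamma)^- = (\rho(\gamma)^{-1})^+ = \lim_{n\to+\infty}\rho(\gamma)^{-n}\cdot\xi(x)$, gives $\xi^l_\rho(\gamma^+) = \rho(\gamma)^-$.

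Finally I would record that the framings $\xi^l_\rho|_{\Lambda_p}$ and $\xi^r_\rho|_{\Lambda_p}$ thus agree, respectively, with the maps $p \mapsto \rho(\gamma_p)^+$ and $p \mapsto \rho(\gamma_p)^-$ from \Cref{lem:PosFramedRepr}, which is the last sentence of the statement. I do not anticipate a serious obstacle here: the only subtlety is keeping the orientation conventions straight — which side of $\gamma^+$ the forward versus backward orbit of a positive parabolic accumulates on — and making sure the chosen base point $\xi(x)$ indeed has positive $\rho(\gamma)$-orbit so that \Cref{propo: OverRPosRotImpliesDiv} applies with the base-point-independence of $\rho(\gamma)^\pm$. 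Both points are handled by the results already in place (\Cref{framed positive implies positively translating}, \Cref{propo: OverRPosRotImpliesDiv}, \Cref{propo:PosTransUniqueFP}), so the argument is essentially a bookkeeping exercise.
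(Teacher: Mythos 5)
Your strategy is essentially the paper's (restrict the globally defined left/right boundary maps of \Cref{prop: the maps are left and right continuous} to $\Lambda_p$ and compute the value at a parabolic fixed point via orbit limits), but there are two errors.

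First, the direction of convergence is wrong, and it propagates. You assert that the forward orbit $(\gamma^n\cdot x)_{n\in\Nb}$ of a positive parabolic converges to $\gamma^+$ from the right, but it converges from the \emph{left}: a positive parabolic translates counter-clockwise, so $(x,\gamma\cdot x,\gamma^2\cdot x,\dots,\gamma^+)$ is cyclically ordered, and taking $z=x$ in the definition one has $\gamma^n\cdot x\in(z,\gamma^+)$ for all $n\geq 1$, which is precisely "from the left." Therefore the left boundary map computes the forward limit, $\xi^l_\rho(\gamma^+)=\lim_n\rho(\gamma)^n\cdot\xi^l_\rho(x)=\rho(\gamma)^+$, and symmetrically $\xi^r_\rho(\gamma^+)=\rho(\gamma)^-$; this is what is needed to match the canonical framings $\xi^l_\rho\colon p\mapsto\rho(\gamma_p)^+$ and $\xi^r_\rho\colon p\mapsto\rho(\gamma_p)^-$ of \Cref{lem:PosFramedRepr}. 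Your sign error happens to reproduce the displayed identities in the statement of the corollary (which appear to have $l$ and $r$ swapped relative to \Cref{lem:PosFramedRepr} and to the paper's own argument), and the tension shows up in your own last sentence, which asserts $\xi^l_\rho|_{\Lambda_p}=(p\mapsto\rho(\gamma_p)^+)$ immediately after you had derived $\xi^l_\rho(\gamma^+)=\rho(\gamma)^-$.

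Second, the appeal to \Cref{framed positive implies positively translating} is circular: its hypothesis is that $\rho$ is $\Theta$-positively frameable, which is the conclusion you are proving. The correct input is \Cref{propo: positive implies weak proximal}(2), which gives that the image of every parabolic under a $\Theta$-positive $\rho$ is $\Theta$-positively rotating, with no frameability hypothesis; that suffices to invoke \Cref{propo: OverRPosRotImpliesDiv}, and over $\Rb$ rotating and translating are equivalent by \Cref{propo: PosTransPosRot} in any case. Also, the detour through an auxiliary positive map $\xi$ and \Cref{corollary: limit map extends to proximal limit map} is unnecessary: $\xi^l_\rho$ is already defined and positive on all of $\Sb^1$, so just take $\xi=\xi^l_\rho$, as the paper does.
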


\begin{proof}
    Since the boundary map $\xi^l_\rho$ is defined on the whole circle $\Sb^1$, its restriction to $\Lambda_p$ defines a positive framing. Moreover, for any $x\neq \gamma^+ \in \Sb^1$, the sequence $(\gamma^n\cdot x)_{n\in \Nb}$ converges to $\gamma^+$ from the left (since $\gamma$ is a positive parabolic element), hence
    \[\xi_\rho^l(\gamma^+) = \lim_{n\to +\infty}\xi_\rho^l(\gamma^n\cdot x) = \lim_{n\to +\infty} \rho(\gamma)^n \cdot \xi_\rho^l(x) = \rho(\gamma)^+\]
    by \Cref{propo: OverRPosRotImpliesDiv}. The proof for $\xi_\rho^r$ is similar.
\end{proof}

Using the monotonic approximation property of Fuchsian groups acting on their limit set (\Cref{lem: Monotonous limit points}), we can now prove the following two results:

\begin{theorem}
\label{thm: positive rep are divergent}
    Let $\Gamma$ be a Fuchsian group (not necessarily of the first kind) and $\rho \colon \Gamma \to G_{\Rb}$ a $\Theta$-positive representation. 
    Then $\rho$ is $\Theta$-divergent.
\end{theorem}

\begin{theorem} \label{thm: continuity at conical limit points}
    Let $\Gamma$ be a Fuchsian group of the first kind and $\rho \colon \Gamma \to G_{\Rb}$ a $\Theta$-positive representation.
    Then $\bdext(x)$ is a singleton whenever $x$ is conical. In particular, if $D,D'\subset\Lambda$ are $\Gamma$-invariant subsets and $\xi \colon D \to \Fc_{\Theta,\Rb}$ and $\xi' \colon D' \to \Fc_{\Theta,\Rb}$ are two positive $\rho$-equivariant maps, then for every conical limit point $x \in D \cap D'$, we have $\xi(x)= \xi'(x)$, and $\xi$ and $\xi'$ are continuous at $x$. 
\end{theorem}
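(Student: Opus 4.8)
The plan is to exploit the monotonic approximation property at conical limit points (\Cref{lem: Monotonous limit points}(2)) together with the squeezing behaviour of diamonds (\Cref{lem: squeeze lemma for diamond not real}(1) and \Cref{onesided squeeze}). First I would reduce to the case where $\Gamma$ is of the first kind: a conical limit point of $\Gamma$ is sent by the semi-conjugacy $\alpha$ of \Cref{Fuchsian groups lemma} to a conical limit point of $\Gamma'$, and \Cref{injective alpha} shows $\alpha$ is injective on $\Lambda_l\cup\Lambda_r\supseteq\Lambda$ restricted to a single $\Gamma$-orbit, so the statement transfers. Also, since any two positive $\rho$-equivariant maps $\xi,\xi'$ take values in $\bdext$ at every point of their domain by \Cref{prop: converse}, the second and third assertions follow immediately once we show $\bdext(x)$ is a singleton: $\xi(x),\xi'(x)\in\bdext(x)=\{pt\}$ forces $\xi(x)=\xi'(x)$, and continuity of $\xi$ at $x$ then follows from the continuity of $\zeta^M_\rho$ together with the fact that $\zeta^M_\rho\circ\xi=\Id_D$ and $(\zeta^M_\rho)^{-1}(x)=\bdext(x)$ is a single point (a net in $D$ converging to $x$ has image under $\xi$ landing in any prescribed neighbourhood of $\xi(x)$, since otherwise a subnet would have a limit point in $\bdext(x)$ distinct from $\xi(x)$, using that $\Fc_{\Theta,\Rb}$ is compact).

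The heart of the argument is thus: if $x\in\Sb^1$ is a conical limit point of a first-kind $\Gamma$, then $\bdext(x)$ is a singleton. By \Cref{lem: Monotonous limit points}(2), there is a sequence $(\gamma_n)_{n\in\Nb}\subset\Gamma$ and disjoint non-empty intervals $U,V$ such that $\gamma_n\cdot V$ converges to $x$ from the left and $\gamma_n\cdot U$ converges to $x$ from the right. Fix a positive $\rho$-equivariant map $\xi\colon D\to\Fc_{\Theta,\Rb}$; shrinking $U$ and $V$ if needed, pick $a\in U\cap D$, $b\in V\cap D$ and a third point $z\in D$ disjoint from the relevant intervals so that we get a positive sequence of flags. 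Concretely, $\gamma_n\cdot a$ approaches $x$ from the right and $\gamma_n\cdot b$ from the left, so by $\rho$-equivariance and positivity the tuples $(\xi(z),\rho(\gamma_n)\cdot\xi(b),\rho(\gamma_n)\cdot\xi(a))$ form a positive configuration, and $\bdext(x)$ is contained in $\bigcap_n\Diam^\opp_{\xi(z)}(\rho(\gamma_n)\cdot\xi(b),\rho(\gamma_n)\cdot\xi(a))$ by the very definition of $\bdext(x)$ (both sequences $\rho(\gamma_n)\cdot\xi(b)$, $\rho(\gamma_n)\cdot\xi(a)$ can be used as the approximating sequences from the left and the right). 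Now over $\Rb$ these two sequences of flags converge — by \Cref{proposition: least upper bounded property for positive sequence}, applied to the positive sequence obtained by interleaving $(\rho(\gamma_n)^{\pm})$-orbits, or more directly because $\Fc_{\Theta,\Rb}$ is compact and the configuration is monotone — say $\rho(\gamma_n)\cdot\xi(a)\to a_\infty$ and $\rho(\gamma_n)\cdot\xi(b)\to b_\infty$. If $a_\infty=b_\infty$, then \Cref{lem: squeeze lemma for diamond not real}(1) (or directly \Cref{onesided squeeze} and \Cref{lem: intersection closures diamonds}) gives that the nested intersection of diamonds is exactly $\{a_\infty\}$, so $\bdext(x)=\{a_\infty\}$ and we are done.

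So the main obstacle is to show $a_\infty = b_\infty$, i.e. that the two monotone sequences $\rho(\gamma_n)\cdot\xi(a)$ and $\rho(\gamma_n)\cdot\xi(b)$ converging from opposite sides actually have the same limit; equivalently, that the ``width'' of the squeezing diamonds shrinks to zero. Here is where I would use the conical hypothesis more forcefully, combined with the collar lemma. The geometric input is that since $x$ is conical, one can choose the $\gamma_n$ so that, in addition to $\gamma_n\cdot a\to x$ from the right and $\gamma_n\cdot b\to x$ from the left, there is a \emph{fixed} pair $c\neq d\in D$ with $\gamma_n^{-1}\cdot a, \gamma_n^{-1}\cdot b$ staying in a compact subset of $(d,c)$ bounded away from $\{c,d\}$ — this is exactly the content of the proof of \Cref{lem: Monotonous limit points}(2) (the angle $\angle_{\gamma_n\cdot o}(o,x)$ stays bounded away from $0$). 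Then the flags $\rho(\gamma_n)^{-1}\cdot\xi(a)$ and $\rho(\gamma_n)^{-1}\cdot\xi(b)$ lie in a fixed diamond $\Diam_0$ with extremities $(\xi(d),\xi(c))$, whose closure $\overline{\Diam}_0$ avoids some further fixed pair of transverse flags; pulling this back and using the dynamics of \Cref{lemma: divergence in flag manifold} — note $\rho$ is $\Theta$-divergent by \Cref{thm: positive rep are divergent}, so along a subsequence $(\gamma_n)$ is $\Theta$-attracting with some attracting flag $f$ and repelling flag $g$, and the points $\xi(a),\xi(b)$ both lie in the open transversality set of $g$ since $\gamma_n^{-1}\cdot a,\gamma_n^{-1}\cdot b$ stay away from the point whose $\xi$-image governs $g$ — forces both $\rho(\gamma_n)\cdot\xi(a)$ and $\rho(\gamma_n)\cdot\xi(b)$ to converge to the \emph{same} flag $f$. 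Hence $a_\infty=b_\infty=f$, completing the proof. I would write the divergence/attracting-sequence step carefully, since matching ``convergence from the right'' in $\Sb^1$ with ``convergence to the attracting flag'' in $\Fc_{\Theta,\Rb}$ requires knowing $\xi(a),\xi(b)$ are transverse to the repelling flag, which is where the transversality of $\xi^h_\rho$ (from \Cref{theorem: limit map extends to proximal limit map} and \Cref{prop: transverse + semi positive implies positive}) and the collar lemma enter.
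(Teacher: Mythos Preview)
Your overall strategy coincides with the paper's: use \Cref{lem: Monotonous limit points}(2) to find $(\gamma_n)$ and two intervals mapped to $x$ from both sides, then use $\Theta$-divergence (\Cref{thm: positive rep are divergent}) together with \Cref{lemma: divergence in flag manifold} to force the two one-sided limits of the positive sequence to coincide, whence \Cref{lem: squeeze lemma for diamond not real}(1) collapses $\bdext(x)$ to a point. The deduction of the ``in particular'' part from $|\bdext(x)|=1$ via \Cref{prop: converse} is correct; your continuity argument (compactness of $\Fc_{\Theta,\Rb}$, closedness of $\bdext$, continuity of $\zeta^M_\rho$) is a valid alternative to the paper's use of $\xi^l_\rho,\xi^r_\rho$. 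The reduction to the first kind is unnecessary since the hypothesis already assumes it.

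The one genuinely murky step is your justification that $\xi(a),\xi(b)$ are transverse to the repelling flag $g$ of the $\Theta$-attracting subsequence. You write that this holds ``since $\gamma_n^{-1}\cdot a,\gamma_n^{-1}\cdot b$ stay away from the point whose $\xi$-image governs $g$'', but $g$ need not be in the image of $\xi$ (nor a priori in $\bdext$ before this theorem is proved), so this is not a proof; the invocation of the collar lemma here is a red herring. The paper avoids this entirely by working with \emph{open diamonds} rather than single points: taking $c\in(a,b)$ and $c'\in(b',a')$, one shows (exactly as in the proof of \Cref{thm: positive rep are divergent}, via \Cref{onesided squeeze}) that $\rho(\gamma_n)\cdot y\to\xi^l_\rho(x)$ for every $y$ in the open set $\Diam_{\xi(c)}(\xi(a),\xi(b))$ and $\rho(\gamma_n)\cdot y\to\xi^r_\rho(x)$ for every $y$ in $\Diam_{\xi(c')}(\xi(a'),\xi(b'))$. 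Then characterization (iv) of \Cref{lemma: divergence in flag manifold} says the attracting flag is unique, so $\xi^l_\rho(x)=\xi^r_\rho(x)$; no transversality check is needed. Your argument can be repaired by making exactly this move: instead of tracking $\xi(a),\xi(b)$, track the full diamonds they sit in.
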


\begin{proof}[Proof of \Cref{thm: positive rep are divergent}]
    Since the divergence property is independent of how $\Gamma$ is embedded in $\Isom_+(\Hb)$, we can assume without loss of generality that $\Gamma$ is of the first kind. Let $\xi\colon \Sb^1 \to \Fc_{\Theta,\Rb}$ be a $\rho$-equivariant positive boundary map.
    
    Let $(\gamma_n)_{n\in \Nb}$ be a sequence in $\Gamma$ that leaves every finite set.
    By \Cref{lem: Monotonous limit points}(1), up to extracting a subsequence, we can find a points $a,b,x\in \Sb^1$ such that $(a,b,x)$ is cyclically ordered and $\gamma_n\cdot [a,b]$ converges to $x$ from the left or from the right, where $[a,b]\subset\Sb^1$ is the subinterval with endpoints $a$ and $b$ that does not contain $x$. We may assume that this convergence is from the right; the other case is similar. 
    
    Let $c$ be a point in the interior $(a,b)$ of $[a,b]$, and pick any $y\in\Diam_{\xi_\rho^r(c)}(\xi_\rho^r(a),\xi_\rho^r(b))$. For $n>1$, $(a,\gamma_n\cdot a,\gamma_n\cdot c, \gamma_n\cdot b,x)$ is cyclically ordered, so
    \[\rho(\gamma_n)\cdot y\in\Diam_{\xi_\rho^r(\gamma_n\cdot c)}(\xi_\rho^r(\gamma_n\cdot a),\xi_\rho^r(\gamma_n\cdot b))\subset\Diam_{\xi_\rho^r(a)}^\opp(\xi_\rho^r(\gamma_n\cdot a),\xi_\rho^r(x)).\]
    Since $\xi^r_\rho(\gamma_n\cdot a)\to \xi^r_\rho(x)$ as $n\to+\infty$, \Cref{onesided squeeze} implies that $\rho(\gamma_n)\cdot y\to \xi^r_\rho(x)$ as $n\to+\infty$. Since $\Diam_{\xi^r_\rho(c)}(\xi^r_\rho(a),\xi^r_\rho(b))\subset\Fc_{\Theta,\Fb}$ is an open set,
    by \Cref{lemma: divergence in flag manifold}~(\ref{lemma: divergence in flag manifold: open sets}), we conclude that the sequence $\rho(\gamma_n)$ is $\Theta$-attracting. Hence the representation $\rho$ is $\Theta$-divergent.
\end{proof}

\begin{proof}[Proof of \Cref{thm: continuity at conical limit points}]
    By \Cref{lem: Monotonous limit points}(2), we can find a sequence $(\gamma_n)_{n\in\Nb}$ and points $a,b,a',b'\in\Sb^1$ such that $(a,b,x)$ and $(x,b',a')$ are cyclically ordered, $\gamma_n\cdot [a,b]$ converges to $x$ from the left, and $\gamma_n\cdot[b',a']$ converges to $x$ from the right (again, $[a,b]$ and $[b',a']$ do not contain $x$). Let $c\in (a,b)$ and $c'\in(b',a')$. Repeating the arguments of the proof of \Cref{thm: positive rep are divergent} above, we deduce that $\rho(\gamma_n)\cdot y$ converges to $\xi^l_\rho(x)$ for all $y\in \Diam_{\xi(c)}(\xi(a),\xi(b))$ and to $\xi^r_\rho(x)$ for all $y\in \Diam_{\xi(c')}(\xi(a'),\xi(b'))$.
    By \Cref{lemma: divergence in flag manifold}, we conclude that both $\xi^l_\rho(x)$ and $\xi^r_\rho(x)$ coincide with the unique attracting point of the sequence $(\gamma_n)_{n\in\Nb}$.
    In particular, $\xi(\gamma_n\cdot c)$ and $\xi(\gamma_n\cdot c')$ have the same limit, and so by \Cref{lem: squeeze lemma for diamond not real}
    \[\Vc^\textnormal{M}_\rho(x)=\bigcap_{n\in \Nb} \Diam_{\xi(x)}(\xi(\gamma_n\cdot c),\xi(\gamma_n\cdot c')) = \{\xi^l_\rho(x)\} = \{\xi^r_\rho(x)\}~.\]
This proves the first claim of the theorem.

Let $\xi \colon D\to \Fc_{\Theta,\Fb}$ be any positive $\rho$-equivariant boundary map, and let $x\in D$ be a conical limit point. By \Cref{prop: converse}, $\xi(x)\in \Vc^\textnormal{M}_\rho(x)$, which we have established above is a singleton that does not depend on $\xi$.
Furthermore, since $\Vc^\textnormal{M}_\rho(x)=\{\xi^l_\rho(x)\} = \{\xi^r_\rho(x)\}$, if $x_n$ is any sequence converging to $x\in D$ from the left or from the right, we have
    \[\lim_{n\to +\infty} \xi(x_n) = \xi^l_\rho(x) = \xi^r_\rho(x)= \xi(x)\]
    by \Cref{lemma: left and right limit independent of the choice of limit map}. Hence $\xi$ is continuous at $x$.
\end{proof}

\subsection{Extended geometrically finite representations}\label{subsection: EGF}

We now collect the consequences of the previous section in terms of $\Theta$-boundary extensions, and conclude that, when $\Gamma$ is geometrically finite, $\Theta$-positive representations over $\Rb$ are \emph{extended geometrically finite} in the sense of Weisman and admit a \emph{minimal} $\Theta$-boundary extension. \\

Let $\Gamma$ be a Fuchsian group of the first kind and $\rho\colon \Gamma \to G_{\Rb}$ a representation.
If $(\Vc,\zeta)$ is a $\Theta$-boundary extension of $\rho$, then the convergence property of the action of $\Gamma$ on $\Sb^1$ implies that for every diverging sequence $(\gamma_n)$ in $\Gamma$, there exist $x,y\in \Sb^1$ such that, up to extraction, $\rho(\gamma_n)$ contracts $\zeta^{-1}(\Sb^1\setminus \{y\})$ towards $\zeta^{-1}(x)$.
Informally speaking, Weisman's \emph{extended geometrically finite property} requires that this convergence property extends to a relative neighborhood of $\Vc$.

\begin{definition}[{\cite[Definition 1.3]{Weisman}}]\label{def: EGF}
    A representation $\rho \from \Gamma\to G_{\Rb}$ is called $\Theta$-\emph{extended geometrically finite} (later abbreviated as $\Theta$-EGF) if it admits a transverse $\Theta$-boundary extension $\zeta \from \Vc \to \Sb^1$ and a family $(C_x)_{x\in \Sb^1}$ of open subsets of $\Fc_{\Theta,\Rb}$ that \emph{extends the convergence dynamics} in the following sense:
    \begin{enumerate}
    \item For all $x\in \Sb^1$, $\Vc\setminus \zeta^{-1}(x)\subset C_x$,  
    \item for every sequence $(\gamma_n)_{n\in \Nb}$ in $\Gamma$ for which there exist $x,y \in \Sb^1$ such that $\gamma_n\cdot z \underset{n\to +\infty}{\longrightarrow} x$ for all $z\in\Sb^1\setminus\{y\}$, for any compact set $K\subset C_y$ and any open set $O\supset \zeta^{-1}(x)$, we have
    \[\rho(\gamma_n)K\subset O\]
    for $n$ large enough.
    \end{enumerate}
\end{definition}

\begin{remark}
    In the work \cite{Weisman}, Weisman defined EGF representations only for relatively hyperbolic groups. The notion of EGF representation actually makes sense more generally for a group $\Gamma$ equipped with a convergence action on a compact space. Here we adapt the definition for Fuchsian groups, the case in which we are interested throughout this paper.
\end{remark}

While the definition is a bit involved, the third author remarked in \cite{W23b} that the EGF condition is better behaved when we assume divergence. To describe this, we use the Benoist limit set, which we now define.

\begin{definition}
For any subgroup $H\subset G_{\Rb}$, the \emph{$\Theta$-Benoist limit set} of $H$, denoted $\mathcal L_H=\mathcal L_{\Theta,H}$, is the set of attracting flags in $\Fc_{\Theta,\Rb}$ of all $\Theta$-attracting sequences in $H$. 
\end{definition}

The set $\mathcal L_H$ was introduced by Benoist in \cite{benoist1997proprietes}, where the following is proven:

\begin{theorem}
\label{remark: Benoist limit cone}
The set $\Lc_H$ is closed and $H$-invariant. If $H$ contains $\Theta$-proximal elements, then  $\Lc_H$ is the closure in $\Fc_{\Theta,\Rb}$ of the set of attracting fixed flags of all $\Theta$-proximal elements in $H$. 
\end{theorem}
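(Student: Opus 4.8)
The statement to prove is \Cref{remark: Benoist limit cone}, which asserts that $\mathcal{L}_{\Theta,H}$ is closed, $H$-invariant, and when $H$ contains $\Theta$-proximal elements, equals the closure of the set of attracting fixed flags of $\Theta$-proximal elements in $H$. This is a result of Benoist, so the task is essentially to recall the proof strategy.

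\medskip

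\textbf{Proof strategy.}

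The plan is to reduce everything to the dynamical characterization of $\Theta$-attracting sequences provided by \Cref{lemma: divergence in flag manifold}, which gives the equivalence between divergence of the Cartan projection together with convergence of the flags $\mathcal{U}_\Theta(g_n)$ and the uniform contraction property on open subsets of the transverse locus.

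First I would establish $H$-invariance: if $(g_n)$ is a $\Theta$-attracting sequence with attracting flag $x$ and repelling flag $y$, and $h \in H$, then $(h g_n)$ is again $\Theta$-attracting with attracting flag $h \cdot x$ (this follows immediately from characterization (ii) of \Cref{lemma: divergence in flag manifold}, since $h$ acts as a homeomorphism of $\Fc_{\Theta,\Rb}$ carrying the transverse locus of $y$ to itself and intertwining the limiting constant maps). Hence $h \cdot x \in \mathcal{L}_{\Theta,H}$, proving invariance. For closedness, suppose $x_k \to x$ with each $x_k \in \mathcal{L}_{\Theta,H}$, realized by a $\Theta$-attracting sequence $(g^{(k)}_n)_n$ in $H$. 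Passing to the diagonal, one extracts for each $k$ an index $n_k$ such that the element $h_k := g^{(k)}_{n_k}$ is $\Theta$-proximal (possible because $\alpha(\mu(g^{(k)}_n)) \to +\infty$ forces weak proximality for large $n$, and over $\Rb$ weak and strong proximality coincide, so $h_k$ is genuinely $\Theta$-proximal), with attracting fixed flag $h_k^+$ as close to $x_k$ as desired — here one uses \Cref{prop: continuity g->g+ hyperbolic} or directly that $\mathcal{U}_\Theta(g_n^{(k)}) \to x_k$ and that $\mathcal{U}_\Theta(h_k)$ is close to $h_k^+$. Then $h_k^+ \to x$, and the sequence $(h_k^{n})$ for suitably growing powers, or just the observation that the set of attracting fixed flags of $\Theta$-proximal elements is contained in $\mathcal{L}_{\Theta,H}$, shows $x$ is a limit of points in $\mathcal{L}_{\Theta,H}$; combined with the second statement (closure of proximal fixed flags lies in $\mathcal{L}_{\Theta,H}$ and is closed) this gives closedness.

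\medskip

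For the second statement, when $H$ contains $\Theta$-proximal elements: one inclusion is easy, namely the attracting fixed flag $g^+$ of a $\Theta$-proximal $g \in H$ lies in $\mathcal{L}_{\Theta,H}$, since $(g^n)_n$ is a $\Theta$-attracting sequence with attracting flag $g^+$ (see the Example after \Cref{lemma: divergence in flag manifold}); taking closure and using closedness of $\mathcal{L}_{\Theta,H}$ gives $\overline{\{g^+ : g \in H \text{ } \Theta\text{-proximal}\}} \subseteq \mathcal{L}_{\Theta,H}$. The reverse inclusion is the substantive point: given any $x \in \mathcal{L}_{\Theta,H}$, realized by a $\Theta$-attracting sequence $(g_n)$ with attracting flag $x$ and repelling flag $y$, one must approximate $x$ by attracting fixed flags of $\Theta$-proximal elements of $H$. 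The idea is to fix a $\Theta$-proximal element $h_0 \in H$ with attracting flag $a_0$ and repelling flag $r_0$ (transverse to each other), and consider products of the form $g_n h_0^m g_n^{-1}$ or $h_0^{m} g_n$ — more precisely, following Benoist, one shows that for $n$ and $m$ large the element $g_n h_0^m$ (or a conjugate thereof) is $\Theta$-proximal with attracting fixed flag converging to $x$: since $g_n$ contracts the transverse locus of $y$ toward $x$ and $h_0^m$ contracts toward $a_0$, one arranges (choosing the sequence $(g_n)$ so that the repelling data $\mathcal{U}_{\iota(\Theta)}(g_n^{-1})$ stays transverse to $a_0$ and $r_0$, using minimality / the freedom to compose with fixed elements of $H$) that the composed map has a genuine attracting fixed point near $x$ and a repelling fixed point near the repelling flag, transverse to each other, hence is $\Theta$-proximal by the proximality criterion. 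Letting $n \to \infty$ then $m \to \infty$ yields $x$ as a limit of such attracting fixed flags.

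\medskip

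\textbf{Main obstacle.} The delicate step is the reverse inclusion in the second statement: constructing, near an arbitrary point of the Benoist limit set, an honest $\Theta$-proximal element of $H$ whose attracting fixed flag is nearby. The subtlety is transversality bookkeeping — one must ensure that after composing the attracting sequence $(g_n)$ with a fixed proximal element $h_0^m$, the resulting attracting and repelling flags of the product remain transverse (so that proximality, rather than mere weak proximality, holds and the dynamical description of \Cref{lemma: divergence in flag manifold} applies). This requires either invoking that $\Fc_{\Theta,\Rb}$-genericity of transverse pairs lets one perturb the repelling data of $g_n$ within $H$, or a more careful Benoist-style argument with the Cartan decomposition; in either case this is the heart of Benoist's original argument and the one step that does not reduce to a direct citation of the lemmas assembled above. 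Since the statement is explicitly attributed to Benoist \cite{benoist1997proprietes}, in the write-up I would give the reduction to \Cref{lemma: divergence in flag manifold} in detail and then cite Benoist for the proximal approximation, rather than reproving it.
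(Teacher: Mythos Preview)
The paper does not prove this theorem at all: it is stated as a result of Benoist and simply cited to \cite{benoist1997proprietes}. So there is no proof in the paper to compare your proposal against. Your instinct at the end --- to cite Benoist for the substantive proximal approximation step --- is exactly what the paper does, except the paper cites Benoist for the entire statement and moves on.

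That said, your sketch is broadly correct in spirit, with one wrinkle. Your closedness argument is slightly circular: you reduce closedness to the second statement (the characterization as the closure of proximal fixed flags), but that second statement carries the extra hypothesis that $H$ contains $\Theta$-proximal elements, whereas closedness is asserted unconditionally. A cleaner route is a direct diagonal argument: if $x_k \to x$ with each $x_k$ the attracting flag of a $\Theta$-attracting sequence $(g^{(k)}_n)_n$ in $H$, pick indices $n_k$ so that $\alpha(\mu(g^{(k)}_{n_k})) \to +\infty$ and $\mathcal U_\Theta(g^{(k)}_{n_k}) \to x$; after extracting so that $\mathcal U_{\iota(\Theta)}((g^{(k)}_{n_k})^{-1})$ also converges, the sequence $(g^{(k)}_{n_k})_k$ is itself $\Theta$-attracting in $H$ with attracting flag $x$, by \Cref{lemma: divergence in flag manifold}(i). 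This avoids any appeal to proximal elements.
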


When $H$ is the image of a representation $\rho$, we denote $\Lc_\rho\coloneqq\Lc_H$.

The following lemma is a slight refinement of \cite[Theorem~1.3]{W23b}:

\begin{lemma}
\label{lemma: equivalences EGF and divergent}
    Let $\rho\colon \Gamma \to G_{\Rb}$ be a $\Theta$-divergent representation. The following are equivalent:
    \begin{itemize}
        \item[(i)] the representation $\rho$ is $\Theta$-EGF;
        \item[(ii)] there exists a transverse $\Theta$-boundary extension $(\Vc,\zeta)$ of $\rho$ such that $\Lc_\rho\subset \Vc$;
        \item[(iii)] there exists a map \[\zeta^{\mathrm{m}}_\rho\colon \Lc_\rho \to \Sb^1\]
        such that $(\Lc_\rho, \zeta_\rho^{\mathrm{m}})$ is a transverse $\Theta$-boundary extension of $\rho$.
    \end{itemize}
\end{lemma}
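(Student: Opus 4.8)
The plan is to prove the cycle of implications $(i)\Rightarrow(ii)\Rightarrow(iii)\Rightarrow(i)$, using the results established earlier in the paper for $\Theta$-positive representations (which are $\Theta$-divergent by \Cref{thm: positive rep are divergent}), together with \cite[Theorem~1.3]{W23b} as a black box where convenient. The implication $(iii)\Rightarrow(ii)$ is immediate since $(\Lc_\rho,\zeta_\rho^{\mathrm m})$ is itself a transverse $\Theta$-boundary extension containing $\Lc_\rho$; and $(ii)\Rightarrow(i)$ is essentially the content of \cite[Theorem~1.3]{W23b}, whose hypotheses are exactly $\Theta$-divergence plus the existence of a transverse boundary extension with $\Lc_\rho\subseteq\Vc$. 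So the only real work is $(i)\Rightarrow(iii)$, that is, producing the \emph{minimal} boundary extension carried by the Benoist limit set.

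First I would take a transverse $\Theta$-boundary extension $(\Vc,\zeta)$ witnessing the $\Theta$-EGF property, and show that one can restrict it to $\Lc_\rho$. For this I need two things: that $\Lc_\rho\subseteq\Vc$, and that $\zeta\vert_{\Lc_\rho}$ still has the required equivariance, continuity and transversality. The transversality and equivariance are inherited immediately from $(\Vc,\zeta)$, and continuity is inherited since $\Lc_\rho$ is closed (\Cref{remark: Benoist limit cone}) and $\Vc$ is closed, so $\Lc_\rho\cap\Vc$ is a closed subspace. The inclusion $\Lc_\rho\subseteq\Vc$ is where the EGF dynamics enters: given a $\Theta$-attracting sequence $(\rho(\gamma_n))$ in $\rho(\Gamma)$ with attracting flag $f$, divergence of $\rho$ together with the convergence action of $\Gamma$ on $\Sb^1$ lets me pass to a subsequence so that $\gamma_n$ is an attracting sequence in $\Gamma$ with attracting point $x$ and repelling point $y$; pick any point $w\in\Vc\setminus\zeta^{-1}(y)\subseteq C_y$ (such $w$ exists because $\zeta(\Vc)$ is non-empty and $\Gamma$-invariant, hence dense, so it is not contained in $\{y\}$), and then property (2) of \Cref{def: EGF} forces $\rho(\gamma_n)\cdot w$ to accumulate on $\zeta^{-1}(x)$. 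On the other hand, since $\Fc_{\Theta,\Rb}$ is compact and $\Lc_\rho$ is defined via the attracting flags of attracting \emph{sequences}, combining the $\Theta$-divergence of the subsequence with \Cref{lemma: divergence in flag manifold} shows that $\rho(\gamma_n)$ converges uniformly on a neighborhood of $w$ to the constant map $f$; hence the accumulation point of $\rho(\gamma_n)\cdot w$ is $f$, so $f\in\overline{\zeta^{-1}(x)}\subseteq\Vc$ (again using closedness of $\Vc$). Since $\Lc_\rho$ is generated as a closed set by such attracting flags, this gives $\Lc_\rho\subseteq\Vc$. Setting $\zeta_\rho^{\mathrm m}\coloneqq\zeta\vert_{\Lc_\rho}$ then yields $(iii)$.

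The main obstacle is the verification of the inclusion $\Lc_\rho\subseteq\Vc$: one must be careful that $\Lc_\rho$ is defined using attracting \emph{sequences} rather than just proximal elements, and that a priori the attracting point in $\Fc_{\Theta,\Rb}$ of such a sequence need not be $\zeta^{-1}$ of the attracting point in $\Sb^1$ of the corresponding sequence in $\Gamma$ — this is precisely where the compatibility forced by \Cref{def: EGF}(2) is used, and where the hypothesis of $\Theta$-divergence of $\rho$ is essential to guarantee that the image sequence is genuinely $\Theta$-attracting (after extraction) in the sense of \Cref{lemma: divergence in flag manifold}. One subtle point to get right is the choice of the ``test point'' $w$: it must lie in the open set $C_y$ and must be such that $\rho(\gamma_n)$ has good contraction dynamics near it; the density of $\zeta(\Vc)$ in $\Sb^1$ handles the first requirement, and transversality of the boundary extension together with \Cref{lemma: divergence in flag manifold} handles the second, since any flag transverse to the repelling flag of $(\rho(\gamma_n))$ is attracted to $f$. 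Once this is set up the remaining implications are formal, and the whole argument can be phrased so as to cite \cite[Theorem~1.3]{W23b} for $(ii)\Leftrightarrow(i)$ and reduce the novelty to the passage from an arbitrary witnessing extension to the canonical minimal one on $\Lc_\rho$.
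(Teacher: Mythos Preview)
Your overall strategy matches the paper's, but there is one genuine gap and one difference in approach worth noting.

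\textbf{The gap in $(i)\Rightarrow(iii)$.} You pick the test point $w\in\Vc\setminus\zeta^{-1}(y)$ and then need $\rho(\gamma_n)\cdot w\to f$, which by \Cref{lemma: divergence in flag manifold} requires $w\pitchfork p_-$, where $p_-$ is the repelling flag of the $\Theta$-attracting sequence $(\rho(\gamma_n))$. You claim ``transversality of the boundary extension'' handles this, but that property only says distinct $\zeta$-fibers are mutually transverse; it gives $w\pitchfork p_-$ only once you already know $p_-\in\Vc$ with $\zeta(p_-)=y$, which is (the symmetric version of) what you are trying to establish. The paper avoids this circularity by not insisting that the test point lie in $\Vc$: it takes $q\in C_y\cap p_-^\pitchfork$, using that $C_y$ is open (part of the EGF data) and $p_-^\pitchfork$ is open and dense in $\Fc_{\Theta,\Rb}$, so their intersection is non-empty. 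Then the EGF dynamics force $\rho(\gamma_n)\cdot q$ into any neighborhood of $\zeta^{-1}(x)\subset\Vc$, while \Cref{lemma: divergence in flag manifold} sends it to $p_+=f$, yielding $p_+\in\Vc$.

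\textbf{Closing the cycle.} You defer $(ii)\Rightarrow(i)$ to \cite[Theorem~1.3]{W23b}. The paper instead proves $(iii)\Rightarrow(i)$ directly: it sets $C_x\coloneqq\{p\in\Fc_{\Theta,\Rb}\mid p\pitchfork q\text{ for all }q\in(\zeta_\rho^{\mathrm m})^{-1}(x)\}$, which is open by compactness of the fiber, and verifies condition~(2) of \Cref{def: EGF} by a contradiction argument using $\Theta$-divergence together with \Cref{lemma: divergence in flag manifold}. This is not just stylistic: the lemma is stated here for arbitrary (possibly infinitely generated) Fuchsian groups of the first kind, and for boundary extensions that need not be surjective (cf.\ \Cref{non-surjectivity}), which may not line up exactly with the hypotheses in \cite{W23b}; the paper itself calls the lemma a ``slight refinement'' of that result. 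So citing \cite{W23b} as a black box is risky in this generality.
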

\begin{proof}
The implication (iii) $\Rightarrow$ (ii) is immediate. Conversely, if $(\Vc,\zeta)$ is a transverse $\Theta$-boundary extension such that $\Lc_\rho \subset \Vc$, then $\zeta_\rho^{\mathrm{m}}\coloneqq \zeta\vert_{ \Lc_\rho}$ satisfies (iii).
Hence (ii) and (iii) are equivalent.

For (i) $\Rightarrow$ (ii), let $(\Vc,\zeta)$ be a transverse $\Theta$-boundary extension satisfying the EGF condition. We need to show that $\Lc_\rho \subset \Vc$. Pick any point $p_+\in\Lc_\rho$. By the definition of $\Lc_\rho$, there is a diverging sequence $(\gamma_n)_{n\in \Nb}$ in $\Gamma$ and a flag $p_-\in\Fc_{\Theta,\Fb}$ such that $\rho(\gamma_n)\cdot p$ converges to $p_+$ for all $p$ transverse to $p_-$. Up to extracting a subsequence, we can assume that there are points $x,y\in\Sb^1$ such that $\gamma_n\cdot z$ converges to $x$ for all $z\in\Sb^1\setminus\{y\}$. Since the set $p_-^\pitchfork$ of flags in $\Fc_{\Theta,\Fb}$ that are transverse to $p_-$ is dense in $\Fc_{\Theta,\Fb}$, $C_y$ necessarily intersects $p_-^\pitchfork$ and, for $q \in C_y \cap p_-^\pitchfork$
\[p_+=\lim_{n\to +\infty} \rho(\gamma_n)\cdot q \in \zeta^{-1}(x) \subset \Vc\]
by the EGF property. Since $p_+\in\mathcal L_\rho$ is arbitrary, we conclude that $\Lc_\rho \subset \Vc$.

Finally, we prove (iii) $\Rightarrow$ (i). For each $x\in\Sb^1$, define \[C_x = \{p\in \Fc_{\Theta,\Rb} \mid p\pitchfork q\quad \forall q\in (\zeta_\rho^{\mathrm{m}})^{-1}(x)\}~.\] 
 Since $(\zeta_\rho^{\mathrm{m}})^{-1}(x)$ is compact, $C_x$ is open. We will show that the transverse $\Theta$-boundary extension $(\Lc_\rho, \zeta_\rho^{\mathrm{m}})$ and the family of open sets $\{C_x\}_{x\in\Sb^1}$ satisfy the EGF property. The fact that $(\Lc_\rho, \zeta_\rho^{\mathrm{m}})$ is transverse implies that condition (1) of \Cref{def: EGF} holds, so we focus on showing condition (2) of \Cref{def: EGF}.

Suppose for the purpose of contradiction that condition (2) of \Cref{def: EGF} fails. Then there is a sequence $(\gamma_n)_{n\in \Nb}$, points $x,y\in\Sb^1$, a compact set $K\subset C_y$, and an open set $O\supset(\zeta_\rho^{\rm m})^{-1}(x)$ such that $\gamma_n\cdot z$ converges to $x$ for all $z \in \Sb^1 \setminus \{y\}$, but for all $N\in\Nb$, $\rho(\gamma_n)K\not\subset O$ for all $n\ge N$. Since $\rho$ is $\Theta$-divergent, by taking a subsequence, we may assume that $(\rho(\gamma_n))_{n\in \Nb}$ is a $\Theta$-attracting sequence. Let $p_+$ and $p_-$ denote its  attracting and repelling flag respectively. It suffices to show that $\zeta_\rho^{\mathrm{m}}(p_+)=x$ and $\zeta_\rho^{\mathrm{m}}(p_-)=y$. Indeed, this implies that $p_+\in O$ and $C_y\subset p_-^\pitchfork$, which contradicts \Cref{lemma: divergence in flag manifold}.

It remains to show that $\zeta_\rho^{\mathrm{m}}(p_+)=x$ and $\zeta_\rho^{\mathrm{m}}(p_-)=y$. Let $z\in \Sb^1$ be different from $x$, $y$, $\zeta(p_-)$, and $\zeta(p_+)$, and let $q \in {(\zeta_\rho^{\mathrm{m}})}^{-1}(z)$. By transversality of $(\Lc_\rho, \zeta_\rho^{\mathrm{m}})$, $q$ is transverse to $p_-$ and $p_+$, hence by the definition of attracting and repelling flags (see \Cref{lemma: divergence in flag manifold}),
\[\lim_{n\to +\infty}\rho(\gamma_n)\cdot q = p_+ \textrm{ and } \lim_{n\to +\infty}\rho(\gamma_n)^{-1}\cdot q = p_-~.\]
On the other hand
\[\lim_{k\to +\infty} \zeta_\rho^{\mathrm{m}}(\rho(\gamma_n)\cdot q) = \lim_{n\to +\infty} \gamma_n\cdot z = x \]\[\textrm{ and } \lim_{n\to +\infty} \zeta_\rho^{\mathrm{m}}(\rho(\gamma_n)^{-1}\cdot q) = \lim_{k\to +\infty} \gamma_n^{-1}\cdot z = y~.\]
By the continuity of $\zeta_\rho^{\mathrm{m}}$, $\zeta_\rho^{\mathrm{m}}(p_+) = x$ and $\zeta_\rho^{\mathrm{m}}(p_-) = y$ as required.
\end{proof}

We are now ready to prove \Cref{thm-intro: Theta-positive EGF}, namely:
\begin{theorem}\label{theorem: extended positive representations are extended geometrically finite}
    If $\rho \from \Gamma\to G_{\Rb}$ is $\Theta$-positive, then 
    \[\Lc_\rho = \xi^l_\rho(\Sb^1)\cup \xi^r_\rho(\Sb^1),\] 
    and is contained in $\Vc$ for every positive $\Theta$-boundary extension $(\Vc,\zeta)$ (these exist by \Cref{prop: converse}). In particular, $\rho$ is $\Theta$-EGF.
\end{theorem}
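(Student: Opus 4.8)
The plan is to prove the three claims in sequence: first identify the Benoist limit set $\Lc_\rho$ with $\xi^l_\rho(\Sb^1)\cup\xi^r_\rho(\Sb^1)$, then show this set is contained in any positive $\Theta$-boundary extension $(\Vc,\zeta)$, and finally invoke \Cref{lemma: equivalences EGF and divergent} together with \Cref{thm: positive rep are divergent} to conclude that $\rho$ is $\Theta$-EGF.

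\textbf{Step 1: $\Lc_\rho \subseteq \xi^l_\rho(\Sb^1)\cup\xi^r_\rho(\Sb^1)$.} Since $\rho$ is $\Theta$-positive, \Cref{propo: positive implies weak proximal} gives that $\rho$ is weakly $\Theta$-proximal, so $\rho(\Gamma)$ contains $\Theta$-proximal elements. Hence by \Cref{remark: Benoist limit cone}, $\Lc_\rho$ is the closure of the set of attracting fixed flags $\rho(\gamma)^+$ over all $\Theta$-proximal $\gamma\in\Gamma$. Such attracting fixed flags all lie in the image of $\xi^h_\rho$, hence in $\xi^r_\rho(\Sb^1)$ (using \Cref{coro: positive => frameable over R} and the fact that $\xi^h_\rho=\xi^r_\rho|_{\Lambda_h}$, which follows from $\xi^r_\rho$ being a section of $\zeta^M_\rho$ together with \Cref{prop: converse}). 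So it remains to show $\xi^r_\rho(\Sb^1)$ is closed, or more simply that the closure of $\{\rho(\gamma)^+ : \gamma\text{ hyperbolic}\}$ is contained in $\xi^l_\rho(\Sb^1)\cup\xi^r_\rho(\Sb^1)$. For this, take a sequence $\rho(\gamma_n)^+ = \xi^r_\rho(\gamma_n^+)$ converging to some flag $f$; passing to a subsequence, $\gamma_n^+ \to x \in \Sb^1$, either from the left or from the right. In the right case, left/right continuity of $\xi^r_\rho$ (\Cref{prop: the maps are left and right continuous}) plus the fact that $\xi^r_\rho(\gamma_n^+) \to \xi^r_\rho(x)$ by \Cref{lemma: left and right limit independent of the choice of limit map} gives $f = \xi^r_\rho(x)$; in the left case similarly $f = \xi^l_\rho(x)$. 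Hence $\Lc_\rho \subseteq \xi^l_\rho(\Sb^1)\cup\xi^r_\rho(\Sb^1)$.

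\textbf{Step 2: the reverse inclusion, and containment in positive boundary extensions.} For the reverse inclusion, it is cleanest to prove directly that $\xi^l_\rho(\Sb^1)\cup\xi^r_\rho(\Sb^1) \subseteq \Vc$ for \emph{any} positive $\Theta$-boundary extension $(\Vc,\zeta)$, which simultaneously establishes the second assertion of the theorem. Since $\Vc$ is closed and $\rho(\Gamma)$-invariant and $\Vc\neq\emptyset$ (by \Cref{non-surjectivity} it contains $\rho(\gamma)^+$ for $\gamma$ hyperbolic), the image $\zeta(\Vc)$ is a non-empty closed $\Gamma$-invariant subset of $\Sb^1$, hence equal to $\Sb^1$ by minimality of the $\Gamma$-action. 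Now fix $x\in\Sb^1$ and a sequence $(x_n)\subset\zeta(\Vc)=\Sb^1$ converging to $x$ from the right; choose lifts $a_n\in\zeta^{-1}(x_n)\subset\Vc$. After passing to a subsequence, $a_n$ converges in the compact set $\Vc$ to some $a\in\Vc$. Pick any positive $\rho$-equivariant boundary map $\xi:D\to\Fc_{\Theta,\Rb}$ and $z\in D$ with $z\neq x$; I would take a sequence $(z_m)\subset D$ converging to $x$ from the left. For large $n,m$, the tuple $(z, z_m, x, x_n)$ is cyclically ordered in $\Sb^1$, and since $\xi$ and $\zeta$ are both sections of / compatible with the positive structure (formally: applying \Cref{coro: compatibility positive boundary maps} after noting $a_n\in\zeta^{-1}(x_n)$, or directly using positivity of $(\Vc,\zeta)$ together with \Cref{prop: converse} to see $\xi(D)\subset\bdext$), the quadruple $(\xi(z),\xi(z_m),a_n, \text{something})$ is positive; passing to the limit $a_n\to a$ and using the openness/closedness of (semi-)positivity, one gets that $a$ lies in the nested intersection $\bigcap_m \Diam^\opp_{\xi(z)}(\xi(z_m),\xi(x_n))$ defining $\bdext(x)$. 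Since $\bdext(x)$ is a singleton when $x$ is conical and always contains $\xi^r_\rho(x)$ when approached from the right (by the construction of $\xi^r_\rho$ in \Cref{subsection: continuity boundary maps} as the limit of $\xi$ along a right-approaching sequence, which lies in $\bdext(x)$ by the same argument as in the proof of \Cref{prop: the maps are left and right continuous}), we conclude $\xi^r_\rho(x)=a\in\Vc$. The argument for $\xi^l_\rho(x)$ is identical with left-approaching sequences. This proves $\xi^l_\rho(\Sb^1)\cup\xi^r_\rho(\Sb^1)\subseteq\Vc$; applying it to $\Vc = \bdext$ (which is a positive $\Theta$-boundary extension by \Cref{prop: converse}) and combining with Step 1 gives $\Lc_\rho = \xi^l_\rho(\Sb^1)\cup\xi^r_\rho(\Sb^1)$.

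\textbf{Step 3: conclusion.} By \Cref{thm: positive rep are divergent}, $\rho$ is $\Theta$-divergent. By \Cref{prop: converse}, $\rho$ admits the maximal positive $\Theta$-boundary extension $(\bdext,\zeta^M_\rho)$, which in particular is transverse (positivity of tuples implies pairwise transversality of distinct-image flags, via \Cref{opposites transverse}). By Step 2, $\Lc_\rho\subseteq\bdext$. Thus condition (ii) of \Cref{lemma: equivalences EGF and divergent} holds, and therefore $\rho$ is $\Theta$-EGF. The main obstacle I anticipate is Step 2: pushing the positivity of finite tuples through the limit $a_n\to a$ rigorously, i.e.\ checking that the limiting flag $a$ genuinely lands in the nested intersection of diamonds and not merely in the intersection of their closures — this requires care with the openness statement of \Cref{propo: positive n-tuples is semi-algebraic}, the one-sided squeeze of \Cref{onesided squeeze}, and possibly invoking \Cref{lem: semi-positive + a bit transverse implies transverse} to upgrade semi-positivity to positivity when transversality of the extremal flags is available. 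A cleaner route for Step 2 may be to avoid sequences entirely and argue, as in the proof of the second maximality property in \Cref{prop: converse}, using hyperbolic elements $\gamma_n$ with $\gamma_n^\pm \to x$ and the already-established identity $\rho(\gamma_n)^\pm \in \bdext(\gamma_n^\pm)$ together with $\bdext(x)$ being squeezed between the diamonds $\Diam^\opp_{\ast}(\xi^h_\rho(\gamma_n^+),\xi^h_\rho(\gamma_n^-))$; I would likely adopt whichever of these two formulations makes the compatibility bookkeeping shortest.
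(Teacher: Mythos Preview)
Your three-step plan matches the paper, and Steps 1 and 3 are fine. Two points deserve comment.

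\emph{The equality $\Lc_\rho=\xi^l_\rho(\Sb^1)\cup\xi^r_\rho(\Sb^1)$ is not closed off.} You argue that Step 1 together with ``Step 2 applied to $\Vc=\bdext$'' gives equality, but Step 1 yields $\Lc_\rho\subseteq\xi^l_\rho(\Sb^1)\cup\xi^r_\rho(\Sb^1)$ while Step 2 applied to $\bdext$ yields only $\xi^l_\rho(\Sb^1)\cup\xi^r_\rho(\Sb^1)\subseteq\bdext$; these do not combine. The missing reverse inclusion is easy and the paper dispatches it in one line: each $\xi^r_\rho(x)$ (resp.\ $\xi^l_\rho(x)$) is by construction a limit of flags $\xi^h_\rho(x_n)=\rho(\gamma_n)^+\in\Lc_\rho$, and $\Lc_\rho$ is closed. (Alternatively, once Step 1 gives $\Lc_\rho\subseteq\bdext$, the pair $(\Lc_\rho,\zeta^M_\rho|_{\Lc_\rho})$ is itself a positive $\Theta$-boundary extension, so applying your Step 2 to $\Vc=\Lc_\rho$ rather than $\Vc=\bdext$ would actually give what you want.)

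\emph{Step 2 as written has the gap you anticipated.} From arbitrary lifts $a_n\in\zeta^{-1}(x_n)$ with $x_n\to x$ from the right and $a_n\to a\in\Vc$, continuity of $\zeta$ gives only $a\in\zeta^{-1}(x)\subseteq\bdext(x)$, and at parabolic fixed points $\bdext(x)$ is not a singleton, so the conclusion ``$\xi^r_\rho(x)=a$'' does not follow. Your sketch can be repaired by interleaving: both the $a_n$ and the values $\xi(y_m)$ of any positive $\rho$-equivariant map (along $y_m\to x$ from the right) lie in $\bdext$ with the correct $\zeta^M_\rho$-images, so positivity of $(\bdext,\zeta^M_\rho)$ makes the merged sequence positive, hence convergent to a single flag (\Cref{proposition: least upper bounded property for positive sequence}), and the $\xi(y_m)$-subsequence identifies that flag as $\xi^r_\rho(x)$. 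The paper's route is shorter and avoids this bookkeeping entirely: since $\zeta$ is surjective onto $\Sb^1$, pick $x_0\in\Sb^1$ with trivial $\Gamma$-stabilizer and a single preimage $\xi(x_0)\in\zeta^{-1}(x_0)$, extend $\rho$-equivariantly to a positive map $\xi:\Gamma\cdot x_0\to\Vc$, and observe that $\xi^l_\rho,\xi^r_\rho$ are by \Cref{lemma: left and right limit independent of the choice of limit map} the one-sided limits of this particular $\xi$, hence land in $\overline{\xi(\Gamma\cdot x_0)}\subseteq\Vc$.
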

\begin{proof}
    We first prove that $\Lc_\rho = \xi^l_\rho(\Sb^1)\cup \xi^r_\rho(\Sb^1)$.
    Recall that $\xi^h_\rho\colon \Lambda_h \to \Fc_{\Theta,\Rb}$ denotes the map sending the attracting fixed point of every hyperbolic element $\gamma$ to the attracting fixed flag $\rho(\gamma)$, see \Cref{section : proximal representations}.
    Following \Cref{remark: Benoist limit cone}, we have 
    \[\Lc_\rho = \overline{\xi_\rho^h(\Lambda_h)}~.\]
    Since every hyperbolic fixed point is a conical limit point, $\xi_\rho^h$ coincides with $\xi_\rho^l$ and $\xi_\rho^r$ on $\Lambda_h$ by \Cref{thm: continuity at conical limit points}, hence $\xi_\rho^h(\Lambda_h) \subset \xi^l_\rho(\Sb^1) \cup \xi^r_\rho(\Sb^1)$.
    On the other hand, by \Cref{lemma: left and right limit independent of the choice of limit map}, $\xi^l_\rho(\Sb^1) \cup \xi^r_\rho(\Sb^1) \subset \overline{\xi_\rho^h(\Lambda_h)} = \Lc_\rho$.
    Finally, since $\xi^l_\rho(\Sb^1)\cup \xi^r_\rho(\Sb^1)$ contains the left and right limits of any positive map $\xi$ (in particular, of $\xi^l_\rho$ and $\xi^r_\rho$ themselves), we deduce that $\xi^l_\rho(\Sb^1) \cup \xi^r_\rho(\Sb^1)$ is closed, and we conclude that
    \[\Lc_\rho = \xi^l_\rho(\Sb^1) \cup \xi^r_\rho(\Sb^1)~.\]

    Now, let $(\Vc,\zeta)$ be a positive $\Theta$-boundary extension of $\rho$. We will next show that $\Lc_\rho\subset\Vc$. Since $\Vc$ is compact (it is a closed subset of $\Fc_{\Theta,\Rb}$, which is compact) and $\zeta(\Vc)$ is $\Gamma$-invariant, the minimality of the $\Gamma$-action on $\Sb^1$ implies that $\zeta$ is surjective. In particular, the image of $\zeta$ contains a point $x \in \Sb^1$ with trivial stabilizer in $\Gamma$ and we can construct a partial section $\xi$ of $\zeta$ on $\Gamma\cdot x$ by setting \[\xi(\gamma\cdot x) \coloneqq  \rho(\gamma)\cdot \xi(x)~,\]
    where $\xi(x)$ is any point in $\zeta^{-1}(x)$.
    By \Cref{lemma: left and right limit independent of the choice of limit map}, $\overline{\xi(\Gamma\cdot x)}$ contains $\xi^l_\rho(\Sb^1) \cup \xi^r_\rho(\Sb^1) =\Lc_\rho$. Since $\Vc$ is closed, we conclude that $\Lc_\rho \subset \Vc$. 
    
     By \Cref{thm: positive rep are divergent}, $\rho$ is $\Theta$-divergent. We have shown that $\Lc_\rho$ is contained in some (in fact any) positive (hence transverse) $\Theta$-boundary extension. We conclude that $\rho$ is $\Theta$-EGF by \Cref{lemma: equivalences EGF and divergent}.
\end{proof}

\subsection{Relatively Anosov representations}

We conclude this section on the real case with a discussion on the relation between $\Theta$-positivity and the relative $\Theta$-Anosov property.\\

In this section we assume that $\Gamma$ is a geometrically finite Fuchsian group (not necessary of the first kind).
This means that $\Gamma$ is finitely generated and its limit set $\Lambda=\Lambda(\Gamma)$ consists of only conical limit points and parabolic fixed points.

\begin{definition}[{\cite[Definition~7.1]{KL}, \cite[Theorem 1.2]{CZZ}, \cite[Definition~1.1]{ZZ1}}]\label{def: Relatively Anosov}
    A representation $\rho\colon\Gamma\to G_{\Rb}$ is \emph{relatively $\Theta$-Anosov} if $\rho$ is $\Theta$-divergent and there exists a map $\xi \colon \Lambda \to \Lc_\rho$ that is a $\rho$-equivariant, transverse homeomorphism.
\end{definition}

\begin{remark}
    In \cite{ZZ1}, relatively $\Theta$-Anosov representations are defined more generally for a group $\Gamma$ that is \emph{hyperbolic relatively to a collection of subgroups}. The above definition coincides with that of Zhu--Zimmer when we see a geometrically finite Fuchsian group $\Gamma$ as a hyperbolic group relatively to the stabilizers of parabolic fixed points in $\Lambda(\Gamma)$. In particular, when $\Gamma$ is convex cocompact (equivalently, does not contain any parabolic element), \Cref{def: Relatively Anosov} can be taken as a definition of $\Theta$-Anosov representations of $\Gamma$.
\end{remark}

When the Fuchsian group $\Gamma$ is cocompact, the whole circle $\Sb^1$ consists of conical limit points. Thus, if $\rho \colon \Gamma\to G_{\Rb}$ is $\Theta$-positive, then by \Cref{thm: continuity at conical limit points}, $\xi_\rho^l = \xi_\rho^r \colon \Sb^1 \to \Fc_{\Theta,\Rb}$ is a continuous positive (hence transverse) boundary map, and we recover the result of \cite[Theorem~A]{GLW} that $\rho$ is $\Theta$-Anosov.

When $\Gamma$ is not cocompact, on the other hand, the boundary maps $\xi^l_\rho$ and $\xi^r_\rho$ associated to a positive representation may not coincide in general.

\begin{definition}\label{definition: weakly unipotent}
    An element $g\in G_{\Rb}$ is called \emph{weakly unipotent} if its Jordan decomposition $g= g_h g_e g_u$ has trivial hyperbolic part, i.e.\ $g_h = \Id_{G_{\Rb}}$.
\end{definition}

Recall that an element $\gamma \in \Gamma$ is called a \emph{peripheral hyperbolic element} if it preserves a connected component of $\Sb^1\setminus \Lambda(\Gamma)$.

\begin{definition}
    A representation $\rho \colon \Gamma\to G_{\Rb}$ is \emph{type-preserving} if it sends every peripheral hyperbolic element in $\Gamma$ to a $\Theta$-proximal element in $G_{\Rb}$, and sends every parabolic element in $\Gamma$ to a weakly unipotent element in $G_{\Rb}$.
\end{definition}

The following theorem follows from the definition of $\Theta$-extended geometrically finite representations and relatively $\Theta$-Anosov representations.

\begin{theorem}\label{theorem: positive relatively Anosov}
    Let $\rho \colon \Gamma \to G_{\Rb}$ be a $\Theta$-positive representation. The following are equivalent:
    \begin{enumerate}
        \item
        \label{theorem: positive relatively Anosov: continuous on Lambda}
        there exists a continuous positive $\rho$-equivariant map defined from $\Lambda=\Lambda(\Gamma)$ to $\Fc_{\Theta,\Rb}$;
        \item
        \label{theorem: positive relatively Anosov: relative Anosov}
        $\rho$ is relatively $\Theta$-Anosov;
        \item
        \label{theorem: positive relatively Anosov: type preserving}
        $\rho$ is type-preserving.
    \end{enumerate}
\end{theorem}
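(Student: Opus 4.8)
The plan is to prove the equivalences via the cycle (i)$\Rightarrow$(iii)$\Rightarrow$(ii)$\Rightarrow$(i), using the structural results already established in this section: the existence and continuity-at-conical-points of $\xi_\rho^l,\xi_\rho^r$ (\Cref{prop: the maps are left and right continuous}, \Cref{thm: continuity at conical limit points}), the $\Theta$-divergence of positive representations (\Cref{thm: positive rep are divergent}), the characterization of peripheral hyperbolic elements and parabolic elements via the semi-conjugacy to a group of the first kind (\Cref{Fuchsian groups lemma}, and the discussion following \Cref{theorem: limit map extends to proximal limit map}), and the dynamical facts from \Cref{propo: OverRPosRotImpliesDiv} and \Cref{corol: CharWeakThetaProxForPosTrans}. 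The main point throughout is that since $\Gamma$ is geometrically finite, $\Lambda = \Lambda_{\mathrm{con}} \sqcup \Lambda_p$ where $\Lambda_{\mathrm{con}}$ is the set of conical limit points; on $\Lambda_{\mathrm{con}}$ the maps $\xi_\rho^l$ and $\xi_\rho^r$ already agree and are continuous, so all the subtlety is concentrated at the parabolic fixed points, and a continuous positive map on $\Lambda$ exists if and only if $\xi_\rho^l = \xi_\rho^r$ at every $p \in \Lambda_p$.

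\textbf{(i)$\Rightarrow$(iii).} Suppose $\xi\colon\Lambda\to\Fc_{\Theta,\Rb}$ is a continuous positive $\rho$-equivariant map. By the uniqueness part of \Cref{thm: continuity at conical limit points}, $\xi$ agrees with $\xi_\rho^l$ and $\xi_\rho^r$ on conical limit points; approximating a parabolic fixed point $p$ from the left and from the right by conical points and using continuity of $\xi$ forces $\xi(p) = \xi_\rho^l(p) = \xi_\rho^r(p)$. By \Cref{coro: positive => frameable over R} (applied after semi-conjugating to a group $\Gamma'$ of the first kind, via \Cref{propo: equivalence positivity first and second kind Fuchsian groups} and the matching of $\Lambda_p$), for the positive parabolic $\gamma_p$ generating $\Stab_\Gamma(p)$ we have $\xi_\rho^r(p) = \rho(\gamma_p)^+$ and $\xi_\rho^l(p) = \rho(\gamma_p)^-$. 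So $\rho(\gamma_p)^+ = \rho(\gamma_p)^-$; by \Cref{corol: CharWeakThetaProxForPosTrans} this means $\rho(\gamma_p)$ is \emph{not} weakly $\Theta$-proximal, and since it is $\Theta$-positively translating (hence $\Theta$-divergent by \Cref{propo: OverRPosRotImpliesDiv}) with coincident forward and backward flags, its Jordan projection must be trivial, i.e.\ $\rho(\gamma_p)$ is weakly unipotent — this is where I would need a short argument: a $\Theta$-divergent element whose forward and backward fixed flags coincide cannot have a nontrivial hyperbolic Jordan part, because a nontrivial hyperbolic part would produce distinct attracting/repelling flags in $\Fc_{\Theta,\Rb}$ for some $\alpha\in\Theta$. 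For the peripheral hyperbolic elements: under the semi-conjugacy $\iota\colon\Gamma\to\Gamma'$, a peripheral hyperbolic $\gamma$ maps to a parabolic element of $\Gamma'$, but continuity of $\xi$ on $\Lambda$ (where both endpoints $\gamma^\pm$ are \emph{distinct} points of $\Lambda$, unlike the image in $\Sb^1 = \Lambda(\Gamma')$ where they collapse) combined with \Cref{lem: CharPosTransWeaklyProx} shows $\rho(\gamma)$ is weakly $\Theta$-proximal with transverse $\rho(\gamma)^\pm = \xi(\gamma^\pm)$; then \Cref{corol: CharWeakThetaProxForPosTrans} plus $\Theta$-divergence upgrades weak to genuine $\Theta$-proximality. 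Thus $\rho$ is type-preserving.

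\textbf{(iii)$\Rightarrow$(ii) and (ii)$\Rightarrow$(i).} Assume $\rho$ is type-preserving. $\Theta$-divergence is automatic by \Cref{thm: positive rep are divergent}. I would build $\xi\colon\Lambda\to\Fc_{\Theta,\Rb}$ by gluing: on conical limit points set $\xi = \xi_\rho^l = \xi_\rho^r$ (well-defined and continuous at those points by \Cref{thm: continuity at conical limit points}), and on a parabolic fixed point $p$ set $\xi(p) = \rho(\gamma_p)^+$ ($= \rho(\gamma_p)^-$, since weak unipotence of $\rho(\gamma_p)$ forces, via \Cref{corol: CharWeakThetaProxForPosTrans}, that $\rho(\gamma_p)^+$ and $\rho(\gamma_p)^-$ are \emph{not} transverse, and by the one-sided squeeze \Cref{onesided squeeze} applied to the orbit $(\rho(\gamma_p)^n\cdot\xi_\rho^r(x))_{n}$ the two one-sided limits both equal this common flag). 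The resulting map is positive by \Cref{coro: compatibility positive boundary maps}, and $\rho$-equivariant by construction. Continuity at parabolic points then follows from the squeeze lemma \Cref{lem: squeeze lemma for diamond not real}(1): points approaching $p$ from either side have images trapped in diamonds $\Diam_{a_0}^\opp(a_{-n},a_n)$ shrinking to $\{\xi(p)\}$ because $\xi_\rho^l(p) = \xi_\rho^r(p)$; continuity at conical points is already known; and continuity at non-fixed points is clear since $\xi$ is locally a translate of one of $\xi_\rho^l,\xi_\rho^r$. This map is injective (positivity implies monotone plus injective, and the diamond structure separates the $\rho(\gamma_p)^+$ from everything else using \Cref{corol: CharWeakThetaProxForPosTrans} and \Cref{lem: CharPosTransWeaklyProx}), hence a homeomorphism onto its image by compactness of $\Lambda$; transversality follows since distinct points of $\Lambda$ always lie in opposite diamonds (\Cref{opposites transverse}) via positivity — except possibly a pair $\{\gamma^-,\gamma^+\}$ for a peripheral hyperbolic $\gamma$, which is transverse by type-preservation ($\rho(\gamma)$ $\Theta$-proximal). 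Finally $\xi(\Lambda) = \Lc_\rho$: it is closed (continuous image of compact), contains all attracting fixed flags of $\Theta$-proximal elements in $\rho(\Gamma)$ via \Cref{remark: Benoist limit cone} and the density of hyperbolic fixed points, and is contained in $\Lc_\rho$ by \Cref{lemma: left and right limit independent of the choice of limit map}. So $\rho$ is relatively $\Theta$-Anosov, giving (iii)$\Rightarrow$(ii); and the transverse homeomorphism $\xi$ furnished by \Cref{def: Relatively Anosov} is the required continuous positive map (positivity of the relative Anosov boundary map for a positive representation holds because, by \Cref{coro: compatibility positive boundary maps}, it must agree with $\xi_\rho^{l/r}$ on the dense set $\Lambda_h$ and hence everywhere by continuity), giving (ii)$\Rightarrow$(i).

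\textbf{Expected main obstacle.} The delicate step is (i)$\Rightarrow$(iii), specifically extracting \emph{type-preservation} — and in particular weak unipotence of $\rho(\gamma_p)$ — purely from the existence of a continuous positive boundary map. The identity $\xi_\rho^l(p)=\xi_\rho^r(p)$ comes easily from continuity, but converting "forward flag $=$ backward flag" into "trivial hyperbolic Jordan part" requires a clean argument that a $\Theta$-divergent, $\Theta$-positively translating element with $g^+ = g^-$ has trivial hyperbolic part; I would prove this by contradiction, using that a nontrivial hyperbolic part would make $\alpha_{\Rb}(J_{\Rb}(g))>1$ for some $\alpha\in\Theta$ while a separate $\alpha'\in\Delta\setminus\Theta$ could still vanish, but the combination with $\Theta$-divergence of $g$ itself forces $\alpha(\mu(g^n))\to\infty$ for all $\alpha\in\Theta$, and one then checks that weak $\Theta$-proximality of the Jordan hyperbolic part would give transverse $g^\pm$ by \Cref{corol: CharWeakThetaProxForPosTrans}. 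The peripheral-hyperbolic half of type-preservation is easier but needs care with the semi-conjugacy bookkeeping, since the notion of "peripheral" refers to $\Gamma$ acting on $\Sb^1$ while the positivity machinery lives on $\Lambda(\Gamma')=\Sb^1$.
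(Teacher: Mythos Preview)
Your cycle is reversed from the paper's: the paper proves (i)$\Rightarrow$(ii)$\Rightarrow$(iii)$\Rightarrow$(i), while you attempt (i)$\Rightarrow$(iii)$\Rightarrow$(ii)$\Rightarrow$(i). Your (iii)$\Rightarrow$(ii) and (ii)$\Rightarrow$(i), taken together, amount to the paper's (iii)$\Rightarrow$(i) and (i)$\Rightarrow$(ii), and are essentially correct. The problem is your direct (i)$\Rightarrow$(iii), precisely at the step you flag as the ``expected main obstacle''.

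From continuity of $\xi$ you correctly obtain $\rho(\gamma_p)^+=\rho(\gamma_p)^-$, and by \Cref{corol: CharWeakThetaProxForPosTrans} this yields that $\rho(\gamma_p)$ is \emph{not} weakly $\Theta$-proximal, i.e.\ $\alpha(J_{\Rb}(\rho(\gamma_p)))=1$ for \emph{some} $\alpha\in\Theta$. But weak unipotence (\Cref{definition: weakly unipotent}) requires $J_{\Rb}(\rho(\gamma_p))=\Id$, i.e.\ $\beta(J_{\Rb}(\rho(\gamma_p)))=1$ for \emph{every} $\beta\in\Delta$. When $\Theta\subsetneq\Delta$ (as for maximal representations into $\mathsf{Sp}(2n,\Rb)$ or the $\SO(p,q)$ family), the Levi $L_{\Theta,\Rb}$ can carry a nontrivial hyperbolic part that acts trivially on the pair $(p_\Theta,p_\Theta^{\rm opp})$ and is therefore invisible to the dynamics on $\Fc_{\Theta,\Rb}$ that determine $g^\pm$. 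Your sketch (``a nontrivial hyperbolic part would make $\alpha_{\Rb}(J_{\Rb}(g))>1$ for some $\alpha\in\Theta$\ldots'') asserts exactly what needs to be proved and does not rule this out. The paper sidesteps the issue entirely: (i)$\Rightarrow$(ii) is immediate (the given $\xi$ agrees with $\xi^h_\rho$ on the dense conical set $\Lambda_h$ by \Cref{thm: continuity at conical limit points}, so its image is $\Lc_\rho$ by \Cref{remark: Benoist limit cone}), and (ii)$\Rightarrow$(iii) is a citation to \cite{ZZ1}, valid for all relatively $\Theta$-Anosov representations without any positivity hypothesis.

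Two smaller points. First, in your (iii)$\Rightarrow$(ii) you need ``weakly unipotent $\Rightarrow \rho(\gamma_p)^+=\rho(\gamma_p)^-$''; via \Cref{corol: CharWeakThetaProxForPosTrans} you only get that $\rho(\gamma_p)^+$ and $\rho(\gamma_p)^-$ are non-transverse, which is not the same as equal in $\Fc_{\Theta,\Rb}$. The paper closes this by citing \cite[Observation~3.13]{ZZ1} directly. Second, your appeal to \Cref{lem: CharPosTransWeaklyProx} for a peripheral hyperbolic $\gamma$ does not apply as written: that lemma needs points on \emph{both} sides of $\{\gamma^+,\gamma^-\}$, but one side is disjoint from $\Lambda$. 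The conclusion that $\rho(\gamma)$ is $\Theta$-proximal is still correct, but via the dynamical characterization of \Cref{lemma: divergence in flag manifold} applied to an open diamond in the image of $\xi$.
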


Similar results were presented in \cite{CZZ} and \cite{yao2025critical}.
We prove the theorem based on \Cref{subsection: EGF}.

\begin{proof}[Proof of \Cref{theorem: positive relatively Anosov}]
    We first show (\ref{theorem: positive relatively Anosov: continuous on Lambda})$\implies$(\ref{theorem: positive relatively Anosov: relative Anosov}).
    A continuous positive $\rho$-equivariant limit map $\xi \colon \Lambda\to \Fc_\Theta$ is transverse. Furthermore, since every point in $\Lambda_h$ is conical, by \Cref{thm: continuity at conical limit points}, $\xi|_{\Lambda_h}=\xi_\rho^h$. Since $\Lambda_h\subset\Lambda$ is dense and $\xi$ is continuous, it follows from \Cref{remark: Benoist limit cone} that 
    \[\xi(\Lambda)=\overline{\xi(\Lambda_h)}=\overline{\xi_\rho^h(\Lambda_h)}=\mathcal L_\rho.\]
    Thus $\rho$ is relatively $\Theta$-Anosov.

    (\ref{theorem: positive relatively Anosov: relative Anosov})$\implies$(\ref{theorem: positive relatively Anosov: type preserving}) follows from \cite{ZZ1} (even without the assumption that $\rho$ is $\Theta$-positive). It remains to prove that (\ref{theorem: positive relatively Anosov: type preserving})$\implies$(\ref{theorem: positive relatively Anosov: continuous on Lambda}).

    Recall from \Cref{subsection: continuity boundary maps} that $\Lambda_l$ and $\Lambda_r$ are respectively the set of left limits and right limits in $\Lambda$, and $\Lambda=\Lambda_l\cup\Lambda_r$. Furthermore, by \Cref{prop: the maps are left and right continuous}, the maps 
    \[\xi_\rho^l\colon\Lambda_l\to\Fc_{\Theta,\Rb}\quad\text{and}\quad\xi_\rho^r\colon\Lambda_r\to\Fc_{\Theta,\Rb}\] 
    are $\rho$-equivariant, positive, and respectively left and right semi-continuous. We first show that $\xi_\rho^l$ and $\xi_\rho^r$ agree on $\Lambda_l\cap\Lambda_r$.

    Pick any point $x\in\Lambda_l\cap\Lambda_r$. Since $\Gamma$ is geometrically finite, $x$ is either parabolic or conical. 
    
    If $x$ is parabolic, then there is some positive parabolic $\eta\in\Gamma$ that fixes $x$. Notice that for any $z\in\Lambda(\Gamma)\setminus x$, $(\gamma^n\cdot z)_{n\in\Nb}$ and $(\gamma^{-n}\cdot z)_{n\in\Nb}$ converges to $x$ from the left and the right respectively, so 
    \[\lim_{n\to+\infty}\rho(\gamma^n)\cdot z=\xi_\rho^r(x)\quad\text{and}\quad\lim_{n\to+\infty}\rho(\gamma^{-n})\cdot z=\xi_\rho^l(x).\]
    At the same time, since $\rho(\eta)$ is weakly unipotent, \cite[Observation~3.13]{ZZ1} gives that $\rho(\eta)^+=\rho(\eta)^-$. Thus, by \Cref{propo: OverRPosRotImpliesDiv},
    \[\lim_{n\to+\infty}\rho(\gamma^n)\cdot z=\rho(\gamma)^+=\rho(\gamma)^-=\lim_{n\to+\infty}\rho(\gamma^{-n})\cdot z,\]
    so $\xi_\rho^l(x)=\xi_\rho^r(x)$.

    Finally, we consider the case when $x$ is conical. Let $\Gamma'$ be a Fuchsian group of the first kind for which there is a semi-conjugacy $\iota \colon \Gamma \to \Gamma'$, and let $\alpha\colon \Lambda\to \Sb^1$ be a continuous surjective monotonous $\iota$-equivariant map, see \Cref{Fuchsian groups lemma}. By \Cref{propo: equivalence positivity first and second kind Fuchsian groups}, $\rho\circ\iota^{-1}\colon  \Gamma' \to G_{\Rb}$ is $\Theta$-positive. 
    
    Notice that since $x\in\Lambda_l\cap\Lambda_r$ is conical, it is not the fixed point of a peripheral element of $\Gamma$. Thus, $\alpha(x)\in\Sb^1$ is not the fixed point of a peripheral element in $\Gamma'$, and is thus a conical point. Also, we previously observed in the proof of \Cref{prop: the maps are left and right continuous} that \Cref{injective alpha} implies 
\[\xi_\rho^l\circ(\alpha|_{\Lambda_l(\Gamma)})^{-1}=\xi_{\rho\circ\iota^{-1}}^l\quad\text{and}\quad\xi_\rho^r\circ(\alpha|_{\Lambda_r(\Gamma)})^{-1}=\xi_{\rho\circ\iota^{-1}}^r.\]
Thus,
\[\xi^l_\rho(x)=\xi_{\rho\circ\iota^{-1}}^l(\alpha(x))=\xi_{\rho\circ\iota^{-1}}^r(\alpha(x))=\xi_\rho^r(x),\]
where the second equality holds by \Cref{thm: continuity at conical limit points}.

Since $\xi_\rho^l$ and $\xi_\rho^r$ coincide on $\Lambda_l \cap \Lambda_r$, we can define
\[\xi_\rho\colon \Lambda = \Lambda_l \cup \Lambda_r \to \Fb_{\Theta,\Rb}\]
by $\xi_\rho(x) = \xi_\rho^l(x)$ if $x\in \Lambda_l$ and $\xi_\rho(x) = \xi_\rho^r(x)$ if $x\in \Lambda_r$. By \Cref{lemma: left and right limit independent of the choice of limit map}, $\xi_\rho$ is left-continuous at every left limit and right-continuous at every right limit, hence it is continuous. It is also $\rho$-equivariant. 

It remains to show that $\xi_\rho$ is positive.
Note that $\xi_\rho$ is positive in restriction to the dense subset $\Lambda_l$ by \Cref{prop: the maps are left and right continuous}. Since it is continuous, it is semi-positive on $\Lambda$ and, by \Cref{prop: transverse + semi positive implies positive}, it is enough to prove that $\xi_\rho$ is transverse. 

Let $x\neq y$ be two points in $\Lambda$. If $(x,y)$ or $(y,x) \subset \Sb^1\setminus \Lambda$, there exists a peripheral hyperbolic element $\gamma\in \Gamma$ such that $(x,y) = (\gamma_+,\gamma_-)$. Then $\xi_\rho(x) = \rho(\gamma)_+$ and $\xi_\rho(y) = \rho(\gamma)_-$, and these are transverse since $\rho(\gamma)$ is $\Theta$-proximal.

Otherwise, since $\Lambda$ is perfect and $\Lambda_l$ is dense in $\Lambda$, we can find $a,b,c,d\in \Lambda_l$ such that $(a,x,b,c,y,d)$ is cyclically ordered. By positivity of $\xi_\rho^l$, we get that $\xi_\rho(a), \xi_\rho(b),\xi_\rho(c),\xi_\rho(d)$ are pairwise transverse, hence $\xi_\rho(x)$ and $\xi_\rho(y)$ are transverse by \Cref{lem: semi-positive + a bit transverse implies transverse}. This concludes the proof that $\xi_\rho$ is transverse, hence positive. 

\end{proof}

\section{%
\texorpdfstring{%
$\Theta$-positivity is a semi-algebraic condition}%
{Theta-positivity is a semi-algebraic condition}}
\label{section:Positivitysemi-algebraic}

From now on, we assume that $\Gamma\subset\Isom_+(\Hb)$ is a finitely generated Fuchsian group. Let $\Fb$ be any real closed field and let $G$ be any linear semisimple semi-algebraic group (over $\overline{\mathbb Q}^r$).
Recall that we denote by $G_{\Fb}$ the $\Fb$-extension of $G$. In this section, we introduce the semi-algebraic structure of $\Hom(\Gamma, G_{\Fb})$ and discuss the semi-algebraicity of the locus of $\Theta$-positive and $\Theta$-positively frameable representations.

For any choice of finite generating set $\mathcal S$ of $\Gamma$, the map $i_{\mathcal S}\colon \Hom(\Gamma,G_{\Fb}) \to G_{\Fb}^{\mathcal S}$ sending a representation $\rho$ to $(\rho(s))_{s\in \mathcal S}$ identifies $\Hom(\Gamma, G_{\Fb})$ with a semi-algebraic subset of $G_{\Fb}^{\mathcal S}$. Furthermore, if $\mathcal S'$ is another generating set, the map $\iota_{\mathcal S'} \circ \iota_{\mathcal S}^{-1}$ is a semi-algebraic homeomorphism. This equips $\Hom(\Gamma, G_{\Fb})$ with a topology and semi-algebraic structure which is independent of the finite generating set, and for which the following holds:
\begin{enumerate}
\item For every $\gamma\in\Gamma$, the evaluation map 
\[e_\gamma \colon \Hom(\Gamma,G_{\Fb})\to G_{\Fb}\]
given by $e_\gamma\colon\rho\mapsto\rho(\gamma)$ is semi-algebraic and continuous.
\item For every finitely generated subgroup $\Gamma'\subset\Gamma$, the restriction map
\[
\begin{array}{cccc}
r_{\Gamma'}\colon &\Hom(\Gamma,G_{\Fb})& \to &\Hom(\Gamma',G_{\Fb})\\
&\rho & \mapsto & \rho\vert_{ \Gamma'}
\end{array}
\]
is semi-algebraic and continuous.
\end{enumerate}

Since the group law is defined over $\Qbar$, we have moreover:
\[\Hom(\Gamma,G_{\Fb}) = \Hom(\Gamma,G)_{\Fb}~.\]

Let is now denote by $\Pos_\Theta(\Gamma,G_{\Fb})$ (resp. $\Pos_\Theta^{\fr}(\Gamma,G_{\Fb})$) the subset of $\Theta$-positive representations (resp.\ $\Theta$-positively frameable representations). The goal of this section is to show that, when $\Gamma$ is a lattice, 
\begin{itemize}
    \item $\Pos_\Theta(\Gamma,G_{\Fb})$ is semi-algebraic and defined over $\Qbar$ when $\Gamma$ is uniform, i.e.\ $\Gamma \backslash \Hb$ is compact (see \Cref{corol : positive semi algebraic}),
    \item $\Pos_\Theta^{\fr}(\Gamma,G_{\Fb})$ is semi-algebraic and defined over $\Qbar$ when $\Gamma$ is non-cocompact (see \Cref{corollary : positively framed repr semi-algebraic}).
\end{itemize}

\subsection{Non-cocompact case}
\label{subs: Semialg Framed Repr}

For now, we assume that $\Gamma\subset\Isom_+(\Hb)$ is a non-cocompact lattice, so that the set $\Lambda_p=\Lambda_p(\Gamma)\subset\Sb^1$ of fixed points of parabolic elements of $\Gamma$ is non-empty. Recall that a \emph{framed representation} is a pair $(\rho,\xi)$ where $\rho\colon \Gamma \to G_{\Fb}$ is a representation and $\xi\colon \Lambda_p(\Gamma) \to \Fc_{\Theta,\Fb}$ is a $\Theta$-framing. Let us denote by $\widetilde{\Hom}_\Theta^{\fr}(\Gamma,G_{\Fb})$ the set of $\Theta$-framed representations from $\Gamma$ to $G_{\Fb}$.

Let $m$ denote the number of $\Gamma$-orbits in $\Lambda_p$ (equivalently, the number of cusps in $\Gamma \backslash \Hb$). Let us choose points $p_1,\dots,p_m \in\Lambda_p$ representing each $\Gamma$-orbit and write ${\bf p}\coloneqq (p_1,\dots,p_m)$. Let also $\gamma_i$ denote the positive parabolic element of $\Gamma$ generating the cyclic subgroup $\Stab_\Gamma(p_i)$.

The map
\[\begin{array}{cccc}
\Phi_{\bf p} \colon & \widetilde{\Hom}_\Theta^{\fr}(\Gamma,G_{\Fb}) & \to & \Hom(\Gamma,G_{\Fb}) \times \Fc_{\Theta,\Fb}^m\\
&(\rho, \xi) & \mapsto & (\rho, \xi(p_1),\ldots, \xi(p_m))
\end{array}
\]
identifies $\widetilde{\Hom}_\Theta^{\fr}(\Gamma,G_{\Fb})$ with the closed semi-algebraic set
\[\{(\rho, f_1, \ldots, f_m) \in \Hom(\Gamma,G_{\Fb}) \times \Fc_{\Theta,\Fb}^m \mid \rho(\gamma_i)\cdot f_i = f_i, 1\leq i \leq m\}~.\]
(The bijectivity of $\Phi_{\bf p}$ is an immediate consequence of the $\rho$-equivariance of $\xi$, see \Cref{s: Framed Positivity}).
If $\bf q$ is another family of representatives of the $\Gamma$-orbits in $\Lambda_p$, and $\eta_i\in\Gamma$ is such that $\eta_i\cdot p_i = q_i$ for all $1\leq i \leq m$, then $\Phi_{\bf q} \circ \Phi_{\bf p}^{-1}$ is the restriction of the semi-algebraic homeomorphism of $\Hom(\Gamma,G_{\Fb}) \times \Fc_{\Theta,\Fb}^m$ given by
\[(\rho, f_1,\ldots, f_m) \mapsto (\rho, \rho(\eta_1)\cdot f_1, \ldots, \rho(\eta_m)\cdot f_m)~.\]
Hence the semi-algebraic structure of $\widetilde{\Hom}_\Theta^{\fr}(\Gamma,G_{\Fb})$ does not depend on the choice of $\bf p$. Moreover:
\begin{itemize}
    \item The map \[\pi_{\Fb}\colon\widetilde{\Hom}_\Theta^{\fr}(\Gamma,G_{\Fb})\to\Hom_\Theta^{\rm fr}(\Gamma,G_{\Fb})\]
    given by $(\rho,\xi) \mapsto \rho$ is the restriction of the projection $\Hom(\Gamma,G_{\Fb}) \times \Fc_{\Theta,\Fb}^m\to \Hom(\Gamma,G_{\Fb})$, and hence is semi-algebraic.
    \item For each $p\in \Lambda_p$, the evaluation map
    $(\rho,\xi)\mapsto \xi(p)$ is semi-algebraic.
\end{itemize}
Finally, all the above maps can be defined over $\Qbar$. In particular, we get
\[\widetilde{\Hom}_\Theta^{\fr}(\Gamma, G_{\Fb}) = \widetilde{\Hom}_\Theta^{\fr}(\Gamma,G)_{\Fb}~.\]

Projecting to the first factor, we obtain the following:
\begin{proposition}\label{framed in representation variety}
    The set $\Hom_\Theta^{\fr}(\Gamma,G_{\Fb})$ is a closed semi-algebraic subset of\break $\Hom(\Gamma,G_{\Fb})$, and we have
    \[\Hom_\Theta^{\fr}(\Gamma,G_{\Fb}) = \Hom_\Theta^{\fr}(\Gamma,G)_{\Fb}~.\]
\end{proposition}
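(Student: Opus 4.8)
The plan is to extract this proposition as an immediate consequence of the semi-algebraic description of $\widetilde{\Hom}_\Theta^{\fr}(\Gamma, G_{\Fb})$ established in the paragraphs just above, namely the identification via $\Phi_{\bf p}$ with the closed semi-algebraic subset
\[
Z_{\Fb} \coloneqq \{(\rho, f_1,\ldots, f_m)\in \Hom(\Gamma,G_{\Fb})\times \Fc_{\Theta,\Fb}^m \mid \rho(\gamma_i)\cdot f_i = f_i \text{ for } 1\leq i\leq m\}.
\]
The key observation is simply that $\Hom_\Theta^{\fr}(\Gamma, G_{\Fb})$ is the image of $Z_{\Fb}$ under the projection $\pr_1$ onto the first factor.

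First I would record that $Z_{\Fb}$ is closed and semi-algebraic: it is cut out inside $\Hom(\Gamma, G_{\Fb})\times \Fc_{\Theta,\Fb}^m$ by the equations $\rho(\gamma_i)\cdot f_i = f_i$, and each of these is a closed semi-algebraic condition since the evaluation maps $\rho\mapsto \rho(\gamma_i)$ and the $G_{\Fb}$-action on $\Fc_{\Theta,\Fb}$ are semi-algebraic and continuous (using that $\Fc_{\Theta,\Fb}$ is closed and bounded, see \Cref{sec: flag}). Moreover $Z$ is defined over $\overline{\Qb}^r$ and $Z_{\Fb} = Z_{\overline{\Qb}^r, \Fb}$, since all the data $\gamma_i$, the action, and the evaluation maps are defined over $\overline{\Qb}^r$. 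Second, since $\Fc_{\Theta,\Fb}^m$ is closed and bounded, \Cref{propo_ProjClosedBd} applied coordinate by coordinate shows that $\pr_1(Z_{\Fb}) \subset \Hom(\Gamma, G_{\Fb})$ is closed and semi-algebraic; and $\pr_1(Z_{\Fb}) = \Hom_\Theta^{\fr}(\Gamma, G_{\Fb})$ because a representation $\rho$ lies in the image exactly when there exist flags $f_i$ fixed by $\rho(\gamma_i)$, which (by the discussion preceding \Cref{def : framed representation}, where a $\Theta$-framing is shown to be equivalent to a choice of fixed flag for each $\gamma_i$) is precisely the condition that $\rho$ admits a $\Theta$-framing.

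Finally, for the statement about the $\Fb$-extension, I would invoke the transfer principle: $\pr_1$ is a semi-algebraic map defined over $\overline{\Qb}^r$, so by \Cref{thm_TarskiSeidenbergproj} and its compatibility with $\Fb$-extensions (\Cref{thm_ExtSemiAlgMaps}, \Cref{thm_ExtConnComp}), we have $\pr_1(Z_{\Fb}) = \left(\pr_1(Z_{\overline{\Qb}^r})\right)_{\Fb}$, i.e.\ $\Hom_\Theta^{\fr}(\Gamma, G_{\Fb}) = \Hom_\Theta^{\fr}(\Gamma, G)_{\Fb}$. There is no real obstacle here; the only point requiring a little care is making sure that the projection $\Fc_{\Theta,\Fb}^m \to \pt$ is handled by closed-and-bounded projection (\Cref{propo_ProjClosedBd}) rather than the general Tarski--Seidenberg projection, so that closedness — and not merely semi-algebraicity — of $\Hom_\Theta^{\fr}(\Gamma, G_{\Fb})$ is preserved. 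This is legitimate precisely because $\Fc_{\Theta,\Fb}$ is closed and bounded, which is recorded in \Cref{sec: flag}.
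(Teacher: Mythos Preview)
Your approach is essentially the same as the paper's, and all the main ideas are in place. There is one small inaccuracy worth flagging: \Cref{propo_ProjClosedBd} as stated requires the \emph{entire} set being projected to be closed and bounded, but $Z_{\Fb}$ is unbounded in the $\Hom(\Gamma,G_{\Fb})$-direction, so ``applying it coordinate by coordinate'' along the $\Fc_{\Theta,\Fb}$-factors does not literally go through --- at each intermediate step the set is still unbounded. The paper handles closedness by the route you already have available from the second half of your argument: since closedness of a semi-algebraic set transfers under $\Fb$-extension (\Cref{thm_ExtConnComp}(\ref{thm_ExtConnComp: closedness})), it suffices to check closedness over $\Rb$, where the projection $\Hom(\Gamma,G_{\Rb})\times\Fc_{\Theta,\Rb}^m\to\Hom(\Gamma,G_{\Rb})$ is proper (the fiber $\Fc_{\Theta,\Rb}^m$ is compact) and hence sends closed sets to closed sets. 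Everything else --- the semi-algebraicity via Tarski--Seidenberg and the identification $\Hom_\Theta^{\fr}(\Gamma,G_{\Fb}) = \Hom_\Theta^{\fr}(\Gamma,G)_{\Fb}$ --- matches the paper.
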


To prove \Cref{framed in representation variety}, we use the following useful fact.

\begin{lemma}\label{lemma: forgetting framing is closed}
The projection $\Hom(\Gamma,G_{\Fb}) \times \Fc_{\Theta,\Fb}^m\to \Hom(\Gamma,G_{\Fb})$ is closed. In particular, the map $\pi_{\Fb}$ is closed.
\end{lemma}

\begin{proof}
Since $\pi_{\Fb}$ is the restriction of the projection $\Hom(\Gamma,G_{\Fb}) \times \Fc_{\Theta,\Fb}^m\to \Hom(\Gamma,G_{\Fb})$ to the closed subset $\widetilde{\Hom}_\Theta^{\fr}(\Gamma,G_{\Fb})\subset \Hom(\Gamma,G_{\Fb}) \times \Fc_{\Theta,\Fb}^m$, the second claim of the lemma follows from the first.

To prove the first claim, we observe that, by the Tarski--Seidenberg transfer principle, it suffices to prove it in the case $\Fb=\Rb$ (the extension of a closed semi-algebraic map is closed, with a proof similarly to \Cref{thm_ExtSemiAlgMaps}).
In that case, the fibers of the projection are compact. It follows from a straightforward argument using the tube lemma that the projection is a closed map. 
\end{proof}

\begin{proof}[Proof of \Cref{framed in representation variety}]
Since $\pi_{\Fb}$ is semi-algebraic and defined over $\Qbar$, it follows that $\Hom_\Theta^{\fr}(\Gamma,G_{\Fb})=\pi_{\Fb}(\widetilde{\Hom}_\Theta^{\fr}(\Gamma,G_{\Fb}))$ is semi-algebraic, and so by Tarski--Seidenberg's theorem, it is the $\Fb$-extension of $\Hom_\Theta^{\fr}(\Gamma,G)$. Also, $\Hom_\Theta^{\fr}(\Gamma,G_{\Fb})$ is the image of the closed subset $\widetilde{\Hom}_\Theta^{\fr}(\Gamma,G_{\Fb})\subset \Hom(\Gamma,G_{\Fb}) \times \Fc_{\Theta,\Fb}^m$ under the projection $\Hom(\Gamma,G_{\Fb}) \times \Fc_{\Theta,\Fb}^m\to \Hom(\Gamma,G_{\Fb})$, so by \Cref{lemma: forgetting framing is closed}, it is closed in $\Hom(\Gamma,G_{\Fb})$. 
\end{proof}

Let $\widetilde{\Pos}_\Theta^{\fr}(\Gamma,G_{\Fb})$ (resp.\ $\Pos_\Theta^{\fr}(\Gamma,G_{\Fb})$) denote the set of $\Theta$-positively framed (resp.\ frameable) representations from $\Gamma$ to $G_{\Fb}$. Our goal is to show that these sets are semi-algebraic.
\\

Assume first that $\Gamma$ is torsion free. We may then choose a $\Gamma$-invariant ideal triangulation $\mathcal T$ of $\Hb$, i.e.\ a $\Gamma$-invariant, maximal collection of pairwise non-intersecting geodesics in $\Hb$ whose endpoints are both in $\Lambda_p$.
Also, we may choose $\Gamma$-invariant orientations on the geodesics in $\mathcal T$. Then for each $\ell\in\mathcal T$,
denote by $x_\ell$ and $y_\ell$ the backward and forward endpoints of $\ell$ respectively, and let $z_\ell$ and $w_\ell$ be the third vertex of the ideal triangle that lies to the left and right of $\ell$ respectively. Notice that $\mathcal T$ has finitely many $\Gamma$-orbits.
Choose a representative in each $\Gamma$-orbit in $\mathcal T$, and let $\mathcal T'\subset\mathcal T$ denote the set of all such choices. 

With this fixed topological data, we then have the following.

\begin{proposition}
\label{propo: ThetaPosSemiAlg}
Let $\Gamma\subset \Isom_+(\Hb)$ be a torsion free non-cocompact lattice. A $\Theta$-framed representation 
\[(\rho \colon \Gamma\to G_{\Fb},\ \xi \colon \Lambda_p(\Gamma)\to\Fc_{\Theta,\Fb})\]
is $\Theta$-positively framed if and only if, for every $\ell\in\mathcal T'$, the quadruple of flags
\[(\xi(x_\ell),\xi(z_\ell),\xi(y_\ell),\xi(w_\ell))\]
in $\Fc_{\Theta,\Fb}^4$ is positive. 
\end{proposition}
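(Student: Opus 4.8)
The direction ``$\Theta$-positively framed $\implies$ all the quadruples are positive'' is immediate: for each $\ell\in\mathcal{T}'$ the four ideal vertices $x_\ell, z_\ell, y_\ell, w_\ell\in\Lambda_p$ are cyclically ordered (by the very definition of the triangles to the left and right of the oriented geodesic $\ell$), so positivity of the framing $\xi$ forces the quadruple $(\xi(x_\ell),\xi(z_\ell),\xi(y_\ell),\xi(w_\ell))$ to be positive. The content is in the converse, and the plan is to bootstrap from the finitely many ``seed'' quadruples over $\mathcal{T}'$ to all cyclically ordered tuples in $\Lambda_p$, using the $\rho$-equivariance of $\xi$ together with the combinatorial structure of the ideal triangulation.

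\textbf{Step 1: from $\mathcal{T}'$ to all of $\mathcal{T}$.} First I would observe that, by $\Gamma$-invariance of $\mathcal T$ and of the chosen orientations, and by $\rho$-equivariance of $\xi$, positivity of the quadruple attached to one $\ell\in\mathcal{T}'$ implies positivity of the quadruple attached to $\gamma\cdot\ell$ for every $\gamma\in\Gamma$ (apply $\rho(\gamma)\in G_{\Fb}\subset\Aut_1(\mathfrak g_{\Fb})$, which preserves positivity of quadruples). Hence the hypothesis upgrades to: for \emph{every} $\ell\in\mathcal{T}$, the quadruple $(\xi(x_\ell),\xi(z_\ell),\xi(y_\ell),\xi(w_\ell))$ is positive. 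In particular, for every ideal triangle $t$ of $\mathcal T$ with vertices $a,b,c$ (in cyclic order), the triple $(\xi(a),\xi(b),\xi(c))$ is positive: this follows by taking any edge of $t$ and extracting the sub-triple from the positive quadruple attached to that edge, using \Cref{lem: CharPosTransWeaklyProx}-style extraction (really just: a sub-tuple of a positive tuple is positive, which is built into \Cref{def:PositiveTuples}).

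\textbf{Step 2: adjacent triangles glue.} The key local move is: if $t$ and $t'$ are two ideal triangles of $\mathcal T$ sharing an edge $\ell$, with combined vertex set $\{x_\ell, z_\ell, y_\ell, w_\ell\}$ cyclically ordered, then the fact that $(\xi(x_\ell),\xi(z_\ell),\xi(y_\ell),\xi(w_\ell))$ is a positive quadruple says exactly that the union of the two triangles maps to a positive $4$-tuple. More generally, any finite ``fan'' or ``strip'' of triangles of $\mathcal T$ sharing consecutive edges is handled by iterating \Cref{lem: AddingElemToPosTuple}: if $(\xi(v_0),\dots,\xi(v_n))$ is known positive and the next triangle adds a vertex $v_{n+1}$ across the edge $v_jv_n$ for some $0<j<n$, then positivity of the quadruple $(\xi(v_0),\xi(v_j),\xi(v_n),\xi(v_{n+1}))$ — which holds because this quadruple is (up to dihedral symmetry) the positive quadruple attached to an edge of $\mathcal{T}$ after applying an element of $\rho(\Gamma)$, or is itself extracted from such — lets us conclude $(\xi(v_0),\dots,\xi(v_n),\xi(v_{n+1}))$ is positive. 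This is precisely the inductive step packaged in \Cref{corollarly: gluing} and \Cref{check quadruples}.

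\textbf{Step 3: exhausting $\Lambda_p$ and the main obstacle.} Finally, given an arbitrary cyclically ordered finite tuple $(q_1,\dots,q_k)$ in $\Lambda_p$, I would choose a finite sub-complex of $\mathcal T$ whose vertex set contains $\{q_1,\dots,q_k\}$ and which is connected and ``simply connected'' in the sense that its dual graph is a tree — this is possible because $\mathcal T$ triangulates the disc $\Hb$, so any finite set of ideal vertices is contained in the vertex set of a finite union of triangles forming a topological disc. Enumerating the triangles of this disc in an order where each new triangle is glued along a single edge to the union of the previous ones (a ``shelling'' of the disc), Step 2 shows that $\xi$ restricted to the whole vertex set of this disc is positive, and hence so is its sub-tuple $(\xi(q_1),\dots,\xi(q_k))$ after reading off the cyclic order correctly. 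By \Cref{check quadruples} this is enough to conclude that $\xi\colon\Lambda_p\to\Fc_{\Theta,\Fb}$ is positive, i.e.\ $(\rho,\xi)$ is $\Theta$-positively framed. The main technical obstacle I anticipate is Step 3: making precise that every finite cyclically ordered tuple in $\Lambda_p$ sits inside the boundary of a finite \emph{shellable} triangulated disc inside $\mathcal{T}$, and that the cyclic order induced on the boundary vertices of that disc agrees with the circular order on $\Sb^1$ — this is a standard but slightly fiddly planar-topology argument, and one must also remark at the start that the assumption that $\Gamma$ is torsion free (so that a $\Gamma$-invariant ideal triangulation $\mathcal T$ exists) is harmless by \Cref{cor: positivity finite-index subgroup} and \Cref{corollary: fi frameable}, which reduce the general finitely generated case to a finite-index torsion-free subgroup.
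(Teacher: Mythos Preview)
Your proposal is correct and follows essentially the same route as the paper, which packages your Steps~2--3 into a standalone ``finite lemma'' (an ideal polygon with a triangulation has positive vertex tuple iff every interior-edge quadruple is positive), proven by exactly the ear-removal induction via \Cref{lem: AddingElemToPosTuple} that you describe. Two small points: in Step~2 the edge being crossed should be $v_0v_n$ (with $v_j$ the third vertex of the \emph{interior} adjacent triangle, so that $(\xi(v_0),\xi(v_j),\xi(v_n),\xi(v_{n+1}))$ really is the edge-quadruple), not $v_jv_n$; and for Step~3 the paper sidesteps the shellability discussion by simply enlarging the given tuple with additional points of $\Lambda_p$ until the restriction of $\mathcal T$ to the convex hull of the enlarged set is already a triangulation of that hull.
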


\Cref{propo: ThetaPosSemiAlg} is \cite[Proposition 6.4]{GRW} in the case $\Fb=\Rb$, but we will prove it here for completeness.
Before doing so, we discuss a couple of its consequences. 

\begin{corollary}
\label{corollary : positively framed repr semi-algebraic}
    Let $\Gamma \subset \Isom_+(\Hb)$ be a non-cocompact lattice. Then the subset 
    \[\widetilde{\Pos}_\Theta^{\fr}(\Gamma,G_{\Fb})\subset \widetilde{\Hom}_\Theta^{\fr}(\Gamma,G_{\Fb})\]
    is an open semi-algebraic subset. In particular, 
    \[\Pos_\Theta^{\fr}(\Gamma,G_{\Fb})\subset \Hom(\Gamma,G_{\Fb})\]
    is a semi-algebraic subset. Furthermore, we have
    \[\left(\widetilde{\Pos}_\Theta^{\fr}(\Gamma,G)\right)_{\Fb}=\widetilde{\Pos}_\Theta^{\fr}(\Gamma,G_{\Fb})\quad\text{and}\quad\left(\Pos_\Theta^{\fr}(\Gamma,G)\right)_{\Fb}=\Pos_\Theta^{\fr}(\Gamma,G_{\Fb})~.\]
\end{corollary}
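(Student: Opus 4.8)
The plan is to reduce \Cref{corollary : positively framed repr semi-algebraic} to \Cref{propo: ThetaPosSemiAlg} together with the semi-algebraic formalism for framed representations developed just above it, and then handle the torsion issue separately. First, I would assume $\Gamma$ is torsion free, so that an ideal triangulation $\mathcal T$ with a choice of $\Gamma$-orbit representatives $\mathcal T'$ exists. Fixing representatives $\mathbf p = (p_1,\dots,p_m)$ of the $\Gamma$-orbits in $\Lambda_p$, recall that $\Phi_{\mathbf p}$ identifies $\widetilde{\Hom}_\Theta^{\fr}(\Gamma,G_{\Fb})$ with a closed semi-algebraic subset of $\Hom(\Gamma,G_{\Fb})\times\Fc_{\Theta,\Fb}^m$, and that for each $p\in\Lambda_p$ the evaluation map $(\rho,\xi)\mapsto\xi(p)$ is semi-algebraic and defined over $\Qbar$ (it is $\rho(\eta)\cdot f_i$ where $\eta\cdot p_i = p$). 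By \Cref{propo: ThetaPosSemiAlg}, the set $\widetilde{\Pos}_\Theta^{\fr}(\Gamma,G_{\Fb})$ is cut out inside $\widetilde{\Hom}_\Theta^{\fr}(\Gamma,G_{\Fb})$ by the finitely many conditions that $(\xi(x_\ell),\xi(z_\ell),\xi(y_\ell),\xi(w_\ell))$ be a positive quadruple, for $\ell$ ranging over the finite set $\mathcal T'$. Since the set $(\Fc_{\Theta,\Fb}^4)_{>0}$ of positive quadruples is open and semi-algebraic (\Cref{propo: positive n-tuples is semi-algebraic}) and defined over $\Qbar$, and since the evaluation maps are continuous semi-algebraic maps defined over $\Qbar$, the preimage is open and semi-algebraic in $\widetilde{\Hom}_\Theta^{\fr}(\Gamma,G_{\Fb})$ and commutes with $\Fb$-extension by \Cref{thm_ExtConnComp} and \Cref{thm_ExtSemiAlgMaps}; hence $\left(\widetilde{\Pos}_\Theta^{\fr}(\Gamma,G)\right)_{\Fb}=\widetilde{\Pos}_\Theta^{\fr}(\Gamma,G_{\Fb})$.

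Next, to pass from framed representations to frameable representations, I observe that $\Pos_\Theta^{\fr}(\Gamma,G_{\Fb})$ is exactly the image of $\widetilde{\Pos}_\Theta^{\fr}(\Gamma,G_{\Fb})$ under the projection $\pi\colon\widetilde{\Hom}_\Theta^{\fr}(\Gamma,G_{\Fb})\to\Hom(\Gamma,G_{\Fb})$, $(\rho,\xi)\mapsto\rho$. This projection is semi-algebraic and defined over $\Qbar$ (it corresponds under $\Phi_{\mathbf p}$ to the linear projection onto the first factor), so $\Pos_\Theta^{\fr}(\Gamma,G_{\Fb})$ is semi-algebraic by the Tarski--Seidenberg theorem (\Cref{thm_TarskiSeidenbergproj}), and the same theorem gives that it is the $\Fb$-extension of $\Pos_\Theta^{\fr}(\Gamma,G)$ (the $\Fb$-extension of a semi-algebraic set is obtained by applying the same Boolean combination of sign conditions, and projection of semi-algebraic sets is expressed by a first-order formula whose truth value transfers; concretely one uses that $\pi_{\Fb}(X_{\Fb}) = (\pi(X))_{\Fb}$, which follows from \cite[Proposition 5.3.1]{BochnakCosteRoy_RealAlgebraicGeometry}). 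This proves $\left(\Pos_\Theta^{\fr}(\Gamma,G)\right)_{\Fb}=\Pos_\Theta^{\fr}(\Gamma,G_{\Fb})$.

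Finally I need to remove the torsion-free hypothesis. If $\Gamma$ is an arbitrary non-uniform lattice, choose a torsion free finite index subgroup $\Gamma'\subset\Gamma$ (which exists by Selberg's lemma). By \Cref{corollary: fi frameable}, a representation $\rho$ of $\Gamma$ is $\Theta$-positively frameable if and only if $\rho|_{\Gamma'}$ is, i.e.\ $\Pos_\Theta^{\fr}(\Gamma,G_{\Fb}) = r_{\Gamma'}^{-1}\left(\Pos_\Theta^{\fr}(\Gamma',G_{\Fb})\right)$, where $r_{\Gamma'}\colon\Hom(\Gamma,G_{\Fb})\to\Hom(\Gamma',G_{\Fb})$ is the restriction map. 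Since $r_{\Gamma'}$ is a continuous semi-algebraic map defined over $\Qbar$ and $\Pos_\Theta^{\fr}(\Gamma',G_{\Fb})$ is semi-algebraic and defined over $\Qbar$ by the torsion free case, the preimage is semi-algebraic, defined over $\Qbar$, and compatible with $\Fb$-extension (using \Cref{thm_ExtSemiAlgMaps} and that preimages commute with extension). For the framed version one argues in the same way, noting that the natural restriction map $\widetilde{\Hom}_\Theta^{\fr}(\Gamma,G_{\Fb})\to\widetilde{\Hom}_\Theta^{\fr}(\Gamma',G_{\Fb})$ (which uses $\Lambda_p(\Gamma')=\Lambda_p(\Gamma)$) is semi-algebraic over $\Qbar$, and $\widetilde{\Pos}_\Theta^{\fr}(\Gamma,G_{\Fb})$ is its preimage of $\widetilde{\Pos}_\Theta^{\fr}(\Gamma',G_{\Fb})$. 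The main obstacle here is bookkeeping rather than mathematics: one must be careful that every evaluation and restriction map in sight is semi-algebraic \emph{and} defined over $\Qbar$ so that the Tarski--Seidenberg transfer applies uniformly, and one must verify that openness of $\widetilde{\Pos}_\Theta^{\fr}$ inside $\widetilde{\Hom}_\Theta^{\fr}$ (not inside the ambient product) follows correctly from the openness of $(\Fc_{\Theta,\Fb}^4)_{>0}$ — but the preimage of an open set under a continuous map is open in the subspace topology, so this is immediate once \Cref{propo: ThetaPosSemiAlg} is granted.
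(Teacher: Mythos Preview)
Your proposal is correct and follows essentially the same approach as the paper: reduce to the torsion free case via Selberg's lemma and the restriction map on (framed) representations, then in the torsion free case use \Cref{propo: ThetaPosSemiAlg} to exhibit $\widetilde{\Pos}_\Theta^{\fr}(\Gamma,G_{\Fb})$ as the preimage of the open semi-algebraic set $((\Fc_{\Theta,\Fb}^4)_{>0})^{\mathcal T'}$ under a continuous semi-algebraic map defined over $\Qbar$, and finally project via Tarski--Seidenberg. The only difference is cosmetic: the paper performs the Selberg reduction first and then the torsion free argument, whereas you do them in the opposite order and invoke \Cref{corollary: fi frameable} for the unframed version rather than deducing it from the framed version.
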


\begin{proof}[Proof of \Cref{corollary : positively framed repr semi-algebraic} assuming \Cref{propo: ThetaPosSemiAlg}]
By Selberg's lemma, $\Gamma$ contains a finite index, torsion free, normal subgroup $\Gamma'$. The restriction map
\[r_{\Fb}\colon\Hom(\Gamma,G_{\Fb})\to\Hom(\Gamma',G_{\Fb})\]
is continuous, semi-algebraic, and is the $\Fb$-extension of the restriction map $r\coloneqq r_{\overline{\Qb}^r}$.
Since $\Lambda_p(\Gamma)=\Lambda_p(\Gamma')$, it follows that the map
\[\widetilde{r}_{\Fb}\colon \widetilde{\Hom}_\Theta^{\fr}(\Gamma,G_{\Fb})\to\widetilde{\Hom}_\Theta^{\fr}(\Gamma',G_{\Fb})\]
given by $\widetilde{r}_{\Fb} \colon (\rho,\xi)\mapsto(\rho|_{\Gamma'},\xi)$is continuous, semi-algebraic, and is the $\Fb$-extension of the restriction map $\widetilde r\coloneqq \widetilde r_{\overline{\Qb}^r}$. By \Cref{cor: positivity finite-index subgroup}, we have
\[\widetilde{\Pos}_\Theta^{\fr}(\Gamma,G_{\Fb})=(\widetilde{r}_{\Fb})^{-1}(\widetilde{\Pos}_\Theta^{\fr}(\Gamma',G_{\Fb}))~.\]
Hence, in order to prove the first claim of the lemma, we may assume that $\Gamma$ is torsion free and put ourselves in the set-up of \Cref{propo: positive n-tuples is semi-algebraic}.

Consider the map
\[f\colon\widetilde{\Hom}_\Theta^{\fr}(\Gamma,G_{\Fb})\to(\Fc_{\Theta,\Fb}^4)^{\Tc'}\]
given by $(\rho,\xi)\mapsto \big((\xi(x_\ell),\xi(z_\ell),\xi(y_\ell),\xi(w_\ell))\big)_{\ell\in\Tc'}$. By \Cref{propo: positive n-tuples is semi-algebraic}, the set $(\Fc_{\Theta,\Fb}^4)_{>0}$ of positive quadruples of flags is an open, semi-algebraic subset of $\Fc_{\Theta,\Fb}^4$, so $((\Fc_{\Theta,\Fb}^4)_{>0})^{\Tc'}\subset(\Fc_{\Theta,\Fb}^4)^{\Tc'}$ is open and semi-algebraic. Since $\Gamma$ is torsion free, \Cref{propo: ThetaPosSemiAlg} can be restated as
\[\widetilde{\Pos}_\Theta^{\fr}(\Gamma,G_{\Fb})=f^{-1}\left(((\Fc_{\Theta,\Fb}^4)_{>0})^{\Tc'}\right),\]
and the first claim follows.

Observe that $\Pos_\Theta^{\fr}(\Gamma,G_{\Fb}) = \pi_{\Fb}(\widetilde{\Pos}_\Theta^{\fr}(\Gamma,G_{\Fb}))$. Thus, by the first claim and the Tarski--Seidenberg theorem, $\Pos_\Theta^{\fr}(\Gamma,G_{\Fb})\subset \Hom_\Theta^{\rm fr}(\Gamma,G_{\Fb})$ is semi-algebraic. Apply \Cref{framed in representation variety} to deduce the second claim.

The third claim follows from the observation that the polynomial equalities and inequalities involved in defining $\widetilde{\Pos}_\Theta^{\fr}(\Gamma,G_{\Fb})$ have coefficients in $\overline{\Qb}^r$.
\end{proof}

Combining \Cref{corollary : positively framed repr semi-algebraic} with \Cref{thm: closedness}, we deduce the second part of \Cref{thm-intro:PG-condition} from the introduction.  

\begin{corollary}\label{cor:PG-PosFrameable-open-closed-in-frameable}
    If $\Gamma\subset\Isom_+(\Hb)$ is a non-cocompact lattice and $\Fc_{\Theta,\Fb}$ satisfies the $\PG$-condition, then \[\Pos_\Theta^{\fr}(\Gamma,G_{\Fb}) \subset \Hom_\Theta^{\fr}(\Gamma,G_{\Fb})\] is open and closed.
\end{corollary}

\begin{proof}
    By \Cref{cor:PG-positive-frameable-implies-positive-framings}, a $\Theta$-frameable representation is $\Theta$-positively frameable if and only if it is $\Theta$-positive. Hence \[
    \Pos_\Theta^{\fr}(\Gamma,G_{\Fb}) = \Pos_\Theta(\Gamma,G_{\Fb})\cap \Hom_\Theta^{\fr}(\Gamma,G_{\Fb})~.\]
    \Cref{thm: closedness} then implies that $\Pos_\Theta^{\fr}(\Gamma,G_{\Fb})\subset \Hom_\Theta^{\fr}(\Gamma,G_{\Fb})$ is closed.
    
    It remains to prove that $\Pos_\Theta^{\fr}(\Gamma,G_{\Fb})\subset \Hom_\Theta^{\fr}(\Gamma,G_{\Fb})$ is open. Again by \Cref{cor:PG-positive-frameable-implies-positive-framings}, a $\Theta$-frameable representation fails to be $\Theta$-positively frameable if and only if it admits a non-positive framing.
    Therefore, 
    \[\Hom_\Theta^{\fr}(\Gamma,G_{\Fb}) \setminus \Pos_\Theta^{\fr}(\Gamma,G_{\Fb}) = \pi_{\Fb}\left(\widetilde{\Hom}_\Theta^{\fr}(\Gamma,G_{\Fb}) \setminus \widetilde{\Pos}_\Theta^{\fr}(\Gamma,G_{\Fb})\right)~.\]
    By \Cref{corollary : positively framed repr semi-algebraic}, $\widetilde{\Hom}_\Theta^{\fr}(\Gamma,G_{\Fb}) \setminus \widetilde{\Pos}_\Theta^{\fr}(\Gamma,G_{\Fb})\subset\widetilde{\Hom}_\Theta^{\fr}(\Gamma,G_{\Fb})$ is closed, and by \Cref{lemma: forgetting framing is closed}, $\pi_{\Fb}$ is closed.
    Therefore, $\Hom_\Theta^{\fr}(\Gamma,G_{\Fb}) \setminus \Pos_\Theta^{\fr}(\Gamma,G_{\Fb})\subset\Hom_\Theta^{\fr}(\Gamma,G_{\Fb})$ is closed and $\Pos_\Theta^{\fr}(\Gamma, G_{\Fb})$ is open in $\Hom_\Theta^{\fr}(\Gamma,G_{\Fb})$.
\end{proof}

To prove \Cref{propo: ThetaPosSemiAlg}, we will use the following lemma.
Let $P_k$ be a subset of $\partial_\infty\Hb$ with cardinality $k\ge 4$, and let $\Rc$ be an ideal triangulation of the convex hull ${\rm Hull}(P_k)\subset\Hb$ of $P_k$, i.e.\ $\Rc$ is a maximal collection of pairwise non-intersecting geodesics in $\Hb$ whose endpoints are in $P_k$.
We say that an edge of $\Rc$ is an \emph{interior edge} if it lies in the interior of ${\rm Hull}(P_k)$, and a \emph{boundary edge} otherwise.
Then as before, choose an orientation on each interior edge $\ell$ of $\Rc$, let $x_\ell$ and $y_\ell$ be the backward and forward endpoints of $\ell$ respectively, and let $z_\ell$ and $w_\ell$ be the third vertex of the ideal triangle that lies to the left and right of $\ell$ respectively.

\begin{lemma}\label{finite lemma}
Let $G$ be a linear semisimple semi-algebraic group that admits a $\Theta$-positive structure.
A map $\xi \colon P_k\to\Fc_{\Theta,\Fb}$ is positive if and only if every interior edge $\ell$ of $\Rc$, the quadruple of flags
\[(\xi(x_\ell),\xi(z_\ell),\xi(y_\ell),\xi(w_\ell))\]
in $\Fc_\Theta^4$ is positive.
\end{lemma}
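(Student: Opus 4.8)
\textbf{Proof plan for \Cref{finite lemma}.}

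The plan is to prove both directions of the equivalence, the forward direction being almost immediate and the converse being the heart of the matter, proved by induction on the number of triangles in the triangulation $\Rc$.

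For the forward direction, suppose $\xi \colon P_k \to \Fc_{\Theta,\Fb}$ is positive. Fix an interior edge $\ell$ of $\Rc$ with endpoints $x_\ell, y_\ell$ and with third vertices $z_\ell$ (to the left) and $w_\ell$ (to the right). Then $(x_\ell, z_\ell, y_\ell, w_\ell)$ is cyclically ordered in $\Sb^1 = \partial_\infty \Hb$ — this is just the statement that $z_\ell$ and $w_\ell$ lie on opposite sides of the geodesic $\ell$. Hence by positivity of $\xi$ (and the fact that positivity of a map is tested on cyclically ordered tuples, via \Cref{check quadruples}), the quadruple $(\xi(x_\ell), \xi(z_\ell), \xi(y_\ell), \xi(w_\ell))$ is positive.

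For the converse, I would argue by induction on the number $n$ of ideal triangles of $\Rc$ (equivalently, on $k = |P_k|$, since $n = k - 2$). The base case $k = 4$, $n = 2$: here $\Rc$ has a single interior edge $\ell$, and $P_4 = \{x_\ell, z_\ell, y_\ell, w_\ell\}$ in cyclic order, so the hypothesis is exactly that $(\xi(x_\ell), \xi(z_\ell), \xi(y_\ell), \xi(w_\ell))$ is positive, which by \Cref{check quadruples} is what ``$\xi$ is positive'' means for a $4$-point set. For the inductive step, pick an ideal triangle $\Delta$ of $\Rc$ that is a ``leaf'', i.e.\ that has (at least) two boundary edges; such a triangle always exists because the dual graph of $\Rc$ is a tree. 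Let $v$ be the vertex of $\Delta$ opposite its unique interior edge $\ell'$, so that $v$ is a vertex of no other triangle of $\Rc$. Set $P_{k-1} \coloneqq P_k \setminus \{v\}$, and let $\Rc'$ be the induced triangulation of $\Hull(P_{k-1})$ (obtained by deleting $\Delta$); its interior edges are those of $\Rc$ except that $\ell'$ becomes a boundary edge, so the hypothesis of the lemma holds for $\xi|_{P_{k-1}}$ with respect to $\Rc'$. By induction, $\xi|_{P_{k-1}}$ is positive. Now I must add the point $v$ back in. Writing the cyclic order on $P_k$ as $(v, a, \dots, b)$ where $a, b$ are the two endpoints of $\ell'$ adjacent to $v$ in the cyclic order (so $\{a,b\} = \{x_{\ell'}, y_{\ell'}\}$ and $v \in \{z_{\ell'}, w_{\ell'}\}$), the hypothesis gives that $(\xi(x_{\ell'}), \xi(z_{\ell'}), \xi(y_{\ell'}), \xi(w_{\ell'}))$ is positive, i.e.\ the quadruple $(\xi(v), \xi(a), \xi(b), \xi(c))$ is positive for a suitable choice of third vertex $c$ of the triangle across $\ell'$ — more precisely, among the remaining points $a, \dots, b$ of $P_{k-1}$ in cyclic order, $v$ and the point immediately across $\ell'$ form a positive quadruple with $a$ and $b$. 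I would then invoke \Cref{lem: AddingElemToPosTuple}: since $\xi|_{P_{k-1}}$ is positive and there is a suitable interior point $x_j$ with $(\xi(a), \xi(x_j), \xi(b), \xi(v))$ positive (obtained after a dihedral permutation from the positive quadruple supplied by the hypothesis at $\ell'$), adding $\xi(v)$ yields that the full tuple $(\xi(v), \xi(a), \dots, \xi(b))$ is positive. Hence $\xi$ is positive on all of $P_k$.

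The main obstacle is the bookkeeping in the inductive step: correctly identifying the leaf triangle, matching up the abstract cyclic-order data $(x_{\ell'}, z_{\ell'}, y_{\ell'}, w_{\ell'})$ with the concrete neighbours of $v$ in $P_k$, and checking that the quadruple handed over by the hypothesis is exactly in the form required to apply \Cref{lem: AddingElemToPosTuple} (which demands $(x_0, x_j, x_n, x_{n+1})$ positive with $1 \le j \le n-1$, so one must use the dihedral-invariance of positivity of quadruples to rotate into that shape, and one must ensure $x_j$ is a genuinely interior point of the already-positive tuple, not an endpoint). Everything else — existence of a leaf triangle via the tree structure of the dual graph, the compatibility of $\Rc'$ with the lemma's hypothesis — is routine. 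Note this argument works uniformly over any real closed field $\Fb$ since it only uses the combinatorial properties of positive tuples established in \Cref{s: Positivity on flag varieties}, all of which hold over $\Fb$ by the transfer principle.
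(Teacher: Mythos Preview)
Your proposal is correct and follows essentially the same approach as the paper's own proof: induction on $k$, removal of the ``tip'' vertex of a leaf triangle, application of the inductive hypothesis to the smaller polygon, and then a single invocation of \Cref{lem: AddingElemToPosTuple} using the positive quadruple associated to the interior edge that was just promoted to boundary. The bookkeeping concern you raise is exactly what the paper handles by explicit enumeration (labelling $P_k$ so that the leaf triangle has vertices $p_{k-1},p_k,p_1$ and writing the quadruple as $(\xi(p_{k-1}),\xi(p_k),\xi(p_1),\xi(p_m))$, which after a cyclic shift matches the form required by \Cref{lem: AddingElemToPosTuple}).
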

\begin{proof}
The forward direction follows from the positivity of $\xi$ and the observation that for every interior edge $\ell$ of $\Rc$, the quadruple $(x_\ell,z_\ell,y_\ell,w_\ell)$ is cyclically ordered.

We will prove the backward direction by induction on $k$. The base case when $k=4$ clearly holds. 

For the inductive step, notice that there is an ideal triangle $T$ of $\Rc$ such that two of its edges are boundary edges of $\Rc$ (and the third is necessarily an interior edge of $\Rc$). Enumerate the set $P_k=\{p_1,\dots,p_k\}$ according to the cyclic order on $P_k$, so that $p_{k-1},p_k,p_1$ are the vertices of $T$. Let $\Rc'$ be $\Rc$ with the two boundary edges of $\Rc$ which are edges of $T$ that removed, and observe that $\Rc'$ is an ideal triangulation of $P_{k-1}\coloneqq\{p_1,\dots,p_{k-1}\}$. Let $\ell_0$ be the interior edge of $\Rc$ that is an edge of $T$. We have 
\[(x_{\ell_0},z_{\ell_0}, y_{\ell_0}, w_{\ell_0})= (p_{k-1}, p_k, p_1, p_m)\]
for some $m\in \{2,\ldots, k-2\}$. 

Observe that the set of interior edges of $\Rc$ is the set of interior edges of $\Rc'$ together with $\ell_0$. By the inductive hypothesis, the tuple $(\xi(p_1),\dots,\xi(p_{k-1}))$ is positive. Moreover, the quadruple \[(\xi(x_{\ell_0}),\xi(z_{\ell_0}),\xi(y_{\ell_0}),\xi(w_{\ell_0})) = (\xi(p_{k-1}), \xi(p_k), \xi(p_1), \xi(p_m))\]
is positive. By \Cref{lem: AddingElemToPosTuple} we conclude that $(\xi(p_1),\dots,\xi(p_k))$ is also positive.
\end{proof}

With this lemma, we may now prove \Cref{propo: ThetaPosSemiAlg}.
 
\begin{proof}[Proof of \Cref{propo: ThetaPosSemiAlg}]
Again, the forward direction follows from the positivity of $\xi$ and the observation that for all $\ell\in\mathcal T$, the quadruple $(x_\ell,z_\ell,y_\ell,w_\ell)$ is cyclically ordered.

For the backward direction, we need to show that for any cyclically ordered tuple $(p_1,\dots,p_k)$ in $\Lambda_p$, the tuple of flags $(\xi(p_1),\dots,\xi(p_k))$ is positive.
Let $P_k \coloneqq \{p_1,\dots,p_k\}$.
By adding more points to the tuple, we may assume that the restriction of $\Tc$ to ${\rm Hull}(P_k)$ is an ideal triangulation of ${\rm Hull}(P_k)$. 

Observe from the $\rho$-equivariance of $\xi$ that requiring $(\xi(x_\ell),\xi(z_\ell),\xi(y_\ell),\xi(w_\ell))$ to be positive for all $\ell\in\Tc'$ is equivalent to requiring that the same holds for all $\ell\in\Tc$, and in particular for all $\ell$ that is an interior edge of $\Tc|_{P_k}$.
Thus, we may now apply \Cref{finite lemma} to deduce that $(\xi(p_1),\dots,\xi(p_k))$ is positive.
\end{proof}

Note that, contrary to $\Pos_\Theta^{\fr}(\Gamma,G_{\Fb})$, the set $\Pos_\Theta(\Gamma,G_{\Fb})$ is not semi-algebraically defined over $\Rb$:

\begin{proposition}
    Let $\Gamma_{0,3}$ be the Fuchsian group introduced in \Cref{subsection: ExNonFramablePosRepr}, and $\Fb$ a non-Archimedean real closed field containing $\mathbb \Rb$.
    Then there does not exist a semi-algebraic set $X\subset \Hom(\Gamma_{0,3},G_{\Rb})$ such that
    \[X_{\Fb} = \Pos_\Theta(\Gamma_{0,3}, G_{\Fb})~.\]
\end{proposition}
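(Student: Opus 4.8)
The strategy is to argue by contradiction using the transfer principle together with the openness/closedness properties already established. Suppose such a semi-algebraic set $X\subset\Hom(\Gamma_{0,3},G_{\Rb})$ exists with $X_{\Fb}=\Pos_\Theta(\Gamma_{0,3},G_{\Fb})$. First I would observe that $X$ is forced to be closed: indeed, by \Cref{thm-intro: Positivity closed} (\Cref{thm: closedness}) the set $\Pos_\Theta(\Gamma_{0,3},G_{\Fb})$ is closed in $\Hom(\Gamma_{0,3},G_{\Fb})$, hence $X_{\Fb}$ is closed, and by \Cref{thm_ExtConnComp}\eqref{thm_ExtConnComp: closedness} this implies that $X$ itself is closed in $\Hom(\Gamma_{0,3},G_{\Rb})$. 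Moreover, since $\Gamma_{0,3}$ is a free group (hence of the second kind, with $\Lambda_p=\emptyset$ when realized concretely but, after semi-conjugation to a first-kind group, its peripheral hyperbolic elements become parabolic), one should be careful about which incarnation one works with; I would stay with the concrete second-kind realization where $c=b^{-1}a^{-1}$ is peripheral. The key tension will be: over $\Rb$, $\Theta$-positive representations of $\Gamma_{0,3}$ form an open set (this is the cocompact-type phenomenon for the thrice-punctured sphere, or can be extracted from \Cref{thm-intro: Continuity boundary map R} and the fact that a positive representation sending $c$ to a proximal or unipotent element is stable), whereas over $\Fb$ the representation $\rho$ constructed in \Cref{subsection: ExNonFramablePosRepr} (with $\rho(c)$ an infinitesimal rotation) is positive but is an \emph{interior} limit of representations where $\rho(c)$ is genuinely elliptic (non-infinitesimal rotation), which are \emph{not} positive.

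More precisely, the plan is as follows. I would first record that $X$ is closed, as above. Next I would show that $X$ contains the conjugates of inclusions of $\Gamma_{0,3}$ as a Fuchsian group into $\PSL_2(\Rb)\subset G_{\Rb}$ (via a principal embedding), so $X$ is non-empty; by Tarski--Seidenberg $X$ has finitely many semi-algebraic connected components, and the component $X_0$ containing such a Fuchsian representation has $(X_0)_{\Fb}\subset\Pos_\Theta(\Gamma_{0,3},G_{\Fb})$ by \Cref{thm_ExtConnComp}. Then I would exhibit the failure of closedness from the $\Fb$-side: consider the one-parameter family $\rho_t\colon\Gamma_{0,3}\to\PSL_2(\Fb)$ obtained exactly as in \Cref{subsection: ExNonFramablePosRepr} but with $\textnormal{CR}(x_1,y_1,y_3,x_3)-1=-\varepsilon_t$ where $\varepsilon_t$ ranges over positive elements of $\Fb$. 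For $\varepsilon$ \emph{not} infinitesimal (say $\varepsilon$ a small positive real), $\rho(c)$ is a genuine elliptic element of $\PSL_2(\Rb)$ of irrational rotation angle, whose orbit on $\mathbf P^1$ is dense, so $\rho$ is \emph{not} positive (a positive representation is discrete and faithful by \Cref{propo: positive implies injective and discrete}, but $\langle\rho(c)\rangle$ has dense orbits hence is non-discrete). Yet this non-positive $\rho$ lies in the closure of the positive $\rho$ with $\varepsilon$ infinitesimal, after passing to a suitable $\Fb$-point path — and conversely the positive $\rho_{\varepsilon\text{ infinitesimal}}$ lies in the closure of these non-positive real representations. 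Because $X$ is semi-algebraic over $\Rb$, one can compute, over $\Rb$, whether the real representation $\rho_{\varepsilon_0}$ (for $\varepsilon_0>0$ real, small) lies in $X$; the honest answer, determined over $\Rb$ where positivity of a free-group representation into $\PSL_2(\Rb)$ is governed by discreteness/faithfulness and sign conditions on traces, is that $\rho_{\varepsilon_0}\notin X$ since $\rho_{\varepsilon_0}$ is non-discrete. But by the transfer principle this same membership sentence is decided identically whether interpreted in $\Rb$ or $\Fb$; hence $\rho_{\varepsilon}\notin X_{\Fb}$ for $\varepsilon$ infinitesimal as well. This contradicts $X_{\Fb}=\Pos_\Theta(\Gamma_{0,3},G_{\Fb})\ni\rho_{\varepsilon\text{ infinitesimal}}$.

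To make the last step rigorous without hand-waving, the cleanest route is to exploit a single real semi-algebraic invariant. Pick a root $\alpha\in\Theta$ and a real-algebraic function $\tau\colon\Hom(\Gamma_{0,3},G_{\Rb})\to\Rb$ which, on $\PSL_2$-valued representations coming from a principal $\SL_2$, records $|\textnormal{Tr}(\rho(c))|$ (or $\alpha_{\Fb}\circ J_{\Fb}(\rho(c))$). For a $\Theta$-positive, $\Theta$-positively translating representation, $\rho(c)$ must be either weakly $\Theta$-proximal or conjugate to something in $U_{\Theta,\Fb}^{>0}\hat L_{\Theta,\Fb}^*$ by \Cref{propo: PosTransUL}; but in the second-kind realization $c$ is peripheral hyperbolic, so a \emph{positive} $\rho$ in the sense of \Cref{dfn: positive representations} forces $\rho(c)$ to be $\Theta$-positively translating (apply \Cref{propo: positive implies weak proximal}(1) to the semi-conjugated first-kind group, where $c$ becomes parabolic, combined with the frameability-over-$\Rb$ phenomenon $\Cref{coro: positive => frameable over R}$ — actually for $\Fb$ non-Archimedean one only gets $\Theta$-positively rotating). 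The decisive point is that over $\Rb$, $\Theta$-positively rotating $\Rightarrow$ $\Theta$-positively translating $\Rightarrow$ $\Theta$-divergent (\Cref{propo: OverRPosRotImpliesDiv}), which fails for an elliptic element; thus $\tau$ restricted to $X$ (over $\Rb$) avoids the open interval corresponding to genuine rotations, i.e.\ $X\subset\tau^{-1}(\Rb\setminus(-2,2))$ on the relevant chart. By transfer, $X_{\Fb}\subset\tau_{\Fb}^{-1}(\Fb\setminus(-2,2))$. But $\rho_{\varepsilon}$ with $\varepsilon$ infinitesimal has $|\textnormal{Tr}(\rho_\varepsilon(c))|_{\Fb}=2-4\varepsilon\in(-2,2)$ (it is an infinitesimal rotation, \Cref{lem: CRBigInf}), so $\rho_\varepsilon\notin X_{\Fb}$, while $\rho_\varepsilon$ \emph{is} positive by \Cref{propo:PosReprNotFrame} — contradiction. \textbf{The main obstacle} will be pinning down exactly which semi-algebraic constraint over $\Rb$ is preserved by transfer and violated by the infinitesimal-rotation example; the trace/Jordan-projection route sketched above is the one I would pursue, using $\Theta$-divergence (\Cref{thm: positive rep are divergent}) as the real-field obstruction to having a rotation in the image of a positive representation.
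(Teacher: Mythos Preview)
Your third paragraph contains a correct argument, but the route is more circuitous than necessary, and the earlier paragraphs contain an error and a gap.

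The error: your claim that $\Pos_\Theta(\Gamma_{0,3},G_{\Rb})$ is open in $\Hom(\Gamma_{0,3},G_{\Rb})$ is false, since $\Gamma_{0,3}$ is not cocompact; openness holds only in relative varieties (\Cref{propo : open in relative character variety}). Fortunately you never use it.

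The gap: before you can say ``$\rho_{\varepsilon_0}\notin X$ because $\rho_{\varepsilon_0}$ is not positive'', you need $X=\Pos_\Theta(\Gamma_{0,3},G_{\Rb})$. This is true---one has $X=X_{\Fb}\cap\Hom(\Gamma_{0,3},G_{\Rb})$ by definition of extension, and a real representation is positive over $\Fb$ iff over $\Rb$ by the $\overline{\Qb}^r$-semi-algebraic criterion of \Cref{theorem: limit map extends to proximal limit map}---but it should be stated explicitly, not buried under the family and transfer discussion.

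Once $X=\Pos_\Theta(\Gamma_{0,3},G_{\Rb})$ is established, the paper finishes in two lines, bypassing your one-parameter family entirely: by \Cref{coro: positive => frameable over R} every real positive representation is frameable, so $X=\Pos_\Theta^{\fr}(\Gamma_{0,3},G_{\Rb})$; by \Cref{corollary : positively framed repr semi-algebraic} its $\Fb$-extension is $\Pos_\Theta^{\fr}(\Gamma_{0,3},G_{\Fb})$; and \Cref{propo:PosReprNotFrame} exhibits a representation in $\Pos_\Theta(\Gamma_{0,3},G_{\Fb})\setminus\Pos_\Theta^{\fr}(\Gamma_{0,3},G_{\Fb})$. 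Your trace condition $|\Tr(\rho(c))|\ge 2$ is precisely the $\PSL_2$-incarnation of ``$\rho(c)$ has a fixed point in $\mathbf P^1$'', which is exactly the frameability obstruction---so your argument and the paper's are the same at heart, but the paper invokes the already-packaged machinery rather than re-deriving the obstruction by hand. Your version is more concrete; the paper's is shorter and works uniformly in $G$ without singling out a trace function.
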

\begin{proof}
    Assume by contradiction that such an $X$ exists.
    Then we should have
    \[X= \Pos_\Theta(\Gamma_{0,3}, G_{\Fb})\cap \Hom(\Gamma_{0,3}, G_{\Rb})= \Pos_\Theta(\Gamma_{0,3}, G_{\Rb})~.\]
    Since every $\Theta$-positive representation over $\Rb$ is $\Theta$-positively frameable (\Cref{coro: positive => frameable over R}), we would thus have $X= \Pos_\Theta^{\fr}(\Gamma_{0,3},G_{\Rb})$, and hence
    \[X_{\Fb} = \Pos_\Theta^{\fr}(\Gamma_{0,3},G_{\Fb})\]
    by \Cref{corollary : positively framed repr semi-algebraic}.
    We would thus conclude that every $\Theta$-positive representation of $\Gamma_{0,3}$ into $G_{\Fb}$ is $\Theta$-positively frameable.
    However, \Cref{propo:PosReprNotFrame} shows precisely that this is not true.
\end{proof}

\begin{remark}
    It is not clear whether the set $\Pos_\Theta(\Gamma,G_{\Fb})$ could be defined by finitely many semi-algebraic conditions with coefficients in the field $\Fb$.
\end{remark}

Let us note that $\Pos_\Theta^{\fr}(\Gamma,G_{\Fb})$ is not open in $\Hom(\Gamma,G_{\Fb})$, simply because $\Theta$-positively translating elements do not form an open set. 
However, we will prove that it is open when restricting to ``relative representation varieties'', where the conjugacy classes of the images of the parabolic elements are fixed.\\

For all $\rho_0\in\Pos_\Theta^{\fr}(\Gamma,G_{\Fb})$, let 
\[\Hom_{\rho_0}(\Gamma,G_{\Fb}) \coloneqq \left\{\rho\in \Hom(\Gamma,G_{\Fb}) \;\middle\vert\, \begin{array}{l}\rho(\gamma) \textrm{ is conjugate to }\rho_0(\gamma)\\
\text{for all peripheral }\gamma\in\Gamma
\end{array}\right\}~.\]
Observe that $\Hom_{\rho_0}(\Gamma,G_{\Fb})\subset \Hom(\Gamma,G_{\Fb})$ is a semi-algebraic set. Indeed, let $\eta_1,\dots,\eta_n\in \Gamma$ be elements such that every parabolic subgroup of $\Gamma$ is conjugated to the group generated by $\eta_i$ for some $i\in\{1,\dots,n\}$. For each $i\in\{1,\dots,n\}$, note that the conjugacy class of $\rho(\eta_i)$, denoted $\mathcal C(\rho_0(\eta_i))\subset G_{\Fb}$, is a semi-algebraic subset. As such,
\[\Hom_{\rho_0}(\Gamma,G_{\Fb})=\bigcap_{i=1}^ne_{\eta_i}^{-1}(\mathcal C(\rho_0(\eta_i)))\subset \Hom(\Gamma,G_{\Fb})\]
is semi-algebraic, and if $\rho_0$ has image in $G$, then $\Hom_{\rho_0}(\Gamma,G_{\Fb})=\Hom_{\rho_0}(\Gamma,G)_{\Fb}$.

\begin{proposition}
\label{propo : open in relative character variety}
    Suppose that $\Gamma\subset\Isom_+(\Hb)$ is a non-cocompact lattice. For any $\rho_0\in\Pos_\Theta^{\fr}(\Gamma,G_{\Fb})$, the semi-algebraic set
    \[\Pos_\Theta^{\fr}(\Gamma, G_{\Fb}) \cap \Hom_{\rho_0}(\Gamma,G_{\Fb}) \]
    is open in $\Hom_{\rho_0}(\Gamma,G_{\Fb})$.
\end{proposition}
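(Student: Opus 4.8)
The strategy is to reduce to a torsion-free finite-index subgroup and the semi-algebraic criterion of \Cref{propo: ThetaPosSemiAlg}, and then to upgrade it to an openness statement inside $\Hom_{\rho_0}(\Gamma,G_{\Fb})$. The only obstruction to openness of $\Pos_\Theta^{\fr}(\Gamma,G_{\Fb})$ in the full representation variety is that being $\Theta$-positively translating is not an open condition (it forces the image of a parabolic to lie in $U_{\Theta,\Fb}^{>0}\hat L_{\Theta,\Fb}^*$ up to conjugacy, see \Cref{propo: PosTransUL}). But on $\Hom_{\rho_0}(\Gamma,G_{\Fb})$ the conjugacy class of $\rho(\gamma)$ is constant equal to that of $\rho_0(\gamma)$ for every peripheral $\gamma$, so this constraint is automatically satisfied by every representation in $\Hom_{\rho_0}$, and what remains to be checked is a genuinely open condition (positivity of finitely many quadruples of flags). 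More precisely, the plan is:

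\emph{Step 1: Reduce to the torsion-free case.} By Selberg's lemma choose a finite-index torsion-free normal subgroup $\Gamma'<\Gamma$. As in the proof of \Cref{corollary : positively framed repr semi-algebraic}, the restriction map $r_{\Fb}\colon \Hom(\Gamma,G_{\Fb})\to\Hom(\Gamma',G_{\Fb})$ is continuous and semi-algebraic, $\Lambda_p(\Gamma)=\Lambda_p(\Gamma')$, and (by \Cref{corollary: fi frameable}) $\rho$ is $\Theta$-positively frameable iff $\rho|_{\Gamma'}$ is. Moreover $r_{\Fb}$ carries $\Hom_{\rho_0}(\Gamma,G_{\Fb})$ into $\Hom_{\rho_0|_{\Gamma'}}(\Gamma',G_{\Fb})$: a peripheral element of $\Gamma$ has a power that is peripheral in $\Gamma'$, and conjugacy of $\rho(\gamma)$ to $\rho_0(\gamma)$ is preserved; conversely, if $\rho\in\Hom_{\rho_0}(\Gamma,G_{\Fb})$ has $\rho|_{\Gamma'}\in\Pos_\Theta^{\fr}(\Gamma',G_{\Fb})$ then $\rho\in\Pos_\Theta^{\fr}(\Gamma,G_{\Fb})$. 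So it suffices to prove that $\Pos_\Theta^{\fr}(\Gamma',G_{\Fb})\cap\Hom_{\rho_0|_{\Gamma'}}(\Gamma',G_{\Fb})$ is open in $\Hom_{\rho_0|_{\Gamma'}}(\Gamma',G_{\Fb})$ and pull back along the continuous map $r_{\Fb}$. Henceforth assume $\Gamma$ torsion free, and fix the data $\mathcal T$, $\mathcal T'$, orientations, etc., as in \Cref{propo: ThetaPosSemiAlg}.

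\emph{Step 2: Construct a continuous framing over a neighborhood.} Let $\rho_0\in\Pos_\Theta^{\fr}(\Gamma,G_{\Fb})$ with positive $\Theta$-framing $\xi_0\colon\Lambda_p\to\Fc_{\Theta,\Fb}$; by \Cref{lem:PosFramedRepr} we may take $\xi_0=\xi_{\rho_0}^l$, i.e.\ $\xi_0(p_i)=\rho_0(\gamma_i)^+$ where $\gamma_i$ generates $\Stab_\Gamma(p_i)$. Fix $h_i\in G_{\Fb}$ with $h_i^{-1}\rho_0(\gamma_i)h_i$ equal to some fixed element $u_i l_i\in U_{\Theta,\Fb}^{>0}\hat L_{\Theta,\Fb}^*$ (using \Cref{propo: PosTransUL}, interpreting the image in $\Ad(G_{\Fb})$), so that $\xi_0(p_i)=h_i\cdot p_\Theta$. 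For $\rho\in\Hom_{\rho_0}(\Gamma,G_{\Fb})$, the element $\rho(\gamma_i)$ is conjugate to $\rho_0(\gamma_i)$; the set of conjugating elements, together with the evaluation $\rho\mapsto\rho(\gamma_i)$, is semi-algebraic, so by the semi-algebraic choice lemma (definable Skolem functions over a real closed field, or directly the curve selection / semi-algebraic triviality available here) one obtains, on a semi-algebraic neighborhood $W$ of $\rho_0$ in $\Hom_{\rho_0}(\Gamma,G_{\Fb})$, a continuous semi-algebraic map $\rho\mapsto g_i(\rho)$ with $g_i(\rho)^{-1}\rho(\gamma_i)g_i(\rho)=u_il_i$ and $g_i(\rho_0)=h_i$. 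Set $f_i(\rho)\coloneqq g_i(\rho)\cdot p_\Theta\in\Fc_{\Theta,\Fb}$; this is a continuous semi-algebraic function of $\rho\in W$ with $f_i(\rho_0)=\xi_0(p_i)$ and $\rho(\gamma_i)\cdot f_i(\rho)=f_i(\rho)$ (since $u_il_i$ fixes $p_\Theta$). Hence $(\rho,f_1(\rho),\dots,f_m(\rho))$ defines a continuous lift $\rho\mapsto(\rho,\xi_\rho)\in\widetilde\Hom_\Theta^{\fr}(\Gamma,G_{\Fb})$ over $W$, with $\xi_{\rho_0}=\xi_0$.

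\emph{Step 3: Conclude by the semi-algebraic criterion.} Consider the continuous map $f\colon W\to(\Fc_{\Theta,\Fb}^4)^{\mathcal T'}$ sending $\rho$ to $\bigl((\xi_\rho(x_\ell),\xi_\rho(z_\ell),\xi_\rho(y_\ell),\xi_\rho(w_\ell))\bigr)_{\ell\in\mathcal T'}$, where the flags $\xi_\rho(v)$ for the finitely many vertices $v$ appearing here are obtained from the $f_i(\rho)$ by applying the appropriate (fixed, $\rho$-dependent-only-through-$\rho$) group elements, i.e.\ $\xi_\rho(\eta\cdot p_i)=\rho(\eta)\cdot f_i(\rho)$; this is continuous in $\rho$. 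By \Cref{propo: positive n-tuples is semi-algebraic}, $((\Fc_{\Theta,\Fb}^4)_{>0})^{\mathcal T'}$ is open, and by \Cref{propo: ThetaPosSemiAlg} we have $f(\rho_0)\in((\Fc_{\Theta,\Fb}^4)_{>0})^{\mathcal T'}$. Hence $f^{-1}\bigl(((\Fc_{\Theta,\Fb}^4)_{>0})^{\mathcal T'}\bigr)$ is an open neighborhood of $\rho_0$ in $W$, and on it $(\rho,\xi_\rho)$ is $\Theta$-positively framed by \Cref{propo: ThetaPosSemiAlg}, so $\rho$ is $\Theta$-positively frameable. This exhibits an open neighborhood of $\rho_0$ inside $\Pos_\Theta^{\fr}(\Gamma,G_{\Fb})\cap\Hom_{\rho_0}(\Gamma,G_{\Fb})$, proving openness; semi-algebraicity of the set is already established in \Cref{corollary : positively framed repr semi-algebraic} and the discussion of $\Hom_{\rho_0}(\Gamma,G_{\Fb})$ above.

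\textbf{Main obstacle.} The delicate point is Step 2: producing a \emph{continuous} framing $\rho\mapsto\xi_\rho$ on a whole neighborhood of $\rho_0$ inside the relative variety. The issue is that $f_i(\rho)$ must be a fixed point of $\rho(\gamma_i)$ depending continuously on $\rho$; over $\Rb$ one could use the limit $\lim_n\rho(\gamma_i)^n\cdot z$ from \Cref{propo:PosTransUniqueFP}/\Cref{propo: OverRPosRotImpliesDiv}, but over a general real closed field this limit need not exist, so one must instead invoke the semi-algebraic structure: the constancy of the conjugacy class along $\Hom_{\rho_0}$ guarantees the existence of semi-algebraic conjugators, and a definable-choice argument makes one such choice continuous near $\rho_0$. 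Getting this continuity cleanly — and checking that the resulting $f_i(\rho)$ is indeed the ``right'' fixed point (the one coming from $u_il_i$ with $u_i\in U_{\Theta,\Fb}^{>0}$, so that nearby $\rho$ are still positively framed and not merely framed) — is where the real work lies; everything else is assembling \Cref{propo: ThetaPosSemiAlg}, \Cref{propo: positive n-tuples is semi-algebraic}, and the semi-algebraic formalism of Section~\ref{section:Positivitysemi-algebraic}.
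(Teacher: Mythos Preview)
Your overall strategy is the same as the paper's: construct a continuous section $\rho\mapsto(\rho,\xi_\rho)$ of the projection $\widetilde{\Hom}_\Theta^{\fr}(\Gamma,G_{\Fb})\to\Hom(\Gamma,G_{\Fb})$ over $\Hom_{\rho_0}(\Gamma,G_{\Fb})$, and then pull back the open set $\widetilde{\Pos}_\Theta^{\fr}(\Gamma,G_{\Fb})$. But your execution of Step~2 is more complicated than necessary and has a soft spot.

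The paper does not use definable choice or local trivialization to pick conjugators. Instead it uses the \emph{canonical} forward fixed point $g^+$ of a $\Theta$-positively translating element, defined semi-algebraically in \Cref{propo:PosTransUniqueFP}. The key observation is the equivariance $(hgh^{-1})^+=h\cdot g^+$, so on each conjugacy class $\mathcal C(g)$ the map $k\mapsto k^+$ is the push-down of the continuous map $h\mapsto h\cdot g^+$ along $h\mapsto hgh^{-1}$; this yields continuity of $k\mapsto k^+$ on $\mathcal C(g)$ directly (or, if one prefers, by transfer from $\Rb$, since continuity of a semi-algebraic map is first-order). Since every $\rho\in\Hom_{\rho_0}(\Gamma,G_{\Fb})$ has $\rho(\gamma_i)\in\mathcal C(\rho_0(\gamma_i))$ with $\rho_0(\gamma_i)$ positively translating (\Cref{framed positive implies positively translating}), the assignment $\rho\mapsto\xi_\rho^l$ with $\xi_\rho^l(p_i)=\rho(\gamma_i)^+$ is a \emph{global} continuous section over all of $\Hom_{\rho_0}$, not just a neighborhood of $\rho_0$. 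The equality $\Pos_\Theta^{\fr}\cap\Hom_{\rho_0}=\sigma^{-1}(\widetilde{\Pos}_\Theta^{\fr})$ then follows from \Cref{lem:PosFramedRepr} (if $\rho$ is positively frameable, $\xi_\rho^l$ is already a positive framing), and openness is immediate from \Cref{corollary : positively framed repr semi-algebraic}. No reduction to the torsion-free case is needed, and there is no need to re-derive the criterion of \Cref{propo: ThetaPosSemiAlg}.

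Regarding your Step~2: definable Skolem functions give \emph{semi-algebraic} selections, not continuous ones; a semi-algebraic function can jump. To get a continuous $\rho\mapsto g_i(\rho)$ you would need something like local triviality of the principal $Z(\rho_0(\gamma_i))$-bundle $G_{\Fb}\to\mathcal C(\rho_0(\gamma_i))$, which is true but is exactly the kind of heavier machinery the paper avoids by using the intrinsic point $g^+$ instead of a conjugator.
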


\begin{proof}
For any $g\in G_{\Fb}$, let $\mathcal C(g)\subset G_{\Fb}$ denote the set of conjugates of $g$.
It is semi-algebraic (as the image of $G_{\Fb}$ by the semi-algebraic map $h\mapsto h g h^{-1}$).
We claim that the semi-algebraic map $h \mapsto h^+$ is continuous in restriction to $\mathcal C(g)$.
Indeed, the definition of $h \mapsto h^+$ implies that $(h g h^{-1})^+ = h \cdot g^+$.
Now, the map $h \mapsto h \cdot g^+$ is continuous and invariant by the centralizer $Z(g)$ of $g$ in $G_{\Fb}$.
Hence it factors to a continuous map on $\mathcal C(g)\simeq G_{\Fb}/Z(g)$.

By \Cref{framed positive implies positively translating}, $\rho_0(\gamma)$ is $\Theta$-positively translating for all $\gamma\in\Gamma$ of infinite order.
Thus, every $\rho\in \Hom_{\rho_0}(\Gamma,G_{\Fb})$ admits a well-defined framing 
\[\xi_\rho^l \colon \Lambda_p\to \Fc_{\Theta,\Fb}\]
given by $\eta^+\mapsto\rho(\eta)^+$ for each positive parabolic element $\eta\in \Gamma$. This allows us to define the map 
\[\begin{array}{cccc}
\sigma \colon &\Hom_{\rho_0}(\Gamma, G_{\Fb}) & \to & \widetilde{\Hom}_\Theta^{\fr}(\Gamma, G_{\Fb})\\
& \rho & \mapsto & (\rho, \xi_\rho^l)
\end{array}\]
which is continuous by the continuity discussed in the paragraph above. Then
\[\Pos_\Theta^{\fr}(\Gamma,G_{\Fb}) \cap \Hom_{\rho_0}(\Gamma, G_{\Fb}) = \sigma^{-1} \left(\widetilde{\Pos}_\Theta^{\fr}(\Gamma, G_{\Fb})\right)\]
is open, since $ \widetilde{\Pos}_\Theta^{\fr}(\Gamma, G_{\Fb})$ is open in $\widetilde{\Hom}_\Theta^{\fr}(\Gamma, G_{\Fb})$, see \Cref{corollary : positively framed repr semi-algebraic}.
\end{proof}

\subsection{Cocompact case}
Assume now that $\Gamma\subset \Isom_+(\Hb)$ is a cocompact lattice.
The goal of this section is to prove that $\Pos_\Theta(\Gamma,G_{\Fb})$ is an open semi-algebraic subset of $\Hom(\Gamma,G_{\Fb})$ defined over $\Qbar$.
Once we know that the subset is semi-algebraic, the openness follows from \cite[Corollary B]{GLW}, which states that $\Pos_\Theta(\Gamma,G_{\Rb})$ is open in $\Hom(\Gamma,G_{\Rb})$, together with the Tarski--Seidenberg principle, see \Cref{thm_ExtConnComp}~(\ref{thm_ExtConnComp: closedness}).

In order to prove the desired result, we will reduce to the non-cocompact case by a ``gluing lemma'' of independent interest. Let $S$ denote the (closed, oriented) hyperbolic orbifold $\Gamma \backslash \Hb$, choose a simple oriented multi-curve $\{c_1,\dots,c_k\}$ in the hyperbolic orbifold $S$, i.e.\ a pairwise non-intersecting collection of simple, oriented closed geodesics in $S$ that avoid the orbifold points of $S$, and let $S_1,\dots,S_\ell$ denote the connected components of $S\setminus\bigcup_{i=1}^k c_i$.
(Here, we abuse notation and denote the image of $c_i$ also by $c_i$.)

Informally, \Cref{propo: cutting lemma} below states that a representation $\rho\colon  \Gamma = \pi_1(S) \to G_{\Fb}$ is $\Theta$-positive if and only if the restriction of $\rho$ to each $\pi_1(S_i)$ is $\Theta$-positively frameable, and the boundary maps associated to two adjacent connected components have image into opposite diamonds. To state this more precisely, we need to fix some topological data:

Let $\pi\colon \Hb\to S\coloneqq \Gamma \backslash \Hb$ denote the universal cover of $S$. For each $j\in\{1,\dots,\ell\}$, choose a connected component $H_j$ of $\Hb\setminus\bigcup_{i=1}^k \pi^{-1}(c_i)$ such that $\pi(H_j) = S_j$, and let $\Gamma_j\subset\Gamma$ denote the subgroup that leaves $H_j$-invariant, so that $\Gamma_j$ is the deck group of the universal cover $\pi|_{H_j} \colon H_j\to S_j$.
Choose also a non-peripheral group element $\omega_j\in\Gamma_j$.
Notice that $\Gamma_j \subset \Isom_+(\Hb)$ is convex cocompact, but not a lattice. However, by \Cref{Fuchsian groups lemma}, we can fix a non-cocompact lattice $\Gamma_j'\subset \Isom_+(\Hb)$ and a semi-conjugacy $\iota_j\colon\Gamma_j\to\Gamma_j'$.

For each $i\in\{1,\dots,k\}$, let $a(i),b(i)\in\{1,\dots,\ell\}$ be such that $S_{a(i)}$ and $S_{b(i)}$ are the two connected components of $S\setminus\bigcup_{i=1}^kc_i$ that lie to the left and right of $c_i$ respectively (it is possible that $a(i)=b(i)$). Let $\gamma_i\in\Gamma$ be such that $H_{a(i)}$ and $\gamma_i\cdot H_{b(i)}$ share a common boundary component. This boundary component $g_i$ is an oriented geodesic such that $\pi(g_i)= c_i$. Let  $\eta_i\in\Gamma$ denote the generator of the cyclic group $\Stab_\Gamma(g_i)$ that translates positively along the geodesic $g_i$, so that its attracting and repelling fixed points $\eta_i^+$ and $\eta_i^-$ are respectively the forward and backward endpoint of $g_i$. Define
\[\hat\eta_i\coloneqq \omega_{a(i)}^{-1}\eta_i\omega_{a(i)},\quad\nu_i\coloneqq \gamma_i^{-1}\eta_i\gamma_i,\quad\text{and}\quad \hat\nu_i\coloneqq \omega_{b(i)}^{-1}\nu_i\omega_{b(i)}.\]
Observe that $\eta_i,\hat\eta_i\in\Gamma_{a(i)}$ and $\nu_i,\hat\nu_i\in\Gamma_{b(i)}$. These auxiliary elements provide us points $\hat{\eta}_i^+ \in \Lambda(\Gamma_{a(i)})\setminus \{\eta_i^+,\eta_i^-\}$ and $\hat \nu_i^+ \in \Lambda(\Gamma_{b(i)})\setminus \{\nu_i^+, \nu_i^-\}$. Moreover, by construction, the quadruple \[(\eta_i^+,\hat\eta_i^+,\eta_i^-,\gamma_i\cdot\hat\nu_i^+)=\gamma_i\cdot(\nu_i^+,\gamma_i^{-1}\cdot\hat\eta_i^+,\nu_i^-,\hat\nu_i^+)\]
is cyclically ordered. 

Given these topological choices, we may now state the following characterization of $\Theta$-positive representations from $\Gamma$ to $G_{\Fb}$. Recall that if $g\in G_{\Fb}$ is $\Theta$-positively translating, then it admits a forward and backward fixed point, which we denote by $g^+$ and $g^-$ respectively, see \Cref{subsec:PosTranslElem}.

\begin{proposition}
\label{propo: cutting lemma}
Let $\rho \colon \Gamma \to G_{\Fb}$ be a representation. Then $\rho$ is $\Theta$-positive if and only if all of the following conditions hold:
\begin{enumerate}
\item for each $j\in\{1,\dots,\ell\}$, the representation
\[\rho_j \coloneqq \rho|_{\Gamma_j}\circ\iota_j^{-1} \colon \Gamma_j'\to G_{\Fb}\] 
is $\Theta$-positively frameable,

\item for each $i\in \{1, \ldots, k\}$, the element $\rho(\eta_i)$ is weakly $\Theta$-proximal, and

\item for each $i\in\{1,\dots,k\}$, the quadruple of flags
\[\left(\rho(\eta_i)^+,\rho(\hat\eta_i)^+,\rho(\eta_i)^-,\rho(\gamma_i)\cdot \rho(\hat\nu_i)^+\right)\] 
is positive.
\end{enumerate}

\end{proposition}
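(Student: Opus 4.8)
The strategy is to establish the two implications separately, with the forward direction (positivity $\Rightarrow$ (1)--(3)) being the easy one and the converse (gluing) requiring the bulk of the work.

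For the forward direction, assume $\rho$ is $\Theta$-positive. By \Cref{propo: equivalence positivity first and second kind Fuchsian groups}, since each $\iota_j$ is a semi-conjugacy, $\rho_j = \rho|_{\Gamma_j}\circ\iota_j^{-1}$ is $\Theta$-positive; but $\Gamma_j'$ is a non-uniform lattice, so by \Cref{coro: positive => frameable over R} -- wait, that is the real case only. Instead I would argue as follows: $\rho|_{\Gamma_j}$ is $\Theta$-positive, and the peripheral hyperbolic elements of $\Gamma_j$ (those preserving a complementary interval of $\Lambda(\Gamma_j)$) are precisely the $\gamma$ whose image under $\iota_j$ is parabolic in $\Gamma_j'$. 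By \Cref{propo: positive implies weak proximal} applied to $\Gamma_j$ viewed as a second-kind group (via the remark after \Cref{theorem: limit map extends to proximal limit map}), the boundary values at these peripheral points give a positive framing; more directly, restrict a positive boundary map for $\rho$ to $\Lambda(\Gamma_j)$, note it is positive there, transport it via $\iota_j$, and at each parabolic fixed point of $\Gamma_j'$ use the forward fixed point $\rho_j(\cdot)^+$ which is well-defined by \Cref{propo: positive implies weak proximal} (the image of a peripheral hyperbolic element is weakly $\Theta$-proximal, hence $\Theta$-positively translating). This proves (1). Condition (2) is immediate from \Cref{propo: positive implies weak proximal} since each $\eta_i$ is hyperbolic in $\Gamma$. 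Condition (3): the quadruple $(\eta_i^+,\hat\eta_i^+,\eta_i^-,\gamma_i\cdot\hat\nu_i^+)$ is cyclically ordered in $\Sb^1$ by construction, and $\eta_i^+$, $\eta_i^-$ are the attracting/repelling fixed points of $\eta_i$ while $\hat\eta_i^+ = \xi^h_\rho(\hat\eta_i^+)$-preimage etc.; applying a positive $\rho$-equivariant boundary map (extended to $\Lambda_h$ via \Cref{corollary: limit map extends to proximal limit map}) and using that $\rho(\eta_i)^{\pm}$ are the weak attracting/repelling flags (by \Cref{corol: CharWeakThetaProxForPosTrans}) gives positivity of the image quadruple.

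For the converse, assume (1)--(3). For each $j$, (1) together with \Cref{extension cor} -- no, that needs Cantor completeness. Instead: from the positive framing of $\rho_j$ on $\Lambda_p(\Gamma_j')$, transport via $\iota_j$ to get a positive $\rho|_{\Gamma_j}$-equivariant map defined at least on the peripheral hyperbolic fixed points of $\Gamma_j$, and then use \Cref{theorem: limit map extends to proximal limit map} to produce a positive $\rho|_{\Gamma_j}$-equivariant map $\xi_j$ defined on $\Lambda_h(\Gamma_j)$, which is dense in $\Lambda(\Gamma_j)$. The key point is to assemble these into a single positive $\rho$-equivariant map on a $\Gamma$-invariant subset of $\Sb^1$. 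Let $D_j \subset \Lambda(\Gamma_j)$ be the domain of $\xi_j$; set $D = \bigcup_{\gamma\in\Gamma}\bigcup_j \gamma\cdot D_j$ (using that $\Gamma$-translates of the $H_j$ tessellate $\Hb$, so the $\Gamma$-translates of the $\Lambda(\Gamma_j)$ cover $\Lambda(\Gamma) = \Sb^1$ up to the endpoints of lifted curves, which are themselves hyperbolic fixed points of $\Gamma$). Define $\xi$ on $D$ by $\xi(\gamma\cdot x) = \rho(\gamma)\cdot\xi_j(x)$ for $x\in D_j$; well-definedness on overlaps is exactly where conditions (2)--(3) and \Cref{coro: compatibility positive boundary maps} enter: on a shared boundary geodesic $g_i$, the two maps $\xi_{a(i)}$ and $\gamma_i\cdot\xi_{b(i)}$ must agree at $\eta_i^\pm$, which follows because $\rho(\eta_i)$ is weakly $\Theta$-proximal with weak attracting flag $\rho(\eta_i)^+$ -- forced to equal both limit values by the $\Theta$-proximal limit map being canonical (\Cref{corollary: limit map extends to proximal limit map}).

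Finally, one must verify that this glued $\xi$ is positive. Given a cyclically ordered tuple in $D$, the idea is to use the tree structure dual to the tessellation $\{\gamma\cdot H_j\}$: the tuple breaks up into blocks, each lying in a single $\Gamma$-translate of some $\Lambda(\Gamma_j)$, and consecutive blocks are separated by a lifted curve $\gamma\cdot g_i$. Within each block positivity holds because $\xi_j$ is positive; across a separating geodesic, positivity of the relevant four-flag configuration is exactly condition (3) (after $\Gamma$-translation), which by \Cref{intersection of diamonds} and \Cref{lem: closure of diamond} controls how the diamonds on the two sides sit as opposites. Then iterated application of \Cref{corollarly: gluing} (the gluing corollary of \Cref{lem: AddingElemToPosTuple}) assembles the blocks into one positive tuple. \textbf{The main obstacle} I anticipate is making this last gluing argument precise: one needs a clean combinatorial description of how an arbitrary cyclically ordered tuple in $\Sb^1$ interacts with the $\Gamma$-orbit of the curve system (a "which side of which wall" bookkeeping on the Bass--Serre tree), and an induction on the number of walls separating the points of the tuple, with \Cref{corollarly: gluing} as the inductive step and condition (3) providing the base-case four-tuple. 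Handling orbifold points (torsion in $\Gamma_j$) cleanly -- passing to a torsion-free finite-index subgroup via \Cref{cor: positivity finite-index subgroup} and \Cref{corollary: fi frameable} -- is a routine but necessary preliminary reduction.
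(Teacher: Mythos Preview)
Your overall architecture matches the paper's: forward direction via restricting the proximal limit map $\xi^h_\rho$, backward direction via defining a $\rho$-equivariant map on a $\Gamma$-orbit of curve endpoints and proving positivity by an induction over the pieces (your Bass--Serre tree picture), with \Cref{corollarly: gluing} as the inductive step and condition (3) supplying the cross-wall quadruple. That part is correct and is exactly the paper's ``Second step''.

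There is, however, a genuine gap in your backward direction, namely the step where you claim to ``transport the positive framing of $\rho_j$ on $\Lambda_p(\Gamma_j')$ via $\iota_j$ to get a positive $\rho|_{\Gamma_j}$-equivariant map defined at least on the peripheral hyperbolic fixed points of $\Gamma_j$''. This cannot work as stated: each parabolic fixed point of $\Gamma_j'$ corresponds under $\alpha_j$ to a \emph{pair} $(\mu^+,\mu^-)$ of peripheral hyperbolic fixed points of $\Gamma_j$, so the single framing value cannot be assigned to both while keeping the map transverse. The correct values are $\rho(\mu)^+$ and $\rho(\mu)^-$, distinct by condition (2); but showing that the resulting map on $D\cap\Lambda(\Gamma_j)$ is \emph{positive} is not a consequence of \Cref{corollary: limit map extends to proximal limit map}: that result requires a first-kind group (which $\Gamma_j$ is not), and invoking it for $\Gamma$ itself would be circular since positivity of $\rho$ is what you are proving.

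The paper's ``First step'' fills exactly this gap. It shows positivity of $\xi|_{D\cap\Lambda(\Gamma_j)}$ by combining two ingredients: the maximal boundary extension $\zeta^M_{\rho_j}$ of the first-kind representation $\rho_j$, which gives positivity of tuples with each pair $(\mu_t^+,\mu_t^-)$ \emph{collapsed} (since $\alpha_j(\mu_t^+)=\alpha_j(\mu_t^-)$); and condition (2) together with \Cref{corol: CharWeakThetaProxForPosTrans}, which for each peripheral $\mu_t$ gives positivity of the local quadruple $(\xi(\mu_t^+),\xi(\mu_t^-),\rho(\mu_t)^{-n}\cdot\xi(y),\rho(\mu_t)^n\cdot\xi(y))$. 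These are then stitched together by iterated applications of \Cref{corollarly: gluing} \emph{within} the piece. So the gluing corollary is used twice over: once inside each $\Lambda(\Gamma_j)$ to separate $\mu_t^+$ from $\mu_t^-$, and once across pieces in the induction you described. Your proposal only accounts for the second use.
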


\begin{proof}
    We first prove the forward implication. Since $\Gamma$ is a cocompact lattice, every infinite order element in $\Gamma$ is hyperbolic. As such, since $\rho$ is $\Theta$-positive, \Cref{theorem: limit map extends to proximal limit map} implies that for every infinite order element $\gamma\in\Gamma$, $\rho(\gamma)$ is weakly $\Theta$-proximal. In particular, $\rho(\eta_i)$ is weakly $\Theta$-proximal for all $i\in \{1,\ldots, k\}$.
    
    Recall that $\Lambda_h(\Gamma)\subset \Sb^1$ denotes the set of fixed points of hyperbolic elements in $\Gamma$. \Cref{theorem: limit map extends to proximal limit map} also implies that the map $\xi_\rho^h \colon \Lambda_h(\Gamma)\to\Fc_{\Theta,\Fb}$, which sends every $\gamma^+\in\Lambda_h(\Gamma)$ to $\rho(\gamma)^+$, is positive. By \Cref{Fuchsian groups lemma}, for each $j\in\{1,\dots,\ell\}$, there is a left-continuous, $\iota_j^{-1}$-equivariant, monotonic, injective map
    \[\beta_j\colon \Lambda(\Gamma_j')\to\Lambda(\Gamma_j).\]
    Since $\beta_j$ is $\iota_j^{-1}$-equivariant, it sends the fixed points in $\Lambda(\Gamma_j')$ of infinite order elements in $\Gamma_j'$ to the fixed points in $\Lambda(\Gamma_j)$ of infinite order elements in $\Gamma_j$. Hence, $\beta_j(\Lambda_p(\Gamma_j'))\subset\Lambda_h(\Gamma_j)\subset\Lambda_h(\Gamma)$, so the map 
    \[\xi^h_\rho\circ(\beta_j|_{\Lambda_p(\Gamma_j')}) \colon \Lambda_p(\Gamma_j')\to\Fc_{\Theta,\Fb}\]
    is a well-defined positive framing for $\rho_j$.

For each $i\in\{1,\dots,k\}$, 
    \[\rho(\eta_i)^\pm=\xi^h_\rho(\eta_i^\pm),\quad\rho(\hat\eta_i)^\pm=\xi^h_\rho(\hat\eta_i^\pm),\quad\text{and}\quad \rho(\hat\nu_i)^\pm=\xi^h_\rho(\hat\nu_i^\pm).\]
    We observed earlier that $(\eta_i^+,\hat\eta_i^+,\eta_i^-,\gamma_i\cdot\hat\nu_i^+)$ is cyclically ordered, so 
    \[\left(\rho(\eta_i)^+,\rho(\hat\eta_i)^+,\rho(\eta_i)^-,\rho(\gamma_i)\cdot \rho(\hat\nu_i)^+\right)=\left(\xi^h_\rho(\eta_i^+),\xi^h_\rho(\hat\eta_i^+),\xi^h_\rho(\eta_i^-),\xi^h_\rho(\gamma_i\cdot\hat\nu_i^+)\right)\] 
    is positive.   This finishes the proof of the forward implication.\\

    Let us now turn to the proof of the backward implication. Consider the $\Gamma$-invariant subset \[D\coloneqq \bigcup_{i=1}^k(\Gamma\cdot \eta_i^+\cup\Gamma\cdot\eta_i^-) \subset \Lambda_h(\Gamma)~.\] 
    Since $\rho(\eta_i)$ is $\Theta$-positively translating by condition (1) and weakly $\Theta$-proximal by condition (2), the map 
    \[\xi \colon D\to\Fc_{\Theta,\Fb}\]
    given by $\gamma^+\mapsto\rho(\gamma)^+$ (for every $\gamma$ conjugated to one of the $\eta_i$ or $\eta_i^{-1}$) is well-defined and $\rho$-equivariant, and it suffices to prove that it is positive. The strategy to do so has two steps; first, we use condition (1) in the statement of the proposition to show that for each $j\in\{1,\dots,\ell\}$, the map \[\xi|_{D\cap\Lambda(\Gamma_j)}\colon D\cap\Lambda(\Gamma_j)\to\Fc_{\Theta,\Fb}\] 
    is positive. Then, using condition (3), we prove using an induction argument that we can recover the positivity of $\xi$ from the positivity of its restrictions considered in the first step.\\

    \noindent {\bf First step.} Fix $j\in\{1,\dots,\ell\}$. Since $\rho_j$ is $\Theta$-positive, there is a maximal positive $\Theta$-boundary extension $\zeta_j\coloneqq \zeta^\textnormal{M}_{\rho_j} \colon \mathcal V_{\rho_j}^{\mathrm{M}}\to\Lambda(\Gamma_j)$ for $\rho_j$, see \Cref{prop: converse}. By \Cref{Fuchsian groups lemma}, there exists a continuous, $\iota_j$-equivariant, monotonic, surjective map 
    \[\alpha_j\colon \Lambda(\Gamma_j)\to  \Lambda(\Gamma_j')\]
    whose fibers contain at most two points.
    Observe that every point in $D\cap\Lambda(\Gamma_j)$ is the attracting fixed point $\gamma^+$ for some hyperbolic element $\gamma\in\Gamma_j$, and by \Cref{lem:PosFramedRepr}  we have
    \begin{align}\label{eqn: propo: cutting lemma}
\xi(\gamma^+)=\rho(\gamma)^+=\rho_j(\iota_j(\gamma))^+\in\zeta_j^{-1}(\iota_j(\gamma)^+)=\zeta_j^{-1}(\alpha_j(\gamma^+)),
\end{align}
where the last equality holds by the $\iota_j$-equivariance and monotonicity of $\alpha_j$.  

We need to prove that $\xi$ sends cyclically ordered tuples in $D\cap\Lambda(\Gamma_j)$ to positive tuples of flags in $\Fc_{\Theta,\Fb}$. Since positivity is preserved by taking sub-tuples, it suffices to prove that if $\mu_1,\dots,\mu_n\in\Gamma_j$ are hyperbolic elements such that $\iota_j(\mu_t)\in\Gamma_j'$ is a parabolic element for all $t\in\{1,\dots,n\}$ and
\[(\mu_1^+,\mu_1^-,\mu_2^+,\mu_2^-,\dots,\mu_n^+,\mu_n^-)\]
is cyclically ordered, then 
\[\left(\xi(\mu_1^+),\xi(\mu_1^-),\xi(\mu_2^+),\xi(\mu_2^-),\dots,\xi(\mu_n^+),\xi(\mu_n^-)\right)\] 
is positive.

Let $y\in D\cap\Lambda(\Gamma_j)$ be a point that is not fixed by $\mu_t$ for all $t\in\{1,\dots,n\}$. Then for sufficiently large $m$, 
\begin{align*}
(\mu_1^+,\mu_1^-,\mu_1^{-m}\cdot y,\mu_2^m\cdot y,\mu_2^+,\mu_2^-,\mu_2^{-m}\cdot y,\dots,\mu_n^m\cdot y,\mu_n^+,\mu_n^-,\mu_n^{-m}\cdot y,\mu_1^m\cdot y)    
\end{align*}
is a cyclically ordered tuple in $D\cap \Lambda(\Gamma_j)$. Observe that for each $t\in\{1,\dots,n\}$, $\alpha_j(\mu_t^+)=\alpha_j(\mu_t^-)\neq\alpha_j(\mu_t^s\cdot y)$ for any $s\in\Zb$, so
\begin{align*}
\big(\alpha_j(\mu_1^+),\alpha_j(\mu_1^{-m}\cdot y),\alpha_j(\mu_2^m\cdot y),&\,\alpha_j(\mu_2^+),\alpha_j(\mu_2^{-m}\cdot y),\dots\\
&\dots,\alpha_j(\mu_n^m\cdot y),\alpha_j(\mu_n^+),\alpha_j(\mu_n^{-m}\cdot y),\alpha_j(\mu_1^m\cdot y)\big)
\end{align*}
is a cyclically ordered tuple in $\Lambda_p(\Gamma_j')$, so by Equation~\eqref{eqn: propo: cutting lemma} and the fact that $\zeta_j$ is a positive $\Theta$-boundary extension for $\rho_j$, we see that
\begin{align}\label{eqn: positive1}
\begin{split}
\big(\xi(\mu_1^+),\xi(\mu_1^{-m}\cdot y),\xi(\mu_2^m\cdot y),&\,\xi(\mu_2^+),\xi(\mu_2^{-m}\cdot y),\dots\\
&\dots,\xi(\mu_n^m\cdot y),\xi(\mu_n^+),\xi(\mu_n^{-m}\cdot y),\xi(\mu_1^m\cdot y)\big)
\end{split}
\end{align}
is positive. 

On the other hand, notice that for each $t\in\{1,\dots,n\}$, $(y,\mu_t\cdot y,\mu_t^2\cdot y,\mu_t^+,\mu_t^-)$ is a cyclically ordered tuple in $D\cap \Lambda(\Gamma_j)$. Hence, $(\alpha_j(y),\alpha_j(\mu_t\cdot y),\alpha_j(\mu_t^2\cdot y),\alpha_j(\mu_t^+))$ is a cyclically ordered tuple in $\Lambda_p(\Gamma_j')$, so Equation~\eqref{eqn: propo: cutting lemma} again implies that
\[\big(\xi(y),\rho(\mu_t)\cdot\xi(y),\rho(\mu_t)^2\cdot\xi( y),\rho(\mu_t)^+\big)=\big(\xi(y),\xi(\mu_t\cdot y),\xi(\mu_t^2\cdot y),\xi(\mu_t^+)\big)\]
is positive, i.e.\ that $(\xi(y),\rho(\mu_t)^+)$ is a positive translating pair for $\rho(\mu_t)$. By assumption, $\rho(\mu_t)=\rho_j(\iota_j(\mu_t))$ is weakly $\Theta$-proximal, so we may apply \Cref{corol: CharWeakThetaProxForPosTrans} to deduce that 
\begin{align}\label{eqn: positive2}
\begin{split}
\big(\xi(\mu_t^+),\xi(\mu_t^-), \xi(\mu_t^{-n}\cdot y),&\,\xi(\mu_t^n\cdot y)\big)\\
&=\big(\rho(\mu_t)^+,\rho(\mu_t)^-,\rho(\mu_t)^{-n}\cdot \xi(y),\rho(\mu_t)^n\cdot\xi(y)\big)
\end{split}
\end{align}
    is positive. Together, the positivity of the tuples \eqref{eqn: positive1} and \eqref{eqn: positive2} allow us to iteratively apply \Cref{corollarly: gluing} to deduce that 
\begin{align*}
\big(\xi(\mu_1^+),\xi(\mu_1^-),\xi(\mu_1^{-m}\cdot y),&\,\xi(\mu_2^m\cdot y),\xi(\mu_2^+),\xi(\mu_2^-),\xi(\mu_2^{-m}\cdot y),\dots\\
&\dots,\xi(\mu_n^m\cdot y),\xi(\mu_n^+),\xi(\mu_n^-),\xi(\mu_n^{-m}\cdot y),\xi(\mu_1^m\cdot y)\big)    
\end{align*}
is positive. In particular, $\left(\xi(\mu_1^+),\xi(\mu_1^-),\xi(\mu_2^+),\xi(\mu_2^-),\dots,\xi(\mu_n^+),\xi(\mu_n^-)\right)$ is positive. This finishes the first step.\\

\noindent {\bf Second step.} Let $\mathcal M$ be a finite, pairwise distinct collection of $\Gamma$-translates of the closures in $\Hb$ of the subsets $H_1,\dots,H_\ell$, such that $\bigcup_{M\in\mathcal M}M$ is connected. Then let $D(\mathcal M)$ denote the set of points in $D$ that are limits of points in $\bigcup_{M\in\mathcal M}M$. It suffices to prove that $\xi|_{D(\mathcal M)}$ is positive. We will do so by induction on the cardinality of $\mathcal M$. 

The base case when $\mathcal M$ is a singleton follows immediately from the first step. For the inductive step, suppose that $\abs{\mathcal M}\ge 2$, and let $(x_1,\dots,x_n)$ be a cyclically ordered tuple in $D(\mathcal M)$. We will prove that $(\xi(x_1),\dots,\xi(x_n))$ is positive.

Let $g$ be an oriented geodesic that lies in the interior of $\bigcup_{M\in\mathcal M}M$ so that $g=\gamma\cdot g_i$ for some $\gamma\in\Gamma$ and some $i\in\{1,\dots,k\}$. Let $\mathcal M_L$ and $\mathcal M_R$ denote the subsets of $\mathcal M$ consisting of all the elements that lie to the left and right of $g$ respectively. Notice that $\gamma\cdot \overline{H_{a(i)}}\in\mathcal M_L$ and $\gamma\gamma_i\cdot\overline{H_{b(i)}}\in\mathcal M_R$, so $\gamma\cdot\hat\eta_i^+\in D(\mathcal M_L)$ and $\gamma\gamma_i\cdot\hat\nu_i^+\in D(\mathcal M_R)$. 

Since positivity is preserved by taking sub-tuples and cyclic permutations, we may assume that $n\ge 4$, and there is some $m\in\{3,\dots,n-1\}$ such that 
\begin{itemize}
    \item $x_1$ and $x_m$ are the backward and forward endpoints of $g$, i.e.\ $x_1=\gamma\cdot\eta_i^-$ and $x_m=\gamma\cdot\eta_i^+$, 
    \item there is some $c\in\{2,\dots,m-1\}$ such that 
    $x_c=
    \gamma\gamma_i\cdot\hat\nu_i^+$
    \item there is some $d\in\{m+1,\dots,n\}$ such that $x_d=
    \gamma\cdot\hat\eta_i^+$.
\end{itemize}
Necessarily, we have $x_1,\dots,x_m\in D(\mathcal M_R)$ and $x_m,\dots,x_n,x_1\in D(M_L)$.

Since $\abs{\mathcal M_R},\abs{\mathcal M_L}<\abs{\mathcal M}$, by the inductive hypothesis, $\xi|_{D(\mathcal M_L)}$ and $\xi|_{D(\mathcal M_R)}$ are both positive, so the tuples of flags
\[\big(\xi(x_1),\dots,\xi(x_m)\big)\quad\text{and}\quad\big(\xi(x_m),\dots,\xi(x_n),\xi(x_1)\big)\]
are positive. By condition (3), the quadruple of flags
\begin{align*}
    (\xi(x_1),\xi(x_c),\xi(x_m),\xi(x_d))&=(\xi(\gamma\cdot \eta_i^-),\xi(\gamma\gamma_i\cdot\hat\nu_i^+),\xi(\gamma\cdot \eta_i^+),\xi(\gamma\cdot\hat\eta_i^+))\\
&=\rho(\gamma)\cdot\left(\rho(\eta_i)^-,\rho(\gamma_i)\cdot \rho(\hat\nu_i)^+,\rho(\eta_i)^-,\rho(\hat\eta_i)^+\right)
\end{align*}
is positive. Thus, by \Cref{corollarly: gluing}, $\big(\xi(x_1),\dots,\xi(x_m)\big)$ is positive.
\end{proof}

As a consequence of \Cref{propo: cutting lemma}, we obtain the following corollary.

\begin{corollary}
\label{corol : positive semi algebraic}
    Suppose that $\Gamma\subset \Isom_+(\Hb)$ is a cocompact lattice. The set
    \[\Pos_\Theta(\Gamma, G_{\Fb})\subset \Hom(\Gamma,G_{\Fb})\]
    is semi-algebraic. Furthermore,
    \[\Pos_\Theta(\Gamma,G)_{\Fb} = \Pos_\Theta(\Gamma,G_{\Fb}).\]
\end{corollary}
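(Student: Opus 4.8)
The plan is to derive \Cref{corol : positive semi algebraic} directly from the cutting and gluing lemma (\Cref{propo: cutting lemma}) together with the semi-algebraicity results already established in the non-uniform case (\Cref{corollary : positively framed repr semi-algebraic}) and the semi-algebraicity of weak proximality and positivity of quadruples of flags (\Cref{prop: continuity g->g+ hyperbolic}, \Cref{propo: positive n-tuples is semi-algebraic}). The key observation is that all three conditions in \Cref{propo: cutting lemma} are semi-algebraic conditions on $\rho$, defined over $\overline{\Qb}^r$.

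First I would fix, once and for all, the topological data entering \Cref{propo: cutting lemma}: a simple oriented multi-curve $\{c_1,\dots,c_k\}$ in $S = \Gamma\backslash\Hb$, the components $S_1,\dots,S_\ell$, the subgroups $\Gamma_j\subset\Gamma$, the non-peripheral elements $\omega_j\in\Gamma_j$, the semi-conjugacies $\iota_j\colon \Gamma_j\to\Gamma_j'$ onto non-uniform lattices $\Gamma_j'$, and the elements $\gamma_i,\eta_i,\hat\eta_i,\nu_i,\hat\nu_i\in\Gamma$. Then I would unwind condition (1): the map $\rho\mapsto \rho_j = \rho|_{\Gamma_j}\circ\iota_j^{-1}$ is the composition of the restriction map $r_{\Gamma_j}\colon \Hom(\Gamma,G_{\Fb})\to \Hom(\Gamma_j,G_{\Fb})$ with the isomorphism $\Hom(\Gamma_j,G_{\Fb})\cong \Hom(\Gamma_j',G_{\Fb})$ induced by $\iota_j$; both are semi-algebraic and defined over $\overline{\Qb}^r$, and $\Pos_\Theta^{\fr}(\Gamma_j',G_{\Fb})$ is semi-algebraic and defined over $\overline{\Qb}^r$ by \Cref{corollary : positively framed repr semi-algebraic}. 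Hence the locus where condition (1) holds is the intersection over $j$ of preimages of semi-algebraic sets under semi-algebraic maps, hence semi-algebraic and defined over $\overline{\Qb}^r$. For condition (2), the set $\Prox_\Theta(G_{\Fb})$ is open semi-algebraic (\Cref{prop: continuity g->g+ hyperbolic}) and defined over $\overline{\Qb}^r$, so $\{\rho\mid \rho(\eta_i)\in\Prox_\Theta(G_{\Fb})\} = e_{\eta_i}^{-1}(\Prox_\Theta(G_{\Fb}))$ is semi-algebraic; on this locus the map $g\mapsto g^+$ is semi-algebraic (\Cref{prop: continuity g->g+ hyperbolic}), and the map $g\mapsto g^+$ for $\Theta$-positively translating $g$ was also shown to be semi-algebraic after \Cref{propo:PosTransUniqueFP}, so all the flags appearing in condition (3), namely $\rho(\eta_i)^\pm$, $\rho(\hat\eta_i)^+$, and $\rho(\gamma_i)\cdot\rho(\hat\nu_i)^+$, depend semi-algebraically on $\rho$ on the locus where conditions (1) and (2) hold (note condition (1) guarantees, via \Cref{framed positive implies positively translating}, that $\rho(\hat\eta_i)$ and $\rho(\hat\nu_i)$ are $\Theta$-positively translating, so their forward fixed flags are defined). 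Finally, the set of positive quadruples $(\Fc_{\Theta,\Fb}^4)_{>0}$ is open semi-algebraic and defined over $\overline{\Qb}^r$ by \Cref{propo: positive n-tuples is semi-algebraic}, so condition (3) cuts out a semi-algebraic subset of the locus where (1) and (2) hold. Intersecting the three loci, \Cref{propo: cutting lemma} gives $\Pos_\Theta(\Gamma,G_{\Fb})$ as a semi-algebraic set defined over $\overline{\Qb}^r$. Moreover, since every map and set used is the $\Fb$-extension of its $\overline{\Qb}^r$-counterpart, the same Boolean combination of polynomial (in)equalities with $\overline{\Qb}^r$-coefficients defines $\Pos_\Theta(\Gamma,G)$ over $\overline{\Qb}^r$, and hence $\Pos_\Theta(\Gamma,G)_{\Fb} = \Pos_\Theta(\Gamma,G_{\Fb})$ by the independence of $\Fb$-extension from the defining formula (\Cref{dfn_ExtSemiAlgSets} and the remark after it).

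The main obstacle I anticipate is purely bookkeeping rather than conceptual: I must verify that the flags $\rho(\eta_i)^{\pm}$, $\rho(\hat\eta_i)^{+}$ and $\rho(\gamma_i)\cdot\rho(\hat\nu_i)^{+}$ are genuinely well-defined and semi-algebraic \emph{as functions on the relevant locus}. This requires knowing that on the locus where (1) holds all infinite-order $\Gamma$-elements involved are $\Theta$-positively translating — which follows since $\Gamma$ is a uniform lattice (so all infinite-order elements are hyperbolic), each $\eta_i,\hat\eta_i,\hat\nu_i$ is conjugate into some $\Gamma_j$, and \Cref{framed positive implies positively translating} applies to $\rho_j$ — and that the semi-algebraic map ``$\Theta$-positively translating element $\mapsto$ forward fixed flag'' established after \Cref{propo:PosTransUniqueFP} restricts correctly along the evaluation maps $e_{\eta_i},e_{\hat\eta_i},e_{\hat\nu_i}$. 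Once these compatibilities are checked (they all reduce to composing semi-algebraic maps defined over $\overline{\Qb}^r$), the conclusion is immediate. I should also note that the stated openness of $\Pos_\Theta(\Gamma,G_{\Fb})$ in $\Hom(\Gamma,G_{\Fb})$ is not part of this corollary's assertion but, as remarked in the text, follows once semi-algebraicity is known by transferring \cite[Corollary B]{GLW} from $\Rb$ via \Cref{thm_ExtConnComp}.
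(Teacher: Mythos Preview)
Your proposal is correct and follows essentially the same route as the paper's proof: decompose $\Pos_\Theta(\Gamma,G_{\Fb})$ via the three conditions of \Cref{propo: cutting lemma}, then express each as a preimage of a semi-algebraic set (over $\overline{\Qb}^r$) under a semi-algebraic map, invoking \Cref{corollary : positively framed repr semi-algebraic} for condition~(1), \Cref{prop: continuity g->g+ hyperbolic} for condition~(2) and for the semi-algebraicity of the flag assignments in condition~(3), and \Cref{propo: positive n-tuples is semi-algebraic} for the positivity of the resulting quadruples. The one step you skip is the paper's preliminary reduction (via Selberg's lemma and \Cref{cor: positivity finite-index subgroup}) to the case where $\Gamma$ is torsion free, which is what guarantees that the orbifold $\Gamma\backslash\Hb$ actually admits a simple oriented multicurve so that the setup of \Cref{propo: cutting lemma} is available.
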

\begin{proof}
By Selberg's lemma, $\Gamma$ contains a finite index, torsion free, normal subgroup $\Gamma'$. We previously observed that the restriction map
\[r_{\Fb}\colon\Hom(\Gamma,G_{\Fb})\to\Hom(\Gamma',G_{\Fb})\]
is continuous, semi-algebraic, and the $\Fb$-extension of $r\coloneqq r_{\overline{\Qb}^r}$.
Since
\[\Pos_\Theta(\Gamma,G_{\Fb})=r_{\Fb}^{-1}(\Pos_\Theta(\Gamma',G_{\Fb})),\]
to prove the first claim of the lemma, we may assume that $\Gamma$ is torsion free.

Since $\Gamma$ is torsion free, $S\coloneqq \Hb/\Gamma$ admits a simple oriented multicurve $\{c_1,\dots,c_k\}$, and so we can put ourselves in the set-up of \Cref{propo: cutting lemma}. Given a representation $\rho \colon \Gamma\to G_{\Fb}$ and $j\in\{1,\dots,\ell\}$, let 
\[\rho_j\coloneqq \rho|_{\Gamma_j}\circ\iota_j^{-1} \colon \Gamma_j'\to G_{\Fb}.\] 
Recall that the map 
\[\Phi_j \colon \Hom(\Gamma,G_{\Fb})\to\Hom(\Gamma_j',G_{\Fb}), \quad \rho\mapsto \rho_j\]
is semi-algebraic.

By \Cref{corollary : positively framed repr semi-algebraic}, $\Pos_\Theta^{\fr}(\Gamma_j',G_{\Fb})\subset \Hom(\Gamma_j',G_{\Fb})$
    is a semi-algebraic subset for each $j\in\{1,\dots,\ell\}$. Also, by \Cref{prop: continuity g->g+ hyperbolic}, the subset $\Prox_\Theta(G_{\Fb})\subset G_{\Fb}$ of weakly $\Theta$-proximal elements in $G_{\Fb}$ is semi-algebraic. 
    Thus the subset $A$ of representations satisfying conditions (1) and (2) in \ref{propo: cutting lemma} can be described as
    \[A= \bigcap_{j=1}^\ell\Phi_j^{-1}(\Pos_\Theta^{\fr}(\Gamma_j',G_{\Fb}))\cap\bigcap_{i=1}^ke_{\eta_i}^{-1}(\Prox_\Theta(G_{\Fb}))\subset\Hom(\Gamma,G_{\Fb})~,\]
    which is a semi-algebraic subset of $\Hom(\Gamma,G_{\Fb})$. Here, recall that for all $\gamma\in\Gamma$,\break $e_\gamma\colon \Hom(\Gamma,G_{\Fb})\to G_{\Fb}$ denotes the evaluation at $\gamma$ defined by $e_\gamma \colon \rho\mapsto\rho(\gamma)$, which we already observed is semi-algebraic.

For each $i\in\{1,\dots,k\}$, \Cref{prop: continuity g->g+ hyperbolic} also implies that the map
\[\Xi_i \colon A\to\Fc_{\Theta,\Fb}^4\]
defined by $\Xi_i \colon \rho\mapsto\left(\rho(\eta_i)^+,\rho(\hat\eta_i)^+,\rho(\eta_i)^-,\rho(\gamma_i)\cdot \rho(\hat\nu_i)^+\right)$ is semi-algebraic. Also, \Cref{propo: positive n-tuples is semi-algebraic} implies that the subset $(\Fc_{\Theta,\Fb}^4)_{>0}\subset\Fc_{\Theta,\Fb}^4$ of positive quadruples of flags is semi-algebraic. Thus,
\[B \coloneqq \bigcap_{i=1}^k\Xi_i^{-1}((\Fc_{\Theta,\Fb}^4)_{>0})\]
is a semi-algebraic subset of $\Hom(\Gamma,G_{\Fb})$. On the other hand, \Cref{propo: cutting lemma} implies that $B=\Pos_\Theta(\Gamma, G_{\Fb})$, so we have proven the first claim of the corollary. \\

One can moreover verify that all the above semi-algebraic conditions are $\Fb$-extensions of conditions which are defined over $\Qbar$. It thus follows from the transfer principle (\Cref{thm_TarskiSeidenberg}) that the set that they define over $\Fb$ is the $\Fb$-extension of the one they define over $\Qbar$, i.e.\
\[\Pos_\Theta(\Gamma,G_{\Fb})= \Pos_\Theta(\Gamma,G)_{\Fb}~.\qedhere\]
\end{proof}

\section{%
\texorpdfstring{%
Real spectrum compactification of $\Theta$-positive representations}%
{Real spectrum compactification of Theta-positive representations}}
\label{section:RSC}

The goal of the section is to prove Corollaries~\ref{coro:Positive = limit of positive} and~\ref{coro: positive form connected components}.
For experts, it should be clear at this moment that these corollaries follow readily from the characterization of the set of $\Theta$-positively frameable and $\Theta$-positive representations in Corollaries~\ref{corollary : positively framed repr semi-algebraic} and~\ref{corol : positive semi algebraic}, and \Cref{thm: closedness}, and are merely a rephrasing of these results in terms of the real spectrum compactification of $\Hom(\Gamma,G)$.

However, to keep this article accessible even for non-experts, we give the necessary background on the real spectrum compactification of representation varieties.
For more details for the general setup we refer to \cite{BochnakCosteRoy_RealAlgebraicGeometry, Scheiderer_RAG}, and to \cite{BurgerIozziParreauPozzetti_RSCCharacterVarieties2} in the context of representations and character varieties.

\subsection{Real spectrum of a ring}

In this section we introduce the notion of the real spectrum of a ring.
We mainly follow \cite[Chapters 7.1 and 7.2]{BochnakCosteRoy_RealAlgebraicGeometry}.
Let $A$ be a commutative ring with $1$.
In our examples, $A$ will often be a polynomial ring with coefficients in $\overline{\Qb}^r$.

\begin{definition}
\label{dfn_RealSpectrum}
The \emph{real spectrum} $\rsp(A)$ is the topological space
\begin{align*}
\rsp(A) = \{ (\mathfrak{p},\leq) \mid \mathfrak{p} \subset &A \textrm{ prime ideal, }\leq \textrm{ order on the fraction field } \textnormal{Frac}(A/\mathfrak{p}) \}
\end{align*}
together with the following subbasis of open sets: For $a \in A$ let
\[ \widetilde{\mathcal{B}}(a) \coloneqq \{ (\mathfrak{p}, \leq) \in \rsp(A) \mid \overline{a}^\mathfrak{p} >0\}, \]
where $\overline{a}^\mathfrak{p}$ is the image of $a$ in $\textnormal{Frac}(A/\mathfrak{p})$ under the homomorphism
\[ A \to A/\mathfrak{p} \to \textnormal{Frac}(A/\mathfrak{p}).\]
\end{definition}

Note that this definition differs from the classical spectrum of a ring: firstly because the only ideals for which $\textnormal{Frac}(A/\mathfrak{p})$ admits an order are \emph{real ideals}, i.e.\ such that whenever $a_1^2 + \ldots +a_m^2 \in \mathfrak{p}$ for some $a_1,\ldots, a_m \in A$, then $a_i \in \mathfrak{p}$ for all $i=1,\ldots m$. Secondly, because the field $\textnormal{Frac}(A/\mathfrak{p})$ may have several distinct orders.

If $k$ is a field,  the real spectrum $\rsp(k)$ of $k$ is homeomorphic to the set of orders on $k$ together with the Harrison topology \cite[Example 7.1.4 a)]{BochnakCosteRoy_RealAlgebraicGeometry}.
It is non-empty if and only if $k$ is orderable.

\begin{remark}
There is an equivalent, and for our purposes better, way of viewing points in the real spectrum \cite[3.1.15]{Scheiderer_RAG}.
Namely, if $\Fb$ is a real closed field, and $\varphi \from A \to \Fb$ a ring homomorphism, the kernel of $\varphi$ is a prime ideal in $A$, and the unique order on $\Fb$ restricts to an order on $\textnormal{Frac}(A/\ker(\varphi)) \subset \Fb$.
We say that $\varphi$ \emph{represents} the point $\alpha=(\ker(\varphi),\leq_{\Fb})$, and we write $[\varphi]=\alpha$.
On the other hand, if $\alpha=(\mathfrak{p}, \leq)$ is a point in $\rsp(A)$, then the ring homomorphism
\[A \to A/\mathfrak{p} \to \textnormal{Frac}(A/\mathfrak{p}) \hookrightarrow \overline{(\textnormal{Frac}(A/\mathfrak{p}),\leq)}^r,\]
where $\overline{(\textnormal{Frac}(A/\mathfrak{p}),\leq)}^r$ denotes the real closure of $\textnormal{Frac}(A/\mathfrak{p})$ endowed with the order~$\leq$, represents $\alpha$.

Two ring homomorphisms $\varphi_1 \colon A \to \Fb_1$ and $\varphi_2 \colon A \to \Fb_2$ represent the same point in $\rsp(A)$ if and only if there exists a real closed field $\Fb$ and field homomorphisms $f_i \colon \Fb_i \to \Fb$ such that
\[f_1\circ \varphi_1 = f_2 \circ \varphi_2~.\]
For a proof see e.g.\ \cite[Lemma 3.1.16]{Scheiderer_RAG}.
\end{remark}

\begin{example}
    Let $A = \overline{\Qb}^r[Y_1,\ldots,Y_d]$ be the polynomial ring with coefficients in $\overline{\Qb}^r \subset \Rb$.
    Then 
    \[\rsp(A) = \{ [\textnormal{ev}_{(x_1,\ldots,x_d)}] \mid \Fb \textrm{ a real closed field and }(x_1,\ldots,x_d) \in \Fb^d\},\]
    where $\textnormal{ev}_{(x_1,\ldots,x_d)}$ denotes the evaluation homomorphism from $A$ to $\Fb$.
\end{example}

\begin{theorem}[{\cite[Proposition 7.1.12]{BochnakCosteRoy_RealAlgebraicGeometry}}]
\label{thm_RSCTopolProp}
The topological space $\rsp(A)$ is compact (but not necessarily Hausdorff).
\end{theorem}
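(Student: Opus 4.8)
The statement to prove is that $\rsp(A)$ is compact (though not necessarily Hausdorff) for any commutative ring $A$ with unit. This is a classical fact, and the standard argument proceeds via a characterization of $\rsp(A)$ as a closed subspace of a product of two-point discrete spaces, to which Tychonoff's theorem applies. First I would recall the crucial reformulation: a point of $\rsp(A)$ can be encoded by its \emph{positive cone}, namely the subset $P = \{a \in A \mid \overline{a}^{\mathfrak p} \ge 0\}$ attached to $(\mathfrak p, \le)$. An \emph{ordering} (or \emph{prime cone}) of $A$ is a subset $P \subseteq A$ satisfying $P + P \subseteq P$, $P \cdot P \subseteq P$, $a^2 \in P$ for all $a \in A$, $P \cup (-P) = A$, and the primeness condition $ab \in P$ together with $a \in P$, $-a \in P$ implies $b \in P$ (equivalently, $-P \cap P$ is a prime ideal, namely the \emph{support} $\supp(P)$). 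The map sending $(\mathfrak p, \le)$ to its positive cone is a bijection between $\rsp(A)$ and the set of orderings of $A$, and under this identification the subbasic open set $\mathcal U(a)$ corresponds to $\{P \mid a \notin -P\}$, i.e. $\{P \mid -a \notin P\}$.

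Next I would set up the embedding. Consider $\{0,1\}^A$ with the product topology, $\{0,1\}$ discrete; by Tychonoff this space is compact (and totally disconnected, Hausdorff). Map each ordering $P$ to its indicator function $\chi_P \in \{0,1\}^A$. This map is injective (an ordering is determined by which elements it contains), continuous, and a homeomorphism onto its image, once one checks that the subbasic opens match: the set $\{P \mid a \in P\}$ is the preimage of the subbasic clopen $\{f \in \{0,1\}^A \mid f(a) = 1\}$ in the product, and the subbasic opens $\mathcal U(a) = \{P \mid -a \notin P\}$ generate the same topology as the sets $\{P \mid a \in P\}$ (note $\{P \mid a \in P\} = \rsp(A) \setminus \mathcal U(-a)$ up to the boundary where $a \in P \cap (-P)$; more carefully one uses that $\{P \mid a \in P\} = \mathcal U(a) \cup \{P \mid a \in \supp P\}$, but the cleanest route is to observe directly from the definitions that both families of sets, $\{\mathcal U(a)\}$ and the complements of the $\mathcal U(a)$'s together, form a subbasis that is contained in the product topology and generates it). The remaining and essential point is to verify that the image of $\rsp(A)$ in $\{0,1\}^A$ is \emph{closed}. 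A closed subset of a compact space is compact, and this will conclude the argument.

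The main obstacle is precisely this closedness verification, which is where all the content sits. One must show that the five (or six) defining conditions of an ordering are each \emph{closed} conditions on $\chi_P \in \{0,1\}^A$. Each of $P + P \subseteq P$, $P\cdot P \subseteq P$, $a^2 \in P$, $P \cup (-P) = A$ is an intersection over (pairs of) elements of $A$ of conditions involving only finitely many coordinates of $\chi_P$ — e.g. "for all $a, b$: if $f(a) = 1$ and $f(b) = 1$ then $f(a+b) = 1$" is an intersection of clopen conditions, hence closed; similarly for multiplication, squares, and $P \cup (-P) = A$ which reads "for all $a$: $f(a) = 1$ or $f(-a) = 1$". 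The primeness condition "$ab \in P$, $a \in P$, $-a \in P$ $\Rightarrow$ $b \in P$" is likewise an intersection over triples of finitary clopen implications, hence closed. Since a finite (even arbitrary) intersection of closed sets is closed, the set of all $\chi_P$ with $P$ an ordering is closed in $\{0,1\}^A$. Therefore $\rsp(A)$ is compact. For the non-Hausdorff remark, I would simply note that $\rsp(A)$ admits nontrivial specialization order in general — a point $(\mathfrak p, \le)$ can specialize to $(\mathfrak q, \le')$ with $\mathfrak p \subsetneq \mathfrak q$ and compatible orders — so distinct points need not be separated by disjoint opens; one can give the explicit example $A = \Rb[X]$, whose real spectrum contains the generic point (the ordering with support $0$) together with the points $x^+$, $x^-$ "just above/below $x$" for $x \in \Rb$, and checking that the closure of the generic point is not a single point already exhibits the failure of Hausdorffness. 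However, since the statement only asserts compactness (with the Hausdorff failure noted parenthetically and without proof), it suffices to carry out the closed-embedding argument above in full.
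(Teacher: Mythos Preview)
The paper does not give its own proof of this statement; it is quoted from Bochnak--Coste--Roy as background and used as a black box. Your proposal is essentially the standard argument found there: identify points of $\rsp(A)$ with prime cones, embed via characteristic functions into $\{0,1\}^A$, verify that the defining conditions of a prime cone are each closed (being intersections of finitary clopen conditions), and apply Tychonoff. So there is nothing to compare against, and your outline is correct in substance.

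One small wrinkle worth cleaning up: you assert that the embedding into $\{0,1\}^A$ is a homeomorphism onto its image, and then hedge. In fact with the topology defined in the paper (subbasis the $\mathcal U(a)$ only), the spectral topology on $\rsp(A)$ is strictly \emph{coarser} than the subspace topology inherited from the product; the latter is the constructible (patch/Harrison) topology, which is Hausdorff. This does not damage the compactness conclusion, since compactness passes to coarser topologies: the image is closed in $\{0,1\}^A$, hence compact in the product topology, and the identity map to the spectral topology is continuous, so the spectral topology is compact as well. Phrasing it this way also makes the parenthetical ``not necessarily Hausdorff'' immediate, since you are genuinely coarsening away from a Hausdorff topology.
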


In order to define the real spectrum compactification of semi-algebraic sets, we need the following.
\begin{definition}
A subset of $\rsp(A)$ is called \emph{constructible} if it can be obtained as a Boolean combination, i.e.\ finite unions, finite intersections and complements, from the sets $\widetilde{\mathcal{B}}(a)$ defined above.
\end{definition}

\begin{proposition}[{\cite[Proposition 7.1.25 (ii)]{BochnakCosteRoy_RealAlgebraicGeometry}}]
\label{propo_RSpClHausdorff}
Let $C \subset \rsp(A)$ be a constructible subset endowed with the subspace topology.
Then the topological space $C^\cl$ of closed points of $C$ is a compact Hausdorff space.
In particular $\rsp^\cl(A)$, the set of closed points of $\rsp(A)$, is a compact Hausdorff space.
\end{proposition}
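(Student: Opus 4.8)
The final statement to prove is \Cref{propo_RSpClHausdorff}, which asserts that for a constructible subset $C \subseteq \rsp(A)$, the subspace $C^{\cl}$ of closed points is compact and Hausdorff. The plan is to exhibit $C^{\cl}$ as a closed subspace of the space of closed points of the whole real spectrum $\rsp(A)^{\cl}$, and then to establish the two claimed properties for $\rsp(A)^{\cl}$ directly from the general theory of the real spectrum (in particular the compactness of $\rsp(A)$ from \Cref{thm_RSCTopolProp} and the structure of the closure operation in spectral spaces).

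First I would recall the general fact that $\rsp(A)$ is a \emph{spectral space}: it is quasi-compact, has a basis of quasi-compact open sets (the sets $\mathcal U(a)$ and their finite Boolean combinations), and every nonempty irreducible closed subset has a unique generic point. In such a space, the specialization preorder $\alpha \leadsto \beta$ (meaning $\beta \in \overline{\{\alpha\}}$) has the crucial property that every point specializes to at least one closed point; this is where quasi-compactness (\Cref{thm_RSCTopolProp}) enters, via a Zorn's lemma argument applied to chains of prime ideals together with their orderings. I would then set up the retraction $r \colon \rsp(A) \to \rsp(A)^{\cl}$ sending each point to \emph{a} closed specialization of it — being careful to note that uniqueness of the closed specialization is precisely the content of the Hausdorff claim and so cannot be assumed at this stage; instead I would invoke the known result (e.g.\ from \cite{BochnakCosteRoy_RealAlgebraicGeometry}) that in the real spectrum the closed specialization \emph{is} unique, which reduces to the fact that two distinct orders on the same residue field cannot both be ``maximal'' refinements of a common coarser datum in a way that produces the same closed point. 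With uniqueness in hand, $r$ is a well-defined continuous retraction, and $\rsp(A)^{\cl}$, being the continuous image of the quasi-compact space $\rsp(A)$, is quasi-compact; Hausdorffness follows because two distinct closed points can be separated using the basic open sets $\mathcal U(a)$, exactly as in the proof that the maximal spectrum of a ring with the Zariski topology, or the space of orderings of a field with the Harrison topology (\cite[Example 7.1.4]{BochnakCosteRoy_RealAlgebraicGeometry}), is Hausdorff.

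Next I would pass from $\rsp(A)$ to the constructible subset $C$. The key point is that $C^{\cl} = C \cap \rsp(A)^{\cl}$ up to a subtlety: a point closed \emph{in} $C$ (with its subspace topology) need not a priori be closed in $\rsp(A)$. Here I would use that $C$ is constructible, hence a finite Boolean combination of the quasi-compact opens $\mathcal U(a)$, which forces $C$ to be a locally closed (in the constructible/patch topology) and, more importantly, a \emph{spectral subspace} in its own right; the standard argument then shows that the closed points of $C$ are exactly the points of $C$ that are closed in $\rsp(A)$, because the closure of a singleton $\{\alpha\}$ with $\alpha \in C$, intersected with $C$, contains the (unique) closed specialization of $\alpha$ whenever that specialization already lies in $C$, and for constructible $C$ one checks that the closed specialization of a point of $C$ does lie in $C$. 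Granting this identification, $C^{\cl}$ is a subspace of $\rsp(A)^{\cl}$; it is Hausdorff automatically, and it is closed in $\rsp(A)^{\cl}$ (again using constructibility, so that $r^{-1}(C^{\cl})$ is patch-closed and hence $C^{\cl}$ is closed in the Hausdorff, quasi-compact space $\rsp(A)^{\cl}$), therefore compact. Taking $C = \rsp(A)$ itself recovers the final sentence of the statement.

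The main obstacle I anticipate is the point-set topology bookkeeping around the two different topologies on the real spectrum — the spectral (Harrison) topology and the constructible/patch topology — and in particular verifying cleanly that the closed points of a constructible subset $C$ coincide with those points of $C$ that are closed in the ambient $\rsp(A)$, together with the uniqueness of closed specializations. This uniqueness is the genuinely real-algebraic input (it fails in a general spectral space) and I would handle it by reducing to the field case: the closed points above a fixed prime $\mathfrak p$ correspond to orders on $\mathrm{Frac}(A/\mathfrak p)$ that are ``archimedean over'' the subring, and the real-closedness package in \cite[\S 7.1]{BochnakCosteRoy_RealAlgebraicGeometry} pins these down uniquely; everything else is then formal manipulation of quasi-compact open covers and continuous retractions. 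Since the excerpt presents this proposition as a cited fact from \cite[Proposition 7.1.25 (ii)]{BochnakCosteRoy_RealAlgebraicGeometry}, in the final write-up I would keep the argument short, citing the spectral-space formalism for the retraction and the field-of-orders result for uniqueness, and spend most of the prose on the reduction of $C^{\cl}$ to a closed subspace of $\rsp(A)^{\cl}$.
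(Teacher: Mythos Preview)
The paper does not supply a proof of this proposition; it is simply quoted as \cite[Proposition 7.1.25 (ii)]{BochnakCosteRoy_RealAlgebraicGeometry}. So there is no ``paper's own proof'' to compare your sketch against.

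That said, your sketch contains a genuine gap in the passage from $\rsp(A)^{\cl}$ to $C^{\cl}$. You assert that for constructible $C$ one has $C^{\cl} = C \cap \rsp(A)^{\cl}$, justified by the claim that ``for constructible $C$ one checks that the closed specialization of a point of $C$ does lie in $C$.'' This is false. Take $A = \Qbar[X]$ and $C = \mathcal U(X)$. The point $0^+$ (the zero ideal with the order making $X$ a positive infinitesimal) lies in $C$, and its unique closed specialization in $\rsp(A)$ is the point ``$X=0$'' (support $(X)$), which is \emph{not} in $C$. Hence $0^+$ is closed in $C$ but not in $\rsp(A)$, so $C^{\cl} \supsetneq C \cap \rsp(A)^{\cl}$. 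Constructible sets are closed in the patch topology, not under specialization in the spectral topology, and your argument conflates the two.

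The correct route (which is what Bochnak--Coste--Roy do) is to work intrinsically to $C$: one shows that every $\alpha \in C$ has a unique maximal specialization \emph{within $C$} (using that specialization chains in the real spectrum are totally ordered and that $C$ is patch-closed), giving a retraction $r_C \colon C \to C^{\cl}$ which is continuous from the patch topology on $C$ to the spectral topology on $C^{\cl}$. Compactness of $C^{\cl}$ then follows since $C$ is patch-compact, and Hausdorffness is proved directly as in your sketch for the ambient case. Your outline for $\rsp(A)^{\cl}$ itself is fine; the error is only in trying to reduce the constructible case to the global one via an identity that does not hold.
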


\subsection{Real spectrum compactification of semi-algebraic sets}
\label{subs_RSCsemi-algebraicSets}

We now describe the real spectrum compactification of semi-algebraic sets.
Note that these include the algebraic subsets.
For this section we also recommend \cite[Chapter~4]{Scheiderer_RAG}.

For the rest of this section let 
\[A \coloneqq \overline{\Qb}^r[Y_1,\ldots,Y_d]~.\]

\begin{proposition}[{\cite[Proposition 7.1.5]{BochnakCosteRoy_RealAlgebraicGeometry}}]
\label{propo_ImageVInRealSpec}
    The map 
    \[\Psi \from (\overline{\Qb}^r)^d \to \rsp(A), \quad v \mapsto [\textnormal{ev}_v \from A \to \overline{\Qb}^r]\]
    is injective and induces a homeomorphism from $(\overline{\Qb}^r)^d$ with its analytic topology (the subspace topology of the analytic topology on $\Rb^d$), onto its image in $\rsp(A)$.
\end{proposition}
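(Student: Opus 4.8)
The statement to prove is \Cref{propo_ImageVInRealSpec}: that the map $\Psi\colon (\overline{\Qb}^r)^d \to \rsp(A)$ sending a point $v$ to the class of its evaluation homomorphism $\textnormal{ev}_v\colon A \to \overline{\Qb}^r$ is injective and is a homeomorphism onto its image (where $(\overline{\Qb}^r)^d$ carries the analytic subspace topology inherited from $\Rb^d$). The proof naturally splits into three parts: injectivity, continuity, and openness onto the image.

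For \textbf{injectivity}, the plan is as follows. Suppose $\Psi(v) = \Psi(w)$ for $v,w \in (\overline{\Qb}^r)^d$. By the characterization recalled in the excerpt, two homomorphisms into real closed fields represent the same point of $\rsp(A)$ iff they become equal after composing with suitable embeddings into a common real closed field; here both target $\overline{\Qb}^r$ itself, and $\overline{\Qb}^r$ admits a unique order, so any field homomorphism $\overline{\Qb}^r \to \Fb$ is order-preserving and in fact $\overline{\Qb}^r$ has no non-trivial automorphisms compatible with its order (indeed none at all, being a real closure of $\Qb$ inside $\Rb$). Thus $\textnormal{ev}_v = \textnormal{ev}_w$ as maps $A \to \overline{\Qb}^r$; applying both to each coordinate polynomial $Y_i$ gives $v_i = w_i$ for all $i$, hence $v = w$. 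Alternatively, and perhaps more cleanly, I would argue directly: the kernel of $\textnormal{ev}_v$ is the maximal ideal $\mathfrak m_v = (Y_1 - v_1, \ldots, Y_d - v_d)$, and the pair $(\mathfrak m_v, \leq)$ already determines $v$ since $\overline{a}^{\mathfrak m_v}$ recovers $a(v)$; so $\Psi(v) = \Psi(w)$ forces $\mathfrak m_v = \mathfrak m_w$, whence $v = w$.

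For the \textbf{homeomorphism onto the image}, the key is to relate basic open sets on both sides. A subbasis for the analytic topology on $(\overline{\Qb}^r)^d$ is given by sets of the form $\{v \mid f(v) > 0\}$ for $f \in \overline{\Qb}^r[Y_1,\ldots,Y_d]$ — in fact polynomials with rational, or even integer, coefficients already suffice to generate it, but allowing $\overline{\Qb}^r$-coefficients is harmless. On the real-spectrum side, the subbasic open sets are $\mathcal U(f) = \{(\mathfrak p, \leq) \mid \overline f^{\mathfrak p} > 0\}$ for $f \in A$. The crucial observation is that under $\Psi$, a point $v$ lies in $\Psi^{-1}(\mathcal U(f))$ precisely when $\overline f^{\,\mathfrak m_v} = f(v) > 0$, i.e.\ $\Psi^{-1}(\mathcal U(f)) = \{v \mid f(v) > 0\}$. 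This identity simultaneously shows continuity of $\Psi$ (preimages of subbasic opens are open) and, since $\Psi$ is injective, that $\Psi$ maps the subbasic open $\{v \mid f(v) > 0\}$ onto $\mathcal U(f) \cap \Psi((\overline{\Qb}^r)^d)$, which is open in the subspace topology of the image. Since $\Psi$ is a continuous open injection, it is a homeomorphism onto its image.

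I do not expect any step here to be a genuine obstacle — this is a foundational compatibility lemma and the content is entirely in matching the two topologies via the single identity $\Psi^{-1}(\mathcal U(f)) = \{f > 0\}$. The one point that requires a moment's care is the injectivity argument: one must use that $\overline{\Qb}^r$ has a unique ordering (being a real closure of $\Qb$), so that specifying only the ideal $\mathfrak m_v$ rather than the ideal-plus-order already pins down the point of $\rsp(A)$; equivalently, that no information is lost in passing from $v$ to $(\mathfrak m_v, \leq_{\overline{\Qb}^r})$. This is immediate but worth stating explicitly. The rest is a routine unwinding of \Cref{dfn_RealSpectrum} and the definition of the analytic topology, and can be cited to \cite[Proposition 7.1.5]{BochnakCosteRoy_RealAlgebraicGeometry}.
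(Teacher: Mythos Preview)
The paper does not give its own proof of this proposition; it is stated with the citation to \cite[Proposition 7.1.5]{BochnakCosteRoy_RealAlgebraicGeometry} and no argument beyond that. Your proof is correct and is essentially the standard one: injectivity via the identification $\ker(\textnormal{ev}_v)=\mathfrak m_v=(Y_1-v_1,\dots,Y_d-v_d)$, and the bicontinuity via the single identity $\Psi^{-1}(\mathcal U(f))=\{v\mid f(v)>0\}$, together with the observation that sets $\{f>0\}$ with $f\in A$ form a subbasis for the analytic topology on $(\overline{\Qb}^r)^d$ (since open boxes with rational endpoints already suffice). There is nothing to compare here, as the paper defers entirely to the cited source.
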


\begin{proposition}[{\cite[Proposition 2.33]{BurgerIozziParreauPozzetti_RSCCharacterVarieties2}}]
\label{propo_ImageVRInRealSpec}
    The map $\Psi$ from \Cref{propo_ImageVInRealSpec} extends to a topological embedding of $\Rb^d$ in $\rsp^\cl(A)$ with open and dense image.
    The space $\rsp^\cl(A)$ is compact, Hausdorff and metrizable.
\end{proposition}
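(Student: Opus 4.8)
The statement to prove is \Cref{propo_ImageVRInRealSpec}: the map $\Psi$ of \Cref{propo_ImageVInRealSpec} extends to a topological embedding of $\Rb^d$ into $\rsp^{\cl}(A)$ with open and dense image, and $\rsp^{\cl}(A)$ is compact, Hausdorff and metrizable. The plan is to assemble this from the general structure theory of the real spectrum already recalled in the excerpt (\Cref{thm_RSCTopolProp}, \Cref{propo_RSpClHausdorff}, \Cref{propo_ImageVInRealSpec}), together with a concrete description of closed points of $\rsp(A)$ in terms of Archimedean orders. First I would recall that $\rsp^{\cl}(A)$ is compact and Hausdorff: compactness follows because $\rsp^{\cl}(A)$ is the space of closed points of the constructible (in fact, whole) set $\rsp(A)$, which is compact by \Cref{thm_RSCTopolProp}, and the closed-point space of a compact space is compact (the retraction onto closed points is continuous and closed); Hausdorffness is exactly \Cref{propo_RSpClHausdorff} applied to $C = \rsp(A)$.

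Next I would identify which points of $\rsp(A)$ are closed and single out the image of $\Psi$. A point $[\varphi\colon A\to \Fb]$ is closed in $\rsp(A)$ precisely when the associated order on $\mathrm{Frac}(A/\ker\varphi)$ is Archimedean, equivalently when $\mathrm{Frac}(A/\ker\varphi)$ embeds (order-preservingly) into $\Rb$; this is a standard fact (see e.g. \cite[Chapter~7]{BochnakCosteRoy_RealAlgebraicGeometry} / \cite[Chapter~4]{Scheiderer_RAG}), and it shows that a closed point is the same datum as a ring homomorphism $A\to\Rb$ up to the equivalence of the remark following \Cref{dfn_RealSpectrum}, i.e. an $\Rb$-point together with the induced residue data. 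Concretely, every homomorphism $A=\overline{\Qb}^r[Y_1,\dots,Y_d]\to\Rb$ is evaluation at some $v\in\Rb^d$, so the set of closed points of $\rsp(A)$ is exactly $\{[\mathrm{ev}_v]\mid v\in\Rb^d\}$, and $\Psi$ extends to a well-defined \emph{bijection} $\overline\Psi\colon\Rb^d\to\rsp^{\cl}(A)$, $v\mapsto[\mathrm{ev}_v]$. (That $[\mathrm{ev}_v]$ determines $v$ uniquely is because $\mathrm{ev}_v$ is injective on each $Y_i$: the sign conditions $Y_i>q$, $Y_i<q$ for $q\in\overline\Qb^r$ pin down each real coordinate $v_i$, using density of $\overline\Qb^r$ in $\Rb$.)

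The heart of the argument, and the step I expect to be the main obstacle, is showing that $\overline\Psi$ is a topological embedding with open dense image. For continuity of $\overline\Psi$ one checks that the preimage of a subbasic open set $\mathcal U(a)\cap\rsp^{\cl}(A)$ is $\{v\in\Rb^d\mid a(v)>0\}$, which is open in the analytic topology since $a$ is a polynomial; this is routine. Openness of $\overline\Psi$ (equivalently, continuity of the inverse, which is where one genuinely uses the metrizability/structure and not just abstract nonsense) requires showing that basic semi-algebraic subsets of $\Rb^d$ have images that are open in $\rsp^{\cl}(A)$, i.e. that every point $[\mathrm{ev}_v]$ has a neighborhood basis in $\rsp^{\cl}(A)$ given by the sets $\{[\varphi]\mid \varphi(f_j)>0,\ j\}$ with $f_j\in A$ — this follows once one knows the constructible subsets of $\rsp(A)$ restrict to a basis, combined with the Artin--Lang / Tarski--Seidenberg transfer which guarantees that a nonempty basic semi-algebraic condition over $\overline\Qb^r$ already has $\Rb$-points, so the constructible sets "see" the closed points faithfully. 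Density of $\overline\Psi(\Rb^d)$ is essentially the same transfer statement: every nonempty constructible subset of $\rsp(A)$ contains a closed point (Artin--Lang), so $\rsp^{\cl}(A)$ is dense, and its subset $\overline\Psi(\Rb^d)$ is all of $\rsp^{\cl}(A)$ by the bijectivity above — so in fact $\overline\Psi(\Rb^d)=\rsp^{\cl}(A)$, which is trivially dense and open in itself; the nontrivial content of "open dense image" is therefore really the statement that the topology $\overline\Psi$ transports onto $\Rb^d$ from $\rsp^{\cl}(A)$ is the analytic one, which is the embedding claim.

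Finally, metrizability of $\rsp^{\cl}(A)$: having established that $\rsp^{\cl}(A)$ is compact Hausdorff, it suffices to exhibit a countable basis or a continuous injection into a metric space. One can take the countable family of sets $\{[\varphi]\mid \varphi(f)>0\}$ with $f$ ranging over $\overline\Qb^r[Y_1,\dots,Y_d]$ (a countable ring, since $\overline\Qb^r$ is countable) — these and their finite Boolean combinations form a countable subbasis for the topology on $\rsp(A)$, hence their traces form a countable subbasis on $\rsp^{\cl}(A)$; a compact Hausdorff space with a countable basis is metrizable by Urysohn. Alternatively, transport the Euclidean metric along $\overline\Psi$ after one-point considerations; but the countable-subbasis route is cleanest and avoids reproving the embedding property. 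I would cite \cite[Proposition 2.33]{BurgerIozziParreauPozzetti_RSCCharacterVarieties2} for the final packaging, noting that the only inputs beyond the general facts already quoted are the Artin--Lang transfer principle and the countability of $\overline\Qb^r$.
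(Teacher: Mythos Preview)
The paper does not give its own proof of this proposition; it is quoted verbatim from \cite[Proposition~2.33]{BurgerIozziParreauPozzetti_RSCCharacterVarieties2} and used as a black box. So there is no ``paper's proof'' to compare against, and your task was really to see whether the argument you sketch is internally sound.

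It is not. The central claim ``a point $[\varphi]\in\rsp(A)$ is closed if and only if the induced order on $\mathrm{Frac}(A/\ker\varphi)$ is Archimedean'' is false, and the error propagates through the rest of the plan. Already for $d=1$, the point of $\rsp(\overline{\Qb}^r[Y])$ given by the prime $(0)$ together with the order on $\overline{\Qb}^r(Y)$ in which $Y>q$ for every $q\in\overline{\Qb}^r$ is closed (it admits no specialization to any $(Y-q)$), yet the order is manifestly non-Archimedean. More generally $\rsp^{\cl}(A)$ strictly contains the image of $\Rb^d$: it is a genuine compactification, with boundary points corresponding to ``directions at infinity'' and finer non-Archimedean data. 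You could also have caught this by consistency: you argue that $\rsp^{\cl}(A)$ is compact and that $\overline\Psi$ is a homeomorphism onto all of it, which would force $\Rb^d$ with the analytic topology to be compact.

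Because of this, your reduction of ``open and dense image'' to a triviality collapses. Showing that $\overline\Psi(\Rb^d)$ is open in $\rsp^{\cl}(A)$ requires identifying, for each $v\in\Rb^d$, a constructible neighborhood of $[\mathrm{ev}_v]$ containing no boundary closed points; density requires showing every nonempty constructible set meets $\overline\Psi(\Rb^d)$. Both are true but need an actual argument (e.g.\ via bounded semi-algebraic neighborhoods for openness, and Artin--Lang for density), and neither is supplied by your bijection claim. Your treatment of compactness, Hausdorffness, and metrizability (using countability of $\overline{\Qb}^r$ and Urysohn) is fine.
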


If $X \subset (\overline{\Qb}^r)^d$ is a semi-algebraic set given by a Boolean combination of the basic semi-algebraic sets $\mathcal{B}(f_i)$ for some $f_i \in A$ (\Cref{dfn_SemiAlgSet}), then we define $\widetilde{X}$ to be the constructible subset of $\rsp(A)$ given by the same Boolean combination of the open sets $\widetilde{\mathcal{B}}(f_i)$ (\Cref{dfn_RealSpectrum}).
It turns out that $\widetilde{X}$ is intrinsically defined by the semi-algebraic set $X$ (up to homeomorphism) and does not depend on the ambient space $(\overline{\Qb}^r)^d$ in which $X$ is embedded, see \cite[Corollary 7.2.4, Remark 7.2.5]{BochnakCosteRoy_RealAlgebraicGeometry}.
This is the reason why we are able to extend the compactification to semi-algebraic sets.

\begin{theorem}[{\cite[Theorem 7.2.3, Proposition 7.2.7]{BochnakCosteRoy_RealAlgebraicGeometry}}]
\label{thm_SemiAlgSetsConstructibleSubsets}
    Let $X \subset (\overline{\Qb}^r)^d$ be a semi-algebraic set.
    Then
\begin{enumerate}
    \item 
    \label{item_thm_SemiAlgSetsConstructibleSubsets}
    $X\subset (\overline{\Qb}^r)^d$ is closed (resp.\ open) if and only if $\widetilde{X}\subseteq\rsp(A)$ is closed (resp.\ open).
    \item 
    \label{propo_CharTilde}
    If $X\subset (\overline{\Qb}^r)^d$ is closed (resp.\ open), then $\widetilde{X}$ is the smallest (resp.\ largest) closed (resp.\ open) subset of $\rsp(A)$ whose intersection with $\Psi((\overline{\Qb}^r)^d)$ is $\Psi(X)$. 
\end{enumerate}
\end{theorem}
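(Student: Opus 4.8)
The plan is to reduce everything to three facts about the tilde operation $X \mapsto \widetilde X$ and the embedding $\Psi$, and then to run a pair of complementation arguments. The first fact is \emph{presentation-independence}: $\widetilde X$ depends only on the semi-algebraic set $X$ and not on the chosen Boolean combination of basic sets defining it. I would deduce this from the Tarski--Seidenberg transfer principle (\Cref{thm_TarskiSeidenberg}), exactly as in the discussion of $\Fb$-extensions in \Cref{s: extensions semi-algebraic sets}: two semi-algebraic subsets of $(\overline{\Qb}^r)^d$ are equal iff the corresponding sentence of $\mathcal L(\overline{\Qb}^r)$ holds, iff their $\Fb$-extensions agree for every real closed $\Fb$, iff their tildes agree. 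Presentation-independence then immediately gives the compatibility identity $\widetilde X \cap \Psi((\overline{\Qb}^r)^d) = \Psi(X)$ — since for $v \in (\overline{\Qb}^r)^d$ one has $\Psi(v) \in \mathcal{U}(f)$ iff $f(v)>0$ iff $v \in \Bc(f)$, so the two Boolean combinations match up vertex by vertex — as well as $\widetilde{X^c} = \widetilde X^{\,c}$. The second fact is the \emph{emptiness criterion} $\widetilde X = \emptyset \iff X = \emptyset$: ``$X$ is nonempty'' is a sentence of $\mathcal L(\overline{\Qb}^r)$, so by transfer it holds over $\overline{\Qb}^r$ iff it holds over some (equivalently, any) real closed extension, which is precisely the assertion $\widetilde X \neq \emptyset$. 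The third fact is \emph{density}: $\Psi((\overline{\Qb}^r)^d)$ is dense in $\rsp(A)$, because any nonempty basic open set $\mathcal{U}(g_1)\cap\cdots\cap\mathcal{U}(g_m)$ equals $\widetilde Y$ for $Y = \Bc(g_1)\cap\cdots\cap\Bc(g_m)$, and by the emptiness criterion $Y$ then contains an $\overline{\Qb}^r$-point $v$ whose image $\Psi(v)$ lies in that basic open set.

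For part (1), I would first prove the closed case. Using the finiteness theorem for closed semi-algebraic sets \cite[Theorem~2.7.2]{BochnakCosteRoy_RealAlgebraicGeometry}, write $X = \bigcup_{i=1}^{s} \bigcap_{j=1}^{r_i} \{ f_{ij} \ge 0 \}$; by presentation-independence $\widetilde X = \bigcup_{i} \bigcap_{j} \widetilde{\{f_{ij} \ge 0\}}$, and each set $\widetilde{\{f \ge 0\}} = \rsp(A) \setminus \mathcal{U}(-f)$ is closed, so $\widetilde X$ is a finite union of finite intersections of closed sets, hence closed. Conversely, if $\widetilde X$ is closed, then $\Psi(X) = \widetilde X \cap \Psi((\overline{\Qb}^r)^d)$ is closed in the subspace $\Psi((\overline{\Qb}^r)^d)$, whence $X$ is closed because $\Psi$ is a homeomorphism onto its image (\Cref{propo_ImageVInRealSpec}). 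The open case follows by passing to complements and invoking $\widetilde{X^c} = \widetilde X^{\,c}$.

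For part (2), assume $X$ is closed; I claim $\widetilde X = \overline{\Psi(X)}$, the closure taken in $\rsp(A)$. The inclusion $\overline{\Psi(X)} \subseteq \widetilde X$ is clear, since $\widetilde X$ is closed by part (1) and contains $\Psi(X)$. For the reverse, take $\alpha \in \widetilde X$ and a basic open neighbourhood $V = \mathcal{U}(g_1) \cap \cdots \cap \mathcal{U}(g_m)$ of $\alpha$; then $\alpha \in \widetilde X \cap V = \widetilde{X \cap \Bc(g_1) \cap \cdots \cap \Bc(g_m)}$, so this tilde is nonempty, hence by the emptiness criterion there is an $\overline{\Qb}^r$-point $v \in X \cap \Bc(g_1) \cap \cdots \cap \Bc(g_m)$, and $\Psi(v) \in \Psi(X) \cap V$. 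Thus every neighbourhood of $\alpha$ meets $\Psi(X)$, i.e.\ $\alpha \in \overline{\Psi(X)}$, proving $\widetilde X = \overline{\Psi(X)}$. Minimality is now formal: if $Z \subseteq \rsp(A)$ is closed with $Z \cap \Psi((\overline{\Qb}^r)^d) = \Psi(X)$, then $Z \supseteq \Psi(X)$, hence $Z \supseteq \overline{\Psi(X)} = \widetilde X$. The open case of (2) follows from the closed case applied to $X^c$ together with complementation and injectivity of $\Psi$: if $U$ is open with $U \cap \Psi((\overline{\Qb}^r)^d) = \Psi(X)$, then $U^c$ is closed with trace $\Psi(X^c)$ on $\Psi((\overline{\Qb}^r)^d)$, so $U^c \supseteq \widetilde{X^c} = \widetilde X^{\,c}$, i.e.\ $U \subseteq \widetilde X$.

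The main obstacle is the forward direction of the closed case in part (1): it rests essentially on the finiteness theorem for closed semi-algebraic sets (the only genuinely hard external input, proved in \cite{BochnakCosteRoy_RealAlgebraicGeometry} via a stratification/triangulation argument) together with presentation-independence of the tilde operation, which itself relies on transfer. Once those two inputs are granted, the remaining steps — the compatibility identity, density, and the two complementation arguments — are routine.
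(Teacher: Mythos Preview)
The paper does not give its own proof of this statement; it is quoted verbatim as background from \cite[Theorem 7.2.3, Proposition 7.2.7]{BochnakCosteRoy_RealAlgebraicGeometry}. Your argument is correct and is essentially the proof given in that reference: the forward direction of (1) via the finiteness theorem for closed semi-algebraic sets, the converse via the homeomorphism $\Psi$, and (2) via the identity $\widetilde X = \overline{\Psi(X)}$ obtained from the emptiness criterion and density of the $\overline{\Qb}^r$-points.
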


The following proposition relates the constructible set associated to a semi-algebraic set to its extensions.

\begin{proposition}[{\cite[Proposition 4.1.10]{Scheiderer_RAG}}]
\label{propo: Characterizing closures of closed semi-algebraic sets}
    Let $X \subset (\overline{\Qb}^r)^d$ be a semi-algebraic set.
    If $(\varphi,\Fb)$ represents a point in $\RSp(A)$, then $[(\varphi,\Fb)] \in \widetilde{X}$ if and only if $(\varphi(Y_1),\ldots, \varphi(Y_d)) \in X_{\Fb}$.
\end{proposition}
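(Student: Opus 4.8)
The statement to prove is \Cref{propo: Characterizing closures of closed semi-algebraic sets}, which identifies, for a semi-algebraic set $X \subset (\overline{\Qb}^r)^d$, the points of the constructible set $\widetilde X \subset \RSp(A)$ with those homomorphisms $\varphi\colon A \to \Fb$ (into a real closed field $\Fb$) for which the tuple $(\varphi(Y_1),\ldots,\varphi(Y_d))$ lies in the $\Fb$-extension $X_{\Fb}$.

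The plan is to reduce everything to the definitions. First I would write $X$ as a fixed Boolean combination of basic semi-algebraic sets $\mathcal B(f_i)$ with $f_i \in A = \overline{\Qb}^r[Y_1,\ldots,Y_d]$; by \Cref{dfn_ExtSemiAlgSets} the extension $X_{\Fb}$ is the same Boolean combination of the sets $\{x \in \Fb^d \mid f_i(x) > 0\}$, and by \Cref{dfn_RealSpectrum} the constructible set $\widetilde X$ is the same Boolean combination of the sets $\widetilde{\mathcal B}(f_i) = \mathcal U(f_i)$. (Here one should recall the cited facts that $\widetilde X$ and $X_{\Fb}$ are independent of the chosen Boolean description — \cite[Corollary 7.2.4]{BochnakCosteRoy_RealAlgebraicGeometry} and \cite[Proposition 5.1.1]{BochnakCosteRoy_RealAlgebraicGeometry} respectively — so the argument does not depend on the presentation.) Since membership in a Boolean combination is governed pointwise by membership in the constituents, it suffices to check the equivalence for a basic set $\mathcal B(f)$: that is, for a homomorphism $\varphi\colon A \to \Fb$ representing the point $\alpha = [\varphi] \in \RSp(A)$, one has $\alpha \in \mathcal U(f)$ if and only if $(\varphi(Y_1),\ldots,\varphi(Y_d)) \in \mathcal B(f)_{\Fb} = \{x \in \Fb^d \mid f(x)>0\}$.

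The key step is then the following translation. Let $\mathfrak p = \ker\varphi$, a prime ideal, and recall from the \enquote{equivalent viewpoint} remark after \Cref{dfn_RealSpectrum} that $\varphi$ factors as $A \to A/\mathfrak p \hookrightarrow \operatorname{Frac}(A/\mathfrak p) \hookrightarrow \Fb$, and that by definition $\alpha \in \mathcal U(f)$ means precisely that the image $\overline f^{\mathfrak p}$ of $f$ in $\operatorname{Frac}(A/\mathfrak p)$ is $> 0$ for the order on $\operatorname{Frac}(A/\mathfrak p)$ induced from (i.e.\ restricted from) the unique order on $\Fb$. Since the embedding $\operatorname{Frac}(A/\mathfrak p) \hookrightarrow \Fb$ is order-preserving by construction, $\overline f^{\mathfrak p} > 0$ in $\operatorname{Frac}(A/\mathfrak p)$ is equivalent to the image of $\overline f^{\mathfrak p}$ in $\Fb$ being $>0$, and that image is exactly $\varphi(f) = f(\varphi(Y_1),\ldots,\varphi(Y_d))$, because ring homomorphisms from a polynomial ring commute with evaluation of polynomials. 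Hence $\alpha \in \mathcal U(f) \iff f(\varphi(Y_1),\ldots,\varphi(Y_d)) > 0 \iff (\varphi(Y_1),\ldots,\varphi(Y_d)) \in \mathcal B(f)_{\Fb}$, which is what we wanted. One small point to be careful about: the statement $\alpha \in \mathcal U(f)$ is a property of the point $\alpha$, not of the particular representative $\varphi$; but the equivalence just established shows $f(\varphi(Y_1),\ldots,\varphi(Y_d))>0$ depends only on $\alpha$, and in fact one can invoke the remark that two homomorphisms represent the same point iff they admit a common order-preserving amalgamation, under which the sign of $\varphi(f)$ is preserved — so there is no ambiguity.

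I do not expect a serious obstacle here: the proof is essentially unwinding two definitions and using that a ring homomorphism out of a polynomial ring is determined by, and commutes with, evaluation. The only mild subtlety worth spelling out carefully is the bookkeeping that the Boolean combination describing $\widetilde X$ matches the one describing $X_{\Fb}$ under the pointwise correspondence $\alpha \leftrightarrow (\varphi(Y_1),\ldots,\varphi(Y_d))$ — i.e.\ that negation of $\mathcal U(f)$ on the real-spectrum side corresponds to the complement of $\mathcal B(f)_{\Fb}$ on the extension side, which is immediate once the basic case is done since $\alpha \notin \mathcal U(f) \iff \varphi(f) \le 0 \iff (\varphi(Y_i))_i \notin \mathcal B(f)_{\Fb}$ — together with the invariance statements cited above so that the argument is genuinely about the sets $\widetilde X$ and $X_{\Fb}$ and not about a chosen presentation. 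Alternatively, since this is \cite[Proposition 4.1.10]{Scheiderer_RAG}, one could simply cite it; but the short self-contained argument above is worth including for the reader's convenience.
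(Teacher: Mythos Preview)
The paper does not give a proof of this proposition; it is stated with a citation to \cite[Proposition~4.1.10]{Scheiderer_RAG} and used as a black box. Your proposal therefore goes beyond what the paper does: you supply a correct self-contained argument by unwinding the definitions of $\widetilde X$, $X_{\Fb}$, and $\mathcal U(f)$, reducing to the basic case $\mathcal B(f)$ via Boolean combinations, and then checking that $\alpha\in\mathcal U(f)$ iff $\varphi(f)>0$ in $\Fb$ iff $(\varphi(Y_i))_i\in\mathcal B(f)_{\Fb}$. This is exactly the standard proof, and your remarks about independence of the chosen Boolean presentation and about well-definedness with respect to the representative $\varphi$ are appropriate. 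As you yourself note at the end, one could equally well just cite the reference, which is precisely what the paper opts to do.
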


With this at hand we are now ready to define the real spectrum compactification of closed semi-algebraic sets.
If $X \subset (\overline{\Qb}^r)^d$ is any subset, we denote by $\overline{X}$ the closure of $\Psi(X)$ in $\RSp(A)$.
Then $\overline{X}$ is compact and contains $X$ as a dense subset. The definition of the compactification of $X$ seems to depend on the embedding $X \subset (\overline{\Qb}^r)^d$.
However if $X$ is closed and semi-algebraic, we have the following.

\begin{lemma}\label{lem: closures}
    Let $X \subset (\overline{\Qb}^r)^d$ be a closed semi-algebraic subset.
    Then $\overline{X}=\widetilde{X}$.
\end{lemma}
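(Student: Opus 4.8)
The plan is to show the two closed subsets $\overline{X}$ and $\widetilde X$ of $\RSp(A)$ coincide by a double inclusion, using the characterizations already recorded in the excerpt. Recall that $\overline X$ is by definition the closure of $\Psi(X)$ in $\RSp(A)$.

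First I would prove $\overline{X}\subseteq \widetilde{X}$. By \Cref{thm_SemiAlgSetsConstructibleSubsets}~(\ref{item_thm_SemiAlgSetsConstructibleSubsets}), since $X$ is closed and semi-algebraic, $\widetilde X$ is a closed subset of $\RSp(A)$. By \Cref{propo: Characterizing closures of closed semi-algebraic sets} applied to the evaluation homomorphisms at points of $X$ (with $\Fb=\overline{\Qb}^r$), we have $\Psi(X)\subseteq \widetilde X$. Since $\widetilde X$ is closed and contains $\Psi(X)$, it contains the closure $\overline X$ of $\Psi(X)$.

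Next I would prove $\widetilde X\subseteq \overline X$. Here the key tool is \Cref{thm_SemiAlgSetsConstructibleSubsets}~(\ref{propo_CharTilde}): since $X$ is closed, $\widetilde X$ is the \emph{smallest} closed subset of $\RSp(A)$ whose intersection with $\Psi((\overline{\Qb}^r)^d)$ equals $\Psi(X)$. So it suffices to check that $\overline X$ is a closed subset with $\overline X\cap \Psi((\overline{\Qb}^r)^d)=\Psi(X)$. Closedness of $\overline X$ is immediate (it is a closure). For the intersection: the inclusion $\Psi(X)\subseteq \overline X\cap \Psi((\overline{\Qb}^r)^d)$ is clear. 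For the reverse inclusion, suppose $\alpha\in \overline X\cap\Psi((\overline{\Qb}^r)^d)$, say $\alpha=\Psi(v)$ with $v\in(\overline{\Qb}^r)^d$. Since by \Cref{propo_ImageVInRealSpec} the map $\Psi$ is a homeomorphism onto its image for the analytic topology, and $\overline X$ is the closure of $\Psi(X)$, one checks that $v$ lies in the closure of $X$ inside $(\overline{\Qb}^r)^d$ for the analytic topology; but $X$ is closed (and semi-algebraic, so its analytic closure in $(\overline{\Qb}^r)^d$ agrees with $X$ itself, cf.\ \Cref{thm_RConnSemiAlgConn} and the surrounding discussion), hence $v\in X$ and $\alpha\in\Psi(X)$. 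Applying the minimality in \Cref{thm_SemiAlgSetsConstructibleSubsets}~(\ref{propo_CharTilde}) then gives $\widetilde X\subseteq\overline X$, completing the proof.

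The main subtlety—though not a deep obstacle—is the last step: one must be slightly careful that ``$\alpha\in\overline X$ and $\alpha=\Psi(v)$'' really forces $v$ into the analytic closure of $X$. This is because $\Psi((\overline{\Qb}^r)^d)$ is not open in $\RSp(A)$, so a priori a net in $\Psi(X)$ converging to $\Psi(v)$ need not have its tail in $\Psi((\overline{\Qb}^r)^d)$. The clean way around this is to invoke \Cref{thm_SemiAlgSetsConstructibleSubsets}~(\ref{propo_CharTilde}) directly as above rather than arguing with nets: since $\overline X$ is visibly closed and contains $\Psi(X)$, it contains $\widetilde X$ only once we know $\overline X\cap\Psi((\overline{\Qb}^r)^d)\subseteq\Psi(X)$, and this last inclusion follows because $\overline X\cap\Psi((\overline{\Qb}^r)^d)$ is a closed subset of $\Psi((\overline{\Qb}^r)^d)\cong(\overline{\Qb}^r)^d$ (intersection of a closed set with the subspace) containing $\Psi(X)$, and $\Psi(X)$ is already closed there by \Cref{propo_ImageVInRealSpec} together with closedness of $X$. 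Thus $\overline X$ satisfies both defining properties of the smallest such set, forcing $\overline X=\widetilde X$.
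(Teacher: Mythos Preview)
Your approach is essentially the same as the paper's: show $\overline{X}\subseteq\widetilde{X}$ using closedness of $\widetilde{X}$, then $\widetilde{X}\subseteq\overline{X}$ using the minimality of $\widetilde{X}$ from \Cref{thm_SemiAlgSetsConstructibleSubsets}~(\ref{propo_CharTilde}). The first inclusion is fine.

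There is, however, a genuine logical slip in your final paragraph. You correctly isolate that, to apply minimality, you need
\[
\overline{X}\cap\Psi\bigl((\overline{\Qb}^r)^d\bigr)\subseteq\Psi(X),
\]
and you correctly note the subtlety that $\Psi((\overline{\Qb}^r)^d)$ is not open in $\RSp(A)$, so one cannot naively pass from closure in $\RSp(A)$ to closure in the subspace. But your proposed ``clean way around this'' does not prove the needed containment: you argue that $\overline{X}\cap\Psi((\overline{\Qb}^r)^d)$ is a closed subset of $\Psi((\overline{\Qb}^r)^d)$ containing $\Psi(X)$, and that $\Psi(X)$ is also closed there. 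From ``$A$ is closed, $B$ is closed, $B\subseteq A$'' one cannot conclude $A\subseteq B$; many closed sets contain a given closed set.

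The fix is immediate and already available: simply reuse the first inclusion. Since you have shown $\overline{X}\subseteq\widetilde{X}$, and \Cref{thm_SemiAlgSetsConstructibleSubsets}~(\ref{propo_CharTilde}) gives $\widetilde{X}\cap\Psi((\overline{\Qb}^r)^d)=\Psi(X)$, you get
\[
\overline{X}\cap\Psi\bigl((\overline{\Qb}^r)^d\bigr)\subseteq\widetilde{X}\cap\Psi\bigl((\overline{\Qb}^r)^d\bigr)=\Psi(X).
\]
Together with the trivial inclusion $\Psi(X)\subseteq\overline{X}\cap\Psi((\overline{\Qb}^r)^d)$, this shows the intersection is exactly $\Psi(X)$, and minimality then yields $\widetilde{X}\subseteq\overline{X}$ as desired. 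With this correction your proof is complete and matches the paper's.
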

\begin{proof}
    Since $X\subset (\overline{\Qb}^r)^d$ is a closed semi-algebraic set, the associated constructible set $\widetilde{X} \subset \RSp(A)$ is closed, see \Cref{thm_SemiAlgSetsConstructibleSubsets}~(\ref{item_thm_SemiAlgSetsConstructibleSubsets}).
    Thus $\overline{X}\subset \widetilde{X}$.
    On the other hand, $\widetilde{X}$ is the smallest closed set in $\RSp(A)$ whose intersection with $\Psi((\overline{\Qb}^r)^d)$ is $\Psi(X)$, see \Cref{thm_SemiAlgSetsConstructibleSubsets}~(\ref{propo_CharTilde}).
    Since $X\subset(\overline{\Qb}^r)^d$ is closed we have that $\overline{X}\cap \Psi((\overline{\Qb}^r)^d)=\Psi(X)$, so $\widetilde{X} \subset \overline{X}$, which concludes that $\overline{X}=\widetilde{X}$.
\end{proof}

In particular, if $X \subset (\overline{\Qb}^r)^d$ is closed and semi-algebraic, the real spectrum compactification $\rsp(X) \coloneqq \overline{X}=\widetilde{X}$ is intrinsic to $X$, and does not depend on the embedding $X \subset (\overline{\Qb}^r)^d$.
We also set $\rsp^\cl(X) \coloneqq \widetilde{X}^\cl $ to be the subset of closed points in $\rsp(X)$.
We then have $\rsp^\cl(X)  = \widetilde{X} \cap \rsp^\cl(A)$, since $\widetilde{X}$ is closed by \Cref{thm_SemiAlgSetsConstructibleSubsets}~(\ref{item_thm_SemiAlgSetsConstructibleSubsets}).

Beware that in \cite[\S 2.7]{BurgerIozziParreauPozzetti_RSCCharacterVarieties2}, the authors define the real spectrum compactification of a semi-algebraic set by its associated constructible set, which in the case of closed semi-algebraic sets gives the same definition as we give here.

Even though $\overline{\Qb}^r \neq \Rb$, the space $\rsp(X)$ contains $X_{\Rb}$, see \Cref{propo_ImageVRInRealSpec}.
Thus we obtain the following.

\begin{theorem}[{\cite{BurgerIozziParreauPozzetti_RSCCharacterVarieties2}}]
    Let $X \subset (\Qbar)^d$ be a closed semi-algebraic set.
    Then $\rsp(X)$ is a compact space that contains $X_{\Rb}$ as a dense subset.
    Furthermore, $\rsp^\cl(X)$ is a compact Hausdorff space that contains $X_{\Rb}$ as a dense open subset.
\end{theorem}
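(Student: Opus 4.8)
The plan is to show that this final theorem is essentially a formal consequence of the results about the real spectrum that have just been established in this section, combined with the density statements coming from \Cref{propo_ImageVRInRealSpec}. First I would recall that for a closed semi-algebraic set $X \subseteq (\overline{\Qb}^r)^d$ we have defined $\rsp(X) \coloneqq \overline{X} = \widetilde{X}$, and that this is intrinsic by the lemma immediately preceding the statement. The compactness of $\rsp(X)$ then follows directly: $\widetilde{X}$ is a closed subset of the compact space $\rsp(A)$ (\Cref{thm_RSCTopolProp} together with \Cref{thm_SemiAlgSetsConstructibleSubsets}~(\ref{item_thm_SemiAlgSetsConstructibleSubsets})), hence itself compact.

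Next I would address the two density claims. By \Cref{propo_ImageVRInRealSpec}, the embedding $\Psi \colon \Rb^d \hookrightarrow \rsp^\cl(A)$ has open and dense image; intersecting with $\widetilde{X}$ and using \Cref{propo: Characterizing closures of closed semi-algebraic sets} (applied to the evaluation homomorphism $\textnormal{ev}_v$ for $v \in \Rb^d$, which has $[\textnormal{ev}_v] \in \widetilde{X}$ precisely when $v \in X_{\Rb}$) identifies $\Psi(\Rb^d) \cap \widetilde X$ with $X_{\Rb}$. To see that $X_{\Rb}$ is dense in $\rsp(X) = \widetilde X$, I would argue as follows: $\widetilde X = \overline X$ is by definition the closure of $\Psi\bigl((\overline{\Qb}^r)^d\bigr) \cap \widetilde{X} = \Psi(X)$ inside $\rsp(A)$; since $\Psi(\overline{\Qb}^r{}^d) \subseteq \Psi(\Rb^d)$, we have $\Psi(X) \subseteq X_{\Rb}$ under the identifications above, so $\overline{\Psi(X)} \subseteq \overline{X_{\Rb}} \subseteq \widetilde X$, forcing $X_{\Rb}$ to be dense. (Alternatively one invokes the characterization in \Cref{thm_SemiAlgSetsConstructibleSubsets}~(\ref{propo_CharTilde}) that $\widetilde X$ is the smallest closed set whose intersection with $\Rb^d$ is $X_{\Rb}$.)

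For the Hausdorff part, I would set $\rsp^\cl(X) = \widetilde{X} \cap \rsp^\cl(A)$, which is the space of closed points of the constructible set $\widetilde X$; by \Cref{propo_RSpClHausdorff} this is a compact Hausdorff space, and by \Cref{propo_ImageVRInRealSpec} the ambient $\rsp^\cl(A)$ is in addition metrizable, so $\rsp^\cl(X)$, being a closed subspace, is metrizable as well. It remains to see that $X_{\Rb}$ sits inside $\rsp^\cl(X)$ as a dense open subset: openness follows because $\Psi(\Rb^d)$ is open in $\rsp^\cl(A)$ (\Cref{propo_ImageVRInRealSpec}) and we intersect with $\widetilde X$, which is closed; density follows from the density statement of the previous paragraph together with the fact that every point of $\rsp(X)$ is a limit of closed points (the closed points are dense in any spectral space, cf.\ the discussion around \Cref{propo_RSpClHausdorff}), reducing density in $\rsp^\cl(X)$ to density in $\rsp(X)$.

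I do not expect any genuine obstacle here — the statement is a summary consolidating results that have been proven or cited in the preceding subsections. The only point requiring a little care is keeping straight the distinction between $(\overline{\Qb}^r)^d$ and $\Rb^d$: the compactification is built from $\Qbar$-points for intrinsicness, but it is the $\Rb$-points $X_{\Rb}$ (a strictly larger set) that one wants to see densely embedded, and this is exactly what \Cref{propo_ImageVRInRealSpec} and \Cref{propo: Characterizing closures of closed semi-algebraic sets} are designed to handle. Accordingly, the proof is short: assemble \Cref{thm_RSCTopolProp}, \Cref{propo_RSpClHausdorff}, \Cref{propo_ImageVRInRealSpec}, \Cref{thm_SemiAlgSetsConstructibleSubsets} and \Cref{propo: Characterizing closures of closed semi-algebraic sets}, and invoke the preceding lemma identifying $\overline X$ with $\widetilde X$.
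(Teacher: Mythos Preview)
The paper does not prove this theorem; it is stated with a citation to \cite{BurgerIozziParreauPozzetti_RSCCharacterVarieties2} and no proof is given. Your proposal to assemble it from the preceding cited results (\Cref{thm_RSCTopolProp}, \Cref{propo_RSpClHausdorff}, \Cref{propo_ImageVRInRealSpec}, \Cref{thm_SemiAlgSetsConstructibleSubsets}, \Cref{propo: Characterizing closures of closed semi-algebraic sets}, and the lemma $\overline{X}=\widetilde{X}$) is correct in spirit and essentially works.

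One small simplification: your argument for density of $X_{\Rb}$ in $\rsp^\cl(X)$ is more complicated than necessary. You invoke that ``closed points are dense in any spectral space'' to reduce to density in $\rsp(X)$, but this detour is not needed: since $X_{\Rb}\subseteq \rsp^\cl(X)\subseteq \rsp(X)$ and $X_{\Rb}$ is dense in $\rsp(X)$, it is automatically dense in the intermediate subspace $\rsp^\cl(X)$ (the closure of $X_{\Rb}$ in $\rsp^\cl(X)$ is $\overline{X_{\Rb}}^{\rsp(X)}\cap\rsp^\cl(X)=\rsp(X)\cap\rsp^\cl(X)=\rsp^\cl(X)$). The claim about density of closed points in spectral spaces is also not something established in the paper, so it is better avoided.
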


\subsection{Real spectrum compactification of spaces of representations}
\label{s: Real Sectrum}

We have seen in \Cref{subs_RSCsemi-algebraicSets} how to define the real spectrum compactification of a closed semi-algebraic set,  which we now would like to apply in the context of character varieties and $\Theta$-positive representations.

Let $\mathbf{G}<\mathbf{\SL}_n$ be a connected semisimple linear algebraic group defined over $\overline{\Qb}^r$ with a $\Theta$-positive structure, see \Cref{subsection: PositiveStructures}.
As in \Cref{section:Positivitysemi-algebraic}, let $\Gamma \subset \Isom_+(\Hb)$ be a lattice.
Since $\Gamma$ is finitely generated, say by a symmetric generating set $\mathcal{S} \subset \Gamma$, 
the set of representations of $\Gamma$ in $\mathbf{G}$, denoted by $\Hom(\Gamma,\mathbf{G})$ has the structure of an affine variety defined over $\overline{\Qb}^r$.

As in \Cref{section:SemiAlgGroups}, let $G$ be a group with $\mathbf{G}(\overline{\Qb}^r)^\circ < G < \mathbf{G}(\overline{\Qb}^r)$, where $\mathbf{G}(\overline{\Qb}^r)^\circ$ denotes the semi-algebraically connected component of $\mathbf{G}(\overline{\Qb}^r)$ containing the identity.
Then $G$ is a semisimple linear semi-algebraic group over $\Qbar$ as in \Cref{dfn: linear semi-algebraic group}.

\subsubsection{The representation variety}
The representation variety $\Hom(\Gamma,G)$ is naturally (up to the choice of $\mathcal{S}$) a subset of $(\overline{\Qb}^r)^d$, where $d = (n^2-1) |\mathcal{S}|$, and has the structure of a closed semi-algebraic subset of $(\overline{\Qb}^r)^d$, which is even algebraic if $G$ is algebraic.

Thus it admits a real spectrum compactification $\rsp\Hom(\Gamma, G)$, which is a compact (non-Hausdorff) space that contains $\Hom(\Gamma, G)$ as a dense (non-open) subset \cite{BochnakCosteRoy_RealAlgebraicGeometry}.
We call $\rsp\Hom(\Gamma, G)$ the \emph{real spectrum compactification of the representation variety}.
The set $\rsp^\cl\Hom(\Gamma, G)$ of closed points in $\rsp\Hom(\Gamma, G)$ is a compact Hausdorff space which contains $\Hom(\Gamma, G)$ as an open and dense subset \cite[Section 1.1]{BurgerIozziParreauPozzetti_RSCCharacterVarieties2}.
Note that $\rsp\Hom(\Gamma, G)$ (and the subset of its closed points) also contains $\Hom(\Gamma,G)_{\Rb}=\Hom(\Gamma,G_{\Rb})$ (\Cref{propo_ImageVRInRealSpec}), thus providing a compactification of the space of representations of $\Gamma$ into $G_{\Rb}$.

It follows from the description of the real spectrum and \Cref{propo: Characterizing closures of closed semi-algebraic sets} that we can describe the points in $\rsp\Hom(\Gamma, G)$ as follows.

\begin{theorem}[{\cite[Proposition 6.3]{BurgerIozziParreauPozzetti_RSCCharacterVarieties2}}]
\label{thm_RSCCharBoundary}
Let $G$ be as above.
Then every $\alpha \in \rsp\Hom(\Gamma, G)$ is represented by a representation $\rho \from \Gamma \to  G_{\Fb}$, where $\Fb$ is a real closed field.
Two representations $\rho_1 \from \Gamma \to G_{\Fb_1}$ and $\rho_2 \from \Gamma \to G_{\Fb_2}$ represent the same point if there exist a real closed field $\Fb$ and field homomorphisms $f_i \from \Fb_{i} \to \Fb$ such that $f_2 \circ \rho_2=f_1 \circ \rho_1$, and we write $\alpha=[(\Fb,\rho)]$.
\end{theorem}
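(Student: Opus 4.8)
The statement to be proved, Theorem~\ref{thm_RSCCharBoundary}, is a description of points in $\rsp\Hom(\Gamma,G)$ as representations of $\Gamma$ into $G_{\Fb}$ over real closed fields $\Fb$, together with the equivalence relation identifying two such when they agree after a common embedding. The plan is to deduce this directly from the general theory of the real spectrum of a ring recalled in the preceding subsections, applied to the coordinate ring $A$ of the affine variety $\Hom(\Gamma,G)$.

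First I would set up the algebra. Fixing the symmetric generating set $\mathcal S$, the variety $\Hom(\Gamma,\mathbf G)\subset \mathbf{SL}_n^{\mathcal S}$ is cut out by the relations of $\Gamma$ together with the equations defining $\mathbf G$; its coordinate ring is a quotient $A=\overline{\Qb}^r[Y_1,\dots,Y_d]/I$ of a polynomial ring, where $d=n^2\cdot|\mathcal S|$. Since $\Hom(\Gamma,G)$ is a closed semi-algebraic subset of $(\overline{\Qb}^r)^d$ (a union of semi-algebraically connected components of $\Hom(\Gamma,\mathbf G)(\overline{\Qb}^r)$, by \Cref{dfn: linear semi-algebraic group} and \Cref{thm_RConnSemiAlgConn}), its real spectrum compactification $\rsp\Hom(\Gamma,G)=\widetilde{\Hom(\Gamma,G)}$ is an intrinsically defined constructible (in fact closed) subset of $\rsp(\overline{\Qb}^r[Y_1,\dots,Y_d])$, by the discussion in \Cref{subs_RSCsemi-algebraicSets}.

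Next I would invoke the ring-homomorphism description of points of the real spectrum (the Remark after \Cref{propo_RSpClHausdorff}, i.e.\ the "represented by $\varphi$" viewpoint, which is \cite[3.1.15, Lemma 3.1.16]{Scheiderer_RAG}). Every point $\alpha\in\rsp(\overline{\Qb}^r[Y_1,\dots,Y_d])$ is represented by an evaluation homomorphism $\mathrm{ev}_{(x_1,\dots,x_d)}$ for some real closed field $\Fb$ and some $(x_1,\dots,x_d)\in\Fb^d$. By \Cref{propo: Characterizing closures of closed semi-algebraic sets}, such a point lies in $\widetilde{\Hom(\Gamma,G)}$ if and only if $(x_1,\dots,x_d)\in\Hom(\Gamma,G)_{\Fb}$. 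But $\Hom(\Gamma,G)_{\Fb}$ is, by definition of the $\Fb$-extension and by \Cref{section:Positivitysemi-algebraic} (where it was observed that $\Hom(\Gamma,G_{\Fb})=\Hom(\Gamma,G)_{\Fb}$), exactly the set of tuples $(\rho(s))_{s\in\mathcal S}$ of matrices in $G_{\Fb}$ satisfying the relations of $\Gamma$, i.e.\ the set of representations $\rho\colon\Gamma\to G_{\Fb}$. So a point of $\rsp\Hom(\Gamma,G)$ is precisely the datum of such a representation $\rho$, which proves the first assertion. For the equivalence relation: two evaluation homomorphisms $\mathrm{ev}_{(\rho_1(s))_{s}}\colon A\to\Fb_1$ and $\mathrm{ev}_{(\rho_2(s))_s}\colon A\to\Fb_2$ represent the same point of $\rsp(A)$ if and only if there is a real closed field $\Fb$ and field homomorphisms $f_i\colon\Fb_i\to\Fb$ with $f_1\circ\mathrm{ev}_{(\rho_1(s))_s}=f_2\circ\mathrm{ev}_{(\rho_2(s))_s}$, again by \cite[Lemma 3.1.16]{Scheiderer_RAG}; unwinding what this means on the generators $Y_i$, this says exactly $f_1(\rho_1(s))=f_2(\rho_2(s))$ for all $s\in\mathcal S$, hence $f_1\circ\rho_1=f_2\circ\rho_2$ as representations $\Gamma\to G_{\Fb}$. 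Conversely such a pair $(f_1,f_2)$ clearly forces the two homomorphisms to agree.

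The argument is essentially a bookkeeping translation, so there is no serious obstacle; the one point that needs a little care is the compatibility of the two descriptions of $\Hom(\Gamma,G)$ — as a semi-algebraic subset of affine space on the one hand, and as $\Hom(\Gamma,G_{\Fb})$ on the other — across field extensions, together with the mild subtlety that $G$ need not be algebraic, so that one is genuinely taking $\Fb$-extensions of semi-algebraically connected components rather than of $\mathbf G(\Fb)$ itself. This is handled by \Cref{thm_ExtConnComp}, which guarantees that the semi-algebraically connected components of $\mathbf G(\overline{\Qb}^r)$ extend to those of $\mathbf G(\Fb)$, so that $G_{\Fb}$ is well-defined and $\Hom(\Gamma,G)_{\Fb}=\Hom(\Gamma,G_{\Fb})$ as claimed. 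One should also remark, for completeness, that the representative $(\Fb,\rho)$ of a given $\alpha$ can be taken with $\Fb$ the real closure of $\mathrm{Frac}(A/\mathfrak p)$ for $\alpha=(\mathfrak p,\le)$, which is the canonical choice.
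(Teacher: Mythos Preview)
Your argument is correct and follows exactly the approach the paper indicates: the paper does not give its own proof but cites \cite[Proposition 6.3]{BurgerIozziParreauPozzetti_RSCCharacterVarieties2} and remarks that the statement ``follows from the description of the real spectrum and \Cref{propo: Characterizing closures of closed semi-algebraic sets}'', which is precisely the route you take. One small correction: the Remark describing points of $\rsp(A)$ via ring homomorphisms $\varphi\colon A\to\Fb$ appears \emph{before} \Cref{propo_RSpClHausdorff} (right after \Cref{dfn_RealSpectrum}), not after it.
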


Since $\Gamma$ is finitely generated we can always assume that $\alpha \in \rsp\Hom(\Gamma,G)$ is represented by a representation into $G_{\Fb}$, where the real closed field $\Fb$ is of finite transcendence degree over $\Qbar$, see \cite[\S 6]{Brumfiel_RSCTeichmullerSpace} and \cite[Proposition 6.3]{BurgerIozziParreauPozzetti_RSCCharacterVarieties2}.
Thus $\Fb$ contains a big element (\Cref{def: big element}) \cite[\S 5]{Brumfiel_RSCTeichmullerSpace}, and we get an order-compatible absolute value $\abs{\cdot} \colon \Fb \to \Rb_{\geqslant 0}$ as in \Cref{sec: ordered fields} (which is unique up to positive rescaling).

From this data one defines a metric space $\mathcal{B}_{G_{\Fb}}$ on which $G_{\Fb}$ acts transitively by isometries.
The stabilizers of points are conjugate to the subgroup $G(\mathcal{O})\coloneqq G_{\Fb} \cap \mathcal{O}^{n \times n}$, where $\mathcal{O}=\{ x \in \Fb \mid \abs{x}\leq 1\}$ is the valuation ring of $\Fb$. 
In particular, $\mathcal{B}_{G_{\Fb}} \cong G_{\Fb}/G(\mathcal{O})$ as $G_{\Fb}$-homogeneous sets.
It turns out that $\mathcal{B}_{G_{\Fb}}$ has the structure of a so-called \emph{generalized affine building} \cite{Appenzeller_Buildings}, which one should think of as the non-Archimedean analogue of the symmetric space of $G_{\Rb}$, and a generalization to higher rank of the Bruhat--Tits tree associated to $\SL_2(\Fb)$ \cite{Brumfiel_TreeNonArchimedeanHyperbolicPlane}.
For more details we refer to \cite{kleiner1997rigidity, Parreau_CompEspReprGroupesTypeFini, BurgerIozziParreauPozzetti_RSCCharacterVarieties2, Appenzeller_Buildings}.

We then have the following very useful characterization of closed points in the boundary of $\rsp\Hom(\Gamma,G)$.
Recall that the subset of closed points is a Hausdorff topological space.

\begin{theorem}[{\cite[Proposition 6.4]{BurgerIozziParreauPozzetti_RSCCharacterVarieties2}}]
    Let $[(\Fb, \rho)]$ be a point in\break $\rsp\Hom(\Gamma,G)$ with $\Fb$ real closed, non-Archimedean and of finite transcendence degree over $\Qbar$.
    Then the following are equivalent:
    \begin{itemize}
        \item $(\Fb,\rho)$ represents a closed point;
        \item the $\Gamma$-action on $\mathcal{B}_{G_{\Fb}}$ does not globally fix $\textnormal{Id}\cdot G(\mathcal{O})$.
    \end{itemize}
\end{theorem}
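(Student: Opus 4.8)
The plan is to reduce the statement to a question about specializations in the real spectrum of the coordinate ring of $\Hom(\Gamma,G)$, and then to use the finiteness of the transcendence degree to identify which valuation ring governs such specializations. Fix a finite symmetric generating set $\mathcal S\subset\Gamma$, write $A\coloneqq\Qbar[Y^s_{ij}\mid s\in\mathcal S,\,1\le i,j\le n]$, and let $\varphi_\rho\colon A\to\Fb$, $Y^s_{ij}\mapsto\rho(s)_{ij}$, be the evaluation homomorphism representing $[(\Fb,\rho)]$; its image $\varphi_\rho(A)$ is the subring of $\Fb$ generated by all matrix entries $\rho(\gamma)_{ij}$, $\gamma\in\Gamma$. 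We normalize the representative so that $\Fb$ is the real closure of the fraction field of $\varphi_\rho(A)$ (this does not change the point and, $\Gamma$ being finitely generated, keeps $\Fb$ of finite transcendence degree over $\Qbar$, hence equipped with a big element $b$ and an order-compatible absolute value $|\cdot|_b$ with valuation ring $\mathcal O=\mathcal O_b$, as in \Cref{sec: ordered fields}); the hypothesis that $\Fb$ is non-Archimedean then says precisely that $\varphi_\rho(A)$ contains non-finite elements. A point of a topological space is closed iff it has no proper specialization, and since $\rsp\Hom(\Gamma,G)$ is closed in $\rsp(A)$, specializations may be computed in $\rsp(A)$. On the other side, the stabilizer of the base point $\Id\cdot G(\mathcal O)$ of the building $\mathcal B_{G_{\Fb}}=G_{\Fb}/G(\mathcal O)$ is $G(\mathcal O)=G_{\Fb}\cap\mathcal O^{n\times n}$ (this is built into the construction of the generalized affine building; cf.\ \cite{Brumfiel_TreeNonArchimedeanHyperbolicPlane} for $\SL_2$ and \cite{Appenzeller} in general), so the $\Gamma$-action fixes $\Id\cdot G(\mathcal O)$ iff $\rho(\Gamma)\subseteq G(\mathcal O)$ iff $\varphi_\rho(A)\subseteq\mathcal O$. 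The theorem thus becomes: \emph{$[(\Fb,\rho)]$ is not closed if and only if $\varphi_\rho(A)\subseteq\mathcal O$.}

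\textbf{If $\varphi_\rho(A)\subseteq\mathcal O$, then $[(\Fb,\rho)]$ is not closed.} Let $\mathfrak m\subset\mathcal O$ be the maximal ideal, $\bar k\coloneqq\mathcal O/\mathfrak m$ the (real closed, Archimedean) residue field, and $\pi\colon\mathcal O\to\bar k$ the reduction. Then $\pi\circ\varphi_\rho\colon A\to\bar k$ represents a point $\bar\alpha=[(\bar k,\bar\rho)]$ with $\bar\rho=\pi\circ\rho\colon\Gamma\to G(\bar k)$. Because $\varphi_\rho(A)\subseteq\mathcal O$, $\pi$ preserves non-negativity, and $\Qbar^\times\subset\mathcal O^\times$, for every $a\in A$ one has $\varphi_\rho(a)\in\mathcal O$ and $\pi(\varphi_\rho(a))\ge 0$ whenever $\varphi_\rho(a)\ge 0$; this is exactly the condition for $\bar\alpha$ to lie in the closure of $[(\Fb,\rho)]$. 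Since $\Fb$ is non-Archimedean, $\mathfrak m\ne(0)$ and $\bar k$ is Archimedean while $\Fb$ is not; hence $\bar\alpha\ne[(\Fb,\rho)]$ (equality would force $\varphi_\rho(A)$, and therefore the residue field of $[(\Fb,\rho)]$, to be Archimedean). So $[(\Fb,\rho)]$ admits a proper specialization and is not closed.

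\textbf{If $[(\Fb,\rho)]$ is not closed, then $\varphi_\rho(A)\subseteq\mathcal O$.} By the structure theory of the real spectrum \cite[Ch.~7]{BochnakCosteRoy_RealAlgebraicGeometry}, \cite[Ch.~4]{Scheiderer_RAG}, every specialization of $[(\Fb,\rho)]$ is represented by the reduction of $\varphi_\rho$ modulo a convex valuation subring of $\Fb$: the full ring $\Fb$ gives $[(\Fb,\rho)]$ itself, while proper convex valuation subrings yield the proper specializations, and such a reduction is defined only when the chosen subring contains $\varphi_\rho(A)$. Hence a proper specialization produces a proper convex valuation subring $\mathcal V\subsetneq\Fb$ with $\varphi_\rho(A)\subseteq\mathcal V$. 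Now the convex valuation subrings of $\Fb$ are totally ordered by inclusion; since $\Fb$ has finite transcendence degree over $\Qbar$, Abhyankar's inequality bounds their ranks, so this chain is finite and has a unique maximal proper element $\mathcal V_{\max}$. This $\mathcal V_{\max}$ is the unique convex valuation subring of (Krull) rank one, hence coincides with the valuation ring $\mathcal O=\mathcal O_b$ of the order-compatible absolute value $|\cdot|_b$, which is real-valued and therefore of rank one. Therefore $\varphi_\rho(A)\subseteq\mathcal V\subseteq\mathcal V_{\max}=\mathcal O$, as desired.

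\textbf{Main obstacle.} The delicate step is the second direction: one must invoke the dictionary between specializations of a point of $\rsp(A)$ and convex valuation subrings of its residue field, and check carefully that a \emph{proper} specialization genuinely confines the matrix entries $\rho(\gamma)_{ij}$ to a \emph{proper} convex valuation subring (rather than merely changing the order on the same support), before using finiteness of the transcendence degree to identify that subring's maximal proper enlargement with $\mathcal O_b$. The remaining ingredients — reduction modulo the maximal ideal in the first direction, and the identification of the base-point stabilizer of $\mathcal B_{G_{\Fb}}$ with $G(\mathcal O)$ — are routine given the construction of $\mathcal B_{G_{\Fb}}$ in \cite{Appenzeller}; no further building theory is needed.
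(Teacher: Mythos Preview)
The paper does not prove this statement; it is quoted verbatim from \cite[Proposition~6.4]{BurgerIozziParreauPozzetti_RSCCharacterVarieties2}. Your argument supplies an essentially correct independent proof. The reduction to the equivalence ``$[(\Fb,\rho)]$ is not closed $\Leftrightarrow$ $\varphi_\rho(A)\subseteq\mathcal O$'' is sound, the first direction (reducing modulo $\mathfrak m$ and observing that the residue field of the specialization is Archimedean while that of $[(\Fb,\rho)]$ is not) is standard, and the second direction is the right argument: the valuation-theoretic description of specializations in $\rsp(A)$ identifies proper specializations of $[(\Fb,\rho)]$ with proper convex valuation subrings of $\textnormal{Frac}(\varphi_\rho(A))$ containing $\varphi_\rho(A)$, and finiteness of the transcendence degree forces any such subring to lie inside the unique maximal proper convex valuation ring of $\Fb$, which is the rank-one ring $\mathcal O$ of $|\cdot|_b$.

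One small inaccuracy: the parenthetical ``the hypothesis that $\Fb$ is non-Archimedean then says precisely that $\varphi_\rho(A)$ contains non-finite elements'' is not correct. After your normalization, $\varphi_\rho(A)$ may lie entirely in $\mathcal O$ and yet contain nonzero infinitesimals, so that $\textnormal{Frac}(\varphi_\rho(A))$, and hence $\Fb$, is non-Archimedean. Fortunately you never use this claim: what you actually need in the first direction is that the residue field of $[(\Fb,\rho)]$ is non-Archimedean (which it is, being $\textnormal{Frac}(\varphi_\rho(A))$) while that of $\bar\alpha$ is Archimedean, so that $\bar\alpha\neq[(\Fb,\rho)]$. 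The error is therefore harmless, but you should delete the parenthetical.
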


Recall that in \Cref{subsection: ExNonFramablePosRepr} we constructed families of representations $\rho$ from the fundamental group of a three-holed sphere $\Gamma_{0,3}$ to $\PSL_2(\Fb)$ in such a way that $\rho(ab)$ is an infinitesimal rotation, where $a$ and $b$ are the generators of $\Gamma_{0,3}$ such that $a$, $b$, and $b^{-1}a^{-1}$ give three peripheral curves in $\Gamma_{0,3} \backslash \mathbf{H}_{\Rb}$.
Briefly, we choose for each $i=1,2,3$, an $\Fb$-line $\ell_i$ in $\mathbf{H}_{\Fb}$ with endpoints $x_i,y_i$, such that $(x_1,y_3,y_1,x_3,y_2,x_2)$ are cyclically ordered (see  \Cref{fig:PosReprNotFrame}), and $\textnormal{CR}(x_1,y_1,y_3,x_3)-1$ is negative and infinitesimal. Let $\sigma_i$ be the reflection in $\PGL_2(\Fb)$ about $\ell_i$. Then by \Cref{propo:PosReprNotFrame}, the representation $\rho \colon \Gamma_{0,3}\to\PSL_2(\Fb)$ defined by $\rho(a)=\sigma_1\sigma_2$ and $\rho(b)=\sigma_2\sigma_3$ is positive but not frameable.

In particular, they give examples of $\Theta$-positive non-frameable representations.
With the above characterization of closed points, we can now show that we can choose $\rho$ in such a way that it represents a closed point in $\rsp\Hom(\Gamma_{0,3},\PSL_2(\overline{\Qb}^r))$.

\begin{corollary}
\label{corollary: positive non frameable representation is closed}
The representation $\rho \colon \Gamma_{0,3} \to \PSL_2(\Fb)$ constructed in \Cref{subsection: ExNonFramablePosRepr} can be chosen to represent a closed point in $\rsp\Hom(\Gamma_{0,3},\PSL_2(\overline{\Qb}^r))$.
\end{corollary}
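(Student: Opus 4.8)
The plan is to combine the explicit construction of $\rho$ from \Cref{subsection: ExNonFramablePosRepr} with the characterization of closed points given in \Cref{thm: Closed point RspHom}. Concretely, I would first observe that the construction of $\rho$ involves a choice of configuration of $\Fb$-lines $l_1, l_2, l_3$, and in particular a choice of the infinitesimal parameter governing $\textnormal{CR}(x_1,y_1,y_3,x_3)-1$ and the positions of the endpoints $x_i, y_i \in \mathbf{P}^1(\Fb)$. The key point is that these parameters can be taken to lie in a real closed subfield $\Fb_0 \subset \Fb$ of finite transcendence degree over $\overline{\Qb}^r$ (for instance, one can take $\Fb_0$ to be the real closure of $\overline{\Qb}^r(\varepsilon)$ for a single infinitesimal $\varepsilon$, choosing $y_2$, $c$, $r$, etc.\ to be elements of this field). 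Then $\rho(a) = \sigma_1\sigma_2$ and $\rho(b) = \sigma_2\sigma_3$ have matrix entries in $\Fb_0$, so $\rho$ is in fact a representation $\Gamma_{0,3} \to \PSL_2(\Fb_0)$, and as such it defines a point $[(\Fb_0, \rho)] \in \rsp\Hom(\Gamma_{0,3}, \PSL_2(\overline{\Qb}^r))$ with $\Fb_0$ non-Archimedean and of finite transcendence degree over $\overline{\Qb}^r$.

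Next I would invoke \Cref{thm: Closed point RspHom}: it remains to check that the $\Gamma_{0,3}$-action on the generalized affine building (here, the Bruhat--Tits tree) $\mathcal B_{\PSL_2(\Fb_0)}$ does not globally fix the basepoint $\textnormal{Id}\cdot \PSL_2(\mathcal O)$. Equivalently, one must show that $\rho$ is not conjugate into $\PSL_2(\mathcal O)$, where $\mathcal O \subset \Fb_0$ is the valuation ring of the order-compatible absolute value $|\cdot|$. The cleanest way is to exhibit an element $\gamma \in \Gamma_{0,3}$ such that $\rho(\gamma)$ has trace of absolute value strictly greater than the absolute value of any entry it could have after conjugation by $\PSL_2(\mathcal O)$ — more precisely, such that $|\tr(\rho(\gamma))| > 1$, i.e.\ $\rho(\gamma)$ is ``hyperbolic of positive translation length'' for the tree metric. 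The element $a$ is a natural candidate: $\rho(a) = \sigma_1\sigma_2$ is hyperbolic in $\PSL_2(\Fb_0)$ by construction (its endpoints $a^+ \in U_2$, $a^- \in U_1$ are distinct points of $\mathbf{P}^1(\Fb_0)$, lying in disjoint sets), and I would compute $|\tr(\rho(a))|_{\Fb_0}$ from \Cref{lem: CRBigInf}: up to normalization $|\tr(\sigma_1\sigma_2)|_{\Fb_0} = |4\,\textnormal{CR}(x_1,y_1,y_2,x_2) - 2|_{\Fb_0}$, and since the cross-ratio $\textnormal{CR}(x_1,y_1,y_2,x_2)$ can be arranged to be a ``big'' (or at least non-infinitesimal, bounded-away-from-$\{0,1\}$) element of $\Fb_0$, this trace has absolute value $>1$. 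An element of $\PSL_2(\mathcal O)$ has all entries in $\mathcal O$, hence trace in $\mathcal O$, i.e.\ of absolute value $\le 1$; since trace is conjugation-invariant, $\rho(a)$ cannot be conjugated into $\PSL_2(\mathcal O)$, so $\rho(\Gamma_{0,3})$ does not fix the basepoint. By \Cref{thm: Closed point RspHom}, $[(\Fb_0,\rho)]$ is a closed point.

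I expect the main obstacle to be bookkeeping rather than conceptual: one must verify that the configuration of $\Fb$-lines underlying the construction — including the condition that $\textnormal{CR}(x_1,y_1,y_3,x_3)-1$ is negative infinitesimal, the disjointness of the Schottky-type sets $U_1, U_2, V_1, V_2$, and the positivity conclusion of \Cref{propo:PosReprNotFrame} — can all be realized over a single finitely generated real closed field, and simultaneously arrange that some loop (such as $a$) has non-infinitesimal trace so that the building action is non-trivial. This is a matter of choosing the real parameters and the one infinitesimal parameter consistently; since the construction in \Cref{subsection: ExNonFramablePosRepr} already produces everything explicitly in terms of finitely many such parameters, there is no genuine difficulty, only the need to track field of definition. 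Finally I would remark that, by \Cref{propo:PosReprNotFrame}, this $\rho$ is $\Theta$-positive and not frameable, so \Cref{corollary: positive non frameable representation is closed} shows that non-frameable $\Theta$-positive representations can occur as closed boundary points of $\rsp\Hom(\Gamma_{0,3},\PSL_2(\overline{\Qb}^r))$, which is the point of interest.
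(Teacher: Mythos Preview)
Your approach is the same as the paper's: arrange the configuration so that $|\Tr(\rho(a))|_{\Fb}$ is a big element of $\Fb$ (via \Cref{lem: CRBigInf}, by choosing $l_2$ so that $\textnormal{CR}(x_1,y_1,y_2,x_2)$ is big), whence $\rho(a)\notin\PSL_2(\mathcal O)$ and \Cref{thm: Closed point RspHom} applies. Your extra care about realizing everything over a real closed field of finite transcendence degree is appropriate, since that is a hypothesis of \Cref{thm: Closed point RspHom} which the paper leaves implicit.

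One point to correct: your parenthetical alternative ``(or at least non-infinitesimal, bounded-away-from-$\{0,1\}$)'' is \emph{not} sufficient. The absolute value $|\cdot|$ in play is the real-valued order-compatible absolute value associated to a big element $b$, and for this absolute value every element $x\in\Fb$ with $0<|x|_{\Fb}\le n$ for some $n\in\Nb$ satisfies $|x|=1$. So if the cross-ratio is merely a bounded nonzero number (for instance a real number, as in the ``default'' picture of \Cref{fig:PosReprNotFrame}), then $\Tr(\rho(a))\in\mathcal O$ and the argument fails; indeed, with the naive choice of all real endpoints plus one infinitesimal, the whole image $\rho(\Gamma_{0,3})$ lies in $\PSL_2(\mathcal O)$ and the point is \emph{not} closed. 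You genuinely need $\textnormal{CR}(x_1,y_1,y_2,x_2)$ to be a big element, which is precisely the additional choice the paper makes (``choose $l_2$ in such a way that $|\textnormal{CR}(x_1,y_1,y_2,x_2)|_{\Fb}$ is a big element''). Once you drop the parenthetical, your proof is correct and matches the paper's.
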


\begin{proof}
    Recall that $\abs{x}_{\Fb}\coloneqq \max \{x,-x\} \in \Fb_{\geqslant 0}$ denotes the $\Fb$-valued absolute value of $x$.
    To ensure that $\rho$ corresponds to a closed point, we further choose $\ell_2$ so that $\abs{\textnormal{CR}(x_1,y_1,y_2,x_2)}_{\Fb}$ is a big element. We saw in \Cref{lem: CRBigInf} that 
    \[\abs{\Tr(\sigma_1\sigma_2)}_{\Fb}=\abs{4\textnormal{CR}(x_1,y_1,y_2,x_2)-2}_{\Fb},\]
    so $\abs{\Tr(\sigma_1 \sigma_2)}_{\Fb}$ is a big element in $\Fb$. In particular, it is not in the ideal $\mathcal{O}$. Thus $\rho(a) \notin G(\mathcal{O})$ (since the trace is the sum of the eigenvalues in $\Fb[\sqrt{-1}]$).
    We conclude using the characterization in the above theorem.
\end{proof}

\subsubsection{Characterizing $\Theta$-positive representations}
We are particularly interested in subsets (e.g.\ the set of $\Theta$-positive representations) of $\Hom(\Gamma,G)$ and their closures in the real spectrum compactification.
Recall that if $X \subset \Hom(\Gamma,G)$, we denote by $\overline{X}$ its closure in $\RSp\Hom(\Gamma,G)$.
Then $\overline{X}$ is a compactification of $X$.
In the case that $X$ is closed and semi-algebraic we have the following characterization of points in $\overline{X}$, which is a direct consequence of the definition of the real spectrum.
It is also the crucial observation in the characterization of limits of $\Theta$-positive representations in \Cref{coro:Positive = limit of positive}.

\begin{proposition}
\label{lem: Characterizing closures of closed semi-algebraic sets}
    Let $X \subset \Hom(\Gamma,G)$ be a closed semi-algebraic set.
    If $(\Fb,\rho)$ represents a point in $\RSp\Hom(\Gamma,G)$, then $[(\Fb,\rho)] \in \overline{X}$ if and only if $\rho \in X_{\Fb}$.
\end{proposition}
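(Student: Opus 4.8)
The statement to prove is \Cref{lem: Characterizing closures of closed semi-algebraic sets}: for a closed semi-algebraic set $X \subset \Hom(\Gamma,G)$ and a point $[(\Fb,\rho)]$ of $\RSp\Hom(\Gamma,G)$, one has $[(\Fb,\rho)] \in \overline{X}$ if and only if $\rho \in X_{\Fb}$. This is essentially a dictionary lemma, translating between the two descriptions of the real spectrum that were set up in \Cref{subs_RSCsemi-algebraicSets} and \Cref{s: Real Sectrum}.

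The plan is to reduce the statement to \Cref{propo: Characterizing closures of closed semi-algebraic sets} (Scheiderer's Proposition 4.1.10), which is the abstract-ring version of exactly this fact. First I would recall that $\Hom(\Gamma,G)$ has been realized, via a choice of finite symmetric generating set $\mathcal S$, as a closed semi-algebraic subset of $(\overline{\Qb}^r)^d$ with $d = n^2 \times |\mathcal S|$ (see \Cref{s: Real Sectrum}), and correspondingly $X$ is a closed semi-algebraic subset of $(\overline{\Qb}^r)^d$. Set $A \coloneqq \overline{\Qb}^r[Y_1,\ldots,Y_d]$. Since $X$ is closed and semi-algebraic, the previous discussion gives $\RSp\Hom(\Gamma,G) = \widetilde{\Hom(\Gamma,G)}$ and, for the closure, $\overline X = \widetilde X$, the constructible subset of $\RSp(A)$ attached to $X$ (intrinsic to $X$, independent of the embedding). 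Next I would unwind what it means for $(\Fb,\rho)$ to represent a point of $\RSp\Hom(\Gamma,G)$: by \Cref{thm_RSCCharBoundary} the point is represented by the ring homomorphism $\varphi_\rho \colon A \to \Fb$ determined by sending the coordinate functions $Y_1,\ldots,Y_d$ to the matrix entries of $\rho(s)$, $s \in \mathcal S$; in particular $(\varphi_\rho(Y_1),\ldots,\varphi_\rho(Y_d))$ is precisely the tuple of coordinates of $\rho$ viewed inside $G_{\Fb}^{\mathcal S} \subset \Fb^d$.

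Now I would apply \Cref{propo: Characterizing closures of closed semi-algebraic sets} directly: it states that $[(\varphi_\rho,\Fb)] \in \widetilde X$ if and only if $(\varphi_\rho(Y_1),\ldots,\varphi_\rho(Y_d)) \in X_{\Fb}$. By the identification of the previous paragraph, the left side is $[(\Fb,\rho)] \in \overline X$, and the right side is exactly the condition $\rho \in X_{\Fb}$ (recall $X_{\Fb}$ is the $\Fb$-extension of $X$ in the sense of \Cref{dfn_ExtSemiAlgSets}, and $\Hom(\Gamma,G_{\Fb}) = \Hom(\Gamma,G)_{\Fb}$, so membership of $\rho$ in $X_{\Fb}$ is indeed the assertion that the coordinate tuple satisfies the defining sign conditions of $X$ over $\Fb$). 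This completes the proof. I would also remark that the equivalence is well-posed in that it does not depend on the choice of representative $(\Fb,\rho)$ of the point $[(\Fb,\rho)]$ nor on the generating set $\mathcal S$: changing $\mathcal S$ changes the embedding by a semi-algebraic homeomorphism defined over $\overline{\Qb}^r$, under which $\widetilde X$ and $X_{\Fb}$ transform compatibly, and two representatives differing by field homomorphisms $f_i\colon \Fb_i \to \Fb$ give the same membership status by functoriality of $\Fb$-extension together with \Cref{propo: Characterizing closures of closed semi-algebraic sets}.

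There is no real obstacle here; the only thing requiring mild care is the bookkeeping of the three layers of notation — points of $\RSp(A)$ as (prime ideal, order) pairs versus as classes of ring homomorphisms into real closed fields versus as classes of representations into $G_{\Fb}$ — and making sure the coordinate functions $Y_i$ on $(\overline{\Qb}^r)^d$ are matched correctly with the matrix entries of $\rho(s)$ for $s \in \mathcal S$. The substantive content has been imported wholesale from \Cref{propo: Characterizing closures of closed semi-algebraic sets} and the structural facts about $\overline X = \widetilde X$ for closed semi-algebraic $X$; the lemma is simply the instantiation of those facts in the representation-variety setting, which is why it can be stated with a one-line proof referring back to the preliminaries.
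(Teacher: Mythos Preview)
Your proposal is correct and takes the same approach as the paper: the paper's proof is a one-liner noting that the statement follows from \Cref{propo: Characterizing closures of closed semi-algebraic sets} (and is implicit in \cite[Remark 2.36]{BurgerIozziParreauPozzetti_RSCCharacterVarieties2}). You have simply spelled out the bookkeeping that makes this citation work.
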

\begin{proof}
    This is implicitly stated in \cite[Remark 2.36]{BurgerIozziParreauPozzetti_RSCCharacterVarieties2} and follows from \Cref{propo: Characterizing closures of closed semi-algebraic sets} and \Cref{lem: closures}.
\end{proof}

Assume now that $G$ has a $\Theta$-positive structure as in \Cref{def: theta positive structure}.
We denote by $\overline{\Pos}_\Theta(\Gamma,G)$ the closure of $\Pos_\Theta(\Gamma,G)$ inside the compact space $\rsp\Hom(\Gamma,G)$.
We would like to say that the representations representing points in the boundary of the set of (Archimedean) $\Theta$-positive representations are exactly the $\Theta$-positive representations over non-Archimedean fields. This is true for cocompact Fuchsian groups. In contrast, in the non-cocompact case, we constructed in \Cref{subsection: ExNonFramablePosRepr} examples of $\Theta$-positive representations over non-Archimedean fields that are not $\Theta$-frameable, hence cannot are not limits of $\Theta$-positive representations over $\Rb$ (which are frameable by \Cref{coro: positive => frameable over R}). This is summarized in \Cref{coro:Positive = limit of positive}, whose statement we recall here.

\begin{corollary}[\Cref{coro:Positive = limit of positive}]
\label{coro in text: coro:Positive = limit of positive}
    Let $\Gamma \subset \Isom_+(\Hb)$ be a lattice and denote by $\overline{\Pos}_\Theta(\Gamma,G)$ the closure of $\Pos_\Theta(\Gamma,G)$ in $\rsp\Hom(\Gamma,G)$. Then we have the following:
    \begin{enumerate}
        \item If $\Gamma$ is a cocompact lattice, then 
        \[\overline{\Pos}_\Theta(\Gamma, G) = \{[(\Fb,\rho)] \mid \rho\colon \Gamma \to G_{\Fb} \textrm{ $\Theta$-positive}\}~,\]

        \item If $\Gamma$ is a non-cocompact lattice, then
        \[\overline{\Pos}_\Theta(\Gamma, G) = \{[(\Fb,\rho)]\mid \rho \colon \Gamma \to G_{\Fb} \textrm{ $\Theta$-positively frameable}\}~.\]
    \end{enumerate}
\end{corollary}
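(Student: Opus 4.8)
The plan is to derive both statements from the semi-algebraicity results already established, namely \Cref{corol : positive semi algebraic} (for uniform lattices, $\Pos_\Theta(\Gamma,G)_{\Fb} = \Pos_\Theta(\Gamma,G_{\Fb})$ and $\Pos_\Theta(\Gamma,G)$ is closed and semi-algebraic over $\Qbar$) and \Cref{corollary : positively framed repr semi-algebraic} together with \Cref{thm: closedness} (for non-uniform lattices, $\Pos_\Theta^{\fr}(\Gamma,G)$ is semi-algebraic over $\Qbar$ and $\Pos_\Theta(\Gamma,G)$ is closed). The only genuinely new ingredient needed is the bridge between ``closure in the real spectrum compactification'' and ``$\Fb$-extension'', which is precisely \Cref{lem: Characterizing closures of closed semi-algebraic sets}: for a closed semi-algebraic $X \subset \Hom(\Gamma,G)$, a point $[(\Fb,\rho)]$ lies in $\overline X$ if and only if $\rho \in X_{\Fb}$.

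For the uniform case, I would first invoke \Cref{corol : positive semi algebraic} to know that $X \coloneqq \Pos_\Theta(\Gamma,G)$ is a closed semi-algebraic subset of $\Hom(\Gamma,G)$ (closedness of $\Pos_\Theta(\Gamma,G_{\Rb})$ inside $\Hom(\Gamma,G_{\Rb})$ follows from \Cref{thm: closedness}, and by \Cref{thm_ExtConnComp}(\ref{thm_ExtConnComp: closedness}) closedness transfers to $\Qbar$). Then \Cref{lem: Characterizing closures of closed semi-algebraic sets} says $[(\Fb,\rho)] \in \overline{\Pos}_\Theta(\Gamma,G)$ if and only if $\rho \in \Pos_\Theta(\Gamma,G)_{\Fb}$. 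Finally \Cref{corol : positive semi algebraic} gives $\Pos_\Theta(\Gamma,G)_{\Fb} = \Pos_\Theta(\Gamma,G_{\Fb})$, so the right-hand side is exactly the set of $\rho$ that are $\Theta$-positive over $G_{\Fb}$. This yields statement (1). I should note that the condition on the right-hand side is well-posed: whether $\rho$ is $\Theta$-positive depends only on the equivalence class $[(\Fb,\rho)]$, since $\Theta$-positivity is a semi-algebraic condition and hence preserved under the field homomorphisms appearing in \Cref{thm_RSCCharBoundary} (one can also see directly that a positive boundary map over $\Fb_1$ pushes forward under $f_1$ to one over $\Fb$, and pulls back under $f_2$ to one over $\Fb_2$).

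For the non-uniform case, the same argument applies with $X \coloneqq \Pos_\Theta^{\fr}(\Gamma,G)$, which is semi-algebraic by \Cref{corollary : positively framed repr semi-algebraic}; its closedness as a subset of $\Hom(\Gamma,G)$ follows because $\Pos_\Theta^{\fr}(\Gamma,G_{\Rb}) = \Pos_\Theta(\Gamma,G_{\Rb})$ over $\Rb$ (by \Cref{coro: positive => frameable over R}) and the latter is closed by \Cref{thm: closedness}, so closedness transfers to $\Qbar$ via \Cref{thm_ExtConnComp}. Then \Cref{lem: Characterizing closures of closed semi-algebraic sets} identifies $\overline{\Pos}_\Theta(\Gamma,G)$—which coincides with $\overline{\Pos}_\Theta^{\fr}(\Gamma,G)$ since the two sets agree over $\Rb$ and $\Rb^d$ is dense—with the set of $[(\Fb,\rho)]$ such that $\rho \in \Pos_\Theta^{\fr}(\Gamma,G)_{\Fb}$, and \Cref{corollary : positively framed repr semi-algebraic} gives $\Pos_\Theta^{\fr}(\Gamma,G)_{\Fb} = \Pos_\Theta^{\fr}(\Gamma,G_{\Fb})$, the $\Theta$-positively frameable representations over $G_{\Fb}$. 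This is statement (2).

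The main obstacle, such as it is, is bookkeeping rather than mathematics: one must be careful that ``$\overline{\Pos}_\Theta(\Gamma,G)$'' taken as the closure of the \emph{Archimedean} positive representations (i.e.\ those over $\Rb$, or over $\Qbar$) really equals $\widetilde{X}$ for the relevant $X$, which requires $X$ closed and uses that $\Rb^d$ (equivalently $(\Qbar)^d$) is dense in $\rsp^{\cl}$—this is \Cref{propo_ImageVRInRealSpec} and the lemma identifying $\overline X = \widetilde X$ for closed semi-algebraic $X$ in the excerpt. One should also remark, in the non-uniform case, why the closure of the frameable positive representations is not larger than that of all positive representations: over $\Rb$ the two classes coincide, so their Zariski-dense-in-$\RSp$ traces coincide and the closures are equal. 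With these points addressed, both equalities are immediate consequences of the already-proven structural results, and the proof is short.
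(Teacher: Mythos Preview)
Your proposal is correct and follows essentially the same route as the paper: reduce both statements to \Cref{lem: Characterizing closures of closed semi-algebraic sets} and then invoke the $\Fb$-extension identities from \Cref{corol : positive semi algebraic} and \Cref{corollary : positively framed repr semi-algebraic}. The paper's proof is terser (it leaves implicit the closedness via \Cref{thm: closedness} and the equality $\Pos_\Theta(\Gamma,G)=\Pos_\Theta^{\fr}(\Gamma,G)$ over Archimedean fields needed in the non-uniform case), but you have correctly identified and justified these bookkeeping steps.
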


\begin{proof}
    If $\Gamma$ is cocompact, then  we have seen in \Cref{corol : positive semi algebraic} that $\Pos_\Theta(\Gamma,G_{\Fb})$ is semi-algebraic and that
    \[\Pos_\Theta(\Gamma,G_{\Fb})= (\Pos_\Theta(\Gamma,G))_{\Fb}~.\]
    By \Cref{lem: Characterizing closures of closed semi-algebraic sets}, this proves~(1).

    In the non-cocompact case, by \Cref{coro: positive => frameable over R}, every $\Theta$-positive representation over $\Qbar$ is $\Theta$-positively frameable over $\Rb$. Since $\Theta$-positively frameable representations are semi-algebraically defined over $\Qbar$ by \Cref{corollary : positively framed repr semi-algebraic}, it is also $\Theta$-positively frameable over $\Qbar$ by the transfer principle. Hence we have
    \[\overline{\Pos}_\Theta(\Gamma, G) = \overline{\Pos}^\fr_\Theta(\Gamma, G)~.\]
    By \Cref{corollary : positively framed repr semi-algebraic}, we also have 
    \[\Pos_\Theta^\fr(\Gamma,G)_{\Fb} = \Pos_\Theta^\fr(\Gamma,G_{\Fb})\]
    for every real closed field $\Fb$. We conclude again by \Cref{lem: Characterizing closures of closed semi-algebraic sets} that
    \[\overline{\Pos}^\fr_\Theta(\Gamma, G) = \{[(\Fb,\rho)]\mid \rho \colon \Gamma \to G_{\Fb} \textrm{ $\Theta$-positively frameable}\}~,\]
    proving (2).
\end{proof}

We know that if $\Gamma$ is cocompact, then $\Pos_\Theta(\Gamma,G_{\Rb})$ is both open and closed in $\Hom(\Gamma,G_{\Rb})$ \cite{GLW, BeyrerGuichardLabouriePozzettiWienhard_PositivityCrossRatiosCollarLemma}.
Thus by \Cref{thm_SemiAlgSetsConstructibleSubsets}, $\overline{\Pos}_\Theta(\Gamma,G)$ is a union of  connected components of $\rsp\Hom(\Gamma,G)$.
Combining \Cref{coro:Positive = limit of positive} and \Cref{propo: positive implies injective and discrete} we thus obtain \Cref{coro: positive form connected components}.

\begin{corollary}[\Cref{coro: positive form connected components}]
    If $\Gamma\subset \Isom_+(\Hb)$ is a cocompact lattice, the set of points $[(\Fb,\rho)]$, where $\Fb$ is a real closed field and $\rho\colon \Gamma \to G_{\Fb}$ is $\Theta$-positive, forms a union of connected components of $\rsp \Hom(\Gamma, G)$ consisting only of injective and discrete representations. 
\end{corollary}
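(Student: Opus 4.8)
The plan is to deduce this corollary by combining the three main inputs gathered earlier: the semi-algebraicity of $\Pos_\Theta(\Gamma, G)$ over $\overline{\Qb}^r$ (Corollary~\ref{corol : positive semi algebraic}), the closedness of $\Pos_\Theta(\Gamma, G_{\Rb})$ in $\Hom(\Gamma, G_{\Rb})$ (Theorem~\ref{thm: closedness}), the known openness of $\Pos_\Theta(\Gamma, G_{\Rb})$ in $\Hom(\Gamma, G_{\Rb})$ in the cocompact case (due to Beyrer--Guichard--Labourie--Pozzetti--Wienhard, \cite{GLW, BeyrerGuichardLabouriePozzettiWienhard_PositivityCrossRatiosCollarLemma}), and the discreteness/faithfulness of positive representations (Proposition~\ref{propo: positive implies injective and discrete}). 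First I would record that, since $\Gamma$ is a uniform lattice, $\Pos_\Theta(\Gamma, G_{\Rb})$ is both open and closed in $\Hom(\Gamma, G_{\Rb})$, hence a union of (semi-algebraically) connected components of $\Hom(\Gamma, G_{\Rb})$. By Theorem~\ref{thm_RConnSemiAlgConn}, each such component is itself a semi-algebraic set defined over $\overline{\Qb}^r$ (the semi-algebraically connected components of a semi-algebraic set defined over $\overline{\Qb}^r$ are again defined over $\overline{\Qb}^r$), so $\Pos_\Theta(\Gamma, G)$ is a union of semi-algebraically connected components $C_1, \dots, C_m$ of $\Hom(\Gamma, G)$.

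Next I would pass to the real spectrum compactification. By Theorem~\ref{thm_SemiAlgSetsConstructibleSubsets} and Theorem~\ref{thm_ExtConnComp}, the constructible sets $\widetilde{C_i} \subset \RSp\Hom(\Gamma, G)$ are the semi-algebraically connected components of $\RSp\Hom(\Gamma, G)$ — more precisely, each $\widetilde{C_i}$ is clopen in $\RSp\Hom(\Gamma, G)$ because $C_i$ is clopen in $\Hom(\Gamma, G)$, and their union is all of $\RSp\Hom(\Gamma, G)$ up to the complement of $\RSp\Pos_\Theta$, which is also clopen. Thus $\overline{\Pos}_\Theta(\Gamma, G) = \bigsqcup_{i=1}^m \widetilde{C_i}$ is a union of connected components of $\RSp\Hom(\Gamma, G)$. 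By Corollary~\ref{coro:Positive = limit of positive}(1), this set is precisely $\{[(\Fb, \rho)] \mid \Fb \text{ real closed}, \ \rho \colon \Gamma \to G_{\Fb} \text{ is } \Theta\text{-positive}\}$, which identifies it with the set described in the statement.

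Finally, for the last clause, every point of $\overline{\Pos}_\Theta(\Gamma, G)$ is represented by a $\Theta$-positive representation $\rho \colon \Gamma \to G_{\Fb}$ over some real closed field $\Fb$ (Theorem~\ref{thm_RSCCharBoundary} combined with the characterization in Corollary~\ref{coro:Positive = limit of positive}), and by Proposition~\ref{propo: positive implies injective and discrete} such a $\rho$ is injective with discrete image. One subtlety worth spelling out: the property of representing a point by an injective, discrete representation is well-defined on equivalence classes, since if $(\Fb_1, \rho_1)$ and $(\Fb_2, \rho_2)$ represent the same point there is a common real closed field $\Fb$ with embeddings $f_i \colon \Fb_i \to \Fb$ and $f_1 \circ \rho_1 = f_2 \circ \rho_2$; the composed representation is still $\Theta$-positive (being a $\Theta$-positive condition is transfer-invariant, or one simply extends the boundary map), hence injective and discrete, and injectivity/discreteness descend. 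The main obstacle here is not any single deep step — all heavy lifting was done in the earlier sections — but rather the bookkeeping needed to correctly transfer the clopen decomposition through the various layers ($\Hom$ over $\overline{\Qb}^r$, over $\Rb$, and its real spectrum), and to invoke the BGLPW openness result, which is the one input genuinely external to the preceding development of this paper.
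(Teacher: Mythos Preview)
Your proposal is correct and follows essentially the same route as the paper: use the openness (from \cite{GLW, BeyrerGuichardLabouriePozzettiWienhard_PositivityCrossRatiosCollarLemma}) and closedness (\Cref{thm: closedness}) of $\Pos_\Theta(\Gamma,G_{\Rb})$ to conclude that $\Pos_\Theta(\Gamma,G)$ is a union of semi-algebraically connected components of $\Hom(\Gamma,G)$, then combine \Cref{coro:Positive = limit of positive} with \Cref{propo: positive implies injective and discrete}. You spell out more of the bookkeeping (the transfer of clopen sets to the real spectrum via \Cref{thm_SemiAlgSetsConstructibleSubsets}, and the well-definedness of injectivity/discreteness on equivalence classes), but the paper's own argument is the same one-line deduction from these ingredients.
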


In the non-cocompact case, we combine \Cref{coro:Positive = limit of positive},  \Cref{propo: positive implies injective and discrete} and \Cref{thm-intro: Positivity closed}.
\begin{corollary}
\label{rem:connected components of positive representations non uniform lattice}
    Let $\Gamma\subset \Isom_+(\Hb)$ be a non-cocompact lattice and $\rho_0 \colon \Gamma \to G$ a $\Theta$-positive representation.
    Then the set of points $[(\Fb,\rho)]$, where $\Fb$ is a real closed field and $\rho\colon \Gamma \to G_{\Fb}$ is $\Theta$-positively frameable and in $\Hom_{\rho_0}(\Gamma, G_{\Fb})$, forms a union of connected components of $\rsp\Hom_{\rho_0}(\Gamma, G)$ consisting only of injective and discrete representations.
\end{corollary}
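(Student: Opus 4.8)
The final statement, \Cref{rem:connected components of positive representations non uniform lattice}, asserts that for a non-uniform lattice $\Gamma$ and a $\Theta$-positive representation $\rho_0$, the set of classes $[(\Fb,\rho)]$ with $\rho \in \Hom_{\rho_0}(\Gamma, G_{\Fb})$ $\Theta$-positively frameable forms a union of connected components of $\RSp \Hom_{\rho_0}(\Gamma, G)$ consisting only of injective and discrete representations.

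The plan is to assemble three ingredients already proven in the excerpt and translate them into the language of the real spectrum compactification exactly as in the proof of \Cref{coro: positive form connected components}. First, I would set $X \coloneqq \Pos_\Theta^{\fr}(\Gamma, G) \cap \Hom_{\rho_0}(\Gamma, G)$ as a semi-algebraic subset of $\Hom_{\rho_0}(\Gamma, G)$, which is itself a closed semi-algebraic set (as the intersection of $\Hom(\Gamma,G)$ with the preimages $e_{\eta_i}^{-1}(\mathcal C(\rho_0(\eta_i)))$ of conjugacy classes, as recorded just before \Cref{propo : open in relative character variety}). By \Cref{thm-intro: Positivity closed} (equivalently the combination of \Cref{thm: closedness} with the openness statement \Cref{propo : open in relative character variety}), the set $X$ is both open and closed in $\Hom_{\rho_0}(\Gamma, G)$, hence is a union of semi-algebraically connected components of $\Hom_{\rho_0}(\Gamma, G)$. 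By \Cref{thm_RConnSemiAlgConn} and \Cref{thm_ExtConnComp}~(4), the $\Fb$-extension $X_{\Fb}$ is then a union of semi-algebraically connected components of $\Hom_{\rho_0}(\Gamma, G)_{\Fb} = \Hom_{\rho_0}(\Gamma, G_{\Fb})$, and correspondingly (by \Cref{thm_SemiAlgSetsConstructibleSubsets} applied to the constructible sets $\widetilde{C_i}$ associated to the semi-algebraically connected components $C_i$, which form a partition of $\RSp \Hom_{\rho_0}(\Gamma, G)$ into clopen pieces) the closure $\widetilde X = \overline X$ is a union of connected components of $\RSp \Hom_{\rho_0}(\Gamma, G)$.

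Next I would identify the points of $\overline X$ with the desired set of representations. By \Cref{lem: Characterizing closures of closed semi-algebraic sets}, a point $[(\Fb, \rho)]$ lies in $\overline X$ if and only if $\rho \in X_{\Fb} = \left(\Pos_\Theta^{\fr}(\Gamma, G) \cap \Hom_{\rho_0}(\Gamma, G)\right)_{\Fb}$. Since $\Fb$-extension commutes with finite intersections, this equals $\left(\Pos_\Theta^{\fr}(\Gamma, G)\right)_{\Fb} \cap \Hom_{\rho_0}(\Gamma, G)_{\Fb}$, which by \Cref{corollary : positively framed repr semi-algebraic} is $\Pos_\Theta^{\fr}(\Gamma, G_{\Fb}) \cap \Hom_{\rho_0}(\Gamma, G_{\Fb})$; that is, $\rho \in \Hom_{\rho_0}(\Gamma, G_{\Fb})$ is $\Theta$-positively frameable. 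This is precisely the set described in the statement. Finally, every such $\rho$ is $\Theta$-positively frameable, hence $\Theta$-positive, hence injective with discrete image by \Cref{propo: positive implies injective and discrete}; this handles the last clause.

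The only genuinely delicate point — and the one I would take most care with — is the passage ``clopen semi-algebraic subset of $\Hom_{\rho_0}(\Gamma, G)$ $\Rightarrow$ union of connected components of $\RSp \Hom_{\rho_0}(\Gamma, G)$''. Two subtleties enter here: one must work inside $\Hom_{\rho_0}(\Gamma,G)$ (itself only a closed semi-algebraic subset, not the whole representation variety, so one invokes the intrinsic nature of $\RSp$ of a closed semi-algebraic set recorded in \Cref{subs_RSCsemi-algebraicSets}), and one must know that the $\widetilde{C_i}$ genuinely partition $\RSp\Hom_{\rho_0}(\Gamma,G)$ into clopen sets and that $\overline X$ is the union of those $\widetilde{C_i}$ with $C_i \subset X$ — this is exactly how \Cref{coro: positive form connected components} was deduced from the uniform analogue, so the argument is a verbatim adaptation. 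I would therefore state the non-uniform corollary as a short remark-style proof: ``By \Cref{thm-intro: Positivity closed}, $X$ is clopen in $\Hom_{\rho_0}(\Gamma,G)$; arguing as in the proof of \Cref{coro: positive form connected components}, using \Cref{corollary : positively framed repr semi-algebraic} in place of \Cref{corol : positive semi algebraic} and \Cref{lem: Characterizing closures of closed semi-algebraic sets}, the closure $\overline X$ is a union of connected components of $\RSp\Hom_{\rho_0}(\Gamma,G)$ whose points are exactly the classes $[(\Fb,\rho)]$ with $\rho\in\Hom_{\rho_0}(\Gamma,G_{\Fb})$ $\Theta$-positively frameable; these are injective with discrete image by \Cref{propo: positive implies injective and discrete}.'' No new estimates or constructions are required beyond what is already in the paper.
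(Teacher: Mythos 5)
Your proposal is correct and takes essentially the same approach as the paper. The paper's proof of this corollary is a one-sentence assembly: ``In the non-uniform case, we combine \Cref{coro:Positive = limit of positive}, \Cref{propo: positive implies injective and discrete} and \Cref{thm-intro: Positivity closed}''; your write-up simply spells out the RSp bookkeeping (\Cref{lem: Characterizing closures of closed semi-algebraic sets}, \Cref{thm_ExtConnComp}~(4), the extension identities from \Cref{corollary : positively framed repr semi-algebraic}) that the paper leaves implicit by analogy with its proof of \Cref{coro: positive form connected components}, and your identification of the relevant ingredients and the treatment of the delicate clopen-to-connected-components step are consistent with what the authors intend.
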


Under the $\PG$-condition we can even say more, in which case we can apply \Cref{thm-intro:PG-condition}~(\ref{thm-intro:PG-condition: open in frameable}) (see also \Cref{cor:PG-PosFrameable-open-closed-in-frameable}) instead of \Cref{thm-intro: Positivity closed}.

\begin{corollary}
    Assume $\Fc_{\Theta,\Fb}$ satisfies the $\PG$-condition.
    Let $\Gamma\subset \Isom_+(\Hb)$ be a non-cocompact lattice.
    Then the set of points $[(\Fb,\rho)]$, where $\Fb$ is a real closed field and $\rho\colon \Gamma \to G_{\Fb}$ is $\Theta$-positively frameable forms a union of connected components of $\rsp\Hom^{\fr}_\Theta(\Gamma, G)$ consisting only of injective and discrete representations.
\end{corollary}

\subsubsection{Some words on the character variety}
    We obtain analogous results for the compactification of the character variety, but the exposition is slightly more delicate, and we will not give as many details in this section.

    Using the theory of minimal vectors the \emph{character variety} 
    \[\mathfrak{X}(\Gamma,G) \coloneqq \Hom_\red(\Gamma,G)/G~,\]
    i.e.\ the space of completely reducible representations from $\Gamma$ to $G$ up to $G$-conjuga-tion, is homeomorphic to a closed semi-algebraic subset of some $(\overline{\Qb}^r)^m$ for some $m\in\Nb$, see \cite[Proposition 7.4]{BurgerIozziParreauPozzetti_RSCCharacterVarieties2}.
    It thus also admits a real spectrum compactification, which we denote by $\RSp\mathfrak{X}(\Gamma,G)$.
    The precise study of the identification of $\mathfrak{X}(\Gamma,G)$ with a closed semi-algebraic subset of $(\overline{\Qb}^r)^m$ allows to describe the points of its compactification as follows.
    
    \begin{theorem}[{\cite[Theorem 1.1]{BurgerIozziParreauPozzetti_RSCCharacterVarieties2}}]
    \label{thm_RSCCharBoundary character variety}
    Let $G$ be as above.
    Then every $\alpha \in \rsp\mathfrak{X}(\Gamma,G)$ is represented by a completely reducible representation $\rho \colon \Gamma \to G_{\Fb}$, where $\Fb$ is real closed.
    Two representations $\rho_1 \from \Gamma \to G_{\Fb_1}$ and $\rho_2 \from \Gamma \to G_{\Fb_2}$ represent the same point if there exists a real closed field $\Kb$ and field homomorphisms $f_i \from \Fb_{i} \to \Kb$ such that $f_2 \circ \rho_2$ and $f_1 \circ \rho_1$ are $G_{\Kb}$-conjugate.

\end{theorem}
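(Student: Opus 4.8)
The statement to prove is Theorem~\ref{thm_RSCCharBoundary character variety}, the analogue of Theorem~\ref{thm_RSCCharBoundary} for the character variety $\mathfrak{X}(\Gamma,G) = \Hom_\red(\Gamma,G)/\!\!/G$. The plan is to reduce it to the general description of points in the real spectrum of a closed semi-algebraic set, exactly as in the representation-variety case, using the embedding of $\mathfrak{X}(\Gamma,G)$ into some $(\overline{\Qb}^r)^m$ furnished by the theory of minimal vectors (as cited from \cite[Proposition 7.4]{BurgerIozziParreauPozzetti_RSCCharacterVarieties2}).

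First I would recall that, by \Cref{thm_RSCCharBoundary} (or rather its proof), every point $\alpha \in \rsp(A)$ for $A = \overline{\Qb}^r[Y_1,\dots,Y_m]$ is represented by a ring homomorphism $\varphi\colon A \to \Fb$ into a real closed field $\Fb$, i.e.\ by an $\Fb$-point of the relevant algebraic set, and two such represent the same point precisely when they become equal after pushing forward along field homomorphisms into a common real closed field (this is the characterization in \cite[Lemma 3.1.16]{Scheiderer_RAG} recalled in the excerpt). Combined with \Cref{lem: Characterizing closures of closed semi-algebraic sets} / \Cref{propo: Characterizing closures of closed semi-algebraic sets}, a point of $\rsp\mathfrak{X}(\Gamma,G)$ corresponds to an $\Fb$-point of $\mathfrak{X}(\Gamma,G)_{\Fb}$. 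The first real step is then to identify $\mathfrak{X}(\Gamma,G)_{\Fb}$ — the $\Fb$-extension of the semi-algebraic set cut out inside $(\overline{\Qb}^r)^m$ — with the set of $G_{\Fb}$-conjugacy classes of \emph{completely reducible} (equivalently, polystable) representations $\rho\colon \Gamma \to G_{\Fb}$. This is where one invokes the transfer principle: the semi-algebraic conditions defining the image of $\Hom_\red(\Gamma,G) \to (\overline{\Qb}^r)^m$ and the fact that the fibers of this map are single conjugacy classes are first-order statements over $\overline{\Qb}^r$, hence hold over any real closed $\Fb$. So $\mathfrak{X}(\Gamma,G)_{\Fb} \cong \Hom_\red(\Gamma,G_{\Fb})/\!\!/G_{\Fb}$, with the understanding that "completely reducible over $\Fb$" is itself a semi-algebraic (first-order) condition transferred from $\overline{\Qb}^r$.

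Next I would translate the equivalence relation. Two homomorphisms $\varphi_1\colon A \to \Fb_1$ and $\varphi_2\colon A \to \Fb_2$ representing the same point of $\rsp(A)$ are, by the quoted lemma, those that agree after field embeddings $f_i\colon \Fb_i \to \Fb$ into a common real closed $\Fb$. Under the identification of the previous paragraph, $\varphi_i$ corresponds to a conjugacy class $[\rho_i]$ with $\rho_i\colon\Gamma\to G_{\Fb_i}$ completely reducible, and $f_i \circ (\text{coordinates of }\varphi_i)$ corresponds to the conjugacy class of $f_i \circ \rho_i$ over $G_{\Fb}$. Here one must check the compatibility: applying a field embedding $f_i$ to the minimal-vector coordinates of $[\rho_i]$ yields the minimal-vector coordinates of $[f_i \circ \rho_i]$ — again a first-order/functoriality statement. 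Hence $\varphi_1$ and $\varphi_2$ represent the same point of $\rsp\mathfrak{X}(\Gamma,G)$ iff there is a common real closed $\Fb$ with embeddings $f_i$ such that $f_1\circ\rho_1$ and $f_2\circ\rho_2$ are $G_{\Fb}$-conjugate, which is exactly the asserted equivalence.

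I expect the main obstacle to be the bookkeeping around the minimal-vector embedding $\mathfrak{X}(\Gamma,G)\hookrightarrow(\overline{\Qb}^r)^m$: one needs that this embedding is defined over $\overline{\Qb}^r$, that its $\Fb$-extension genuinely parametrizes $G_{\Fb}$-orbits of polystable representations (and not something larger or smaller), and that it is functorial for field extensions. All of this is contained in the framework of \cite[\S 7]{BurgerIozziParreauPozzetti_RSCCharacterVarieties2}, and in the completely-reducible/polystable setting these are the standard facts (closedness of orbits of polystable points, uniqueness of the polystable point in a fiber, semi-algebraicity of the Kempf--Ness/minimal-vector construction over a real closed field). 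So in the write-up I would state the reduction to \Cref{thm_RSCCharBoundary}-style reasoning, cite \cite[Theorem 1.1, Proposition 7.4]{BurgerIozziParreauPozzetti_RSCCharacterVarieties2} for the semi-algebraic model and \cite[Lemma 3.1.16, Proposition 4.1.10]{Scheiderer_RAG} for the abstract real-spectrum dictionary, and spend the bulk of the argument verifying the two transfer-principle points above (fibers are conjugacy classes; coordinates are functorial under field embeddings), leaving the routine identification of $\mathfrak{X}(\Gamma,G)_{\Fb}$ with the set of conjugacy classes to a short paragraph.
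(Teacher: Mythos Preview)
The paper does not prove this theorem: it is cited verbatim from \cite[Theorem~1.1]{BurgerIozziParreauPozzetti_RSCCharacterVarieties2} and used as a black box, with the paper only adding the remark that the cited result is in fact slightly stronger (one may choose the field $\Fb$ to be minimal). There is therefore no proof in the paper to compare your proposal against.

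That said, your sketch is a reasonable outline of how the result is obtained in the cited reference: reduce to the general dictionary between points of the real spectrum and $\Fb$-points of the semi-algebraic model, then use the minimal-vector embedding of $\mathfrak X(\Gamma,G)$ and the transfer principle to identify $\mathfrak X(\Gamma,G)_{\Fb}$ with $G_{\Fb}$-conjugacy classes of completely reducible representations. The one point you correctly flag as the main obstacle---that the fibers of the minimal-vector map over any real closed $\Fb$ are precisely $G_{\Fb}$-orbits, and that this is compatible with field extensions---is indeed the substantive content, and it is exactly what \cite[Proposition~7.4]{BurgerIozziParreauPozzetti_RSCCharacterVarieties2} establishes. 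Your write-up would amount to a summary of that proposition plus the standard real-spectrum bookkeeping, which is appropriate for a cited result but is not something the present paper attempts.
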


Their theorem is slightly stronger: it says that we can even choose $\Fb$ to be \emph{minimal}, in the sense that $\rho$ cannot be $G_{\Fb}$-conjugated into a representation $\rho \colon \Gamma \to G_{\Lb}$, where $\Lb \subset \Fb$ is a proper real closed subfield. The boundary points then correspond to those representations, where the minimal field is non-Archimedean.

Again we have the following very useful characterization of closed points in the boundary of $\rsp\mathfrak{X}(\Gamma,G)$, whose description is not as straight-forward as in the case of the representation variety.

\begin{theorem}[{\cite[Theorem 1.2]{BurgerIozziParreauPozzetti_RSCCharacterVarieties2}}]
    Let $(\Fb, \rho)$ represent a point in\break $\rsp\mathfrak{X}(\Gamma,G) \setminus \mathfrak{X}(\Gamma,G)$ with $\Fb$ real closed, minimal and non-Archimedean.
    Then the following are equivalent:
    \begin{itemize}
        \item $(\Fb,\rho)$ represents a closed point;
        \item the $\Gamma$-action on $\mathcal{B}_{G_{\Fb}}$ does not have a global fixed point;
        \item there exists $\gamma \in \Gamma$ for which $\rho(\gamma)$ has positive translation length on $\overline{\mathcal{B}_{G_{\Fb}}}$, the metric completion of $\mathcal{B}_{G_{\Fb}}$.
    \end{itemize}
\end{theorem}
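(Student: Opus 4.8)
The plan is to prove the three bulleted conditions equivalent, treating the two non-trivial equivalences $(ii)\Leftrightarrow(iii)$ and $(i)\Leftrightarrow(ii)$ separately, since they rest on different ingredients. Here I abbreviate $(i)$ for ``$(\Fb,\rho)$ represents a closed point'', $(ii)$ for ``the $\Gamma$-action on $\mathcal B_{G_\Fb}$ has no global fixed point'', and $(iii)$ for ``some $\rho(\gamma)$ has positive translation length on $\overline{\mathcal B_{G_\Fb}}$''. The first equivalence is a statement purely about the geometry of the action on the generalized affine building $\mathcal B_{G_\Fb}\cong G_\Fb/G(\mathcal O)$, while the second couples this geometry with the structure of the real spectrum and closely mirrors the argument already used for $\rsp\Hom(\Gamma,G)$ in \Cref{thm: Closed point RspHom}.

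I would first dispose of $(iii)\Rightarrow(ii)$, which is immediate: an isometry of positive translation length fixes no point, so $\Gamma$ has no global fixed point. For $(ii)\Rightarrow(iii)$ I would argue contrapositively: assuming no $\rho(\gamma)$ has positive translation length, one must produce a point fixed by all of $\Gamma$. The key algebraic input is that, $G_\Fb$ being the group of $\Fb$-points of a reductive group, every isometry of $\mathcal B_{G_\Fb}$ is semisimple and its translation length equals the $\abs{\cdot}$-norm of the Jordan projection $J_\Fb(\rho(\gamma))$; vanishing translation length therefore forces every $\rho(\gamma)$ to be elliptic. One then invokes a fixed-point theorem for finitely generated groups acting on generalized affine buildings: an action of a finitely generated group in which every element is elliptic, and which does not fix a point at infinity, has a global fixed point, obtained as the circumcenter of a bounded orbit inside the metric completion $\overline{\mathcal B_{G_\Fb}}$, which is $\mathrm{CAT}(0)$. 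The possibility of a fixed point at infinity is excluded using that $\Gamma$ is a non-elementary lattice and $\rho$ is completely reducible, so $\rho(\Gamma)$ cannot be contained in a proper parabolic subgroup of $G_\Fb$ unless it is already bounded.

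For $(i)\Leftrightarrow(ii)$ I would argue by contraposition in both directions. If $\Gamma$ fixes a point of $\mathcal B_{G_\Fb}$, then after conjugating by an element of $G_\Fb$ (which does not change the class $[(\Fb,\rho)]$ in $\rsp\mathfrak X(\Gamma,G)$) we may assume $\rho(\Gamma)\subseteq G(\mathcal O)$, where $\mathcal O$ is the valuation ring of $(\Fb,\abs{\cdot})$. Then all matrix entries — and hence all the invariant trace-type coordinate functions cutting out $\mathfrak X(\Gamma,G)$ inside $(\overline{\Qb}^r)^m$ — take values in $\mathcal O$; reducing modulo the maximal ideal $\mathcal I$ of $\mathcal O$ and using that $\mathcal O/\mathcal I$ embeds into $\Rb$ exhibits $[(\Fb,\rho)]$ as a proper specialization of a point represented over an Archimedean field, so it is not closed. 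Conversely, if $[(\Fb,\rho)]$ is not closed it admits a proper specialization in $\rsp\mathfrak X(\Gamma,G)$; translating specializations in the real spectrum into the language of order-compatible valuation rings of $\Fb$ (the same dictionary underlying the passage to $\mathcal B_{G_\Fb}$), this produces, after conjugation, a model of $\rho$ whose entire character takes values in $\mathcal O$. One then upgrades ``bounded character'' to ``bounded $\Gamma$-orbit on $\mathcal B_{G_\Fb}$'', using that the Cartan and Jordan projections of $\rho(\gamma)$ for bounded words $\gamma$ are controlled, via the Weyl group and the fundamental weights, by the valuations of traces of $\rho$, and concludes again by the circumcenter argument that $\Gamma$ has a global fixed point, contradicting $(ii)$.

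I expect the genuinely hard step, and the one I would import wholesale from the $\Rb$-building literature, is the fixed-point theorem for finitely generated groups of semisimple isometries with no fixed point at infinity, which is used both in $(ii)\Rightarrow(iii)$ and in the ``not closed $\Rightarrow$ fixed point'' half of $(i)\Leftrightarrow(ii)$: since $\mathcal B_{G_\Fb}$ is neither locally compact nor metrically complete, existence of the circumcenter of a bounded orbit and the verification that it is genuinely fixed must be carried out in the completion $\overline{\mathcal B_{G_\Fb}}$ with care. By contrast, the real-spectrum bookkeeping — specializations corresponding to valuation rings, and boundedness of coordinate functions corresponding to boundedness of the orbit — is a direct adaptation of the argument already cited for the representation variety, and the semisimplicity-of-isometries input is standard for buildings associated to reductive groups.
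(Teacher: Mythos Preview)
The paper does not give its own proof of this theorem: it is quoted verbatim as \cite[Theorem 1.2]{BurgerIozziParreauPozzetti_RSCCharacterVarieties2} and used as a black box, with the follow-up paragraph merely unpacking the translation-length condition via \cite[Proposition 5.21]{BurgerIozziParreauPozzetti_RSCCharacterVarieties2}. There is therefore no in-paper argument to compare your proposal against.

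That said, a brief remark on your sketch. The overall architecture is the right one and matches what one finds in the cited source, but two points deserve more care than you indicate. First, in $(ii)\Rightarrow(iii)$ you assert that every isometry of $\mathcal B_{G_\Fb}$ is semisimple; for generalized affine buildings over non-discretely valued fields this is not automatic, and the passage from ``zero translation length'' to ``elliptic'' genuinely requires the completion $\overline{\mathcal B_{G_\Fb}}$ together with the complete-reducibility hypothesis to rule out parabolic-type behavior. Second, in the ``not closed $\Rightarrow$ fixed point'' direction, the step upgrading ``bounded character coordinates'' to ``bounded orbit on $\mathcal B_{G_\Fb}$'' is where the character-variety version diverges nontrivially from the representation-variety version in \Cref{thm: Closed point RspHom}: one must first conjugate to a representative with bounded matrix entries, and this is exactly where minimality of $\Fb$ and complete reducibility of $\rho$ are used. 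Your outline gestures at both issues but underestimates them; these are the substantive parts of the proof in \cite{BurgerIozziParreauPozzetti_RSCCharacterVarieties2}, not routine bookkeeping.
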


By \cite[Proposition 5.21]{BurgerIozziParreauPozzetti_RSCCharacterVarieties2}, 
the condition on having positive translation length on the completion of $\mathcal{B}_{G_{\Fb}}$ in the above theorem can be reformulated as follows:
Let $\| \cdot \|$ be some Weyl group invariant norm on $\mathfrak{a}_{\Rb}$, $\abs{\cdot} \colon \Fb^\times \to \Rb$ an order-compatible absolute value, and recall the multiplicative Jordan projection $J_{\Fb} \colon G_{\Fb} \to C_{\Fb}$ from \Cref{section: Jordan projection}.
Then the composition of the maps
\[N \colon G_{\Fb} \xrightarrow{J_{\Fb}} C_{\Fb} \xrightarrow{\ln\abs{\cdot}} \mathfrak{a}_{\Rb} \xrightarrow{\|\cdot\|} \Rb~,\]
where we apply $\ln\abs{\cdot}$ component-wise, can be used to calculate the translation length.
Namely, the translation length of $\rho(\gamma)$ is equal to $N(\rho(\gamma))$.
Thus the last condition is equivalent to asking that there exists $\gamma \in \Gamma$ with
\[ N(\rho(\gamma))>0~.\]

Using the above characterization we can show that the representation constructed in \Cref{subsection: ExNonFramablePosRepr} also represents a closed point in $\rsp\mathfrak{X}(\Gamma_{0,3},\PSL_2(\overline{\Qb}^r))$.

\begin{corollary}
    The representation $\rho \colon \Gamma_{0,3} \to \PSL_2(\Fb)$ from \Cref{corollary: positive non frameable representation is closed} also represents a closed point in $\rsp\mathfrak{X}(\Gamma_{0,3},\PSL_2(\overline{\Qb}^r))$.
\end{corollary}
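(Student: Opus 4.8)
The plan is to run the same argument as in \Cref{corollary: positive non frameable representation is closed}, but now using the translation-length criterion for closed points in the character variety instead of the global-fixed-point criterion for the representation variety. First I would recall that, by construction in \Cref{subsection: ExNonFramablePosRepr} (and with the extra freedom we already exploited in \Cref{corollary: positive non frameable representation is closed}), we may arrange the configuration of $\Fb$-lines so that $\abs{\Tr(\sigma_1\sigma_2)}_{\Fb} = \abs{4\,\textnormal{CR}(x_1,y_1,y_2,x_2)-2}_{\Fb}$ is a \emph{big element} of $\Fb$; in particular $\rho(ab^{-1})=\rho(a)=\sigma_1\sigma_2$ is a hyperbolic element of $\PSL_2(\Fb)$ whose trace has big absolute value, hence whose eigenvalues $\lambda,\lambda^{-1}\in\Fb^\times$ satisfy that $\abs{\lambda}_{\Fb}$ is big.

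Next I would compute the quantity $N(\rho(a))$ from the reformulation of the closed-point criterion given after \cite[Theorem 1.2]{BurgerIozziParreauPozzetti_RSCCharacterVarieties2}. For $\PSL_2$, the multiplicative Jordan projection $J_{\Fb}(\rho(a))$ is conjugate to $\diag(\abs{\lambda}_{\Fb},\abs{\lambda}_{\Fb}^{-1})$, and applying $\ln\abs{\cdot}$ componentwise (where now $\abs{\cdot}\colon\Fb^\times\to\Rb$ is the fixed order-compatible real-valued absolute value coming from a big element) gives a nonzero element of $\mathfrak a_{\Rb}$ precisely because $\abs{\lambda}_{\Fb}$ is big: $\ln\bigl(\abs{\,\abs{\lambda}_{\Fb}\,}\bigr)>0$, so $\abs{\,\abs{\lambda}_{\Fb}\,}>1$. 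Hence for any Weyl-invariant norm $\|\cdot\|$ on $\mathfrak a_{\Rb}$ we get $N(\rho(a))>0$, i.e.\ $\rho(a)$ has positive translation length on the completion of the generalized affine building $\mathcal B_{\PSL_2(\Fb)}$.

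By the equivalence in the cited theorem, the existence of a single $\gamma\in\Gamma_{0,3}$ (namely $\gamma=a$) with $N(\rho(\gamma))>0$ shows that $(\Fb,\rho)$ represents a closed point of $\rsp\mathfrak X(\Gamma_{0,3},\PSL_2(\overline{\Qb}^r))$. One small point to verify is that $\rho$ is completely reducible, so that it genuinely represents a point of the character variety (and that the minimality hypothesis on $\Fb$ in the theorem causes no trouble): since $\rho$ is $\Theta$-positive by \Cref{propo:PosReprNotFrame}, it is irreducible by \Cref{propo: positive implies irreducible} (its image is not contained in any proper parabolic subgroup of $\PSL_2(\Fb)$, and for $\PSL_2$ that is the same as being irreducible), hence completely reducible; and passing to a minimal real closed field of definition does not change any trace, so the big-trace computation is unaffected. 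I do not expect any genuine obstacle here — the only thing to be careful about is bookkeeping the two absolute values ($\Fb$-valued $\abs{\cdot}_{\Fb}$ versus the $\Rb$-valued $\abs{\cdot}$ attached to a big element) so that ``$\abs{\Tr}_{\Fb}$ big'' correctly translates into ``$N(\rho(a))>0$''.
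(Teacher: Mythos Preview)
Your argument is correct and essentially identical to the paper's: both check complete reducibility via \Cref{propo: positive implies irreducible}, then use that $\rho(a)=\sigma_1\sigma_2$ has big $\Fb$-trace to conclude that an eigenvalue is big, hence $N(\rho(a))>0$, and invoke the translation-length criterion. The equality ``$\rho(ab^{-1})=\rho(a)$'' is a slip (you simply mean $\rho(a)=\sigma_1\sigma_2$), and your remark on passing to a minimal field of definition is a useful clarification that the paper leaves implicit.
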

\begin{proof}
    Since the representation $\rho$ is positive, it is irreducible, and in particular completely reducible (\Cref{propo: positive implies irreducible}).
By definition, $\rho(a)=\sigma_1\sigma_2$ and $\vert \Tr(\sigma_1\sigma_2)\vert_{\Fb}$ is a big element in $\Fb$. In particular, $\vert \Tr(\sigma_1\sigma_2)\vert_{\Fb}>2$, so the two eigenvalues of (some representative of) $\rho(a)$ are in $\Fb$, are inverses of each other. Moreover, these two eigenvalues sum to a big element, so one of them must be a big element, which implies by the above remark that 
    \[N(\rho(a)) >0~,\]
    and thus that $\rho$ represents a closed point in $\rsp\mathfrak{X}(\Gamma_{0,3},\PSL_2(\overline{\Qb}^r))$. 
\end{proof}

We now turn to the characterization of the boundary of the subset of equivalence classes of $\Theta$-positive representations.
We know from \Cref{propo: positive implies irreducible} that $\Theta$-positive representations are irreducible, so in particular completely reducible.
Since $\Pos_\Theta(\Gamma,G)$ is closed in $\Hom(\Gamma,G)$, it follows that $\Pos_\Theta(\Gamma,G) \subset \Hom_\red(\Gamma,G)$ is a closed $G$-invariant semi-algebraic subset.
We denote by $\mathfrak{Pos}_\Theta(\Gamma,G)$ the image of $\Pos_\Theta(\Gamma,G)$ in $\mathfrak{X}(\Gamma,G)$, and by $\overline{\mathfrak{Pos}}_\Theta(\Gamma,G)$ its closure in $\rsp\mathfrak{X}(\Gamma,G)$. 
    
With these notations we obtain the following corollary on the level of character varieties, similarly to \Cref{coro in text: coro:Positive = limit of positive}.

\begin{corollary}
    Let $\Gamma \subset \Isom_+(\Hb)$ be a lattice and $\rho$ a representation from $\Gamma$ to $G_{\Fb}$, where $\Fb$ is a real closed field.
    If $\Gamma$ is cocompact then the following are equivalent:
    \begin{enumerate}
        \item $\rho$ is $\Theta$-positive,
        \item $[(\Fb,\rho)]$, seen as a point of $\RSp\mathfrak{X}(\Gamma,G)$, belongs to $\overline{\mathfrak{Pos}}_\Theta(\Gamma,G)$.
    \end{enumerate}
    If $\Gamma$ is non-cocompact then the following are equivalent:
    \begin{enumerate}
        \item $\rho$ is $\Theta$-positively frameable,
        \item $[(\Fb,\rho)]$, seen as a point of $\RSp\mathfrak{X}(\Gamma,G)$, belongs to $\overline{\mathfrak{Pos}}_\Theta(\Gamma,G)$.
    \end{enumerate}
\end{corollary}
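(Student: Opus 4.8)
The plan is to deduce this corollary directly from \Cref{coro in text: coro:Positive = limit of positive} (the representation variety version) by passing to the quotient, using the two facts already established that make this transition smooth: that $\Theta$-positive representations are irreducible (\Cref{propo: positive implies irreducible}), and that $\Pos_\Theta(\Gamma,G)$ is closed and semi-algebraic (\Cref{thm: closedness}, together with \Cref{corol : positive semi algebraic} and \Cref{corollary : positively framed repr semi-algebraic}). The key point is that $\overline{\mathfrak{Pos}}_\Theta(\Gamma,G)$, being the closure of a closed $G$-invariant semi-algebraic subset of $\Hom_\red(\Gamma,G)$, is itself the real spectrum compactification of a closed semi-algebraic set, so \Cref{lem: Characterizing closures of closed semi-algebraic sets} applies: a representation $\rho\colon\Gamma\to G_{\Fb}$ represents a point of $\overline{\mathfrak{Pos}}_\Theta(\Gamma,G)$ if and only if $\rho \in (\mathfrak{Pos}_\Theta(\Gamma,G))_{\Fb}$.

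First I would record that, since $\Pos_\Theta(\Gamma,G)\subset\Hom_\red(\Gamma,G)$ is closed, $G$-invariant, and semi-algebraic, its image $\mathfrak{Pos}_\Theta(\Gamma,G)$ in $\mathfrak{X}(\Gamma,G)$ is a closed semi-algebraic subset (the quotient map $\Hom_\red(\Gamma,G)\to\mathfrak X(\Gamma,G)$ is semi-algebraic and closed on $G$-invariant closed semi-algebraic sets by the theory of minimal vectors cited in \cite{BurgerIozziParreauPozzetti_RSCCharacterVarieties2}). Then $\overline{\mathfrak{Pos}}_\Theta(\Gamma,G)$ is its real spectrum compactification, and by \Cref{lem: Characterizing closures of closed semi-algebraic sets}, $[(\Fb,\rho)]\in\overline{\mathfrak{Pos}}_\Theta(\Gamma,G)$ if and only if $\rho$, viewed via its image in $\mathfrak X(\Gamma,G_{\Fb})$, lies in $(\mathfrak{Pos}_\Theta(\Gamma,G))_{\Fb}$. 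The second step is to identify $(\mathfrak{Pos}_\Theta(\Gamma,G))_{\Fb}$ with the image of $(\Pos_\Theta(\Gamma,G))_{\Fb}$ in $\mathfrak X(\Gamma,G_{\Fb})$: this is because $\Fb$-extension commutes with taking the semi-algebraic image under the (semi-algebraically defined over $\overline{\Qb}^r$) quotient map, an instance of Tarski--Seidenberg. Finally, invoking \Cref{corol : positive semi algebraic} in the uniform case and \Cref{corollary : positively framed repr semi-algebraic} in the non-uniform case, $(\Pos_\Theta(\Gamma,G))_{\Fb}$ equals $\Pos_\Theta(\Gamma,G_{\Fb})$ (uniform) or $\Pos_\Theta^{\fr}(\Gamma,G_{\Fb})$ (non-uniform). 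Since every $\Theta$-positive (hence irreducible, hence completely reducible) representation is determined up to $G_{\Fb}$-conjugacy by its class in $\mathfrak X(\Gamma,G_{\Fb})$, and $\Theta$-positivity and $\Theta$-positive frameability are conjugation-invariant, this gives exactly the two asserted equivalences.

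The main subtlety — not so much an obstacle as a point requiring care — is the compatibility of the quotient construction with real spectrum compactification and with $\Fb$-extension simultaneously: one must check that the closure of $\mathfrak{Pos}_\Theta(\Gamma,G)$ inside $\rsp\mathfrak X(\Gamma,G)$ really is $\widetilde{\mathfrak{Pos}_\Theta(\Gamma,G)}$ (which holds because the set is closed semi-algebraic, by the same argument as in \Cref{subs_RSCsemi-algebraicSets}), and that a point $[(\Fb,\rho)]$ of $\rsp\mathfrak X(\Gamma,G)$ which is represented by a $\Theta$-positive (resp.\ frameable) representation indeed lands in this closure — here one uses \Cref{thm_RSCCharBoundary character variety} to represent the point by a completely reducible representation, which for a $\Theta$-positive representation is automatic since it is already irreducible. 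Conversely, if $[(\Fb,\rho)]\in\overline{\mathfrak{Pos}}_\Theta(\Gamma,G)$, then \Cref{lem: Characterizing closures of closed semi-algebraic sets} applied to the semi-algebraic set $\mathfrak{Pos}_\Theta(\Gamma,G)$ forces the class of $\rho$ to be that of a representation in $(\Pos_\Theta(\Gamma,G))_{\Fb}$, and \Cref{propo: positive implies irreducible} ensures that representation is $G_{\Fb}$-conjugate to $\rho$ itself, so $\rho$ is $\Theta$-positive (resp.\ $\Theta$-positively frameable). The remaining verifications are routine applications of the transfer principle and of results already in the paper.
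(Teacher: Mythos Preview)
Your proposal is correct and follows essentially the same route as the paper: reduce to the representation-variety statement (\Cref{coro in text: coro:Positive = limit of positive}) using that $\Theta$-positive representations are irreducible (hence completely reducible) and that $\Pos_\Theta(\Gamma,G)$ is a closed $G$-invariant semi-algebraic subset of $\Hom_\red(\Gamma,G)$, then identify the fibers of $\Hom_\red(\Gamma,G)_{\Fb}\to\mathfrak X(\Gamma,G)_{\Fb}$ with $G_{\Fb}$-orbits. The paper's proof is more terse (it simply cites \cite[Proposition~7.4~(2)]{BurgerIozziParreauPozzetti_RSCCharacterVarieties2} for the fiber description and says the statement follows), while you spell out the compatibility of $\Fb$-extension with the quotient and the application of \Cref{lem: Characterizing closures of closed semi-algebraic sets}; these are exactly the details implicit in the paper's argument.
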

\begin{proof}
    By the construction of the semi-algebraic model for $\mathfrak{X}(\Gamma,G)$, for every real closed field $\Fb$ the fibers of the map
    \[\Hom_\red(\Gamma,G)_{\Fb} \to \mathfrak{X}(\Gamma,G)_{\Fb}\]
    coincide with the $G_{\Fb}$-orbits \cite[Proposition 7.4~(2)]{BurgerIozziParreauPozzetti_RSCCharacterVarieties2}.
    Since the subset of $\Theta$-positive representations consists solely of irreducible representations (\Cref{propo: positive implies irreducible}), in particular completely reducible representations, the set $\Pos_\Theta(\Gamma,G)$ is a closed $G$-invariant semi-algebraic subset of $\Hom_{\red}(\Gamma,G)$.
    By the description of the points of $\RSp\mathfrak{X}(\Gamma,G)$ in \Cref{thm_RSCCharBoundary character variety}, the result follows from the statement on the level of representation varieties, see \Cref{coro in text: coro:Positive = limit of positive}.
\end{proof}

\appendix
\section{%
\texorpdfstring{%
Positive representations into $\PSL_2(\Rb)$}%
{Positive representations into PSL(2,R)}}
\label{appendix}

This section proves some technical results about Fuchsian groups, which can be seen either as describing Fuchsian groups modulo the equivalence relation that we call \emph{semi-conjugacy}, or describing positive representations of a Fuchsian group of the first kind into $\PSL_2(\Rb)$. These results might not be surprising to experts and are certainly classical when the Fuchsian group is finitely generated (see for instance \cite{Katok}). However, the case of infinitely generated Fuchsian groups is more subtle and, since we could not find a reference in the literature, it seems useful to include them here.

\subsection{Semi-conjugacy between Fuchsian groups}
Let $\Gamma$, $\Gamma'$ be two non-elemen-tary Fuchsian groups, with $\Gamma'$ of the first kind. Let $\mathcal C_\Gamma \subset \Hb$ denote the the \emph{convex core} of $\Gamma$ (i.e.\ the convex hull of $\Lambda(\Gamma)$ in $\Hb$), and $\mathcal{C}^\circ_\Gamma$ its interior.

The following theorem gives six different characterizations of when an isomorphism between $\Gamma$ and $\Gamma'$ is a \emph{semi-conjugacy}.

\begin{theorem} \label{thm: Equivalences semi-conjugation}
    Let $\iota\colon \Gamma \to \Gamma'$ be an isomorphism. The following conditions are equivalent:
    \begin{itemize}
        \item[(i)] there exists a continuous, $\iota$-equivariant, monotonous surjective map
        \[\alpha\colon \Sb^1 \to \Sb^1~;\]
        \item[(ii)] there exist left and right-continuous, $\iota^{-1}$-equivariant, monotonic injective maps
        \[\beta^l, \beta^r\colon \Sb^1 \to \Lambda(\Gamma)~;\]
        \item[(iii)] The representation $\iota^{-1} \colon \Gamma' \to \PSL_2(\Rb)$ is positive;
        \item[(iv)] there exists a $\iota$-equivariant injective local diffeomorphism from $\mathcal C^\circ_\Gamma$ to $\Hb$;
        \item[(v)] there exists a $\iota$-equivariant diffeomorphism from $\mathcal C^\circ_\Gamma$ to $\Hb$.
        \item[(vi)] there exists a $\iota$-equivariant diffeomorphism from $\Hb$ to $\Hb$;
    \end{itemize}
\end{theorem}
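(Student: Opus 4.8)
The plan is to prove the six conditions equivalent by establishing a cycle of implications, supplemented by a few direct equivalences where convenient. The most natural route is $(v) \Leftrightarrow (vi)$, then $(v) \Rightarrow (iv)$ trivially, $(iv) \Rightarrow (i)$ by taking boundary extensions, $(i) \Rightarrow (ii)$ by constructing one-sided inverses of $\alpha$, $(ii) \Rightarrow (iii)$ since a monotonic injective map into $\Lambda(\Gamma) \subset \Sb^1 \simeq \mathbf P^1(\Rb)$ is exactly a positive map for $\PSL_2(\Rb)$ (see the examples in Section~\ref{s: Positivity on flag varieties}), and finally $(iii) \Rightarrow (v)$ by bootstrapping from the boundary map to a equivariant diffeomorphism of the convex core. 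I would first deal with the ``hyperbolic geometry'' equivalences $(iv)$, $(v)$, $(vi)$ among themselves.

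\textbf{The geometric core.} For $(vi) \Rightarrow (v)$: given an $\iota$-equivariant diffeomorphism $F\colon \Hb \to \Hb$, since $\Gamma'$ is of the first kind, $\Hb = \mathcal C_{\Gamma'}$, so $F$ restricts to an $\iota$-equivariant embedding of $\mathcal C^\circ_\Gamma$ onto a $\Gamma'$-invariant open convex subset of $\Hb$, which must be all of $\Hb$ by minimality of the $\Gamma'$-action on $\Sb^1$ and convexity; after an isotopy one upgrades this to a diffeomorphism onto $\Hb$. For $(v) \Rightarrow (vi)$: the complement $\Hb \setminus \mathcal C^\circ_\Gamma$ is a disjoint union of half-planes $H_c$ indexed by complementary intervals $c$ of $\Lambda(\Gamma)$, permuted by $\Gamma$; using the diffeomorphism of $(v)$ one reads off the images of the endpoints of each $c$ under the induced boundary map, and then one extends the diffeomorphism of $\mathcal C^\circ_\Gamma$ across each $H_c$ (equivariantly, choosing one extension per $\Gamma$-orbit of complementary intervals and spreading it by the group action). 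The key subtlety here — and I expect this to be the main obstacle — is the \emph{infinitely generated} case: one must ensure that the equivariant extensions across infinitely many complementary half-planes can be chosen uniformly enough that the resulting map is a genuine (globally smooth) diffeomorphism of $\Hb$ and not merely a homeomorphism, which requires care with the smoothing near the accumulation points of the complementary intervals. The standard trick is to do the extension so that it is the identity outside a compact core of each $H_c$ with size controlled by the geometry, using that the half-planes $H_c$ have pairwise disjoint closures in $\Hb$ even though their boundary endpoints may accumulate on $\Sb^1$.

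\textbf{From the core to the boundary and back.} For $(iv) \Rightarrow (i)$: an $\iota$-equivariant injective local diffeomorphism $f\colon \mathcal C^\circ_\Gamma \to \Hb$ is an isometric-type embedding up to bi-Lipschitz distortion on compact sets; since $f$ is $\iota$-equivariant and the $\Gamma'$-action on $\Hb$ is cocompact on $\mathcal C_{\Gamma'}=\Hb$ modulo a quotient of finite volume (really: by the convergence group property of Section~\ref{s: Fuchsian groups}), geodesic rays in $\mathcal C^\circ_\Gamma$ landing at a point $x\in\Lambda(\Gamma)$ are mapped to paths converging to a well-defined point $\alpha(x)\in\Sb^1$; one checks $\alpha$ is continuous, monotone, $\iota$-equivariant, and surjective (surjectivity because the image is a closed $\Gamma'$-invariant subset of $\Sb^1$, hence all of $\Sb^1$ since $\Gamma'$ is of the first kind). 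For $(i) \Rightarrow (ii)$: given monotone continuous surjective $\alpha$, I would define $\beta^l(y)$ (resp.\ $\beta^r(y)$) to be the infimum (resp.\ supremum) of the fiber $\alpha^{-1}(y)$ in the cyclic order — each fiber is a closed arc (possibly a point) since $\alpha$ is monotone continuous — and verify one-sided continuity, injectivity, monotonicity, and $\iota^{-1}$-equivariance; injectivity of $\beta^l$ uses that two distinct fibers of $\alpha$ cannot share an endpoint, which in turn uses the standard fact that non-commuting infinite-order elements of a discrete subgroup of $\PSL_2(\Rb)$ cannot share a fixed point (exactly as in Lemma~\ref{injective alpha}). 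Finally $(iii) \Rightarrow (iv)$ (or directly $(iii)\Rightarrow(v)$): a positive, i.e.\ monotone injective, $\iota^{-1}$-equivariant boundary map $\Sb^1\to\Lambda(\Gamma)$ identifies $\Sb^1$ order-theoretically with a $\Gamma$-invariant subset of $\Lambda(\Gamma)$ whose closure is all of $\Lambda(\Gamma)$; one then builds the equivariant diffeomorphism of $\mathcal C^\circ_\Gamma$ by declaring it on ideal triangulations adapted to this boundary identification (the ideal triangulation of $\mathcal C^\circ_\Gamma$ with vertices in $\Lambda_h(\Gamma)$) and spreading across triangles, using $\iota$-equivariance to reduce to finitely many (or, in the infinitely generated case, uniformly controlled) simplices — the same kind of gluing argument as in Proposition~\ref{propo: cutting lemma}, now over $\PSL_2(\Rb)$. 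Throughout, the hard technical point remains the infinitely generated case, where ``finitely many orbits of pieces'' must be replaced by a careful uniform-control argument.
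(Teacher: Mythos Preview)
Your cycle differs substantially from the paper's, and while the outline is broadly plausible, two of your key steps are handled very differently in the paper, and your versions either have gaps or would require the delicate uniform-control arguments you yourself flag.

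A minor point first: your $(vi) \Rightarrow (v)$ is flawed as written. The image $F(\mathcal C^\circ_\Gamma)$ under a mere diffeomorphism has no reason to be convex, and minimality of the $\Gamma'$-action on $\Sb^1$ does not force a $\Gamma'$-invariant open subset of $\Hb$ to equal $\Hb$ (remove a $\Gamma'$-orbit of an interior point). This is not fatal, since $(vi) \Rightarrow (iv)$ is trivial by restriction and the paper closes the cycle that way; but your stated argument does not work.

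More substantively, the paper replaces your two hardest steps with analytic arguments that make the infinitely generated case automatic. For $(i) \Rightarrow (v)$ the paper uses the \emph{Douady--Earle extension}: set $f(x)=b(\alpha_*\nu_x)$, where $\nu_x$ is the visual measure at $x$ and $b$ is the conformal barycenter. This is well-defined exactly on $\mathcal C^\circ_\Gamma$ (the locus where no fiber of $\alpha$ carries $\nu_x$-mass $\geq\tfrac12$), is tautologically equivariant, and Douady--Earle's argument shows it is a proper local diffeomorphism---no triangulations, no combinatorics. For $(v) \Rightarrow (vi)$ the paper avoids extending across half-planes altogether: it proves (Lemma~\ref{lemma: convex core holomorphic}, via the Poincar\'e metric on $\mathbf P^1(\Cb)\setminus\Lambda(\Gamma)$ and Carath\'eodory's theorem) that there is always an equivariant \emph{biholomorphism} $\Hb \to \mathcal C^\circ_{\Gamma_1}$ for some auxiliary Fuchsian group $\Gamma_1$, and then bootstraps using Theorem~\ref{thm: Existence semiconjugation} and the already-established $(iv)\Rightarrow(v)$. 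Both of these analytic devices completely sidestep the ``uniform control near accumulation points'' difficulty you correctly identified; your triangulation and half-plane-extension approaches would have to confront it head-on.
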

As before, when these conditions are satisfied, we refer to $\iota$ as a \emph{semi-conjugacy} between $\Gamma$ and $\Gamma'$. 

The second theorem that we will prove in this section is the following:
\begin{theorem} \label{thm: Existence semiconjugation}
    Let $\Gamma$ be a non-elementary Fuchsian group. Then there exists a Fuchsian group of the first kind $\Gamma'$ and an isomorphism $\iota \colon \Gamma \to \Gamma'$ such that there exists an injective $\iota$-equivariant \emph{holomorphic} map $f\colon \Hb\to \Hb$.
\end{theorem}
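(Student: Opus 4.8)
\textbf{Proof strategy for \Cref{thm: Existence semiconjugation}.}

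The plan is to realize $\Gamma$ as a group of biholomorphisms of the disc that is of the first kind by ``filling in'' the complement of the limit set, using the theory of planar Riemann surfaces / uniformization. First I would reduce to the case where $\Gamma$ is of the second kind (otherwise we may take $\Gamma' = \Gamma$ and $f = \id$), and moreover — after passing through the equivalences of \Cref{thm: Equivalences semi-conjugation} if needed — I would work directly with the quotient surface $S \coloneqq \Gamma \backslash \Hb$. The surface $S$ is a (possibly infinite-type) hyperbolic surface, and its ends are of three kinds: cusps, funnels, and more complicated ``infinite-genus'' or ``infinitely-many-punctures'' ends accumulated along the boundary at infinity. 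The key observation is that $S$ has a well-defined convex core $\mathcal C_\Gamma^\circ / \Gamma$, whose boundary consists of the simple closed geodesics bounding the funnels; and cutting along these geodesics and discarding the funnels gives a surface $S_0$ with geodesic boundary. The idea is then to double $S_0$ along its geodesic boundary, or more precisely to glue a once-punctured ``half-pair-of-pants'' or a copy of a modular torus onto each boundary geodesic, to produce a complete finite-area-at-each-end surface $\widehat S$ containing $S_0$ isometrically; then $\widehat S = \Gamma' \backslash \Hb$ for some Fuchsian group $\Gamma'$ of the first kind, and the isometric embedding $S_0 \hookrightarrow \widehat S$ lifts to a $\iota$-equivariant isometric embedding $\mathcal C_\Gamma^\circ \hookrightarrow \Hb$.

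Concretely, the cleanest route: for each boundary geodesic $c$ of $S_0$, glue along $c$ a copy of a hyperbolic half-plane quotient would merely re-create a funnel, so instead I would glue a surface with one boundary geodesic equal (in length) to $c$ and all other ends being cusps — for instance, cap $c$ with a one-holed torus of appropriate boundary length, or with a pair of pants two of whose boundary curves have been further capped off with cusped pieces. Since there are (at most countably) many boundary geodesics, this gluing can be carried out simultaneously and $\Gamma$-equivariantly on the universal cover. The resulting surface $\widehat S$ has no funnels, hence $\Gamma'$ is of the first kind. The restriction of the covering $\Hb \to \widehat S$ over $S_0 \subset \widehat S$ recovers the cover $\mathcal C_\Gamma^\circ \to S_0$, which gives the equivariant isometric embedding $f \colon \mathcal C_\Gamma^\circ \to \Hb$; since isometries of $\Hb$ in the upper half plane / disc model are biholomorphisms and $\mathcal C_\Gamma^\circ$ is an open (simply connected, since it sits inside $\Hb$) subset, $f$ is holomorphic and injective as required. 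Finally one extends $f$ from $\mathcal C_\Gamma^\circ$ to all of $\Hb$ if desired by a (non-holomorphic) equivariant diffeomorphism supported near the funnels, but for the holomorphic conclusion of \Cref{thm: Existence semiconjugation} the embedding of the convex core already suffices — note that the statement only asks for an injective holomorphic $f \colon \Hb \to \Hb$, so I would instead use the Riemann mapping / uniformization to identify $\mathcal C_\Gamma^\circ$ conformally with $\Hb$, transporting the $\Gamma$-action, and then compose with the inclusion $\mathcal C_\Gamma^\circ \hookrightarrow \Hb$ to land the domain in $\Hb$.

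The main obstacle I expect is the equivariant gluing when $S$ has infinitely many funnels, or funnels whose bounding geodesics are not isolated in the boundary at infinity (i.e.\ when $\Lambda(\Gamma)$ is a Cantor set and the complementary intervals accumulate). In that regime one must check that the capping surfaces can be chosen with uniformly (or at least summably) controlled geometry so that the glued surface $\widehat S$ is still a complete hyperbolic surface of the first kind, and that the construction descends from a $\Gamma$-equivariant construction on $\Hb$; this requires care with the combinatorics of the components of $\Sb^1 \setminus \Lambda(\Gamma)$ and their $\Gamma$-orbits. A secondary subtlety is torsion: if $\Gamma$ has elliptic elements one works with hyperbolic orbifolds rather than surfaces, and the capping pieces must be chosen as orbifold pieces; alternatively, one passes to a finite-index torsion-free subgroup via Selberg's lemma, performs the construction there, and then checks that the normalizer action extends — but since the statement is only about the existence of \emph{some} $\Gamma'$ and some $\iota$, and not about matching orbifold structures, the torsion-free reduction is harmless. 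Once these geometric-topological points are settled, the holomorphicity of $f$ is immediate from the fact that orientation-preserving hyperbolic isometries are Möbius transformations.
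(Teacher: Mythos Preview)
There is a genuine gap. Your capping construction does not produce an \emph{isomorphism} $\iota\colon\Gamma\to\Gamma'$: gluing a one-holed torus or a two-cusped pair of pants onto each boundary geodesic of $S_0$ strictly enlarges the fundamental group (by van Kampen, $\pi_1(\widehat S)\cong \pi_1(S_0)\ast_{\Zb}\pi_1(\text{cap})$ with $\pi_1(\text{cap})\cong F_2$), so $\Gamma'=\pi_1(\widehat S)$ is not isomorphic to $\Gamma$. For the map $\iota$ to be an isomorphism you would need each cap to have $\pi_1\cong\Zb$ generated by its boundary curve; but the only complete hyperbolic surface with geodesic boundary of length $\ell>0$ and cyclic fundamental group is the funnel itself, which is exactly what you are trying to eliminate. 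So isometric capping cannot turn funnels into cusps while keeping $\pi_1$ fixed.

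Your fallback --- uniformize $\mathcal C_\Gamma^\circ$ by $\Hb$ via the Riemann map and transport the $\Gamma$-action --- also does not yield a first-kind group. The end of $\Gamma\backslash\mathcal C_\Gamma^\circ$ approaching a boundary geodesic of length $\ell$ is conformally a \emph{finite-modulus} annulus (one half of the collar), so under uniformization it becomes a funnel again, not a cusp. Composing with the inclusion $\mathcal C_\Gamma^\circ\hookrightarrow\Hb$ then lands you back in the original second-kind $\Gamma$-action. (Also, Selberg's lemma is unavailable for infinitely generated $\Gamma$, so the torsion reduction you sketch needs another argument.)

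The paper's proof is completely different and avoids any explicit gluing: it partially orders the discrete faithful representations $\rho\colon\Gamma\to\Isom_+(\Hb)$ by declaring $\rho\preceq\rho'$ when there is a normalized $(\rho,\rho')$-equivariant injective holomorphic map $\Hb\to\Hb$; the Schwarz lemma gives antisymmetry, Montel's theorem gives compactness of each $\{\rho'\mid\rho\preceq\rho'\}$, hence Zorn's lemma yields a maximal $\iota$. If $\iota(\Gamma)$ were of the second kind one could Riemann-map $\Hb\cup\bigcup_\gamma\iota(\gamma)\cdot B$ (for a small ball $B$ touching $\Sb^1\setminus\Lambda$) back onto $\Hb$ and strictly increase in the order, contradicting maximality. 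A salvageable version of your idea --- cap each funnel \emph{conformally} by a punctured disc rather than isometrically by a hyperbolic piece --- does preserve $\pi_1$ and works for finitely generated $\Gamma$, but making this uniform over infinitely many funnels is precisely the difficulty that the Zorn/Montel argument sidesteps.
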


Since the restriction of $f$ to $\mathcal C^\circ_\Gamma$ is an injective local diffeomorphism, by characterization (iv) of semi-conjugacy in \Cref{thm: Equivalences semi-conjugation}, we obtain the following:
\begin{corollary} \label{coro:Fuchsian group conjugated to first kind}
    Every non-elementary Fuchsian group is semi-conjugated to a Fuchsian group of the first kind.
\end{corollary}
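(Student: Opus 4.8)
\textbf{Proof plan for \Cref{thm: Existence semiconjugation}.}

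The plan is to build $\Gamma'$ by uniformizing the Riemann surface (or orbifold) obtained from $\mathbb{H}$ by filling in the ``missing'' directions of $\Gamma$, i.e.\ the directions corresponding to the domain of discontinuity $\Omega(\Gamma) = \mathbb{S}^1 \setminus \Lambda(\Gamma)$. Concretely, first I would form the quotient orbifold $S = \Gamma \backslash \mathbb{H}$. If $\Gamma$ is of the first kind there is nothing to do, so assume $\Lambda(\Gamma) \neq \mathbb{S}^1$; then $S$ has funnel ends (one for each $\Gamma$-orbit of connected components of $\Omega(\Gamma)$, equivalently each $\Gamma$-conjugacy class of maximal cyclic subgroups generated by peripheral hyperbolic elements). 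The idea is to double $S$ across the boundary geodesics of its convex core, or more precisely to construct a complete hyperbolic orbifold $S'$ of the first kind together with a conformal (holomorphic) open embedding $j \colon \mathcal{C}_S^\circ \hookrightarrow S'$ of the interior of the convex core of $S$ into $S'$, realizing $S'$ as obtained from $\mathcal{C}_S$ by attaching a cusp or a mirror boundary (or, in the simplest approach, another copy of $\mathcal{C}_S$) along each boundary geodesic. The precise gluing does not matter; any construction that produces a first-kind hyperbolic orbifold receiving $\mathcal{C}_S^\circ$ holomorphically will do.

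Next, I would uniformize: let $\Gamma' \subset \mathrm{PSL}_2(\mathbb{R})$ be the Fuchsian group with $\Gamma' \backslash \mathbb{H} = S'$, which is of the first kind since $S'$ is a complete hyperbolic orbifold all of whose ends are cusps. The holomorphic embedding $j \colon \mathcal{C}_S^\circ \hookrightarrow S'$ lifts to a holomorphic map $f \colon \widetilde{\mathcal{C}_S^\circ} \to \mathbb{H}$ between universal covers, equivariant with respect to the induced homomorphism $\iota \colon \pi_1(\mathcal{C}_S^\circ) \to \pi_1(S')$. Since $\mathcal{C}_S^\circ$ is an open hyperbolic surface whose fundamental group includes isomorphically as $\Gamma$ (the inclusion $\mathcal{C}_S^\circ \hookrightarrow S$ is a homotopy equivalence, as $S$ deformation retracts onto its convex core), we get $\pi_1(\mathcal{C}_S^\circ) \cong \Gamma$, and $\iota \colon \Gamma \to \Gamma'$ is an isomorphism onto its image. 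Here I need to check $\iota$ is onto $\Gamma'$: this follows because $j$ is $\pi_1$-surjective — $S' \setminus j(\mathcal{C}_S^\circ)$ is a disjoint union of cusp neighbourhoods (or collars), each of which is a regular neighbourhood of a curve already in the image of $\pi_1(\mathcal{C}_S^\circ)$ under $j$, so it adds no new $\pi_1$. The universal cover $\widetilde{\mathcal{C}_S^\circ}$ is identified with a $\Gamma$-invariant convex open subset of $\mathbb{H}$, namely $\mathcal{C}_\Gamma^\circ$; composing with this identification yields the $\iota$-equivariant holomorphic map. To upgrade from a map defined on $\mathcal{C}_\Gamma^\circ$ to one defined on all of $\mathbb{H}$: either one extends $j$ to a holomorphic embedding of a slightly larger $\Gamma$-invariant domain (e.g.\ by also attaching funnels to the funnel ends of $S$ outside the convex core, which can be done conformally), or one simply invokes that $\mathcal{C}_\Gamma^\circ$ is conformally a disc equivariantly — but cleanest is to perform the doubling/capping on all of $S$ rather than only on $\mathcal{C}_S$, so that the holomorphic embedding is $S \hookrightarrow S'$ directly, giving $f$ on all of $\mathbb{H}$. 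Injectivity of $f$ is immediate from injectivity of the orbifold embedding $j$ plus equivariance.

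\textbf{The main obstacle} I anticipate is the torsion/orbifold bookkeeping and the verification that $\iota$ is an isomorphism rather than merely a monomorphism — i.e.\ controlling exactly what $\pi_1$ is added by the capping operation, especially in the presence of parabolics in $\Gamma$ (whose cusps must be preserved, not capped) and of orbifold cone points or mirror boundary. One must be careful to cap \emph{only} the funnel ends (boundary components of the convex core that are geodesics coming from peripheral hyperbolic elements), while leaving cusps untouched, and to ensure the capping region deformation retracts onto a curve in the image so that surjectivity of $\iota$ holds. A clean way to sidestep subtleties about infinitely generated $\Gamma$ (infinitely many funnels) is to cap \emph{each} funnel by the same local model simultaneously and $\Gamma$-equivariantly at the level of $\mathbb{H}$: each component $I$ of $\Omega(\Gamma)$ determines a half-plane $H_I \subset \mathbb{H}$ bounded by the geodesic with the same endpoints as $I$; one glues a standard conformal model (e.g.\ another half-plane, producing a first-kind pattern) along each $\partial H_I$, equivariantly, using that $\Gamma$ permutes the $H_I$ with finitely many orbits. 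The resulting surface is simply connected (being a tree-like union of discs glued along geodesics), hence biholomorphic to $\mathbb{H}$, and $\Gamma$ acts on it extending its action on $\mathbb{H} \setminus \bigcup_I H_I$; transporting this action through the biholomorphism to $\mathbb{H}$ gives the desired $\Gamma' \subset \mathrm{PSL}_2(\mathbb{R})$ and the $\iota$-equivariant holomorphic inclusion $f \colon \mathbb{H} \to \mathbb{H}$, with $\iota$ manifestly an isomorphism since both groups act on the same set. Finally, \Cref{coro:Fuchsian group conjugated to first kind} follows by restricting $f$ to $\mathcal{C}_\Gamma^\circ$ and applying characterization (v) (via (iv)) of \Cref{thm: Equivalences semi-conjugation}.
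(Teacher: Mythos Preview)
Your approach is genuinely different from the paper's. You attempt a direct construction---cap each funnel end of $S = \Gamma \backslash \Hb$ with a cusp and uniformize---while the paper proves \Cref{thm: Existence semiconjugation} by a non-constructive Zorn's lemma argument: it partially orders the discrete faithful representations $\rho \colon \Gamma \to \Isom_+(\Hb)$ admitting a normalized $\rho$-equivariant injective holomorphic self-map of $\Hb$, shows via Montel's theorem on normal families that upper sets are compact (so chains have upper bounds), and observes that a maximal element must be of the first kind, since otherwise one could absorb a small disc about a point of $\Sb^1 \setminus \Lambda$ into the domain and re-uniformize via the Riemann mapping theorem to strictly enlarge it.

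For finitely generated $\Gamma$ your capping construction goes through and is more transparent. But the statement is needed for \emph{arbitrary} non-elementary $\Gamma$ (it is invoked, via \Cref{Fuchsian groups lemma}, in the proof of \Cref{thm: closedness}, where finite generation is explicitly not assumed), and there your plan has a real gap. The claim that $\Gamma$ acts on the components of $\Omega(\Gamma)$ with finitely many orbits is false in general (an infinite-type surface can have infinitely many funnel ends), and for infinitely generated $\Gamma$ a component of $\Omega(\Gamma)$ may even have trivial stabilizer---for instance, in an infinite-rank Schottky group whose defining intervals accumulate, the ``gap'' intervals between consecutive ping-pong domains are not stabilized by any non-trivial element---so the corresponding boundary component of $\Gamma \backslash (\Hb \cup \Omega(\Gamma))$ is a line rather than a circle and your capping prescription does not apply. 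Your upstairs remedy is also not well-specified: gluing a half-plane to $\mathcal C_\Gamma$ along each $\partial H_I$ simply rebuilds $\Hb$ (so nothing changes), gluing cusp neighbourhoods yields something not simply connected, and doubling across $\partial \mathcal C_\Gamma$ enlarges $\pi_1$ so $\iota$ ceases to be an isomorphism. Even granting a well-defined $S'$, you would still owe an argument that after infinitely many caps no new ideal boundary has been created, which is not obvious when ends accumulate. The virtue of the paper's argument is precisely that it never analyses the end structure of $S$: it only uses that if $\Omega(\Gamma')$ is non-empty, the representation can be strictly enlarged.
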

Together with the various characterizations of semi-conjugacy in \Cref{thm: Equivalences semi-conjugation}, this corollary implies \Cref{Fuchsian groups lemma}.\\

Let us define more generally a semi-conjugacy between Fuchsian groups (not necessarily of the first kind) in the following way:
\begin{definition}\label{semicon}
    Let $\Gamma_1$ and $\Gamma_2$ be two Fuchsian groups. An isomorphism $\iota\colon  \Gamma_1 \to \Gamma_2$ is a \emph{semi-conjugacy} if there exists a $\iota$-equivariant diffeomorphism from $\Hb$ to $\Hb$.
\end{definition}

With this definition, semi-conjugacy is clearly an equivalence relation.
If $\Gamma_2$ is of the first kind, then this does coincide with the previous notion of semi-conjugacy by characterization (vi) in \Cref{thm: Equivalences semi-conjugation}. 

\begin{remark} \label{rmk: semiconjugation -> conjugation first kind}
    If $\iota\colon \Gamma \to \Gamma'$ is a semi-conjugacy between Fuchsian groups of the first kind, then by minimality of the actions of $\Gamma$ and $\Gamma'$ on $\Sb^1$, the maps $\alpha$ and $\beta^l=\beta^r$ in \Cref{thm: Equivalences semi-conjugation} are both homeomorphisms that are inverses of each other.
\end{remark}

Let us also recall the following theorem of Gabai: 
\begin{theorem}[\cite{Gabai92}]
\label{thm: Gabai}
    Let $\Gamma$ be a group acting on $\mathbb S^1$ as a convergence group by orientation preserving homeomorphisms. Then there exists a Fuchsian group $\Gamma'$ isomorphic to $\Gamma$ such that the actions of $\Gamma$ and $\Gamma'$ on $\Sb^1$ are conjugated by a homeomorphism.
\end{theorem}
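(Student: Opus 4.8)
\textbf{Plan for the proof of \Cref{thm: Gabai} (Gabai's theorem).}

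This is a citation of a theorem of Gabai, so the "proof" here consists essentially in recalling the strategy of \cite{Gabai92} at a high level rather than reproving it from scratch. The statement asserts that any orientation-preserving convergence action of a group $\Gamma$ on $\Sb^1$ is topologically conjugate to a Fuchsian action. First I would observe that the hypothesis of being a convergence group on $\Sb^1$ is, by the Convergence Group Theorem of Tukia, Casson--Jungreis and Gabai, already equivalent to $\Gamma$ being virtually Fuchsian; so the content to extract is the \emph{conjugacy} (not merely abstract isomorphism), and the orientation-preserving hypothesis is what upgrades "virtually Fuchsian" to "Fuchsian" with an orientation-compatible conjugacy. The plan is therefore: (1) reduce to the non-elementary case, since for elementary groups (finite, or virtually $\Zb$ acting with one or two fixed points) the statement is elementary and can be checked by hand; (2) in the non-elementary case, invoke the classification of convergence actions to produce a candidate Fuchsian group $\Gamma'$; (3) build the conjugating homeomorphism.

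For step (3), which is the technical heart, I would follow Gabai's approach via the \emph{convergence group structure on the space of distinct triples}. Given the convergence action of $\Gamma$ on $\Sb^1$, one forms $\Sb^1_{(3)}$, the space of ordered triples of distinct points, on which $\Gamma$ acts properly discontinuously (this is a standard equivalent formulation of the convergence property). One then uses the topology of $\Sb^1_{(3)}$ — it is homeomorphic to an open solid torus, and $\Gamma$ acts freely after passing to a torsion-free finite-index subgroup by Selberg's lemma — together with the cyclic order on $\Sb^1$ preserved by $\Gamma$, to reconstruct a hyperbolic structure. Concretely: the cyclic order lets one identify $\Sb^1$ equivariantly, up to the desired conjugacy, with the Gromov boundary of a hyperbolic group, and Gabai's argument (refined by Casson--Jungreis and Tukia in the proper-discontinuity/triple-space language) produces a $\Gamma$-equivariant homeomorphism onto the boundary circle of a Fuchsian group $\Gamma'$. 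Since the cyclic order is preserved by $\Gamma$ and the identification is built from it, the conjugacy is automatically orientation-preserving, which is exactly what is recorded in the statement.

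The \textbf{main obstacle}, and the reason this is cited rather than reproven, is the passage from "properly discontinuous action on the triple space" to "topologically conjugate to Fuchsian": this is precisely the Seifert fibered space conjecture / Tukia--Gabai--Casson--Jungreis circle of results, whose proof is genuinely deep (it rests on Gabai's work on taut foliations and Seifert fibered spaces, or alternatively on Tukia's quasiconformal-limit arguments in the Fuchsian-like case). In the context of the present paper this theorem is only used as a black box to set up the notion of semi-conjugacy and to justify that convergence actions on $\Sb^1$ are the "same data" as diffeomorphism classes of hyperbolic orbifolds (cf.\ \Cref{thm: informal theorem semi-conjugacy}); accordingly, I would not attempt to reproduce the Gabai/Casson--Jungreis/Tukia argument here, but rather state that it applies verbatim and refer the reader to \cite{Gabai92} for the full proof, after having disposed of the elementary cases by the direct hand computation mentioned in step (1).
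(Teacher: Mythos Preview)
Your assessment is correct: the paper does not prove this theorem at all but merely cites it from \cite{Gabai92} as a black box, exactly as you anticipated. Your additional high-level sketch of the Gabai/Casson--Jungreis/Tukia strategy is extra context the paper does not provide, but your conclusion---that the result is invoked rather than reproven---matches the paper's treatment precisely.
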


Putting these results together, we obtain the following (informal) theorem:

\begin{theorem}
\label{thm: informal theorem semi-conjugacy}
    There are natural bijections between the following classes:
    \begin{enumerate}
        \item non-elementary Fuchsian groups, up to semi-conjugacy,
        \item Fuchsian groups of the first kind, up to semi-conjugacy,
        \item orientation-preserving minimal convergence actions of discrete groups on $\Sb^1$, modulo conjugacy by orientation preserving-homeomorphisms,
        \item $2$-dimensional orbifolds of negative Euler characteristic, up to diffeomorphism.
    \end{enumerate}
\end{theorem}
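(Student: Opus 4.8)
The plan is to establish the four bijections in \Cref{thm: informal theorem semi-conjugacy} by assembling the results already proven (or cited) in this appendix, arranging them into a cycle of maps.

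First I would make precise what the ``natural maps'' are. Between (1) and (2): to a non-elementary Fuchsian group $\Gamma$ one associates, via \Cref{thm: Existence semiconjugation} and \Cref{coro:Fuchsian group conjugated to first kind}, a semi-conjugate Fuchsian group $\Gamma'$ of the first kind; conversely (2) sits inside (1) by inclusion. One must check this is well-defined on semi-conjugacy classes, which is immediate since semi-conjugacy is an equivalence relation and, by \Cref{rmk: semiconjugation -> conjugation first kind}, two first-kind Fuchsian groups that are semi-conjugate to a common non-elementary Fuchsian group are semi-conjugate to each other; injectivity of (2)$\to$(1) on classes follows from the same remark, and surjectivity is \Cref{coro:Fuchsian group conjugated to first kind}. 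Between (2) and (3): a Fuchsian group of the first kind acts on $\Sb^1 = \partial_\infty \Hb$ minimally (minimality on the limit set, which is all of $\Sb^1$) and as a convergence group (\Cref{fact: Fuchsian groups are convergence groups}); one shows that two first-kind Fuchsian groups are semi-conjugate if and only if these boundary actions are topologically conjugate — the ``only if'' is \Cref{rmk: semiconjugation -> conjugation first kind}, and the ``if'' direction requires upgrading a boundary conjugacy to an equivariant diffeomorphism of $\Hb$, using characterization (vi) of \Cref{thm: Equivalences semi-conjugation} after transporting the conjugating homeomorphism through (iii) or (i). Surjectivity of (2)$\to$(3) is exactly Gabai's theorem (\Cref{thm: Gabai}): any orientation-preserving minimal convergence action is conjugate to the boundary action of some Fuchsian group, which is then of the first kind because the action is minimal on all of $\Sb^1$.

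Between (2) and (4): to a torsion-free Fuchsian group $\Gamma$ of the first kind one associates the hyperbolic surface $\Gamma \backslash \Hb$, of finite or infinite type, hence a $2$-orbifold of negative Euler characteristic (in general $\Gamma$ need not be torsion-free, and $\Gamma \backslash \Hb$ is an orbifold directly); conversely a hyperbolic structure on a given orbifold of negative Euler characteristic realizes its fundamental group as a Fuchsian group of the first kind. The content here is that semi-conjugacy of $\Gamma_1, \Gamma_2$ corresponds precisely to the existence of an orientation-preserving diffeomorphism $\Gamma_1\backslash\Hb \to \Gamma_2\backslash\Hb$: a $\iota$-equivariant diffeomorphism $\Hb \to \Hb$ descends to such a diffeomorphism of quotients, and conversely a diffeomorphism of the quotient orbifolds lifts to an equivariant diffeomorphism of universal covers (the universal cover of a $2$-orbifold of negative Euler characteristic being $\Hb$, and the lift being unique up to composition with deck transformations). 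The final step is to note that every $2$-orbifold of negative Euler characteristic admits a hyperbolic structure, so (2)$\to$(4) is surjective; this is classical uniformization for orbifolds. Chasing the cycle $(1)\leftrightarrow(2)\leftrightarrow(3)$ and $(2)\leftrightarrow(4)$ then gives all the asserted bijections.

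The main obstacle I anticipate is the (2)$\leftrightarrow$(4) correspondence in the non-finitely-generated case: lifting a diffeomorphism between infinite-type hyperbolic orbifolds to an equivariant diffeomorphism of $\Hb$, and conversely descending, requires some care with the orbifold universal cover and with the fact that, unlike in the cocompact or geometrically finite setting, there is no finite presentation to fall back on. A secondary subtlety is making the map (2)$\to$(3) genuinely well-defined as a bijection on equivalence classes — i.e. showing that a topological conjugacy of boundary actions always comes from a semi-conjugacy — since \Cref{thm: Equivalences semi-conjugation} as stated produces the conjugacy from a semi-conjugacy but one needs the converse, which should follow by feeding the conjugating homeomorphism into condition (i) and then invoking (vi), but this chain of implications must be verified to run in the required direction. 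Everything else is bookkeeping: checking that each association respects the equivalence relations and that the composites around the cycle are the identity.
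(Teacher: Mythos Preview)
Your proposal is correct and follows essentially the same route as the paper: (1)$\leftrightarrow$(2) via \Cref{coro:Fuchsian group conjugated to first kind}, (2)$\leftrightarrow$(3) via Gabai's theorem together with characterization (i) of \Cref{thm: Equivalences semi-conjugation} and \Cref{rmk: semiconjugation -> conjugation first kind}, and the orbifold correspondence via equivariant diffeomorphisms of $\Hb$ descending to and lifting from quotient diffeomorphisms. The only divergence is that you link (4) to (2) whereas the paper links (4) directly to (1); this matters because your sentence ``a hyperbolic structure on a given orbifold of negative Euler characteristic realizes its fundamental group as a Fuchsian group of the first kind'' is false as written (funnels give second-kind groups), and routing through (1) as the paper does sidesteps this --- though your argument is easily repaired by invoking the already-established (1)$\leftrightarrow$(2).
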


\begin{proof}
The equivalence between (1) and (2) follows from the fact that every Fuchsian group is semi-conjugated to a Fuchsian group of the first kind (\Cref{coro:Fuchsian group conjugated to first kind}).

The equivalence between (2) and (3) follows from Gabai's \Cref{thm: Gabai}: every orientation-preserving minimal convergence action on $\Sb^1$ is topologically conjugated to the action of a Fuchsian group of the first kind, and two such actions are conjugated if and only if the Fuchsian groups are semi-conjugated, by characterization (i) or (ii) in \Cref{thm: Equivalences semi-conjugation} and \Cref{rmk: semiconjugation -> conjugation first kind}.

The equivalence between (1) and (4) follows from the definition of semi-conjugacy for general Fuchsian groups (see \Cref{semicon}): to a non-elementary Fuchsian group $\Gamma$, one can associate the orbifold $\Gamma \backslash \Hb$ of negative Euler characteristic. If $\iota \colon \Gamma\to \Gamma'$ is a semi-conjugation, there exists a $\iota$-equivariant diffeomorphism $f\colon \Hb\to \Hb$, which factors to an orbifold diffeomorphism
\[\Gamma \backslash \Hb \to \Gamma' \backslash \Hb~.\]
Conversely, any $2$-dimensional orbifold with negative Euler characteristic admits a complete hyperbolic metric and is thus isomorphic to $\Gamma \backslash \Hb$ for some non-elementary Fuchsian group $\Gamma$. Finally, any orbifold diffeomorphism $\bar f \colon \Gamma \backslash \Hb \to \Gamma' \backslash \Hb$ induces an isomorphism $\bar f_*$ between their fundamental groups and lifts to a $\bar f_*$-equivariant diffeomorphism between their universal covers. Hence $\Gamma$ and $\Gamma'$ are semi-conjugated.
\end{proof}

Finally, since positive representations are discrete and faithful by \Cref{propo: positive implies injective and discrete}, the characterization (iii) of semi-conjugacy in \Cref{thm: Equivalences semi-conjugation} gives:

\begin{proposition}
    Let $\Gamma$ be a Fuchsian group and $\rho\colon \Gamma \to \PSL_2(\Rb)$ a representation. Then $\rho$ is positive if and only if $\rho$ is discrete and faithful and $\rho\colon \Gamma \to \rho(\Gamma)$ is a semi-conjugacy.
\end{proposition}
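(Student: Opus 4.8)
The final statement to prove is:

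\begin{proposition}
    Let $\Gamma$ be a Fuchsian group. A representation $\rho\colon \Gamma \to \PSL_2(\Rb)$ is positive if and only if $\rho$ is discrete and faithful and $\rho: \Gamma \to \rho(\Gamma)$ is a semi-conjugacy.
\end{proposition}

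The plan is to combine two facts already established: that positive representations are discrete and faithful (\Cref{propo: positive implies injective and discrete}), and the list of equivalent characterizations of semi-conjugacy in \Cref{thm: Equivalences semi-conjugation}, in particular the equivalence between positivity and the existence of an equivariant diffeomorphism of $\Hb$. The key subtlety is that \Cref{thm: Equivalences semi-conjugation} is stated for an isomorphism $\iota\colon \Gamma \to \Gamma'$ where $\Gamma'$ is assumed to be a Fuchsian group \emph{of the first kind}, whereas here $\rho(\Gamma)$ need not be of the first kind. So the argument must reduce to that case.

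First I would treat the forward direction. Assume $\rho$ is positive. By \Cref{propo: positive implies injective and discrete}, $\rho$ is injective with discrete image, so $\rho(\Gamma) \subset \PSL_2(\Rb)$ is a Fuchsian group and $\rho\colon \Gamma \to \rho(\Gamma)$ is an isomorphism; call this isomorphism $\iota$. Note also that $\rho(\Gamma)$ is non-elementary since $\Gamma$ is and $\rho$ is faithful. Now I want to show $\iota$ is a semi-conjugacy, i.e.\ that there is an $\iota$-equivariant diffeomorphism of $\Hb$. By \Cref{coro:Fuchsian group conjugated to first kind}, there is a Fuchsian group $\Gamma'$ of the first kind and a semi-conjugacy $\kappa\colon \rho(\Gamma) \to \Gamma'$, hence (by definition of semi-conjugacy for general Fuchsian groups) a $\kappa$-equivariant diffeomorphism of $\Hb$. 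By \Cref{propo: equivalence positivity first and second kind Fuchsian groups}, the representation $\kappa \circ \rho \circ \iota^{-1} \circ \iota = \kappa \circ \rho \colon \Gamma \to \PSL_2(\Rb)$ — more precisely, viewing $\kappa$ as an abstract isomorphism and applying the semi-conjugacy invariance of positivity — one sees that $\rho$ is positive if and only if the composed representation $\kappa\circ\rho$, regarded as a representation of $\Gamma' $ after identifying $\Gamma$ with $\Gamma'$ via $\kappa\circ\iota$, is positive. Concretely: the inclusion $\Gamma' \hookrightarrow \PSL_2(\Rb)$ equals $\iota' \coloneqq \kappa \circ \iota$ conjugated appropriately, and positivity of $\rho$ transports, via \Cref{propo: equivalence positivity first and second kind Fuchsian groups}, to positivity of $(\iota')^{-1}$. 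Since $\Gamma'$ is of the first kind, characterization (iii)$\Leftrightarrow$(vi) of \Cref{thm: Equivalences semi-conjugation} applied to the isomorphism $\Gamma \to \Gamma'$, $\gamma \mapsto \iota'(\gamma)$... — wait, here the target of this isomorphism is $\Gamma'$ which is of the first kind, so the theorem applies directly and yields a $(\iota')$-equivariant diffeomorphism of $\Hb$. Composing with the inverse of the $\kappa$-equivariant diffeomorphism of $\Hb$ gives an $\iota$-equivariant diffeomorphism of $\Hb$, so $\iota$ is a semi-conjugacy.

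For the converse, assume $\rho$ is discrete and faithful and $\iota = \rho\colon \Gamma \to \rho(\Gamma)$ is a semi-conjugacy. Then $\rho(\Gamma)$ is a non-elementary Fuchsian group and there is an $\iota$-equivariant diffeomorphism of $\Hb$. Using \Cref{coro:Fuchsian group conjugated to first kind} again, pick a semi-conjugacy $\kappa\colon \rho(\Gamma)\to\Gamma'$ with $\Gamma'$ of the first kind; then $\kappa\circ\iota\colon\Gamma\to\Gamma'$ is a semi-conjugacy (composition of semi-conjugacies, using that semi-conjugacy is an equivalence relation as noted after its definition). By characterization (vi)$\Rightarrow$(iii) of \Cref{thm: Equivalences semi-conjugation} applied to $\kappa\circ\iota$, the inclusion representation $(\kappa\circ\iota)^{-1}\colon \Gamma' \to \PSL_2(\Rb)$ is positive. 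Translating back via \Cref{propo: equivalence positivity first and second kind Fuchsian groups} (which says positivity is preserved under semi-conjugacy of the source Fuchsian group), we conclude that $\rho\colon\Gamma\to\PSL_2(\Rb)$ is positive. The main obstacle, and the only thing requiring care, is the bookkeeping of how \Cref{propo: equivalence positivity first and second kind Fuchsian groups} and \Cref{thm: Equivalences semi-conjugation} interact when the image $\rho(\Gamma)$ is of the second kind — everything else is a direct invocation of already-proved results.
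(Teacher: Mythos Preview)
There is a genuine gap in how you invoke \Cref{propo: equivalence positivity first and second kind Fuchsian groups}. That proposition transports positivity of a representation along a semi-conjugacy of its \emph{source} Fuchsian group, but you anchor the auxiliary first-kind group $\Gamma'$ via a semi-conjugacy $\kappa$ from the \emph{image} $\rho(\Gamma)$. In the forward direction, the only semi-conjugacy you know at that point is $\kappa\colon\rho(\Gamma)\to\Gamma'$, which lets you compare representations of $\rho(\Gamma)$ and of $\Gamma'$---never of $\Gamma$; you cannot use $\kappa\circ\rho$ as the semi-conjugacy in \Cref{propo: equivalence positivity first and second kind Fuchsian groups}, since whether $\rho$ is a semi-conjugacy is exactly what you are trying to prove. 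In the converse direction, applying \Cref{thm: Equivalences semi-conjugation}(iii) to $\kappa\circ\iota$ yields positivity of $(\kappa\circ\iota)^{-1}\colon\Gamma'\to\Gamma\hookrightarrow\PSL_2(\Rb)$, and transporting this back along the semi-conjugacy $\kappa\circ\iota$ gives positivity of the \emph{inclusion} $\Gamma\hookrightarrow\PSL_2(\Rb)$, not of $\rho$.

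The paper's remedy is to anchor on the source side: fix once and for all a semi-conjugacy $\iota\colon\Gamma\to\Gamma'$ with $\Gamma'$ of the first kind (via \Cref{coro:Fuchsian group conjugated to first kind}). Then \Cref{propo: equivalence positivity first and second kind Fuchsian groups} applied with this $\iota$ says $\rho$ is positive iff $\rho\circ\iota^{-1}\colon\Gamma'\to\PSL_2(\Rb)$ is positive; \Cref{thm: Equivalences semi-conjugation}(iii)$\Leftrightarrow$(vi), applied to the isomorphism $\iota\circ\rho^{-1}\colon\rho(\Gamma)\to\Gamma'$ (whose target is first kind), says this holds iff $\iota\circ\rho^{-1}$ is a semi-conjugacy; and since $\iota$ is a semi-conjugacy and semi-conjugacy is an equivalence relation, this in turn holds iff $\rho$ is a semi-conjugacy.
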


\begin{proof}
    Let $\iota\colon \Gamma \to \Gamma'$ be a semi-conjugacy with a Fuchsian group of the first kind $\Gamma'$. By \Cref{propo: equivalence positivity first and second kind Fuchsian groups}, $\rho$ is positive if and only if $\rho\circ\iota^{-1}$ is positive. Also, observe that $\rho$ is discrete and faithful if and only the same is true for $\rho\circ\iota^{-1}$, and $\rho\colon \Gamma\to\rho(\Gamma)$ is a semi-conjugacy if and only if $\rho\circ\iota^{-1}\colon \Gamma'\to\rho\circ\iota^{-1}(\Gamma')$ is a semi-conjugacy. It thus suffices to prove the proposition assuming that $\Gamma$ is of the first kind.

    Assume that $\Gamma$ is of the first kind. If $\rho$ is discrete and faithful and $\rho\colon \Gamma \to \rho(\Gamma)$ is a semi-conjugacy, then so is $\rho^{-1}\colon  \rho(\Gamma) \to \Gamma\subset\PSL_2(\Rb)$, in which case $\rho$ is positive by \Cref{thm: Equivalences semi-conjugation} (iii). Conversely, if $\rho$ is positive, then \Cref{propo: positive implies injective and discrete} implies that it is discrete and faithful, and \Cref{thm: Equivalences semi-conjugation} (iii) implies that $\rho^{-1}\colon \rho(\Gamma)\to\Gamma$, and hence $\rho\colon\Gamma\to\rho(\Gamma)$, is a semi-conjugacy.
\end{proof}

The rest of this section is devoted to the proof of Theorems~\ref{thm: Equivalences semi-conjugation} and~\ref{thm: Existence semiconjugation}.

\subsection{%
\texorpdfstring{%
Proof of \Cref{thm: Existence semiconjugation}}%
{Proof of Theorem A.2}}
    Recall that $o\in \Hb$ is a fixed basepoint. Consider the set $E$ of discrete and faithful representations $\rho\colon\Gamma \to \Isom_+(\Hb)$ for which there exists a $\rho$-equivariant injective holomorphic map $f\colon  \Hb \to \Hb$ satisfying $f(o) = o$ and $f'(o)\in \R_{>0}$. For any $\rho,\rho'\in\Gamma$, we say that a map $f\colon\Hb\to\Hb$ is \emph{$(\rho,\rho')$-equivariant} if for all $\gamma\in\Gamma$ and $x\in\Hb$, we have $f(\rho(\gamma)\,x)=\rho'(\gamma)\,f(x)$. We write
    \[\rho \preceq \rho'\]
    if there exists a $(\rho,\rho')$-equivariant holomorphic map $f\colon \Hb \to \Hb$ satisfying $f(o) = o$ and $f'(o)\in \R_{>0}$.

    We claim that $(E,\preceq)$ is a partially ordered set satisfying the hypotheses of Zorn's lemma. It is clear that $\preceq$ is transitive and reflexive. Assume $\rho \preceq \rho'$ and $\rho' \preceq \rho$. Then there are holomorphic maps $h\colon \Hb\to\Hb$ and $g\colon \Hb\to\Hb$ that are $(\rho,\rho')$-equivariant and $(\rho',\rho)$-equivariant respectively, such that $h(o)=o=g(o)$ and $h'(o),g'(o)\in\Rb_{>0}$. The map $f= g\circ h\colon \Hb \to \Hb$ also fixes $o$. By the Schwarz lemma, the maps $f$, $g$, and $h$ are either isometries or they contract distances. Since
    \begin{eqnarray*}
        d (f(\rho(\gamma) \cdot o), f(o)) & = & d(\rho(\gamma) \cdot f(o), f(o))\\
        &=& d(\rho(\gamma) \cdot o, o)~,
    \end{eqnarray*}
    $f$ cannot contract distances. Hence $h$, $g$ and $f$ are isometries. Since $h(o) = o$ and $h'(o)\in \R_{>0}$, $h$ is the identity and $\rho = \rho'$. This proves that $\preceq$ is anti-symmetric, and hence is a partial order.

    Let us equip $E$ with the topology of pointwise convergence of representations, which is metrizable since $\Gamma$ is countable. We will now prove that for any $\rho\in E$, the set
    \[E_\rho \coloneqq \{\rho' \in E \mid \rho \preceq \rho'\}\] 
    is compact. Let $(\rho_n)$ be a sequence in $E_\rho$ and $(h_n)$ a sequence of $(\rho,\rho_n)$-equivariant holomorphic maps with $h_n(o)=o$ and $h'_n(o)\in \Rb_{>0}$. By Montel's Theorem on normal families, up to extracting a subsequence, $h_n$ converges uniformly to a holomorphic map $h_\infty\colon\Hb \to \Hb$ such that $h_\infty(o)=o$. Notice that for any $x\in\Hb$ and any $\gamma\in\Gamma$, 
    \begin{align} \tag{$\star$}\label{eqn: Montel}
    \lim_{n\to\infty}\rho_n(\gamma)\, h_\infty(x)=\lim_{n\to\infty}\rho_n(\gamma)\, h_n(x)=\lim_{n\to\infty}h_n(\rho(\gamma)\, x)= h_\infty(\rho(\gamma)\,x).
    \end{align}
    
    It is a standard consequence of Cauchy's argument principle that $h_\infty$ is either constant or injective. Suppose for the sake of contradiction that $h_\infty$ is constant, i.e.\ $h_\infty(x)=o$ for all $x\in\Hb$. Then \eqref{eqn: Montel} implies that $\rho_n(\gamma)\, o\to o$ as $n\to\infty$. Since $\Gamma$ is non-elementary, there is a finite rank, non-Abelian free subgroup $\Gamma'\subset\Gamma$. By the Margulis lemma, $\rho_n(\Gamma')$ is virtually nilpotent for sufficiently large $n$, but this is impossible because $\rho_n$ is injective for all $n$. Thus, $h_\infty$ is injective, and in particular $h_\infty'(o)\in \Rb_{>0}$. Then \eqref{eqn: Montel} implies that $\rho_n$ converges pointwise to some representation $\rho_\infty\colon\Gamma\to\Isom_+(\Hb)$, and that $h_\infty$ is $(\rho,\rho_\infty)$-equivariant. If $f\colon \Hb\to\Hb$ is a $\rho$-equivariant, injective holomorphic map satisfying $f(o)=o$ and $f'(o)\in\Rb_{>0}$, then $f_\infty\coloneqq h_\infty\circ f\colon \Hb\to\Hb$ is a holomorphic, injective, $\rho_\infty$-equivariant map that satisfies $f_\infty(o)=o$ and $f_\infty'(o)\in\Rb_{>0}$, so $\rho_\infty$ lies in $E$, hence in $E_\rho$.

    Now, let $V$ be a totally ordered subset of $E$. For every finite subset $\{\rho_1\preceq \ldots \preceq \rho_n\} \subset V$, we have that
    \[\rho_n \in \bigcap_{i=1}^n E_{\rho_i}~.\]
    By compactness, we deduce that $\bigcap_{\rho \in V} E_\rho$
    is non-empty. Hence $V$ admits an upper bound. By Zorn's lemma, we can thus find a maximal element $\iota\in E$. \\
    
    To conclude the proof of \Cref{thm: Existence semiconjugation}, it now suffices to prove that $\Gamma'\coloneqq \iota(\Gamma)$ is of the first kind. 
    Assume for the sake of contradiction that it is not. Consider $\Hb$ as the upper-half plane in $\mathbf P^1(\Cb)$, in which case $\partial_\infty \Hb$ is identified with $\mathbf P^1(\Rb)\subset\mathbf P^1(\Cb)$. Since $\Gamma'$ acts properly discontinuously on $\mathbf P^1(\Cb) \backslash \Lambda(\Gamma')$, given some point $x\in \mathbf P^1(\Rb) \setminus \Lambda(\Gamma)$, we can find a small open ball $B$ about $x$ such that the domain $D = \Hb \cup \bigcup_{\gamma \in \Gamma} \iota(\gamma) \cdot B$ is simply connected and acted on properly discontinuously by $\iota(\Gamma)$. By the Riemann mapping theorem, there is a biholomorphism $f\colon D\to \Hb$ such that $f(o)=o$ and $f'(o)\in\Rb_{>0}$. Let $\iota'\colon\Gamma\to\Isom_+(\Hb)$ be the representation defined by $\iota'(\gamma)=f\circ \iota(\gamma)\circ f^{-1}$, and observe that $f|_{\Hb}$ is $(\iota,\iota')$-equivariant, is injective but is not surjective. The representation $\iota'$ thus satisfies $\iota \preceq \iota'$ and $\iota \neq \iota'$, which contradicts the maximality of $\iota$. Hence $\Gamma'$ is of the first kind.

\subsection{%
\texorpdfstring{%
Proof of \Cref{thm: Equivalences semi-conjugation}}%
{Proof of Theorem A.1}}

In this section, we prove the equivalence between the six characterizations of a semi-conjugacy. Note that the implications (v) $\Rightarrow$ (iv) and (vi) $\Rightarrow$ (iv) are immediate. Let $\iota\colon \Gamma \to \Gamma'$ be an isomorphism between Fuchsian groups such that $\Gamma'$ is of the first kind.\\

\paragraph{Proof of (i) $\Leftrightarrow$ (ii) $\Leftrightarrow$ (iii)}

This was essentially already proven through our various characterizations of positive representations. Indeed, (ii) $\Rightarrow$ (iii) by definition of positivity and (i) $\Rightarrow$ (iii) by \Cref{prop: Boundary extension implies positive}.
Conversely, if $\iota^{-1}$ is positive, then $\beta^l$ and $\beta^r$ are the boundary maps given by \Cref{lemma: left and right limit independent of the choice of limit map} and $(\Sb^1,\alpha)$ is the maximal boundary extension given by \Cref{prop: converse}.\\

\paragraph{Proof of (iv) $\Rightarrow$ (iii)}

Let $\iota\colon \Gamma\to \Gamma'$ be an isomorphism between Fuchsian groups, with $\Gamma'$ of the first kind, and $f\colon\mathcal{C}_\Gamma^\circ \to \Hb$ a $\iota$-equivariant injective local diffeomorphism. Up to precomposing $f$ and conjugating $\iota$ with an element of $\PGL_2(\Rb) = \Aut_1(\mathfrak{sl}(2,\Rb))$, we can assume that $f$ is orientation preserving.

For every $\varepsilon>0$, set
\[\mathcal{C}_\Gamma^\varepsilon = \{x\in \mathcal{C}_\Gamma^\circ \mid d_{\Hb}(x, \partial \mathcal{C}_{\Gamma}) \geqslant \varepsilon\}~.\] 
This is a $\Gamma$-invariant closed subset of $\mathcal C_\Gamma^\circ$.

Fix a basepoint $o\in C_{\Gamma}^\circ$.
Let $\gamma \in \Gamma$ be a non-peripheral hyperbolic element such that $\iota(\gamma)$ is also hyperbolic. Let $g \colon \Rb_+ \to \mathcal C_\Gamma^\circ$ be any geodesic ray contained in $\mathcal C_\Gamma^\circ$ and converging to $\gamma^+$. Then $g$ is asymptotic to the axis of $\gamma$, which is contained in $\mathcal C_\Gamma^\circ$ since $\gamma$ is non-peripheral, and remains uniformly bounded away from $\partial\mathcal C_\Gamma$ since it is acted on cocompactly by $\langle\gamma\rangle$. Hence $g(\R_+)$ is contained in $\mathcal C_\Gamma^\varepsilon$ for some $\varepsilon >0$.

 We claim that $f(g(t))$ converges to $\iota(\gamma)^+$ as $t\to +\infty$. Indeed, since $\gamma$ is hyperbolic, the sequence $(\gamma^n\cdot o)$ is a quasi-geodesic ray converging to $\gamma_+$, and we can find some $R>0$ and some $\lambda>0$ such that for all $t\in\Rb$,
    \[d_{\Hb}(g(t), \gamma^{\lfloor \lambda t\rfloor}\cdot o)\leq R~.\]
    Since $f$ is continuous, there is some $R'>0$ such that $f(B_R(o) \cap \mathcal C_\Gamma^\varepsilon)\subset B_{R'}(f(o))$. Then the $\iota$-equivariance of $f$ implies that
    \[d_{\Hb}(f(g(t)), \iota(\gamma)^{\lfloor \lambda t \rfloor}\cdot f(o))\leq R'~,\]
    from which we get that
    \[\lim_{t\to +\infty}f(g(t)) = \lim_{n\to +\infty}\iota(\gamma)^n \cdot f(o) = \iota(\gamma)^+~.\]

    Recall that, as a general property of Fuchsian groups, the stabilizer of $\gamma^+$ in $\Gamma$ (resp. of $\iota(\gamma)^+$ in $\Gamma'$) is the centralizer of $\gamma$ (resp. $\iota(\gamma)$). Since $\iota$ is an isomorphism, we get that $\iota(\Stab_\Gamma(\gamma^+))= \Stab_{\Gamma'}(\iota(\gamma)^+)$ and deduce that the map
    \[\alpha \colon \Gamma \cdot \gamma^+  \to \Gamma'\cdot \iota(\gamma)^+\]
    given by $\alpha(\eta\cdot \gamma^+) = \iota(\eta)\cdot \iota(\gamma)^+$
    is well-defined, injective and $\iota$-equivariant. 
    
    We will now argue that $\alpha$ is monotonic. Let $(x,y,z)= (\eta_1\cdot \gamma^+, \eta_2\cdot \gamma^+, \eta_3\cdot \gamma^+) \in (\Gamma \cdot \gamma^+)^3$ be a cyclically oriented triple. Since $\gamma$ is non-peripheral, the ideal geodesic triangle with vertices $x,y,z$ is contained in $\mathcal C_\Gamma^\circ$. By the previous paragraph (applied to $\eta_1\gamma \eta_1^{-1}$, $\eta_2\gamma \eta_2^{-1}$ and $\eta_3 \gamma \eta_3^{-1}$), for any distinct $a,b\in\{x,y,z\}$, $f$ sends the geodesic with ideal endpoints $a$ and $b$ to a proper curve with ideal endpoints $\alpha(a)$ and $\alpha(b)$. Since $f$ is injective, the ideal geodesic triangle with vertices $x,y,z$ is mapped via $f$ to a (not necessarily geodesic) triangle with ideal vertices $\alpha(x), \alpha(y), \alpha(z)$. Since $f$ is orientation preserving, we conclude that $(\alpha(x),\alpha(y),\alpha(z))$ is cyclically ordered.

    Now, the map $\alpha^{-1}\colon \Gamma'\cdot \iota(\gamma)_+ \to \Sb^1$ is a $\iota^{-1}$-equivariant positive map, and we conclude that $\iota^{-1}$ is positive.\\

\paragraph{Proof of (i) $\Rightarrow$ (v)}
To prove this implication, we verify that the \emph{Douady--Earle extension} of a homeomorphism of the circle generalizes to a  monotonous, continuous surjective map which may not be injective. We refer to \cite{DouadyEarle86} for details.\\

For a point $p \in \Sb^1$, we denote by $B_p \colon \Hb \to \R$ the Busemann function based at $p$ vanishing at the basepoint $o$. Given a probability measure $\nu$ on $\Sb^1$, define $b(\nu)\in \Hb$ as the unique minimum of the function
\[x\mapsto \int_{\Sb^1} B_p(x) \mathrm d \nu(p)~.\]
In \cite[Remark (2) p.26]{DouadyEarle86}, Douady and Earle note that $b(\nu)$ is defined if and only if $\nu$ does not have an atom of mass $\geqslant \frac{1}{2}$. Note that this does not depend on the choice of the basepoint $o$.

Given a point $x\in \Hb$, denote by $\nu_x$ its visual (probability) measure on $\Sb^1$. Finally, define
\[f(x) = b(\alpha_*\nu_x)~.\]

Since $\alpha$ is continuous, monotonous and surjective, there is a countable collection $(I_n)_{n\in \Nb}$ of disjoint closed intervals in $\Sb^1$ which are mapped to points by $\alpha$, and the above map $f$ is well-defined on the set $\Omega$ such that for all $x\in \Omega$, $\nu_x(I_n)< \frac{1}{2}$ for all $n$. 
Since the set 
\[H_n = \left\{x \mid \nu_x(I_n) \geqslant \tfrac{1}{2}\right\}\]
is the closed half-plane bounded by $I_n$, we get that
\[\Omega = \Hb \setminus \bigcup_{n\in \Nb} H_n = \mathcal C_\Gamma^\circ~.\]
Hence $f$ is well-defined on the interior of the convex core of $\Gamma$.

Since $\alpha$ is $\iota$-equivariant and the construction is natural, $f$ is $\iota$-equivariant.\break
Douady--Earle's proof that $f$ is a local diffeomorphism carries through. 

Finally, when $x_n$ diverges in $\mathcal C_\Gamma^\circ$, $\alpha_*\nu_{x_n}$ converges (after extraction) to a measure with an atom of mass $\geqslant \frac{1}{2}$, from which we deduce that $f\colon \mathcal C^\circ_\Gamma \to \Hb$ is proper. Hence $f$ is a global diffeomorphism.\\

\paragraph{Proof of (v) $\Rightarrow$ (vi)}

Note that we have proven the equivalence between conditions (i) through (v) and that (vi) $\Rightarrow $ (iv) is immediate. To prove the last implication, we will use the following result, which we extracted from \cite[p.~636, last paragraph]{ALPSS}.

\begin{lemma} \label{lemma: convex core holomorphic}
    Let $\Gamma$ be a non-elementary Fuchsian group. Then there exists a Fuchsian group $\Gamma_1$, an isomorphism $j\colon \Gamma \to \Gamma_1$ and a $j$-equivariant biholomorphism $f\colon \mathcal \Hb \to \mathcal C^\circ_{\Gamma_1}$.
\end{lemma}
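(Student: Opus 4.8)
The goal is \Cref{lemma: convex core holomorphic}: given a non-elementary Fuchsian group $\Gamma$, we want to realize it (after an isomorphism $j\colon\Gamma\to\Gamma_1$) so that the universal cover of the associated surface is, conformally, the interior of the convex core of $\Gamma_1$ — equivalently, so that $\mathcal C^\circ_{\Gamma_1}$ is the entire hyperbolic plane up to biholomorphism but viewed equivariantly. The plan is to run the Zorn's lemma argument of \Cref{thm: Existence semiconjugation} essentially verbatim, but replacing ``maps $\Hb\to\Hb$'' by ``maps $\mathcal C^\circ_\Gamma\to\Hb$'', and then re-examine what maximality gives.

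First I would set up the partially ordered set. Fix a basepoint $o\in\mathcal C^\circ_\Gamma$. Consider the set $E$ of discrete faithful representations $\rho\colon\Gamma\to\Isom_+(\Hb)$ admitting a $\rho$-equivariant injective holomorphic map $f\colon\mathcal C^\circ_\Gamma\to\Hb$ with $f(o)=o$ and $f'(o)\in\Rb_{>0}$, and declare $\rho\preceq\rho'$ if there is a $(\rho,\rho')$-equivariant injective holomorphic map $h\colon\Hb\to\Hb$ (note: between copies of $\Hb$, not $\mathcal C^\circ$) with $h(o)=o$, $h'(o)\in\Rb_{>0}$ — wait, more carefully, one should posit $h$ defined on the target $\Hb$ of the chosen $f$, so that if $f$ realizes $\rho$ then $h\circ f$ realizes $\rho'$. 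The inclusion of $\Gamma$ into itself with $f$ the identity on $\mathcal C^\circ_\Gamma$ shows $E$ is nonempty (with $\rho=\id$). Anti-symmetry follows exactly as before from the Schwarz lemma applied to the composition $g\circ h$, using that an equivariant holomorphic self-map of $\Hb$ cannot strictly contract because it preserves the displacement $d(\rho(\gamma)\cdot o,o)$. Transitivity and reflexivity are clear. For Zorn, I would show $E_\rho\coloneqq\{\rho'\mid\rho\preceq\rho'\}$ is compact in the topology of pointwise convergence, using Montel's theorem on the family of maps $h_n\colon\Hb\to\Hb$: a subsequence converges locally uniformly to $h_\infty$, which by the argument principle is constant or injective; the constant case is excluded by the Margulis lemma exactly as in the proof of \Cref{thm: Existence semiconjugation} (non-elementarity gives a free subgroup whose image would be virtually nilpotent); and then $h_\infty\circ f$ realizes the limit representation, so it lies in $E_\rho$. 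Zorn's lemma then produces a maximal $\iota\in E$, with realizing map $f_\iota\colon\mathcal C^\circ_\Gamma\to\Hb$.

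Next I would argue that for the maximal $\iota$, the map $f_\iota$ is a biholomorphism from $\mathcal C^\circ_\Gamma$ onto $\Hb$. It is already injective and holomorphic; I claim it is surjective. Suppose not: then $\Omega\coloneqq f_\iota(\mathcal C^\circ_\Gamma)\subsetneq\Hb$ is a simply connected proper subdomain which is $\iota(\Gamma)$-invariant (since $f_\iota$ is equivariant and $\mathcal C^\circ_\Gamma$ is $\Gamma$-invariant). By the Riemann mapping theorem there is a biholomorphism $\psi\colon\Omega\to\Hb$ with $\psi(o)=o$, $\psi'(o)\in\Rb_{>0}$; conjugating $\iota$ by $\psi$ yields a new representation $\iota'\colon\Gamma\to\Isom_+(\Hb)$, still discrete and faithful (conformal conjugacy of a discrete group acting on a simply connected domain — here one should check $\iota'$ takes values in $\Isom_+(\Hb)$, which holds because $\psi$ conjugates the $\Isom_+$-action on $\Omega$, being the restriction of automorphisms of $\Omega$, to the $\Isom_+$-action on $\Hb$; this is where one uses that holomorphic automorphisms of a simply connected hyperbolic domain are hyperbolic isometries). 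Then $\psi\circ f_\iota\colon\mathcal C^\circ_\Gamma\to\Hb$ realizes $\iota'$, so $\iota'\in E$, and $\psi|_\Omega$ composed appropriately witnesses $\iota\preceq\iota'$ with $\iota\ne\iota'$ (they differ because $\psi$ is not the identity unless $\Omega=\Hb$), contradicting maximality. Hence $f_\iota$ is onto, so $f_\iota^{-1}\colon\Hb\to\mathcal C^\circ_\Gamma$ is a biholomorphism; but wait — I want a map \emph{onto} $\mathcal C^\circ_{\Gamma_1}$ where $\Gamma_1=\iota(\Gamma)$. Observe $f_\iota$ conjugates the $\Gamma$-action on $\mathcal C^\circ_\Gamma$ to the $\Gamma_1$-action on $\Hb$; but a Fuchsian group acting on its full convex core interior — I realize the correct formulation: apply the whole construction with the roles arranged so that the \emph{limit set} is preserved. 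Concretely, since $f_\iota$ is an equivariant biholomorphism $\mathcal C^\circ_\Gamma\to\Hb$, it extends to a homeomorphism $\Lambda(\Gamma)\to\partial_\infty\Hb=\Sb^1$ (by the boundary behavior argument as in the $(iv)\Rightarrow(iii)$ proof, hyperbolic fixed points map to hyperbolic fixed points), forcing $\Gamma_1$ to be of the first kind, so $\mathcal C^\circ_{\Gamma_1}=\Hb$ and $f_\iota$ is already the desired map — alternatively, invert: $g\coloneqq f_\iota^{-1}\colon\Hb\to\mathcal C^\circ_\Gamma$ and $j\coloneqq\iota^{-1}$ gives a $j$-equivariant biholomorphism $\Hb\to\mathcal C^\circ_{\Gamma}$, which after relabeling $\Gamma_1\coloneqq\Gamma$ and viewing the source group via $j$ is exactly the statement.

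The main obstacle I anticipate is the surjectivity/maximality step: one must be careful that conjugating by the Riemann map $\psi$ genuinely produces a representation into $\Isom_+(\Hb)$ and genuinely yields a strictly larger element of $E$ — in particular that $\iota\ne\iota'$, i.e.\ that a proper simply connected $\Gamma_1$-invariant subdomain cannot be all of $\Hb$ yet give back the same representation, and that $\psi$ can be chosen fixing $o$ with positive derivative so that the normalization is preserved under composition. A secondary subtlety is the compactness of $E_\rho$: ruling out that the Montel limit $h_\infty$ degenerates to a constant requires the Margulis-lemma input, and one should also confirm that the limit map, being injective with $h_\infty'(o)\in\Rb_{>0}$, indeed forces pointwise convergence of the $\rho_n$ to a genuine (discrete, faithful) representation — discreteness and faithfulness of the limit follow since $h_\infty$ is an equivariant \emph{injective} holomorphic map, which transports the free properly discontinuous action back. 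Once these points are in place, \Cref{lemma: convex core holomorphic} follows, and it is precisely the ingredient needed to upgrade the diffeomorphism $\mathcal C^\circ_\Gamma\cong\Hb$ of condition (v) to an equivariant self-diffeomorphism of $\Hb$: compose the given $j$-equivariant biholomorphism $\Hb\to\mathcal C^\circ_{\Gamma_1}$ with the (v)-diffeomorphism $\mathcal C^\circ_{\Gamma_1}\to\Hb$ after identifying the relevant groups via $j$ and the semi-conjugacy, completing the proof of (v) $\Rightarrow$ (vi) and hence of \Cref{thm: Equivalences semi-conjugation}.
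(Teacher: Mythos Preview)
Your approach has the direction reversed. The lemma asks, given $\Gamma$ acting on all of $\Hb$, to find a Fuchsian group $\Gamma_1$ so that $\Hb$ with its $\Gamma$-action is biholomorphic to $\mathcal C^\circ_{\Gamma_1}$ with its $\Gamma_1$-action. Your Zorn argument on maps $\mathcal C^\circ_\Gamma\to\Hb$ instead produces an equivariant biholomorphism $\mathcal C^\circ_\Gamma\cong\Hb$ for some $\iota\colon\Gamma\to\Gamma_1$ with $\Gamma_1$ of the first kind --- i.e.\ you uniformize the convex core of the \emph{given} group, which is nothing more than the Riemann mapping theorem applied to the simply connected domain $\mathcal C^\circ_\Gamma$. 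Your relabeling at the end (``take $\Gamma_1\coloneqq\Gamma$'') would require the \emph{input} of the lemma to coincide with your Zorn output, but that group is produced by the argument and is always of the first kind; so at best you recover only the trivial case $\mathcal C^\circ_{\Gamma_1}=\Hb$. There is also a smaller technical gap in your surjectivity step: the Riemann map $\psi\colon\Omega\to\Hb$ is defined only on $\Omega\subsetneq\Hb$ and therefore cannot serve as the witness $h\colon\Hb\to\Hb$ that your order $\preceq$ demands.

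The paper's proof is entirely different and does not use Zorn. Identifying $\Hb$ with the upper half-plane, one equips $\mathbf P^1(\Cb)\setminus\Lambda(\Gamma)$ with its Poincar\'e metric $g_\Lambda$; complex conjugation is a $g_\Lambda$-isometry, so $\mathbf P^1(\Rb)\setminus\Lambda(\Gamma)$ is geodesic, and $(\overline{\Hb}\setminus\Lambda(\Gamma),g_\Lambda)$ is a complete contractible hyperbolic surface with geodesic boundary. It is therefore isometric, by a conformal and hence holomorphic map $f$, to a convex subset $\mathcal C\subset\Hb$, equivariantly for some $j\colon\Gamma\to\Gamma_1$. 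Carath\'eodory's extension theorem then yields $\overline{\mathcal C}\cap\partial_\infty\Hb=\Lambda(\Gamma_1)$, so $\mathcal C=\mathcal C_{\Gamma_1}$, and $f|_{\Hb}\colon\Hb\to\mathcal C^\circ_{\Gamma_1}$ is the desired biholomorphism. The key move you are missing is to \emph{enlarge} the domain on which $\Gamma$ acts (from $\Hb$ to $\mathbf P^1(\Cb)\setminus\Lambda(\Gamma)$, doubling across the real axis) rather than shrink it to $\mathcal C^\circ_\Gamma$.
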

\begin{proof}
    Identify $\Hb$ with the upper half-plane in $\mathbf{P}^1(\Cb)$. The group $\Gamma$ acts on the Riemann surface $\mathbf{P}^1(\Cb) \setminus \Lambda(\Gamma)$ by biholomorphisms, hence by isometries for its Poincaré metric $g_\Lambda$. The anti-holomorphic involution $z\mapsto \bar z$ of $\mathbf{P}^1(\Cb) \setminus \Lambda(\Gamma)$ is also an isometry for $g_\Lambda$, hence its fixed locus $\mathbf{P}^1(\Rb)\setminus \Lambda(\Gamma)$ is geodesic for $g_\Lambda$. We get that  $(\overline{\Hb}\setminus \Lambda(\Gamma), g_\Lambda)$ is a complete contractible hyperbolic surface with geodesic boundary, hence there is an isometry $f \colon (\overline{\Hb}\setminus \Lambda(\Gamma), g_\Lambda) \to \mathcal C \subset \Hb$ whose image is a convex subset $\mathcal C$ with geodesic boundary. This map $f$ is tautologically equivariant with respect to a homomorphism $j\colon \Gamma \to \Isom_+(\mathcal C) \subset \Isom_+(\Hb)$. Since $g_\Lambda$ is conformal to the Poincar\'e metric on $\Hb$, the map $f$ is holomorphic. Since $\Gamma$ acts properly on $\overline{\Hb}\setminus \Lambda(\Gamma)$, the representation $j$ is discrete and faithful.

    By Caratheodory's theorem, the map $f$ extends to a homeomorphism $\overline{f} \colon \overline{\Hb} \to \overline{\mathcal C}$, where $\overline{\mathcal C}$ is the closure of $\mathcal C$ in $\overline{\Hb}$. By $j$-equivariance of $\overline{f}$, we have
    \[\overline{f}(\Lambda(\Gamma)) \subset \Lambda(j(\Gamma))~.\]
    On the other hand, we have
    \[\overline{f}(\overline{\Hb} \setminus \Lambda(\Gamma)) \subset \Hb~,\]
    hence
    \[\overline{\mathcal C} \cap \partial_\infty \Hb \subset \overline{f}(\Lambda(\Gamma))~. \]
    We conclude that $\overline{\mathcal C} \cap \partial_\infty \Hb = \Lambda(j(\Gamma))$, hence $\mathcal C$ is the convex core of $\mathcal C_{j(\Gamma)}$ of $j(\Gamma)$.
    Taking $\Gamma_1 = j(\Gamma)$ and restricting $f$ to $\Hb$ gives the conclusion of \Cref{lemma: convex core holomorphic}.     
\end{proof}

Let us now conclude the proof of (v) $\Rightarrow$ (vi). Fix $\Gamma$ a Fuchsian group, $\Gamma'$ a Fuchsian group of the first kind, $\iota\colon\Gamma \to \Gamma'$ and $h\colon \mathcal C^\circ_\Gamma \to \Hb$ a $\iota$-equivariant diffeomorphism. Let $\Gamma_1$, $j$ and $f$ be as in \Cref{lemma: convex core holomorphic}. By \Cref{thm: Existence semiconjugation}, we can find a Fuchsian group of the first kind $\Gamma_1'$, an isomorphism $\iota_1\colon\Gamma_1 \to \Gamma_1'$ satisfying condition (iv), hence condition (v). Let $h_1\colon \mathcal C^\circ_{\Gamma_1} \to \Hb$ be a $\iota_1$-equivariant bijection. Then $h_1\circ f\colon\Hb \to \Hb$ is a $\iota_1 \circ j$-equivariant diffeomorphism, showing that $\iota_1 \circ j$ satisfies condition (vi).

Moreover, $h_1 \circ f \circ h^{-1}$ is a $\iota_1 \circ j \circ \iota^{-1}$ injective local diffeomorphism defined on $\mathcal C^\circ_{\Gamma'}= \Hb$. We deduce that $\iota_1 \circ j \circ \iota^{-1}$ satisfies condition (iv). Since (iv)$\Rightarrow$(v), there exists a $\iota_1 \circ j \circ \iota^{-1}$-equivariant diffeomorphism
$g\colon \mathcal C^\circ_{\Gamma'}=\Hb \to \Hb$. The composition $g^{-1} \circ h_1 \circ f\colon\Hb \to \Hb$ is the global $\iota$-equivariant diffeomorphism that we were seeking.

\bibliographystyle{alpha}
\bibliography{reference.bib}

\end{document}